%% Kazuo Murota  2016-07-16 / 2016-12-18 / 2022-12-07
%% The file, as of 2016-12-18 is NOT identical with the paper published in the journal
%% The difference can be identified from the PDF file for galleyproofs
%% This file is revised to incorporate updates after journal publication 
%% file = DCAECO0.tex
%%Journal of Mechanism and Institution Design
%%An Open-Access Journal for the Economic and Social Sciences
%%http://www.mechanism-design.org/index.php
%%  
%%\documentclass{article}
\documentclass[a4paper,12pt]{article}
\usepackage{amsmath}
\usepackage{amssymb}
\usepackage{amsthm}
\usepackage{latexsym}
\usepackage{amsmath}
\usepackage{amsthm}
\usepackage{graphicx}
\usepackage{txfonts} 
\usepackage{bm}
\usepackage{epic,eepic}
\usepackage{braket}
\usepackage{color}
\usepackage[author-year]{amsrefs} 

\usepackage{geometry}
\geometry{left=27mm,right=27mm,top=27mm,bottom=27mm}

\newtheorem{theorem}{Theorem}[section]
\newtheorem{proposition}[theorem]{Proposition}

\newtheorem{remark}{Remark}[section]
\newtheorem{example}{Example}[section]

\numberwithin{equation}{section}

\newcommand{\todaye}{\the\year/\the\month/\the\day}

%%\input{DCAECOmcr}
%% 2016-10-28 Kazuo Murota 
%% 2022-12-06
%% file= DCAECOmcr.tex
%%%%%%%%%%%%%%%%%%%%%%%%%%%%%%%%%%%%%%%%%%%%%%%%%%%%%%
%------------------------------------------------------------
% \renewcommand{\thefootnote}{\dagger\arabic{footnote}}
%------------------------------------------------------------

%%\newcommand{\kwd}[1]{{\bf #1}}  %% bold for index word
%%\newcommand{\citep}[2]{\cite{#1}\label{#2}} %% citation page 
%%\newcommand{\pagec}[1]{{\small \pageref{#1}}} %% citation page 
%%\newcommand{\kwd}[1]{{\em #1\/}}  %% italic for index word
\newcommand{\finbox}{\hspace*{\fill}$\rule{0.15cm}{0.2cm}$}
%%\newcommand{\finbox}{\hspace*{\fill}$\rule{0.25cm}{0.25cm}$}
%%\newcommand{\qed}{\rule{0.25cm}{0.25cm}}
%---------------------------------------------------------------
%-- ------------------------------------------

\newcommand{\RED}[1]{{\color{red}#1}} 
 \renewcommand{\RED}[1]{{#1}}

%%%

\newcommand{\RR}{\mathbb{R}}
\newcommand{\ZZ}{\mathbb{Z}}

\newcommand{\calI}{\mathcal{I}}

\newcommand{\calT}{\mathcal{T}}
%% ----------
%%%%%%%
\newcommand{\vecone}{{\bf 1}}
\newcommand{\veczero}{{\bf 0}}
%%%%%%%%%%%
%%\newcommand{\coln}[1]{\multicolumn{1}{c}{#1}}
\newcommand{\sign}{{\rm sign\,}}

\newcommand{\dom}{{\rm dom\,}}
\newcommand{\domR}{{\rm dom_{\RR}}}
\newcommand{\domZ}{{\rm dom_{\ZZ}}}

\newcommand{\suppp}{{\rm supp}\sp{+}}
\newcommand{\suppm}{{\rm supp}\sp{-}}
\newcommand{\unitvec}[1]{\chi_{#1}}
\newcommand{\argmax}{\arg \max}
\newcommand{\argmaxZ}{\arg \max_{\ZZ}}

\newcommand{\argmin}{\arg \min}

\newcommand{\conv}{\Box\,}
%%\newcommand{\convZ}{\Box_{\ZZ}\,}
%%\newcommand{\convR}{\Box_{\RR}\,}
%%%%%%%%%%%%%%%% end of file %%%%%%%%%%%%%%%%%

%%%%%%%%%%%%%%%%% memo, OMIT by murota %%%%%%%
  %%% For work
%%\newcommand{\memo}[1]{{\bf [MEMO:}{  #1 }\ {\bf :end] }}  %%% For work
%%\newcommand{\memo}[1]{{\bf [MEMO:} #1 \ {\bf :end memo] }}  %%% For work
%%   \renewcommand{\memo}[1]{}           %%% For FINAL
\newcommand{\OMIT}[1]{{\bf [OMIT:} #1 \ {\bf --- end OMIT] }}  %%% For work
   \renewcommand{\OMIT}[1]{}            %%% For FINAL
%%%%%%%%%%%%%%%% cite %%%%%%%%%%%%%%%%%%%%%%%
\newcommand{\citeH}[2][]{\cite[#1]{#2}} 
  \renewcommand{\citeH}[2][]{}   %% Journal of Mechanism and Institution Design

\title{Discrete Convex Analysis:
\\
A Tool for Economics and Game Theory%
\footnote{
\RED{
This is a revised version of the paper with the same title
published in
\textit{Journal of Mechanism and Institution Design} 
{\bf 1} (2016), 151--273.
The revision consists of corrections in 
definitions \eqref{viapkarcdef} and \eqref{gamdef2k}
as well as updates of bibliographic information.
}%red
}%foot
}%title

\author{Kazuo Murota\thanks{
The Institute of Statistical Mathematics, and 
Tokyo Metropolitan University,
e-mail: murota@tmu.ac.jp.}}
%%, phone: +81-42-677-2322.}}
%%The first page of each submitted paper should contain 
%%every author's e-mail address, phone number and affiliation.

%\date{October 2016 / December 2022 (Version: \today)}
\date{October 2016 / December 2022}
\begin{document}

\maketitle

\begin{abstract}
This paper presents discrete convex analysis
as a tool for economics and game theory.
Discrete convex analysis is a new framework of discrete mathematics and optimization,
developed during the last two decades.
Recently, it is being recognized 
as a powerful tool for analyzing economic or game models with indivisibilities.
The main feature of discrete convex analysis
is the distinction of two convexity concepts,
M-convexity and L-convexity, for functions in integer or binary variables,
together with their conjugacy relationship.
The crucial fact is that M-concavity, or its variant called M$\sp{\natural}$-concavity, 
is equivalent to the (gross) substitutes property in economics.
Fundamental theorems in discrete convex analysis such as the M-L conjugacy theorems,
discrete separation theorems and discrete fixed point theorems
yield structural results in economics such as the existence of equilibria
and the lattice structure of equilibrium price vectors.
Algorithms in discrete convex analysis give iterative auction algorithms
as well as computational methods for equilibria.
%%%150 or less 12pt
\end{abstract}

%%\newpage
%%\tableofcontents

%% \newpage
%% 2016-10-28 Kazuo Murota 
%% file= DCAECOintro.tex

%%%%%%%%%%%%%% SSSSS %%%%%%%%%%%%%%%%%%%%%
\section{Introduction}
\label{SCintro}

Convex analysis and fixed point theorems have played a crucial role 
in economic and game-theoretic analysis, for instance, 
to prove the existence of competitive equilibrium and Nash equilibrium; 
see Debreu (1959)\citeH{Deb59}, 
Arrow and Hahn (1971)\citeH{AH71}, 
and Fudenberg and Tirole (1991)\citeH{FT91}.
Traditionally, in such studies,
it is assumed that commodities are perfectly divisible,
or mixed strategies can be used,
or the space of strategies is continuous. 
However, this traditional approach cannot be equally applied to economic models
which involve significant indivisibilities or to game-theoretic models
 where the space of strategies is discrete and mixed strategies do not make much sense.
In this paper we will present a new approach
based on discrete convex analysis and discrete fixed point theorems, 
which have been recently developed in the field of discrete mathematics
and optimization and
become a powerful tool for analyzing economic or game models with indivisibilities.

Discrete convex analysis
(Murota 1998, 2003)\citeH{Mdca,Mdcasiam} 
is a general theoretical framework 
constructed through a combination of convex analysis
and combinatorial mathematics.
The framework of convex analysis 
is adapted to discrete settings and 
the mathematical results in matroid/submodular function theory are generalized%
\footnote{%%%%%%%%%%%%%%%%%%%%%%%%%%%%
The readers who are interested in general backgrounds are referred to
Rockafellar (1970)\citeH{Roc70}
for convex analysis,
Schrijver (1986)\citeH{Sch86}
for linear and integer programming,
Korte and Vygen (2012)\citeH{KV12}
and Schrijver (2003)\citeH{Sch03} 
for combinatorial optimization,
Oxley (2011)\citeH{Oxl11}
for matroid theory, and 
Fujishige (2005)\citeH{Fuj05}
and Topkis (1998)\citeH{Top98}
for submodular function theory.
}. %%%%%%%%%%%%%%%%%%%%%%%%%%%%%%%%%%%%%%%
The theory extends the direction set forth in discrete optimization around 1980
by Edmonds (1970)\citeH{Edm70}, 
Frank (1982)\citeH{Fra82}, 
Fujishige (1984)\citeH{Fuj84}, 
and Lov{\'a}sz (1983)\citeH{Lov83};
see also Fujishige (2005)\citeH[Chapter VII]{Fuj05}.
The main feature of discrete convex analysis
is the distinction of two convexity concepts
for functions in integer or binary variables,
M-convexity and L-convexity%
\footnote{%%%%%%%%%%%%%%%%%%%%%%%%%
``M'' stands for ``Matroid'' and ``L''  for ``Lattice.'' 
}, %%%%%% footnote %%%%%%%%%%%%%%%%%%%%%%%%%
together with their conjugacy relationship 
with respect to the (continuous or discrete) Legendre--Fenchel transformation.
Roughly speaking, M-convexity is defined in terms of an exchange property
and L-convexity by submodularity.

The interaction between discrete convex analysis and mathematical economics
was initiated by 
Danilov, Koshevoy, and Murota (1998, 2001)\citeH{DKM98,DKM01} 
for the  Walrasian equilibrium of indivisible markets
(see also Chapter 11 of Murota 2003)\citeH[Chapter 11]{Mdcasiam}.
The next stage of the interaction was brought about 
by the crucial observation of 
Fujishige and Yang (2003)\citeH{FY03gr}
that M-concavity, or its variant called M$\sp{\natural}$-concavity%
\footnote{%%%%%%%%%%%%%%%%%%%%%%%%%
``M$\sp{\natural}$'' and ``L$\sp{\natural}$'' are read 
``em natural'' and ``ell natural,'' respectively.
}, %%%%%% footnote %%%%%%%%%%%%%%%%%%%%%%%%%
is equivalent to the gross substitutability (GS) of 
Kelso and Crawford (1982)\citeH{KC82}.
The survey papers by
Murota and Tamura (2003b)\citeH{MTcompeq03}
and Tamura (2004)\citeH{Tam04}
describe the interaction at the earlier stages.

Concepts, theorems, and algorithms in discrete convex analysis
have turned out to be useful in modeling and analysis of economic problems.
The M-L conjugacy corresponds to the conjugacy
between commodity bundles and price vectors in economics.
The conjugacy theorem in discrete convex analysis implies,
for example, that a valuation (utility) function has substitutes property 
(M$\sp{\natural}$-concavity)
if and only if
the indirect utility function is an L$\sp{\natural}$-convex function,
where L$\sp{\natural}$-convexity is a variant of L-convexity.

One of the most successful examples of the discrete convex analysis approach
is Fujishige and Tamura's model 
(Fujishige and Tamura 2006, 2007)\citeH{FT06market,FT07market}
of two-sided matching, which unifies the stable matching of 
Gale and Shapley (1962)\citeH{GS62}
and the assignment model of 
Shapley and Shubik (1972)\citeH{SS72}.
The existence of a market equilibrium is established 
by revealing a novel duality-related property of M$\sp{\natural}$-concave functions.
Tamura's monograph (Tamura 2009)\citeH{Tam09book},
though in Japanese, gives a comprehensive account of this model.

Another significant instance of the discrete convex analysis approach 
is the design and analysis of auction algorithms.
Based on the Lyapunov function approach of 
Ausubel (2006)\citeH{Aus06},
Murota, Shioura, and Yang (2013a, 2016)\citeH{MSY13,MSY16auction}
shed a new light on a variety of 
iterative auctions by making full use of
the M-L conjugacy theorem
and L$\sp{\natural}$-convex function minimization algorithms.
The lattice structure of equilibrium price vectors
is obtained as an immediate consequence of the L$\sp{\natural}$-convexity
of the Lyapunov function.

The contents of this paper are as follows:

\begin{quote}
\noindent
Section \ref{SCintro}: {Introduction}
\\
Section \ref{SCnotation}: {Notation}
\\
Section \ref{SCmnatconcav01}: 
{M$\sp{\natural}$-concave set function}
\\
Section \ref{SCmnatconcavZ}: 
{M$\sp{\natural}$-concave function on $\ZZ\sp{n}$}
\\
Section \ref{SCmnatconcavR}: 
{M$\sp{\natural}$-concave function on $\RR\sp{n}$}
\\
Section \ref{SCoperM}: 
{Operations for M$\sp{\natural}$-concave functions}
\\
Section \ref{SCconjuLconv}: 
{Conjugacy and L$\sp{\natural}$-convexity}
\\
Section \ref{SCauction}: 
{Iterative auctions}
\\
Section \ref{SCintersepar}: 
{Intersection and separation theorems}
\\
Section \ref{SCmarrigeassigngame}: 
{Stable marriage and assignment game}
\\
Section \ref{SCviap}: 
{Valuated assignment problem}
\\
Section \ref{SCsubmflow}:
{Submodular flow problem}
\\
Section \ref{SCfixedpoint}:
{Discrete fixed point theorem}
\\
Section \ref{SCothertopic}:
{Other topics}
\end{quote}

Following the introduction of notations in Section \ref{SCnotation},
Sections \ref{SCmnatconcav01} to \ref{SCmnatconcavR}
present the definition of M$\sp{\natural}$-concave functions
and the characterizations of (or equivalent conditions for)
M$\sp{\natural}$-concavity in terms of demand functions and choice functions.
Section \ref{SCoperM} shows the operations
valid for M$\sp{\natural}$-concave functions,
including convolution operation used for the aggregation of utility functions.
Section \ref{SCconjuLconv} introduces L$\sp{\natural}$-convexity
as the conjugate concept of M$\sp{\natural}$-concavity,
and Section \ref{SCauction} presents the application to
iterative auctions. 
Section \ref{SCintersepar} deals with duality theorems
of fundamental importance,
including the discrete separation theorems and 
the Fenchel-type minimax relations.
Section \ref{SCmarrigeassigngame} is a succinct description of
Fujishige and Tamura's model.
Combinations of M$\sp{\natural}$-concave functions
with graph/network structures are considered in 
Sections \ref{SCviap} and \ref{SCsubmflow}. 
Section \ref{SCfixedpoint} explains the basic idea 
underlying the discrete fixed point theorems.
Finally in Section \ref{SCothertopic},
some topics not covered in the main body of the paper
are touched upon briefly.

\bigskip

Beside economics and game theory, discrete convex analysis 
has found applications in many different areas,
including systems analysis 
(Murota 2000)\citeH{Mspr2000} in engineering,
and 
resource allocation
(Katoh et al.~2013)\citeH{KSI13}
and inventory theory 
 (Simchi-Levi et al.~2014)\citeH{SCB14}
in operations research.
The survey paper (Murota 2009)\citeH{Mbonn09}
describes other  applications including
those to finite metric spaces and eigenvalues of Hermitian matrices.

%%%%%%%%%%%%% end of file %%%%%%%%%

%% \newpage
%% 2016-10-28 Kazuo Murota 
%% file= DCAECOnotation.tex

\section{Notation}
\label{SCnotation}

Basic notations are listed here.

\begin{itemize}
\item
The set of all real numbers is denoted by $\RR$, and
the sets of nonnegative reals and positive reals
are denoted, respectively, by $\RR_{+}$ and $\RR_{++}$.
The set of all integers is denoted by $\ZZ$, and
the sets of nonnegative integers and positive integers
are denoted, respectively, by $\ZZ_{+}$ and $\ZZ_{++}$.

\item 
We consistently assume  $N = \{ 1,2, \ldots, n \}$
for a positive integer $n$.
Then  
$2\sp{N}$ denotes the set of all subsets of $N$,
i.e., the power set of $N$.

\item
The characteristic vector of a subset
$A \subseteq N= \{ 1,2, \ldots, n \}$
is denoted by
$\chi_{A} \in \{ 0, 1 \}\sp{n}$.  
That is,
\begin{equation} \label{charvecdefnotat}
 (\chi_{A})_{i} =
   \left\{  \begin{array}{ll}
     1      & (i \in A) , \\
     0      & (i \in N \setminus A). \\
                      \end{array}  \right.
\end{equation}
For $i \in \{ 1,2, \ldots, n \}$,
we write $\chi_{i}$ for $\chi_{ \{ i \} }$, 
which is the $i$th unit vector.
We define $\chi_{0}=\veczero$
where
$\veczero =(0,0,\ldots,0)$. 
We also define $\vecone=(1,1,\ldots,1)$.

\item
For a vector $x=(x_{1},x_{2}, \ldots, x_{n})$
 and a subset $A \subseteq \{ 1,2, \ldots, n \}$,
$x(A)$ denotes the component sum within $A$,
i.e.,
$x(A) = \sum_{i \in A} x_{i}$.

\item
For two vectors 
$x=(x_{1},x_{2}, \ldots, x_{n})$
and 
$y=(y_{1},y_{2}, \ldots, y_{n})$,
$x \leq y$ means the componentwise inequality.
That is, $x \leq y$ is true if and only if
$x_{i} \leq y_{i}$ is true for all $i=1,2,\ldots,n$.

\item
For two integer vectors $a$ and $b$ in $\ZZ\sp{n}$
with $a \leq b$,
$[a,b]_{\ZZ}$ denotes the integer interval between $a$ and $b$ (inclusive),
i.e.,
$[a,b]_{\ZZ} = \{ x \in \ZZ\sp{n} \mid a \leq x \leq b \}$.

\item 
For two vectors $x$ and $y$,
$x \vee y$ and $x \wedge y$ denote
the vectors of componentwise maximum and minimum.
That is,
$(x \vee y)_{i}   = \max(x_{i}, y_{i})$ and $(x \wedge y)_{i} = \min(x_{i}, y_{i})$
for $i=1,\ldots,n$.

\item
For a real number $z \in \RR$, 
$\left\lceil  z   \right\rceil$ 
denotes the smallest integer not smaller than $z$
(rounding-up to the nearest integer)
and $\left\lfloor  z  \right\rfloor$
the largest integer not larger than $z$
(rounding-down to the nearest integer).
This operation is extended to a vector
by componentwise application.

\item
For a vector $x$, 
$\suppp(x) = \{ i \mid x_{i} > 0 \}$ and 
$\suppm(x) = \{ i \mid x_{i} < 0 \}$
denote the positive and negative supports of $x$,
respectively.

\item 
The $\ell_{\infty}$-norm of a vector $x$
is denoted as $\| x \|_{\infty}$, i.e.,
$\| x \|_{\infty} = \max( |x_{1}|, |x_{2}|, \ldots, |x_{n}| )$.
Variants are:
$\| x \|_\infty\sp{+} = \max(0, x_{1}, x_{2}, \ldots, x_{n} )$
and 
$\| x \|_\infty\sp{-} = \max(0, -x_{1}, -x_{2}, \ldots, -x_{n} )$.

\item
For two vectors 
$p=(p_{1},p_{2}, \ldots, p_{n})$
and 
$x=(x_{1},x_{2}, \ldots, x_{n})$,
their inner product is denoted by 
$ \langle p, x \rangle$, 
i.e.,
$ \langle p, x \rangle = p\sp{\top} x = \sum_{i=1}\sp{n} p_{i} x_{i}$,
where $p\sp{\top}$ is the transpose of $p$ viewed as a column vector.

\item
For a function $f: \mathbb{R}\sp{n} \to \RR \cup \{ +\infty \}$
or $f: \mathbb{R}\sp{n} \to \RR \cup \{  -\infty \}$,
\begin{eqnarray*} 
 \dom f &=& \{ x  \mid -\infty < f(x) < +\infty  \} ,
\\
 \argmin f &=& \{ x  \mid f(x) \leq f(y) \  \ \mbox{for all $y$} \, \},
\\
 \argmax f &=& \{ x  \mid f(x) \geq f(y) \  \ \mbox{for all $y$} \, \}. 
\end{eqnarray*}
These notations are used also for
$f: \mathbb{Z}\sp{n} \to \RR \cup \{ +\infty \}$
or $f: \mathbb{Z}\sp{n} \to \RR \cup \{  -\infty \}$.
We sometimes use $\domR f $ and $\domZ f$ to emphasize that 
$\dom f \subseteq \mathbb{R}\sp{n}$ and $\dom f \subseteq \mathbb{Z}\sp{n}$. 

\item
For a set function $f: 2\sp{N} \to \RR \cup \{ +\infty \}$ or
$f: 2\sp{N} \to \RR \cup \{ -\infty \}$,
\begin{eqnarray*} 
 \dom f &=& \{ X \subseteq N  \mid -\infty < f(X) < +\infty  \} ,
\\
 \argmin f &=& \{ X \subseteq N  \mid f(X) \leq f(Y) \  \ \mbox{for all $Y \subseteq N$} \, \},
\\
 \argmax f &=& \{ X \subseteq N  \mid f(X) \geq f(Y) \  \ \mbox{for all $Y \subseteq N$} \, \}. 
\end{eqnarray*}

\item
For a function $f$ and a vector $p$,
$f[-p]$  means the function defined by 
\begin{align*} \label{f-pdefG}
f[-p](x) &= f(x) - p\sp{\top} x = f(x) - \langle p, x \rangle .
\end{align*}
If $f$ is a set function,
$f[-p]$ is the set function defined by 
$f[-p](X) = f(X) - p(X)$.

\item  
For a function $f$, four variants of the conjugate function of $f$ are denoted as
\begin{align*} 
 f\sp{\bullet}(p) 
 &= \sup\{  \langle p, x \rangle - f(x)  \},
\qquad
 f\sp{\circ}(p) 
 = \inf\{  \langle p, x \rangle - f(x)  \},
\\
 f\sp{\triangledown}(p) 
 &= \sup\{  f(x) - \langle p, x \rangle  \},
\qquad
 f\sp{\triangle}(p) 
 = \inf\{  f(x) + \langle p, x \rangle  \}.
\end{align*}

\item 
The convex closure of a function $f$ is denoted by $\overline{f}$.
The convex hull of a set $S$ is denoted by $\overline{S}$.

\item
$D(p ; f)$ denotes the demand correspondence
for a price vector $p$ and a valuation function $f$,
defined in (\ref{Dpdef}) and (\ref{DpdefZ}).

\item
$C( \cdot )$ denotes a choice function.
$C( \cdot \, ; f)$ denotes the choice function
determined by a valuation function $f$,
defined in (\ref{choiceByVal01}) and (\ref{choiceByValZ}).

\item
$\mathrm{tw}( \cdot )$ denotes the twisting of a set 
or a vector, defined in (\ref{twistsetdef01}) and 
(\ref{twistsetdefZ}), respectively.

\item
For an arc $a$ in a directed graph,
$\partial\sp{+} a$ denotes the initial (tail) vertex of $a$, and
$\partial\sp{-}a$ the terminal (head) vertex of $a$.
That is, $\partial\sp{+} a = u $ and 
$\partial\sp{-} a = v$ if $a =(u,v)$.

\item
For a flow $\xi$ in a network,
$\partial \xi$ denotes the boundary vector on the vertex set,
defined  in (\ref{flowbounddefZ}).
For a matching $M$,
$\partial M$ denotes the set of the vertices incident to some edge in $M$.

\item
For a potential $p$ defined on the vertex set of a network,
$\delta p$ denotes the coboundary of $p$, 
the vector on the arc set defined in (\ref{cobounddef5}).
\end{itemize}

%%%%%%%%%%%%% end of file %%%%%%%%%

%% \newpage
%% 2016-10-28 Kazuo Murota 
%% file= DCAECOmcavefn.tex

%%%%BBBBBBBBBBBBBBBBBBBBBBB

\section{M$\sp{\natural}$-concave Set Function}
\label{SCmnatconcav01}

First we introduce M$\sp{\natural}$-concavity
for set functions.
Let $N$ be a finite set, say,
$N = \{ 1,2,\ldots, n \}$,
$\mathcal{F}$ be a nonempty family of subsets of $N$,
and 
$f: \mathcal{F} \to \RR$
be a real-valued function on $\mathcal{F}$.
In economic applications, we may think of $f$ as
a single-unit valuation (binary valuation)
over combinations of indivisible commodities $N$,
where $\mathcal{F}$ represents the set of feasible combinations.

\subsection{Exchange property}
\label{SCexchange01}

Let $\mathcal{F}$ be a nonempty family of subsets of 
a finite set $N = \{ 1,2,\ldots, n \}$. 
We say that a function 
$f: \mathcal{F} \to \RR$
is {\em M$\sp{\natural}$-concave}, if,
for any $X, Y \in \mathcal{F}$ and $i \in X \setminus Y$,
we have (i)
$X - i \in \mathcal{F}$, $ Y + i \in \mathcal{F}$ and
\begin{equation}  \label{mconcav1}
f( X) + f( Y ) \leq f( X - i ) + f( Y + i ),
\end{equation}
or (ii) there exists some $j \in Y \setminus X$ such that
$X - i +j \in \mathcal{F}$, $ Y + i -j  \in \mathcal{F}$ and
\begin{equation}  \label{mconcav2}
f( X) + f( Y ) \leq 
 f( X - i + j) + f( Y + i -j).
\end{equation}
Here we use short-hand notations
$X - i = X \setminus  \{ i \}$,
$Y + i = Y \cup \{ i \}$,
$X - i + j =(X \setminus  \{ i \}) \cup \{ j \}$,
and
$Y + i - j =(Y \cup \{ i \}) \setminus \{ j \}$.
This property
is referred to as the {\em exchange property}.

A more compact way of defining M$\sp{\natural}$-concavity, 
free from explicit reference to the domain $\mathcal{F}$,
is to define 
a function
$f: 2\sp{N} \to \RR \cup \{ -\infty \}$ 
to be M$\sp{\natural}$-concave
 if it has the following property: 
\begin{description}
\item[(M$\sp{\natural}$-EXC)] 
For any $X, Y \subseteq N$ and $i \in X \setminus Y$, we have
\begin{align}
f( X) + f( Y )   &\leq 
   \max\left( f( X - i ) + f( Y + i ), \ 
 \max_{j \in Y \setminus X}  \{ f( X - i + j) + f( Y + i -j) \}
       \right) ,
\label{mnatconcavexc2}
\end{align}
\end{description}
where
$(-\infty) + a = a + (-\infty) = (-\infty) + (-\infty)  = -\infty$ for $a \in \RR$,
$-\infty \leq -\infty$, and
a maximum taken over an empty set
is defined to be $-\infty$.
The family of subsets $X$ for which $f(X)$ is finite
is called the {\em effective domain} of $f$, and denoted as 
$\dom f$, i.e.,
$\dom f = \{ X \mid f(X) > -\infty \}$.
When $f$
is regarded as a function on  $\mathcal{F} = \dom f$,
it is an M$\sp{\natural}$-concave function in the original sense.

As a (seemingly) stronger condition than (M$\sp{\natural}$-EXC)
we may also conceive the
{\em multiple exchange property}:
\begin{description}
\item[(M$\sp{\natural}$-EXC$_{\rm\bf m}$)]
For any $X, Y \subseteq N$ and $I \subseteq X \setminus Y$,
there exists $J \subseteq Y \setminus X$ such that
$ f( X) + f( Y )   \leq  f((X \setminus I) \cup J) +f((Y \setminus J) \cup I) $, 
i.e., 
\begin{align}
f( X) + f( Y )   \leq 
 \max_{J \subseteq Y \setminus X} 
 \{  f((X \setminus I) \cup J) +f((Y \setminus J) \cup I)  \}.
\label{mnatconcavexcmult}
\end{align}
\end{description}

Recently it has been shown 
%%(Murota 2016)
\RED{
(Murota 2018)
}%%
 \citeH{Mmultexc16}%
that {\rm (M$\sp{\natural}$-EXC$_{\rm m}$)} is equivalent to
{\rm (M$\sp{\natural}$-EXC)}. 

\begin{theorem} \label{THmultexchmnat}
A function
$f: 2\sp{N} \to \RR \cup \{ -\infty \}$ 
satisfies {\rm (M$\sp{\natural}$-EXC)}
if and only if it satisfies
{\rm (M$\sp{\natural}$-EXC$_{\rm m}$)}.
Hence, every M$\sp{\natural}$-concave function
has the multiple exchange property 
{\rm (M$\sp{\natural}$-EXC$_{\rm m}$)}.
\end{theorem}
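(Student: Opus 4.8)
The plan is to prove the two implications separately, with the reverse one being comparatively routine and the forward implication, (M$^{\natural}$-EXC) $\Rightarrow$ (M$^{\natural}$-EXC$_{\rm m}$), carrying all the substance. For the reverse implication I would specialize (M$^{\natural}$-EXC$_{\rm m}$) to a singleton $I=\{i\}$: it yields some $J\subseteq Y\setminus X$ with $f(X)+f(Y)\le f((X-i)\cup J)+f((Y\setminus J)\cup\{i\})$, and among all such $J$ I would take one of minimum cardinality. When $|J|\le 1$ this is already one of the terms inside the maximum of \eqref{mnatconcavexc2}, so the only point needing work is to argue, by a minimality/exchange argument, that a cardinality-minimizing $J$ cannot have two or more elements. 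For the forward implication I would argue by induction on $|I|$. The case $|I|=0$ is trivial (take $J=\emptyset$), and the case $|I|=1$ follows immediately from (M$^{\natural}$-EXC), since the maximum in \eqref{mnatconcavexc2} is attained by a transfer set of size at most one, which is in particular an admissible $J$ in \eqref{mnatconcavexcmult}.

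For the inductive step with $|I|\ge 2$, I would fix an element $i\in I$ and apply (M$^{\natural}$-EXC) to the pair $(X,Y)$ and this $i$. This produces either a no-swap move giving $f(X)+f(Y)\le f(X-i)+f(Y+i)$, or a single-swap move $f(X)+f(Y)\le f(X-i+j)+f(Y+i-j)$ for some $j\in Y\setminus X$. In the no-swap case I set $X_{1}=X-i$, $Y_{1}=Y+i$, $I_{1}=I\setminus\{i\}$, note $I_{1}\subseteq X_{1}\setminus Y_{1}$, and invoke the induction hypothesis to get $J\subseteq Y_{1}\setminus X_{1}$ with $f(X_{1})+f(Y_{1})\le f((X_{1}\setminus I_{1})\cup J)+f((Y_{1}\setminus J)\cup I_{1})$. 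A direct rewriting, using $(X_{1}\setminus I_{1})\cup J=(X\setminus I)\cup J$ and, when $i\notin J$, $(Y_{1}\setminus J)\cup I_{1}=(Y\setminus J)\cup I$, then yields exactly $f(X)+f(Y)\le f((X\setminus I)\cup J)+f((Y\setminus J)\cup I)$ with $J\subseteq Y\setminus X$, as required. The single-swap case is handled analogously, with the swapped-in element $j$ carried along into the eventual transfer set.

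The main obstacle is precisely the interference that arises when the exchange returned by the induction hypothesis reuses the element $i$ just moved into $Y_{1}$, or the freshly introduced $j$ in the swap case. Since $Y_{1}\setminus X_{1}=(Y\setminus X)\cup\{i\}$, the set $J$ is a priori only contained in $(Y\setminus X)\cup\{i\}$, and if $i\in J$ the naive composition re-adds $i$ to $X$ and collapses to an exchange for $I\setminus\{i\}$ rather than for all of $I$, which is not what \eqref{mnatconcavexcmult} demands. Overcoming this requires rerouting the exchange so that the final transferred set stays inside $Y\setminus X$ while still accounting for the removal of every element of $I$; the cleanest routes are to strengthen the induction hypothesis and induct on a finer measure such as $|I|+|Y\setminus X|$ (or to pick $i\in I$ extremally), splitting the step into subcases according to whether the returned exchange touches $i$ or $j$. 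Establishing this interference-free composition, rather than the individual one-element exchange steps, is the heart of the argument.
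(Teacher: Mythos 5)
Your proposal does not establish the theorem: both implications are left resting on unproven key steps, and for the forward implication the missing step is the entire content of the result. The induction on $|I|$ that you set up breaks down exactly where you say it does---since $Y_{1} \setminus X_{1} = (Y\setminus X) \cup \{i\}$, the set $J$ returned by the induction hypothesis may contain the element $i$ just moved across, and the composed inequality then certifies an exchange only for $I \setminus \{i\}$---and none of the patches you gesture at (inducting on $|I|+|Y\setminus X|$, choosing $i\in I$ extremally, strengthening the hypothesis) is carried out. You yourself concede that "establishing this interference-free composition \ldots is the heart of the argument," i.e., the heart is absent. This is not a routine technicality one can expect to fall to a cleverer bookkeeping of the same induction: whether (M$\sp{\natural}$-EXC), equivalently gross substitutability, implies the multiple exchange property (M$\sp{\natural}$-EXC$_{\rm m}$) (Gul and Stacchetti's (SNC)) was open from 1999 until Murota (2018), which is precisely the source this paper cites---the survey gives no proof of Theorem~\ref{THmultexchmnat} at all. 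The known proof proceeds quite differently: it reduces the M$\sp{\natural}$-concave case to valuated matroids and there establishes a multiple exchange with $|J|=|I|$ by a delicate argument built on the machinery of valuated matroid theory (exchangeability bipartite graphs, the upper-bound and unique-max lemmas), not by composing single exchanges. A sanity check that your scheme cannot be right as sketched: already in the $\{0,-\infty\}$-valued case it would yield a two-line proof of the symmetric multiple exchange property for matroid bases (cf.\ Remark~\ref{RMmconcave} and Schrijver, Section~39.9a), itself a substantive classical theorem for which exactly this naive composition of single exchanges is a well-known failed approach.

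The reverse implication in your proposal also has a gap, though a more tractable one. Specializing (M$\sp{\natural}$-EXC$_{\rm m}$) to $I=\{i\}$ gives some $J \subseteq Y\setminus X$ of unrestricted size, and you assert without argument that a cardinality-minimal such $J$ satisfies $|J|\leq 1$; that assertion is the whole point and is not immediate, since (M$\sp{\natural}$-EXC$_{\rm m}$) by itself keeps handing back possibly large $J$'s when you try to re-apply it. The non-triviality is visible even in the local characterization: applying (M$\sp{\natural}$-EXC$_{\rm m}$) with $I=\{i\}$ to the pair $(X+i+j,\,X+k+l)$ admits the outcomes $J=\emptyset$ and $J=\{k,l\}$, which fall outside the maximum allowed in (\ref{mnatconcavexc3loc}), so some elimination argument is genuinely required. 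The route implicit in the paper (Remark~\ref{RMmnat=SNC=GS}) is different and complete: (M$\sp{\natural}$-EXC$_{\rm m}$) $=$ (SNC) implies (GS) by Gul and Stacchetti (1999), and (GS) is equivalent to (M$\sp{\natural}$-EXC) by Fujishige and Yang (2003) (Theorem~\ref{THmconcavgross}). Either supply such an argument for your minimality claim or route the easy direction through (GS); and for the hard direction, be aware that a correct proof requires machinery well beyond the induction you outline.
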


\begin{remark} \rm  \label{RMmnat=SNC=GS}
The multiple exchange property (M$\sp{\natural}$-EXC$_{\rm m}$) here
is the same as the ``strong no complementarities property (SNC)''
introduced by
Gul and Stacchetti (1999),
\citeH{GS99}%
where it is shown that (SNC)
implies the gross substitutes property (GS).
On the other hand, (GS) is known 
(Fujishige and Yang 2003)
\citeH{FY03gr}%
to be equivalent to 
(M$\sp{\natural}$-EXC) 
(see Theorem~\ref{THmconcavgross}).
Therefore,  
Theorem~\ref{THmultexchmnat} above 
reveals that (SNC) is equivalent to (GS).
This settles the question since 1999:
Is (SNC)  strictly stronger than (GS) or not?
We now know that (SNC) is equivalent to (GS).
See 
\RED{
%%Murota (2016) 
Murota (2018) 
\citeH{Mmultexc16}%
}%
for details.
\finbox
\end{remark}

It follows from the definition of an M$\sp{\natural}$-concave function
that the (effective) domain $\mathcal{F}$
of an M$\sp{\natural}$-concave function
has the following exchange property:
\begin{description}
\item[(B$\sp{\natural}$-EXC)] 
For any $X, Y \in \mathcal{F}$ and $i \in X \setminus Y$, \ 
we have
(i) $X - i \in \mathcal{F}$, $ Y + i \in \mathcal{F}$ 
\ or \  
\\
(ii) there exists some $j \in Y \setminus X$ such that
$X - i +j \in \mathcal{F}$, $ Y + i -j  \in \mathcal{F}$.
%%\label{gmataxiom}
\end{description}
This means that $\mathcal{F}$ forms a matroid-like structure%
\footnote{%%%%%%%%%%%%
See, e.g.,
Murota (2000a)\citeH{Mspr2000},
Oxley (2011)\citeH{Oxl11},
and Schrijver (2003)\citeH{Sch03} 
for matroids.
}, %%%%%%%%%%%
called a {\em generalized matroid} ({\em g-matroid}), or 
an {\em M$\sp{\natural}$-convex family}%
\footnote{%%%%%%%%%%%%
A subset of $N$ can be identified with a 0-1 vector
 (characteristic vector in (\ref{charvecdefnotat})),
and accordingly, a family of subsets can be identified with a set of 0-1 vectors.
We call a family of subsets an {\em M$\sp{\natural}$-convex family}
if the corresponding set of 0-1 vectors is an M$\sp{\natural}$-convex set
as a subset of $\ZZ\sp{N}$.
}. %%%% footnote %%%%%%%%%%%%%%%
An M$\sp{\natural}$-convex family $\mathcal{F}$ containing the empty set
forms the family of independent sets of a matroid.
For example, for integers $a,b$ with $0 \leq a \leq b \leq n$,
$\mathcal{F}_{ab} = \{ X \mid a \leq |X| \leq b \}$
is an M$\sp{\natural}$-convex family,
and $\mathcal{F}_{0b}$ (with  $a=0$)
 forms the family of independent sets of a matroid.

\begin{remark} \rm  \label{RMmultBexc}
It follows from Theorem~\ref{THmultexchmnat} that
a nonempty family $\mathcal{F} \subseteq 2\sp{N}$
satisfies (B$\sp{\natural}$-EXC) if and only if it satisfies
the multiple exchange axiom:
\begin{description}
\item[(B$\sp{\natural}$-EXC$_{\rm\bf m}$)]
For any $X, Y \in \mathcal{F}$ and $I \subseteq X \setminus Y$,
there exists $J \subseteq Y \setminus X$ such that
$(X \setminus I) \cup J \in \mathcal{F}$
and $(Y \setminus J) \cup I \in \mathcal{F}$.
\finbox
\end{description}
\end{remark}

M$\sp{\natural}$-concavity can be characterized by a local exchange property
under the assumption 
that function $f$ is (effectively) defined on an M$\sp{\natural}$-convex family of sets
(Murota 1996c, 2003; Murota and Shioura 1999).
\citeH{Mstein, Mdcasiam, MS99gp}%
The conditions (\ref{mnatconcavexc1loc})--(\ref{mnatconcavexc3loc}) below
are ``local'' in the sense that they require 
the exchangeability of the form of (\ref{mnatconcavexc2}) 
only for 
$(X,Y)$ with $\max(| X \setminus Y | , | Y \setminus X |) \leq 2$.

\begin{theorem}\label{THmconcavlocexc01}
A set function  $f: 2\sp{N} \to \RR \cup \{ -\infty \}$ 
is M$\sp{\natural}$-concave
if and only if 
$\dom f$ is an M$\sp{\natural}$-convex family and 
the following three conditions hold:
\begin{align} 
&
f( X + i + j ) + f( X ) \leq f(X + i) + f(X + j) 
\qquad 
(\forall X \subseteq N, \ \forall i,j \in N \setminus X, \  i \not= j),
\label{mnatconcavexc1loc}
\\
&
 f( X + i + j ) + f( X + k)  \leq 
\max\left[ f(X + i + k) + f(X + j),  f(X + j + k) + f(X + i) \right]  
\notag \\
& \hspace{0.5\textwidth}
 (\forall X \subseteq N, \ \forall i,j,k \, \mbox{\rm (distinct}) \in N \setminus X) ,
\label{mnatconcavexc2loc}
\\
&
 f( X + i + j ) + f( X + k + l)  
\leq 
\max\left[  f(X + i + k) + f(X + j +l ),  f(X + j + k) + f(X + i + l) \right]  
\notag \\
& \hspace{0.5\textwidth}
 (\forall X \subseteq N, \ \forall i,j,k,l \, \mbox{\rm (distinct}) \in N \setminus X).
\label{mnatconcavexc3loc}
\end{align}
\end{theorem}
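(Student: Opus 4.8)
The plan is to prove the two implications separately, treating the necessity of the three conditions as essentially routine and concentrating the effort on sufficiency, which is the substantive claim.

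For necessity, assume $f$ is M$\sp{\natural}$-concave, i.e. satisfies (M$\sp{\natural}$-EXC). That $\dom f$ is an M$\sp{\natural}$-convex family is exactly (B$\sp{\natural}$-EXC), already recorded as a consequence of the definition. The three inequalities I would obtain by specializing (M$\sp{\natural}$-EXC). Applying it to the pair $(X+i+j,\,X)$ with distinguished element $i$ gives (\ref{mnatconcavexc1loc}), since $X\setminus(X+i+j)=\emptyset$ kills the swap terms and only the move term survives. Applying it to $(X+i+j,\,X+k)$ with distinguished element $i$ gives (\ref{mnatconcavexc2loc}) verbatim, as the surviving move term $f(X+j)+f(X+i+k)$ and the single swap term $f(X+j+k)+f(X+i)$ are precisely the two entries on the right-hand side. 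The only delicate point is (\ref{mnatconcavexc3loc}): specializing to $(X+i+j,\,X+k+l)$ with element $i$ produces, besides the two balanced terms, the unwanted unbalanced move term $f(X+j)+f(X+i+k+l)$. To discard it I would run the specialization twice, once with $i$ and once with $j$ as the distinguished element; the two resulting maxima share the balanced terms but carry different unbalanced move terms, and assuming that the conclusion of (\ref{mnatconcavexc3loc}) fails would force both move terms to dominate, which a short elimination using (\ref{mnatconcavexc1loc}) and (\ref{mnatconcavexc2loc}) contradicts. Equivalently, one may note that all six sets in (\ref{mnatconcavexc3loc}) have the same cardinality and invoke the balanced, equal-cardinality exchange.

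For sufficiency, assume $\dom f$ is M$\sp{\natural}$-convex and that (\ref{mnatconcavexc1loc})--(\ref{mnatconcavexc3loc}) hold; I must recover (M$\sp{\natural}$-EXC) for arbitrary $X,Y$ and $i\in X\setminus Y$ (the cases with $X\notin\dom f$ or $Y\notin\dom f$ being vacuous under the $-\infty$ conventions). The first step is to reinterpret the hypotheses: together with the trivial small cases they say exactly that (M$\sp{\natural}$-EXC) already holds whenever $\max(|X\setminus Y|,|Y\setminus X|)\le 2$, the inequalities (\ref{mnatconcavexc1loc}), (\ref{mnatconcavexc2loc}), (\ref{mnatconcavexc3loc}) being the representative shapes $(2,0)$, $(2,1)$, $(2,2)$. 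I would then induct on $d:=|X\setminus Y|+|Y\setminus X|$, the base $\max\le 2$ being the hypothesis. When $Y\subseteq X$ (no swap partner available), the required move inequality $f(X)+f(Y)\le f(X-i)+f(Y+i)$ is a ``concavity along a chain'' statement that follows by iterating the submodularity inequality (\ref{mnatconcavexc1loc}) up the interval $[Y,X]$, with all intermediate sets lying in $\dom f$ because the domain is M$\sp{\natural}$-convex.

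In the remaining case $\max(|X\setminus Y|,|Y\setminus X|)\ge 3$ with $Y\setminus X\ne\emptyset$, the plan is the standard distance reduction: fix $i\in X\setminus Y$, choose an auxiliary element, and apply the induction hypothesis to two pairs each at distance strictly less than $d$, then splice the two exchanges using a local inequality ((\ref{mnatconcavexc2loc}) or (\ref{mnatconcavexc3loc})) together with the submodularity (\ref{mnatconcavexc1loc}) to cancel the auxiliary term and land on a genuine swap for the original pair; feasibility of every intermediate set is again supplied by M$\sp{\natural}$-convexity of $\dom f$ via (B$\sp{\natural}$-EXC) and its multiple form (B$\sp{\natural}$-EXC$_{\rm m}$) from Remark~\ref{RMmultBexc}. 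The main obstacle lies precisely here: each inductive application of (M$\sp{\natural}$-EXC) delivers a \emph{disjunction} (move or swap), so the splicing requires a careful case analysis to track which alternative is realized in each application and to guarantee that the combined inequality yields an admissible exchange for $(X,Y,i)$ without circularity, i.e. with a strict decrease of $d$ at every inductive call. Organizing this bookkeeping — the right choice of auxiliary element and the consistent resolution of the maxima — is the technical heart of the argument; everything else is routine cancellation.
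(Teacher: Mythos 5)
Your necessity direction is essentially correct. The specializations of (M$\sp{\natural}$-EXC) to the pairs $(X+i+j,\,X)$ and $(X+i+j,\,X+k)$ do give (\ref{mnatconcavexc1loc}) and (\ref{mnatconcavexc2loc}) verbatim, and for (\ref{mnatconcavexc3loc}) the viable route is the second one you mention: all six sets are equicardinal, so the cardinality-constrained exchange (\ref{mnatexccard2}) of Remark~\ref{RMmnatexcsize01} discards the unbalanced move term in one stroke. Your first suggestion, by contrast, is not a ``short elimination'': running (M$\sp{\natural}$-EXC) with $i$ and with $j$ as distinguished elements and chasing the move branches produces terms such as $f(X+i+k+l)$ and $f(X+i+j+l)$, and the case analysis closes only with equality chains that amount to re-proving (\ref{mnatexccard2}) itself. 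One small inaccuracy: the shape $(1,2)$ in your base-case inventory is not trivial; it follows from (\ref{mnatconcavexc2loc}) after relabeling, which is worth one line. Your treatment of the nested case $Y \subseteq X$ is correct, including the observation that for nested pairs the swap alternative in (B$\sp{\natural}$-EXC) is vacuous, so the whole interval between $Y$ and $X$ lies in $\dom f$ and the telescoping of (\ref{mnatconcavexc1loc}) is legitimate.

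The genuine gap is the one you name yourself: the inductive step of the sufficiency direction, which is the entire content of the theorem, is deferred rather than proved. ``Apply the induction hypothesis to two pairs at smaller distance and splice'' does not close as stated: each appeal returns a \emph{disjunction}, the move alternatives change cardinalities so the spliced terms need not lie at distance less than $d$ from anything useful, and nothing in your sketch prevents every maximum from resolving to exactly the branch that recreates a pair at distance $d$ — no well-founded descent is exhibited. Note that the paper itself contains no proof of Theorem~\ref{THmconcavlocexc01}; it cites Murota (1996c, 2003) and Murota--Shioura (1999), and those proofs do not run this naive double induction: they rely on an additional device — reduction to the valuated-matroid local exchange theorem of Dress and Wenzel (via an embedding in the spirit of (\ref{mfnmnatfnrelationcave}) and Remark~\ref{RMmconcave}), or an argument built around a carefully chosen maximizer of $f(X-i+j)+f(Y+i-j)$ that forces the correct branch of each disjunction. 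Supplying such an idea, not ``routine cancellation,'' is what a complete proof requires; as it stands, your proposal proves necessity and only outlines sufficiency.
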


When the effective domain $\dom f$ contains the emptyset,
the local exchange condition for M$\sp{\natural}$-concavity
takes a simpler form without involving (\ref{mnatconcavexc3loc})
(Reijnierse et al.~2002,
\citeH[Theorem~10]{RGP02}%
M{\"u}ller 2006,
\citeH[Theorem~13.5]{Mul06}%
Shioura and Tamura 2015). 
\citeH[Theorem~6.5]{ST15jorsj}%

\begin{theorem}\label{THmconcavlocexc01B}
Let $f: 2\sp{N} \to \RR \cup \{ -\infty \}$
be a set function such that 
$\dom f$ is an M$\sp{\natural}$-convex family
containing $\emptyset$ (the empty set). 
Then $f$  
is M$\sp{\natural}$-concave
if and only if 
{\rm (\ref{mnatconcavexc1loc})} 
and {\rm (\ref{mnatconcavexc2loc})}
hold.
%%\marginpar{both needed}
\end{theorem}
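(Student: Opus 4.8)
The plan is to reduce everything to Theorem~\ref{THmconcavlocexc01}, which already settles M$\sp{\natural}$-concavity once \eqref{mnatconcavexc1loc}, \eqref{mnatconcavexc2loc} and \eqref{mnatconcavexc3loc} are all known. The ``only if'' direction is then immediate: if $f$ is M$\sp{\natural}$-concave, Theorem~\ref{THmconcavlocexc01} yields all three local inequalities, in particular \eqref{mnatconcavexc1loc} and \eqref{mnatconcavexc2loc}. So the entire content lies in the ``if'' direction, and by Theorem~\ref{THmconcavlocexc01} it suffices to derive the remaining four-element inequality \eqref{mnatconcavexc3loc} from \eqref{mnatconcavexc1loc}, \eqref{mnatconcavexc2loc}, the M$\sp{\natural}$-convexity of $\dom f$, and the extra hypothesis $\emptyset \in \dom f$. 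The role of $\emptyset \in \dom f$ is the crux: as noted in the text, an M$\sp{\natural}$-convex family containing $\emptyset$ is the family of independent sets of a matroid, hence \emph{downward closed}. This is exactly the feature that fails for families such as $\{ X : |X| = 2 \}$, for which \eqref{mnatconcavexc1loc} and \eqref{mnatconcavexc2loc} are vacuous while \eqref{mnatconcavexc3loc} is a genuine constraint; so downward-closedness must do the work.

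Fix $X \subseteq N$ and distinct $i,j,k,l \in N \setminus X$. If $f(X+i+j) = -\infty$ or $f(X+k+l) = -\infty$, the left-hand side of \eqref{mnatconcavexc3loc} is $-\infty$ and there is nothing to prove, so assume both are finite. By downward-closedness, $X, X+i, X+j, X+k, X+l$ all lie in $\dom f$ and have finite values; these are the ``base'' and ``single-element'' quantities that make \eqref{mnatconcavexc1loc} and \eqref{mnatconcavexc2loc} at base $X$ genuinely usable rather than degenerate. (If one prefers, pass to $g(Z) = f(X \cup Z)$ on $N \setminus X$; since $\dom f$ is a matroid and $X \in \dom f$, the family $\dom g$ is the independent-set family of the contraction by $X$, so $g$ inherits \eqref{mnatconcavexc1loc}, \eqref{mnatconcavexc2loc} and $\emptyset \in \dom g$, reducing to $X = \emptyset$.) Before any numeric estimate I would also apply the domain exchange property {\rm (B$\sp{\natural}$-EXC)} to $X+i+j$ and $X+k+l$ with the distinguished element $k$: this yields either the first branch (placing a level-three set $X+i+j+k$ and a level-one set $X+l$ in $\dom f$) or one of the pairs $\{X+i+k,\,X+j+l\}$, $\{X+j+k,\,X+i+l\}$ entirely inside $\dom f$. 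This guarantees that at least one option on the right-hand side of \eqref{mnatconcavexc3loc} is finite, ruling out the degenerate comparison of a finite left-hand side with $-\infty$.

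With finiteness secured, the core is to combine concrete instances of the two hypotheses. I would write submodularity \eqref{mnatconcavexc1loc} at base $X$ (giving $f(X+a+b) + f(X) \le f(X+a) + f(X+b)$ for the relevant pairs) together with several instances of the three-element exchange \eqref{mnatconcavexc2loc} at base $X$ --- e.g.\ with triples $(i,j,k)$ and $(k,l,i)$ and their $j/l$-counterparts --- and, where needed, at the one-larger bases $X+i$, $X+k$. Each such instance bounds a level-two value such as $f(X+i+j)$ by a maximum of two expressions involving a level-two and a level-one term; adding a suitable pair and using the finite single-element values $f(X+i),\ldots,f(X+l)$ to cancel the level-one contributions should collapse the estimate onto one of the two options of \eqref{mnatconcavexc3loc}. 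The main obstacle is precisely this last step: because \eqref{mnatconcavexc2loc} is phrased with a $\max$, adding two instances splits into cases according to which branch is attained, and in each case one must verify that the surviving bound lands on a right-hand option that \emph{is} finite (using the {\rm (B$\sp{\natural}$-EXC)} information above) and points in the correct direction. Organizing this finite case analysis together with the $-\infty$ bookkeeping cleanly --- rather than any single inequality --- is where the real difficulty lies.
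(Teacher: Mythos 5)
Your plan is sound, and the step you left open does in fact close; note first that the paper itself contains no proof of this theorem (it is stated with citations to Reijnierse et al.\ 2002, M\"uller 2006, and Shioura--Tamura 2015, and even the $\ZZ\sp{N}$ analogue, Theorem~\ref{THmconcavlocexcZB}, is deferred to Shioura--Tamura), so your reduction to Theorem~\ref{THmconcavlocexc01} is a legitimate self-contained route. The ``only if'' direction is immediate as you say, and your use of $\emptyset \in \dom f$ is exactly right: applying {\rm (B$\sp{\natural}$-EXC)} with $Y=\emptyset$ (branch (ii) is vacuous since $Y \setminus X = \emptyset$) shows $\dom f$ is hereditary, so once $f(X+i+j)$ and $f(X+k+l)$ are finite, the values $a=f(X+i)$, $b=f(X+j)$, $c=f(X+k)$, $d=f(X+l)$ and $f(X)$ are all finite.

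The collapse you hoped for works with just the four instances of {\rm (\ref{mnatconcavexc2loc})} at base $X$; the instances at the enlarged bases $X+i$, $X+k$ that you contemplated are unnecessary, and so is {\rm (\ref{mnatconcavexc1loc})} for this step (it is needed only when you finally invoke Theorem~\ref{THmconcavlocexc01}). Writing $P_{ij}=f(X+i+j)$, etc., the instances with triples $(i,j,k)$, $(i,j,l)$, $(k,l,i)$, $(k,l,j)$ read
\begin{align*}
\mathrm{(T1)}\ \ P_{ij} + c &\leq \max(P_{ik}+b,\ P_{jk}+a),
&\mathrm{(T2)}\ \ P_{ij} + d &\leq \max(P_{il}+b,\ P_{jl}+a),\\
\mathrm{(T3)}\ \ P_{kl} + a &\leq \max(P_{ik}+d,\ P_{il}+c),
&\mathrm{(T4)}\ \ P_{kl} + b &\leq \max(P_{jk}+d,\ P_{jl}+c).
\end{align*}
Suppose, for contradiction, that $P_{ij}+P_{kl}$ strictly exceeds both $P_{ik}+P_{jl}$ and $P_{jk}+P_{il}$. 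Since every left-hand side above is finite, each line realizes at least one branch, and a realized branch has finite right-hand side --- this disposes of all $-\infty$ bookkeeping automatically, with no preprocessing. Now each of the branch pairs (T1-left, T4-right), (T1-right, T3-right), (T2-left, T4-left), (T2-right, T3-left) sums, after cancelling two finite singleton values, to one of the two forbidden inequalities, so none of these pairs can hold simultaneously; propagating this forces exactly the assignments (T1-left, T2-right, T3-right, T4-left) or (T1-right, T2-left, T3-left, T4-right), and in either case summing the four chosen inequalities and cancelling $a+b+c+d$ yields $2(P_{ij}+P_{kl}) \leq (P_{ik}+P_{jl})+(P_{jk}+P_{il})$, contradicting the two strict inequalities. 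Hence {\rm (\ref{mnatconcavexc3loc})} holds and Theorem~\ref{THmconcavlocexc01} applies.

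One auxiliary claim in your write-up is false as stated, though harmless: applying {\rm (B$\sp{\natural}$-EXC)} to $X+i+j$ and $X+k+l$ with distinguished element $k$ does \emph{not} guarantee that a complete right-hand option of {\rm (\ref{mnatconcavexc3loc})} lies in $\dom f$. In its branch (i) you obtain $X+i+j+k$ and $X+l$ in $\dom f$, and even after passing to subsets by heredity this certifies finiteness of $P_{ik}$ and $P_{jk}$ but says nothing about $P_{il}$ or $P_{jl}$, so neither option $P_{ik}+P_{jl}$ nor $P_{jk}+P_{il}$ is secured. As the argument above shows, no such a priori finiteness is needed, so this flaw does not affect the viability of your proof.
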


It is known 
(Theorem 6.19 of Murota 2003) 
\citeH[Theorem 6.19]{Mdcasiam}%
that an M$\sp{\natural}$-concave function is {\em submodular}, i.e.,
\begin{equation} \label{setfnsubm}
 f(X) + f(Y) \geq f(X \cup Y) + f(X \cap Y)
 \qquad (X, Y \subseteq N). 
\end{equation}
More precisely, the condition (\ref{mnatconcavexc1loc}) above
is equivalent to the submodularity (\ref{setfnsubm})
as long as $\dom f$ is M$\sp{\natural}$-convex
(Proposition 6.1 of Shioura and Tamura 2015).
\citeH[Proposition 6.1]{ST15jorsj}%

Because of the additional condition (\ref{mnatconcavexc2loc})
for M$\sp{\natural}$-concavity,
not every submodular set function is
M$\sp{\natural}$-concave.  Thus, M$\sp{\natural}$-concave set functions
form a proper subclass of submodular set functions.

\begin{remark} \rm  \label{RMmnatexcsize01}
It follows from (M$\sp{\natural}$-EXC) that
M$\sp{\natural}$-concave set functions enjoy the following 
exchange properties under cardinality constraints
(Lemmas 4.3 and 4.6 of Murota and Shioura 1999):
\citeH[Lemmas 4.3 and 4.6]{MS99gp}%
\\ $\bullet$ 
For any $X, Y \subseteq N$ with $|X| < |Y|$, 
\begin{align}
f( X) + f( Y )   &\leq 
 \max_{j \in Y \setminus X}  \{ f( X  + j) + f( Y  -j) \}.
\label{mnatexccard1}
\end{align}
\\ $\bullet$ 
For any $X, Y \subseteq N$ with $|X| = |Y|$ 
and $i \in X \setminus Y$, 
\begin{align}
f( X) + f( Y )   &\leq 
 \max_{j \in Y \setminus X}  \{ f( X - i + j) + f( Y + i -j) \}.
\label{mnatexccard2}
\end{align}
The former property, in particular, implies 
the cardinal-monotonicity of the induced choice function;
see Theorem~\ref{THchomonoMnat01uniq} and its proof.
\finbox
\end{remark}

\begin{remark} \rm  \label{RMmconcave}
For a set family $\mathcal{F}$ consisting of equi-cardinal sets
(i.e., $|X| =|Y|$ for all  $X, Y \in \mathcal{F}$)
the exchange property (B$\sp{\natural}$-EXC) takes a simpler form:
For any $X, Y \in \mathcal{F}$ and $i \in X \setminus Y$,
there exists some $j \in Y \setminus X$ such that
$X - i +j \in \mathcal{F}$, $ Y + i -j  \in \mathcal{F}$.
This means that $\mathcal{F}$ forms the family of bases of a matroid.
An M$\sp{\natural}$-concave function defined on matroid bases
is called 
a {\em valuated matroid}  
(Dress and Wenzel 1990,1992;
\citeH{DW90}\citeH{DW92}%
Chapter 5 of Murota 2000a),
\citeH[Chapter 5]{Mspr2000}% 
 or an {\em M-concave set function}
(Murota 1996c, 2003).
\citeH{Mstein}\citeH{Mdcasiam}%
The exchange property for M-concavity reads:
A set function $f$ is M-concave
if and only if
(\ref{mnatexccard2}) holds for any $X, Y \subseteq N$ and $i \in X \setminus Y$.
A corollary of Theorem~\ref{THmultexchmnat}:
Every M-concave function (valuated matroid) $f$ has the multiple exchange property 
(M$\sp{\natural}$-EXC$_{\rm m}$) with $|J|=|I|$.
A further corollary of this fact is a classical result in matroid theory:
The base family of a matroid has the multiple exchange property 
(B$\sp{\natural}$-EXC$_{\rm m}$) with $|J|=|I|$;
see, e.g., 
Section 39.9a of Schrijver (2003)\citeH[Section 39.9a]{Sch03}.
\finbox
\end{remark}

\subsection{Maximization and single improvement property}
\label{SCmaximization01}

For an M$\sp{\natural}$-concave function,
the maximality of a function value is characterized by a local condition
(Theorem 6.26 of Murota 2003).
\citeH[Theorem 6.26]{Mdcasiam}%

%%%%%%%%%%
\begin{theorem}
%%[\protect{\citeH[Theorem 6.26]{Mdcasiam}}]
\label{THmnatsetfnlocmax}
Let $f: 2\sp{N} \to \RR \cup \{ -\infty \}$ 
be an M$\sp{\natural}$-concave function and  $X \in \dom f$. 
Then $X$ is a maximizer of $f$
if and only if
\begin{align} \label{mnatsetfnlocmax}
 f(X)  &\geq  f(X - i + j) 
  \quad (\forall \,  i \in X, \ \forall \, j \in N \setminus X) ,
\\ 
 f(X)  &\geq  f(X - i) 
  \qquad \ \ (\forall \,  i \in X) ,
\\ 
 f(X)  &\geq  f(X + j) 
  \qquad \ \ (\forall \,  j \in N \setminus X) .
\end{align}
\end{theorem}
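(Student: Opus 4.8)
The forward implication is immediate: if $X$ maximizes $f$, then $f(X)\ge f(Z)$ for every $Z\subseteq N$, in particular for $Z\in\{X-i+j,\ X-i,\ X+j\}$, which gives the three displayed inequalities. So the substance is the converse, and the plan is to show that the three local inequalities force global optimality by a minimal-counterexample argument driven by the exchange property (M$\sp{\natural}$-EXC), which serves as a descent mechanism toward $X$.

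Suppose the three conditions hold but $X$ is not a maximizer. Then $\mathcal{Y}=\{\,Y\subseteq N\mid f(Y)>f(X)\,\}$ is nonempty; note every $Y\in\mathcal{Y}$ lies in $\dom f$, since $f(Y)>f(X)>-\infty$. Choose $Y\in\mathcal{Y}$ minimizing the distance $d(X,Y)=|X\setminus Y|+|Y\setminus X|$; as $X\notin\mathcal{Y}$ we have $d(X,Y)\ge 1$. The goal is to produce from $Y$ another element of $\mathcal{Y}$ strictly closer to $X$, contradicting minimality.

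Consider first the main case $X\setminus Y\neq\emptyset$. Pick $i\in X\setminus Y$ and apply (M$\sp{\natural}$-EXC) to the pair $(X,Y)$. Since $f(X)$ and $f(Y)$ are finite, the maximum on the right is attained by a finite term, namely either $f(X-i)+f(Y+i)$ or $f(X-i+j)+f(Y+i-j)$ for some $j\in Y\setminus X$. In the first case the second local inequality $f(X-i)\le f(X)$ gives $f(X)+f(Y)\le f(X)+f(Y+i)$, hence $f(Y+i)\ge f(Y)>f(X)$, so $Y+i\in\mathcal{Y}$; and since $i\in X$ one checks $d(X,Y+i)=d(X,Y)-1$. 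In the second case the first local inequality $f(X-i+j)\le f(X)$ gives $f(Y+i-j)\ge f(Y)>f(X)$, so $Y+i-j\in\mathcal{Y}$ with $d(X,Y+i-j)=d(X,Y)-2$. Either way the minimality of $d(X,Y)$ is contradicted.

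It remains to treat $X\setminus Y=\emptyset$, i.e.\ $X\subsetneq Y$; this is the step where (M$\sp{\natural}$-EXC) cannot be invoked with an index in $X\setminus Y$, and I expect it to be the only genuinely delicate point. The remedy is to apply (M$\sp{\natural}$-EXC) to the reversed pair $(Y,X)$ with $i\in Y\setminus X$: because $X\setminus Y=\emptyset$ the inner maximum over $j\in X\setminus Y$ is taken over the empty set and equals $-\infty$, so the inequality collapses to $f(Y)+f(X)\le f(Y-i)+f(X+i)$. Now the third local inequality $f(X+i)\le f(X)$ (valid as $i\in Y\setminus X\subseteq N\setminus X$) yields $f(Y-i)\ge f(Y)>f(X)$, so $Y-i\in\mathcal{Y}$; since $i\notin X$ we get $d(X,Y-i)=d(X,Y)-1$, again contradicting minimality. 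This exhausts the cases and completes the argument. The only routine part is the three bookkeeping computations of $d(X,\cdot)$, which I would carry out directly from the definition of the symmetric difference.
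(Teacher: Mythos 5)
Your proof is correct: the minimal-distance counterexample argument driven by (M$\sp{\natural}$-EXC), split according to whether $X \setminus Y$ is empty, is exactly the standard proof of this local optimality criterion — the paper itself gives no proof here, deferring to Theorem 6.26 of Murota (2003), whose argument is the same descent-by-exchange scheme. Your finiteness bookkeeping (the maximum on the right of (M$\sp{\natural}$-EXC) must be attained by a finite term, so all subtractions are legitimate) and the three computations $d(X,Y+i)=d(X,Y)-1$, $d(X,Y+i-j)=d(X,Y)-2$, $d(X,Y-i)=d(X,Y)-1$ all check out.
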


As a discrete analogue of the subgradient inequality for convex functions,
we have the inequality (\ref{matchbndsbgr}) in the following theorem%
\footnote{%%%%%%%%%%%%%%
This is a reformulation of the ``upper-bound lemma'' 
(Lemma 5.2.29 of Murota 2000a)\citeH[Lemma 5.2.29]{Mspr2000}
for valuated matroids
to M$\sp{\natural}$-concave functions.
See also 
Proposition 6.25 of Murota (2003).  
\citeH[Proposition 6.25]{Mdcasiam}%
}. %%%%% footnote %%%%%%%%%%%%%%%%%%%%%%%

%%%%%%%%%%
\begin{theorem}  \label{THmnatfnupperbnd}
Let $f: 2\sp{N} \to \RR \cup \{ -\infty \}$ 
be an M$\sp{\natural}$-concave function and  $X, Y \in \dom f$. 
Then 
\begin{equation} \label{matchbndsbgr}
 f(Y) - f(X) \leq \hat f(X,Y) ,
\end{equation}
where
$\hat f(X,Y)$ is defined as follows:
\begin{itemize}
\item
When $|X|=|Y|$, 
\[  
\hat f(X,Y) = \max_{\sigma} \bigg(
 \sum_{i \in X \setminus Y} [f(X - i + \sigma(i)) - f(X)]
\bigg),
\]
where the maximum is
taken over all one-to-one correspondences $\sigma: X \setminus Y \to Y \setminus X$.

\item
When $|X| < |Y|$, 
\[  
\hat f(X,Y) = \max_{\sigma} \bigg(
\sum_{i \in X \setminus Y} [f(X - i + \sigma(i)) - f(X)] + 
\sum_{j \in Y \setminus (X\cup \sigma(X))} [f(X + j) - f(X)] 
\bigg),
\]
where the maximum is
taken over all injections $\sigma: X \setminus Y \to Y \setminus X$.

\item
When $|X| > |Y|$, 
\[ 
\hat f(X,Y) = \max_{\tau} \bigg(
\sum_{j \in Y \setminus X} [f(X - \tau(j) + j) - f(X)] + 
\sum_{i \in X \setminus (Y\cup \tau(Y))} [f(X - i ) - f(X)]
\bigg),
\]
where the maximum is
taken over all injections $\tau: Y \setminus X \to X \setminus Y$.
\end{itemize}
\end{theorem}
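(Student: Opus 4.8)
The plan is to establish the subgradient-type inequality \eqref{matchbndsbgr} by induction, shrinking the symmetric difference between $X$ and $Y$ one element at a time using the cardinality-constrained exchange inequalities \eqref{mnatexccard1} and \eqref{mnatexccard2} recorded in Remark~\ref{RMmnatexcsize01}. The three cases in the definition of $\hat f(X,Y)$ correspond to the three ways this reduction can terminate, so I would first settle the equi-cardinal case $|X|=|Y|$ (which is precisely the valuated-matroid upper-bound lemma referred to in the footnote) and then bootstrap the two unequal cases onto it.

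For $|X|=|Y|$ I would induct on $k=|X\setminus Y|=|Y\setminus X|$. When $k=0$ we have $X=Y$ and both sides vanish. For $k\ge 1$, pick any $i_0\in X\setminus Y$; by \eqref{mnatexccard2} there is $j_0\in Y\setminus X$ with $f(X)+f(Y)\le f(X-i_0+j_0)+f(Y+i_0-j_0)$, and since $f(X),f(Y)$ are finite the two sets $X_1:=X-i_0+j_0$ and $Y_1:=Y+i_0-j_0$ lie in $\dom f$. Rearranging gives $f(Y)-f(X)\le[f(X_1)-f(X)]+[f(Y_1)-f(X)]$. Now $|X|=|Y_1|$ while $|X\setminus Y_1|=k-1$, so the induction hypothesis applied to the pair $(X,Y_1)$ yields a bijection $\sigma_1\colon (X\setminus Y)\setminus\{i_0\}\to(Y\setminus X)\setminus\{j_0\}$ bounding $f(Y_1)-f(X)$; extending it by $\sigma(i_0)=j_0$ and adjoining the weight $f(X_1)-f(X)=f(X-i_0+\sigma(i_0))-f(X)$ produces a bijection $\sigma$ on $X\setminus Y$ witnessing the required bound.

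For $|X|<|Y|$ I would then induct on the gap $g=|Y|-|X|$, with $g=0$ being the equi-cardinal case just proved. For $g\ge 1$, \eqref{mnatexccard1} furnishes $j_0\in Y\setminus X$ with $f(X)+f(Y)\le f(X+j_0)+f(Y-j_0)$, hence $f(Y)-f(X)\le[f(X+j_0)-f(X)]+[f(Y-j_0)-f(X)]$. The pair $(X,Y-j_0)$ has gap $g-1$ and $X\setminus(Y-j_0)=X\setminus Y$, so the hypothesis gives an injection $\sigma\colon X\setminus Y\to(Y\setminus X)\setminus\{j_0\}$; keeping $\sigma$ unchanged and absorbing $j_0$ into the unmatched sum $\sum_{j\in Y\setminus(X\cup\sigma(X))}[f(X+j)-f(X)]$ delivers the claimed formula. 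The case $|X|>|Y|$ is handled symmetrically via the swapped form of \eqref{mnatexccard1}: choose $i_0\in X\setminus Y$ with $f(X)+f(Y)\le f(X-i_0)+f(Y+i_0)$, recurse on $(X,Y+i_0)$, and let the deleted element $i_0$ join the unmatched deletion sum over $X\setminus(Y\cup\tau(Y))$; at the base $g=0$ one takes $\tau=\sigma^{-1}$ to identify this with the equi-cardinal bound.

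The main obstacle is bookkeeping rather than inequality-chasing: at each step I must verify that the chosen exchange element lands in the correct residual set, that extending $\sigma$ (resp.\ $\tau$) keeps it injective with the intended domain and codomain, and that the leftover element is exactly the one the definition of $\hat f$ assigns to the ``addition'' term $f(X+j)$ (resp.\ the ``deletion'' term $f(X-i)$), together with confirming that all intermediate sets stay in $\dom f$. These are where the three cases genuinely differ. A cleaner conceptual alternative would be to lift $f$ to an M-concave (valuated-matroid) function on $2^{N\cup\{0\}}$ and deduce all three cases from the single equi-cardinal upper-bound lemma, but the direct induction above stays entirely within the tools already furnished by Remark~\ref{RMmnatexcsize01} and Theorem~\ref{THmultexchmnat}.
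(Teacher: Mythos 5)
Your proof is correct, and it takes a genuinely different route from the paper's. The paper does not prove Theorem~\ref{THmnatfnupperbnd} directly: its footnote obtains the statement as a reformulation of the ``upper-bound lemma'' for valuated matroids (Lemma 5.2.29 of Murota 2000a) --- essentially the lifting route you mention only as an alternative at the end, in which the M$\sp{\natural}$-concave set function is embedded into an equi-cardinal (M-concave) one on an enlarged ground set so that the single bijection case does all the work. Your direct induction stays entirely inside the M$\sp{\natural}$ framework and uses only the cardinality-constrained exchange inequalities \eqref{mnatexccard1} and \eqref{mnatexccard2} of Remark~\ref{RMmnatexcsize01}. What this buys: no dummy-element bookkeeping (for set functions the embedding needs extra elements to equalize cardinalities, slightly messier than the vector construction \eqref{mfnmnatfnrelationcave}), and the three case formulas for $\hat f(X,Y)$ emerge transparently from which exchange inequality drives the recursion. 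What the paper's route buys: a single equi-cardinal induction plus a formal reduction, at the cost of the embedding.

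The bookkeeping you flagged does check out. In the equi-cardinal step, $f(X)+f(Y)\leq f(X-i_0+j_0)+f(Y+i_0-j_0)$ with finite left side forces $X-i_0+j_0,\ Y+i_0-j_0 \in \dom f$ (values are never $+\infty$), so the induction hypothesis legitimately applies to the pair $(X, Y+i_0-j_0)$; it is essential, as you do, to keep the first argument fixed at $X$, since $\hat f(X,\cdot)$ measures exchanges around $X$. In the unequal-cardinality cases, the peeled-off element $j_0$ (resp.\ $i_0$) automatically lies outside $X\cup\sigma(X\setminus Y)$ (resp.\ outside $Y\cup\tau(Y\setminus X)$) precisely because the inductively obtained injection has codomain $(Y\setminus X)\setminus\{j_0\}$ (resp.\ $(X\setminus Y)\setminus\{i_0\}$), so absorbing it into the unmatched addition (resp.\ deletion) sum is valid; and at the base $g=0$ of the $|X|>|Y|$ case, $\tau=\sigma\sp{-1}$ converts the equi-cardinal bound into the stated $\tau$-form with unmatched set exactly $\{i_0\}$. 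Finally, the maxima in \eqref{mnatexccard1}--\eqref{mnatexccard2} are over nonempty sets at every step of the recursion, so the witnesses $j_0, i_0$ always exist.
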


For a vector
$p = ( p_{i} \mid i \in N) \in \RR\sp{N}$
we use the notation $f[-p]$ to mean the function $f(X) - p(X)$, where
$X \subseteq N$ and $p(X) = \sum_{i \in X} p_{i}$.  That is, 
\begin{equation} \label{f-pdef01}
 f[-p](X) = f(X) - p(X)
\qquad (X \subseteq N).
\end{equation}
Note that $f[-p]$ is M$\sp{\natural}$-concave if and only if 
$f$ is  M$\sp{\natural}$-concave.

The ``if'' part of Theorem~\ref{THmnatsetfnlocmax}, 
which is the content of the theorem, can be restated as follows:
If $X$ is not a maximizer of $f$,   
there exists $Y \subseteq N$ such that
$|X \setminus Y| \leq 1$, $|Y \setminus X| \leq 1$, 
and $f(X) < f(Y)$.
By considering this property for $f[-p]$ with varying $p$,
we are naturally led to the {\em single improvement property} of 
Gul and Stacchetti (1999):
\citeH{GS99}%
\begin{description}
\item[(SI)]
%% [Single Improvement property] \ 
For any $p \in \RR\sp{N}$, if
$X$ is not a maximizer of $f[-p]$,   
 there exists $Y \subseteq N$ such that
$|X \setminus Y| \leq 1$, $|Y \setminus X| \leq 1$, 
and $f[-p](X) < f[-p](Y)$.
\end{description}
The above argument shows that (SI) is true for M$\sp{\natural}$-concave functions. 
In fact, (SI) is equivalent to M$\sp{\natural}$-concavity
(Fujishige and Yang 2003).
\citeH{FY03gr}%

\subsection{Maximizers and gross substitutability}
\label{SCmaximizers01}

For a vector
$p = ( p_{i} \mid i \in N) \in \RR\sp{N}$
we consider the  maximizers of the function $f[-p](X) = f(X) - p(X)$,
where $p(X) = \sum_{i \in X} p_{i}$ for $X \subseteq N$.
We denote the set of these  maximizers by
\begin{equation} \label{Dpdef}
 D(p ; f) = \argmax_{X} \{ f(X) - p(X) \mid X \subseteq N \}.
\end{equation}
In economic applications, 
$p$ is a price vector and
$D(p) = D(p ; f)$ represents the demand correspondence.

It is one of the most fundamental facts in 
discrete convex analysis that the
M$\sp{\natural}$-concavity of a function 
is characterized in terms of the M$\sp{\natural}$-convexity of its maximizers 
(Murota 1996c; Theorem 6.30 of Murota 2003; Murota and Shioura 1999).
\citeH{Mstein}\citeH[Theorem 6.30]{Mdcasiam}\citeH{MS99gp}%

\begin{theorem}
%%[\citeH{Mstein,Mdcasiam}, \citeH{MS99gp}] 
\label{THmconcavargmax}
A set function $f: 2\sp{N} \to \RR \cup \{ -\infty \}$ 
is M$\sp{\natural}$-concave
if and only if, for every vector $p \in \RR\sp{N}$, 
$D(p ; f)$ is an M$\sp{\natural}$-convex family.
That is, $f$ satisfies {\rm (M$\sp{\natural}$-EXC)} 
if and only if, for every $p \in \RR\sp{N}$, 
$D(p ; f)$ satisfies {\rm (B$\sp{\natural}$-EXC)}.
\end{theorem}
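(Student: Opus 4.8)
The plan is to treat the two implications separately, using throughout that $f[-p]$ is M$\sp{\natural}$-concave exactly when $f$ is and that $D(p;f)=\argmax f[-p]$. For the ``only if'' direction I would fix $p$, set $g=f[-p]$, and let $m=\max g$, so that $D(p;f)=\{Z\mid g(Z)=m\}$. Given $X,Y\in D(p;f)$ and $i\in X\setminus Y$, I apply {\rm (M$\sp{\natural}$-EXC)} in the form \eqref{mnatconcavexc2} to $g$: the left side equals $2m$, while every candidate on the right is a sum of two values of $g$, each at most $m$, hence at most $2m$. Therefore the maximum on the right equals $2m$ and is attained by a pair both of whose terms equal $m$. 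If that pair is $g(X-i)+g(Y+i)$, then $X-i,Y+i\in D(p;f)$, which is alternative (i) of {\rm (B$\sp{\natural}$-EXC)}; otherwise some $j\in Y\setminus X$ gives $X-i+j,Y+i-j\in D(p;f)$, which is alternative (ii). (A term equal to $-\infty$ cannot occur in the attaining pair, since the sum is the finite value $2m$.) This direction needs no case analysis beyond the above.

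For the ``if'' direction I would argue by contraposition through the local characterization in Theorem~\ref{THmconcavlocexc01}: it suffices to show that $\dom f$ is an M$\sp{\natural}$-convex family and that the three local inequalities \eqref{mnatconcavexc1loc}--\eqref{mnatconcavexc3loc} hold. The mechanism in each case is to assume a given local inequality fails and then to \emph{manufacture} a price $p$ at which $D(p;f)$ violates {\rm (B$\sp{\natural}$-EXC)}. The device is localization: on the coordinates outside the few elements appearing in the inequality I set the prices to $\pm\infty$ in effect (very negative on the common part to force those elements in, very positive outside to force them out), so that $D(p;f)$ lives on the small ground set $\{i,j\}$, $\{i,j,k\}$, or $\{i,j,k,l\}$ involved. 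On this reduced set the remaining prices are tuned so that the two sets on the left of the violated inequality become simultaneously demanded while the sets on its right are made strictly suboptimal. Applying {\rm (B$\sp{\natural}$-EXC)} to this co-demanded pair then forces one of the right-hand sets back into $D(p;f)$, contradicting its suboptimality.

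What makes the construction go through is that the two sides of a local inequality are close: they differ in at most two elements and, after localization, the relevant pair is complementary in the small ground set, so the linear system that defines a supporting price is solvable precisely in the regime where the inequality is violated. This is the point I would confirm by an explicit but routine choice of the finitely many remaining price components. The M$\sp{\natural}$-convexity of $\dom f$ itself I would handle along the same lines, applying {\rm (B$\sp{\natural}$-EXC)} to a demand set that, after forcing $X\cap Y$ in and $N\setminus(X\cup Y)$ out, isolates the relevant members of $\dom f$, using that restricting and contracting a demand correspondence preserves M$\sp{\natural}$-convexity.

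The main obstacle, and the step I expect to demand the most care, is exactly this reverse direction for sets that are far apart: for arbitrary $X,Y$ there is in general no price making them simultaneously demanded, so one cannot apply {\rm (B$\sp{\natural}$-EXC)} to $X,Y$ directly. Reducing to the local conditions of Theorem~\ref{THmconcavlocexc01} is what circumvents this, but it shifts the burden onto (a) establishing that $\dom f$ is M$\sp{\natural}$-convex without a direct co-maximality argument, and (b) checking, in each of the three local configurations, that a violated inequality genuinely admits a finite price window realizing the required demand pattern. Verifying (b) for \eqref{mnatconcavexc2loc} and \eqref{mnatconcavexc3loc}, with three and four free elements, is the most delicate bookkeeping, though it stays elementary once the localization is in place.
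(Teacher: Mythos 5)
Your ``only if'' direction is correct and is the standard argument: for $X,Y\in D(p;f)$ the left side of \eqref{mnatconcavexc2} applied to $g=f[-p]$ equals $2m$, every term on the right is at most $m$, so the attaining pair consists of two maximizers, which is exactly (B$\sp{\natural}$-EXC) for $D(p;f)$. (Note that the paper states Theorem~\ref{THmconcavargmax} without proof, citing Murota 1996c, Theorem 6.30 of Murota 2003, and Murota--Shioura 1999, so the comparison below is with the structure any complete proof must have.) The genuine gap is in your converse, specifically in item (a), the M$\sp{\natural}$-convexity of $\dom f$. Your mechanism there --- localize by $\pm M$ prices and then ``apply (B$\sp{\natural}$-EXC) to a demand set'' containing the violating pair $X,Y$ --- cannot work, because even after localization there may be \emph{no} price at which $X$ and $Y$ are simultaneously demanded. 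Concretely, take $N=\{1,2,3,4\}$, $f(\emptyset)=f(N)=1$, $f(\{1,2\})=f(\{3,4\})=0$, and $f=-\infty$ otherwise. Then $\dom f$ violates (B$\sp{\natural}$-EXC) at $X=\{1,2\}$, $Y=\{3,4\}$, $i=1$ (neither $\{2\}$ nor $\{2,3\}$, $\{2,4\}$ lies in $\dom f$). But for every $p$, if $X,Y\in D(p;f)$ with common value $m$, then $2m \geq f[-p](\emptyset)+f[-p](N) = 2 - p(N) > -p(N) = f[-p](X)+f[-p](Y) = 2m$, a contradiction; so this pair is never co-demanded. The hypothesis does fail for this $f$, as the theorem requires, but only at a \emph{different} pair ($D(\veczero;f)=\{\emptyset,N\}$ violates (B$\sp{\natural}$-EXC) at $(N,\emptyset)$). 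So in the domain case you must be prepared to locate a violation of (B$\sp{\natural}$-EXC) in some $D(p;f)$ at a pair other than the one witnessing the failure of $\dom f$ --- e.g.\ via a minimal-counterexample choice or a supergradient/minimal-face argument on the concave closure --- and your proposal contains no such mechanism. This is why the literature proofs route the converse through intermediate conditions like (SI)/(GS) rather than brute-force price construction at the given pair.

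A secondary, repairable inaccuracy: your claim that the localized price system ``is solvable precisely in the regime where the inequality is violated'' is literally false for \eqref{mnatconcavexc2loc} and \eqref{mnatconcavexc3loc}. For \eqref{mnatconcavexc2loc}, writing $f_S=f(X\cup S)$ for $S\subseteq\{i,j,k\}$, co-demanding $X+i+j$ and $X+k$ while dominating the corner sets forces, among other things, $f_\emptyset + f_{ijk} \leq f_{ij}+f_k$, which does not follow from the failure of \eqref{mnatconcavexc2loc} alone; it does follow from instances of \eqref{mnatconcavexc1loc} (e.g.\ $f_{ijk}+f_j\leq f_{ij}+f_{jk}$ plus $f_{jk}+f_\emptyset\leq f_j+f_k$, with separate treatment of $-\infty$ values). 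So you must prioritize the cases: treat a failure of \eqref{mnatconcavexc1loc} first, then assume it holds everywhere when treating \eqref{mnatconcavexc2loc}, and assume both when treating \eqref{mnatconcavexc3loc}. With that ordering your window computations close (I checked \eqref{mnatconcavexc1loc} and \eqref{mnatconcavexc2loc} explicitly; \eqref{mnatconcavexc1loc} is clean because the box has only four sets and co-demandability of $X+i+j$ and $X$ is \emph{equivalent} to the violated inequality). In summary: forward direction correct; local-inequality part of the converse correct in spirit but needing the case prioritization you omitted; the $\dom f$ step is a real gap as proposed.
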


The following are two versions of the multiple exchange property of $D(p ; f)$:
\begin{description}
\item[(NC)]
For any $p \in \RR\sp{N}$, if
$X, Y \in D(p ; f)$ and $I \subseteq X \setminus Y$,
there exists $J \subseteq Y \setminus X$ such that
$(X \setminus I) \cup J \in D(p ; f)$,
\end{description}
\begin{description}
\item[(NCsim)] 
For any $p \in \RR\sp{N}$, if
$X, Y \in D(p ; f)$ and $I \subseteq X \setminus Y$,
there exists $J \subseteq Y \setminus X$ such that
$(X \setminus I) \cup J \in D(p ; f)$ and $(Y \setminus J) \cup I \in D(p ; f)$.
\end{description}
The condition (NC), introduced by 
Gul and Stacchetti (1999),
\citeH{GS99}%
is called ``no complementarities property''
and (NCsim) is a simultaneous (or symmetric) version of (NC)
introduced by
%%Murota (2016).
\RED{
Murota (2018).
}%%
\citeH{Mmultexc16}%
These conditions, (NC) and (NCsim), are equivalent to each other, 
and are equivalent to the M$\sp{\natural}$-concavity of $f$;
see Remark \ref{RMmnat=SNC=GS} as well as 
%%Murota (2016)
\RED{
(Murota 2018)
}%%
\citeH{Mmultexc16}% 
for details.

In the above we have looked at
the family $D(p ; f)$ of the maximizers 
for each $p \in \RR\sp{N}$.
We now investigate how $D(p ; f)$
changes with the variation of $p$.

A set function (single-unit valuation function) 
$f: 2\sp{N} \to \RR \cup \{ -\infty \}$ 
is said to have the
{\em gross substitutes property} if
\footnote{%%%%%%%%
To be precise, 
Kelso and Crawford (1982)\citeH{KC82}
and also Gul and Stacchetti (1999)\citeH{GS99}
treat the case of $f: 2\sp{N} \to \RR$.
}  %%%%%%%%%% footnote %%%%%%
\begin{description}
\item[(GS)]
%% [Gross Substitutes property] \ 
For any $p,q  \in \RR\sp{N}$ 
with $p \leq q$ and $X \in D(p ; f)$,
 there exists $Y \in D(q ; f)$ such that
$ \{ i \in X \mid p_{i} = q_{i} \} \subseteq Y$.
\end{description}
The concept of gross substitutes property,  introduced by 
Kelso and Crawford (1982)\citeH{KC82},
has turned out to be crucial in economics; see, e.g., 
Roth and Sotomayor (1990)\citeH{RS90},
Bikhchandani and Mamer (1997)\citeH{BM97eco},
Gul and Stacchetti (1999)\citeH{GS99},
Ausubel and Milgrom (2002)\citeH{AM02},
Milgrom (2004)\citeH{Mil04book},
Hatfield and Milgrom (2005)\citeH{HM05},
Ausubel (2006)\citeH{Aus06},
Sun and Yang (2006)\citeH{SY06},
Milgrom and Strulovici (2009)\citeH{MS09sbst},
and 
\RED{
Hatfield et al. (2019)\citeH{HKNOW16fullsubst}.
}%
%%Hatfield et al. (2016)\citeH{HKNOW16fullsubst}.

The following theorem, due to 
Fujishige and Yang (2003)\citeH{FY03gr},
plays the key role to connect discrete convex analysis and economics.

\begin{theorem} \label{THmconcavgross}
%%\begin{theorem} [\protect{\citeH{FY03gr}}] \label{THmconcavgross}
A set function 
$f: 2\sp{N} \to \RR \cup \{ -\infty \}$ 
has the gross substitutes property {\rm (GS)}
if and only if it is M$\sp{\natural}$-concave.
\end{theorem}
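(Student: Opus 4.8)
The plan is to prove the two implications separately. For necessity I will use the exchange definition (M$\sp{\natural}$-EXC) directly, and for sufficiency I will pass through the demand-correspondence characterization of Theorem~\ref{THmconcavargmax} together with the already-recorded equivalence between the single improvement property (SI) and M$\sp{\natural}$-concavity.

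\medskip

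\noindent\textbf{Necessity (M$\sp{\natural}$-concave $\Rightarrow$ (GS)).} Fix $p\le q$ and $X\in D(p;f)$, and among all $Y\in D(q;f)$ choose one maximizing $|X\cap Y|$; I claim this $Y$ already witnesses (GS), i.e.\ it contains every $i\in X$ with $p_{i}=q_{i}$. Suppose not, so some such $i$ lies in $X\setminus Y$. Applying (M$\sp{\natural}$-EXC) to $X$, $Y$, $i$ gives either $f(X)+f(Y)\le f(X-i)+f(Y+i)$ or $f(X)+f(Y)\le f(X-i+j)+f(Y+i-j)$ for some $j\in Y\setminus X$. In each case I combine this with the optimality inequalities coming from $X\in D(p;f)$ and $Y\in D(q;f)$, evaluated at the corresponding candidate exchange bundles. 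Adding the two optimality inequalities, the price increments telescope to $(q_{i}-p_{i})$ in the first case and to $(q_{i}-p_{i})+(p_{j}-q_{j})$ in the second; since $p_{i}=q_{i}$ and $p\le q$, both are $\le 0$, so the exchange inequality must in fact hold with equality. Consequently the perturbed bundles are again optimal: $X-i\in D(p;f)$ and $Y+i\in D(q;f)$ in the first case, and $X-i+j\in D(p;f)$, $Y+i-j\in D(q;f)$ (with $p_{j}=q_{j}$ forced by the equality) in the second. But $Y+i$ and $Y+i-j$ lie in $D(q;f)$ and have strictly larger intersection with $X$ than $Y$ does (one adds $i\in X$ and, in the second case, deletes $j\notin X$), contradicting the choice of $Y$. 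This proves (GS).

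\medskip

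\noindent\textbf{Sufficiency ((GS) $\Rightarrow$ M$\sp{\natural}$-concave).} Here the strategy is to verify, via Theorem~\ref{THmconcavargmax}, that each demand family $D(p;f)$ satisfies (B$\sp{\natural}$-EXC). Given $X,Y\in D(p;f)$ and $i\in X\setminus Y$, the natural device is to raise the price of $i$ slightly, setting $q=p+\varepsilon\chi_{i}$ with $\varepsilon>0$ small: then $D(q;f)=\{Z\in D(p;f)\mid i\notin Z\}$, so $Y\in D(q;f)$ while $X\notin D(q;f)$, and (GS) applied to $p\le q$ and $X$ yields a bundle $Y'\in D(q;f)\subseteq D(p;f)$ with $X-i\subseteq Y'$ and $i\notin Y'$, supplying the $X$-side of the exchange. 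The main obstacle is that (B$\sp{\natural}$-EXC) requires the two sides \emph{simultaneously}: the same index $j\in Y\setminus X$ must also produce $Y+i-j\in D(p;f)$, whereas one price perturbation controls only one side, and the bundle $Y'$ delivered by (GS) may differ from $X-i$ by more than a single element. To close this gap I would either run the symmetric perturbation (lowering $p_{j}$) and reconcile the two resulting choices of $j$, or, more cleanly, prove the implication (GS) $\Rightarrow$ (SI) in the spirit of Gul and Stacchetti (1999) and then invoke the stated equivalence of (SI) with M$\sp{\natural}$-concavity. I expect precisely this simultaneity — equivalently, the passage from (GS) to the single improvement property — to be the genuinely hard step, the necessity direction being essentially a short exchange computation.
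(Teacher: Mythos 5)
Your necessity direction is correct and complete. The extremal choice of $Y\in D(q;f)$ maximizing $|X\cap Y|$, the application of (M$\sp{\natural}$-EXC), and the sign bookkeeping all check out: adding the two optimality inequalities bounds $f(X-i)+f(Y+i)-f(X)-f(Y)$ by $q_{i}-p_{i}=0$ (resp.\ $f(X-i+j)+f(Y+i-j)-f(X)-f(Y)$ by $(q_{i}-p_{i})+(p_{j}-q_{j})\leq 0$), forcing equality against the exchange inequality, whence $Y+i$ (resp.\ $Y+i-j$, with $p_{j}=q_{j}$) lies in $D(q;f)$ and strictly increases $|X\cap Y|$ --- a contradiction. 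Note for calibration that the paper itself contains no proof of this theorem; it is stated with a citation to Fujishige and Yang (2003), so your argument is being measured against the cited literature rather than an in-paper proof. Your necessity argument is exactly the standard one and matches the extremal-maximizer style the paper uses elsewhere (e.g.\ the proofs of Theorems~\ref{THchosubstMnat01} and \ref{THchotwistMnat01}).

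The sufficiency direction, however, contains a genuine gap, which you yourself diagnose but do not close. The perturbation $q=p+\varepsilon\chi_{i}$ correctly yields $D(q;f)=\{Z\in D(p;f)\mid i\notin Z\}$ and hence some $Y'\in D(p;f)$ with $X-i\subseteq Y'$, $i\notin Y'$, but this is strictly weaker than (B$\sp{\natural}$-EXC) on two counts: $Y'$ need not be within unit Hamming distance of $X-i$, and nothing ties the $Y$-side exchange $Y+i$ or $Y+i-j$ to the same index $j$. The ``symmetric perturbation and reconcile'' idea is not obviously workable --- the two perturbations produce maximizers at different prices with no mechanism forcing a common $j$ --- and the difficulty is not incidental: as Remark~\ref{RMmnat=SNC=GS} recounts, establishing the \emph{simultaneous} exchange statements ((NCsim), and the equivalence of (SNC) with (GS)) from one-sided substitutes conditions was open until Murota (2018). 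Your cleaner fallback, proving (GS)~$\Rightarrow$~(SI) and invoking the equivalence of (SI) with M$\sp{\natural}$-concavity recorded in Section~\ref{SCmaximization01}, is indeed the actual Gul--Stacchetti/Fujishige--Yang route; but (GS)~$\Rightarrow$~(SI) is precisely the substantive step of that route, and you leave it entirely unexecuted --- naming the strategy ``in the spirit of Gul and Stacchetti'' is not a proof. Moreover, the cited (SI)~$\Leftrightarrow$~M$\sp{\natural}$-concavity equivalence is itself attributed to the same Fujishige--Yang paper, so your reduction consumes roughly half of the theorem's content as a black box. Finally, even importing Gul--Stacchetti wholesale would not finish the job as the theorem is stated here: their argument is for monotone real-valued $f:2\sp{N}\to\RR$ (the paper flags this in a footnote), whereas the theorem allows $f:2\sp{N}\to\RR\cup\{-\infty\}$, so one must additionally show that (GS) constrains the effective domain appropriately (e.g.\ that $\dom f$ satisfies (B$\sp{\natural}$-EXC)) before the local/single-improvement machinery applies. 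In short: necessity proved, sufficiency reduced to its hardest step and stopped there.
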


It is known 
(Hatfield and Milgrom 2005, 
\citeH{HM05}%
Milgrom and Strulovici 2009)
\citeH{MS09sbst}%
that the gross substitutes property, and hence M$\sp{\natural}$-concavity,
implies the law of aggregate demand
in the following form:

\begin{description}
\item[(LAD)] 
For any $p,q  \in \RR\sp{N}$ 
with $p \leq q$ and $X \in D(p ; f)$,
there exists $Y \in D(q ; f)$ such that
$|X| \geq |Y|$.
\end{description}

Gross substitutes properties for multi-unit valuations are treated in 
Section \ref{SCmaximizersZ}.

\subsection{Choice function}
\label{SCchoiceMnat01}

A function 
$C: 2\sp{N} \to 2\sp{N}$ is called a {\em choice function} if
$C(Z) \subseteq Z$ for all $Z \subseteq N$.
We have 
$C(\emptyset)=\emptyset$ and, 
possibly, $C(Z)=\emptyset$ for some nonempty subsets $Z$.
A choice function $C$ is said to be {\em consistent} if
$C(X) \subseteq Y \subseteq X$ implies $C(Y) = C(X)$.
Here we discuss two other properties of choice functions,  
substitutability and cardinal monotonicity,
which are closely related to M$\sp{\natural}$-concavity.

The {\em substitutability} of a choice function $C$ 
means the following property
(Roth 1984,
\citeH{Rot84}%
Roth and Sotomayor 1990):
\citeH{RS90}%
%%\citeH[Definition 6.2]{RS90}
\begin{description}
\item[(SC$_{\rm ch}$)]
 For any $Z_{1}, Z_{2} \subseteq N$ 
with $Z_{1} \supseteq Z_{2}$ it holds that
$  Z_{2} \cap C(Z_{1}) \subseteq  C(Z_{2})$.
\end{description}
Several apparently different formulations of substitutability,
each equivalent to (SC$_{\rm ch}$),
are found in the literature:
\begin{itemize}
\item
 For any $Z_{1}, Z_{2} \subseteq N$ 
with $Z_{1} \supseteq Z_{2}$ it holds that
$Z_{1} \setminus C(Z_{1}) \supseteq  Z_{2} \setminus C(Z_{2})$.

\item
$i \in C(X)$ implies $i \in C(Y \cup \{ i \})$ for $Y \subseteq X$.
%% \citeH{Alk02}

\item
For any $X \subseteq N$ and any distinct $i,j \in X$, \ 
$i \in C(X)$ implies $i \in C(X \setminus \{ j \})$.
\end{itemize}

A choice function $C$ is said to be {\em cardinal-monotone} if
$|C(Y)| \leq |C(X)|$ for all $Y \subseteq X$
 \citeH{Alk02}.
This property is called
{\em increasing property} by  
Fleiner (2003)\citeH{Flei03}
and {\em law of aggregate demand} by
Hatfield and Milgrom (2005)\citeH{HM05}.

\begin{remark} \rm  %% \label{RM}
As is well known, consistency and substitutability together
are equivalent to {\em path independence} of 
Plott (1973),
\citeH{Plo73}%
 which is characterized by the condition:
$C(C(X) \cup Y) =C(X \cup Y)$ for all $X, Y \subseteq N$.
This condition is equivalent to:
$C(C(X) \cup C(Y)) =C(X \cup Y)$ for all $X, Y \subseteq N$.
\finbox
\end{remark}

\begin{remark} \rm  \label{RMsignif3cond}
The above-mentioned properties of choice functions
are well-known key properties in economics and game theory.
In the stable matching problem, for example,
consistency and substitutability (i.e., path independence)
guarantee, roughly, the existence of a stable matching.
If, in addition, the choice functions are cardinal-monotone,
then the stable matchings form a nice lattice
(with simple lattice operations, being distributive, etc.).
To quote 
Theorem 10 of Alkan (2002):
\citeH[Theorem 10]{Alk02}%
``The set of stable matchings in any two-sided market with path-independent
cardinal-monotone choice functions is a distributive lattice under
the common preferences of all agents on one side of the market. The supremum
(infimum) operation of the lattice for each side consists componentwise of the
join (meet) operation in the revealed preference ordering of associated agents.
The lattice has the polarity, unicardinality and complementarity properties.''
\finbox
\end{remark}

\begin{remark} \rm  %% \label{RM}
A function $C: 2\sp{N} \to 2\sp{N}$ is called {\em comonotone}  
if there exists a monotone function  
$g: 2\sp{N} \to 2\sp{N}$ such that 
$C(X) = X \setminus g(X)$ for all $X \subseteq N$
(Fleiner 2003).
\citeH{Flei03}%
A function $C: 2\sp{N} \to 2\sp{N}$ is  comonotone   
if and only if $C$ is a choice function with substitutability.
The fixed point approach to stable matchings of 
Fleiner (2003)
\citeH{Flei03}%
is based on the observation
that stable matchings correspond to
fixed points of a certain monotone function 
associated with the choice functions
and
the deferred acceptance algorithm of 
Gale and Shapley (1962) 
\citeH{GS62}%
can be regarded as an iteration of this function. 
See also 
Farooq et al.~(2012).
\citeH{FFT12}%
\finbox
\end{remark}

A {\em choice correspondence} means a function
$C: 2\sp{N} \to 2\sp{2\sp{N}}$ such that
$\emptyset \not= C(Z) \subseteq 2\sp{Z}$ for all $Z \subseteq N$.
It should be clear that the value $C(Z)$ is not a subset of $N$ but
a family of subsets of $N$.
If $C(Z)$ consists of a single subset for each $Z \subseteq N$,
then $C$ can be identified with a choice function
$C: 2\sp{N} \to 2\sp{N}$.

The {\em substitutability} of a choice correspondence $C$ 
is formulated as follows 
(Definition~4 of Sotomayor 1999):
\citeH[Definition~4]{Sot99three}%
\begin{description}
\item[(SC$_{\rm ch}\sp{1}$)]
For any $Z_{1}, Z_{2} \subseteq N$ 
with $Z_{1} \supseteq Z_{2}$ 
and any  $X_{1} \in C(Z_{1})$, 
there exists 
$X_{2} \in C(Z_{2})$ such that
$Z_{2} \cap X_{1}\subseteq  X_{2}$.

\item[(SC$_{\rm ch}\sp{2}$)]
For any $Z_{1}, Z_{2} \subseteq N$ 
with $Z_{1} \supseteq Z_{2}$ 
and any 
$X_{2} \in C(Z_{2})$,
there exists $X_{1} \in C(Z_{1})$
such that
$Z_{2} \cap X_{1}\subseteq  X_{2}$.
\end{description}
For a choice function 
$C: 2\sp{N} \to 2\sp{N}$,
(SC$_{\rm ch}\sp{1}$) and (SC$_{\rm ch}\sp{2}$)
are each equivalent to (SC$_{\rm ch}$).

\paragraph{Choice function induced from a valuation function:}

A valuation function $f: 2\sp{N} \to \RR \cup \{ -\infty \}$
with $\emptyset \in \dom f$
induces a choice correspondence
$C: 2\sp{N} \to 2\sp{2\sp{N}}$
 by 
\begin{equation} \label{choiceByVal01}
C(Z) = C(Z ; f) = \argmax \{ f(Y) \mid Y \subseteq Z \} .
\end{equation}
The assumption 
``$\emptyset \in \dom f$'' ensures that  $C(Z;f) \not= \emptyset$
for every $Z \subseteq N$.
In general, the maximizer is not unique, and accordingly,
$C$ is a choice correspondence
(i.e., $C(Z;f)$ is a family of subsets of $N$).

While (SC$_{\rm ch}\sp{1}$) and (SC$_{\rm ch}\sp{2}$) above 
formulate the substitutability for a choice correspondence,
(SC$\sp{1}$) and (SC$\sp{2}$) below
are the corresponding conditions for a valuation function $f$.
That is,  a valuation function $f$ satisfies 
(SC$\sp{1}$) if and only if the induced choice correspondence
$C(\, \cdot \, ;f)$ satisfies (SC$_{\rm ch}\sp{1}$),
and similarly for (SC$\sp{2}$) and (SC$_{\rm ch}\sp{2}$).

\begin{description}
\item[(SC$\sp{1}$)]
For any $Z_{1}, Z_{2} \subseteq N$ 
with $Z_{1} \supseteq Z_{2}$ 
and any 
$X_{1} \in C(Z_{1};f)$, 
there exists $X_{2} \in C(Z_{2};f)$ such that $Z_{2} \cap X_{1}\subseteq  X_{2}$.

\item[(SC$\sp{2}$)]
 For any $Z_{1}, Z_{2} \subseteq N$ 
with $Z_{1} \supseteq Z_{2}$ 
and any 
$X_{2} \in C(Z_{2};f)$,
there exists $X_{1} \in C(Z_{1};f)$ such that $Z_{2} \cap X_{1}\subseteq  X_{2}$.
\end{description}
These two conditions are independent of each other; see Examples 3.1 and 3.2 in 
Farooq and Tamura (2004).
\citeH{FT04sbst}%

A connection to M$\sp{\natural}$-concavity
is pointed out by 
Eguchi et al.~(2003);
\citeH{EFT03}%
see also 
Fujishige and Tamura (2006).
\citeH{FT06market}%
This is another important finding, on top of Theorem~\ref{THmconcavgross}
(equivalence of M$\sp{\natural}$-concavity to (GS)),
which has reinforced the connection 
between discrete convex analysis and economics.

\begin{theorem} \label{THchosubstMnat01}
Every M$\sp{\natural}$-concave function
$f: 2\sp{N} \to \RR \cup \{ -\infty \}$
with $\emptyset \in \dom f$
satisfies {\rm (SC$\sp{1}$)} and {\rm (SC$\sp{2}$)}.
That is, the choice correspondence induced from 
an M$\sp{\natural}$-concave set function
has the substitutability properties
{\rm (SC$_{\rm ch}\sp{1}$)} and {\rm (SC$_{\rm ch}\sp{2}$)}.
\end{theorem}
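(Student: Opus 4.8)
The plan is to derive both {\rm (SC$\sp{1}$)} and {\rm (SC$\sp{2}$)} directly from the single-element exchange axiom {\rm (M$\sp{\natural}$-EXC)}, using nothing beyond the global optimality of the relevant maximizers together with an extremal choice of maximizer; the multiple exchange property is not needed. Throughout, note that since $\emptyset \in \dom f$ the families $C(Z_{1};f)$ and $C(Z_{2};f)$ are nonempty and every maximizer in them has finite value, hence lies in $\dom f$, so that {\rm (M$\sp{\natural}$-EXC)} may be applied to any pair drawn from them. Recall also that $X \in C(Z;f)$ means $X \subseteq Z$ and $f(X) \geq f(Y)$ for all $Y \subseteq Z$; in particular, adding or deleting elements of $Z$ from $X$ can never strictly increase the value of $f$.

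For {\rm (SC$\sp{1}$)}, fix $X_{1} \in C(Z_{1};f)$, set $W = Z_{2} \cap X_{1}$, and among all $X_{2} \in C(Z_{2};f)$ choose one for which $|X_{2} \cap W|$ is as large as possible; I claim $W \subseteq X_{2}$. If not, pick $i \in W \setminus X_{2}$, so that $i \in X_{1} \setminus X_{2}$ and $i \in Z_{2}$, and apply {\rm (M$\sp{\natural}$-EXC)} to $(X_{1},X_{2})$ at $i$. In the first alternative, $f(X_{1}) + f(X_{2}) \leq f(X_{1}-i) + f(X_{2}+i)$; since $X_{1}-i \subseteq Z_{1}$ gives $f(X_{1}-i) \leq f(X_{1})$, I obtain $f(X_{2}+i) \geq f(X_{2})$, while $X_{2}+i \subseteq Z_{2}$ forces the reverse inequality, so $X_{2}+i \in C(Z_{2};f)$ has strictly larger overlap with $W$, a contradiction. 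In the second alternative there is $j \in X_{2} \setminus X_{1}$ with $f(X_{1}) + f(X_{2}) \leq f(X_{1}-i+j) + f(X_{2}+i-j)$; here $j \in X_{2} \subseteq Z_{2} \subseteq Z_{1}$, so $X_{1}-i+j \subseteq Z_{1}$ and $f(X_{1}-i+j) \leq f(X_{1})$, whence $f(X_{2}+i-j) \geq f(X_{2})$, and $X_{2}+i-j \subseteq Z_{2}$ gives $X_{2}+i-j \in C(Z_{2};f)$. As $j \notin X_{1}$ we have $j \notin W$, so again the overlap with $W$ strictly increases, a contradiction. Hence $W \subseteq X_{2}$, which is exactly {\rm (SC$\sp{1}$)}.

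For {\rm (SC$\sp{2}$)}, fix $X_{2} \in C(Z_{2};f)$ and, dually, among all $X_{1} \in C(Z_{1};f)$ choose one minimizing $|X_{1} \cap (Z_{2} \setminus X_{2})|$; I claim this count is zero, i.e. $Z_{2} \cap X_{1} \subseteq X_{2}$. If some $i \in X_{1} \cap (Z_{2} \setminus X_{2})$ existed, then $i \in X_{1} \setminus X_{2}$ with $i \in Z_{2}$, and applying {\rm (M$\sp{\natural}$-EXC)} to $(X_{1},X_{2})$ at $i$ yields, in the two alternatives, either $X_{1}-i$ or $X_{1}-i+j$ with $j \in X_{2} \setminus X_{1} \subseteq Z_{1}$. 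Using $X_{2}+i \subseteq Z_{2}$ (respectively $X_{2}+i-j \subseteq Z_{2}$) and the optimality of $X_{2}$ on $Z_{2}$, exactly as above, these sets remain maximizers of $f$ on $Z_{1}$. Since $i \in Z_{2} \setminus X_{2}$ is removed while any added $j$ lies in $X_{2}$ and hence not in $Z_{2} \setminus X_{2}$, the new maximizer has strictly smaller intersection with $Z_{2} \setminus X_{2}$, a contradiction. This establishes {\rm (SC$\sp{2}$)}; by the stated equivalence between these conditions on $f$ and their counterparts on the induced correspondence, $C(\,\cdot\,;f)$ then satisfies {\rm (SC$_{\rm ch}\sp{1}$)} and {\rm (SC$_{\rm ch}\sp{2}$)}.

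The arguments for the two parts are mirror images, so I expect no conceptual difficulty; the only points requiring care are the bookkeeping ones, namely verifying at each exchange that the modified set still lies inside the correct ground set (which rests on $Z_{2} \subseteq Z_{1}$ and on $j \in X_{2} \subseteq Z_{2}$), and confirming that the swapped-in element $j$ does not belong to the set whose overlap is being optimized, so that the chosen extremal quantity genuinely improves and the contradiction is valid.
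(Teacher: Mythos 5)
Your proof is correct and follows essentially the same route as the paper's own: for each condition you pick the extremal maximizer (your maximizing $|X_{2}\cap(Z_{2}\cap X_{1})|$ and minimizing $|X_{1}\cap(Z_{2}\setminus X_{2})|$ are just reformulations of the paper's minimization of $|(Z_{2}\cap X_{1})\setminus X_{2}|$, since $(Z_{2}\cap X_{1})\setminus X_{2}=X_{1}\cap(Z_{2}\setminus X_{2})$) and then apply {\rm (M$\sp{\natural}$-EXC)} once, using $Z_{2}\subseteq Z_{1}$ and the optimality of both maximizers to turn the exchange inequality into equalities and contradict extremality. The bookkeeping points you flag (that $j\in X_{2}\subseteq Z_{2}\subseteq Z_{1}$ keeps the modified sets feasible and that the swapped-in $j$ does not affect the counted overlap) are exactly the checks made in the paper's proof, so nothing is missing.
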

\begin{proof}
Assume $Z_{1} \supseteq Z_{2}$.

Proof of (SC$\sp{1}$):
Let $X_{1} \in C(Z_{1};f)$
and take
$X_{2} \in C(Z_{2};f)$ with minimum
$|(Z_{2} \cap X_{1} ) \setminus X_{2}|$.
To prove by contradiction, suppose that there exists 
$i \in (Z_{2} \cap X_{1} ) \setminus X_{2}$.
Since
$i \in X_{1} \setminus X_{2}$, 
(M$\sp{\natural}$-EXC) implies
(i) $f( X_{1} ) + f( X_{2} ) \leq f( X_{1} - i ) + f( X_{2} + i )$
or
(ii) there exists $j \in X_{2} \setminus X_{1}$ such that
$f( X_{1} ) + f( X_{2} ) \leq  f( X_{1} - i + j) + f( X_{2} + i -j)$.
In case (i) we note $X_{1} - i \subseteq Z_{1}$ 
and $X_{2} + i \subseteq Z_{2}$, 
from which follow
$f( X_{1} -i  ) \leq f( X_{1})$
and
$f( X_{2} + i ) \leq f(X_{2})$.
Therefore,
the inequalities are in fact equalities, and
$X_{1} - i \in C(Z_{1};f)$ and $X_{2} + i \in C(Z_{2};f)$.
But we have
$|(Z_{2} \cap X_{1} ) \setminus (X_{2} + i)| =
 |(Z_{2} \cap X_{1} ) \setminus X_{2}| -  1$,
which contradicts the choice of $X_{2}$.
In case (ii) we note $X_{1} - i + j \subseteq Z_{1}$ 
and $X_{2} + i -j  \subseteq Z_{2}$, 
from which follow
$f( X_{1} -i +j ) \leq f( X_{1})$
and
$f( X_{2} + i -j ) \leq f(X_{2})$.
Therefore,
the inequalities are in fact equalities, and
$X_{1} - i +j \in C(Z_{1};f)$ and $X_{2} + i -j  \in C(Z_{2};f)$.
But we have
$|(Z_{2} \cap X_{1} ) \setminus (X_{2} + i -j)| =
 |(Z_{2} \cap X_{1} ) \setminus X_{2}| -  1$,
which contradicts the choice of $X_{2}$.

Proof of (SC$\sp{2}$):
Let $X_{2} \in C(Z_{2};f)$
and take
$X_{1} \in C(Z_{1};f)$ with minimum
$|(Z_{2} \cap X_{1} ) \setminus X_{2}|$.
By the same argument as above we obtain
(i) $X_{1} - i \in C(Z_{1};f)$ with
$|(Z_{2} \cap (X_{1} - i) ) \setminus X_{2}| =
 |(Z_{2} \cap X_{1} ) \setminus X_{2}| -  1$,
or (ii) 
$X_{1} - i + j \in C(Z_{1};f)$ with
$|(Z_{2} \cap (X_{1}-i+j) ) \setminus X_{2}| =
 |(Z_{2} \cap X_{1} ) \setminus X_{2}| -  1$.
This is a contradiction to the choice of $X_{1}$.
\end{proof}

When the maximizer is unique in (\ref{choiceByVal01}) for every $Z$,  we say
that $f$ is {\em unique-selecting}.
In this case, $C$ in (\ref{choiceByVal01}) is a choice function
(i.e., $C(Z;f)$ is a subset of $N$ for every $Z$),
and {\rm (SC$\sp{1}$)} and {\rm (SC$\sp{2}$)} both reduce to
the following condition: 
\begin{description}
\item[(SC)]
 For any $Z_{1}, Z_{2} \subseteq N$ 
with $Z_{1} \supseteq Z_{2}$ it holds that
$Z_{2} \cap C(Z_{1};f) \subseteq  C(Z_{2};f)$.
\end{description}

Theorem~\ref{THchosubstMnat01} 
yields, as a corollary, the following result of 
Eguchi and Fujishige (2002)\citeH{EF02}.

\begin{theorem} \label{THchosubstMnat01uniq}
%%\begin{theorem}[\citeH{EF02}] \label{THchosubstMnat01uniq}
Every unique-selecting M$\sp{\natural}$-concave function
$f: 2\sp{N} \to \RR \cup \{ -\infty \}$
with $\emptyset \in \dom f$
satisfies {\rm (SC)}.
That is, the choice function induced from 
a unique-selecting M$\sp{\natural}$-concave set function
has the substitutability property 
{\rm (SC$_{\rm ch}$)}.
\end{theorem}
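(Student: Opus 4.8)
The plan is to obtain this statement as an immediate corollary of Theorem~\ref{THchosubstMnat01}, exploiting the fact that the unique-selecting hypothesis collapses the existential quantifiers in (SC$\sp{1}$) and (SC$\sp{2}$) down to the plain inclusion demanded by (SC). The heavy lifting—the exchange argument driven by (M$\sp{\natural}$-EXC)—has already been done in the proof of Theorem~\ref{THchosubstMnat01}, so the work here is purely a matter of specialization.

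First I would record that, under the unique-selecting assumption, the correspondence $C(\,\cdot\,;f)$ defined in \eqref{choiceByVal01} is single-valued: for every $Z \subseteq N$ the family $C(Z;f) = \argmax \{ f(Y) \mid Y \subseteq Z \}$ consists of exactly one subset of $N$, which we identify with its unique element and regard as a genuine choice function $C : 2\sp{N} \to 2\sp{N}$. The hypothesis $\emptyset \in \dom f$ guarantees $C(Z;f) \neq \emptyset$, so the $\argmax$ is indeed a single nonempty selection for each $Z$. Next, since $f$ is M$\sp{\natural}$-concave with $\emptyset \in \dom f$, Theorem~\ref{THchosubstMnat01} applies and yields (SC$\sp{1}$). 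I would then specialize it: fixing $Z_{1} \supseteq Z_{2}$, the only admissible $X_{1}$ is $X_{1} = C(Z_{1};f)$, and the only member of $C(Z_{2};f)$ that the existential quantifier in (SC$\sp{1}$) can return is $X_{2} = C(Z_{2};f)$. Hence the conclusion $Z_{2} \cap X_{1} \subseteq X_{2}$ becomes $Z_{2} \cap C(Z_{1};f) \subseteq C(Z_{2};f)$, which is exactly (SC). Starting instead from (SC$\sp{2}$) produces the same inclusion, so either half of Theorem~\ref{THchosubstMnat01} suffices. Restated for the induced choice function, this is precisely the substitutability property (SC$_{\rm ch}$).

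There is no genuine analytic obstacle: the combinatorial content has been absorbed into Theorem~\ref{THchosubstMnat01}. The only point requiring care is the bookkeeping of the quantifiers—one must verify that uniqueness forces both $X_{1}$ and $X_{2}$ to coincide with the respective maximizers, so that the ``there exists'' in (SC$\sp{1}$)/(SC$\sp{2}$) degenerates into the deterministic inclusion required by (SC). I would state this reduction explicitly to keep the corollary airtight, and note in passing that it recovers the result of Eguchi and Fujishige (2002).
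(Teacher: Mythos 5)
Your proposal is correct and matches the paper's own treatment: the paper likewise observes that under the unique-selecting hypothesis (SC$\sp{1}$) and (SC$\sp{2}$) both collapse to (SC), and obtains the statement as an immediate corollary of Theorem~\ref{THchosubstMnat01}. Your explicit quantifier bookkeeping is just a careful spelling-out of that same specialization.
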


Unique-selecting 
M$\sp{\natural}$-concave functions are well-behaved also
with respect to cardinal monotonicity.
The following is a special case of
Lemma 4.5 of Murota and Yokoi (2015)\citeH[Lemma 4.5]{MY15mor}.

\begin{theorem} \label{THchomonoMnat01uniq}
%%\begin{theorem}[\citeH{MY15mor}] \label{THchomonoMnat01uniq}
Every unique-selecting M$\sp{\natural}$-concave function
$f: 2\sp{N} \to \RR \cup \{ -\infty \}$
with $\emptyset \in \dom f$
induces a choice function with cardinal monotonicity.
\end{theorem}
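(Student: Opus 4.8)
The plan is to argue by contradiction, combining the cardinality-constrained exchange inequality~(\ref{mnatexccard1}) of Remark~\ref{RMmnatexcsize01} with the strict optimality that the unique-selecting hypothesis provides.

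First I would fix $Y \subseteq X$ and write $P = C(X;f)$ and $Q = C(Y;f)$ for the (unique) maximizers of $f$ over the subsets of $X$ and of $Y$, respectively; these are well-defined single sets because $f$ is unique-selecting, and $\emptyset \in \dom f$ guarantees that each feasible family is nonempty. By construction $P \subseteq X$ and $Q \subseteq Y \subseteq X$. The goal is $|Q| \leq |P|$, so I would suppose for contradiction that $|P| < |Q|$.

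Then I would apply~(\ref{mnatexccard1}) to the pair $(P,Q)$: since $|P| < |Q|$, there is some $j \in Q \setminus P$ with $f(P) + f(Q) \leq f(P+j) + f(Q-j)$. The key observation is that both new sets stay within their relevant ground sets: $P + j \subseteq X$ (because $j \in Q \subseteq X$) and $Q - j \subseteq Y$. Since $P$ is the \emph{unique} maximizer over the subsets of $X$ and $P+j \neq P$, uniqueness yields the \emph{strict} inequality $f(P+j) < f(P)$; symmetrically, $Q-j \subseteq Y$ with $Q-j \neq Q$ gives $f(Q-j) < f(Q)$.

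Adding these two strict inequalities gives $f(P+j) + f(Q-j) < f(P) + f(Q)$, contradicting the exchange inequality above. Hence $|Q| \leq |P|$, i.e.\ $|C(Y)| \leq |C(X)|$ whenever $Y \subseteq X$, which is exactly cardinal monotonicity. I do not anticipate a serious obstacle; the one delicate point is that the contradiction needs the \emph{strict} form of both optimality inequalities, and this is precisely where the unique-selecting hypothesis is indispensable—without it one obtains only weak inequalities, which are consistent with the exchange bound and so fail to close the argument.
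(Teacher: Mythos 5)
Your proof is correct and is essentially identical to the paper's own argument: both assume $|C(X;f)| < |C(Y;f)|$ for $Y \subseteq X$, apply the cardinality-constrained exchange inequality (\ref{mnatexccard1}) to the two optimal sets, and derive a contradiction from the two strict inequalities forced by unique selection. Nothing is missing; your closing remark about where uniqueness is indispensable matches the paper's use of it exactly.
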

\begin{proof}
The proof is based on the exchange property 
(\ref{mnatexccard1}) in Remark~\ref{RMmnatexcsize01}.
To prove by contradiction, suppose that there exist 
$X$ and $Y$ such that $X \supseteq Y$ and
$|C(X)|<|C(Y)|$.
Set $X\sp{*}=C(X)$ and $Y\sp{*}=C(Y)$. Then $|X\sp{*}|<|Y\sp{*}|$.
By the exchange property (\ref{mnatexccard1}) 
there exists  $j \in Y\sp{*} \setminus X\sp{*}$
such that
$ f(X\sp{*}) + f( Y\sp{*} )   \leq f( X\sp{*}  + j) + f( Y\sp{*}  -j) $.
Here we have 
$f( X\sp{*}  + j) < f(X\sp{*})$
since $X\sp{*}+ j \subseteq X$ and $X\sp{*}$ is the unique maximizer,
and also $f( Y\sp{*}  - j) < f(Y\sp{*})$
since $Y\sp{*}- j \subseteq Y$ and $Y\sp{*}$ is the unique maximizer.
This is a contradiction.
\end{proof}

Thus, M$\sp{\natural}$-concave valuation functions 
entail the three desirable properties.
Recall Remark~\ref{RMsignif3cond} for the implications of this fact.

\begin{theorem} \label{THchoiceMnat01}
The choice function induced from 
a unique-selecting M$\sp{\natural}$-concave set function
$f$ with $\emptyset \in \dom f$
has consistency, substitutability, and cardinal monotonicity.
\end{theorem}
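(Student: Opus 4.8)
The plan is to dispose of the three properties separately, noting that two of them have in effect already been proved. Substitutability is nothing but Theorem~\ref{THchosubstMnat01uniq}, which states that a unique-selecting M$\sp{\natural}$-concave function with $\emptyset \in \dom f$ induces a choice function satisfying {\rm (SC$_{\rm ch}$)}; and cardinal monotonicity is exactly Theorem~\ref{THchomonoMnat01uniq}. Since the hypotheses of both theorems coincide with the hypotheses here, I would simply invoke them. The entire substance of the present theorem therefore reduces to establishing consistency, and for that I expect to need only the $\argmax$ definition~\eqref{choiceByVal01}, not M$\sp{\natural}$-concavity.

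For consistency, I would argue as follows. Suppose $C(X;f) \subseteq Y \subseteq X$, and write $X\sp{*} = C(X;f)$ for the unique maximizer of $f$ over all subsets of $X$. The observation to exploit is that $\{ W \mid W \subseteq Y \}$ is a subfamily of $\{ W \mid W \subseteq X \}$, so the maximization defining $C(Y;f)$ ranges over fewer candidates; and because the inclusion $X\sp{*} \subseteq Y$ is assumed, the previous maximizer $X\sp{*}$ survives as a feasible candidate for this smaller problem. As $X\sp{*}$ maximizes $f$ uniquely over the larger family, it maximizes $f$ uniquely over the smaller one as well. Hence $C(Y;f) = X\sp{*} = C(X;f)$, which is precisely consistency.

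The point that has to be handled with care is the interplay between feasibility and uniqueness: shrinking the ground set from $X$ to $Y$ can only remove candidates, so it cannot create a new, strictly better subset, and the hypothesis $C(X;f) \subseteq Y$ is exactly what keeps the old optimum in play. The unique-selecting assumption then upgrades ``same optimal value'' to ``same optimal set.'' I do not anticipate any genuine obstacle in this theorem: the real work lies in Theorems~\ref{THchosubstMnat01uniq} and~\ref{THchomonoMnat01uniq}, which I am merely citing, and the present statement simply bundles those two facts together with the elementary consistency argument above, which in fact holds for any choice function of the form~\eqref{choiceByVal01} with a unique maximizer.
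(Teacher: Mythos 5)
Your proposal is correct and matches the paper's own (implicit) treatment: the paper states Theorem~\ref{THchoiceMnat01} precisely as a bundling of Theorem~\ref{THchosubstMnat01uniq} (substitutability) and Theorem~\ref{THchomonoMnat01uniq} (cardinal monotonicity), with consistency regarded as immediate from the $\argmax$ form~\eqref{choiceByVal01} for any unique-selecting valuation, just as you argue. Your explicit consistency argument is sound, including the two points you flag: $C(X;f)\subseteq Y$ keeps the old optimum feasible when the candidate family shrinks, and unique selection upgrades equality of optimal values to equality of chosen sets.
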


 Finally, we mention a theorem that 
characterizes  M$\sp{\natural}$-concavity 
in terms of a parametrized version of 
(SC$\sp{1}$) and (SC$\sp{2}$).
Recall from (\ref{f-pdef01}) the 
notation $f[-p](X) = f(X) - p(X)$ for 
$p \in \RR\sp{N}$ and $X \subseteq N$.
If $f$ is an M$\sp{\natural}$-concave function
 (not assumed to be unique-selecting),
$f[-p]$ is also M$\sp{\natural}$-concave, and hence
is equipped with the properties (SC$\sp{1}$) and (SC$\sp{2}$)
by Theorem~\ref{THchosubstMnat01}.
 In other words, an M$\sp{\natural}$-concave function $f$
has the following properties.

\begin{description}
\item[(SC$\sp{1}_{\rm\bf {G}}$)]  
For any $p\in \RR^{N}$,  $f[-p]$ satisfies {\rm (SC$\sp{1}$)}.
\item[(SC$\sp{2}_{\rm\bf {G}}$)]  
For any $p\in \RR^{N}$,  $f[-p]$ satisfies {\rm (SC$\sp{2}$)}.
\end{description}
The following theorem, due to 
Farooq and Tamura (2004),
\citeH{FT04sbst}%
states that these two conditions are equivalent, and each of them 
characterizes M$\sp{\natural}$-concavity.

\begin{theorem} \label{THchoMnat01}
%%\begin{theorem}[\citeH{FT04sbst}] \label{THchoMnat01}
For a set function $f: 2\sp{N} \to \RR \cup \{ -\infty \}$
with $\dom f \not= \emptyset$,
we have the equivalence:
$f$ is M$\sp{\natural}$-concave 
$\iff$ 
{\rm (SC$\sp{1}_{\rm {G}}$)}
$\iff$ 
{\rm (SC$\sp{2}_{\rm {G}}$)}.
\end{theorem}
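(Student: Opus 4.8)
The statement is a three-way equivalence, so the plan is to prove the cycle $f$ M$\sp{\natural}$-concave $\Rightarrow$ {\rm (SC$\sp{1}_{\rm G}$)}, $f$ M$\sp{\natural}$-concave $\Rightarrow$ {\rm (SC$\sp{2}_{\rm G}$)}, and then the two back-implications {\rm (SC$\sp{1}_{\rm G}$)} $\Rightarrow$ $f$ M$\sp{\natural}$-concave and {\rm (SC$\sp{2}_{\rm G}$)} $\Rightarrow$ $f$ M$\sp{\natural}$-concave. Once both back-implications are established, the equivalence {\rm (SC$\sp{1}_{\rm G}$)} $\iff$ {\rm (SC$\sp{2}_{\rm G}$)} comes for free, since each is sandwiched by M$\sp{\natural}$-concavity.

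The two forward implications are the routine part and reduce to Theorem~\ref{THchosubstMnat01}. The parametrized conditions are tailored precisely so that this reduction works: for any $p$ the function $f[-p]$ is M$\sp{\natural}$-concave whenever $f$ is (subtracting a linear term preserves the exchange inequality, since both sides of {\rm (M$\sp{\natural}$-EXC)} shift by the same amount), and the class $\{f[-p] : p \in \RR\sp{N}\}$ is closed under further translation because $(f[-p])[-q] = f[-(p+q)]$. Hence Theorem~\ref{THchosubstMnat01}, applied to each $f[-p]$, asserts exactly that $f[-p]$ satisfies {\rm (SC$\sp{1}$)} and {\rm (SC$\sp{2}$)} for every $p$, which is the content of {\rm (SC$\sp{1}_{\rm G}$)} and {\rm (SC$\sp{2}_{\rm G}$)}. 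The only care needed is that the induced choice correspondence stay meaningful when $\emptyset \notin \dom f$, which is handled by working over $2\sp{Z} \cap \dom f$ and invoking $\dom f \ne \emptyset$.

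For each back-implication I would target {\rm (M$\sp{\natural}$-EXC)} through the demand-correspondence characterization of Theorem~\ref{THmconcavargmax}: it suffices to show that {\rm (SC$\sp{1}_{\rm G}$)} (respectively {\rm (SC$\sp{2}_{\rm G}$)}) forces $D(p;f)$ to satisfy {\rm (B$\sp{\natural}$-EXC)} for every $p$. The bridge between the availability-based (SC) conditions and the price-based object $D(p;f)$ is the observation that shrinking the choice domain to $Z$ is the same as forbidding $N \setminus Z$ by a prohibitive price: if $q$ agrees with $p$ on $Z$ and is sufficiently large on $N \setminus Z$, then $C(Z; f[-p])$ coincides with $D(q;f)$. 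Under this translation {\rm (SC$\sp{1}$)}/{\rm (SC$\sp{2}$)} for $f[-p]$ turn into gross-substitutes-type monotonicity statements for $D(\cdot\,;f)$ at comparable price vectors. Concretely, given $U,V \in D(p;f)$ with $i \in U \setminus V$, I would apply the condition with $Z_{1} = U \cup V$ and $Z_{2} = (U \cup V) - i$: since $U,V$ are global maximizers of $f[-p]$ they also maximize over subsets of $U \cup V$, so the substitutability guarantee produces a maximizer of the form $(U - i) \cup J$ with $J \subseteq V \setminus U$ lying again in $D(p;f)$, which is the raw material for an exchange.

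The main obstacle is upgrading this raw material to the single-element, two-sided exchange required by {\rm (B$\sp{\natural}$-EXC)}: the (SC) axioms deliver only a one-sided inclusion with an a priori multi-element $J$, whereas {\rm (B$\sp{\natural}$-EXC)} demands either $U-i,\,V+i \in D(p;f)$ or a single $j \in V \setminus U$ with both $U-i+j$ and $V+i-j$ in $D(p;f)$. I would overcome this by a minimal-counterexample argument, taking the guaranteed maximizer with $|J|$ minimal and using infinitesimal price perturbations on $V \setminus U$ to peel off one index at a time; the symmetric partner $V+i-j$ is produced by running the same step from the $V$-side, which is exactly where the companion condition {\rm (SC$\sp{2}$)} (the reverse substitutability) or a second application of {\rm (SC$\sp{1}$)} enters and makes the two back-implications genuinely parallel. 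Throughout, the prohibitive prices must be taken large but finite and the relevant demand sets kept nonempty, which is where $\dom f \ne \emptyset$ is used. As a sanity check, since {\rm (GS)} is equivalent to M$\sp{\natural}$-concavity (Theorem~\ref{THmconcavgross}), this step can equivalently be read as recovering {\rm (GS)} from the forbidding-price special cases supplied by the (SC) conditions.
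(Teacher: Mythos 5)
Your forward implications are exactly what the paper itself does: the text immediately preceding Theorem~\ref{THchoMnat01} derives {\rm (SC$\sp{1}_{\rm G}$)} and {\rm (SC$\sp{2}_{\rm G}$)} from M$\sp{\natural}$-concavity precisely by your reduction ($f[-p]$ is M$\sp{\natural}$-concave for every $p$, then apply Theorem~\ref{THchosubstMnat01}), and for the converse the paper gives no proof at all, citing Farooq and Tamura (2004). So the substance of your proposal is the converse sketch. Its architecture is sound: targeting {\rm (B$\sp{\natural}$-EXC)} for every $D(p;f)$ and invoking Theorem~\ref{THmconcavargmax} is the right move, since it avoids ever needing a supporting price for an arbitrary pair $X,Y$. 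Your raw-material step is also correct as stated: with $Z_{1}=U\cup V$, $Z_{2}=Z_{1}-i$, $X_{1}=U$, the set $X_{2}$ supplied by {\rm (SC$\sp{1}$)} satisfies $U-i\subseteq X_{2}\subseteq Z_{2}$, and $X_{2}\in D(p;f)$ because $V\subseteq Z_{2}$ forces the maximum of $f[-p]$ over $2\sp{Z_{2}}$ to equal the global maximum; a second application with $Z_{2}=V+i$ (and still $X_{1}=U$) even yields the $V$-side material $(V+i)\setminus K\in D(p;f)$ with $K\subseteq V\setminus U$, so your ``same step from the $V$-side'' can indeed be done with {\rm (SC$\sp{1}$)} alone. (A statement-level point: since Theorem~\ref{THchoMnat01} assumes only $\dom f\not=\emptyset$, the conditions must be read with nonemptiness provisos on the choice sets, as in their counterparts {\rm (SC$\sp{1}$[$\ZZ$])}, {\rm (SC$\sp{2}$[$\ZZ$])}; your remark about working within $\dom f$ is the right spirit.)

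The genuine gap is the upgrading step, and it is not a detail. {\rm (B$\sp{\natural}$-EXC)} is a dichotomy: either $U-i$ \emph{and} $V+i$ both lie in $D(p;f)$, or there is one \emph{common} $j\in V\setminus U$ with $U-i+j$ and $V+i-j$ both in $D(p;f)$. Your plan produces the two sides independently---some $j_{1}$ for the $U$-side and some $j_{2}$ for the $V$-side---and nothing in the sketch forces $j_{1}=j_{2}$, nor excludes mixed outcomes (say $U-i+j_{1}\in D(p;f)$ but only $V+i\in D(p;f)$), which certify neither clause. Your proposed tools do not close this. The ``infinitesimal price perturbations on $V\setminus U$'' fail as described: for small $\varepsilon>0$, $D(p+\varepsilon\chi_{V\setminus U};f)$ consists of the members of $D(p;f)$ disjoint from $V\setminus U$, so $V$ itself is ejected and the pair you are exchanging along no longer coexists at the perturbed price. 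A minimality argument inside the fixed $D(p;f)$ also stalls: taking $(U-i)\cup J\in D(p;f)$ with $|J|$ minimal and applying {\rm (SC$\sp{1}$)} to the pair $((U-i)\cup J,\,U)$ with $Z_{2}=(U\cup J)-j$ yields only $U\cup(J-j)\in D(p;f)$, which does not shrink $J$ in the representation you need. Extracting a simultaneous single exchange from a one-sided substitutability axiom is exactly the simultaneity phenomenon the paper flags around {\rm (NC)} versus {\rm (NCsim)} in Remark~\ref{RMmnat=SNC=GS}, and you cannot appeal to any such equivalence here without circularity, since {\rm (SC$\sp{1}_{\rm G}$)} $\Rightarrow$ M$\sp{\natural}$-concavity is what is being proved. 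This coordination step is where the actual content of Farooq and Tamura's theorem lies, and your proposal leaves it unproved; the same criticism applies, with an extra twist, to the {\rm (SC$\sp{2}_{\rm G}$)} half, whose raw material has a different shape (an upper bound $X_{1}\subseteq V+i$ with no lower containment), so the claimed parallelism is not automatic either.
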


\subsection{Twisted M$\sp{\natural}$-concavity}
\label{SCtwistMnat01}

Let $W$ be a subset of $N$.
For any subset $X$ of $N$
we define
\begin{equation}  \label{twistsetdef01}
\mathrm{tw}(X) = (X \setminus W) \cup (W \setminus X).
\end{equation}
A set function $f: 2\sp{N} \to \RR \cup \{ -\infty \}$ 
is said to be a {\em twisted M$\sp{\natural}$-concave function}
with respect to $W$,
if the function $\tilde{f}: 2\sp{N} \to \RR \cup \{ -\infty \}$ 
defined by
\begin{equation}\label{ftwistMnatdef01}
\tilde{f}(X) = f(\mathrm{tw}(X))
\qquad (X \subseteq N) 
\end{equation}
is an M$\sp{\natural}$-concave function 
(Ikebe and Tamura 2015)\citeH{IT15scnet}.
The same concept was introduced earlier by 
Sun and Yang (2006, 2009)\citeH{SY06}\citeH{SY09}
under the name of {\em GM-concave functions}. 
Note that $f$ is twisted M$\sp{\natural}$-concave with respect to $W$
if and only if 
it is twisted M$\sp{\natural}$-concave with respect to $U = N \setminus W$.

Mathematically, 
twisted M$\sp{\natural}$-concavity is equivalent to the original 
M$\sp{\natural}$-concavity  through twisting, 
and all the properties and theorems
about M$\sp{\natural}$-concave functions 
can be translated into those about
twisted M$\sp{\natural}$-concave functions.
However, 
twisted M$\sp{\natural}$-concave functions
are convenient sometimes in the modeling in economics.

For example, as pointed out by 
Ikebe and Tamura (2015),
\citeH{IT15scnet}%
twisted M$\sp{\natural}$-concavity implies
the same-side substitutability (SSS) and the cross-side complementarity (CSC) 
proposed by
Ostrovsky (2008)
\citeH{Ost08}% 
in discussing supply chain networks.
For a choice function $C: 2\sp{N} \to 2\sp{N}$
the {\em same-side substitutability} (SSS)
with respect to the bipartition $(U,W)$ of $N$
means the following property:
\begin{description}
\item[(SSS)]
(i)
 For any $Z_{1}, Z_{2} \subseteq N$ 
with 
$Z_{1} \cap U \supseteq Z_{2} \cap U$ and $Z_{1} \cap W =Z_{2} \cap W$,
we have
$Z_{2} \cap C(Z_{1}) \cap U \subseteq  C(Z_{2}) \cap U$,
and (ii) the same statement with $U$ and $W$ interchanged,
\end{description}
and the {\em cross-side complementarity} (CSC)
means 
\begin{description}
\item[(CSC)]
(i)
 For any $Z_{1}, Z_{2} \subseteq N$ 
with $Z_{1} \cap U \supseteq Z_{2} \cap U$ and $Z_{1} \cap W =Z_{2} \cap W$,
we have $C(Z_{1}) \cap W \supseteq  C(Z_{2}) \cap W$,
and (ii) the same statement with $U$ and $W$ interchanged.
\end{description}
For our exposition it is convenient to combine these two into
a single property:
\begin{description}
\item[(SSS-CSC)]
(i)
 For any $Z_{1}, Z_{2} \subseteq N$ 
with $Z_{1} \cap U \supseteq Z_{2} \cap U$ and $Z_{1} \cap W =Z_{2} \cap W$,
we have
$Z_{2} \cap C(Z_{1}) \cap U \subseteq  C(Z_{2}) \cap U$
and $C(Z_{1}) \cap W \supseteq  C(Z_{2}) \cap W$,
and (ii) the same statement with $U$ and $W$ interchanged.
\end{description}

The connection to twisted M$\sp{\natural}$-concavity is given 
in the following theorem%
\footnote{%%%%%%%%%%%%%%%%%
Theorem~\ref{THchotwistMnatuniq01} can be understood as
 a twisted version of 
Theorem~\ref{THchosubstMnat01uniq},
though a straightforward translation of Theorem~\ref{THchosubstMnat01uniq}
via twisting does not seem to yield Theorem~\ref{THchotwistMnatuniq01}.
Theorem~\ref{THchotwistMnatuniq01} can be proved 
as a special case of 
Theorem~\ref{THchotwistMnat01} below, for which a direct proof is given.
},  %%%%%% footnote %%%%%%%%%%
to be ascribed to 
Ikebe and Tamura (2015).
\citeH{IT15scnet}%
Recall from (\ref{choiceByVal01}) the definition of 
the choice function induced from a valuation function:
$C(Z) = C(Z ; f) = \argmax \{ f(Y) \mid Y \subseteq Z \}$.

\begin{theorem} \label{THchotwistMnatuniq01}
The choice function induced from 
a unique-selecting twisted M$\sp{\natural}$-concave set function
$f: 2\sp{N} \to \RR \cup \{ -\infty \}$
with $\emptyset \in \dom f$
has the property {\rm (SSS-CSC)}.
\end{theorem}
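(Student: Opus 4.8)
The plan is to reduce everything to the genuinely M$\sp{\natural}$-concave function $\tilde f = f\circ\mathrm{tw}$ and then to run an exchange argument on $\tilde f$ that parallels the proof of Theorem~\ref{THchosubstMnat01}, with the domain now being a set-interval instead of a downward-closed family. Write $U = N\setminus W$. Since $\mathrm{tw}$ is an involution that keeps the $U$-coordinates and complements the $W$-coordinates, I first translate the induced choice sets. For any $Z\subseteq N$ the map $Y\mapsto\mathrm{tw}(Y)$ carries the feasible family $\{Y\mid Y\subseteq Z\}$ bijectively onto the set-interval $\{Y'\mid a(Z)\subseteq Y'\subseteq b(Z)\}$ with $a(Z)=W\setminus Z$ and $b(Z)=(Z\cap U)\cup W$, and it satisfies $f(Y)=\tilde f(\mathrm{tw}(Y))$. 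Because $f$ is unique-selecting, so is $\tilde f$ on each such interval; hence, writing $X_i=C(Z_i;f)$, the sets $\tilde X_i:=\mathrm{tw}(X_i)$ are the unique maximizers of $\tilde f$ over $\{Y'\mid a(Z_i)\subseteq Y'\subseteq b(Z_i)\}$.

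Next I translate the hypotheses and the conclusion of (SSS-CSC)(i). The assumptions $Z_1\cap U\supseteq Z_2\cap U$ and $Z_1\cap W=Z_2\cap W$ say exactly that the two intervals share the common lower bound $a:=a(Z_1)=a(Z_2)=W\setminus Z_1$, while their upper bounds satisfy $b(Z_2)\subseteq b(Z_1)$ with $b(Z_1)\setminus b(Z_2)\subseteq U$. On the other side, using $\tilde X_i\cap U=X_i\cap U$ and $\tilde X_i\cap W=W\setminus(X_i\cap W)$ together with $b(Z_2)\cap U=Z_2\cap U$ and $b(Z_2)\cap W=W$, the two desired inclusions $Z_2\cap C(Z_1)\cap U\subseteq C(Z_2)\cap U$ and $C(Z_1)\cap W\supseteq C(Z_2)\cap W$ collapse into the single inclusion
\[
\tilde X_1\cap b(Z_2)\subseteq\tilde X_2 .
\]
Thus the whole of part (i) is reduced to a monotonicity statement for the unique maximizer of an M$\sp{\natural}$-concave function over set-intervals sharing a common lower bound.

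I would prove this inclusion by contradiction, mimicking Theorem~\ref{THchosubstMnat01} but with the membership tests dictated by the interval. Suppose $i\in(\tilde X_1\cap b(Z_2))\setminus\tilde X_2$. Since $a\subseteq\tilde X_2$ and $i\notin\tilde X_2$ we have $i\notin a$, and by hypothesis $i\in b(Z_2)$. Applying (M$\sp{\natural}$-EXC) to $X=\tilde X_1$, $Y=\tilde X_2$, $i$: in the unpaired case, $\tilde X_1-i$ still lies in the first interval (as $i\notin a$) and $\tilde X_2+i$ in the second (as $i\in b(Z_2)$); in the paired case, the witness $j\in\tilde X_2\setminus\tilde X_1$ satisfies $j\notin a$ (since $a\subseteq\tilde X_1$) and $j\in b(Z_2)\subseteq b(Z_1)$ (since $j\in\tilde X_2$), so $\tilde X_1-i+j$ and $\tilde X_2+i-j$ are again feasible in the respective intervals. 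In either case the two exchanged sets differ from $\tilde X_1$ and $\tilde X_2$, so uniqueness of the maximizers forces strict decreases (e.g.\ $\tilde f(\tilde X_1-i+j)<\tilde f(\tilde X_1)$ and $\tilde f(\tilde X_2+i-j)<\tilde f(\tilde X_2)$), contradicting the exchange inequality. Hence no such $i$ exists, which is the required inclusion and establishes (i). Part (ii) then follows by the identical argument with $U$ and $W$ interchanged, using that $f$ is twisted M$\sp{\natural}$-concave with respect to $U=N\setminus W$ as well (complementation $X\mapsto N\setminus X$ preserves M$\sp{\natural}$-concavity).

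I expect the main obstacle to be the bookkeeping of the twist: correctly identifying that the induced feasible family becomes the set-interval with lower bound $W\setminus Z$ and upper bound $(Z\cap U)\cup W$, and recognizing that the two apparently different conclusions (the $U$-side substitutability and the $W$-side complementarity) fuse into the single inclusion $\tilde X_1\cap b(Z_2)\subseteq\tilde X_2$. Once this dictionary is fixed, the exchange step is routine, and the role of the unique-selecting hypothesis is precisely to upgrade the weak (M$\sp{\natural}$-EXC) inequality into a strict contradiction.
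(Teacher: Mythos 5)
Your proof is correct, and it reaches the result by a route that differs in an instructive way from the paper's. The paper does not prove Theorem~\ref{THchotwistMnatuniq01} directly: it derives it as a special case of Theorem~\ref{THchotwistMnat01} for choice correspondences, whose proof fixes $X_{1} \in C(Z_{1};f)$, chooses $X_{2} \in C(Z_{2};f)$ minimizing the potential
$\varPhi(X_{1}, X_{2}) = |(Z_{2} \cap X_{1} \cap U ) \setminus (X_{2} \cap U ) | + |(X_{2} \cap W ) \setminus (X_{1} \cap W ) |$,
and shows that a violating element $i$ yields new maximizers with $\varPhi$ decreased by one. Your ``bad'' set $(\mathrm{tw}(X_{1}) \cap b(Z_{2})) \setminus \mathrm{tw}(X_{2})$ is exactly the paper's set
$\bigl( ( Z_{2} \cap X_{1} \cap U ) \setminus (X_{2} \cap U ) \bigr) \cup \bigl( (X_{2} \cap W ) \setminus (X_{1} \cap W ) \bigr)$,
and the exchange step via (M$\sp{\natural}$-EXC) for $\tilde{f}$ is the same skeleton; but because the theorem assumes unique selection, you can dispense with $\varPhi$ entirely: uniqueness turns both comparisons into strict inequalities whose sum immediately contradicts the exchange inequality (and remains valid when an exchanged set falls outside $\dom \tilde{f}$, since then the right-hand side is $-\infty$ against a finite left-hand side). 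Two features of your write-up are genuine improvements in clarity for this special case. First, the dictionary identifying the twisted feasible family as the set-interval $[\,W \setminus Z,\ (Z\cap U)\cup W\,]$ makes the feasibility verifications---which the paper dismisses with ``as can be verified easily'' after a case split over $i \in U$ or $W$ and $j \in U$, $W$, or absent---into mechanical checks against a common lower bound $a$ and nested upper bounds $b(Z_{2}) \subseteq b(Z_{1})$. Second, fusing the $U$-side substitutability and $W$-side complementarity into the single inclusion $\mathrm{tw}(X_{1}) \cap b(Z_{2}) \subseteq \mathrm{tw}(X_{2})$ explains the paper's own footnote that a straightforward twisting of Theorem~\ref{THchosubstMnat01uniq} does not yield Theorem~\ref{THchotwistMnatuniq01}: after twisting, the feasible regions are intervals with a common lower bound rather than down-sets, so the down-set argument must be adapted exactly as you do. The trade-off is generality: the paper's potential-function argument covers correspondences and delivers (SSS-CSC$\sp{1}$) and (SSS-CSC$\sp{2}$), while your argument is confined to the unique-selecting statement; your appeal to the equivalence of twisting with respect to $W$ and $U = N \setminus W$ for part (ii) matches the paper's own symmetry reduction and is legitimate.
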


For choice correspondences we need to consider
the following  pair of conditions.

\begin{description}
\item[(SSS-CSC$\sp{1}$)]
(i)
 For any $Z_{1}, Z_{2} \subseteq N$ 
with $Z_{1} \cap U \supseteq Z_{2} \cap U$ and $Z_{1} \cap W =Z_{2} \cap W$
and any  $X_{1} \in C(Z_{1})$, 
there exists $X_{2} \in C(Z_{2})$
 such that
$Z_{2} \cap X_{1} \cap U \subseteq  X_{2} \cap U$
and
$X_{1} \cap W \supseteq X_{2} \cap W$,
and (ii) the same statement with $U$ and $W$ interchanged.

\item[(SSS-CSC$\sp{2}$)]
(i)
 For any $Z_{1}, Z_{2} \subseteq N$ 
with $Z_{1} \cap U \supseteq Z_{2} \cap U$ and $Z_{1} \cap W =Z_{2} \cap W$
and any  $X_{2} \in C(Z_{2})$, 
there exists $X_{1} \in C(Z_{1})$
 such that
$Z_{2} \cap X_{1} \cap U \subseteq  X_{2} \cap U$
and	
$X_{1} \cap W \supseteq X_{2} \cap W$,
and (ii) the same statement with $U$ and $W$ interchanged.
\end{description}

The following theorem  
(Ikebe and Tamura 2015)
\citeH{IT15scnet}%
states that these two properties are
implied by twisted M$\sp{\natural}$-concavity.

\begin{theorem} \label{THchotwistMnat01}
The choice correspondence induced from 
a twisted M$\sp{\natural}$-concave set function
$f: 2\sp{N} \to \RR \cup \{ -\infty \}$
with $\emptyset \in \dom f$
has the properties 
{\rm (SSS-CSC$\sp{1}$)} and {\rm (SSS-CSC$\sp{2}$)}.
\end{theorem}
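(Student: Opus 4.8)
The plan is to pass to the twisted function and reproduce the exchange argument behind Theorem~\ref{THchosubstMnat01}, but carried out over an interval rather than over a down-set. Write $U=N\setminus W$ and recall that $\mathrm{tw}$ is an involution with $\mathrm{tw}(X)\cap U=X\cap U$ and $\mathrm{tw}(X)\cap W=W\setminus X$, and that $\tilde f(X)=f(\mathrm{tw}(X))$ is M$\sp{\natural}$-concave, hence satisfies {\rm (M$\sp{\natural}$-EXC)}. Setting $\tilde X=\mathrm{tw}(X)$, one checks directly from $f(Y)=\tilde f(\mathrm{tw}(Y))$ and the bijectivity of $\mathrm{tw}$ that $X\in C(Z;f)$ if and only if $\tilde X$ maximizes $\tilde f$ over the interval $\mathcal I(Z)=\{Y\mid Y\cap U\subseteq Z\cap U,\ Y\cap W\supseteq W\setminus Z\}$. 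Under the hypotheses $Z_1\cap U\supseteq Z_2\cap U$ and $Z_1\cap W=Z_2\cap W$, the lower bound $B:=W\setminus Z_1=W\setminus Z_2$ is common to both intervals and $\mathcal I(Z_2)\subseteq\mathcal I(Z_1)$. (Since $W\in\mathcal I(Z)$ with $\tilde f(W)=f(\emptyset)$ finite, the assumption $\emptyset\in\dom f$ guarantees these maximizer sets are nonempty.)

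First I would rewrite the target inclusions as a single containment. Put $\widehat Z_2=(Z_2\cap U)\cup W$, so that $\widehat Z_2\cap U=Z_2\cap U$ and $\widehat Z_2\cap W=W$. In the twisted variables $\tilde X_i=\mathrm{tw}(X_i)$, the pair of conditions $Z_2\cap X_1\cap U\subseteq X_2\cap U$ and $X_1\cap W\supseteq X_2\cap W$ is then equivalent to the single inclusion $\tilde X_1\cap\widehat Z_2\subseteq\tilde X_2$: on $U$ this reads $Z_2\cap\tilde X_1\cap U\subseteq\tilde X_2\cap U$, and on $W$, where complementation reverses the direction, $\tilde X_1\cap W\subseteq\tilde X_2\cap W$ is exactly $X_1\cap W\supseteq X_2\cap W$. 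Thus both (SSS-CSC$\sp{1}$)(i) and (SSS-CSC$\sp{2}$)(i) reduce to producing a maximizer realizing $\tilde X_1\cap\widehat Z_2\subseteq\tilde X_2$.

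I would then run the minimal-counterexample exchange argument. For (SSS-CSC$\sp{1}$) the set $\tilde X_1$ is given and I choose $\tilde X_2\in\argmax_{\mathcal I(Z_2)}\tilde f$ minimizing $|(\tilde X_1\cap\widehat Z_2)\setminus\tilde X_2|$; for (SSS-CSC$\sp{2}$) I fix $\tilde X_2$ and choose such $\tilde X_1\in\argmax_{\mathcal I(Z_1)}\tilde f$. If this set were nonempty, pick $i$ in it. Since $i\in\tilde X_1\setminus\tilde X_2$, {\rm (M$\sp{\natural}$-EXC)} applied to $(\tilde X_1,\tilde X_2,i)$ gives either $\tilde f(\tilde X_1)+\tilde f(\tilde X_2)\le\tilde f(\tilde X_1-i)+\tilde f(\tilde X_2+i)$, or the existence of $j\in\tilde X_2\setminus\tilde X_1$ with $\tilde f(\tilde X_1)+\tilde f(\tilde X_2)\le\tilde f(\tilde X_1-i+j)+\tilde f(\tilde X_2+i-j)$. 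Exactly as in the proof of Theorem~\ref{THchosubstMnat01}, once the four perturbed sets lie in their intervals the optimality of $\tilde X_1,\tilde X_2$ forces all inequalities to be equalities, so the perturbed sets are again maximizers. The chosen perturbation then strictly decreases $|(\tilde X_1\cap\widehat Z_2)\setminus\tilde X_2|$: the step involving $i$ (adding it to $\tilde X_2$ for (SSS-CSC$\sp{1}$), or deleting it from $\tilde X_1$ for (SSS-CSC$\sp{2}$)) removes $i$ from this set because $i\in(\tilde X_1\cap\widehat Z_2)\setminus\tilde X_2$, while the step involving $j$ leaves the count unchanged because $j\in\tilde X_2\setminus\tilde X_1$. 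This contradicts minimality, so the count is zero and $\tilde X_1\cap\widehat Z_2\subseteq\tilde X_2$, which is (i). Part (ii) follows verbatim, since twisted M$\sp{\natural}$-concavity with respect to $W$ coincides with that with respect to $U$, so $U$ and $W$ may be interchanged throughout.

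The part needing care — the main obstacle — is the feasibility bookkeeping for $\mathcal I(\cdot)$, which carries both an upper bound on the $U$-coordinates and the lower bound $B$ on the $W$-coordinates, unlike the plain down-set $\{Y\subseteq Z\}$ of Theorem~\ref{THchosubstMnat01}. Concretely I must verify $\tilde X_1-i,\ \tilde X_1-i+j\in\mathcal I(Z_1)$ and $\tilde X_2+i,\ \tilde X_2+i-j\in\mathcal I(Z_2)$. The lower bound survives because the deleted elements avoid $B$: $i\notin B$ since $B\subseteq\tilde X_2$ while $i\notin\tilde X_2$, and $j\notin B$ since $B\subseteq\tilde X_1$ while $j\notin\tilde X_1$. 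The $U$-upper bound survives because any added $U$-element is admissible: if $i\in U$ then $i\in\widehat Z_2$ forces $i\in Z_2\cap U$, and if $j\in U$ then $j\in\tilde X_2$ gives $j\in Z_2\cap U\subseteq Z_1\cap U$. Granting these four checks, the equalities are forced and the counting contradiction closes the proof.
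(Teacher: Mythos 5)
Your proof is correct and is essentially the paper's own argument in twisted coordinates: you pass to $\tilde{f}$, apply (M$\sp{\natural}$-EXC), and run a minimal-counterexample exchange, where your potential $|(\tilde{X}_{1}\cap\widehat{Z}_{2})\setminus\tilde{X}_{2}|$ coincides exactly with the paper's $\varPhi(X_{1},X_{2})$ and your interval $\mathcal{I}(Z)$ is just the twisted image of the down-set $\{Y\subseteq Z\}$. The only difference is presentational — your reformulation via $\widehat{Z}_{2}$ packages the two target inclusions into one, and you spell out the four feasibility checks that the paper dismisses as ``easily verified.''
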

\begin{proof}
We prove (SSS-CSC$\sp{1}$)-(i) and (SSS-CSC$\sp{2}$)-(i);
the proofs of (SSS-CSC$\sp{1}$)-(ii) and (SSS-CSC$\sp{2}$)-(ii)
are obtained by interchanging $U$ and $W$.
Assume 
$Z_{1} \cap U \supseteq Z_{2} \cap U$ and $Z_{1} \cap W =Z_{2} \cap W$,
and let
 $\tilde{f}$ be the M$\sp{\natural}$-concave function
in (\ref{ftwistMnatdef01}) associated with $f$.
For 
$X_{1} \subseteq Z_{1}$ and $X_{2} \subseteq Z_{2}$
define
\[
\varPhi(X_{1}, X_{2}) = 
    |(Z_{2} \cap X_{1} \cap U ) \setminus (X_{2} \cap U ) |
  + |(X_{2} \cap W ) \setminus (X_{1} \cap W ) | .
\]

Proof of (SSS-CSC$\sp{1}$)-(i): \ 
Let $X_{1} \in C(Z_{1};f)$
and take
$X_{2} \in C(Z_{2};f)$ with
$\varPhi(X_{1}, X_{2})$ minimum.
To prove by contradiction, suppose that there exists 
$i \in \big( ( Z_{2} \cap X_{1} \cap U ) \setminus (X_{2} \cap U ) \big)
\cup \big( (X_{2} \cap W ) \setminus (X_{1} \cap W ) \big)$.
Since
$i \in \mathrm{tw}(X_{1})  \setminus \mathrm{tw}(X_{2})$,
(M$\sp{\natural}$-EXC) for $\tilde{f}$ implies
\begin{enumerate}
\item[(i)]
$\tilde{f}( \mathrm{tw}(X_{1}) ) + \tilde{f}( \mathrm{tw}(X_{2}) ) 
\leq \tilde{f}( \mathrm{tw}(X_{1}) - i ) + \tilde{f}( \mathrm{tw}(X_{2}) + i )$
\ \ or

\item[(ii)]
 there exists 
$j \in \mathrm{tw}(X_{2})  \setminus \mathrm{tw}(X_{1})$  
such that
$\tilde{f}( \mathrm{tw}(X_{1}) ) + \tilde{f}(  \mathrm{tw}(X_{2})) 
\leq  \tilde{f}( \mathrm{tw}(X_{1}) - i + j) 
      + \tilde{f}( \mathrm{tw}(X_{2}) + i -j)$.
\end{enumerate}
Letting 
\begin{align*} 
\hat{X}_{1} =  
   \left\{  \begin{array}{ll}
   \mathrm{tw}(\mathrm{tw}(X_{1}) - i))   & (\mbox{in (i)}),  \\
  \mathrm{tw}(\mathrm{tw}(X_{1}) - i + j))&  (\mbox{in (ii)}),  \\
             \end{array}  \right.
\quad
\hat{X}_{2} =  
   \left\{  \begin{array}{ll}
   \mathrm{tw}(\mathrm{tw}(X_{2}) + i))   & (\mbox{in (i)}),  \\
  \mathrm{tw}(\mathrm{tw}(X_{2}) + i - j))&  (\mbox{in (ii)}),  \\
             \end{array}  \right.
%%%% cases / Cases %%%%
\end{align*}
we can express the above inequalities in (i) and (ii) as
\[
f( X_{1} ) + f( X_{2} ) \leq f( \hat{X}_{1} ) + f( \hat{X}_{2} ) .
\]
As can be verified easily,  we have $\hat{X}_{1} \subseteq Z_{1}$ 
and $\hat{X}_{2} \subseteq Z_{2}$, 
from which follow
$f( \hat{X}_{1} ) \leq f( X_{1})$
and
$f( \hat{X}_{2} ) \leq f(X_{2})$
since  $X_{1} \in C(Z_{1};f)$
and $X_{2} \in C(Z_{2};f)$.
Therefore,
the inequalities are in fact equalities, and
$\hat{X}_{1} \in C(Z_{1};f)$ and $\hat{X}_{2} \in C(Z_{2};f)$.
But we have
$\varPhi(X_{1}, \hat{X}_{2}) = \varPhi(X_{1}, X_{2}) - 1$,
which contradicts the choice of $X_{2}$.

Proof of (SSS-CSC$\sp{2}$)-(i): \ 
Let $X_{2} \in C(Z_{2};f)$
and take
$X_{1} \in C(Z_{1};f)$ with
$\varPhi(X_{1}, X_{2})$
minimum.
By the same argument as above we obtain
$\hat{X}_{1} \in C(Z_{1};f)$ 
with 
$\varPhi(\hat{X}_{1}, X_{2}) = \varPhi(X_{1}, X_{2}) - 1$.
This is a contradiction to the choice of $X_{1}$.
\end{proof}

The concept of twisted M$\sp{\natural}$-concavity
can also be defined for functions on integer vectors $\ZZ\sp{N}$
to be used for multi-unit models. See Section \ref{SCtwistMnatZ}.

\subsection{Examples}
\label{SCmnatexample01}

Here are some examples of M$\sp{\natural}$-concave set functions.

\begin{enumerate}
\item

For real numbers $a_{i}$ indexed by $i \in N$,
the {\em additive valuation}
\begin{equation} \label{mlinfn01}
 f(X)=   \sum_{i \in X} a_{i}  
\qquad (X \subseteq N)
\end{equation}
is an M$\sp{\natural}$-concave function.

\item
For a set of nonnegative numbers $a_{i}$ indexed by $i \in N$,
the {\em maximum-value function}
({\em unit-demand utility})
\begin{equation} \label{unitdemutil01}
 f(X) = \max_{i \in X} a_{i}
\qquad (X \subseteq N)
\end{equation}
with $f(\emptyset) = 0$
is an M$\sp{\natural}$-concave function.

\item
For a univariate concave function
$\varphi: \ZZ \to \RR \cup \{ -\infty \}$
(i.e., if $\varphi(t-1) + \varphi(t+1) \leq 2\varphi(t)$ for all integers $t$),
the function $f$ defined by
\begin{equation}  \label{cardconv01}
f(X)=\varphi(|X|) 
\qquad (X \subseteq N)
\end{equation}
is M$\sp{\natural}$-concave.
Such $f$ is called a {\em symmetric concave valuation}.

\item
For a family of univariate concave functions 
$\{ \varphi_{A} \mid A \in \calT\}$ indexed by 
a family $\calT$ of subsets of $N$,
the function 
\begin{equation}  \label{laminarconv01}
 f(X)  =  \sum_{A \in \calT} \varphi_{A}(|A \cap X|)
\qquad (X \subseteq N)
\end{equation}
is submodular. 
A function $f$ of the form (\ref{laminarconv01})
is called {\em laminar concave},
if $\calT$ is  a laminar family, i.e., if
[$A, B \in \calT   \Rightarrow A \cap B = \emptyset$
or $A \subseteq B$ or $A \supseteq B$].
A laminar concave function is M$\sp{\natural}$-concave.  See 
Note 6.11 of Murota (2003)
\citeH[Note 6.11]{Mdcasiam}%
for a proof. A special case of
(\ref{laminarconv01}) with $\calT = \{ N \}$ reduces to (\ref{cardconv01}).

\item
Given a matroid%
\footnote{%%%%%%%%%%%%
For matroids, see, e.g., 
Murota (2000a),
\citeH{Mspr2000}%
Oxley (2011),
\citeH{Oxl11}%
and Schrijver (2003).
\citeH{Sch03}%
} %%%%%%%%%%%
 on $N$ in terms of the family $\calI$ of 
independent sets, the {\em rank function} $f$ is defined by
\begin{equation}  \label{matroidrank}
 f(X) = \max\{|I| \mid I \in {\calI},\ I \subseteq X\}
\qquad (X \subseteq N),
\end{equation}
which denotes the maximum size of an independent set contained in $X$.
A matroid rank function {\rm (\ref{matroidrank})}
is M$\sp{\natural}$-concave.
A {\em weighted matroid rank function} 
(or {\em weighted matroid valuation}) is a function represented as
\begin{equation}  \label{weightedrank}
 f(X) = \max\{ w(I) \mid I \in {\calI},\ I \subseteq X\}
\qquad (X \subseteq N)
\end{equation}
with some weight $w \in \RR\sp{N}$,
where $w(I) = \sum_{i \in I} w_{i}$. 
A weighted matroid rank function (\ref{weightedrank}) is M$\sp{\natural}$-concave
(Shioura 2012).
\citeH{Shi12rank}%
See Murota (2010)
\citeH{Mrims10}%
for an elementary proof
for the M$\sp{\natural}$-concavity of 
(\ref{weightedrank}) as well as (\ref{matroidrank}).

\item
Let $G=(S,T; E)$ be a bipartite graph with vertex bipartition
$(S, T)$ and edge set $E$,
and suppose that each edge $e \in E$ 
is associated with weight $w_{e} \in \RR$.
For $M \subseteq E$, 
we denote by $\partial M$ the set of the vertices incident to some edge in $M$,
and call $M$ a {\em matching} if
$|S \cap  \partial M \, | = |M \, | = |T \cap  \partial M \, |$.
For $X \subseteq T$ denote by 
$f(X)$ the maximum weight
of a matching that precisely matches $X$ in $T$, i.e.,
\begin{equation}  \label{weightbipartmatch}
 f(X) = \max \{ w(M) 
         \mid \mbox{$M$ is a matching, $T \cap \partial M=X$}  \}
\end{equation}
with $w(M) = \sum_{e \in M} w_{e}$, where $f(X)= -\infty$ 
if no such $M$ exists for $X$.
Then 
$f: 2\sp{T} \to \RR \cup \{ -\infty \}$
is an M$\sp{\natural}$-concave function.
See 
Example 3.3 of Murota (1996a) or
\citeH[Example 3.3]{MvmiI96}%
Example 5.2.4 of Murota (2000a)
\citeH[Example 5.2.4]{Mspr2000}%
for proofs.
Such function is called an {\em assignment valuation} by
Hatfield and Milgrom (2005).
\citeH{HM05}%
Assignment valuations cover a fairly large class of 
M$\sp{\natural}$-concave functions, but not every
M$\sp{\natural}$-concave function can be represented in the form of
(\ref{weightbipartmatch}), as shown by
Ostrovsky and Paes Leme (2015).
\citeH{OP15gs}%

\item

Let $G=(S,T; E)$ be a bipartite graph with vertex bipartition
$(S, T)$ and edge set $E$,
with weight $w_{e} \in \RR$ associated with each edge $e \in E$.
Furthermore, suppose that a matroid 
on $S$ is given in terms of the family $\calI$ of independent sets
(see Fig.~\ref{FGindassgraph}).
For $X \subseteq T$ denote by 
$f(X)$ the maximum weight
of a matching such that the end-vertices in $S$ form an independent set
and the end-vertices in $T$ are equal to $X$, i.e.,
\begin{equation}  \label{weightindepmatch}
 f(X) = \max \{ w(M) 
         \mid \mbox{$M$ is a matching, $S \cap \partial M \in \calI$, 
                $T \cap \partial M=X$}  \},
\end{equation}
where $f(X)= -\infty$ if no such $M$ exists for $X$.
We call such $f$ an {\em independent assignment valuation}.
It is known that an independent assignment valuation
is  M$\sp{\natural}$-concave.
For proofs, see 
Example 5.2.18 of Murota (2000a),
\citeH[Example 5.2.18]{Mspr2000}% 
Section 9.6.2 of Murota (2003),
\citeH[Section 9.6.2]{Mdcasiam}%
and Kobayashi et al.~(2007).
\citeH{KMT07jump}%
If the given matroid is a free matroid with $\calI = 2\sp{S}$,
(\ref{weightindepmatch}) reduces to (\ref{weightbipartmatch}).
\end{enumerate}

%%%  FIGURE %%%%%%%%%%%%%%%%%%
%% \input{DCAfgIAVgraph}
\begin{figure}\begin{center}
 \includegraphics[width=0.25\textwidth,clip]{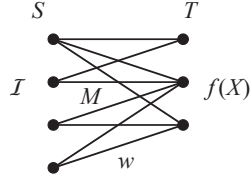}
 \caption{Independent assignment valuation}
 \label{FGindassgraph}
\end{center}\end{figure}
%%%  FIGURE %%%%%%%%%%%%%%%%%%

\subsection{Concluding remarks of section \ref{SCmnatconcav01}}

We collect here the conditions that characterize
M$\sp{\natural}$-concave set functions:

-- Exchange property
(M$\sp{\natural}$-EXC)  
\hfill (Section~\ref{SCexchange01})

-- Multiple exchange property
(M$\sp{\natural}$-EXC$_{\rm m}$)
\\ \qquad \ \ 
= Strong no complementarities property (SNC)
\hfill (Section~\ref{SCexchange01})

-- Local exchange property 
(Theorems \ref{THmconcavlocexc01} and \ref{THmconcavlocexc01B})
\hfill (Section~\ref{SCexchange01})

-- Single improvement property (SI)
\hfill (Section~\ref{SCmaximization01})

-- Exchange property (B$\sp{\natural}$-EXC) for the maximizers $D(p;f)$
\hfill (Section~\ref{SCmaximizers01})

-- Multiple (one-sided) exchange property for the maximizers $D(p;f)$
\\ \qquad \ \ 
= No complementarities property (NC)
\hfill (Section~\ref{SCmaximizers01})

-- Multiple exchange property (NCsim) for the maximizers $D(p;f)$
\hfill (Section~\ref{SCmaximizers01})

-- Gross substitutability (GS)
\hfill (Section~\ref{SCmaximizers01})

-- Parametrized substitutability
(SC$\sp{1}_{\rm {G}}$)
\hfill (Section~\ref{SCchoiceMnat01})

-- Parametrized substitutability
(SC$\sp{2}_{\rm {G}}$)
\hfill (Section~\ref{SCchoiceMnat01})

%%\newpage

%%%% ZZZZZZZZZZZZZZZZZZZZZZZZZZZZZZZZZZ

\section{M$\sp{\natural}$-concave Function on $\ZZ\sp{n}$}
\label{SCmnatconcavZ}

In Section \ref{SCmnatconcav01}
we have considered M$\sp{\natural}$-concave set functions,
which correspond to single-unit valuations with substitutability.
In this section we deal with M$\sp{\natural}$-concave functions 
defined on integer vectors,
$f: \mathbb{Z}\sp{n} \to \RR \cup \{ -\infty \}$,
which correspond to multi-unit valuations with substitutability.

\subsection{Exchange property}
\label{SCexchangeZ}

Let $N$ be a finite set, say, $N = \{ 1,2,\ldots, n \}$ for $n \geq 1$. 
For a vector $z \in \RR\sp{N}$ in general, define 
the {\em positive}\index{positive support}
 and {\em negative supports}\index{negative support}
 of $z$ as
\begin{equation} \label{vecsupportdef}
 \suppp(z) = \{ i \mid z_{i} > 0 \},
\qquad 
 \suppm(z) = \{ j \mid z_{j} < 0 \}.
\end{equation}
Recall that, for $i \in N$, the $i$th unit vector is denoted by $\chi_{i}$.

We say that a function
$f: \mathbb{Z}\sp{N} \to \RR \cup \{ -\infty \}$
with $\dom f \not= \emptyset$
is {\em M$\sp{\natural}$-concave}, if,
for any $x, y \in \ZZ\sp{N}$ and $i \in \suppp(x-y)$, 
we have (i)
\begin{equation}  \label{mconcav1Z}
f(x) + f(y)  \leq  f(x -\unitvec{i}) + f(y+\unitvec{i})
\end{equation}
or (ii) there exists some $j \in \suppm(x-y)$ such that
\begin{equation}  \label{mconcav2Z}
f(x) + f(y)   \leq 
 f(x-\unitvec{i}+\unitvec{j}) + f(y+\unitvec{i}-\unitvec{j}) .
\end{equation}
This property is referred to as the {\em exchange property}.
See Fig.~\ref{FGmnatfndef}, in which 
$(x',y')= (x -\unitvec{i}, y+\unitvec{i})$ and
$(x'',y'')= ( x-\unitvec{i}+\unitvec{j}, y+\unitvec{i}-\unitvec{j})$.

%%%  FIGURE %%%%%%%%%%%%%%%%%%
%% \input{DCAfgMnatdef}
\begin{figure}\begin{center}
 \includegraphics[width=0.35\textwidth,clip]{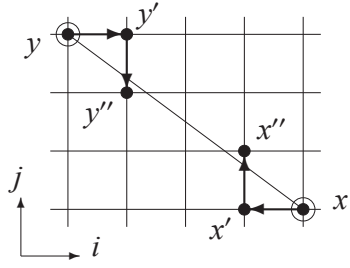}
\caption{Nearer pair in the definition of M$\sp{\natural}$-concave functions}
 \label{FGmnatfndef}
\end{center}\end{figure}
%%%  FIGURE %%%%%%%%%%%%%%%%%%

A more compact expression of the exchange property is as follows:
\begin{description}
\item[(M$\sp{\natural}$-EXC\hbox{[$\ZZ$]})] 
 For any $x, y \in \ZZ\sp{N}$ and $i \in \suppp(x-y)$, we have
\begin{equation} \label{mnatconcavexc2Z}
f(x) + f(y)   \leq 
 \max_{j \in \suppm(x - y) \cup \{ 0 \}} 
 \{ f(x - \unitvec{i} + \unitvec{j}) + f(y + \unitvec{i} - \unitvec{j}) \},
\end{equation}
\end{description}
where $\chi_{0}=\veczero$ (zero vector).
In the above statement we may change
``For any $x, y \in \ZZ\sp{N}$\,'' to ``For any $x, y \in \dom f$\,''
 since if $x \not\in \dom f$ or $y \not\in \dom f$,
(\ref{mnatconcavexc2Z}) trivially holds with $f(x) + f(y)  = -\infty$. 
An M$\sp{\natural}$-concave function $f$ with $\dom f \subseteq \{ 0,1 \}\sp{N}$
can be identified with an M$\sp{\natural}$-concave set function
introduced in Section \ref{SCexchange01}.
A function $f$ is called {\em M$\sp{\natural}$-convex} 
if $-f$ is M$\sp{\natural}$-concave.

It follows from 
(M$\sp{\natural}$-EXC[$\ZZ$]) 
that the effective domain $B = \dom f$
of an M$\sp{\natural}$-concave function $f$
has the following exchange property:
\begin{description}
\item[(B$\sp{\natural}$-EXC\hbox{[$\ZZ$]})] 
For any $x, y \in B$ and $i \in \suppp(x-y)$, we have
(i)
$x -\unitvec{i} \in B$, $y+\unitvec{i} \in B$
\ or \  
\\
(ii) there exists some $j \in \suppm(x-y)$ such that
$x-\unitvec{i}+\unitvec{j}  \in B$, $y+\unitvec{i}-\unitvec{j} \in B$.
\end{description}
A set $B \subseteq \ZZ\sp{N}$ having this property 
is called an {\em M$\sp{\natural}$-convex set}
(or {\em integral generalized polymatroid}, {\em integral g-polymatroid}).  
An M$\sp{\natural}$-convex set contained 
in the unit cube $\{ 0,1 \}\sp{N}$
can be identified with an M$\sp{\natural}$-convex family of subsets
(Section \ref{SCexchange01}).

M$\sp{\natural}$-concavity can be characterized by a local exchange property
under the assumption 
that function $f$ is (effectively) defined on an M$\sp{\natural}$-convex set
(Murota 1996c, 2003; Murota and Shioura 1999).
\citeH{Mstein, Mdcasiam, MS99gp}%
The conditions (\ref{mnatconcavexc0loc2Z})--(\ref{mnatconcavexc4loc2Z}) below
are ``local'' in the sense that they require 
the exchangeability of the form of (\ref{mnatconcavexc2Z}) 
only for some
$(x,y)$ with $\| x - y \|_{1} \leq 4$.

\begin{theorem}\label{THmconcavlocexcZ}
A function 
$f: \mathbb{Z}\sp{N} \to \RR \cup \{ -\infty \}$
is M$\sp{\natural}$-concave
if and only if 
$\dom f$ is an M$\sp{\natural}$-convex set and 
the following conditions hold:
\begin{align} 
& f( x + 2 \unitvec{i} ) + f( x ) 
\leq 2 f(x + \unitvec{i})  
\qquad 
(\forall x \in \ZZ\sp{N}, \ \forall i \in N),
\label{mnatconcavexc0loc2Z}
\\
&
f( x + \unitvec{i} + \unitvec{j} ) + f( x ) 
\leq f(x + \unitvec{i}) + f(x + \unitvec{j}) 
\qquad 
(\forall x \in \ZZ\sp{N}, \ \forall i,j \in N, \  i \not= j),
\label{mnatconcavexc1loc2Z}
\\
&
 f( x + 2 \unitvec{i} ) + f( x + \unitvec{k})  \leq 
 f(x + \unitvec{i} + \unitvec{k}) + f(x + \unitvec{i})
\qquad 
 (\forall x \in \ZZ\sp{N}, \ \forall i,k \in N,  \  i \not= k),
\label{mnatconcavexc2loc2Z}
\\
&
 f( x + \unitvec{i} + \unitvec{j} ) + f( x + \unitvec{k})  \leq 
\max\left[ f(x + \unitvec{i} + \unitvec{k}) + f(x + \unitvec{j}), 
           f(x + \unitvec{j} + \unitvec{k}) + f(x + \unitvec{i}) \right]  
\notag \\
& \hspace{0.5\textwidth}
 (\forall x \in \ZZ\sp{N}, \ \forall i,j,k \, \mbox{\rm (distinct}) \in N) ,
\label{mnatconcavexc3loc2Z}
\\
&
 f( x + \unitvec{i} + \unitvec{j} ) + f( x + \unitvec{k} + \unitvec{l})  
\notag  \\ &
\leq 
\max\left[  f(x + \unitvec{i} + \unitvec{k}) + f(x + \unitvec{j} + \unitvec{l} ),  
       f(x + \unitvec{j} + \unitvec{k}) + f(x + \unitvec{i} + \unitvec{l}) \right]  
\notag \\
& \hspace{0.4\textwidth}
 (\forall x \in \ZZ\sp{N},
 \ \forall i,j,k, l \in N \ \mbox{\rm with} \ \{ i,j \} \cap \{ k,l \} = \emptyset),
\label{mnatconcavexc4loc2Z}
\end{align}
where in {\rm (\ref{mnatconcavexc4loc2Z})} we allow the possibility of
$i= j$ or $k=l$. 
\end{theorem}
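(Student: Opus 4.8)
The plan is to treat the two implications separately. The ``only if'' direction is a routine specialization of the global exchange property, whereas the ``if'' direction (the local conditions imply the global exchange) carries essentially all of the content.

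Suppose first that $f$ is M$\sp{\natural}$-concave, so that (M$\sp{\natural}$-EXC[$\ZZ$]) holds. Each of (\ref{mnatconcavexc0loc2Z})--(\ref{mnatconcavexc3loc2Z}) is obtained by instantiating (\ref{mnatconcavexc2Z}) at a single pair of $\ell_{1}$-distance at most $3$ and reading off the resulting maximum: apply it to $(x+2\chi_{i},\,x)$ at index $i$ for (\ref{mnatconcavexc0loc2Z}); to $(x+\chi_{i}+\chi_{j},\,x)$ at index $i$ for (\ref{mnatconcavexc1loc2Z}); to $(x+2\chi_{i},\,x+\chi_{k})$ at index $i$ for (\ref{mnatconcavexc2loc2Z}); and to $(x+\chi_{i}+\chi_{j},\,x+\chi_{k})$ at index $i$ for (\ref{mnatconcavexc3loc2Z}). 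In each case the (at most two) terms produced by the maximum in (\ref{mnatconcavexc2Z}), including the standalone $j=0$ term, already lie inside the displayed right-hand side. The one instantiation that is not immediate is (\ref{mnatconcavexc4loc2Z}): the pair $(x+\chi_{i}+\chi_{j},\,x+\chi_{k}+\chi_{l})$ has equal component sums, and a direct application of (\ref{mnatconcavexc2Z}) at index $i$ produces a spurious standalone term $f(x+\chi_{j})+f(x+\chi_{i}+\chi_{k}+\chi_{l})$ absent from the right-hand side. To discard it I would invoke the equal-sum refinement of the exchange property --- the $\ZZ\sp{N}$ analogue of the cardinality exchange (\ref{mnatexccard2}) --- which guarantees that when $\vecone\sp{\top}x=\vecone\sp{\top}y$ one may always take $j\in\suppm(x-y)$ rather than $j=0$; this refinement is itself a standard consequence of M$\sp{\natural}$-concavity. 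Its use forces the swap index into $\{k,l\}$, yielding exactly the two-term maximum of (\ref{mnatconcavexc4loc2Z}).

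For the converse, assume $\dom f$ is an M$\sp{\natural}$-convex set and that (\ref{mnatconcavexc0loc2Z})--(\ref{mnatconcavexc4loc2Z}) hold; the goal is (M$\sp{\natural}$-EXC[$\ZZ$]) for every $x,y\in\dom f$ and $i\in\suppp(x-y)$. I would argue by induction on $\|x-y\|_{1}$. The smallest configurations reduce directly to the local hypotheses or hold trivially (e.g.\ at distance $2$ the swap returning to $(y,x)$ certifies the inequality, while the pure-$2\chi_{i}$ case is (\ref{mnatconcavexc0loc2Z})), and (\ref{mnatconcavexc0loc2Z})--(\ref{mnatconcavexc4loc2Z}) serve as the unit-level building blocks. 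Since $\dom f$ is M$\sp{\natural}$-convex, the domain exchange axiom (B$\sp{\natural}$-EXC[$\ZZ$]) connects $x$ and $y$ inside $\dom f$ by single exchange steps, so I may choose an intermediate lattice point $w\in\dom f$ lying ``between'' $x$ and $y$ with $\|x-w\|_{1},\|w-y\|_{1}<\|x-y\|_{1}$. The inductive step applies the hypothesis to the two shorter pairs $(x,w)$ and $(w,y)$ and telescopes the two resulting exchange inequalities through a local exchange performed at $w$, so that the individual unit moves compose into a single move of the form demanded for $(x,y)$. The delicate part --- and where I expect the real work to lie --- is the bookkeeping that makes this telescoping valid: one must choose $w$ and the exchange coordinate at each stage so that the coordinate released at one stage is the one absorbed at the next, so that every intermediate vector remains in $\dom f$ (again by M$\sp{\natural}$-convexity of the domain), and so that the two alternatives, the standalone ``$j=0$'' move and a genuine swap, are matched correctly across the two sub-pairs. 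This induces a case analysis on the relative positions of $i$, the chosen swap index $j$, and the supports of $w-x$ and $y-w$; conditions (\ref{mnatconcavexc1loc2Z})--(\ref{mnatconcavexc4loc2Z}) are exactly what resolve each case, while (\ref{mnatconcavexc0loc2Z}) supplies the one-dimensional concavity needed when a coordinate is repeated (the situations $x_{i}-y_{i}\ge 2$).

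A cleaner conceptual route, which I would keep in reserve, is to lift $f$ to $\ZZ\sp{\{0\}\cup N}$ by setting $\tilde f(x_{0},x)=f(x)$ on the hyperplane $x_{0}+x(N)=\text{const}$ (and $-\infty$ off it): then $f$ is M$\sp{\natural}$-concave if and only if $\tilde f$ is M-concave, and (\ref{mnatconcavexc0loc2Z})--(\ref{mnatconcavexc4loc2Z}) translate into the local exchange conditions for $\tilde f$ involving the auxiliary coordinate $0$. This reduces the statement to the local-to-global characterization of M-concave functions on a constant-sum set, where every exchange pairs one positive with one negative index and the $j=0$ alternative disappears, making the case analysis above easier to organize; the cost is that one must separately establish the $f\leftrightarrow\tilde f$ correspondence and the M-concave form of the result.
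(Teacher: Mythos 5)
Your ``only if'' direction is sound and complete: each of (\ref{mnatconcavexc0loc2Z})--(\ref{mnatconcavexc3loc2Z}) is a single instantiation of (M$\sp{\natural}$-EXC[$\ZZ$]) exactly as you describe, and your appeal to the equal-component-sum exchange property (\ref{mnatexcsizeZ2}) of Remark~\ref{RMmnatexcsizeZ} is precisely the right device for eliminating the spurious $j=0$ term in (\ref{mnatconcavexc4loc2Z}). (For the record, this survey states Theorem~\ref{THmconcavlocexcZ} without proof, citing Murota (1996c, 2003) and Murota--Shioura (1999), so your proposal has to be measured against those proofs, not against anything in this paper.)

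The genuine gap is the ``if'' direction, which carries all the content of the theorem, and the telescoping induction you sketch would fail as described. Two concrete obstructions. First, the induction hypothesis applied to a sub-pair $(x,w)$ yields only a maximum over exchange moves; you cannot dictate which index attains it, so the move it produces need not be the one that your ``local exchange performed at $w$'' or the pair $(w,y)$ can absorb, and reconciling all the combinations is not bookkeeping layered on top of the argument --- it \emph{is} the argument. Second, and more fatally, your induction measure does not decrease: after one exchange the point to be compared with $y$ is $w' = w + \unitvec{i} - \unitvec{j}$, and for $w$ in the box $[x \wedge y, \, x \vee y]$ with $i \in \suppp(x-y)$, $j \in \suppm(x-y)$ one has $w_{i} \geq y_{i}$ and $w_{j} \leq y_{j}$, whence $\| w' - y \|_{1} = \| w - y \|_{1} + 2$; when $\| x - w \|_{1} = 2$ this already equals $\| x - y \|_{1}$, so the recursion revisits pairs as far apart as the one you started from and need not terminate. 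The proofs in the cited sources avoid exactly this by running a minimal-counterexample argument with extremal auxiliary choices rather than a midpoint split. Your reserve route --- lifting via (\ref{mfnmnatfnrelationcave}) to an M-concave function on a constant-sum set --- is indeed the standard reduction used in the literature, but it only transfers the burden: the local-to-global characterization of M-concave functions is the same hard combinatorial lemma, and your proposal proves it nowhere. As it stands, the hard implication is a plan, not a proof.
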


When the effective domain $\dom f$ is
an M$\sp{\natural}$-convex set such that
$\bm{0} \in \dom f \subseteq \mathbb{Z}_{+}\sp{N}$,
the local exchange condition above
takes a simpler form that does not involve (\ref{mnatconcavexc4loc2Z})
(Theorem~6.8 of Shioura and Tamura 2015).
\citeH[Theorem~6.8]{ST15jorsj}%
To cover the case of $\dom f = \mathbb{Z}\sp{N}$
we weaken the assumption on $\dom f$ to:
%%%%%%%%%%%%%%%%%%%%%%%%%%%%%%%%%%%%%%%%%%%%%%%%%%
\begin{equation} \label{domcondlocZ}
 x, y \in \dom f \Longrightarrow x \wedge y \in \dom f.
\end{equation}

\begin{theorem}\label{THmconcavlocexcZB}
Let 
$f: \mathbb{Z}\sp{N} \to \RR \cup \{ -\infty \}$
be a function such that 
$\dom f$ is an M$\sp{\natural}$-convex set 
satisfying {\rm (\ref{domcondlocZ})}.
Then 
$f$ is M$\sp{\natural}$-concave
if and only if 
{\rm (\ref{mnatconcavexc0loc2Z})},
{\rm (\ref{mnatconcavexc1loc2Z})},
{\rm (\ref{mnatconcavexc2loc2Z})} and
{\rm (\ref{mnatconcavexc3loc2Z})} hold.
\end{theorem}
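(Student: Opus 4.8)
The \emph{only if} direction is immediate from Theorem~\ref{THmconcavlocexcZ}: if $f$ is M$\sp{\natural}$-concave, then \eqref{mnatconcavexc0loc2Z}--\eqref{mnatconcavexc4loc2Z} all hold, in particular the four inequalities claimed here. So the entire substance lies in the \emph{if} direction. Assume that $\dom f$ is an M$\sp{\natural}$-convex set satisfying \eqref{domcondlocZ} and that \eqref{mnatconcavexc0loc2Z}--\eqref{mnatconcavexc3loc2Z} hold. Since $\dom f$ is already M$\sp{\natural}$-convex by hypothesis, the \emph{if} part of Theorem~\ref{THmconcavlocexcZ} tells us that it suffices to establish the one remaining local inequality \eqref{mnatconcavexc4loc2Z}; M$\sp{\natural}$-concavity then follows. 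Thus the plan is to derive \eqref{mnatconcavexc4loc2Z} from \eqref{mnatconcavexc0loc2Z}--\eqref{mnatconcavexc3loc2Z} with the help of \eqref{domcondlocZ}, which is exactly the place where the extra hypothesis must pull its weight.

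The first and decisive use of \eqref{domcondlocZ} is to supply a base point. Fix $x$ and indices with $\{i,j\}\cap\{k,l\}=\emptyset$, and put $p=x+\chi_i+\chi_j$, $q=x+\chi_k+\chi_l$. If $p\notin\dom f$ or $q\notin\dom f$ the left-hand side of \eqref{mnatconcavexc4loc2Z} is $-\infty$ and there is nothing to prove, so assume $p,q\in\dom f$. The disjointness of $\{i,j\}$ and $\{k,l\}$ forces $p\wedge q=x$, whence \eqref{domcondlocZ} gives $x\in\dom f$; iterating the same device makes the relevant lower neighbours available in $\dom f$ as well, and M$\sp{\natural}$-convexity of $\dom f$ (property (B$\sp{\natural}$-EXC[$\ZZ$]) applied to $p,q$ with $i\in\suppp(p-q)$) guarantees that at least one of the two swapped pairs lies in $\dom f$, so the right-hand side is not identically $-\infty$. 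This step is the integer analogue of the hypothesis $\emptyset\in\dom f$ in the set-function statement Theorem~\ref{THmconcavlocexc01B}, and it is precisely what lets us omit \eqref{mnatconcavexc4loc2Z} from the list of axioms.

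I would then verify \eqref{mnatconcavexc4loc2Z} by splitting on whether the indices coincide. When $i=j$ (symmetrically $k=l$) the two entries of the maximum collapse into one, and the inequality becomes a statement about a single doubled coordinate; here the diagonal conditions \eqref{mnatconcavexc0loc2Z} and \eqref{mnatconcavexc2loc2Z}, combined with submodularity \eqref{mnatconcavexc1loc2Z}, suffice. The genuinely four-index case, $i,j,k,l$ distinct, is \textbf{the main obstacle}. A one-shot combination of submodularity \eqref{mnatconcavexc1loc2Z} with the three-index inequality \eqref{mnatconcavexc3loc2Z} does not close: writing $f(x+\chi_k+\chi_l)\le f(x+\chi_k)+f(x+\chi_l)-f(x)$ and feeding it into \eqref{mnatconcavexc3loc2Z} replaces the target value $f(x+\chi_j+\chi_l)$ by the \emph{larger} quantity $f(x+\chi_j)+f(x+\chi_l)-f(x)$, so the estimate overshoots. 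Instead I would invoke \eqref{mnatconcavexc3loc2Z} at a \emph{shifted} base, e.g.\ with $x$ replaced by $x+\chi_l$ (legitimate once $x+\chi_l\in\dom f$ has been secured as above), so that the three-index exchange acts on $\{i,j,k\}$ in the presence of the extra coordinate $l$. This produces level-three values such as $f(x+\chi_i+\chi_j+\chi_l)$ which must then be pushed back down to level two. I expect the clean way to control this is an induction on the position of the base point in the lattice; to make a minimal base point exist one first truncates $f$ to a box $[x-M\vecone,\,x+M\vecone]_{\ZZ}$ (which keeps $\dom f$ M$\sp{\natural}$-convex and $\wedge$-closed and leaves the local conditions intact), and \eqref{domcondlocZ} then guarantees at each descent step that the lower base points remain in $\dom f$, so the inductive hypothesis applies.

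Finally, an alternative route worth keeping in mind is to bypass the direct manipulation of \eqref{mnatconcavexc4loc2Z} and reduce the integer statement to the already-established set-function statement Theorem~\ref{THmconcavlocexc01B}. Near the relevant box one encodes $f$ as a set function $\tilde f$ on a ground set obtained by replacing each coordinate $i\in N$ by a chain of $0$--$1$ copies, so that integer vectors correspond to ``staircase'' subsets. Under this encoding \eqref{domcondlocZ} becomes the condition $\emptyset\in\dom\tilde f$, the two-index condition \eqref{mnatconcavexc1loc2Z} translates into \eqref{mnatconcavexc1loc}, the three-index condition \eqref{mnatconcavexc3loc2Z} into \eqref{mnatconcavexc2loc}, while the diagonal conditions \eqref{mnatconcavexc0loc2Z} and \eqref{mnatconcavexc2loc2Z} are exactly what is needed to make the encoding faithful across copies of a single coordinate. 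If the encoding is carried out carefully, \eqref{mnatconcavexc4loc2Z} would then come for free, exactly as its set-function counterpart \eqref{mnatconcavexc3loc} does in Theorem~\ref{THmconcavlocexc01B}; the delicate point in this route is checking that the staircase restriction preserves M$\sp{\natural}$-convexity and interacts correctly with the repeated-coordinate inequalities.
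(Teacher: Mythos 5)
Your architecture is reasonable and your key structural observation is exactly the right one: the only-if direction follows from Theorem~\ref{THmconcavlocexcZ}, the if direction reduces to deriving (\ref{mnatconcavexc4loc2Z}) from (\ref{mnatconcavexc0loc2Z})--(\ref{mnatconcavexc3loc2Z}), and the meet $x = p \wedge q$ supplied by (\ref{domcondlocZ}) is the integer surrogate for the hypothesis $\emptyset \in \dom f$ in Theorem~\ref{THmconcavlocexc01B}. This matches the point the paper itself makes, since the paper's entire proof is the one-line observation that the proof of Theorem~6.8 of Shioura and Tamura (2015) uses nothing about $\bm{0} \in \dom f \subseteq \ZZ_{+}\sp{N}$ beyond meet-closure. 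But there is a genuine gap at precisely the spot you label ``the main obstacle'': the four-distinct-index case of (\ref{mnatconcavexc4loc2Z}) is never actually proved. Both of your completions are programs rather than arguments. The first (``invoke (\ref{mnatconcavexc3loc2Z}) at a shifted base \dots\ I expect the clean way to control this is an induction'') states neither an induction hypothesis nor a descent step, and the shifted application at base $x+\chi_{l}$ introduces level-three values such as $f(x+\chi_{i}+\chi_{j}+\chi_{l})$ that may be $-\infty$ or simply unrelated to the level-two quantities you must bound; the overshoot you correctly diagnose is acknowledged but not resolved. The second route (unary encoding onto a set function, then Theorem~\ref{THmconcavlocexc01B}) ends with ``if the encoding is carried out carefully,'' leaving unverified exactly the delicate translations on which it rests: that box truncation and translation turn (\ref{domcondlocZ}) into $\emptyset \in \dom \tilde f$, and that (\ref{mnatconcavexc0loc2Z}) and (\ref{mnatconcavexc2loc2Z}) really do make the encoding faithful across copies of a single coordinate. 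Since this inequality is the entire content of the theorem beyond Theorem~\ref{THmconcavlocexcZ}, the proof is incomplete where it matters.

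In addition, one step is wrong as written. You claim that a single application of (B$\sp{\natural}$-EXC[$\ZZ$]) to $p = x+\chi_{i}+\chi_{j}$ and $q = x+\chi_{k}+\chi_{l}$ with $i \in \suppp(p-q)$ ``guarantees that at least one of the two swapped pairs lies in $\dom f$.'' It does not: alternative (i) of the exchange yields $p-\chi_{i} = x+\chi_{j} \in \dom f$ and $q+\chi_{i} = x+\chi_{i}+\chi_{k}+\chi_{l} \in \dom f$, which produces neither swapped pair, so even the finiteness of the right-hand side of (\ref{mnatconcavexc4loc2Z}) is not secured by this step. The domain statement is true, but it needs a longer argument: from $x = p \wedge q \in \dom f$, exchanges between $p$ (resp.\ $q$) and $x$ force $x+\chi_{m} \in \dom f$ for $m \in \{i,j,k,l\}$ (the negative support is empty, so only alternative (i) can occur), and then exchanges of $p$ against $x+\chi_{k}$ and $x+\chi_{l}$, and of $q$ against $x+\chi_{i}$ and $x+\chi_{j}$, yield a system of disjunctions whose combination forces one full swapped pair into $\dom f$. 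Even granting this, it is only the qualitative shadow of the quantitative inequality; the paper discharges that inequality by checking that the Shioura--Tamura proof goes through verbatim under (\ref{domcondlocZ}), and your proposal contains no substitute for that verification.
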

\begin{proof}
The proof of 
Theorem~6.8 of Shioura and Tamura (2015)
\citeH[Theorem~6.8]{ST15jorsj}%
works under the weaker condition (\ref{domcondlocZ}).
\end{proof}

The local exchange property above
admits a natural reformulation in terms of the discrete Hessian matrix
when $\dom f = \ZZ\sp{N}$.
For $x\in\ZZ\sp{N}$ and $i, j \in N$ define
\begin{equation}\label{mhessianDefZ}
H_{ij}(x) = f(x+\unitvec{i}+\unitvec{j})-f(x+\unitvec{i})-f(x+\unitvec{j})+f(x),
\end{equation}
and let
$H_{f}(x) =(H_{ij}(x) \mid i,j \in N)$ 
be the matrix consisting of those components.
This matrix $H_{f}(x)$ is called the {\em discrete Hessian matrix} of $f$ at $x$. 
The following theorem, due to 
Hirai and Murota (2004) 
\citeH{HM03tree}%
and Murota (2007),
\citeH{Mdcaprimer07}%
can be derived from Theorem \ref{THmconcavlocexcZB}.

\begin{theorem} \label{Mmnathessian}
%%\begin{theorem}[\protect{\citeH{HM03tree, Mdcaprimer07}}]\label{Mgchara}
A function $f: \ZZ\sp{N} \to \RR$ is M$\sp{\natural}$-concave
if and only if the discrete Hessian matrix $H_f(x)=(H_{ij}(x))$ 
satisfies the following conditions
for each $x\in\ZZ\sp{N}$:
\begin{alignat}{2}
&  H_{ij}(x)  \leq 0 
&\quad & \mbox{for any $(i,j)$} ,
\label{mnatfnhesseZ1} 
\\
&  H_{ij}(x)  \leq \max ( H_{ik}(x) ,  H_{jk}(x) ) 
&\quad & \mbox{if } \   \{ i,j \} \cap \{ k \} = \emptyset .
\label{mnatfnhesseZ2} 
\end{alignat}
\end{theorem}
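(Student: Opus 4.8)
The plan is to derive the theorem directly from Theorem~\ref{THmconcavlocexcZB} by rewriting each of the four local exchange inequalities purely in terms of the discrete Hessian entries $H_{ij}(x)$ defined in (\ref{mhessianDefZ}). Since $f$ is finite-valued on all of $\ZZ\sp{N}$, its effective domain is $\dom f = \ZZ\sp{N}$, which is trivially an M$\sp{\natural}$-convex set and trivially satisfies the weak closedness condition (\ref{domcondlocZ}) because $x \wedge y \in \ZZ\sp{N}$ for any $x, y$. Hence Theorem~\ref{THmconcavlocexcZB} applies verbatim, and it suffices to show that conditions (\ref{mnatconcavexc0loc2Z})--(\ref{mnatconcavexc3loc2Z}), required to hold at every $x \in \ZZ\sp{N}$, are jointly equivalent to the two Hessian conditions (\ref{mnatfnhesseZ1}) and (\ref{mnatfnhesseZ2}), again required at every $x$.

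First I would dispose of the nonpositivity conditions. Reading (\ref{mhessianDefZ}) with $i = j$ gives $H_{ii}(x) = f(x + 2\chi_{i}) - 2 f(x + \chi_{i}) + f(x)$, so (\ref{mnatconcavexc0loc2Z}) is literally $H_{ii}(x) \leq 0$; and for $i \neq j$ the inequality (\ref{mnatconcavexc1loc2Z}) is literally $H_{ij}(x) \leq 0$. Taken together over all index pairs, these two say exactly $H_{ij}(x) \leq 0$ for all $(i,j)$, which is precisely (\ref{mnatfnhesseZ1}).

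The remaining two conditions are handled by the telescoping identities for differences of Hessian entries. A short computation gives $H_{ii}(x) - H_{ik}(x) = f(x + 2\chi_{i}) - f(x + \chi_{i}) - f(x + \chi_{i} + \chi_{k}) + f(x + \chi_{k})$, so (\ref{mnatconcavexc2loc2Z}) is precisely $H_{ii}(x) \leq H_{ik}(x)$ for $i \neq k$; this is nothing but the degenerate case $i = j$ of (\ref{mnatfnhesseZ2}), where $\max(H_{ik}(x), H_{ik}(x)) = H_{ik}(x)$. Likewise one checks $H_{ij}(x) - H_{ik}(x) = f(x + \chi_{i} + \chi_{j}) - f(x + \chi_{j}) - f(x + \chi_{i} + \chi_{k}) + f(x + \chi_{k})$ and $H_{ij}(x) - H_{jk}(x) = f(x + \chi_{i} + \chi_{j}) - f(x + \chi_{i}) - f(x + \chi_{j} + \chi_{k}) + f(x + \chi_{k})$, so the two alternatives inside the maximum in (\ref{mnatconcavexc3loc2Z}) are respectively equivalent to $H_{ij}(x) \leq H_{ik}(x)$ and $H_{ij}(x) \leq H_{jk}(x)$. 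Hence (\ref{mnatconcavexc3loc2Z}) holds if and only if $H_{ij}(x) \leq \max(H_{ik}(x), H_{jk}(x))$ for distinct $i,j,k$, which is the case $i \neq j$ of (\ref{mnatfnhesseZ2}). Combining the two instances, (\ref{mnatconcavexc2loc2Z}) and (\ref{mnatconcavexc3loc2Z}) together are equivalent to (\ref{mnatfnhesseZ2}), and the theorem follows.

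I do not expect a genuine obstacle here: the argument is a direct, if fiddly, algebraic translation, with all of the analytic content already absorbed into Theorem~\ref{THmconcavlocexcZB}. The only point demanding care is the bookkeeping of index cases --- in particular, recognizing that the diagonal instance $i = j$ of (\ref{mnatfnhesseZ2}) reproduces (\ref{mnatconcavexc2loc2Z}), so that no separate diagonal hypothesis is needed, and that the side condition $\{ i,j \} \cap \{ k \} = \emptyset$ in (\ref{mnatfnhesseZ2}) matches exactly the distinctness requirements built into (\ref{mnatconcavexc2loc2Z}) and (\ref{mnatconcavexc3loc2Z}).
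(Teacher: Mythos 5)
Your proposal is correct and takes essentially the same route as the paper's own proof: the paper likewise derives Theorem~\ref{Mmnathessian} from Theorem~\ref{THmconcavlocexcZB} (applicable since $\dom f = \ZZ\sp{N}$ trivially satisfies (\ref{domcondlocZ})) by checking the correspondences (\ref{mnatconcavexc0loc2Z}) $\Leftrightarrow$ $H_{ii}(x)\leq 0$, (\ref{mnatconcavexc1loc2Z}) $\Leftrightarrow$ $H_{ij}(x)\leq 0$ for $i\neq j$, (\ref{mnatconcavexc2loc2Z}) $\Leftrightarrow$ $H_{ii}(x)\leq H_{ik}(x)$ for $i\neq k$, and (\ref{mnatconcavexc3loc2Z}) $\Leftrightarrow$ $H_{ij}(x)\leq \max(H_{ik}(x),H_{jk}(x))$ for distinct $i,j,k$. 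Your explicit telescoping identities and the observation that the diagonal case $i=j$ of (\ref{mnatfnhesseZ2}) absorbs (\ref{mnatconcavexc2loc2Z}) simply spell out in detail what the paper dismisses as ``quite straightforward.''
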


\begin{proof}
The correspondence between  the conditions in 
Theorems \ref{THmconcavlocexcZB} and \ref{Mmnathessian} 
is quite straightforward.
With the use of (\ref{mhessianDefZ})
we can easily verify:
{\rm (\ref{mnatconcavexc0loc2Z})}
$\Leftrightarrow$ 
$H_{ii}(x)  \leq 0$, \ 
{\rm (\ref{mnatconcavexc1loc2Z})}
$\Leftrightarrow$ 
$H_{ij}(x)  \leq 0$ $(i \not= j)$, \ 
{\rm (\ref{mnatconcavexc2loc2Z})}
$\Leftrightarrow$ 
$H_{ii}(x)  \leq H_{ik}(x) $ $(i \not= k)$, \ 
and
{\rm (\ref{mnatconcavexc3loc2Z})} 
$\Leftrightarrow$ 
$ H_{ij}(x)  \leq \max ( H_{ik}(x) ,  H_{jk}(x) )$
($i,j,k$: distinct).
\end{proof}

It is known 
(Theorem 6.19 of Murota 2003) 
\citeH[Theorem 6.19]{Mdcasiam}% 
that an M$\sp{\natural}$-concave function
$f: \mathbb{Z}\sp{N} \to \RR \cup \{ -\infty \}$
 is {\em submodular} on the integer lattice, i.e.,
\begin{equation} \label{setfnsubmZ}
 f(x) + f(y) \geq f(x \vee y) + f(x \wedge y)
 \qquad (x, y \in \ZZ\sp{N}) .
\end{equation}
More precisely, the condition (\ref{mnatconcavexc1loc2Z}) above
is equivalent to the submodularity (\ref{setfnsubmZ})
as long as $\dom f$ is M$\sp{\natural}$-convex
(Proposition 6.1 of Shioura and Tamura 2015). 
\citeH[Proposition 6.1]{ST15jorsj}%
Because of the additional conditions
for M$\sp{\natural}$-concavity,
not every submodular function is
M$\sp{\natural}$-concave.  Thus, M$\sp{\natural}$-concave functions
form a proper subclass of submodular functions on $\ZZ\sp{N}$.

It is also known 
(Theorem 4.6 of Murota 1996c,
\citeH[Theorem 4.6]{Mstein}%
 Theorem 6.42 of Murota 2003)
\citeH[Theorem 6.42]{Mdcasiam}%
that an M$\sp{\natural}$-concave function 
$f: \mathbb{Z}\sp{N} \to \RR \cup \{ -\infty \}$
is {\em concave-extensible}, 
i.e., there exists a concave function 
$\overline{f}: \RR\sp{N} \to \RR \cup \{ -\infty \}$
such that
$\overline{f}(x) = f(x)$ for all $x \in \ZZ\sp{N}$.

\begin{remark} \rm  \label{RMmnatexcsizeZ}
It follows from (M$\sp{\natural}$-EXC[$\ZZ$]) that
M$\sp{\natural}$-concave functions enjoy the following 
exchange properties under size constraints
(Lemmas 4.3 and 4.6 of Murota and Shioura 1999):
\citeH[Lemmas 4.3 and 4.6]{MS99gp}%
\\ $\bullet$ 
For any $x, y \in \ZZ\sp{N}$ with $x(N) < y(N)$, 
\begin{align}
f(x) + f(y)   &\leq 
 \max_{j \in \suppm(x-y)}  
 \{ f(x +\unitvec{j}) + f(y-\unitvec{j})  \}.
\label{mnatexcsizeZ1}
\end{align}
\\ $\bullet$ 
For any $x, y \in \ZZ\sp{N}$ with $x(N) =y(N)$
and $i \in \suppp(x-y)$, 
\begin{align}
f(x) + f(y)   &\leq 
 \max_{j \in \suppm(x-y)}  
\{ f(x-\unitvec{i}+\unitvec{j}) + f(y+\unitvec{i}-\unitvec{j})  \}.
\label{mnatexcsizeZ2}
\end{align}
The former property, in particular, implies 
the size-monotonicity of the induced choice function;
see Theorem~\ref{THchomonoMnatZuniq} and its proof.
\finbox
\end{remark}

\begin{remark} \rm  \label{RMmconcaveZ}
If $B \subseteq \ZZ\sp{N}$ lies in a hyperplane with a constant component sum
(i.e., $x(N) = y(N)$ for all  $x, y \in B$),
the exchange property (B$\sp{\natural}$-EXC[$\ZZ$]) takes a simpler form
(without the possibility of $j=0$): 
For any $x, y \in B$ and $i \in \suppp(x-y)$,
there exists some $j \in \suppm(x-y)$ such that
$x-\unitvec{i}+\unitvec{j}  \in B$, $y+\unitvec{i}-\unitvec{j} \in B$.
A set $B \subseteq \ZZ\sp{N}$ having this exchange property 
is called an {\em M-convex set}
(or {\em integral base polyhedron}).  
An M$\sp{\natural}$-concave function defined on an M-convex set
is called an {\em M-concave function}
(Murota 1996c, 2003).
\citeH{Mstein}\citeH{Mdcasiam}%
The exchange property for M-concavity reads:
A function 
$f: \mathbb{Z}\sp{N} \to \RR \cup \{ -\infty \}$
is M-concave
if and only if, 
 for any $x, y \in \mathbb{Z}\sp{N}$ and $i \in \suppp(x-y)$, it holds that
\begin{equation}
f(x) + f(y)   \leq  \max_{j \in \suppm(x-y)}  
\{ f(x-\unitvec{i}+\unitvec{j}) + f(y+\unitvec{i}-\unitvec{j})  \}.
\label{mconcavexc2Z}
\end{equation}
M-concave functions and
M$\sp{\natural}$-concave functions are equivalent concepts,
in that M$\sp{\natural}$-concave functions in $n$ variables 
can be obtained as projections of M-concave functions in $n+1$ variables.
More formally, let ``$0$'' denote a new element not in $N$ and
$\tilde{N} = \{0\} \cup N$.
A function $f : \ZZ\sp{N} \to \RR \cup \{ -\infty \}$ is M$\sp{\natural}$-concave
if and only if  the function 
$\tilde{f} : \ZZ\sp{\tilde{N}} \to \RR \cup \{ -\infty \}$ 
defined by
\begin{equation} \label{mfnmnatfnrelationcave}
  \tilde{f}(x_{0},x) = \left\{ \begin{array}{ll}
      f(x)    & \mbox{ if $x_{0} = {-}x(N)$} \\
      -\infty & \mbox{ otherwise}
    \end{array}\right.
 \qquad ( x_{0} \in \ZZ, x \in \ZZ\sp{N})
\end{equation}
is an M-concave function.
A function $f$ is called {\em M-convex} if $-f$ is M-concave.
\finbox
\end{remark}

\subsection{Maximization and single improvement property}
\label{SCmaximizationZ}

For an M$\sp{\natural}$-concave function,
the maximality of a function value is characterized by a local condition
as follows,
where $\unitvec{0}=\veczero$
(Proposition 6.23 and Theorem 6.26 of Murota 2003).
\citeH[Proposition 6.23, Theorem 6.26]{Mdcasiam}%

%%%%%%%%%%
\begin{theorem}
%%[\protect{\citeH[Theorem 6.26]{Mdcasiam}}]
\label{THmnatsetfnlocmaxZ}
Let $f: \ZZ\sp{N} \to \RR \cup \{ -\infty \}$ 
be an M$\sp{\natural}$-concave function and  $x \in \dom f$.

\noindent {\rm (1)}
If  $f(x) < f(y)$ for $y \in \dom f$,  
then $f(x) < f(x - \unitvec{i} + \unitvec{j})$
for some
$i \in \suppp(x-y) \cup \{ 0 \}$ and $j \in \suppm(x-y) \cup \{ 0 \}$.

\noindent {\rm (2)}
$x$ is a maximizer of $f$ if and only if
\begin{align} \label{mnatsetfnlocmaxZ}
 f(x)  &\geq  f(x - \unitvec{i} + \unitvec{j}) 
  \quad (\forall \,  i, j \in N \cup \{ 0 \}) .
\end{align}
\end{theorem}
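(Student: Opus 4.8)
The plan is to deduce part (2) from part (1) and to concentrate all the work on (1). The ``only if'' direction of (2) is immediate: a maximizer $x$ satisfies $f(x)\ge f(z)$ for every $z$, in particular for $z=x-\chi_{i}+\chi_{j}$. For the ``if'' direction, suppose $x$ is not a maximizer, so $f(x)<f(y)$ for some $y$; here $y\in\dom f$ because $f(y)>f(x)>-\infty$. Part (1) then yields indices $i\in\suppp(x-y)\cup\{0\}\subseteq N\cup\{0\}$ and $j\in\suppm(x-y)\cup\{0\}\subseteq N\cup\{0\}$ with $f(x)<f(x-\chi_{i}+\chi_{j})$, contradicting \eqref{mnatsetfnlocmaxZ}. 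Hence it suffices to prove (1).

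For (1) I would run a minimal-distance (descent) argument. Among all $y'\in\dom f$ with $f(y')>f(x)$ choose one, still denoted $y$, minimizing the $\ell_{1}$-distance $\|x-y\|_{1}$; the minimum is attained since the candidate distances form a nonempty set of nonnegative integers. As $f(y)>f(x)$ we have $y\neq x$, so $x-y\neq\veczero$ and at least one of $\suppp(x-y)$, $\suppm(x-y)$ is nonempty. If $\suppp(x-y)\neq\emptyset$, pick $i\in\suppp(x-y)$ and apply {\rm (M$\sp{\natural}$-EXC[$\ZZ$])} to the pair $(x,y)$, obtaining $j\in\suppm(x-y)\cup\{0\}$ with $f(x)+f(y)\le f(x-\chi_{i}+\chi_{j})+f(y+\chi_{i}-\chi_{j})$. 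If instead $\suppp(x-y)=\emptyset$, then $\suppm(x-y)\neq\emptyset$; picking $i\in\suppm(x-y)=\suppp(y-x)$ and applying the exchange to $(y,x)$ forces the swap index to be $0$ (because $\suppm(y-x)=\emptyset$), giving $f(x)+f(y)\le f(x+\chi_{i})+f(y-\chi_{i})$. In either case I write the resulting pair as $x',y'$, where $x'$ is one of the candidate points $x-\chi_{i}+\chi_{j}$ allowed in the conclusion of (1) (with the convention $\chi_{0}=\veczero$), and
\begin{equation*}
 f(x')+f(y')\ \ge\ f(x)+f(y).
\end{equation*}

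The dichotomy closes the argument. If $f(x')>f(x)$, then $x'$ is exactly the improving point required by (1). Otherwise $f(x')\le f(x)$, and the displayed inequality gives $f(y')\ge f(x)+f(y)-f(x')\ge f(y)>f(x)$, so $y'\in\dom f$ is another strict improver over $x$. The decisive computation is that $y'$ is strictly nearer to $x$: in the first case $y'=y+\chi_{i}-\chi_{j}$ has $(x-y')_{i}=(x-y)_{i}-1$ with $(x-y)_{i}\ge 1$, and, when $j\neq 0$, $(x-y')_{j}=(x-y)_{j}+1$ with $(x-y)_{j}\le -1$, so $\|x-y'\|_{1}=\|x-y\|_{1}-2$ (or $-1$ when $j=0$); the second case likewise gives $\|x-y'\|_{1}=\|x-y\|_{1}-1$. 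This contradicts the minimality of $\|x-y\|_{1}$, so $f(x')\le f(x)$ is impossible and (1) holds. The step I expect to be the main obstacle is precisely this distance bookkeeping: one must recognize that it is the \emph{companion} point $y'$ (which the exchange slides toward $x$), not $x'$, whose $\ell_{1}$-distance provably drops, and that the drop is strict even when the swapped coordinates of $x-y$ have magnitude one. Once the monovariant $\|x-y\|_{1}$ is identified, the two cases are routine verifications from the definitions of $\suppp$, $\suppm$ and the exchange inequality.
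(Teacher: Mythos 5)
Your reduction of (2) to (1) is fine, and the descent bookkeeping you single out as the crux is carried out correctly: the exchange inequality forces either $f(x')>f(x)$ or else the companion point $y'=y+\chi_{i}-\chi_{j}$ (resp.\ $y-\chi_{i}$) to be a strict improver with $\|x-y'\|_{1}$ smaller by $1$ or $2$. The genuine gap is in (1) itself: the theorem asserts that the improving indices satisfy $i\in\suppp(x-y)\cup\{0\}$ and $j\in\suppm(x-y)\cup\{0\}$ \emph{for the given} $y$, whereas you discard the given $y$ at the outset and minimize $\|x-y'\|_{1}$ over \emph{all} improvers. The indices your argument produces lie in the supports of $x-y_{\min}$ for the distance-minimizing improver $y_{\min}$, and these need not be contained in the supports of $x-y$. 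Concretely, for the separable (hence M$\sp{\natural}$-concave) function $f(x_{1},x_{2})=\varphi_{1}(x_{1})+\varphi_{2}(x_{2})$ with $\varphi_{1}(0)=\varphi_{2}(0)=0$, $\varphi_{1}(1)=\varphi_{2}(-1)=1$, take $x=(0,0)$ and $y=(1,0)$: then $\suppp(x-y)=\emptyset$, $\suppm(x-y)=\{1\}$, so (1) promises an improver among $x$ and $x+\chi_{1}$; but your minimization may select $y_{\min}=(0,-1)$, and your exchange step then returns $x'=x-\chi_{2}$, which is not of the permitted form. So as written you have proved (2) (for which any improving pair $i,j\in N\cup\{0\}$ suffices), but only a weakened, existential-in-$y$ version of (1).

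The repair is small but essential: restrict the minimization to improvers in the interval $[x\wedge y,\,x\vee y]_{\ZZ}$, equivalently to $y'$ with $\suppp(x-y')\subseteq\suppp(x-y)$ and $\suppm(x-y')\subseteq\suppm(x-y)$; this set contains the given $y$, so it is nonempty. Your exchange step preserves the restriction, since $y'=y_{\min}+\chi_{i}-\chi_{j}$ moves $y_{\min}$ one unit towards $x$ in each swapped coordinate and hence stays in the interval; the contradiction with minimality then yields $f(x)<f(x-\chi_{i}+\chi_{j})$ with $i\in\suppp(x-y_{\min})\cup\{0\}\subseteq\suppp(x-y)\cup\{0\}$, and likewise for $j$. (An equivalent packaging is induction on $\|x-y\|_{1}$, observing that the supports only shrink along the descent.) This support-monotonicity observation is exactly the extra ingredient in the proofs of Proposition 6.23 and Theorem 6.26 of Murota (2003), which the paper cites for this theorem instead of giving a proof.
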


For a vector
$p = ( p_{i} \mid i \in N) \in \RR\sp{N}$
we use the notation $f[-p]$ to mean the function 
$f(x) - p\sp{\top} x$,
where $p\sp{\top}$ means the transpose of $p$.
That is, 
\begin{equation} \label{f-pdefZ}
f[-p](x) = f(x) - p\sp{\top} x
\qquad (x \in \ZZ\sp{N}).
\end{equation}
By considering the properties of (1) and (2) in Theorem~\ref{THmnatsetfnlocmaxZ} 
for $f[-p]$ with varying $p$,
we are naturally led to 
(SSI[$\ZZ$]) and (SI[$\ZZ$]) below%
\footnote{%%%%%%%%%%%%%%%%%%
(SSI[$\ZZ$]) here is denoted as (M$\sp{\natural}$-SI[$\ZZ$]) in 
Murota (2003).
\citeH{Mdcasiam}%
}:  %%%%%%%%% footnote %%%%%%%%%%%%

\begin{description}
\item[(SSI\hbox{[$\ZZ$]})]%
For any $p \in \RR\sp{N}$ and 
$x, y \in \dom f$ with $f[-p](x) < f[-p](y)$,
there exists
$i \in \suppp(x-y) \cup \{ 0 \}$ and $j \in \suppm(x-y) \cup \{ 0 \}$ 
such that
$f[-p](x) < f[-p](x - \unitvec{i} + \unitvec{j})$.

\item[(SI\hbox{[$\ZZ$]})]
For any $p \in \RR\sp{N}$, if
$x \in \dom f$ is not a maximizer of $f[-p]$,   
there exists
$i \in N \cup \{ 0 \}$ and $j \in N \cup \{ 0 \}$ 
such that
$f[-p](x) < f[-p](x - \unitvec{i} + \unitvec{j})$.
\end{description}

The stronger version (SSI[$\ZZ$]) is 
shown to be equivalent to M$\sp{\natural}$-concavity
(Theorem 7 of Murota and Tamura 2003a).
\citeH[Theorem 7]{MTgross03}%
This property is named the {\em strong single improvement property} in 
Shioura and Tamura (2015). 
\citeH{ST15jorsj}%
The latter (SI[$\ZZ$]) is the vector version of
single improvement property 
(Section \ref{SCmaximization01}),
called the {\em multi-unit  single improvement property} by
Milgrom and Strulovici (2009).
\citeH{MS09sbst}%
We can see from 
Theorem 13 of Milgrom and Strulovici (2009)
\citeH[Theorem 13]{MS09sbst}%
that (SI[$\ZZ$]) is equivalent to M$\sp{\natural}$-concavity
under the assumption
of concave-extensibility of $f$ and boundedness of $\dom f$.

\subsection{Maximizers and gross substitutability}
\label{SCmaximizersZ}

For a vector
$p = ( p_{i} \mid i \in N) \in \RR\sp{N}$
we consider the  maximizers of the function $f[-p](x) = f(x) - p\sp{\top} x$.
We denote the set of these  maximizers by
\begin{equation} \label{DpdefZ}
 D(p ; f) = \argmax_{x} \{ f(x) - p\sp{\top} x  \}.
\end{equation}
In economic applications, 
$p$ is a price vector and
$D(p) = D(p ; f)$ represents the demand correspondence.

It is one of the most fundamental facts in 
discrete convex analysis that the
M$\sp{\natural}$-concavity of a function 
is characterized in terms of the M$\sp{\natural}$-convexity of its maximizers 
(Murota 1996c; Theorem 6.30 of Murota 2003; Murota and Shioura 1999).
\citeH{Mstein}\citeH[Theorem 6.30]{Mdcasiam}\citeH{MS99gp}%
\begin{theorem} \label{THmconcavargmaxZ}
Let 
$f: \ZZ\sp{N} \to \RR \cup \{ -\infty \}$ 
be a function with a bounded effective domain.
Then $f$ is M$\sp{\natural}$-concave
if and only if, for every vector $p \in \RR\sp{N}$, 
$D(p ; f)$ is an M$\sp{\natural}$-convex set.
That is, $f$ satisfies {\rm (M$\sp{\natural}$-EXC[$\ZZ$])} 
if and only if, for every $p \in \RR\sp{N}$, 
$D(p ; f)$ satisfies {\rm (B$\sp{\natural}$-EXC[$\ZZ$])}.
\end{theorem}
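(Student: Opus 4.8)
The plan is to prove the two implications separately, disposing of the forward (``only if'') direction by a one-line use of the exchange axiom and concentrating the effort on the converse, which is the substantive part.

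\emph{The ``only if'' direction.} I would first note that $f[-p]$ is M$\sp{\natural}$-concave whenever $f$ is, since the linear correction cancels out of every instance of (M$\sp{\natural}$-EXC[$\ZZ$]); it therefore suffices to show that the maximizer set of an M$\sp{\natural}$-concave function is M$\sp{\natural}$-convex. Because $\dom f$ is bounded and nonempty, $g:=f[-p]$ attains its maximum $V$, so $D(p;f)=\argmax g$ is nonempty. Taking $x,y\in D(p;f)$ and $i\in\suppp(x-y)$, the exchange inequality for $g$ gives
\[
2V=g(x)+g(y)\le \max_{j\in\suppm(x-y)\cup\{0\}}\{g(x-\chi_{i}+\chi_{j})+g(y+\chi_{i}-\chi_{j})\}.
\]
Every summand on the right is at most $2V$, so the maximizing index $j\sp{*}$ forces $g(x-\chi_{i}+\chi_{j\sp{*}})=g(y+\chi_{i}-\chi_{j\sp{*}})=V$, i.e.\ both exchanged points lie again in $D(p;f)$. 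Reading off the cases $j\sp{*}=0$ and $j\sp{*}\neq0$ reproduces exactly the two alternatives of (B$\sp{\natural}$-EXC[$\ZZ$]).

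\emph{The ``if'' direction.} Here the plan is to verify the local exchange conditions \eqref{mnatconcavexc0loc2Z}--\eqref{mnatconcavexc3loc2Z} of Theorem~\ref{THmconcavlocexcZ} and then quote that theorem. The device that converts membership in maximizer sets into inequalities among values is this: if $a,b\in D(p;f)$ with $a\ge b$, then $\suppm(a-b)=\emptyset$, so for each $i\in\suppp(a-b)$ the second branch of (B$\sp{\natural}$-EXC[$\ZZ$]) is unavailable and the first branch must hold, forcing $a-\chi_{i}$ and $b+\chi_{i}$ into $D(p;f)$ as well. I would run this as a proof by contradiction. Suppose, for instance, that submodularity \eqref{mnatconcavexc1loc2Z} failed at some $x$, so that $f(x)+f(x+\chi_{i}+\chi_{j})>f(x+\chi_{i})+f(x+\chi_{j})$; I would then exhibit a price $p$ for which the two corners $x$ and $x+\chi_{i}+\chi_{j}$ are maximizers of $f[-p]$ while the middles $x+\chi_{i}$, $x+\chi_{j}$ are strictly suboptimal. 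Applying the boxed mechanism with $a=x+\chi_{i}+\chi_{j}$, $b=x$ would then drag the middles into $D(p;f)$, contradicting their suboptimality. The conditions \eqref{mnatconcavexc0loc2Z}, \eqref{mnatconcavexc2loc2Z}, \eqref{mnatconcavexc3loc2Z}, and the exchange property (B$\sp{\natural}$-EXC[$\ZZ$]) for $\dom f$, are meant to follow from the same template applied to the appropriate one-signed configurations.

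\emph{The main obstacle.} The difficulty is entirely in realizing the required price $p$: the productive branch above only fires once the prescribed corners genuinely belong to some $D(p;f)$, and for an arbitrary configuration no such price need exist. Making this precise is tantamount to proving that $f$ is concave-extensible, which I would establish first and by contradiction -- if some $z\in\dom f$ lay strictly below the concave closure $\overline{f}$, I would take a supergradient $p$ of $\overline{f}$ at $z$ and argue that $D(p;f)$, the integer points of the supporting face along which $f$ meets $\overline{f}$, cannot be M$\sp{\natural}$-convex, contradicting the hypothesis. With concave-extensibility and the polyhedral (face) structure of the polytope $\overline{\dom f}$ in hand, I would then select $p$ as a supergradient of the minimal face carrying the corners in question, so that the two corners become maximizers and the failed local inequality yields a violation of (B$\sp{\natural}$-EXC[$\ZZ$]). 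Boundedness of $\dom f$ is used throughout to guarantee that maxima are attained and that the relevant supporting sets are honest faces of a polytope.
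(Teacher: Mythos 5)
Your ``only if'' half is complete and correct, and it is the standard argument (note that the paper itself states Theorem~\ref{THmconcavargmaxZ} without proof, citing Murota 1996c, Theorem 6.30 of Murota 2003, and Murota--Shioura 1999, so the benchmark is the proofs in those references). The ``if'' half, however, has two genuine gaps. First, the reduction target is mis-stated: Theorem~\ref{THmconcavlocexcZ} requires, besides \eqref{mnatconcavexc0loc2Z}--\eqref{mnatconcavexc3loc2Z}, also the four-index condition \eqref{mnatconcavexc4loc2Z} \emph{and} the standing hypothesis that $\dom f$ is an M$\sp{\natural}$-convex set. The variant that drops \eqref{mnatconcavexc4loc2Z}, Theorem~\ref{THmconcavlocexcZB}, is available only under the meet-closure assumption \eqref{domcondlocZ}, which M$\sp{\natural}$-convex domains violate in general (e.g.\ $\{(1,0),(0,1)\}$, with $(1,0)\wedge(0,1)=(0,0)$ outside the set). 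Your plan gives no route to (B$\sp{\natural}$-EXC[$\ZZ$]) for $\dom f$: two arbitrary points of $\dom f$ need not belong to any common $D(p;f)$, and $\dom f$ is not itself of the form $D(p;f)$, so ``the same template'' does not apply there. Moreover, the corner pairs in \eqref{mnatconcavexc2loc2Z}, \eqref{mnatconcavexc3loc2Z}, \eqref{mnatconcavexc4loc2Z} are \emph{not} one-signed configurations (e.g.\ $(x+2\chi_{i})-(x+\chi_{k})=2\chi_{i}-\chi_{k}$), so your device for $a\ge b$ covers only \eqref{mnatconcavexc0loc2Z} and \eqref{mnatconcavexc1loc2Z}; the others need the full exchange axiom on $D(p;f)$, and \eqref{mnatconcavexc4loc2Z} needs more than one application.

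Second, and more fundamentally, the price-selection step is circular as formulated. ``Select $p$ as a supergradient of the minimal face carrying the corners'' presupposes that the corners of a violated inequality, say $x$ and $x+\chi_{i}+\chi_{j}$, lie on a common face of $\overline{f}$, i.e.\ that some $p$ puts both into $D(p;f)$ --- but that is essentially what has to be proved: even granting concave-extensibility (your hole argument for which is sound, modulo the fact that M$\sp{\natural}$-convex sets are integrally convex, hence hole-free, as recorded in Section~\ref{SCfixpticsetdirpres}), the value $\overline{f}$ at the midpoint may be generated by \emph{other} points of $\dom f$, in which case no supporting hyperplane contains both corners and your contradiction never fires. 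The known repair is to support $\overline{f}$ at the half-integral midpoint $m=x+\tfrac{1}{2}(\chi_{i}+\chi_{j})$: for a generic supergradient $p$ at $m$ one has $m\in\overline{D(p;f)}$, the integral neighborhood $N(m)$ consists exactly of the four box corners, and integral convexity of the M$\sp{\natural}$-convex set $D(p;f)$ forces either the diagonal pair $\{x,\,x+\chi_{i}+\chi_{j}\}$ or the antidiagonal pair $\{x+\chi_{i},\,x+\chi_{j}\}$ into $D(p;f)$. In the first case your exchange device yields $f(x)+f(x+\chi_{i}+\chi_{j})=f(x+\chi_{i})+f(x+\chi_{j})$, and in the second case maximality gives $f[-p](x+\chi_{i})+f[-p](x+\chi_{j})\ge f[-p](x)+f[-p](x+\chi_{i}+\chi_{j})$; either way the assumed strict violation is contradicted. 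Analogous but longer case analyses are needed for \eqref{mnatconcavexc2loc2Z}--\eqref{mnatconcavexc4loc2Z} and for the exchange property of $\dom f$ --- that is where the real work of the cited proofs lies, and your proposal does not yet contain it.
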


As a straightforward extension of the gross substitutes condition
from single-unit valuations 
(Section \ref{SCmaximizers01})
to multi-unit valuations it seems natural to conceive the following condition:
\begin{description}
\item[(GS\hbox{[$\ZZ$]})]
For any $p,q  \in \RR\sp{N}$ 
with $p \leq q$ and $x \in D(p ; f)$,
 there exists $y \in D(q ; f)$ such that
$x_{i} \le y_{i}$ for all $i\in N$ with $p_{i}=q_{i}$.
\end{description}
It turns out, however, that this condition
alone is too weak to be fruitful, mathematically and economically.
Subsequently, several different strengthened forms of (GS[$\ZZ$])
are proposed in the literature 
(Danilov et al.~2003,
Murota and Tamura 2003a,
Milgrom and Strulovici 2009,
Shioura and Tamura 2015).
\citeH{DKL03gr}\citeH{MTgross03}\citeH{MS09sbst}\citeH{ST15jorsj}%

Among others we start with
the {\em projected gross substitutes condition}% 
\footnote{%%%%%%%%%%%%
(PRJ-GS[$\ZZ$]) is denoted as (M$\sp{\natural}$-GS[$\ZZ$]) in 
Section 6.8 of Murota (2003).
\citeH[\S 6.8]{Mdcasiam}%
}  %%%%% footnote %%%%
(PRJ-GS[$\ZZ$]) of 
Murota and Tamura (2003a):
\citeH{MTgross03}%

\begin{description}
\item[(PRJ-GS\hbox{[$\ZZ$]})]
For any $p, q  \in \RR\sp{N}$ with $p \leq q$,
any $p_{0},q_{0} \in \RR$ with $p_{0} \leq q_{0}$
and $x \in D(p - p_0 \bm{1} ; f)$,
 there exists $y \in D(q - q_0 \bm{1}; f)$ such that
(i) $x_{i} \le y_{i}$ for all $i\in N$ with $p_{i}=q_{i}$
and (ii)
$x(N) \geq y(N)$ if $p_0 = q_0$,
\end{description}
where $x(N) = \sum_{i \in N} x_{i}$ and $y(N) =\sum_{i \in N} y_{i}$.
By fixing $p_{0} = q_{0} =0 $ in (PRJ-GS[$\ZZ$]) we obtain 
the following condition:
\begin{description}
\item[(GS{\&}LAD\hbox{[$\ZZ$]})] 
For any $p,q  \in \RR\sp{N}$ 
with $p \leq q$ and $x \in D(p ; f)$,
there exists $y \in D(q ; f)$ such that
(i) $x_{i} \le y_{i}$ for all $i\in N$ with $p_{i}=q_{i}$
and 
(ii)
$x(N) \geq y(N)$.
\end{description}
As the acronym (GS{\&}LAD[$\ZZ$]) shows, 
this condition is a combination of (GS[$\ZZ$]) above 
and the law of aggregate demand: 
\begin{description}
\item[(LAD\hbox{[$\ZZ$]})] 
For any $p,q  \in \RR\sp{N}$ 
with $p \leq q$ and $x \in D(p ; f)$,
there exists $y \in D(q ; f)$ such that
$x(N) \geq y(N)$
\end{description}
considered by 
Hatfield and Milgrom (2005) 
\citeH{HM05}%
and Milgrom and Strulovici (2009).
\citeH{MS09sbst}%
Note, however, that imposing (GS{\&}LAD[$\ZZ$]) on $f$ is not the same as 
imposing (GS[$\ZZ$]) and (LAD[$\ZZ$]) on $f$, since in (GS{\&}LAD[$\ZZ$])
both (i) and (ii) must be satisfied by the same vector $y$.
Obviously, (GS{\&}LAD[$\ZZ$]) implies (GS[$\ZZ$]) and (LAD[$\ZZ$]).
The amalgamated form (GS{\&}LAD[$\ZZ$]) is given in 
Murota et al.~(2013a)\citeH{MSY13},
whereas the juxtaposition of (GS[$\ZZ$]) and (LAD[$\ZZ$]) is in 
Theorem 13 (iv) of Milgrom and Strulovici (2009).
\citeH[Theorem 13 (iv)]{MS09sbst}%
We may also consider the following variant
(Shioura and Tamura 2015, Shioura and Yang 2015)
of (GS{\&}LAD[$\ZZ$]),
where the vector $q$ takes a special form%
\footnote{%%%%%%%%%%%%%
Recall that $\chi_{k}$ denotes the $k$th unit vector.
} %%%%%%%%%%%%%%%
 $p+\delta \chi_k$ with $k \in N$ and $\delta > 0$:
\begin{description}
\item[(GS{\&}LAD$'$\hbox{[$\ZZ$]})] 
 For any $p \in \RR\sp{N}$,  $k \in N$,
$\delta > 0$ and  
$x \in D(p ; f)$, there exists $y \in D(p + \delta \chi_{k} ; f)$
such that (i) $x_{i} \le y_{i}$ for all $i\in N \setminus \{ k \}$
and  (ii) $x(N) \geq y(N)$.
\end{description}

M$\sp{\natural}$-concavity can be characterized by these properties as follows
(Murota and Tamura 2003a\citeH{MTgross03},
Danilov et al.~2003\citeH{DKL03gr},
Theorem 13 of Milgrom and Strulovici 2009\citeH[Theorem 13]{MS09sbst},
Theorem 4.1 of Shioura and Tamura 2015\citeH[Theorem 4.1]{ST15jorsj};
Theorems 6.34 and 6.36 of Murota 2003)\citeH[Theorems 6.34, 6.36]{Mdcasiam}.
The theorem refers to two other conditions
(SWGS[$\ZZ$]) and (SS[$\ZZ$]), which are explained in Remark~\ref{RMgsSWGS-SS} below.

\begin{theorem} \label{THmnatGSLAD}
 Let $f: \ZZ\sp{N} \to \RR \cup \{-\infty \}$ be a concave-extensible 
function with a bounded effective domain.
 Then we have the following equivalence:
{\rm (M$\sp{\natural}$-EXC[$\ZZ$])}
$\iff$ {\rm (PRJ-GS[$\ZZ$])}
$\iff$ {\rm (GS{\&}LAD[$\ZZ$])}
$\iff$ {\rm (GS[$\ZZ$])} {\rm \&} {\rm (LAD[$\ZZ$])} 
$\iff$ {\rm (GS{\&}LAD$'$[$\ZZ$])}
$\iff$ {\rm (SWGS[$\ZZ$])}.
If $\dom f$ is contained in $\ZZ_{+}\sp{N}$,
each of these conditions is equivalent to {\rm (SS[$\ZZ$])}.
\end{theorem}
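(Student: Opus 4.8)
The plan is to make the family of demand sets $D(p;f)$ the common pivot for all six conditions and to exploit the lift of $f$ to the M-concave function $\tilde{f}$ on $\tilde{N}=\{0\}\cup N$ from Remark~\ref{RMmconcaveZ}. The key identity is that, for a lifted price $\tilde{p}=(p_{0},p)$, one has $\tilde{f}[-\tilde{p}](x_{0},x)=f[-(p-p_{0}\bm{1})](x)$ on $\dom\tilde{f}=\{(x_{0},x)\mid x_{0}=-x(N)\}$, since $\tilde{p}\sp{\top}(x_{0},x)=-p_{0}\,x(N)+p\sp{\top}x$. Hence $D(\tilde{p};\tilde{f})$ is exactly the graph $\{(-x(N),x)\mid x\in D(p-p_{0}\bm{1};f)\}$. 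Reading the plain gross substitutes condition for $\tilde{f}$ on $\tilde{N}$ through this identity reproduces {\rm (PRJ-GS[$\ZZ$])} verbatim: the coordinates $i\in N$ with $p_{i}=q_{i}$ give clause (i), while coordinate $0$, whose price is common precisely when $p_{0}=q_{0}$, gives via $\tilde{x}_{0}=-x(N)\le-y(N)=\tilde{y}_{0}$ exactly clause (ii) $x(N)\ge y(N)$. This reduces {\rm (M$\sp{\natural}$-EXC[$\ZZ$])}$\iff${\rm (PRJ-GS[$\ZZ$])} to the single statement that an M-concave function, i.e.\ one whose effective domain lies in a constant-sum hyperplane, is characterized by gross substitutes; the two directions of that statement are treated in the last two paragraphs.

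The implications among the remaining conditions are specializations. Fixing $p_{0}=q_{0}=0$ in {\rm (PRJ-GS[$\ZZ$])} yields {\rm (GS\&LAD[$\ZZ$])}; discarding either conclusion gives {\rm (GS[$\ZZ$])} and {\rm (LAD[$\ZZ$])}; and taking $q=p+\delta\chi_{k}$ gives {\rm (GS\&LAD$'$[$\ZZ$])}. The forward direction of the reduction is then the assertion that M-concave $\tilde{f}$ satisfies gross substitutes. By Theorem~\ref{THmconcavargmaxZ} every $D(\tilde{p};\tilde{f})$ is an M$\sp{\natural}$-convex (indeed M-convex) set; given $\tilde{p}\le\tilde{q}$ and $\tilde{x}\in D(\tilde{p};\tilde{f})$, I would choose $\tilde{y}\in D(\tilde{q};\tilde{f})$ minimizing $|\{i\in\tilde{N}\mid\tilde{p}_{i}=\tilde{q}_{i},\ \tilde{x}_{i}>\tilde{y}_{i}\}|$ and, on any violating coordinate, use the exchange property of $\tilde{f}$ to transfer a unit while comparing $\tilde{f}[-\tilde{q}]$ before and after, so that $\tilde{p}\le\tilde{q}$ forces the modified point back into $D(\tilde{q};\tilde{f})$ with one fewer violation, contradicting minimality — the same contradiction-by-exchange pattern as in the proof of Theorem~\ref{THchosubstMnat01}. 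Translating back, the equal-price coordinates in $N$ give clause (i) and coordinate $0$ gives clause (ii), and {\rm (GS\&LAD[$\ZZ$])}, {\rm (GS[$\ZZ$])}\,\&\,{\rm (LAD[$\ZZ$])}, and {\rm (GS\&LAD$'$[$\ZZ$])} follow by specialization.

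The substance lies in the reverse implications that close the cycle back to M$\sp{\natural}$-concavity, and here the main obstacle is that each price perturbation exposes only one-sided, demand-decreasing information, whereas the exchange inequality is symmetric. My plan is to route every reverse implication through the local characterization of Theorem~\ref{THmconcavlocexcZB} (equivalently the discrete Hessian conditions of Theorem~\ref{Mmnathessian}): first establish that $\dom f$ satisfies {\rm (B$\sp{\natural}$-EXC[$\ZZ$])}, using that boundedness makes all relevant demands nonempty and that the substitutes condition applied at extreme prices forces the domain exchange; then extract the local inequalities~\eqref{mnatconcavexc1loc2Z}--\eqref{mnatconcavexc3loc2Z} by fixing a base point $x$, activating the relevant two- and three-element comparisons, and raising a single coordinate price by a small $\delta>0$ so that the demand shift reads off the required inequality. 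Concave-extensibility supplies the diagonal conditions~\eqref{mnatconcavexc0loc2Z} and~\eqref{mnatconcavexc2loc2Z} and, together with boundedness of $\dom f$, guarantees that the perturbed demands exist and stabilize for all small $\delta$; this is exactly where both hypotheses are indispensable, since without them a single perturbation need not pin down a unique comparison.

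Finally, {\rm (SWGS[$\ZZ$])} and {\rm (SS[$\ZZ$])} of Remark~\ref{RMgsSWGS-SS} are inserted into the same cycle by this local argument. The equivalence with {\rm (SWGS[$\ZZ$])} holds in general, as its price movements already reach the full orthant of directions needed to activate each local comparison. By contrast, {\rm (SS[$\ZZ$])} is only equivalent under $\dom f\subseteq\ZZ_{+}\sp{N}$, because its admissible price movements are confined to the nonnegative orthant and hence cannot probe bundles on the lower boundary; under the nonnegativity hypothesis the boundary coordinates are inert and the confined movements suffice, which is why the last clause of the theorem is stated with that restriction.
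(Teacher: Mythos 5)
Your first two paragraphs are sound and organize the easy half of the theorem cleanly: the identity $\tilde{f}[-\tilde{p}](x_{0},x)=f[-(p-p_{0}\vecone)](x)$ on $\dom \tilde{f}$ is correct, so (PRJ-GS[$\ZZ$]) for $f$ is literally (GS[$\ZZ$]) for the M-concave lift $\tilde{f}$ of \eqref{mfnmnatfnrelationcave}, and the specializations to (GS{\&}LAD[$\ZZ$]), (GS[$\ZZ$])\,{\&}\,(LAD[$\ZZ$]), and (GS{\&}LAD$'$[$\ZZ$]) are indeed trivial. One repair in your forward argument: minimize $\sum\{(\tilde{x}_{i}-\tilde{y}_{i})^{+}\mid \tilde{p}_{i}=\tilde{q}_{i}\}$ rather than the cardinality of the violating set, as in the potential $\varPhi$ used to prove Theorem~\ref{THchosubstMnatZ}; a single exchange step lowers one coordinate gap by one but need not remove that coordinate from the violating set, so the cardinality measure does not strictly decrease. (Note also that the paper itself gives no proof of this theorem---it cites Murota and Tamura (2003a), Danilov et al.~(2003), Milgrom and Strulovici (2009), and Shioura and Tamura (2015)---so your argument has to stand entirely on its own.)

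The reverse implications, which are the substance of the theorem, are where your proposal has genuine gaps. First, your claim that concave-extensibility supplies \eqref{mnatconcavexc2loc2Z} is false: the function $f(x_{1},x_{2})=\min(2,\,x_{1}+2x_{2})$ on $\dom f=[\bm{0},(2,1)]_{\ZZ}$ from Example~\ref{EXmconjIfsubmNot} is concave-extensible with bounded effective domain, yet at $x=\bm{0}$, $i=1$, $k=2$ one has $f(x+2\chi_{i})+f(x+\chi_{k})=2+2=4>3=f(x+\chi_{i}+\chi_{k})+f(x+\chi_{i})$. Concave-extensibility only yields the one-dimensional condition \eqref{mnatconcavexc0loc2Z} (the midpoint of $x$ and $x+2\chi_{i}$ is integral); \eqref{mnatconcavexc2loc2Z} is an exchange-type inequality that must be extracted from the substitutes hypothesis, and your sketch never does so. Second, both local characterizations you invoke carry domain restrictions that exclude the general case: Theorem~\ref{Mmnathessian} requires $\dom f=\ZZ\sp{N}$, incompatible with bounded effective domain, and Theorem~\ref{THmconcavlocexcZB} requires \eqref{domcondlocZ}, i.e.\ closure of $\dom f$ under $\wedge$, which fails for general M$\sp{\natural}$-convex domains (take the base family of a matroid embedded in $\{0,1\}\sp{N}$). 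You are therefore forced back to Theorem~\ref{THmconcavlocexcZ} including the four-point condition \eqref{mnatconcavexc4loc2Z}, and ``raising a single coordinate price by a small $\delta$'' is not an argument for it: exhibiting, for each base point and quadruple, a price at which the relevant bundles are simultaneously demanded is precisely the nontrivial step performed in the cited proofs, and nothing in your outline substitutes for it; the same applies to (SWGS[$\ZZ$])~$\Rightarrow$~(M$\sp{\natural}$-EXC[$\ZZ$]). Finally, your rationale for the proviso $\dom f\subseteq\ZZ_{+}\sp{N}$ in the (SS[$\ZZ$]) clause is not the right one: the hypothesis is needed so that the binary expansion $f\sp{\rm B}$ of \eqref{fn-SGS-MS} is well defined at all (one needs $\dom f\subseteq[\bm{0},u]_{\ZZ}$ for some $u\in\ZZ_{+}\sp{N}$); the price vectors for $f\sp{\rm B}$ in (GS[$\ZZ$]) range over all of $\RR\sp{N\sp{\rm B}}$, so nothing is ``confined to the nonnegative orthant,'' and the equivalence of (SS[$\ZZ$]) with M$\sp{\natural}$-concavity under that proviso remains unproven in your plan.
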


\begin{remark} \rm  \label{RMgsSWGS-SS}
The {\em step-wise gross substitutes condition} 
(Danilov et al.~2003)
\citeH{DKL03gr}%
means:
\begin{description}
\item[(SWGS\hbox{[$\ZZ$]})] 
For any $p \in \RR\sp{N}$, $k \in N$ and $x \in D(p;f)$, 
at least one of (i) and (ii) holds true%
\footnote{%%%%%%%%%%%%%
Recall that $\chi_{k}$ denotes the $k$th unit vector.
}: %%%%%%%%%%%%%%%
\\
(i) $x\in D(p+ \delta \chi_k ; f)$ for all $\delta \geq 0$,
\\
(ii) there exists $\delta \geq 0$ and $y \in D(p + \delta \chi_k ; f)$
such that 
$y_{k}=x_{k}-1$ and $y_{i} \geq x_{i}$ for all $i\in N \setminus \{ k \}$.
\end{description}
The {\em strong substitute condition} 
(Milgrom and Strulovici 2009)
\citeH{MS09sbst}%
for a multi-unit valuation $f$
means the condition (GS[$\ZZ$]) for the single-unit valuation $f^{\rm B}$
corresponding to $f$:

\begin{description}
\item[(SS\hbox{[$\ZZ$]})] 
The function $f^{\rm B}$ associated with $f$ satisfies the condition (GS[$\ZZ$]).
\end{description}
More specifically, the function $f^{\rm B}$ is defined as follows.
Let $u \in \ZZ_{+}\sp{N}$ be a vector such that
$\dom f \subseteq [\bm{0}, u]_\ZZ$.
Consider a set 
$N^{\rm B} = \{(i, \beta) \mid i \in N,\ \beta \in \ZZ,\ 1 \leq \beta \leq u_{i} \}$
and define 
$f^{\rm B}: \ZZ^{N^{\rm B}} \to \RR \cup \{ -\infty \}$ 
with $\dom f^{\rm B} \subseteq \{0,1\}^{N^{\rm B}}$
by
\begin{equation} \label{fn-SGS-MS}
  {f}^{\rm B}({x}^{\rm B}) = f(x),
\quad x^{\rm B} \in \{0,1\}^{N^{\rm B}},
\quad
x_{i} = \sum_{\beta=1}^{u_{i}}x^{\rm B}_{(i, \beta)}\quad (i \in N).
\end{equation} 
\vspace{-1.5\baselineskip}
\\
\finbox
\end{remark}

\subsection{Choice function}
\label{SCchoiceMnatZ}

Let $b\in \mathbb{Z}_{+}^{N}$ be an upper bound vector and
$\mathcal{B}= \{ x\in \mathbb{Z}_{+}^{N} \mid  x\leq b \}$ 
be the set of feasible vectors.
A function $C:\mathcal{B}\to \mathcal{B}$ is called
a {\em choice function} if $C(x)\leq x$ for all $x\in \mathcal{B}$. 
Three important properties are identified in the literature 
(Alkan and Gale 2003):
\citeH{AG03stab}%

\begin{itemize}
\item
$C$ is called {\em consistent} if \ 
$C(x)\leq y\leq x$ implies $C(y)=C(x)$,

\item
$C$ is called {\em persistent} if \ 
$x\geq y$ implies $y \wedge C(x)\leq C(y)$,

\item
$C$ is called {\em size-monotone} if \  
$x\geq y$ implies $|C(x)|\geq |C(y)|$,
where $\displaystyle |C(x)|=\sum_{i \in N}C(x)_{i}$.
\end{itemize}

\begin{remark} \rm  \label{RMsignif3condZ}
Alkan and Gale (2003)\citeH{AG03stab}
considered the stable allocation model
that extends the stable matching model of 
Alkan (2002)\citeH{Alk02}.
If the choice functions are consistent and persistent,
the set of stable allocations is nonempty and forms a lattice.
Moreover, if the choice functions are also size-monotone, 
the lattice of stable allocations is distributive and has
several significant properties, called 
polarity, complementarity, and uni-size property.
\finbox
\end{remark}

For a given function 
$f: \ZZ\sp{N} \to \RR \cup \{ -\infty \}$
we define
\begin{equation}  \label{choiceByValZ}
C(z)= C(z; f)= \argmax \{ f(y) \mid y\leq z \} .
\end{equation}
In general, the maximizer may not be unique, and hence
$C(z;f) \subseteq \ZZ\sp{N}$.
We also have the possibility of $C(z;f) = \emptyset$
to express the nonexistence of a maximizer.

An important property of M$\sp{\natural}$-concave functions,
closely related to persistence,
is found in 
Lemma 1 of Eguchi et al.~(2003);
\citeH[Lemma 1]{EFT03}%
see also 
Lemma 5.2 of Fujishige and Tamura (2006).
\citeH[Lemma 5.2]{FT06market}%

\begin{theorem}
%%[\protect{\citeH[Lemma 1]{EFT03},\citeH[Lemma 5.2]{FT06market}}]  
\label{THchosubstMnatZ}
Let $f : \ZZ\sp{N} \rightarrow \RR \cup \{ -\infty\}$ be 
an M$\sp{\natural}$-concave function.
Then the following hold.
\begin{description}
\item[(SC$\sp{1}$\hbox{[$\ZZ$]})]
 For any $z_{1},z_{2}\in \ZZ\sp{N}$ 
with  $z_{1}\geq z_{2}$ and
$C(z_{2}; f) \neq \emptyset$
and for any 
$x_{1} \in C(z_{1};f)$,
there exists 
$x_{2} \in C(z_{2}; f)$
such that
$ z_{2} \wedge x_{1}\leq x_{2}$.

\item[(SC$\sp{2}$\hbox{[$\ZZ$]})]
 For any $z_{1},z_{2}\in \ZZ\sp{N}$ with $z_{1}\geq z_{2}$ and
$C(z_{1}; f) \neq \emptyset$
and for any 
$x_{2} \in C(z_{2}; f)$,
there exists 
$x_{1} \in C(z_{1};f)$
such that
$z_{2} \wedge x_{1}\leq x_{2}$.
\end{description}
\end{theorem}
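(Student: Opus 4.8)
The plan is to transcribe the minimal-counterexample argument used for the set-function version (Theorem~\ref{THchosubstMnat01}) into the integer-vector setting, replacing $\cap$, $\setminus$ and $\subseteq$ by $\wedge$, componentwise subtraction and $\leq$, and invoking the exchange property (M$\sp{\natural}$-EXC[$\ZZ$]) in place of (M$\sp{\natural}$-EXC). Throughout I use the defining property of $C(z;f)=\argmax\{f(y)\mid y\le z\}$: if $w\in C(z;f)$ and $y\le z$ then $f(y)\le f(w)$, with equality exactly when $y\in C(z;f)$; and the fact that $w\le z$ for every $w\in C(z;f)$, so in particular $(x_2)_i\le(z_2)_i$ for all $i$.

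For (SC$\sp{1}$[$\ZZ$]) I would fix $x_1\in C(z_1;f)$, set the constant vector $c=z_2\wedge x_1$, and among all $x_2\in C(z_2;f)$ choose one minimizing the nonnegative integer shortfall
\[
\Phi(x_2)=\sum_{i\in N}\max\bigl(0,\,c_i-(x_2)_i\bigr).
\]
If the claim $c\le x_2$ failed there would be an index $i$ with $c_i>(x_2)_i$, which forces both $(x_1)_i>(x_2)_i$ (so $i\in\suppp(x_1-x_2)$) and $(z_2)_i>(x_2)_i$. Applying (M$\sp{\natural}$-EXC[$\ZZ$]) to $x_1,x_2$ at this $i$ gives either (i) $f(x_1)+f(x_2)\le f(x_1-\chi_i)+f(x_2+\chi_i)$, or (ii) the analogous inequality with some $j\in\suppm(x_1-x_2)$. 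The feasibility checks are routine: $x_1-\chi_i\le x_1\le z_1$ always, $x_2+\chi_i\le z_2$ because $(x_2)_i<(z_2)_i$, and in case (ii) $x_1-\chi_i+\chi_j\le z_1$ follows from $(x_1)_j<(x_2)_j\le(z_2)_j\le(z_1)_j$ while $x_2+\chi_i-\chi_j\le z_2$. Maximality then forces the exchange inequality to hold with equality, placing the new vectors back in $C(z_1;f)$ and $C(z_2;f)$; and since $c$ is constant, raising $x_2$ at $i$ (and, in case (ii), lowering it at $j$, where $c_j\le(x_1)_j<(x_2)_j$ keeps that term at $0$) drops $\Phi$ by exactly $1$, contradicting minimality. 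Hence $z_2\wedge x_1\le x_2$.

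For (SC$\sp{2}$[$\ZZ$]) the roles are swapped: $x_2\in C(z_2;f)$ is fixed and $x_1$ varies. The naive choice — minimizing $\sum_i\max(0,(z_2\wedge x_1)_i-(x_2)_i)$ — is awkward, because the clamp $z_2\wedge x_1$ now drifts as $x_1$ changes, and at a coordinate with $(x_1)_i>(z_2)_i$ lowering $x_1$ need not change this quantity. This is the one genuinely new point compared with the $0$-$1$ case, where such a coordinate cannot occur. The fix is to minimize instead the excess
\[
\Psi(x_1)=\sum_{i\in N}\max\bigl(0,\,(x_1)_i-(x_2)_i\bigr)
\]
over $x_1\in C(z_1;f)$. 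If $z_2\wedge x_1\le x_2$ fails at some $i$, then again $(x_1)_i>(x_2)_i$ and $(z_2)_i>(x_2)_i$; the very same exchange move and feasibility bookkeeping as above produce a vector in $C(z_1;f)$, and because $(x_1)_i>(x_2)_i$ the coordinate-$i$ term of $\Psi$ drops by $1$ while the coordinate-$j$ term (in case (ii)) stays $0$ since $(x_1)_j<(x_2)_j$. Thus $\Psi$ strictly decreases, a contradiction. A short computation confirms the two potentials are consistent: on the region $(x_2)_i\le(z_2)_i$ one has $\max(0,(z_2\wedge x_1)_i-(x_2)_i)+\max(0,(x_1)_i-(z_2)_i)=\max(0,(x_1)_i-(x_2)_i)$, which is why tracking $\Psi$ alone suffices.

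The main obstacle is therefore not the exchange step — that mirrors the set case verbatim — but the choice of monovariant for (SC$\sp{2}$[$\ZZ$]): one must use the excess of $x_1$ over $x_2$ rather than the literal shortfall in $z_2\wedge x_1\le x_2$, since only the former is guaranteed to strictly decrease under the exchange move. Once the right potential is fixed, both parts reduce to the same feasibility checks and the equality-forcing argument inherited from Theorem~\ref{THchosubstMnat01}.
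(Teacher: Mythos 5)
Your proof is correct and follows essentially the same route as the paper's: a minimal-violation argument over the choice sets, the identical application of (M$\sp{\natural}$-EXC[$\ZZ$]) with the same feasibility bookkeeping ($x_{1}-\chi_{i}+\chi_{j}\leq z_{1}$ via $(x_{1})_{j}<(x_{2})_{j}\leq (z_{2})_{j}\leq (z_{1})_{j}$, and $x_{2}+\chi_{i}-\chi_{j}\leq z_{2}$ via $(x_{2})_{i}<(z_{2})_{i}$), and the same equality-forcing step that keeps the exchanged vectors in $C(z_{1};f)$ and $C(z_{2};f)$. The only difference is the monovariant: the paper uses the single potential $\varPhi(x_{1},x_{2})=\sum\{(x_{1})_{i}-(x_{2})_{i}\mid i\in\suppp((z_{2}\wedge x_{1})-x_{2})\}$ for both parts --- weighting each violated coordinate by the full excess $(x_{1})_{i}-(x_{2})_{i}$ rather than the clamped shortfall is precisely what makes it strictly decrease under the (SC$\sp{2}$[$\ZZ$]) exchange, the ``drift'' obstacle you correctly identified --- so your two tailored potentials ($\Phi$ for (SC$\sp{1}$[$\ZZ$]), $\Psi$ for (SC$\sp{2}$[$\ZZ$])) are a sound variant of the paper's one.
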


\begin{proof}
Assume $z_{1} \geq z_{2}$.
For 
$x_{1} \leq  z_{1}$ and $x_{2} \leq  z_{2}$
define
\[
\varPhi(x_{1}, x_{2}) = 
\sum\{(x_{1})_{i}- (x_{2})_{i} \mid i \in \suppp( (z_{2} \wedge x_{1})-x_{2}) \}.
\]

Proof of (SC$\sp{1}$[$\ZZ$]):
Let $x_{1} \in C(z_{1};f)$
and take
$x_{2} \in C(z_{2};f)$ 
with minimum $\varPhi(x_{1}, x_{2})$. 
To prove by contradiction, suppose that there exists 
$i \in \suppp( (z_{2} \wedge x_{1})-x_{2})$.
Since
$i \in \suppp( x_{1} - x_{2})$, 
(M$\sp{\natural}$-EXC[$\ZZ$]) implies
there exists 
$j \in \suppm( x_{1} - x_{2}) \cup \{ 0 \}$ 
such that
\begin{equation*}
  f(x_{1}) + f(x_{2}) \leq 
   f(x_{1}-\chi_{i}+\chi_{j}) + f(x_{2}+\chi_{i}-\chi_{j}).
\end{equation*}
Here we have $x_{1}-\chi_{i}+\chi_{j} \leq  z_{1}$ 
and $x_{2}+\chi_{i}-\chi_{j}  \leq  z_{2}$;
the former is obvious if $j=0$ and otherwise,
it follows from 
$(x_{1})_{j} < (x_{2})_{j} \leq (z_{2})_{j}\leq (z_{1})_{j}$,
and the latter follows from $(x_{2})_{i} < (z_{2})_{i}$.
This implies that
$f( x_{1}-\chi_{i}+\chi_{j}) \leq f( x_{1})$
and
$f(x_{2}+\chi_{i}-\chi_{j} ) \leq f(x_{2})$
since $x_{1} \in C(z_{1};f)$ and $x_{2} \in C(z_{2};f)$.
Therefore,
the inequalities are in fact equalities, and
$x_{1}-\chi_{i}+\chi_{j} \in C(z_{1};f)$ 
and $x_{2}+\chi_{i}-\chi_{j} \in C(z_{2};f)$.
But we have
$\varPhi(x_{1}, x_{2}+\chi_{i}-\chi_{j}) = \varPhi(x_{1}, x_{2}) - 1$,
which contradicts the choice of $x_{2}$.

Proof of (SC$\sp{2}$[$\ZZ$]):
Let $x_{2} \in C(z_{2};f)$
and take
$x_{1} \in C(z_{1};f)$ 
with minimum $\varPhi(x_{1}, x_{2})$. 
By the same argument as above we obtain
$x_{1}-\chi_{i}+\chi_{j}  \in C(z_{1};f)$ with
$\varPhi(x_{1}-\chi_{i}+\chi_{j}, x_{2}) = \varPhi(x_{1}, x_{2}) - 1$.
This is a contradiction to the choice of $x_{1}$.
\end{proof}

When the maximizer is unique in (\ref{choiceByValZ}) for every $z$,  we say
that $f$ is {\em unique-selecting}.
In the following we  assume that $f$ is unique-selecting
and 
\begin{equation}  \label{choicedomfZ}
\bm{0} \in \dom f \subseteq \mathbb{Z}_{+}^{N}.
\end{equation}
Then $C$ in (\ref{choiceByValZ}) can be regarded as a choice function
$C:\mathcal{B}\to \mathcal{B}$.

The induced choice function $C$ is obviously consistent
for any valuation function $f$.
For persistence, 
M$\sp{\natural}$-concavity plays an essential role.
The following theorem of 
Eguchi et al.~(2003)\citeH{EFT03}
can be obtained as a corollary of Theorem~\ref{THchosubstMnatZ},
since for unique-selecting valuation functions, 
 {\rm (SC$\sp{1}$[$\ZZ$])} and {\rm (SC$\sp{2}$[$\ZZ$])} are equivalent
and  both coincide with persistence.

\begin{theorem} \label{THchopersMnatZuniq}
%%\begin{theorem}[\citeH{EFT03}] \label{THchopersMnatZuniq}
Every unique-selecting M$\sp{\natural}$-concave function
$f: \ZZ\sp{N} \to \RR \cup \{ -\infty \}$
with {\rm (\ref{choicedomfZ})}
induces a persistent choice function.
\end{theorem}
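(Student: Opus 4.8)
The plan is to obtain persistence as an immediate specialization of Theorem~\ref{THchosubstMnatZ}: once $f$ is unique-selecting, the condition {\rm (SC$\sp{1}$[$\ZZ$])} (equivalently {\rm (SC$\sp{2}$[$\ZZ$])}) collapses precisely to the persistence inequality. Before invoking that theorem, I would first check that, under the standing assumption (\ref{choicedomfZ}), the rule $z \mapsto C(z;f)$ is a bona fide choice function $C:\mathcal{B}\to\mathcal{B}$, that is, $C(z;f)$ is nonempty for every $z\in\mathcal{B}$. This is routine: for $z\in\mathcal{B}$ we have $z\geq\bm{0}$, and since $\bm{0}\in\dom f$ while $\dom f\subseteq\mathbb{Z}_{+}^{N}$, the feasible set $\{y\in\dom f\mid y\leq z\}$ contains $\bm{0}$ and is contained in the finite box $[\bm{0},z]_{\ZZ}$, so a maximizer exists. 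Being unique-selecting, $C(z;f)$ is then a singleton, whose element I denote $C(z)$.

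The main step is to fix arbitrary $x,y\in\mathcal{B}$ with $x\geq y$ and apply {\rm (SC$\sp{1}$[$\ZZ$])} of Theorem~\ref{THchosubstMnatZ} with $z_{1}=x$ and $z_{2}=y$. The hypothesis $C(z_{2};f)\neq\emptyset$ holds by the previous step, so for the unique $x_{1}=C(x)\in C(z_{1};f)$ the condition supplies some $x_{2}\in C(z_{2};f)$ with $z_{2}\wedge x_{1}\leq x_{2}$. Since $C(z_{2};f)=\{C(y)\}$ by uniqueness, necessarily $x_{2}=C(y)$, and substituting $z_{2}=y$, $x_{1}=C(x)$ yields $y\wedge C(x)\leq C(y)$, which is exactly persistence.

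I expect no genuine obstacle here, because the entire combinatorial content is already carried by Theorem~\ref{THchosubstMnatZ}; the remaining work is the bookkeeping observation that uniqueness makes {\rm (SC$\sp{1}$[$\ZZ$])} and {\rm (SC$\sp{2}$[$\ZZ$])} say the same thing, namely $y\wedge C(x)\leq C(y)$. The one point deserving mild care is matching the nonemptiness hypotheses built into these conditions: {\rm (SC$\sp{1}$[$\ZZ$])} requires $C(z_{2};f)\neq\emptyset$ and {\rm (SC$\sp{2}$[$\ZZ$])} requires $C(z_{1};f)\neq\emptyset$, and the first step confirms both hold uniformly over $\mathcal{B}$ thanks to (\ref{choicedomfZ}). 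Running {\rm (SC$\sp{2}$[$\ZZ$])} instead, with $x_{2}=C(y)$, would give the same conclusion, so either branch of Theorem~\ref{THchosubstMnatZ} completes the argument.
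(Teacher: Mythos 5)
Your proposal is correct and follows essentially the same route as the paper, which obtains the theorem precisely as a corollary of Theorem~\ref{THchosubstMnatZ} by observing that for unique-selecting valuations {\rm (SC$\sp{1}$[$\ZZ$])} and {\rm (SC$\sp{2}$[$\ZZ$])} both collapse to the persistence inequality $y \wedge C(x) \leq C(y)$. Your explicit verification that $C(z;f) \neq \emptyset$ on $\mathcal{B}$ under (\ref{choicedomfZ}) is a detail the paper leaves implicit, and it is handled correctly.
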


The size-monotonicity is also implied by M$\sp{\natural}$-concavity 
(Murota and Yokoi 2015).
\citeH{MY15mor}%

\begin{theorem} \label{THchomonoMnatZuniq}
%%\begin{theorem}[\citeH{MY15mor}] \label{THchomonoMnatZuniq}
Every unique-selecting M$\sp{\natural}$-concave function
$f: \ZZ\sp{N} \to \RR \cup \{ -\infty \}$
with {\rm (\ref{choicedomfZ})}
induces a size-monotone choice function.
\end{theorem}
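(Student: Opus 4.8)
The plan is to mirror the proof of Theorem~\ref{THchomonoMnat01uniq} for the set-function case, replacing the cardinality-constrained exchange property (\ref{mnatexccard1}) by its vector analogue (\ref{mnatexcsizeZ1}) in Remark~\ref{RMmnatexcsizeZ}. Let $C$ be the choice function induced by $f$ via (\ref{choiceByValZ}), which is well defined because $f$ is unique-selecting and satisfies (\ref{choicedomfZ}). To establish size-monotonicity by contradiction, I would suppose there exist $z_{1} \geq z_{2}$ with $|C(z_{1})| < |C(z_{2})|$, and set $x_{1} = C(z_{1})$ and $x_{2} = C(z_{2})$; then $x_{1}(N) < x_{2}(N)$, and $x_{1}, x_{2} \in \dom f \subseteq \ZZ_{+}^{N}$.

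The key step is to apply the size-constrained exchange inequality (\ref{mnatexcsizeZ1}) with $x = x_{1}$ and $y = x_{2}$: since $x_{1}(N) < x_{2}(N)$, there exists $j \in \suppm(x_{1} - x_{2})$ such that
\[
 f(x_{1}) + f(x_{2}) \leq f(x_{1} + \chi_{j}) + f(x_{2} - \chi_{j}).
\]
It then remains to certify that the two perturbed vectors are feasible, i.e.\ $x_{1} + \chi_{j} \leq z_{1}$ and $x_{2} - \chi_{j} \leq z_{2}$. For the first, $j \in \suppm(x_{1} - x_{2})$ gives $(x_{1})_{j} < (x_{2})_{j}$, so $(x_{1})_{j} + 1 \leq (x_{2})_{j} \leq (z_{2})_{j} \leq (z_{1})_{j}$ using $x_{2} \leq z_{2}$ and $z_{2} \leq z_{1}$; for the second, $x_{2} - \chi_{j} \leq x_{2} \leq z_{2}$, and $x_{2} - \chi_{j} \geq \bm{0}$ because $(x_{2})_{j} \geq (x_{1})_{j} + 1 \geq 1$.

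Finally I would invoke uniqueness of the maximizer. Since $x_{1}$ is the unique maximizer of $f$ over $\{ y \leq z_{1} \}$ and $x_{1} + \chi_{j}$ is a distinct feasible point, $f(x_{1} + \chi_{j}) < f(x_{1})$; likewise $x_{2} - \chi_{j} \leq z_{2}$ is distinct from the unique maximizer $x_{2}$, so $f(x_{2} - \chi_{j}) < f(x_{2})$. Adding these strict inequalities contradicts the displayed exchange inequality, completing the argument. The only delicate point — and the step I would handle most carefully — is the feasibility bookkeeping for $x_{1} + \chi_{j}$, which hinges on chaining $(x_{1})_{j} < (x_{2})_{j} \leq (z_{2})_{j} \leq (z_{1})_{j}$; everything else is a direct transcription of the set-function proof.
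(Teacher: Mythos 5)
Your proof is correct and is essentially identical to the paper's own argument: it too proceeds by contradiction from $|C(z_{1};f)|<|C(z_{2};f)|$, applies the size-constrained exchange property (\ref{mnatexcsizeZ1}) to the two maximizers, verifies feasibility via the same chain $(x_{1})_{j}<(x_{2})_{j}\leq (z_{2})_{j}\leq (z_{1})_{j}$, and derives the contradiction from unique-selection. Your extra check that $x_{2}-\chi_{j}\geq \bm{0}$ is harmless but not needed, since the exchange inequality already forces both perturbed points into $\dom f$.
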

\begin{proof}
The proof is based on the exchange property 
(\ref{mnatexcsizeZ1}) in Remark~\ref{RMmnatexcsizeZ}.
To prove by contradiction, suppose that there exist 
$x, y\in \ZZ^{N}$ such that $x\geq y$ and
$|C(x;f)|<|C(y;f)|$.
Set $x\sp{*} = C(x;f)$ and $y\sp{*} = C(y;f)$. Then $|x\sp{*}|<|y\sp{*}|$.
By the exchange property (\ref{mnatexcsizeZ1}) 
there exists  $j \in \suppm(x\sp{*} - y\sp{*})$
such that
$f( x\sp{*})+f( y\sp{*}) \leq f(x\sp{*}+ \chi_{j})+f(y\sp{*}- \chi_{j})$. 
Here we have 
$f(x\sp{*}+ \chi_{j})<f(x\sp{*})$
since 
$x\sp{*}+ \chi_{j} \leq x$
by $x\sp{*}_{j}<y\sp{*}_{j}\leq y_{j}\leq x_{j}$
and $x\sp{*}$ 
is the unique maximizer.
We also have
$f(y\sp{*}- \chi_{j})<f(y\sp{*})$
since 
$y\sp{*}- \chi_{j}\leq y\sp{*}\leq y$
and $y\sp{*}$ is the unique maximizer.
This is a contradiction.
\end{proof}

Thus, M$\sp{\natural}$-concave valuation functions 
entail the three desired properties,
consistency,  persistence, and size-monotonicity%
\footnote{%%%%%%%%
Theorem~\ref{THchoiceMnatZ} can be extended to 
quasi M$\sp{\natural}$-concave value functions; see  
Murota and Yokoi (2015)\citeH{MY15mor}.
}. %%% footnote %%%%
Recall Remark \ref{RMsignif3condZ} for the implications of this fact.

%%%%Theorem~\ref{THchoiceMnatZ} is ascribed to 
%%%%\citeH{EFT03}, \citeH{FT06market}, \citeH{MY15mor}

\begin{theorem} \label{THchoiceMnatZ}
For a unique-selecting M$\sp{\natural}$-concave value function
$f: \ZZ\sp{N} \to \RR \cup \{ -\infty \}$
with {\rm (\ref{choicedomfZ})},
the choice function $C$ induced from $f$ is 
consistent,  persistent, and size-monotone.
\end{theorem}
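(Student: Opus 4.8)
The plan is to recognize that this theorem is an \emph{aggregation} of three properties, two of which have already been proved just above, so that only consistency calls for a short fresh argument while persistence and size-monotonicity are quoted verbatim from the preceding theorems.

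First I would dispose of consistency directly from the definition (\ref{choiceByValZ}), using nothing about M$\sp{\natural}$-concavity. Suppose $C(x) \le y \le x$ and write $x\sp{*} = C(x) = \argmax \{ f(w) \mid w \le x \}$. Since $y \le x$, the feasible set $\{ w \mid w \le y \}$ is contained in $\{ w \mid w \le x \}$, so $\max \{ f(w) \mid w \le y \} \le f(x\sp{*})$. Conversely, the hypothesis $x\sp{*} \le y$ means $x\sp{*}$ is feasible for the restricted maximization, whence $\max \{ f(w) \mid w \le y \} \ge f(x\sp{*})$. Thus $x\sp{*}$ attains the maximum over $\{ w \le y \}$, and since $f$ is unique-selecting this maximizer is unique, giving $C(y) = x\sp{*} = C(x)$. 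This establishes consistency for any valuation $f$ satisfying (\ref{choicedomfZ}), M$\sp{\natural}$-concave or not.

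Next, persistence is exactly the content of Theorem~\ref{THchopersMnatZuniq}, obtained there from the substitutability conditions (SC$\sp{1}$[$\ZZ$])/(SC$\sp{2}$[$\ZZ$]) of Theorem~\ref{THchosubstMnatZ}, which collapse to persistence precisely because $f$ is unique-selecting. Size-monotonicity is exactly the content of Theorem~\ref{THchomonoMnatZuniq}, proved there from the size-constrained exchange inequality (\ref{mnatexcsizeZ1}). Quoting these two results alongside the consistency argument above completes the proof.

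I expect essentially no obstacle, since the substantive combinatorial work lives in the earlier theorems; the only point to verify is that consistency does not covertly require M$\sp{\natural}$-concavity, which the argument above confirms. The one detail worth stating with care is the role of the unique-selecting hypothesis: it is what upgrades the conclusion ``$x\sp{*}$ is a maximizer over $\{ w \le y \}$'' to the functional identity $C(y) = C(x)$ demanded by the definition of consistency, and it is likewise what reduces the correspondence-level conditions of Theorem~\ref{THchosubstMnatZ} to persistence in Theorem~\ref{THchopersMnatZuniq}.
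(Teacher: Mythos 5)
Your proposal is correct and takes essentially the same route as the paper: the theorem is stated there as an aggregation, with consistency noted as obvious from the definition (\ref{choiceByValZ}) for any valuation function, and persistence and size-monotonicity supplied verbatim by Theorems \ref{THchopersMnatZuniq} and \ref{THchomonoMnatZuniq}. Your explicit consistency argument, including the observation that unique-selection is what upgrades ``$x\sp{*}$ is a maximizer over $\{w \mid w \leq y\}$'' to $C(y)=C(x)$, correctly fills in the step the paper leaves as obvious.
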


 Finally, we mention a theorem that 
characterizes  M$\sp{\natural}$-concavity 
in terms of a parametrized version of 
(SC$\sp{1}$[$\ZZ$]) and (SC$\sp{2}$[$\ZZ$]).
Recall from (\ref{f-pdefZ}) the 
notation $f[-p](x) = f(x) - p\sp{\top} x$ for 
$p \in \RR\sp{N}$ and $x \in \ZZ\sp{N}$.
If $f$ is an M$\sp{\natural}$-concave function
 (not assumed to be unique-selecting),
$f[-p]$ is also M$\sp{\natural}$-concave, and hence
is equipped with the properties (SC$\sp{1}$[$\ZZ$]) and (SC$\sp{2}$[$\ZZ$])
by Theorem~\ref{THchosubstMnatZ}.
 In other words, an M$\sp{\natural}$-concave function $f$
has the following properties.

\begin{description}
\item[(SC$\sp{1}_{\rm\bf {G}}$\hbox{[$\ZZ$]})]  
For any $p\in \RR^{N}$, $f[-p]$ satisfies {\rm (SC$\sp{1}$[$\ZZ$])}.

\item[(SC$\sp{2}_{\rm\bf {G}}$\hbox{[$\ZZ$]})]  
For any $p\in \RR^{N}$, $f[-p]$ satisfies {\rm (SC$\sp{2}$[$\ZZ$])}. 
\end{description}
The following theorem, due to 
Farooq and Shioura (2005)\citeH{FS05sbst},
states that each of these conditions characterizes M$\sp{\natural}$-concavity.

\begin{theorem} \label{THchoMnatZ}
%%\begin{theorem}[\citeH{FS05sbst}] \label{THchoMnatZ}
For a function 
$f: \ZZ\sp{N} \to \RR \cup \{ -\infty \}$ 
with a bounded nonempty effective domain,
we have the equivalence:
$f$ is M$\sp{\natural}$-concave 
$\iff$ 
{\rm (SC$\sp{1}_{\rm {G}}$[$\ZZ$])}
$\iff$ 
{\rm (SC$\sp{2}_{\rm {G}}$[$\ZZ$])}.
\end{theorem}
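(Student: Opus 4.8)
The plan is to establish the three-way equivalence by a short cycle: first the two easy implications out of M$\sp{\natural}$-concavity, then the two converse implications, which carry all the difficulty and are handled by a single probing device.

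For the easy direction, recall that $f[-p]$ is M$\sp{\natural}$-concave whenever $f$ is, for every $p \in \RR\sp{N}$. Hence Theorem~\ref{THchosubstMnatZ}, applied to each $f[-p]$ in place of $f$, shows that $f[-p]$ satisfies {\rm (SC$\sp{1}$[$\ZZ$])} and {\rm (SC$\sp{2}$[$\ZZ$])}. Since this holds for all $p$, the defining requirements of {\rm (SC$\sp{1}_{\rm G}$[$\ZZ$])} and {\rm (SC$\sp{2}_{\rm G}$[$\ZZ$])} are met, giving M$\sp{\natural}$-concave $\Rightarrow$ {\rm (SC$\sp{1}_{\rm G}$[$\ZZ$])} and M$\sp{\natural}$-concave $\Rightarrow$ {\rm (SC$\sp{2}_{\rm G}$[$\ZZ$])}. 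It then remains to prove {\rm (SC$\sp{1}_{\rm G}$[$\ZZ$])} $\Rightarrow$ M$\sp{\natural}$-concave and {\rm (SC$\sp{2}_{\rm G}$[$\ZZ$])} $\Rightarrow$ M$\sp{\natural}$-concave; the two arguments are mirror images (in the first one fixes the maximizer over the larger box and tracks one over the smaller box, in the second one does the reverse), so I would write out the first in full and indicate the symmetric changes for the second.

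For the converse, the plan is to verify M$\sp{\natural}$-concavity through its local characterization (Theorem~\ref{THmconcavlocexcZ}): since $\dom f$ is bounded, it suffices to show that $\dom f$ is an M$\sp{\natural}$-convex set and that the local inequalities \eqref{mnatconcavexc0loc2Z}--\eqref{mnatconcavexc4loc2Z} hold, all of which compare values of $f$ at integer points within $\ell_{1}$-distance at most $4$. The probing device is as follows. Given a target inequality, I would choose a price $p$ and a pair of boxes $z_{1} \geq z_{2}$ with $z_{2} = z_{1} - \chi_{i}$, where $p$ is tuned (with components very negative outside the coordinates in play, and with the values on the coordinates in play tied) so that a prescribed point $x_{1}$ is the maximizer of $f[-p]$ over $\{w \le z_{1}\}$, whence $x_{1} \in C(z_{1}; f[-p])$. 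Capping every coordinate except $i$ and one further coordinate $j$ at the corresponding value of $x_{1}$ forces $z_{2} \wedge x_{1} = x_{1} - \chi_{i}$; the conclusion of {\rm (SC$\sp{1}$[$\ZZ$])} then produces $x_{2} \in C(z_{2}; f[-p])$ with $x_{1} - \chi_{i} \le x_{2} \le z_{2}$, so that $x_{2}$ is pinned to be either $x_{1} - \chi_{i}$ or $x_{1} - \chi_{i} + \chi_{j}$, i.e.\ exactly the two one-step exchange candidates. Translating the optimality of $x_{1}$ and $x_{2}$ over their respective boxes back into inequalities among the finitely many $f$-values then yields the desired local relation, where the freedom to let $p$ range over all of $\RR\sp{N}$ (the force of the subscript ``G'') is precisely what lets me realize the required ties point by point.

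The hard part will be the two genuinely two-point local conditions \eqref{mnatconcavexc3loc2Z} and \eqref{mnatconcavexc4loc2Z}, together with the M$\sp{\natural}$-convexity of $\dom f$. The conclusion of {\rm (SC$\sp{1}$[$\ZZ$])} is only a monotone containment $z_{2}\wedge x_{1} \le x_{2}$, not an exact single-coordinate swap, so the whole burden falls on designing the boxes tightly enough---capping all but the two coordinates involved---that the containment leaves exactly one degree of freedom, and on arranging that $C(z_{2}; f[-p]) \neq \emptyset$ and that every point entering the final inequality lies in $\dom f$ (otherwise the relevant terms are $-\infty$ and the inequality is vacuous). For the two-point inequalities I would additionally have to choose $p$ so that both endpoints of the exchange are simultaneously near-optimal for the same price, which is feasible locally precisely because the comparison is confined to an $\ell_{1}$-ball of radius $4$; carrying out the resulting case analysis cleanly, and checking that the same boxes serve the mirror-image argument for {\rm (SC$\sp{2}_{\rm G}$[$\ZZ$])}, is the main technical obstacle.
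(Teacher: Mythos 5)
Your easy half coincides with everything the paper actually contains: the paper obtains ``M$\sp{\natural}$-concave $\Rightarrow$ (SC$\sp{1}_{\rm G}$[$\ZZ$]) and (SC$\sp{2}_{\rm G}$[$\ZZ$])'' exactly as you do, by applying Theorem~\ref{THchosubstMnatZ} to $f[-p]$, and for the converse it gives no proof at all, citing Farooq and Shioura (2005) --- so your converse must stand on its own, and as planned it has two genuine gaps.

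First, the route through the local characterization cannot close. Theorem~\ref{THmconcavlocexcZ} assumes, as a \emph{hypothesis}, that $\dom f$ is an M$\sp{\natural}$-convex set, i.e.\ (B$\sp{\natural}$-EXC[$\ZZ$]) for arbitrary pairs $x,y \in \dom f$, however far apart; the inequalities (\ref{mnatconcavexc0loc2Z})--(\ref{mnatconcavexc4loc2Z}) alone prove nothing. Concretely, the indicator function $f$ of $\{(0,0),(3,3)\} \subseteq \ZZ\sp{2}$ (value $0$ on the set, $-\infty$ off it) satisfies all of (\ref{mnatconcavexc0loc2Z})--(\ref{mnatconcavexc4loc2Z}) vacuously --- every instance needs two domain points at $\ell_{1}$-distance at most $4$, and the only domain pair is at distance $6$ --- yet $f$ is not M$\sp{\natural}$-concave. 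For indicator functions the entire theorem \emph{is} the domain statement, so this is the core of the converse, not a side condition. Your probing device, which pins $x_{2}$ within one or two unit steps of $x_{1}$, extracts exactly the local inequalities and says nothing about distant pairs; you list domain M$\sp{\natural}$-convexity under ``the main technical obstacle,'' but the only tool you propose is structurally incapable of reaching it. Some genuinely global argument (e.g.\ an induction on $\|x-y\|_{1}$ with prices re-tuned at each step) is required and absent.

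Second, the prescribed-optimum tuning is circular, because the feasible regions $\{w \leq z\}$ have no floor. The $-M$ trick disposes of downward deviations only on the inactive coordinates; on the active ones, the same tuned prices that are forced into bounded windows by the intended contradiction must also dominate all multi-step downward moves. For instance, in the probe for (\ref{mnatconcavexc1loc2Z}) with $x_{1} = x + \chi_{i} + \chi_{j}$, the contradiction forces $p_{j} > f(x+\chi_{j}) - f(x)$, while optimality of $x_{1}$ against $w = x + \chi_{i} - \chi_{j}$ forces $2p_{j} \leq f(x+\chi_{i}+\chi_{j}) - f(x+\chi_{i}-\chi_{j})$; these are jointly feasible only if a concavity-along-$j$ inequality --- an instance of the very conditions being proven --- already holds. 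This leak is invisible in the set-function case (Theorem~\ref{THchoMnat01}, Farooq--Tamura), where $\dom f \subseteq \{0,1\}\sp{N}$ admits no multi-unit downward moves, which is presumably what suggested the device; on $\ZZ\sp{N}$ the argmax can escape downward and unpin $x_{2}$. To repair it you would need either to order the instances and induct (prove (\ref{mnatconcavexc0loc2Z}) everywhere first, then use it to certify feasibility of the later probes), or to argue from an arbitrary or extreme element of $C(z_{1}; f[-p])$ rather than a prescribed one. A minor further point: for (\ref{mnatconcavexc4loc2Z}) the box $z_{2}$ must leave two coordinates free, so $x_{2}$ is pinned to four candidates, not the claimed two.
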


\subsection{Twisted M$\sp{\natural}$-concavity}
\label{SCtwistMnatZ}

Let $W$ be a subset of $N$.
For any vector $x \in \ZZ\sp{N}$
we define
$\mathrm{tw}(x) \in \ZZ\sp{N}$ by specifying its $i$th component 
$\mathrm{tw}(x)_{i}$ as
\begin{equation}  \label{twistsetdefZ}
\mathrm{tw}(x)_{i}  =
   \left\{  \begin{array}{ll}
   \phantom{-} x_{i}     &   (i \in N \setminus W),  \\
   - x_{i}          &   (i  \in W) .     \\
                     \end{array}  \right.
\end{equation}
A function $f: \ZZ\sp{N} \to \RR \cup \{ -\infty \}$ 
is said to be a {\em twisted M$\sp{\natural}$-concave function}
with respect to $W$,
if the function $\tilde{f}: \ZZ\sp{N} \to \RR \cup \{ -\infty \}$ 
defined by
\begin{equation}\label{ftwistMnatdefZ}
\tilde{f}(x) = f(\mathrm{tw}(x))
\qquad (x \in \ZZ\sp{N}) 
\end{equation}
is an M$\sp{\natural}$-concave function 
(Ikebe and Tamura 2015)\citeH{IT15scnet}.
The same concept has been introduced by 
Shioura and Yang (2015)\citeH{SY15jorsj},
almost at the same time and independently, 
under the name of {\em GM-concave functions}. 
Note that $f$ is twisted M$\sp{\natural}$-concave with respect to $W$
if and only if 
it is twisted M$\sp{\natural}$-concave with respect to $U = N \setminus W$.

Mathematically, 
twisted M$\sp{\natural}$-concavity is equivalent to the original 
M$\sp{\natural}$-concavity through twisting, 
and all the properties and theorems
about M$\sp{\natural}$-concave functions 
can be translated into those about
twisted M$\sp{\natural}$-concave functions.
In such translations it is often adequate to define
the {\em twisted demand correspondence} as%
\footnote{%%%%%%%%%%%%%%%%%
Note: $x \in \tilde{D}(p ; f)$  $\iff$  $\mathrm{tw}(x) \in D(p ; \tilde{f})$. 
}%%%% footnote %%%%%%%%%%%
\begin{equation} \label{DpdefZtwist}
 \tilde{D}(p ; f) = \argmax_{x} \{ f(x) - \mathrm{tw}(p)\sp{\top} x  \}.
\end{equation}

A twisted version of (GS\&LAD$'$[$\ZZ$]) is introduced by 
Ikebe et al.~(2015)\citeH{ISST15}
as the {\em generalized full substitutes} (GFS[$\ZZ$]) condition:

\begin{description}
\item[(GFS\hbox{[$\ZZ$]})]
(i) For any $p \in \RR\sp{N}$, $\tilde{D}(p ; f)$ is a discrete convex set%
\footnote{That is,  $\tilde{D}(p ; f)$ should coincide with the integer points
contained in the convex hull of $\tilde{D}(p ; f)$.
}.%%% footnote
\\
(ii) For any $p \in \RR\sp{N}$,  $k \in U$,
$\delta > 0$, and  
$x \in \tilde{D}(p ; f)$, there exists $y \in \tilde{D}(p + \delta \chi_{k} ; f)$
such that 
\begin{equation} \label{twistMGFS1}
 x_{i} \leq y_{i} \ \   (\forall i \in U \setminus \{ k \}),
\quad 
 x_{i} \geq y_{i} \ \   (\forall i \in W),
\quad 
 x(U)  -  x(W)  \geq  y(U) - y(W). 
\end{equation}
(iii) For any $p \in \RR\sp{N}$, $k \in W$,
$\delta > 0$, and  
$x \in \tilde{D}(p ; f)$, there exists $y \in \tilde{D}(p - \delta \chi_{k} ; f)$
such that 
\begin{equation} \label{twistMGFS2}
 x_{i	} \leq y_{i} \ \   (\forall i \in W \setminus \{ k \}),
\quad 
 x_{i} \geq y_{i} \ \   (\forall i \in U),
\quad 
 x(W) - x(U) \geq y(W) - y(U).
\end{equation}
\end{description}
The following theorem%
\footnote{%%%%%%%%%%%%%%%%%
Theorem~\ref{THtwistMnatISST} can be understood as
 a twisted version of the equivalence 
``{\rm (GS{\&}LAD$'$[$\ZZ$])} $\Leftrightarrow$ {\rm (M$\sp{\natural}$-EXC[$\ZZ$])}''
in Theorem~\ref{THmnatGSLAD}.
} %%%%%% footnote %%%%%%%%%%
(Ikebe et al.~2015)\citeH{ISST15}
characterizes twisted M$\sp{\natural}$-concavity
in terms of this condition.

\begin{theorem} \label{THtwistMnatISST}
Let $f: \ZZ\sp{N} \to \RR \cup \{ -\infty \}$ be a concave-extensible%
\footnote{%%%%%%%%%%%%
The concave-extensibility of $f$ is assumed here
for the consistency with the statement of Theorem~\ref{THmnatGSLAD}.
Mathematically, this assumption can be omitted,
since the condition (i) in (GFS[$\ZZ$])
is equivalent to the concave-extensibility of $f$
and twisted M$\sp{\natural}$-concave functions are concave-extensible.
Similarly in Theorem~\ref{THtwistMnatSY}.
} %%%%%%%%%%%%%%%%%%%%%%%%%%%
function with a bounded effective domain.  Then 
$f$ satisfies {\rm (GFS[$\ZZ$])} if and only if 
it is a twisted M$\sp{\natural}$-concave function with respect to $W$.
\end{theorem}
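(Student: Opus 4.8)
The plan is to reduce the claim to the equivalence ``(GS\&LAD$'$[$\ZZ$]) $\iff$ (M$\sp{\natural}$-EXC[$\ZZ$])'' of Theorem~\ref{THmnatGSLAD} by twisting. First I would set $\tilde{f}(x)=f(\mathrm{tw}(x))$ as in (\ref{ftwistMnatdefZ}); by definition $f$ is twisted M$\sp{\natural}$-concave with respect to $W$ if and only if $\tilde{f}$ is M$\sp{\natural}$-concave. Since $\mathrm{tw}$ is an affine involution (a coordinatewise sign change on $W$), $\tilde{f}$ inherits from $f$ both a bounded effective domain and concave-extensibility, so Theorem~\ref{THmnatGSLAD} applies to $\tilde{f}$. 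The bridge between the two demand pictures is the footnote identity to (\ref{DpdefZtwist}): from $p\sp{\top}\mathrm{tw}(x)=\mathrm{tw}(p)\sp{\top}x$ one gets $x\in\tilde{D}(p;f)\iff\mathrm{tw}(x)\in D(p;\tilde{f})$, that is, $\tilde{D}(p;f)=\mathrm{tw}(D(p;\tilde{f}))$ at the \emph{same} price vector $p$.

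Next I would translate the three clauses of (GFS[$\ZZ$]) for $f$ into statements about $D(\,\cdot\,;\tilde{f})$. As $\mathrm{tw}$ is a linear bijection of $\ZZ\sp{N}$, clause (i) --- discrete convexity of every $\tilde{D}(p;f)$ --- is equivalent to discrete convexity of every $D(p;\tilde{f})$, i.e. to concave-extensibility of $\tilde{f}$. For clauses (ii) and (iii) the substitution $\hat{x}=\mathrm{tw}(x)$, $\hat{y}=\mathrm{tw}(y)$ turns the inequalities (\ref{twistMGFS1}) and (\ref{twistMGFS2}) into $\hat{x}_i\le\hat{y}_i$, respectively $\hat{x}_i\ge\hat{y}_i$, for $i\ne k$, together with the corresponding inequality between $\hat{x}(N)$ and $\hat{y}(N)$; and since the correspondence matches $\tilde{D}$ and $D(\,\cdot\,;\tilde{f})$ at the same price, the move $p\to p+\delta\chi_k$ in clause (ii) is an \emph{upward} price move for $\tilde{f}$ while $p\to p-\delta\chi_k$ in clause (iii) is a \emph{downward} one. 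Thus clause (ii) is exactly (GS\&LAD$'$[$\ZZ$]) for $\tilde{f}$ at the coordinates $k\in U$, whereas clause (iii) is its price-decrease analogue at the coordinates $k\in W$.

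For the necessity direction (twisted M$\sp{\natural}$-concavity $\Rightarrow$ (GFS[$\ZZ$])) this translation closes the argument cleanly. If $\tilde{f}$ is M$\sp{\natural}$-concave, Theorem~\ref{THmnatGSLAD} yields (GS\&LAD$'$[$\ZZ$]) at every coordinate, in particular the upward moves at $k\in U$ giving clause (ii), while concave-extensibility gives clause (i). To obtain the downward moves at $k\in W$ needed for clause (iii) I would use that M$\sp{\natural}$-concavity is preserved under the total sign change $x\mapsto -x$ --- this is twisting with respect to $W=N$, equivalently $U=\emptyset$, hence the untwisted case --- apply Theorem~\ref{THmnatGSLAD} to $\tilde{f}\sp{-}(x)=\tilde{f}(-x)$, and transport its upward condition back through $D(p;\tilde{f}\sp{-})=-D(-p;\tilde{f})$ to the price-decrease version for $\tilde{f}$.

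The main obstacle is the sufficiency direction, (GFS[$\ZZ$]) $\Rightarrow$ twisted M$\sp{\natural}$-concavity: here the two substitutes conditions point in opposite price directions (upward on $U$, downward on $W$), so they do not assemble into the single uniform hypothesis of Theorem~\ref{THmnatGSLAD}, and a genuine argument is required. My plan is to verify M$\sp{\natural}$-concavity of $\tilde{f}$ through its local exchange characterization (Theorem~\ref{THmconcavlocexcZ}), conveniently read via the discrete Hessian when $\dom\tilde{f}=\ZZ\sp{N}$ (Theorem~\ref{Mmnathessian}). Clause (i) supplies concave-extensibility of $\tilde{f}$; one must still show that $\dom\tilde{f}$ is an M$\sp{\natural}$-convex set and then verify the local inequalities. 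For a fixed base point I would choose prices $p$ for which only two or three prescribed coordinates are active in $D(p;\tilde{f})$ and take $\delta$ small, so that the comparative-statics conclusions of clauses (ii)/(iii) read off the signs of the discrete Hessian entries $H_{ij}(x)$ of $\tilde{f}$: the upward conditions on $U$ control the entries indexed within $U$, the downward conditions on $W$ those within $W$, and the cross-side inclusions the mixed entries, together yielding (\ref{mnatfnhesseZ1}) and (\ref{mnatfnhesseZ2}). Reconciling the opposed price directions so that every exchange inequality comes out correctly oriented --- in effect, selecting for each exchange in (M$\sp{\natural}$-EXC[$\ZZ$]) the clause whose side matches the coordinate being exchanged --- is the delicate technical core where the argument of Ikebe et al. concentrates.
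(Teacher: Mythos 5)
Your reduction layer is sound and in fact coincides with the only proof indications the paper itself gives: the survey does not prove Theorem~\ref{THtwistMnatISST} (it is cited from Ikebe et al.~2015), and its internal hints are precisely the footnote calling it a twisted version of Theorem~\ref{THmnatGSLAD} and the closing remark of Section~\ref{SCtwistMnatZ}, whose displayed identities $D(p;f)=\tilde{D}(\mathrm{tw}(p);f)$ and $\mathrm{tw}(p+\delta\chi_{k})=\mathrm{tw}(p)+\delta\chi_{k}$ $(k\in U)$, $=\mathrm{tw}(p)-\delta\chi_{k}$ $(k\in W)$ are exactly your translation. Your computation that clause (ii) of (GFS[$\ZZ$]) is (GS{\&}LAD$'$[$\ZZ$]) for $\tilde{f}$ at $k\in U$, and clause (iii) its downward analogue at $k\in W$ (via $x(U)-x(W)=\mathrm{tw}(x)(N)$), is correct, and your necessity argument is complete and valid: $\tilde{f}\sp{-}(x)=\tilde{f}(-x)$ is M$\sp{\natural}$-concave by Theorem~\ref{THmfnoperation}\,(2), $D(p;\tilde{f}\sp{-})=-D(-p;\tilde{f})$, and Theorem~\ref{THmnatGSLAD} applied to $\tilde{f}\sp{-}$ transports into clause (iii), while clause (i) follows since $\mathrm{tw}$ is a linear involution preserving hole-freeness of M$\sp{\natural}$-convex sets.

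The genuine gap is the sufficiency direction, which you outline but do not prove, and the outline has concrete defects. First, the discrete-Hessian reading (Theorem~\ref{Mmnathessian}) is never available in this setting: it requires $\dom\tilde{f}=\ZZ\sp{N}$, whereas the theorem assumes a bounded effective domain; you are therefore forced through Theorem~\ref{THmconcavlocexcZ}, whose prerequisite that $\dom\tilde{f}$ be an M$\sp{\natural}$-convex set you flag but never extract from (GFS[$\ZZ$]) --- itself a nontrivial step, since (GFS[$\ZZ$]) speaks only about the maximizer sets $\tilde{D}(p;f)$. Second, and decisively, your own translation shows the hypotheses are split across two different functions: clause (ii) is the upward condition for $\tilde{f}$ at coordinates in $U$, while clause (iii), after your reflection identity, is the upward condition for $\tilde{f}\sp{-}$ at coordinates in $W$; Theorem~\ref{THmnatGSLAD} demands the upward condition at \emph{every} coordinate for a \emph{single} function, so it applies to neither, and no formal twisting or reflection reassembles the two halves into one. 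Bridging the mixed $U$--$W$ exchanges of (M$\sp{\natural}$-EXC[$\ZZ$]) from one-sided price moves on each side is the actual mathematical content of the theorem --- this is what Ikebe et al.~(2015), and Shioura--Yang (2015) for the equivalent condition (GGSC[$\ZZ$]) of Theorem~\ref{THtwistMnatSY}, prove --- and your plan of ``choosing prices with two or three active coordinates and reading off signs'' names the target without supplying the mechanism. As submitted, the ``if'' half of the theorem is unestablished.
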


In the modeling of a trading network (supply chain network),
where an agent is identified with a vertex (node) of the network,
each vertex (agent) is associated with 
a valuation function $f$ defined 
on the set of arcs incident to the vertex.
Denoting the set of in-coming arcs to the vertex by $U$
and the set of out-going arcs from the vertex by $W$,
the function $f$ is defined on $U \cup W$.
Twisted M$\sp{\natural}$-concave functions
are used effectively in this context
(Ikebe and Tamura 2015\citeH{IT15scnet},
Ikebe et al.~2015\citeH{ISST15},
Candogan et al.~2016\citeH{CEV16}).
See Section~\ref{SCtradingnet}.

With the use of the ordinary (un-twisted) demand correspondence
\begin{equation} \label{DpdefZ2}
 D(p ; f) = \argmax_{x} \{ f(x) - p\sp{\top} x  \},
\end{equation}
a similar condition was formulated by 
Shioura and Yang (2015)\citeH{SY15jorsj},
independently of 
Ikebe et al.~(2015)\citeH{ISST15},
to deal with economies with two classes of indivisible goods
such that goods in the same class are substitutable and
goods across two classes are complementary.
The condition, called
the {\em generalized gross substitutes and complements} (GGSC[$\ZZ$]) condition,
reads as follows:

\begin{description}
\item[(GGSC\hbox{[$\ZZ$]})]
(i) For any $p \in \RR\sp{N}$, $D(p ; f)$ is a discrete convex set.
\\
(ii) For any $p \in \RR\sp{N}$,  $k \in U$,
 $\delta > 0$, and  
$x \in D(p ; f)$, there exists $y \in D(p + \delta \chi_{k} ; f)$
that satisfies (\ref{twistMGFS1}).
\\
(iii) For any $p \in \RR\sp{N}$,  $k \in W$,
 $\delta > 0$, and  
$x \in D(p ; f)$, there exists $y \in D(p + \delta \chi_{k} ; f)$
that satisfies (\ref{twistMGFS2}).
\end{description}

This condition also characterizes twisted M$\sp{\natural}$-concavity
(Shioura and Yang 2015)\citeH{SY15jorsj}.

\begin{theorem} \label{THtwistMnatSY}
Let $f: \ZZ\sp{N} \to \RR \cup \{ -\infty \}$ be a 
concave-extensible 
function with a bounded effective domain.
Then 
$f$ satisfies {\rm (GGSC[$\ZZ$])} 
if and only if 
it is a twisted M$\sp{\natural}$-concave function with respect to $W$.
\end{theorem}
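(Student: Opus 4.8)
The plan is to deduce this theorem directly from Theorem~\ref{THtwistMnatISST}, which already characterizes twisted M$\sp{\natural}$-concavity through the condition (GFS[$\ZZ$]). The hypotheses on $f$ here (concave-extensible, bounded effective domain) coincide exactly with those of Theorem~\ref{THtwistMnatISST}, so it suffices to show that (GGSC[$\ZZ$]) and (GFS[$\ZZ$]) are equivalent conditions on $f$. The entire argument rests on one bookkeeping identity relating the two demand correspondences. Reading off the definitions (\ref{DpdefZtwist}) and (\ref{DpdefZ2}) directly, one sees that
\[
\tilde{D}(p ; f) = \argmax_{x} \{ f(x) - \mathrm{tw}(p)\sp{\top} x \} = D(\mathrm{tw}(p) ; f),
\]
so the twisted demand at price $p$ is simply the ordinary demand at the twisted price $\mathrm{tw}(p)$. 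Since $\mathrm{tw}$ is an involution on $\RR\sp{N}$, the reparametrization $q = \mathrm{tw}(p)$ is a bijection, so a universal quantifier ``for all $p \in \RR\sp{N}$'' may be freely replaced by ``for all $q \in \RR\sp{N}$''.

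Next I would track how the price perturbations transform under $\mathrm{tw}$, using the definition (\ref{twistsetdefZ}). For $k \in U = N \setminus W$ the $k$th twist coordinate carries a plus sign, so $\mathrm{tw}(p + \delta \chi_{k}) = \mathrm{tw}(p) + \delta \chi_{k}$; for $k \in W$ the $k$th twist coordinate carries a minus sign, so $\mathrm{tw}(p - \delta \chi_{k}) = \mathrm{tw}(p) + \delta \chi_{k}$. Combining each with the demand identity above gives, for every $\delta > 0$,
\[
\tilde{D}(p + \delta \chi_{k} ; f) = D(\mathrm{tw}(p) + \delta \chi_{k} ; f) \ \ (k \in U),
\qquad
\tilde{D}(p - \delta \chi_{k} ; f) = D(\mathrm{tw}(p) + \delta \chi_{k} ; f) \ \ (k \in W).
\]
The defining inequalities (\ref{twistMGFS1}) and (\ref{twistMGFS2}) are imposed on the original integer vectors $x, y \in \ZZ\sp{N}$ and are literally identical in the two conditions, so they are untouched by this change of price variable.

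With these identities in hand, the equivalence follows clause by clause under the bijection $q = \mathrm{tw}(p)$. Clause (i) of (GFS[$\ZZ$]), discrete convexity of $\tilde{D}(p;f) = D(q;f)$ for all $p$, becomes discrete convexity of $D(q;f)$ for all $q$, i.e.\ clause (i) of (GGSC[$\ZZ$]). Clause (ii) for $k \in U$, producing $y \in \tilde{D}(p+\delta\chi_k;f) = D(q+\delta\chi_k;f)$ satisfying (\ref{twistMGFS1}), becomes clause (ii) of (GGSC[$\ZZ$]). Clause (iii) for $k \in W$, producing $y \in \tilde{D}(p-\delta\chi_k;f) = D(q+\delta\chi_k;f)$ satisfying (\ref{twistMGFS2}), becomes clause (iii) of (GGSC[$\ZZ$]). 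Hence the two conditions are equivalent, and the theorem follows by invoking Theorem~\ref{THtwistMnatISST}.

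The only point demanding attention --- the ``hard part,'' though it is quite mild --- is the sign alignment in clause (iii): (GFS[$\ZZ$]) \emph{decreases} the price on a $W$-coordinate while (GGSC[$\ZZ$]) \emph{increases} it, and one must check that the minus sign of $\mathrm{tw}$ on $W$ converts the former into the latter, so that the \emph{same} shifted ordinary demand $D(q+\delta\chi_k;f)$ appears on both sides. Everything else is a transparent substitution, and no new convexity analysis is needed beyond what Theorem~\ref{THtwistMnatISST} already supplies.
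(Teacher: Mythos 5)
Your proposal is correct and is essentially the paper's own argument: immediately after stating Theorem~\ref{THtwistMnatSY}, the paper itself remarks that (GFS[$\ZZ$]) and (GGSC[$\ZZ$]) are equivalent via precisely the identities $D(p;f)=\tilde{D}(\mathrm{tw}(p);f)$ and $\mathrm{tw}(p+\delta\chi_{k})=\mathrm{tw}(p)+\delta\chi_{k}$ for $k\in U$, $\mathrm{tw}(p)-\delta\chi_{k}$ for $k\in W$, so that Theorems~\ref{THtwistMnatISST} and~\ref{THtwistMnatSY} are two variants of the same statement under identical hypotheses. Your clause-by-clause substitution under the involution $q=\mathrm{tw}(p)$, including the sign alignment turning the price decrease in clause (iii) of (GFS[$\ZZ$]) into the price increase in (GGSC[$\ZZ$]), just makes explicit what the paper leaves as a one-line remark.
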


Although Theorems \ref{THtwistMnatISST} and \ref{THtwistMnatSY}
have significances in different contexts,
they are in fact two variants of the same mathematical statement.
Note that (GSF[$\ZZ$]) and (GGSC[$\ZZ$]) are equivalent, since
\begin{align*}
& 
D(p ; f) = \tilde{D}(\mathrm{tw}(p) ; f) , 
\qquad %%\\ & 
\mathrm{tw}(p + \delta \chi_{k})  =
   \left\{  \begin{array}{ll}
     \mathrm{tw}(p) + \delta \chi_{k}     &   (k \in U),  \\
     \mathrm{tw}(p) - \delta \chi_{k}       &   (k  \in W) .     \\
                     \end{array}  \right.
\end{align*}

The multi-unit (or vector) version of 
the same-side substitutability (SSS) and the cross-side complementarity (CSC) of 
Ostrovsky (2008)
\citeH{Ost08}%
can be formulated for a correspondence
$C: \ZZ\sp{N} \to 2\sp{\ZZ\sp{N}}$ 
as follows,
where, for any $z \in \mathbb{Z}^{N}$,
the subvector of $z$ on $U$ is denoted  by $z\sp{U} \in \mathbb{Z}^{U}$
and similarly 
the subvector on $W$ by $z\sp{W} \in \mathbb{Z}^{W}$.

\begin{description}
\item[(SSS-CSC$\sp{1}$\hbox{[$\ZZ$]})]
(i)
 For any $z_{1},z_{2}\in \ZZ\sp{N}$ 
with  $z_{1}\sp{U} \geq z_{2}\sp{U}$,  $z_{1}\sp{W} = z_{2}\sp{W}$
and $C(z_{2}) \neq \emptyset$
and for any  
$x_{1} \in C(z_{1})$, 
there exists 
$x_{2} \in C(z_{2})$
 such that
$ z_{2}\sp{U} \wedge x_{1}\sp{U} \leq x_{2}\sp{U}$
and
$x_{1}\sp{W} \geq x_{2}\sp{W}$,
and (ii) the same statement with $U$ and $W$ interchanged.

\item[(SSS-CSC$\sp{2}$\hbox{[$\ZZ$]})]
(i)
 For any $z_{1},z_{2}\in \ZZ\sp{N}$ 
with  $z_{1}\sp{U} \geq z_{2}\sp{U}$,  $z_{1}\sp{W} = z_{2}\sp{W}$
and $C(z_{1}) \neq \emptyset$
and for any  
$x_{2} \in C(z_{2})$
there exists 
$x_{1} \in C(z_{1})$, 
 such that
$ z_{2}\sp{U} \wedge x_{1}\sp{U} \leq x_{2}\sp{U}$
and
$x_{1}\sp{W} \geq x_{2}\sp{W}$,
and (ii) the same statement with $U$ and $W$ interchanged.
\end{description}

The following theorem  
(Ikebe and Tamura 2015)\citeH{IT15scnet} 
states that these two properties are
implied by twisted M$\sp{\natural}$-concavity.
Recall from (\ref{choiceByValZ}) 
that a valuation function $f$ induces
the  correspondence%
\footnote{%%%%%%%%%%%%%
It may be that $C(z)= \emptyset$ 
if $\dom f$ is unbounded below or  
$\{ y \mid y\leq z \} \cap \dom f =\emptyset$.
The condition ``$C(z_{2}) \neq \emptyset$''
in (SSS-CSC$\sp{1}$[$\ZZ$]), for example, takes care of this possibility.
} %%%%%%%%%%%%%%%%%%%%%%%
$C(z)= C(z; f)= \argmax \{ f(y) \mid y\leq z \}$ $(z \in \ZZ\sp{N})$.

\begin{theorem} \label{THchotwistMnatZ}
For any twisted M$\sp{\natural}$-concave function 
$f: \ZZ\sp{N} \to \RR \cup \{ -\infty \}$,
the induced correspondence $C$ has the properties 
{\rm (SSS-CSC$\sp{1}$[$\ZZ$])} and {\rm (SSS-CSC$\sp{2}$[$\ZZ$])}.
\end{theorem}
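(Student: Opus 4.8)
The plan is to run the direct argument already used for the set-function case (Theorem~\ref{THchotwistMnat01}) in the vector setting, borrowing the potential-function bookkeeping from the untwisted vector case (Theorem~\ref{THchosubstMnatZ}). Let $\tilde{f}(x)=f(\mathrm{tw}(x))$ be the M$\sp{\natural}$-concave function associated with $f$ through (\ref{ftwistMnatdefZ}); recall from (\ref{twistsetdefZ}) that $\mathrm{tw}(x)_{i}=x_{i}$ for $i\in U=N\setminus W$ and $\mathrm{tw}(x)_{i}=-x_{i}$ for $i\in W$. I will prove (SSS-CSC$\sp{1}$[$\ZZ$])-(i) and (SSS-CSC$\sp{2}$[$\ZZ$])-(i); the ``(ii)'' parts follow by interchanging the roles of $U$ and $W$. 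Throughout assume $z_{1}\sp{U}\geq z_{2}\sp{U}$ and $z_{1}\sp{W}=z_{2}\sp{W}$, and recall $C(z;f)=\argmax\{f(y)\mid y\leq z\}$.

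For $x_{1}\leq z_{1}$ and $x_{2}\leq z_{2}$, define a single potential that fuses the $U$-substitutability defect with the $W$-complementarity defect:
\begin{equation*}
\varPhi(x_{1},x_{2})=
\sum\{(x_{1})_{i}-(x_{2})_{i}\mid i\in U,\ (z_{2}\wedge x_{1})_{i}>(x_{2})_{i}\}
+\sum\{(x_{2})_{i}-(x_{1})_{i}\mid i\in W,\ (x_{2})_{i}>(x_{1})_{i}\}.
\end{equation*}
Then $\varPhi(x_{1},x_{2})=0$ is equivalent to the desired conclusion $z_{2}\sp{U}\wedge x_{1}\sp{U}\leq x_{2}\sp{U}$ and $x_{1}\sp{W}\geq x_{2}\sp{W}$. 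For (SSS-CSC$\sp{1}$[$\ZZ$])-(i) I fix $x_{1}\in C(z_{1};f)$, choose $x_{2}\in C(z_{2};f)$ minimizing $\varPhi$, and suppose $\varPhi>0$. The key observation is that any index $i$ contributing to $\varPhi$ lies in $\suppp(\mathrm{tw}(x_{1})-\mathrm{tw}(x_{2}))$: if $i\in U$ and $(z_{2}\wedge x_{1})_{i}>(x_{2})_{i}$ then $(x_{1})_{i}>(x_{2})_{i}$, so $\mathrm{tw}(x_{1})_{i}>\mathrm{tw}(x_{2})_{i}$; if $i\in W$ and $(x_{2})_{i}>(x_{1})_{i}$ then $-(x_{1})_{i}>-(x_{2})_{i}$, again $\mathrm{tw}(x_{1})_{i}>\mathrm{tw}(x_{2})_{i}$.

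I then apply (M$\sp{\natural}$-EXC[$\ZZ$]) to $\tilde{f}$ at $(\mathrm{tw}(x_{1}),\mathrm{tw}(x_{2}))$ with this $i$, obtaining $j\in\suppm(\mathrm{tw}(x_{1})-\mathrm{tw}(x_{2}))\cup\{0\}$, and twist the two perturbed points back to set $\hat{x}_{1}=\mathrm{tw}(\mathrm{tw}(x_{1})-\chi_{i}+\chi_{j})$ and $\hat{x}_{2}=\mathrm{tw}(\mathrm{tw}(x_{2})+\chi_{i}-\chi_{j})$, whence $f(x_{1})+f(x_{2})\leq f(\hat{x}_{1})+f(\hat{x}_{2})$. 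The main obstacle is verifying feasibility $\hat{x}_{1}\leq z_{1}$ and $\hat{x}_{2}\leq z_{2}$: one must split into cases according to whether each of $i,j$ lies in $U$ or $W$ (and $j=0$), keeping in mind that raising a twisted coordinate lowers the original coordinate on $W$, and lowering it raises it. Each potentially dangerous increment is controlled by the hypotheses $z_{2}\sp{U}\leq z_{1}\sp{U}$, $z_{2}\sp{W}=z_{1}\sp{W}$ and the sign data attached to $i$ and $j$; e.g.\ for $i\in U$ one uses $(z_{2})_{i}>(x_{2})_{i}$, and for $j\in U$ one uses $(x_{1})_{j}<(x_{2})_{j}\leq(z_{2})_{j}\leq(z_{1})_{j}$, with the mirror inequalities on $W$. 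Feasibility together with the maximality of $x_{1},x_{2}$ turns the exchange inequality into an equality, so $\hat{x}_{1}\in C(z_{1};f)$ and $\hat{x}_{2}\in C(z_{2};f)$.

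It remains to see that $\varPhi$ strictly decreases, contradicting minimality. The perturbation at the bad index $i$ lowers its strictly positive summand, while the perturbation at $j$ touches only coordinates that carry no defect and create none: for $j\in U$ one has $(x_{1})_{j}<(x_{2})_{j}$, so $j$ never enters the $U$-sum, and for $j\in W$ one has $(x_{1})_{j}>(x_{2})_{j}$, so $j$ never enters the $W$-sum. Hence $\varPhi(x_{1},\hat{x}_{2})<\varPhi(x_{1},x_{2})$, the contradiction that proves (SSS-CSC$\sp{1}$[$\ZZ$])-(i). For (SSS-CSC$\sp{2}$[$\ZZ$])-(i) I instead fix $x_{2}\in C(z_{2};f)$, minimize $\varPhi$ over $x_{1}\in C(z_{1};f)$, and run the identical exchange to obtain $\hat{x}_{1}\in C(z_{1};f)$ with $\varPhi(\hat{x}_{1},x_{2})<\varPhi(x_{1},x_{2})$, again a contradiction. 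I expect the feasibility check under twisting to be the only genuinely delicate point; the strict decrease of $\varPhi$ and the reduction of ``(ii)'' to ``(i)'' are routine.
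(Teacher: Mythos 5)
Your proposal is correct and follows essentially the same route as the paper's proof: the identical potential $\varPhi$ (your two sums are exactly the paper's terms over $U \cap \suppp((z_{2} \wedge x_{1})-x_{2})$ and $W \cap \suppp(x_{2}-x_{1})$), the same observation that a defect index lies in $\suppp(\mathrm{tw}(x_{1})-\mathrm{tw}(x_{2}))$, the same application of (M$\sp{\natural}$-EXC[$\ZZ$]) to $\tilde{f}$ with twist-back points $\hat{x}_{1},\hat{x}_{2}$, the same case analysis for $\hat{x}_{1}\leq z_{1}$, $\hat{x}_{2}\leq z_{2}$, and the same minimality contradiction, with (ii) reduced to (i) by swapping $U$ and $W$. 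Your claim of a strict decrease of $\varPhi$, rather than the paper's exact decrement of $1$, is if anything the more careful formulation (the drop can exceed $1$ when an index leaves the $U$-defect set because $(z_{2})_{i}<(x_{1})_{i}$) and fully suffices for the contradiction.
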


\begin{proof}
We prove (SSS-CSC$\sp{1}$[$\ZZ$])-(i) and (SSS-CSC$\sp{2}$[$\ZZ$])-(i);
the proofs of (SSS-CSC$\sp{1}$[$\ZZ$])-(ii) and (SSS-CSC$\sp{2}$[$\ZZ$])-(ii)
are obtained by interchanging $U$ and $W$.
Assume 
 $z_{1}\sp{U} \geq z_{2}\sp{U}$,  $z_{1}\sp{W} = z_{2}\sp{W}$
and $C(z_{1}; f) \neq \emptyset$,
and let
 $\tilde{f}$ be the M$\sp{\natural}$-concave function
in (\ref{ftwistMnatdefZ}) associated with $f$.
For 
$x_{1} \leq  z_{1}$ and $x_{2} \leq  z_{2}$
define
\begin{align*} 
\varPhi(x_{1}, x_{2}) =& 
\sum\{(x_{1})_{i}- (x_{2})_{i} 
   \mid i \in U \cap \suppp( (z_{2} \wedge x_{1})-x_{2}) \}
\\ &
+ \sum\{(x_{2})_{i}- (x_{1})_{i} 
   \mid i \in W \cap \suppp( x_{2} -x_{1}) \}.
\end{align*}

Proof of (SSS-CSC$\sp{1}$[$\ZZ$])-(i): \ 
Let $x_{1} \in C(z_{1};f)$
and take
$x_{2} \in C(z_{2};f)$ with
$\varPhi(x_{1}, x_{2})$
minimum.
To prove by contradiction, suppose that there exists 
\[
i \in 
\big( 
 U \cap \suppp( (z_{2} \wedge x_{1})-x_{2})
\big)
\cup \big( 
 W \cap \suppp( x_{2} -x_{1}) 
\big).
\]
Then 
$i \in \suppp (\mathrm{tw}(x_{1}) - \mathrm{tw}(x_{2}) )$,
and 
(M$\sp{\natural}$-EXC[$\ZZ$]) for $\tilde{f}$ implies that
there exists 
$j \in \suppm( \mathrm{tw}(x_{1}) - \mathrm{tw}(x_{2}) ) \cup \{ 0 \}$
such that
\[
\tilde{f}( \mathrm{tw}(x_{1}) ) + \tilde{f}(  \mathrm{tw}(x_{2})) 
\leq  \tilde{f}( \mathrm{tw}(x_{1}) - \chi_{i} + \chi_{j}) 
      + \tilde{f}( \mathrm{tw}(x_{2}) + \chi_{i} -\chi_{j}).
\]
Letting 
$\hat{x}_{1} =   \mathrm{tw}( \mathrm{tw}(x_{1}) - \chi_{i} + \chi_{j}) )$
and
$\hat{x}_{2} =   \mathrm{tw}( \mathrm{tw}(x_{2}) + \chi_{i} -\chi_{j}) )$
we can express the above inequality as
\[
f( x_{1} ) + f( x_{2} ) \leq f( \hat{x}_{1} ) + f( \hat{x}_{2} ) .
\]
By considering all possibilities
($i \in U$ or $i \in W$, and 
$j \in U$ or $j \in W$ or $j=0$),
we can verify that $\hat{x}_{1} \leq z_{1}$ 
and $\hat{x}_{2} \leq z_{2}$, 
from which follow
$f( \hat{x}_{1} ) \leq f( x_{1})$
and
$f( \hat{x}_{2} ) \leq f(x_{2})$
since  $x_{1} \in C(z_{1};f)$
and $x_{2} \in C(z_{2};f)$.
Therefore,
the inequalities are in fact equalities, and
$\hat{x}_{1} \in C(z_{1};f)$ and $\hat{x}_{2} \in C(z_{2};f)$.
But we have
$\varPhi(x_{1}, \hat{x}_{2}) = \varPhi(x_{1}, x_{2}) - 1$,
which contradicts the choice of $x_{2}$.

Proof of (SSS-CSC$\sp{2}$[$\ZZ$])-(i): \ 
Let $x_{2} \in C(z_{2};f)$
and take
$x_{1} \in C(z_{1};f)$ 
with minimum $\varPhi(x_{1}, x_{2})$. 
By the same argument as above we obtain
$\hat{x}_{1} \in C(z_{1};f)$ 
with
$\varPhi(\hat{x}_{1}, x_{2}) = \varPhi(x_{1}, x_{2}) - 1$.
This is a contradiction to the choice of $x_{1}$.
\end{proof}

\subsection{Examples}
\label{SCmnatexampleZ}

Here are some examples of M$\sp{\natural}$-concave functions 
in integer variables.

\begin{enumerate}
\item
A linear (or affine) function
\begin{equation} \label{mlinfnZ}
 f(x)=     \alpha + \langle p,x \rangle  
\end{equation}
with  $p \in \RR\sp{N}$ and $\alpha \in \RR$
is M$\sp{\natural}$-concave
if $\dom f$ is an M$\sp{\natural}$-convex set.

\item
A quadratic function
$f: \ZZ\sp{N} \to \RR$
defined by 
\begin{equation} \label{mfnquad2Z}
 f(x) = \sum_{i= 1}\sp{n} \sum_{j = 1}\sp{n} a_{ij} x_{i} x_{j}
\end{equation}
with $a_{ij}=a_{ji} \in \RR$ $(i,j=1,\ldots,n)$
is M$\sp{\natural}$-concave if and only if
\begin{equation} \label{mfnquadhesseZ}
 a_{ij}  \leq 0  \mbox{ \  for all  $(i,j)$, \ \  and} \quad
 a_{ij}  \leq \max ( a_{ik} ,  a_{jk} ) 
 \ \mbox{ if } \  
 \{ i,j \} \cap \{ k \} = \emptyset .
\end{equation}
The Hessian matrix $H_{f}(x)=(H_{ij} (x))$ defined in (\ref{mhessianDefZ}) 
is given by $H_{ij} (x) = 2 a_{ij}$,
and (\ref{mfnquadhesseZ}) above is consistent with 
(\ref{mnatfnhesseZ1}), (\ref{mnatfnhesseZ2}) in Theorem \ref{Mmnathessian}.

\item
A function 
$f: \ZZ\sp{N} \to \RR \cup \{ -\infty \}$
is called {\em separable concave} if it can be represented as
\begin{equation}  \label{separconvZ}
 f(x)  =  \sum_{i \in N} \varphi_{i}(x_{i})
\qquad (x \in \ZZ\sp{N})
\end{equation}
for univariate concave functions%
\footnote{%%%%%%%%%%%%%%%%%%%%%%%%%%%%%%%
Recall that $\varphi: \ZZ \to \RR \cup \{ -\infty \}$
is called concave if 
$\varphi(t-1) + \varphi(t+1) \leq 2\varphi(t)$ for all integers $t$.
} %%%%%footnote %%%%%%%%%%%%
$\varphi_{i}: \ZZ \to \RR \cup \{ -\infty \}$
$(i \in N)$.
A separable concave function is M$\sp{\natural}$-concave.
In (\ref{mnatconcavexc2Z}) for (M$\sp{\natural}$-EXC[$\ZZ$]) 
we can always take $j=0$, i.e., (\ref{mconcav1Z}).

\item
A function 
$f: \ZZ\sp{N} \to \RR \cup \{ -\infty \}$
is called {\em laminar concave} if it can be represented as
\begin{equation}  \label{laminarconvZ}
 f(x)  =  \sum_{A \in \calT} \varphi_{A}(x(A))
\qquad (x \in \ZZ\sp{N})
\end{equation}
for a laminar family $\calT \subseteq 2\sp{N}$ 
and a family of univariate concave functions
$\varphi_{A}: \ZZ \to \RR \cup \{ -\infty \}$
indexed by $A \in \calT$,
where $x(A) = \sum_{i \in A} x_{i}$.
A laminar concave function is M$\sp{\natural}$-concave; see 
Note 6.11 of Murota (2003) 
\citeH[Note 6.11]{Mdcasiam}% 
for a proof.  A special case of
(\ref{laminarconvZ}) with 
$\calT = \{ \{ 1 \},   \{ 2 \},  \ldots, \{ n \}   \}$ 
reduces to the separable convex function (\ref{separconvZ}).

\item
M$\sp{\natural}$-concave functions arise from 
the maximum weight of nonlinear network flows.
Let $G = (V, A)$ be a directed graph with 
two disjoint vertex subsets $S \subseteq V$ and $T \subseteq V$
specified as the entrance and the exit.
 Suppose that, for each arc $a \in A$, 
we are given a univariate concave function 
$\varphi_{a}: \ZZ \to \RR \cup \{ -\infty \}$
representing the weight of flow on the arc $a$.
Let $\xi \in \ZZ^A$ be a vector representing an integer flow, and
$\partial \xi \in \ZZ\sp{V}$ be the boundary of flow $\xi$ defined  by
\begin{equation}  \label{flowbounddefZ}
\begin{array}{l}
 \partial \xi(v) = \sum\{ \xi(a) \mid
 \mbox{ arc $a$ leaves $v$ } \} 
 - \sum\{\xi(a) \mid \mbox{ arc $a$ enters $v$ } \}
 \quad (v \in V).
\end{array}
\end{equation}
 Then, the maximum weight of a flow that realizes a supply/demand specification
on the exit $T$  in terms of $x \in \ZZ^T$ is expressed by
\begin{equation}  \label{weightnetworkflowZ}
  f(x) = \sup_{\xi} \big\{\sum_{a \in A} \varphi_a(\xi(a)) \mid
     (\partial \xi)(v) = - x(v) \ (v \in T), \ 
     (\partial \xi)(v) = 0\ (v \in  V \setminus (S \cup T))
       \big \} ,
\end{equation}
where no constraint is imposed on 
$(\partial \xi)(v)$ for entrance vertices $v \in S$.
This function is M$\sp{\natural}$-concave, 
provided that $f$ does not take the value $+\infty$ and 
$\dom f$ is nonempty.
If $S = \emptyset$, the function $f$ is M-concave, since
$\sum_{v \in T} x(v) = - \sum_{v \in T} (\partial \xi)(v) 
= \sum_{v \in V \setminus T} (\partial \xi)(v)  = 0$ 
in this case.
See 
Example 2.3 of Murota (1998)\citeH[Example 2.3]{Mdca}
and Section 2.2.2 of Murota (2003)\citeH[Section 2.2.2]{Mdcasiam}
for details.
The maximum weight of a matching in (\ref{weightbipartmatch}) 
can be understood as a special case of (\ref{weightnetworkflowZ}).
\end{enumerate}

\subsection{Concluding remarks of section \ref{SCmnatconcavZ}}
\label{SCconremmnatZ}

The concept of M-convex functions is formulated by
Murota (1996c)\citeH{Mstein}
as a generalization of valuated matroids of
Dress and Wenzel (1990, 1992)\citeH{DW90}\citeH{DW92}.
Then
M$\sp{\natural}$-convex functions are introduced by
Murota and Shioura (1999)\citeH{MS99gp}
as a variant of M-convex functions.
Quasi M-convex functions are introduced by
Murota and Shioura (2003)\citeH{MS03quasi}.
The concept of M-convex functions 
is extended to functions on jump systems by 
Murota (2006)\citeH{Mmjump06}; 
see also 
Kobayashi et al.~(2007)\citeH{KMT07jump}.

Unimodularity is closely related to discrete convexity.
For a fixed unimodular matrix $U$ we may consider 
a change of variables $x \mapsto Ux$ for $x \in \ZZ\sp{n}$
to define a class of functions 
$\{ f(U x) \mid f: \mbox{M$\sp{\natural}$-concave} \}$
as a variant of M$\sp{\natural}$-concave functions.
Twisted M$\sp{\natural}$-concave functions 
(Section \ref{SCtwistMnatZ}) 
are a typical example of this construction
with $U = {\rm diag}(1,\ldots,1, -1,\ldots,-1)$;  see 
Sun and Yang (2008)\citeH{SY08}
and Section \ref{SCunimodular} 
for further discussion in this direction.

%%\newpage

%%%RRRRRRRRRRRRRRRRRRRRRR
\section{M$\sp{\natural}$-concave Function on $\RR\sp{n}$}
\label{SCmnatconcavR}

In Sections \ref{SCmnatconcav01} and  \ref{SCmnatconcavZ},
we have considered M$\sp{\natural}$-concave functions
on $2\sp{N}$ and $\ZZ\sp{N}$,
which correspond to valuations for indivisible goods with substitutability.
In this section we deal with M$\sp{\natural}$-concave functions 
in real vectors,
$f: \mathbb{R}\sp{N} \to \RR \cup \{ -\infty \}$,
which correspond to valuations for divisible goods with substitutability.
M$\sp{\natural}$-concave functions in real variables are investigated by
Murota and Shioura (2000, 2004a, 2004b).
\citeH{MS00poly}\citeH{MS04conjreal}\citeH{MS04fund}%

\subsection{Exchange property}
\label{SCexchangeR}

We say that a function 
$f: \mathbb{R}\sp{N} \to \RR \cup \{ -\infty \}$
is 
{\em M$\sp{\natural}$-concave}
if it is a concave function (in the ordinary sense) that satisfies
\begin{description}
\item[(M$\sp{\natural}$-EXC\hbox{[$\RR$]})] 
 For any $x, y \in \RR\sp{N}$ and $i \in \suppp(x-y)$, 
there exist $j \in \suppm(x-y) \cup \{ 0 \}$
and a positive number $\alpha_{0} \in \RR_{++}$ such that 
\begin{equation} \label{mnatconcavexc2R}
 f(x)+f(y) \leq 
   f(x-\alpha (\unitvec{i}-\unitvec{j})) + f(y+\alpha (\unitvec{i}-\unitvec{j})) 
\end{equation}
%%%%%%%%%
for all $\alpha \in \RR$ with $0 \leq \alpha \leq \alpha_{0}$.
\end{description}
In the following we restrict ourselves to closed proper%
\footnote{%%%%%%%%%%%%%%%%%%%%%
A concave function $f: \mathbb{R}\sp{n} \to \RR \cup \{ -\infty \}$
is said to be {\em proper} if $\dom f$ is nonempty, and 
{\em closed} if the hypograph
$\{ (x,\beta) \in \RR\sp{n+1} \mid \beta \leq f(x) \}$
is a closed subset of $\RR\sp{n+1}$.
} %%%%%%%%% footnote %%%%%%%%%%%%%%%%%%
 M$\sp{\natural}$-concave functions,
for which the closure of the effective domain $\dom f$ is a well-behaved polyhedron
(g-polymatroid, or M$\sp{\natural}$-convex polyhedron%
\footnote{%%%%%%%%%%%%%%%%%%%%%
\label{FTmnatpolyhed}%
A polyhedron $P$ is called an {\em M$\sp{\natural}$-convex polyhedron}
if its (concave) indicator function $f$ is M$\sp{\natural}$-concave,
where $f(x)=0$ for $x \in P$ and $=-\infty$ for $x \not\in P$.
See Section 4.8 of Murota (2003)\citeH[Section 4.8]{Mdcasiam} 
for details.
});   %%%%%%%%% footnote %%%%%%%%%%%%%%%%%%
see Theorem 3.2 of Murota and Shioura (2008)\citeH[Theorem 3.2]{MS08cont}.
Often we are interested in polyhedral M$\sp{\natural}$-concave functions.

\begin{remark} \rm  \label{RMmnatexcsizeR}
It follows from (M$\sp{\natural}$-EXC[$\RR$]) that
M$\sp{\natural}$-concave functions enjoy the following 
exchange properties under size constraints:
\\ $\bullet$ 
For any $x, y \in \RR\sp{N}$ with $x(N) < y(N)$, 
there exists $\alpha_{0} \in \RR_{++}$ such that 
\begin{align}
f(x) + f(y)   &\leq 
 \max_{j \in \suppm(x-y)}  
 \{ f(x + \alpha  \unitvec{j}) + f(y- \alpha  \unitvec{j})  \}
\label{mnatexcsizeR1}
\end{align}
for all $\alpha \in \RR$ with $0 \leq \alpha \leq \alpha_{0}$.
\\ $\bullet$ 
For any $x, y \in \RR\sp{N}$ with $x(N) =y(N)$
and $i \in \suppp(x-y)$, 
there exists $\alpha_{0} \in \RR_{++}$ such that 
\begin{align}
f(x) + f(y)   &\leq 
 \max_{j \in \suppm(x-y)}  
 \{ f(x-\alpha (\unitvec{i}-\unitvec{j})) + f(y+\alpha (\unitvec{i}-\unitvec{j})) \}
\label{mnatexcsizeR2}
\end{align}
for all $\alpha \in \RR$ with $0 \leq \alpha \leq \alpha_{0}$.
\finbox
\end{remark}

\begin{remark} \rm  \label{RMmconcaveR}
If $\dom f \subseteq \RR\sp{N}$ lies in a hyperplane with a constant component sum
(i.e., $x(N) = y(N)$ for all  $x, y \in \dom f$),
the exchange property (M$\sp{\natural}$-EXC[$\RR$]) takes a simpler form
excluding the possibility of $j=0$.
A function 
$f: \mathbb{R}\sp{N} \to \RR \cup \{ -\infty \}$
having this exchange property 
is called an {\em M-concave function}.
That is, a concave function $f$ 
is M-concave
if and only if (\ref{mnatexcsizeR2}) holds.
\finbox
\end{remark}

\subsection{Maximizers and gross substitutability}
\label{SCmaximizersR}

For $p \in \RR\sp{N}$ we denote
the set of the  maximizers 
of $f[-p](x) = f(x) - p\sp{\top} x$
by $D(p ; f) \subseteq \RR\sp{N}$
(cf.~(\ref{DpdefZ})).
M$\sp{\natural}$-concavity of a function $f$ 
is characterized by the M$\sp{\natural}$-convexity of $D(p ; f)$
(Theorem 5.2 of Murota and Shioura 2000).
\citeH[Theorem 5.2]{MS00poly}%

\begin{theorem} \label{THmconcavargmaxR}
A polyhedral concave function 
$f: \RR\sp{N} \to \RR \cup \{ -\infty \}$ 
is M$\sp{\natural}$-concave
if and only if, for every vector $p \in \RR\sp{N}$, 
$D(p ; f)$ is an M$\sp{\natural}$-convex polyhedron%
\footnote{%%%%%%%%%%%%%%%%%%%%%%%%%%%%
See the footnote \ref{FTmnatpolyhed}.
}. %%%%% footnote %%%%%%%%%%%%%%%%%%%%%%%%
\end{theorem}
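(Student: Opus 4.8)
The plan is to prove the two implications separately, following the pattern of the set-function case (Theorem~\ref{THmconcavargmax}) and the integer case (Theorem~\ref{THmconcavargmaxZ}). The ``only if'' direction is short and self-contained; the converse is the substantive part, and to handle it I would localize the exchange inequality using the polyhedral structure of $f$ together with supporting prices.

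For the ``only if'' direction, the first observation is that $f[-p]$ is M$\sp{\natural}$-concave whenever $f$ is, because the exchange move $(x,y)\mapsto(x-\alpha(\unitvec{i}-\unitvec{j}),\,y+\alpha(\unitvec{i}-\unitvec{j}))$ preserves $x+y$, so the linear terms cancel on both sides of (M$\sp{\natural}$-EXC[$\RR$]). Writing $M=\max f[-p]$ and taking $x,y\in D(p;f)$ with $i\in\suppp(x-y)$, the exchange property of $f[-p]$ yields some $j\in\suppm(x-y)\cup\{0\}$ and $\alpha_{0}>0$ with $2M=f[-p](x)+f[-p](y)\le f[-p](x-\alpha(\unitvec{i}-\unitvec{j}))+f[-p](y+\alpha(\unitvec{i}-\unitvec{j}))$ for all $\alpha\in[0,\alpha_{0}]$. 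Since each summand on the right is at most $M$, the inequality is forced to be an equality, so both exchanged points lie in $D(p;f)$ for every such $\alpha$. This is exactly the set-level exchange defining an M$\sp{\natural}$-convex polyhedron for the (concave) indicator of $D(p;f)$; polyhedrality is automatic since $D(p;f)$ is a face of $\dom f$, and I would dispose of the empty case by the usual convention.

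For the ``if'' direction I would first reformulate the quantitative exchange as a sign condition on one-sided directional derivatives. For $j\in\suppm(x-y)\cup\{0\}$ set $\psi_{j}(\alpha)=f(x-\alpha(\unitvec{i}-\unitvec{j}))+f(y+\alpha(\unitvec{i}-\unitvec{j}))$; each $\psi_{j}$ is concave and piecewise affine, so (M$\sp{\natural}$-EXC[$\RR$]) at $(x,y,i)$ holds for some admissible $j$ and small $\alpha>0$ if and only if $\psi_{j}'(0^{+})=f'(x;\unitvec{j}-\unitvec{i})+f'(y;\unitvec{i}-\unitvec{j})\ge 0$ for some such $j$. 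To produce this $j$ from the hypothesis, I would choose a supporting price $p$ (a supergradient at $x$, or at a suitable point of the segment $[x,y]$) so that $x\in D(p;f)$, and then invoke the M$\sp{\natural}$-convexity of the polyhedron $D(p;f)$: the combinatorial exchange available at $x$ inside $D(p;f)$, read against the coordinates where $x$ and $y$ differ, should supply the index $j$, which translates back into $\psi_{j}'(0^{+})\ge 0$.

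The hard part is precisely this bridge: a single $D(p;f)$ generally contains $x$ but not $y$, so its set-level exchange does not directly ``see'' the target $y$, and one must argue that the index $j$ delivered by the polyhedron can be taken inside $\suppm(x-y)\cup\{0\}$ rather than in an irrelevant coordinate. The clean way to overcome this is a localization step: reduce (M$\sp{\natural}$-EXC[$\RR$]) to a local exchange condition across adjacent cells of the polyhedral subdivision of $\dom f$ induced by $f$ (the polyhedral analogue of Theorems~\ref{THmconcavlocexcZ}--\ref{THmconcavlocexcZB}), so that $x$ and $y$ may be assumed to share a common supporting slope $p$; on the resulting common maximizer polyhedron $f[-p]$ is constant, and the required exchange is then delivered verbatim by the M$\sp{\natural}$-convexity of $D(p;f)$. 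An alternative route, available when $f$ is integral-polyhedral, is to pass to $D(p;f)\cap\ZZ\sp{N}$ and cite Theorem~\ref{THmconcavargmaxZ} after rational scaling, but the general polyhedral case seems to require the direct localization. I also expect minor but necessary care in treating the case $j=0$, in securing a uniform $\alpha_{0}>0$, and in handling unbounded $\dom f$.
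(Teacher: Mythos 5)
Your statement is one the survey itself does not prove: Theorem~\ref{THmconcavargmaxR} is quoted from Theorem~5.2 of Murota and Shioura (2000), so your proposal can only be measured against the standard argument, not an in-paper one. On its merits, your ``only if'' direction is correct and complete, and it is the same argument that works in the set-function and integer cases (Theorems~\ref{THmconcavargmax} and \ref{THmconcavargmaxZ}): $f[-p]$ inherits (M$\sp{\natural}$-EXC[$\RR$]) because the exchange preserves $x+y$, and the equality-forcing step puts both exchanged points back into $D(p;f)$ for all $\alpha\in[0,\alpha_{0}]$, which is exactly the exchange property of the indicator function. Two small repairs: $D(p;f)$ is not a face of $\dom f$ --- it is $\{x \mid f[-p](x)\geq \max f[-p]\}$, the projection of a face of the polyhedral hypograph of $f[-p]$, which is what gives polyhedrality; and when $\dom f$ is unbounded, $D(p;f)$ is empty for $p\notin\dom f\sp{\triangledown}$, so since the definition in footnote~\ref{FTmnatpolyhed} presupposes a nonempty polyhedron, the quantifier must explicitly be read over those $p$ with $D(p;f)\neq\emptyset$; ``the usual convention'' should be spelled out.

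The ``if'' direction, however, has a genuine gap, and you have located it yourself: the ``localization step'' --- that $x$ and $y$ may be assumed to share a supporting slope, i.e.\ to lie in a single cell $D(p;f)$ --- is asserted, not proved, and it is the entire content of the hard direction (once $x,y\in D(p;f)$ the exchange is indeed verbatim, since $f[-p]$ is constant on $D(p;f)$ and $x'+y'=x+y$). The natural one-step version fails concretely. Let $z$ be the point where the segment $[x,y]$ leaves the cell $C=D(p;f)$ containing $x$; then $\suppp(x-z)=\suppp(x-y)$, and exchanging $x$ against $z$ inside $C$ produces $j\in\suppm(x-y)\cup\{0\}$ with $f'(x;\chi_{j}-\chi_{i})=p_{j}-p_{i}$ and $f'(z;\chi_{i}-\chi_{j})=p_{i}-p_{j}$, where $f'(\cdot\,;d)$ is the one-sided directional derivative. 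What your reformulation requires is $f'(y;\chi_{i}-\chi_{j})\geq p_{i}-p_{j}$, but concavity makes $f'(\cdot\,;d)$ monotone only along the ray in direction $d$ itself, and $y$ does not lie on the ray from $z$ in direction $\chi_{i}-\chi_{j}$; in general $f'(y;\chi_{i}-\chi_{j})$ is strictly smaller, so $\psi_{j}'(0\sp{+})\geq 0$ does not follow, and as the segment crosses further cells the index $j$ must be re-selected, making the inductive step essentially as hard as the theorem. Example~\ref{EXgsRnotMnat} shows the bridge cannot be softened: for $f(x_{1},x_{2})=\min(2,\,x_{1}+2x_{2})$ every nonempty $D(p;f)$ has edges $d$ with $|\suppp(d)|\leq 1$ and $|\suppm(d)|\leq 1$ (e.g.\ $d=2\chi_{1}-\chi_{2}$), yet $f$ is not M$\sp{\natural}$-concave --- so any correct completion must exploit the unit-coefficient edge structure ($\chi_{i}-\chi_{j}$, $\pm\chi_{i}$) of genuinely M$\sp{\natural}$-convex cells everywhere along the segment, not merely an exchange in the cell of $x$. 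A completion more consistent with the results quoted in this survey (at least for bounded $\dom f$) runs through conjugacy rather than through segments: lift $f$ as in (\ref{mfnmnatfnrelationcave}), observe that an edge $d$ of $D(p;f)$ lifts to $(-d(N),d)$, which has positive and negative supports of size at most one precisely when $d\in\{\chi_{i}-\chi_{j},\,\pm\chi_{i}\}$, apply a Theorem-\ref{THgssubmRpoly}-type equivalence to the lifted pair to conclude that $f\sp{\triangledown}$ is L$\sp{\natural}$-convex, and finish with Theorem~\ref{THmconcavebyconjfnRR}(2). Your rational-scaling fallback via Theorem~\ref{THmconcavargmaxZ} is likewise not routine: that theorem assumes a bounded effective domain, and relating real maximizers to lattice maximizers under refinement is exactly the kind of step Remark~\ref{RMmfnscaling4} warns about.
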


\begin{description}
\item[(GS\hbox{[$\RR$]})]
For any $p,q  \in \RR\sp{N}$ 
with $p \leq q$ and $x \in D(p ; f)$,
 there exists $y \in D(q ; f)$ such that
$x_{i} \le y_{i}$ for all $i\in N$ with $p_{i}=q_{i}$.
\end{description}

The following theorem is given by
Danilov et al.~(2003).
\citeH{DKL03gr}%

\begin{theorem} \label{THmnatgsRpoly}
A polyhedral M$\sp{\natural}$-concave function 
$f: \RR\sp{N} \to \RR \cup \{ -\infty \}$ 
with a bounded effective domain satisfies {\rm (GS[$\RR$])}.
\end{theorem}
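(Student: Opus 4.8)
The plan is to prove (GS[$\RR$]) directly from the exchange property (M$\sp{\natural}$-EXC[$\RR$]) by a potential-minimization argument, parallel to the proofs of Theorems~\ref{THchosubstMnat01} and \ref{THchosubstMnatZ}. Fix $p \leq q$ and $x \in D(p ; f)$, and let $E = \{ i \in N \mid p_{i} = q_{i} \}$ be the set of indices whose price is unchanged. Since $f$ is closed proper with bounded $\dom f$, the maximizer set $D(q ; f)$ is a nonempty compact set, so the continuous functional
\[
\varPhi(y) = \sum_{k \in E} \max(0, x_{k} - y_{k})
\]
attains its minimum over $D(q ; f)$. First I would pick $y \in D(q ; f)$ minimizing $\varPhi$ and assume, for contradiction, that $\varPhi(y) > 0$, i.e.\ there is some $i \in E$ with $x_{i} > y_{i}$; in particular $i \in \suppp(x-y)$.

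Next I would apply (M$\sp{\natural}$-EXC[$\RR$]) to the pair $(x,y)$ and this index $i$, obtaining $j \in \suppm(x-y) \cup \{ 0 \}$ and $\alpha_{0} > 0$ with
\[
f(x) + f(y) \leq f(x - \alpha(\chi_{i} - \chi_{j})) + f(y + \alpha(\chi_{i} - \chi_{j})) \qquad (0 \leq \alpha \leq \alpha_{0}).
\]
The key step is to convert this into equalities. Using $x \in D(p ; f)$ gives $f(x) - f(x - \alpha(\chi_{i} - \chi_{j})) \geq \alpha(p_{i} - p_{j})$, and using $y \in D(q ; f)$ gives $f(y) - f(y + \alpha(\chi_{i} - \chi_{j})) \geq -\alpha(q_{i} - q_{j})$ (with the convention $\chi_{0} = \veczero$, so the $p_{j}, q_{j}$ terms drop when $j = 0$). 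Adding these and comparing with the exchange inequality forces $\alpha[(p_{i} - q_{i}) - (p_{j} - q_{j})] \leq 0$; since $i \in E$ this reads $p_{j} \geq q_{j}$, which together with $p \leq q$ yields $p_{j} = q_{j}$, i.e.\ $j \in E$ (or $j = 0$). Consequently both lower bounds are attained with equality, and in particular $y' := y + \alpha(\chi_{i} - \chi_{j}) \in D(q ; f)$ for every $\alpha \in [0, \alpha_{0}]$.

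Finally I would choose $\alpha = \min(\alpha_{0},\, x_{i} - y_{i},\, y_{j} - x_{j})$ (omitting the last term when $j = 0$), which is strictly positive because $i \in \suppp(x-y)$ and $j \in \suppm(x-y)$. For this $\alpha$, coordinate $i$ contributes $x_{i} - y_{i} - \alpha$ to $\varPhi(y')$, a decrease by $\alpha$, while coordinate $j$ — which lies in $E$ but satisfies $x_{j} < y_{j}$ — still contributes $0$ since $\alpha \leq y_{j} - x_{j}$, and all other coordinates are unchanged. Hence $\varPhi(y') = \varPhi(y) - \alpha < \varPhi(y)$, contradicting the minimality of $\varPhi(y)$. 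Therefore $\varPhi(y) = 0$, i.e.\ $x_{i} \leq y_{i}$ for all $i \in E$, which is exactly (GS[$\RR$]). I expect the main obstacle to be the middle step: one must check that the exchange inequality, combined with the optimality of $x$ and $y$ and the sign condition $p \leq q$, not only forces equalities (so that the shifted point remains a maximizer of $f[-q]$) but also pins the exchanged-out index $j$ inside $E$, which is precisely what guarantees that the update creates no new violation on coordinate $j$.
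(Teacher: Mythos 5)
Your proof is correct, and it takes a genuinely different route from the paper's. The paper disposes of Theorem~\ref{THmnatgsRpoly} in one line by duality: by Theorem~\ref{THmconcavebyconjfnRR}~(2) the conjugate $f\sp{\triangledown}$ of a polyhedral M$\sp{\natural}$-concave function is polyhedral L$\sp{\natural}$-convex, hence submodular, and Theorem~\ref{THgssubmRpoly} (ascribed to Danilov and Lang) identifies submodularity of $f\sp{\triangledown}$ with {\rm (GS[$\RR$])} for polyhedral concave functions with bounded effective domain. You instead argue directly from {\rm (M$\sp{\natural}$-EXC[$\RR$])} with a potential-minimization scheme patterned on the paper's proofs of Theorems~\ref{THchosubstMnat01} and \ref{THchosubstMnatZ}, and the argument is sound: the sandwich $0 \geq [f(x)-f(x')] + [f(y)-f(y')] \geq \alpha(q_{j}-p_{j}) \geq 0$ correctly \emph{derives} (rather than assumes) $p_{j}=q_{j}$ together with tightness of both optimality inequalities, so the shifted point stays in $D(q;f)$ while $\varPhi$ strictly decreases; and your appeal to compactness of $D(q;f)$ (polyhedrality plus bounded domain) to guarantee that $\min \varPhi$ is attained is exactly the point where the continuous setting needs care that the finite discrete analogues get for free. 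What the paper's route buys is brevity and a conceptual placement of {\rm (GS[$\RR$])} in the conjugacy picture — GS corresponds to a merely submodular conjugate, strictly weaker than the L$\sp{\natural}$-convex conjugate of M$\sp{\natural}$-concavity, which is why the converse fails (Example~\ref{EXgsRnotMnat}). What your route buys is independence from the heavy machinery (Theorems~\ref{THlmconjcavevexP} and \ref{THgssubmRpoly} are themselves nontrivial), an argument that extends verbatim to closed proper, not necessarily polyhedral, M$\sp{\natural}$-concave functions with bounded effective domain, and the extra structural information that the exchanged-out coordinate satisfies $p_{j}=q_{j}$ — which, as you note, is what prevents the update from creating a new violation, even though coordinates outside $E$ never contribute to $\varPhi$ in the first place.
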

\begin{proof}
This follows from Theorem \ref{THmconcavebyconjfnRR} (2) and Theorem \ref{THgssubmRpoly}
in Section \ref{SCconjugacyR}.
\end{proof}

\begin{example} \rm \label{EXgsRnotMnat}
Here is an example to show 
that (GS[$\RR$]) does not imply M$\sp{\natural}$-concavity.
Let
$f: \mathbb{R}^{2} \to \mathbb{R}$
be defined by $f(x_{1}, x_{2})  = \min ( 2, x_{1} + 2 x_{2} )$ 
on $\dom f =  \mathbb{R}^{2}$.
This function is not M$\sp{\natural}$-concave because
(M$\sp{\natural}$-EXC[$\RR$]) fails for $x=(2,0)$, $y=(0,1)$ and $i=1$.
However, it satisfies (GS[$\RR$]), which can be verified easily. 
Thus the converse of Theorem \ref{THmnatgsRpoly} does not hold.
\finbox
\end{example}

\subsection{Choice function}
\label{SCchoiceMnatR}

In Theorem \ref{THchoiceMnatZ} in Section \ref{SCchoiceMnatZ}
we have seen, for the multi-unit indivisible goods,
the choice function induced from
a unique-selecting M$\sp{\natural}$-concave value function
is consistent,  persistent, and size-monotone
in the sense of 
Alkan and Gale (2003).
\citeH{AG03stab}%
In this section we point out that this is also the case with divisible goods;
recall Remark \ref{RMsignif3condZ} in Section \ref{SCchoiceMnatZ} 
for the implications of this fact.

For a choice function $C:\mathcal{B} \to \mathcal{B}$
with $\mathcal{B} = \{ x\in \mathbb{R}_{+}^{N} \mid  x \leq b \}$
for some $b\in \mathbb{R}_{+}^{N}$,
consistency means
$[\ C(x)\leq y\leq x \Rightarrow C(y)=C(x)\ ]$,
persistence means
$[\ x\geq y \Rightarrow y \wedge C(x)\leq C(y)\ ]$,
and size-monotonicity
means $[\ x\geq y \Rightarrow |C(x)|\geq |C(y)|\ ]$,
where
 $ |C(x)|=\sum_{i \in N}C(x)_{i}$ (sum of the components).

\begin{theorem} \label{THchoiceMnatR}
For a unique-selecting M$\sp{\natural}$-concave value function
$f: \mathbb{R}\sp{N} \to \RR \cup \{ -\infty \}$
with 
$\bm{0} \in \dom f \subseteq \mathbb{R}_{+}^{N}$,
the induced choice function 
$C(x;f)=\argmax \{f(y) \mid y\leq x \}$
is consistent,  persistent, and size-monotone%
\footnote{%%%%%%%%%%%%%%%%
As in Section \ref{SCchoiceMnatZ}, $f$ is said to be  unique-selecting
if $C(x;f)$ consists of a single element for every $x$.
}.  %%%%% footnote %%%%%%%%%%%%%%%%%%%
\end{theorem}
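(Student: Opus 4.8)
The plan is to establish the three properties separately, following the pattern of the integer case (Theorem~\ref{THchoiceMnatZ}) but taking care of the continuous step size in the exchange. Throughout I write $C(x)=C(x;f)$ for the unique maximizer of $f$ over the order interval $\{\, w \mid w \le x \,\}$; existence and uniqueness of this maximizer follow from $\bm{0}\in\dom f\subseteq\RR_{+}^{N}$ (so the feasible set meets $\dom f$ and is effectively compact) together with the unique-selecting hypothesis.

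First I would dispatch consistency, which is purely order-theoretic and needs no M$\sp{\natural}$-concavity. If $C(x)\le y\le x$, then $C(x)$ is feasible for the maximization over $\{\, w \mid w \le y \,\}$, whose feasible region is contained in that for $x$; since $C(x)$ already attains the maximum over the larger region, it is optimal for the smaller one as well, and uniqueness forces $C(y)=C(x)$.

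For persistence, fix $x\ge y$ and set $x_{1}=C(x)$, $x_{2}=C(y)$; I would argue by contradiction that $y\wedge x_{1}\le x_{2}$. If this fails, pick $i$ with $\min(y_{i},(x_{1})_{i})>(x_{2})_{i}$, so that $i\in\suppp(x_{1}-x_{2})$ and moreover $y_{i}>(x_{2})_{i}$ strictly. Applying (M$\sp{\natural}$-EXC[$\RR$]) gives some $j\in\suppm(x_{1}-x_{2})\cup\{0\}$ and $\alpha_{0}>0$ with $f(x_{1})+f(x_{2})\le f(x_{1}-\alpha(\unitvec{i}-\unitvec{j}))+f(x_{2}+\alpha(\unitvec{i}-\unitvec{j}))$ for all $0\le\alpha\le\alpha_{0}$. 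The crux is to verify that, for small $\alpha$, both perturbed points remain feasible: $x_{1}-\alpha(\unitvec{i}-\unitvec{j})\le x$ because the $i$-component only decreases while, if $j\ne 0$, the gap $(x_{1})_{j}<(x_{2})_{j}\le y_{j}\le x_{j}$ is strict; and $x_{2}+\alpha(\unitvec{i}-\unitvec{j})\le y$ because $(x_{2})_{i}<y_{i}$ is strict while the $j$-component only decreases. Uniqueness then makes both perturbed values strictly smaller than $f(x_{1})$ and $f(x_{2})$ respectively, contradicting the exchange inequality.

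Finally, size-monotonicity would be proved by the same contradiction scheme applied to the size-constrained exchange (\ref{mnatexcsizeR1}) of Remark~\ref{RMmnatexcsizeR}. Assuming $|C(x)|<|C(y)|$ with $x\ge y$, set $x^{*}=C(x)$, $y^{*}=C(y)$, so $x^{*}(N)<y^{*}(N)$; then (\ref{mnatexcsizeR1}) yields, for a fixed small $\alpha>0$, some $j\in\suppm(x^{*}-y^{*})$ with $f(x^{*})+f(y^{*})\le f(x^{*}+\alpha\unitvec{j})+f(y^{*}-\alpha\unitvec{j})$. Since $(x^{*})_{j}<(y^{*})_{j}\le y_{j}\le x_{j}$ strictly, $x^{*}+\alpha\unitvec{j}\le x$ for small $\alpha$, while $y^{*}-\alpha\unitvec{j}\le y^{*}\le y$; uniqueness again makes both values drop strictly, a contradiction. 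I expect the only genuine subtlety---and thus the main obstacle---to be precisely this feasibility-of-the-perturbation step: unlike the integer proofs, where a unit move decreases an integer potential $\varPhi$ and drives an induction, here there is no integer potential, so one must instead produce a single exchange step with $\alpha>0$ small enough to stay inside the two order intervals yet nonzero so as to invoke strict suboptimality from unique-selection.
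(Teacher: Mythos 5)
Your proposal is correct and follows essentially the same route as the paper's proof: consistency is dispatched as immediate from the definition, persistence is proved by the same contradiction via (M$\sp{\natural}$-EXC[$\RR$]) with the identical feasibility estimates ($x\sp{*}_{j}<y\sp{*}_{j}\leq y_{j}\leq x_{j}$ and $y\sp{*}_{i}<y_{i}\wedge x\sp{*}_{i}\leq y_{i}$) and strict suboptimality from unique selection, and size-monotonicity uses the size-constrained exchange (\ref{mnatexcsizeR1}) in exactly the same way. Your closing observation that a single sufficiently small $\alpha>0$ replaces the integer potential $\varPhi$ is precisely the adjustment the paper makes in passing from the $\ZZ\sp{N}$ proofs to the $\RR\sp{N}$ setting.
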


\begin{proof}
The consistency is obvious from the definition of $C(x;f)$.

To prove persistence%
\footnote{%%%%%%%%%%%%%%%
This proof for persistence is an adaptation of the one in 
Lemma 3.3 of Murota and Yokoi (2015)\citeH[Lemma 3.3]{MY15mor}.
} %%%%%%%%%%%%%%%%%%%%
 by contradiction, suppose that 
$y\wedge C(x;f)\leq C(y;f)$ fails
for some $x,y\in \RR^{N}$ with $x\geq y$.
Set $x\sp{*}=C(x;f)$, $y\sp{*}=C(y;f)$.
Since $y\wedge x\sp{*}\leq y\sp{*}$ fails, 
there exists some $i \in N$ such that $y_{i} \wedge x\sp{*}_{i}>y\sp{*}_{i}$. 
Then $i\in \suppp(x\sp{*}-y\sp{*})$.
We apply (M$\sp{\natural}$-EXC[$\RR$]) to $x\sp{*}, y\sp{*}$ and $i$,
to obtain $j\in \suppm(x\sp{*}-y\sp{*})\cup\{0\}$ and $\alpha_{0} > 0$ such that
\begin{equation}
f(x\sp{*})+f(y\sp{*})\leq 
f(x\sp{*}- \alpha (\chi_{i}-\chi_{j}))+f(y\sp{*}+ \alpha (\chi_{i}-\chi_{j}))
\label{substituteR}
\end{equation}
for all $\alpha$ with $0 < \alpha \leq  \alpha_{0}$.
For sufficiently small $\alpha > 0$ we also have
$x\sp{*}- \alpha (\chi_{i}-\chi_{j}) \leq x$
and 
$y\sp{*}+ \alpha (\chi_{i}-\chi_{j}) \leq y$;
the former follows from 
$x\sp{*}_{j}<y\sp{*}_{j}\leq y_{j}\leq x_{j}$
for $j\in \suppm(x\sp{*}-y\sp{*})$,
and the latter from
$y\sp{*}_{i}<y_{i} \wedge x\sp{*}_{i}\leq y_{i}$.
On the right-hand side of (\ref{substituteR}),
we have
$f(x\sp{*}- \alpha (\chi_{i}-\chi_{j}))<f(x\sp{*})$
since
$x\sp{*}- \alpha (\chi_{i}-\chi_{j}) \leq x$
and  $x\sp{*}=C(x;f)$ is the unique maximizer of $f$ 
in 
$\{ z\in \RR^{N} \mid z\leq x \}$,
and similarly, $f(y\sp{*}+ \alpha (\chi_{i}-\chi_{j})) < f(y\sp{*})$.
This is a contradiction, proving persistence.

To prove size-monotonicity by contradiction, suppose that there exist 
$x, y\in \RR^{N}$ such that $x\geq y$ and
$|C(x;f)|<|C(y;f)|$.
Set $x\sp{*} = C(x;f)$ and $y\sp{*} = C(y;f)$. Then $|x\sp{*}|<|y\sp{*}|$.
By the exchange property (\ref{mnatexcsizeR1}) in Remark~\ref{RMmnatexcsizeR},
there exists  $j \in \suppm(x\sp{*} - y\sp{*})$
such that
$f( x\sp{*})+f( y\sp{*}) \leq f(x\sp{*}+ \alpha \chi_{j})+f(y\sp{*}- \alpha \chi_{j})$
for sufficiently small $\alpha > 0$.
Here we have 
$f(x\sp{*}+ \alpha \chi_{j})<f(x\sp{*})$
since 
$x\sp{*}+ \alpha \chi_{j} \leq x$
by $x\sp{*}_{j}<y\sp{*}_{j}\leq y_{j}\leq x_{j}$
and $x\sp{*}$ 
is the unique maximizer.
We also have
$f(y\sp{*}- \alpha \chi_{j})<f(y\sp{*})$
since 
$y\sp{*}- \alpha \chi_{j}\leq y\sp{*}\leq y$
and $y\sp{*}$ is the unique maximizer.
This is a contradiction, proving size-monotonicity.
\end{proof}

\subsection{Examples}
\label{SCmnatexampleR}

Here are some examples of M$\sp{\natural}$-concave functions 
in real variables.

\begin{enumerate}
\item

A function 
$f: \RR\sp{N} \to \RR \cup \{ -\infty \}$
 is called 
{\em laminar concave}
if it can be represented as
\begin{equation}  \label{laminarconvR}
 f(x)  =  \sum_{A \in \calT} \varphi_{A}(x(A))
\qquad (x \in \RR\sp{N})
\end{equation}
for a laminar family $\calT \subseteq 2\sp{N}$ 
and a family of univariate (closed proper) concave functions 
$\varphi_{A}: \RR \to \RR \cup \{ -\infty \}$ indexed by $A \in \calT$,
where $x(A) = \sum_{i \in A} x_{i}$.
A laminar concave function is M$\sp{\natural}$-concave.

\item
M$\sp{\natural}$-concave functions arise from 
the maximum weight of nonlinear network flows.
Let $G = (V, A)$ be a directed graph with 
two disjoint vertex subsets $S \subseteq V$ and $T \subseteq V$
specified as the entrance and the exit.
 Suppose that, for each arc $a \in A$, 
we are given a univariate (closed proper) concave function 
$\varphi_{a}: \RR \to \RR \cup \{ -\infty \}$
representing the weight of flow on the arc $a$.
Let $\xi \in \RR^A$ be a vector representing a flow, and
$\partial \xi \in \RR\sp{V}$ be the boundary of flow $\xi$ defined  by
(\ref{flowbounddefZ}).
 Then, the maximum weight of a flow that realizes a supply/demand specification
on the exit $T$  in terms of $x \in \RR^T$ is expressed by
a function $f: \RR^T \rightarrow \RR \cup \{ -\infty \}$ 
defined as (\ref{weightnetworkflowZ}).
This function is M$\sp{\natural}$-concave, 
provided that $f$ does not take the value $+\infty$ and 
$\dom f$ is nonempty.
If $S = \emptyset$, the function $f$ is M-concave.
See Section 2.2.1 of Murota (2003)\citeH[Section 2.2.1]{Mdcasiam}
and Theorem~2.10 of Murota and Shioura (2004a)\citeH[Theorem~2.10]{MS04conjreal}
for details.
\end{enumerate}

\subsection{Concluding remarks of section \ref{SCmnatconcavR}}

The concept of M-concave functions in continuous variables 
is introduced for polyhedral concave functions by
Murota and Shioura (2000)\citeH{MS00poly}
and for general concave functions by 
Murota and Shioura (2004a)\citeH{MS04conjreal}. 
This is partly motivated by a phenomenon inherent in the network flow/tension problem
described in Section \ref{SCmnatexampleR}.

%%\newpage

%% OOOOOOOOOOOOOOOOOOOOOOOOOOOOOOOOOOOO
\section{Operations for M$\sp{\natural}$-concave Functions}
\label{SCoperM}

\subsection{Basic operations}
\label{SCoperMbasic}

Basic operations on M$\sp{\natural}$-concave functions on $\ZZ\sp{n}$ are presented here,
whereas the most powerful operation,
transformation by networks,
is treated in Section~\ref{SCnetinduce}.

M$\sp{\natural}$-concave functions admit the following operations.

%%%%%%%%%%%%%%%
\begin{theorem}  \label{THmfnoperation}
Let $f, f_{1}, f_{2}: \ZZ\sp{N} \to \RR \cup \{ -\infty \}$ 
be M$\sp{\natural}$-concave functions.

\noindent {\rm (1)}
For nonnegative  $\alpha \in \RR_{+}$
and $\beta \in \RR$,
$\alpha f(x) + \beta$ is M$\sp{\natural}$-concave in $x$.

\noindent {\rm (2)} 
For $a \in \ZZ\sp{N}$,
$f(a-x)$ and $f(a+x)$ are M$\sp{\natural}$-concave in $x$. 

\noindent {\rm (3)} 
For $p \in \RR\sp{N}$, $f[-p]$ is M$\sp{\natural}$-concave,
where $f[-p]$ is defined by {\rm (\ref{f-pdefZ})}.

\noindent {\rm (4)} 
For univariate concave functions 
$\varphi_{i}: \ZZ \to \RR \cup \{ -\infty \}$ indexed by $i \in N$, 
\begin{equation} \label{f0sumphi}
 \tilde f(x) = f(x) + \sum_{i \in N} \varphi_{i}(x_{i})
\qquad (x \in \ZZ\sp{N})
\end{equation}
is M$\sp{\natural}$-concave, provided 
$\dom \tilde f \not= \emptyset$.

\noindent {\rm (5)} 
For $a \in (\ZZ \cup \{ -\infty \})\sp{N}$
and $b \in (\ZZ \cup \{ +\infty \})\sp{N}$,
the restriction of $f$ to the integer interval 
$[a,b]_{\ZZ} = \{ x \in \ZZ\sp{N} \mid a \leq x \leq b \}$
defined by
\begin{equation} \label{fboxrestrict}
 f_{[a,b]_{\ZZ}}(x) = 
   \left\{  \begin{array}{ll}
     f(x)      & (x \in [a,b]_{\ZZ}) ,\\
     -\infty   & (x \not\in [a,b]_{\ZZ}) 
                      \end{array}  \right.
\end{equation}
is M$\sp{\natural}$-concave, provided 
$\dom f_{[a,b]_{\ZZ}} \not= \emptyset$.

\noindent {\rm (6)} 
For $U \subseteq N$, the restriction of $f$ to $U$
defined by
\begin{eqnarray} 
 f_{U}(y) &=& f(y,\veczero_{N \setminus U})
  \qquad (y \in \ZZ\sp{U}) 
\label{fsetrestrict} 
\end{eqnarray}
is M$\sp{\natural}$-concave, provided 
$\dom f_{U} \not= \emptyset$,
where $\veczero_{N \setminus U}$ means the zero vector 
in $\ZZ\sp{N \setminus U}$.

\noindent {\rm (7)} 
For $U \subseteq N$, the projection of $f$ to $U$ defined by
\begin{eqnarray} 
  f\sp{U}(y) & = & \sup \{ f(y,z) \mid z \in \ZZ\sp{N \setminus U} \}
  \qquad (y \in \ZZ\sp{U}) 
\label{fsetproj}
\end{eqnarray}
is M$\sp{\natural}$-concave, provided
$f\sp{U} < +\infty$.

\noindent {\rm (8)} 
For $U \subseteq N$, the function $\tilde{f}$ defined by
\begin{eqnarray} 
  \tilde{f}(y,w) 
&=& 
 \sup \{ f(y, z) \mid  z(N \setminus U)=w, z \in \ZZ\sp{N \setminus U} \}
  \qquad (y \in \ZZ\sp{U}, w \in \ZZ) 
\label{fUaggreg}
\end{eqnarray}
is M$\sp{\natural}$-concave, provided 
$\tilde{f} < +\infty$.

\noindent {\rm (9)} 
{\em Integer (supremal) convolution}
$f_{1} \conv f_{2}: \ZZ\sp{N} \to \RR \cup \{ -\infty, +\infty \}$ 
defined by
\begin{equation} \label{f1f2convdef}
(f_{1} \conv f_{2})(x) =
  \sup\{ f_{1}(x_{1}) + f_{2}(x_{2}) \mid
      x= x_{1}+ x_{2}, \,  x_{1}, x_{2} \in \ZZ\sp{N}
 \}
\qquad (x \in \ZZ\sp{N})
\end{equation}
is M$\sp{\natural}$-concave, provided 
$(f_{1} \conv f_{2}) < +\infty$.
\end{theorem}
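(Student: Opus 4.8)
The plan is to verify, operation by operation, that the resulting function again satisfies the exchange property (M$\sp{\natural}$-EXC[$\ZZ$]), after first checking that its effective domain is a nonempty M$\sp{\natural}$-convex set. Items (1)--(3) are immediate from the inequality (\ref{mnatconcavexc2Z}): multiplying it by $\alpha \geq 0$ and adding $2\beta$ to both sides gives (1) (nonnegativity of $\alpha$ is what preserves the inequality direction); for (2) I substitute and apply the axiom for $f$ to the translated pair $(a+x,a+y)$, respectively to the reflected pair $(a-y,a-x)$, using that $(a-y)-(a-x)=x-y$, so that reflection interchanges the roles of $\suppp$ and $\suppm$ and returns the desired move for $f(a-x)$; and (3) is the special case $\varphi_i(t)=-p_i t$ of (4).

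For (4), I would work through the discrete Hessian of $\tilde f = f + s$ with $s(x)=\sum_i \varphi_i(x_i)$. A direct computation from (\ref{mhessianDefZ}) shows that the off-diagonal entries of the Hessian of a separable function vanish, while the diagonal entries equal $\varphi_i(x_i+2)-2\varphi_i(x_i+1)+\varphi_i(x_i)\le 0$ by concavity of $\varphi_i$. Hence, passing from $f$ to $\tilde f$, the off-diagonal Hessian entries are unchanged and the diagonal entries only decrease; each of the conditions (\ref{mnatfnhesseZ1}) and (\ref{mnatfnhesseZ2}) of Theorem~\ref{Mmnathessian} is preserved under such a change, which settles the case $\dom f=\ZZ\sp{N}$. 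For a general effective domain I would instead invoke the local exchange characterization (Theorem~\ref{THmconcavlocexcZ}) with the same bookkeeping, noting that $\dom \tilde f$ is the intersection of the M$\sp{\natural}$-convex set $\dom f$ with the box $\prod_i \dom \varphi_i$, and that such an intersection is again M$\sp{\natural}$-convex.

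For the restrictions (5) and (6) the exchange inequality is inherited verbatim from $f$ once one checks that the domain stays M$\sp{\natural}$-convex (intersection with a box for (5), a coordinate section for (6)). For the projection (7) and the aggregation (8) I would, given $y,y'$ in the new domain and $i\in\suppp(y-y')$, fix fibre vectors $z,z'$ attaining the suprema (these exist under the stated finiteness provisos), apply (M$\sp{\natural}$-EXC[$\ZZ$]) to $f$ at the full vectors $(y,z)$ and $(y',z')$, and case-split on whether the returned index $k$ lies in $U$ or in $N\setminus U$: in the former case $k$ serves directly as the exchange partner $j$ for the projected/aggregated function, while in the latter case the move collapses to the $j=0$ branch. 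Either way the required inequality for $f\sp{U}$ (resp.\ $\tilde f$) follows.

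The convolution (9) is the \emph{main obstacle}. The cleanest route uses the maximizer characterization (Theorem~\ref{THmconcavargmaxZ}): since
\[
 D(p; f_{1} \conv f_{2}) = D(p; f_{1}) + D(p; f_{2})
\]
(the Minkowski sum, valid when the suprema are attained, as they are for bounded domains), M$\sp{\natural}$-concavity of $f_{1}\conv f_{2}$ reduces to the known closure property that the Minkowski sum of two M$\sp{\natural}$-convex sets (integral g-polymatroids) is again M$\sp{\natural}$-convex. For unbounded domains, or for a self-contained argument, I would verify (M$\sp{\natural}$-EXC[$\ZZ$]) directly: fix $x,y$ in the domain and $i\in\suppp(x-y)$, take value-optimal decompositions $x=x_{1}+x_{2}$, $y=y_{1}+y_{2}$ that additionally minimize a support-discrepancy potential such as $\|x_{1}-y_{1}\|_{1}$, and apply the exchange property of whichever of $f_{1},f_{2}$ carries the index $i$. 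The delicate point, and the reason convolution is harder than the other operations, is to guarantee that the exchange index returned for the summand lies in $\suppm(x-y)$ and not merely in $\suppm(x_{1}-y_{1})$; it is the minimality of the potential that forces this sign alignment.
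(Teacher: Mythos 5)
Your bounded-domain argument for (9) is exactly the paper's own proof in Remark~\ref{RMconvolproof}: from $f[-p]=(f_{1}[-p])\conv(f_{2}[-p])$ one gets $\argmax (f[-p])=\argmax (f_{1}[-p])+\argmax (f_{2}[-p])$, both summands are M$\sp{\natural}$-convex by the ``only if'' part of Theorem~\ref{THmconcavargmaxZ}, Minkowski sums of M$\sp{\natural}$-convex sets are M$\sp{\natural}$-convex, and the ``if'' part of Theorem~\ref{THmconcavargmaxZ} concludes (note that theorem assumes bounded effective domains, so boundedness is genuinely used in both directions). For unbounded domains, however, your plan has a real gap. First, ``value-optimal decompositions'' need not exist: the supremum in (\ref{f1f2convdef}) need not be attained---already $\varphi(t)=-2^{-t}$ is a univariate M$\sp{\natural}$-concave function with unattained supremum---so you must work with $\varepsilon$-optimal decompositions. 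Second, the assertion that minimizing $\|x_{1}-y_{1}\|_{1}$ ``forces the sign alignment,'' i.e.\ forces the exchange index $j$ returned for $f_{1}$ into $\suppm(x-y)$ rather than merely $\suppm(x_{1}-y_{1})$, is precisely the hard step and is stated without proof: if $j\in\suppm(x_{1}-y_{1})$ but $x_{j}\geq y_{j}$, then $j\in\suppp(x_{2}-y_{2})$ and the exchange must be bounced into $f_{2}$, starting a chain whose termination is what needs proving. The paper sidesteps all of this with a truncation-and-limit argument: truncate $f_{1},f_{2}$ to $\|x\|_{\infty}\leq k$, apply the bounded case to $f^{(k)}=f_{1}^{(k)}\conv f_{2}^{(k)}$, use pointwise convergence $f^{(k)}(x)\to f(x)$, and for fixed $x,y,i$ exploit finiteness of $\suppm(x-y)\cup\{0\}$ to pick a single index $j$ returned for infinitely many $k$, passing to the limit along that subsequence. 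You should either adopt this pigeonhole argument or carry out your chain argument in full (the latter is essentially the content of Murota's separate paper on convolution).

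For (1)--(8) the paper gives no proof at all (it cites Theorem 6.15 of Murota 2003), so your verifications are additional content. Items (1)--(6) are sound; for (4) a direct check of (M$\sp{\natural}$-EXC[$\ZZ$]) is even shorter than your Hessian/local-exchange route, since the index $j$ returned by $f$ works for $\tilde f$: $x_{i}-1\geq y_{i}$ and $x_{j}+1\leq y_{j}$ make the separable increments nonnegative by concavity of $\varphi_{i},\varphi_{j}$. Two flaws remain. In (7) and (8) the fibre suprema need not be attained under the provisos $f\sp{U}<+\infty$ and $\tilde f<+\infty$ (same example as above), so you again need $\varepsilon$-optimizers together with finiteness of the candidate indices. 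More seriously, your case-split is correct for the projection (7) but fails verbatim for the aggregation (8): if $i\in\suppp(y-y')$ and the inner exchange returns $j\in N\setminus U$, the induced move sends $(y,w)$ to $(y-\unitvec{i},\,w+1)$, which is an admissible exchange step for $\tilde f$ only when the aggregate coordinate lies in $\suppm((y,w)-(y',w'))$, i.e.\ when $w<w'$; if $w\geq w'$ the move is unavailable and does \emph{not} collapse to the $j=0$ branch, which would be $(y-\unitvec{i},\,w)$. Repairing (8) requires either a more careful choice of fibre points (e.g.\ tie-breaking by $\|z-z'\|_{1}$) or a reduction through (\ref{mfnmnatfnrelationcave}) and the network transformation of Section~\ref{SCnetinduce}.
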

%%%%%%%%%%%%%%
\begin{proof}
See 
Theorem 6.15 of Murota (2003)\citeH[Theorem 6.15]{Mdcasiam} 
for the proofs of (1) to (8).
In view of the importance of convolution operations we give 
a straightforward alternative proof of (9) in Remark~\ref{RMconvolproof}.
\end{proof}

\begin{remark} \rm  \label{RMecosignifconv}
Theorem \ref{THmfnoperation} (9)
for M$\sp{\natural}$-concavity of convolutions
has an implication of great economic significance.
Suppose that 
$U_1, U_2, \ldots, U_k$ represent utility functions.
Then the aggregated utility is given by their convolution
$U_1 \Box U_2 \Box \cdots \Box U_k$.
Theorem \ref{THmfnoperation} (9) means that 
substitutability is preserved in this aggregation operation.
\finbox
\end{remark}

\begin{remark} \rm  \label{RMconvolproof}
A proof for M$\sp{\natural}$-concavity of the convolution 
(\ref{f1f2convdef}) is given here%
\footnote{%%%%%%%%%%%%%%%%%%%%%%
This proof is an adaptation of the proof 
(Murota 2004b)\citeH{MconvolM04}
for {\rm M}-convex functions to ${\rm M}^{\natural}$-concave functions.
See Note 9.30 of Murota (2003)\citeH[Note 9.30]{Mdcasiam} 
for another proof using a network transformation.
}.  %%% footnote %%%%%%%%
Let $f_{1}$ and $f_{2}$ be M$\sp{\natural}$-concave functions, and 
$f = f_{1} \conv f_{2}$.
First we treat the case where 
$\dom f_{1}$ and $\dom f_{2}$ are bounded.
Then $\dom f = \dom f_{1} + \dom f_{2}$ (Minkowski sum) is bounded. 
For each $p \in \RR\sp{N}$ we have
$f[-p] = (f_{1}[-p]) \conv (f_{2}[-p])$,
from which follows
\[
 \argmax (f[-p]) = \argmax (f_{1}[-p]) + \argmax (f_{2}[-p]).
\]
In this expression, both $\argmax (f_{1}[-p])$ and $\argmax (f_{2}[-p])$
are M$\sp{\natural}$-convex sets by Theorem~\ref{THmconcavargmaxZ} (only if part),
and therefore, their Minkowski sum (the right-hand side) is M$\sp{\natural}$-convex 
(Theorem 4.23 of Murota 2003\citeH[Theorem 4.23]{Mdcasiam}).
This means that 
$\argmax (f[-p])$ is M$\sp{\natural}$-convex for each $p \in \RR\sp{N}$,
which  implies the M$\sp{\natural}$-concavity of $f$
by Theorem~\ref{THmconcavargmaxZ} (if part).

The general case without the boundedness assumption on effective domains
can be treated via limiting procedure as follows.
For $l=1,2$ and $k=1,2,\ldots$, define 
$f_{l}\sp{(k)}: \ZZ\sp{N} \to \RR \cup \{ -\infty \}$ by
\[
 f_{l}\sp{(k)}(x) = \left\{ \begin{array}{ll}
      f_{l}(x)    & \mbox{ if $\| x \|_{\infty} \leq k$} \\
      -\infty & \mbox{ otherwise},
    \end{array}\right.
\]
which is an M$\sp{\natural}$-concave function with a bounded effective domain,
provided that $k$ is large enough to ensure $\dom f_{l}\sp{(k)} \not= \emptyset$.
For each $k$, the convolution 
$f\sp{(k)}=f_{1}\sp{(k)} \conv f_{2}\sp{(k)}$ is M$\sp{\natural}$-concave
by the above argument, and moreover, 
$ \lim_{k \to \infty} f\sp{(k)}(x) = f(x)$ for each $x$.
It remains to demonstrate the property (M$\sp{\natural}$-EXC[$\ZZ$]) for $f$.
Take $x, y \in \dom f$ and $i \in \suppp(x-y)$.
There exists $k_{0}=k_{0}(x,y)$, depending on $x$ and $y$,
such that
$x, y \in \dom f\sp{(k)}$ for every $k \geq k_{0}$.
Since $f\sp{(k)}$ is M$\sp{\natural}$-concave, 
there exists $j_{k} \in \suppm(x-y) \cup \{ 0 \}$ such that  
\[
 f\sp{(k)}(x)+f\sp{(k)}(y) \leq 
 f\sp{(k)}(x-\unitvec{i}+\unitvec{j_{k}}) + f\sp{(k)}(y+\unitvec{i}-\unitvec{j_{k}}) . 
\]
Since $\suppm(x-y) \cup \{ 0 \}$ is a finite set,
at least one element of $\suppm(x-y) \cup \{ 0 \}$ appears infinitely many times in 
the sequence 
$\{ j_{k} \mid k \geq k_{0} \}$.
%%% $j_{1},  j_{2}, \ldots$.
%%% incorrect 
More precisely,  
there exists $j \in \suppm(x-y) \cup \{ 0 \}$ and an increasing subsequence
$k(1) < k(2) < \cdots$ such that
$j_{k(t)}=j$ for $t=1,2,\ldots$.
By letting $k \to \infty$ along this subsequence 
in the above inequality we obtain
\[
 f(x)+f(y) \leq  f(x-\unitvec{i}+\unitvec{j})  + f(y+\unitvec{i}-\unitvec{j})  .
\]
Thus $f = f_{1} \conv f_{2}$ satisfies (M$\sp{\natural}$-EXC[$\ZZ$]),
which proves Theorem \ref{THmfnoperation} (9).
\finbox
\end{remark}

%%%%%%%%%%%%%%%%%%%%%%%%%%%%%%%%%%%%%
\begin{remark} \rm  \label{RMmfnsum}
A sum of M$\sp{\natural}$-concave functions
is not necessarily M$\sp{\natural}$-concave.
This implies, in particular, that an M$\sp{\natural}$-concave function
does not necessarily remain M$\sp{\natural}$-concave 
when its effective domain is restricted to an M$\sp{\natural}$-convex set.
For example% 
\footnote{%%%%%%%%%%%%%%%%%%%%%%
This example is a reformulation of 
Note 4.25 of Murota (2003)\citeH[Note 4.25]{Mdcasiam}
for {\rm M}-convex functions to ${\rm M}^{\natural}$-concave functions.
},  %%% footnote %%%%%%%%
let
$S_{1} = S_{0} \cup \{(0, 1, 1)\}$ and
$S_{2} = S_{0} \cup \{(1, 1, 0)\}$
with
$ S_{0} = \{(0, 0, 0), (1, 0, 0), (0, 1, 0),  (0, 0, 1), (1, 0, 1)\}$,
and let $f_{i}: \ZZ\sp{3} \to \RR \cup \{ -\infty \}$
be the (concave) indicator function%
\footnote{%%%%%%%%%%%%
$f_{i}(x)=0$ for $x \in S_{i}$ and $=-\infty$ for $x \not\in S_{i}$.
} %%%%%% footnote %%%%%%%%%%%%%%%
 of $S_{i}$ for $i=1,2$.
Then $f_{1} + f_{2}$ is the
indicator function of $S_{1} \cap S_{2} = S_{0}$.
Here $S_{1}$ and $S_{2}$ are M$\sp{\natural}$-convex sets,
whereas $S_{0}$ is not%
\footnote{%%%%%%%%%%%%%%%%%%%%%%
(B$\sp{\natural}$-EXC[$\ZZ$]) fails for $S_{0}$ with
$x = (1, 0, 1)$, $y = (0, 1, 0)$, and $i = 1$.
}.  %%% footnote %%%%%%%%
Accordingly, 
 $f_{1}$ and $f_{2}$ are M$\sp{\natural}$-concave functions,
but their sum $f_{1} + f_{2}$ is not M$\sp{\natural}$-concave.
Functions represented as a sum of two M$\sp{\natural}$-concave functions 
are an intriguing mathematical object, investigated under the name of
{\em M$\sp{\natural}_{2}$-concave function} in 
Section 8.3 of Murota (2003)\citeH[Section 8.3]{Mdcasiam}.
\finbox
\end{remark}

%%%%%%%%%%%%%%%%%%%%%%%%%%%%%%%%%%%%%
\begin{remark} \rm  \label{RMmfnscaling4}
For a function $f: \ZZ\sp{n} \to \RR \cup \{ -\infty \}$
and a positive integer $\alpha$, the function 
$f\sp{\alpha}: \ZZ\sp{n} \to \RR \cup \{ -\infty \}$ 
defined by
$f\sp{\alpha}(x) = f(\alpha x)$ $(x \in \ZZ\sp{n})$
is called a {\em domain scaling} of $f$.
If $\alpha=2$, for instance, this amounts to 
considering the function values only on vectors of even integers.
Scaling is one of the common techniques used in designing
efficient algorithms and this is particularly true of 
network flow algorithms.
Unfortunately, M$\sp{\natural}$-concavity is not preserved under scaling.
For example% 
\footnote{%%%%%%%%%%%%%%%%%%%%%%
This example is a reformulation of 
Note 6.18 of Murota (2003)\citeH[Note 6.18]{Mdcasiam} 
for {\rm M}-convex functions to ${\rm M}^{\natural}$-concave functions.
},  %%% footnote %%%%%%%%
let $f$ be the indicator function of a set 
$S =  \{ c_{1} (1,0,-1) +c_{2} (1,0,0) + c_{3} (0,1,-1) + c_{4} (0,1,0)
\mid c_{i} \in \{ 0,1 \} \} \subseteq \ZZ^{3}$.
This $f$ is an ${\rm M}^{\natural}$-concave function,
but $f^{2}$ (=$f^{\alpha}$ with $\alpha=2$),
being the indicator function of $ \{ (0,0,0), (1,1,-1) \}$,
is not ${\rm M}^{\natural}$-concave.
Nevertheless, scaling of an M$\sp{\natural}$-concave function is useful 
in designing efficient algorithms 
(Section 10.1 of Murota 2003)\citeH[Section 10.1]{Mdcasiam}.
It is worth mentioning that 
some subclasses of M$\sp{\natural}$-concave functions are closed under scaling operation;
linear, quadratic, separable, and laminar M$\sp{\natural}$-concave functions,
respectively, form such subclasses.
\finbox
\end{remark}

\begin{remark} \rm  \label{RMmatbaseval}
A class of set functions,
named matroid-based valuations,
is defined by 
Ostrovsky and Paes Leme (2015)
\citeH{OP15gs}%
with the use of the convolution operation
as well as the contraction operation.
For set functions 
$f_{1}, f_{2}: 2\sp{N} \to \RR \cup \{ -\infty \}$,
the convolution of $f_{1}$ and $f_{2}$
is defined by
$(f_{1} \conv f_{2})(X)  = \max_{Y \subseteq X}( f_{1}(Y) + f_{2}(X \setminus Y))$
for $X \subseteq N$.
For a set function 
$f: 2\sp{N} \to \RR \cup \{ -\infty \}$
and a subset $T$ of $N$,
the {\em contraction} of $T$
is defined as
$f_{T}(X)  = f(X \cup T ) - f(T)$ for $X \subseteq N \setminus T$.
A set function $f$ is said to be a {\em matroid-based valuation},
if it can be constructed by repeated application of convolution and contraction
to weighted matroid valuations (\ref{weightedrank}).
By Theorem \ref{THmfnoperation}, 
matroid-based valuations are 
M$\sp{\natural}$-concave functions.
It is conjectured in 
Ostrovsky and Paes Leme (2015)
\citeH{OP15gs}%
that every  M$\sp{\natural}$-concave function is a matroid-based valuation.
\finbox
\end{remark}

\subsection{Transformation by networks}
\label{SCnetinduce}

%%%  FIGURE %%%%%%%%%%%%%%%%%%
%% \input{DCAfgNettrans}
\begin{figure}\begin{center}
 \includegraphics[width=0.8\textwidth,clip]{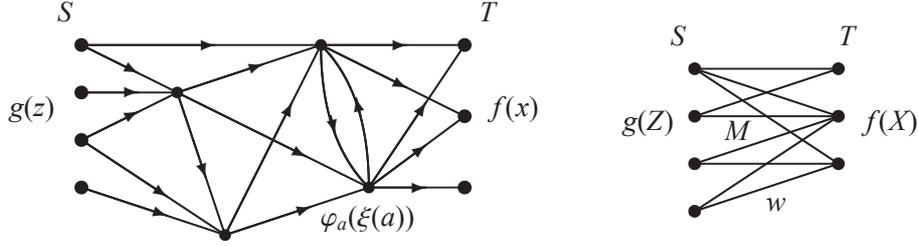}
 \caption{Transformation by a network and a bipartite graph}
 \label{FGnettrans}
\end{center}\end{figure}
%%%  FIGURE %%%%%%%%%%%%%%%%%%

M$\sp{\natural}$-concave functions can be transformed through networks.
Let $G = (V, A)$ be a directed graph with 
two disjoint vertex subsets $S \subseteq V$ and $T \subseteq V$
specified as the entrance and the exit (Fig.~\ref{FGnettrans}, left).
 Suppose that, for each arc $a \in A$, 
we are given a univariate concave function 
$\varphi_{a}: \ZZ \to \RR \cup \{ -\infty \}$
representing the weight of flow on the arc $a$.
Let $\xi \in \ZZ^A$ be a vector representing a flow, and
$\partial \xi \in \ZZ\sp{V}$ be the boundary of flow $\xi$ defined  by
(\ref{flowbounddefZ}).

Given a function
$g: \ZZ\sp{S} \to \RR \cup \{ -\infty \}$
on the entrance set $S$, 
we define a function
$f: \ZZ\sp{T} \to \RR \cup \{ -\infty, +\infty \}$
on the exit set $T$ by 
\begin{equation} \label{netindftilde} 
 f(x) =  \sup_{\xi,z}\{  g(z) +  \sum_{a \in A} \varphi_{a}(\xi(a)) \mid
  \xi \in \ZZ\sp{A},   \partial\xi =  (z, -x, \veczero) 
  \in \ZZ\sp{S} \times \ZZ\sp{T} \times \ZZ\sp{V \setminus (S \cup T)} \} 
 \quad  (x \in \ZZ\sp{T}) .
\end{equation}
This function $f(x)$ represents the maximum weight to meet
the demand specification $x$ at the exit,
subject to the flow conservation at the vertices not in $S \cup T$.
The weight consists of two parts,
the weight $g(z)$ of supply $z$ at the entrance $S$
and the weight $\sum_{a \in A} \varphi_{a}(\xi(a))$ in the arcs.

We can regard (\ref{netindftilde}) as a transformation of $g$ to $f$ by the network.
If the given function $g$ is M$\sp{\natural}$-concave, 
the resultant function $f$
is also M$\sp{\natural}$-concave,
provided that $f$ does not take the value $+\infty$ and 
$\dom f$ is nonempty.
In other words, the transformation (\ref{netindftilde}) by a network
preserves M$\sp{\natural}$-concavity.
See 
Section 9.6 of Murota (2003)\citeH[Section 9.6]{Mdcasiam} 
for a proof.  An alternative proof is given by
Kobayashi et al.~(2007)\citeH{KMT07jump}.

In particular, an M$\sp{\natural}$-concave set function 
is transformed to another M$\sp{\natural}$-concave set function 
through a bipartite graph
(Fig.~\ref{FGnettrans}, right).
Let $G=(S,T; E)$ be a bipartite graph with vertex bipartition
$(S, T)$ and edge set $E$,
with weight $w_{e} \in \RR$ associated with each edge $e \in E$.
Given an M$\sp{\natural}$-concave set function 
$g: 2\sp{S} \to \RR \cup \{ -\infty \}$ on $S$,
define a set function $f$ on $T$ by
\begin{equation}  \label{weightmatchinduce}
 f(X) = \max \{ g(Z) + w(M) 
         \mid \mbox{$M$ is a matching, $S \cap \partial M = Z$, 
                $T \cap \partial M=X$}  \}
\quad (X \subseteq T) ,
\end{equation}
where $f(X)= -\infty$ if no such $M$ exists for $X$.
If $g$ is M$\sp{\natural}$-concave, then $f$
is also M$\sp{\natural}$-concave,
as long as $\dom f$ is nonempty.
A proof tailored to set functions is given 
in the proof of Theorem~5.2.18 of Murota (2000a).
\citeH[Theorem 5.2.18]{Mspr2000}% 

\subsection{Concluding remarks of section \ref{SCoperM}}

Efficient algorithms are available for 
the operations listed in Theorem \ref{THmfnoperation}.
In particular, the convolution (\ref{f1f2convdef}),
corresponding to the aggregation of utility functions,
can be computed efficiently
(Murota and Tamura 2003b)\citeH{MTcompeq03}.
The transformation by networks is also accompanied by efficient algorithms.
For M$\sp{\natural}$-concave function maximization algorithms,
see Chapter 10 of Murota (2003)\citeH[Chapter 10]{Mdcasiam},
and more recent papers, e.g., 
Shioura (2004)\citeH{Shimin04},
Tamura (2005)\citeH{Tam05scale},
Murota (2010)\citeH{Mrims10},
Moriguchi et al.~(2011)\citeH{MST08Mrelax},
Fujishige et al.~(2015)\citeH{FGHPZ15conges},
and Shioura (2015)\citeH{Shi15budget}.

%%%% end of file %%%%%%%%%%%

%% \newpage
%% 2016-10-28 Kazuo Murota 
%% file= DCAECOconjL.tex

%%%%%%%%%%%%%%%%%%%%%%%%%%%%%%%%%%%%%%%%%%%%%%%%%%%%%%%%%%%%%%%%
\section{Conjugacy and L$\sp{\natural}$-convexity}
\label{SCconjuLconv}

Conjugacy under the Legendre  transformation 
is one of the most appealing facts in convex analysis.
This is also the case in discrete convex analysis.
The conjugacy theorem in discrete convex analysis says
that the Legendre transformation 
gives a one-to-one correspondence between M$\sp{\natural}$-concave
functions and L$\sp{\natural}$-convex functions.
Since M$\sp{\natural}$-concavity expresses
substitutability of valuation or utility functions,
L$\sp{\natural}$-convexity characterizes
substitutability in terms of indirect utility functions.
This fact has a significant application to auction theory, 
to be expounded in Section \ref{SCauction}.

\subsection{L$\sp{\natural}$-convex function}

The concept of L$\sp{\natural}$-convexity is defined for
functions in discrete (integer) variables and for those in 
continuous (real) variables.
We start with discrete variables.

\paragraph{L$\sp{\natural}$-convex function on $\ZZ\sp{n}$:}

First recall that
a function $g: \ZZ^{n} \to \RR \cup \{ +\infty \}$
is called {\em submodular} if 
\begin{equation} \label{gsubmineq}
 g(p) + g(q) \geq g(p \vee q) + g(p \wedge q) 
\qquad (p,q \in \ZZ^{n}),
\end{equation}
where $p \vee q$ and $p \wedge q$ mean
the vectors of componentwise maximum and minimum of $p$ and $q$, respectively.
To define L$\sp{\natural}$-convexity of $g$,
we consider a function $\tilde g$ in $n+1$ variables
$(p_{0},p) =(p_{0}, p_{1}, \ldots, p_{n})$
defined as 
\begin{equation}\label{lfnlnatfnrelation}
 \tilde g(p_{0},p) = g(p - p_{0} \vecone)
 \qquad ( p_{0} \in \ZZ, p \in \ZZ\sp{n}),
\end{equation}
where $\bm{1}=(1,1,\ldots, 1)$.
Then we say that $g: \ZZ^{n} \to \RR \cup \{ +\infty \}$ is 
{\em L$\sp{\natural}$-convex}
if the associated function 
$\tilde g: \ZZ\sp{n+1} \to \RR \cup \{ +\infty \}$
is a submodular function in $(p_{0},p)$,
i.e., if
\begin{equation}\label{tildegsubmineqZ}
  g(p - p_{0} \vecone) +  g(q - q_{0} \vecone)
 \geq  g( (p \vee q) - (p_{0} \vee q_{0}) \vecone) 
    +  g( (p \wedge q) - (p_{0} \wedge q_{0}) \vecone) 
 \quad ( p_{0}, q_{0} \in \ZZ, p, q \in \ZZ\sp{n}).
\end{equation}

\begin{remark} \rm  \label{RMonedimsubm}
The significance of the extra variable $p_{0}$ in the definition of
 L$\sp{\natural}$-convexity is most transparent when $n = 1$.
When $n=1$ we have
$(p \vee q, p \wedge q) = ( p, q )$ or $( q, p )$,
according to whether $p \geq q$ or $p \leq q$.
Hence the submodular inequality (\ref{gsubmineq}) is always satisfied,
and every function $g: \ZZ \to \RR \cup \{ +\infty \}$ is submodular.
On the other hand, 
the inequality (\ref{tildegsubmineqZ}) for $(p_{0},p) = (1,t)$ and
$(q_{0},q) = (0,t+1)$ yields
$ g(t-1) +  g(t+1) \geq  2g(t)$ for $t \in \ZZ$,
which shows the convexity of $g$ on $\ZZ$. 
The converse is also true.
Therefore, 
a function $g: \ZZ \to \RR \cup \{ +\infty \}$ is 
 L$\sp{\natural}$-convex if and only if 
$ g(t-1) +  g(t+1) \geq  2g(t)$ for all $t \in \ZZ$.
\finbox
\end{remark}

\begin{remark} \rm  \label{RMlnatsubm01}
For a set function
$\mu: 2\sp{N} \to \RR \cup \{ +\infty \}$,
L$\sp{\natural}$-convexity is equivalent to submodularity (\ref{setfnsubm}).
Recall the notation $\unitvec{X}$ for the characteristic vector 
of a subset $X$;
see (\ref{charvecdefnotat}).
A set function $\mu$ can be identified with a function 
$g: \mathbb{Z}\sp{N} \to \RR \cup \{ +\infty \}$
with  $\dom g \subseteq \{ 0, 1 \}\sp{N}$
by $\mu(X) = g(\unitvec{X})$ for $X \subseteq N$,
and $\mu$ is submodular if and only if the corresponding $g$ 
is L$\sp{\natural}$-convex.
\finbox
\end{remark}

\begin{remark} \rm  \label{RMmatrankdualchar}
Matroid rank functions have a dual character of 
being both L$\sp{\natural}$-convex and M$\sp{\natural}$-concave.
It is L$\sp{\natural}$-convex as it is submodular,
and M$\sp{\natural}$-concave as already mentioned in Section~\ref{SCmnatexample01}.
\finbox
\end{remark}

L$\sp{\natural}$-convexity can be characterized by a number of equivalent conditions
(Favati and Tardella 1990\citeH{FT90}, 
Fujishige and Murota 2000\citeH{FM00}, 
Murota 2003\citeH{Mdcasiam}).

\begin{theorem}
%%[\protect{\cite[Corollary 5.2.2]{FT90}, \cite[Theorem~3]{FM00}, \cite[Theorem 7.7]{Mdcasiam}}] 
 \label{THlnatcondZ}
For $g: \ZZ^{n} \to \RR \cup \{ +\infty \}$
the following conditions, {\rm (a)} to {\rm (d)}, are equivalent:

\noindent \ \ {\rm (a)}
L$\sp{\natural}$-convexity, i.e.,
{\rm (\ref{tildegsubmineqZ})}.

\noindent \ \ {\rm (b)}
{\em Translation-submodularity}%
\footnote{%%%%%%%%%%%%%%%%%%%%
This condition is labeled as (SBF$\sp{\natural}$[$\ZZ$]) in 
Section 7.1 of Murota (2003)\citeH[Section 7.1]{Mdcasiam}.
Note that $\alpha$ is restricted to be nonnegative, and 
the inequality (\ref{lnatftrsubmZ}) for $\alpha=0$ 
coincides with submodularity (\ref{gsubmineq}).
}{\rm :}  %%%% footnote %%%%%%%%%%%%%%%
\begin{equation} \label{lnatftrsubmZ}
  g(p) + g(q) \geq g((p - \alpha \vecone) \vee q) 
                 + g(p \wedge (q + \alpha \vecone))
\qquad  (\forall p, q \in \ZZ\sp{n},  \forall \alpha \in \ZZ_{+}).
\end{equation}

\noindent \ \ {\rm (c)}
{\em Discrete midpoint convexity:} 
\begin{equation} \label{disfnmidconv}
 g(p) + g(q) \geq
   g \left(\left\lceil \frac{p+q}{2} \right\rceil\right) 
  + g \left(\left\lfloor \frac{p+q}{2} \right\rfloor\right) 
\qquad (p, q \in \ZZ\sp{n})   ,
\end{equation}
where $\lceil \cdot \rceil$ and $\lfloor \cdot \rfloor$ denote
the integer vectors obtained by
componentwise rounding-up and rounding-down to the nearest integers,
respectively.

\noindent \ \ {\rm (d)}
For any $p, q \in \ZZ\sp{n}$ with $\suppp(p-q)\not= \emptyset$, 
it holds that%
\footnote{%%%%%%%%%%%%%%%%%%%%
This condition is labeled as  (L$\sp{\natural}$-APR[$\ZZ$]) in 
Section 7.2 of Murota (2003)\citeH[Section 7.2]{Mdcasiam}.
Recall the notation $\unitvec{A}$ for the characteristic vector of $A$,
as defined in (\ref{charvecdefnotat}).
} %%%% footnote %%%%%%%%%%%%%%%
\begin{equation} \label{lnatAPR}
 g(p) + g(q) \geq g(p - \unitvec{A}) + g(q + \unitvec{A}) ,
\end{equation}
where $\displaystyle A = \argmax_{i} \{ p_{i} - q_{i} \}$.
\end{theorem}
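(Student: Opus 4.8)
The plan is to take translation-submodularity (b) as the hub, to compress the two-dimensional content into a short list of unit-step inequalities, and to test (c) and (d) against that list. First I would settle (a) $\Leftrightarrow$ (b). Unfolding $\tilde g(p_0,p)=g(p-p_0\vecone)$, note that $\tilde g$ is invariant under the diagonal shift $(p_0,p)\mapsto(p_0+c,p+c\vecone)$. For (a)$\Rightarrow$(b) I apply submodularity of $\tilde g$ to the lifts $(0,p)$ and $(-\alpha,q)$ with $\alpha\in\ZZ_{+}$; their join is $(0,p\vee(q-\alpha\vecone))$ and their meet is $(-\alpha,p\wedge(q-\alpha\vecone))$, and pushing these back through $\tilde g$ (absorbing the $p_0$-shift) gives (b) after interchanging $p$ and $q$. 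For (b)$\Rightarrow$(a) I reverse this: normalise one $0$-coordinate of a pair in $\ZZ^{n+1}$ to $0$ by a diagonal shift, read $\alpha$ as the gap between the two $0$-coordinates, and (b) becomes the submodular inequality for $\tilde g$. This step is pure bookkeeping.

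Next I would isolate a common local core. By Topkis's local characterisation, submodularity of $\tilde g$ on $\ZZ^{n+1}$ (i.e.\ (a)) is equivalent to its unit-step inequalities on adjacent coordinate pairs; translating back to $g$, these are, for all $x$ and distinct $i,j$,
\[
 g(x+\chi_{i})+g(x+\chi_{j})\ \ge\ g(x+\chi_{i}+\chi_{j})+g(x),\qquad g(x-\vecone)+g(x+\chi_{i})\ \ge\ g(x)+g(x+\chi_{i}-\vecone),
\]
the first coming from the pairs inside $N$ and the second from the pairs involving the extra coordinate $p_{0}$ (coordinatewise convexity of $g$ is then a derived consequence, obtained by adding a diagonal inequality to a plain submodular one). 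I would then check that each of (c) and (d) \emph{implies} this core by direct substitution: $p=x+\chi_{i},\,q=x+\chi_{j}$ gives the first inequality (in (d) one has $A=\{i\}$, and the exchange reproduces $x$ and $x+\chi_{i}+\chi_{j}$), while $p=x-\vecone,\,q=x+\chi_{i}$ in (c) and $p=x+\chi_{i},\,q=x-\vecone$ in (d) give the second.

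It remains to prove (a)$\Rightarrow$(c) and (a)$\Rightarrow$(d); together with the previous step this closes every equivalence, since each of (c),(d) implies the core, which is equivalent to (a). Both implications I would establish by induction on $\|p-q\|_{\infty}$, the engine being the submodularity of $g$ (and of $\tilde g$) supplied by (a). The base case $\|p-q\|_{\infty}\le 1$ is ordinary submodularity: there $\lceil\tfrac{p+q}{2}\rceil=p\vee q$ and $\lfloor\tfrac{p+q}{2}\rfloor=p\wedge q$, so (c) reduces to submodularity, and the top level set $A=\argmax_{i}\{p_{i}-q_{i}\}$ makes the step in (d) a single meet/join exchange. In the inductive step for (c) I pass to the midpoint pair $(\lceil\tfrac{p+q}{2}\rceil,\lfloor\tfrac{p+q}{2}\rfloor)$, which has strictly smaller $\ell_{\infty}$-distance and the same componentwise sum, so the midpoints are preserved along the recursion; for (d) I peel the level set $A$, lowering $\max_{i}\{p_{i}-q_{i}\}$ by combining one exchange with the inductive hypothesis.

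The main obstacle is precisely these two inductions. The substitutions producing the local core are immediate and the local$\Leftrightarrow$global equivalence for the submodular lift $\tilde g$ is classical, but bootstrapping the \emph{global} midpoint inequality (c) and the global steepest-exchange inequality (d) from unit-step data needs a reduction that simultaneously strictly decreases $\|p-q\|_{\infty}$ and preserves the correct invariant---the sum $p+q$ for (c), the order and level-set structure for (d)---while handling ties in $\argmax_{i}\{p_{i}-q_{i}\}$, whose cardinality may shrink or split as the recursion proceeds. That tie-handling is where I expect the bookkeeping to be heaviest.
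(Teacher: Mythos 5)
There is a genuine gap at the hub of your argument, namely the step ``local core $\Leftrightarrow$ (a)''. Topkis-type localization of submodularity is valid for finite-valued functions on $\ZZ^{n+1}$, but the theorem is stated for $g:\ZZ^{n}\to\RR\cup\{+\infty\}$, and with infinite values the pairwise unit-step inequalities do not imply global submodularity of $\tilde g$. Concretely, let $g$ be the indicator function of $S=\{(0,0),(2,2)\}\subseteq\ZZ^{2}$ (i.e., $g=0$ on $S$ and $+\infty$ off $S$). No two points of $S$ differ by $\chi_{i}-\chi_{j}$ or by $\chi_{i}+\vecone$, so every inequality in your core has $+\infty$ on its left-hand side and holds vacuously; yet $g$ violates (c) (the midpoint $(1,1)$ lies outside $\dom g$), and $\tilde g$ is not submodular (test the pair $(0,(0,0))$ and $(-1,(1,1))$). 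Hence your cycles (c) $\Rightarrow$ core $\Rightarrow$ (a) and (d) $\Rightarrow$ core $\Rightarrow$ (a) collapse exactly at ``core $\Rightarrow$ (a)'', and the cancellation by which you derive coordinatewise convexity from the core is likewise illegitimate when the cancelled terms may be $+\infty$. Your (a) $\Leftrightarrow$ (b) step, by contrast, is a purely global change of variables and is correct as stated. (Note that the paper itself contains no proof of this theorem: it cites Favati--Tardella (1990), Fujishige--Murota (2000), and Murota (2003), whose proofs run entirely through global implications, precisely to accommodate restricted effective domains.)

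The second problem is that the implications you yourself flag as the main obstacle are the ones left unproved, and the sketched reductions aim at the wrong targets. First, (b) $\Rightarrow$ (d) needs no induction and no tie-handling at all: with $m=\max_{i}\{p_{i}-q_{i}\}\geq 1$ put $\alpha=m-1\in\ZZ_{+}$; then componentwise $(p-\alpha\vecone)\vee q=q+\chi_{A}$ and $p\wedge(q+\alpha\vecone)=p-\chi_{A}$ with $A=\argmax_{i}\{p_{i}-q_{i}\}$, so (b) specializes literally to (d), ties included. Second, for (c), recursing on the pair $(\lceil(p+q)/2\rceil,\lfloor(p+q)/2\rfloor)$ gains nothing: that pair is already at $\ell_{\infty}$-distance at most $1$, and the inequality to be proved relates $(p,q)$ to its midpoints, not the midpoints to theirs. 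The working reduction is the exchange supplied by (d): whenever $\max_{i}\{p_{i}-q_{i}\}\geq 2$ replace $(p,q)$ by $(p-\chi_{A},q+\chi_{A})$ (symmetrically with the roles of $p$ and $q$ swapped on the other side); this preserves $p+q$, hence both rounded midpoints, and strictly decreases $\sum_{i}|p_{i}-q_{i}|$, and once $\|p-q\|_{\infty}\leq 1$ a final application of (d) with $A=\suppp(p-q)$ yields $g(p)+g(q)\geq g(p\vee q)+g(p\wedge q)$, which is (c) in that range. With (b) $\Rightarrow$ (d) $\Rightarrow$ (c) global in this way, the one arc that genuinely requires an induction on $\|p-q\|_{\infty}$ is (c) $\Rightarrow$ (b), and for $\RR\cup\{+\infty\}$-valued $g$ it must be carried out globally rather than routed through a local core.
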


It is known 
(Theorem 7.20 of Murota 2003\citeH[Theorem 7.20]{Mdcasiam}) 
that an L$\sp{\natural}$-convex function 
$g: \mathbb{Z}\sp{n} \to \RR \cup \{ +\infty \}$
is {\em convex-extensible}, 
i.e., there exists a convex function 
$\overline{g}: \RR\sp{n} \to \RR \cup \{ +\infty \}$
such that
$\overline{g}(p) = g(p)$ for all $p \in \ZZ\sp{n}$.
Moreover, the convex extension $\overline{g}$ can be constructed 
by a simple procedure; see 
Theorem 7.19 of Murota (2003)\citeH[Theorem 7.19]{Mdcasiam}.

\begin{remark} \rm  \label{RMlnatconvexsetZ}
A nonempty set $P \subseteq \mathbb{Z}\sp{n}$
is called an {\em L$\sp{\natural}$-convex set}
if its indicator function%
\footnote{%%%%%%%%%%%%
$g(p)=0$ for $p \in P$ and $=+\infty$ for $p \not\in P$.
} %%%%%% footnote %%%%%%%%%%%%%%%
 is an L$\sp{\natural}$-convex function.
In other words, $P \not= \emptyset$ is an L$\sp{\natural}$-convex set
if it satisfies one of the following equivalent conditions,
where $ p, q \in \ZZ\sp{n}$ and $ p_{0}, q_{0} \in \ZZ$:

\begin{itemize}
\item[(a)]
$p - p_{0} \vecone, \  q - q_{0} \vecone \  \in P$
\ $\Longrightarrow$ \ 
$ (p \vee q) - (p_{0} \vee q_{0}) \vecone, \   
    (p \wedge q) - (p_{0} \wedge q_{0}) \vecone \ \in P$.

\item[(b)]
$p,q \in P$, $\alpha \in \ZZ_{+}$
\ $\Longrightarrow$ \ 
$(p - \alpha \vecone) \vee q, \  p \wedge (q + \alpha \vecone) \ \in P$.

\item[(c)]
$p,q \in P$ 
\ $\Longrightarrow$ \ 
$\left\lceil \frac{p+q}{2} \right\rceil, \  
\left\lfloor \frac{p+q}{2} \right\rfloor \ \in P$.

\item[(d)]
$p,q \in P$, \  $\suppp(p-q)\not= \emptyset$ 
\ $\Longrightarrow$ \ 
$ p - \unitvec{A}, \  q + \unitvec{A} \  \in P$
with $\displaystyle A = \argmax_{i} \{ p_{i} - q_{i} \}$.
\end{itemize}
For an L$\sp{\natural}$-convex function $g$, 
the effective domain $\dom g$ and the set of minimizers $\argmin g$
are L$\sp{\natural}$-convex sets. 
See 
Section 5.5 of Murota (2003)\citeH[Section 5.5]{Mdcasiam}
for more about L$\sp{\natural}$-convex sets. 
\finbox
\end{remark}

\begin{remark} \rm  \label{RMlconvexZ}
A function $g: \mathbb{Z}\sp{n} \to \RR \cup \{ +\infty \}$
is called an {\em L-convex function}
if it is an L$\sp{\natural}$-convex function such that 
there exists $r \in \RR$ for which	
$g(p + \vecone) = g(p) +  r$ for all $p \in \ZZ\sp{n}$.
L-convex functions and
L$\sp{\natural}$-convex functions are equivalent concepts,
in that L$\sp{\natural}$-convex functions in $n$ variables 
can be identified, up to the constant $r$, with
L-convex functions in $n+1$ variables.
Indeed, a function $g : \ZZ\sp{n} \to \RR \cup \{ +\infty \}$ is L$\sp{\natural}$-convex
if and only if  the function 
$\tilde{g} : \ZZ\sp{n+1} \to \RR \cup \{ +\infty \}$ 
in (\ref{lfnlnatfnrelation}) is an L-convex function (with $r=0$).
\finbox
\end{remark}

\paragraph{L$\sp{\natural}$-convex function on $\RR\sp{n}$:}
\label{SClnatconvR}

We turn to continuous variables.
A function 
$g: \mathbb{R}\sp{n} \to \RR \cup \{ +\infty \}$
is said to be  
{\em L$\sp{\natural}$-convex}
if it is a convex function (in the ordinary sense) such that
$\tilde g(p_{0},p) = g(p - p_{0} \vecone)$
$( p_{0} \in \RR, p \in \RR\sp{n})$
is a submodular function in $n+1$ variables,
i.e.,
\begin{equation}\label{tildegsubmineqR}
  g(p - p_{0} \vecone) +  g(q - q_{0} \vecone)
 \geq
  g( (p \vee q) - (p_{0} \vee q_{0}) \vecone) 
  +  g( (p \wedge q) - (p_{0} \wedge q_{0}) \vecone) 
 \quad ( p_{0}, q_{0} \in \RR, p, q \in \RR\sp{n}).
\end{equation}
In the following 
we restrict ourselves to closed proper L$\sp{\natural}$-convex functions%
\footnote{%%%%%%%%%%%%%%%%%%%%%
A convex function $g: \mathbb{R}\sp{n} \to \RR \cup \{ +\infty \}$
is said to be {\em proper} if $\dom g$ is nonempty, and 
{\em closed} if the epigraph $\{ (p,\alpha) \in \RR\sp{n+1} \mid  \alpha \geq g(p) \}$
is a closed subset of $\RR\sp{n+1}$.
},   %%%%%%%%% footnote %%%%%%%%%%%%%%%%%%
for which the closure of the effective domain $\dom g$ is a well-behaved polyhedron
(L$\sp{\natural}$-convex polyhedron%
\footnote{%%%%%%%%%%%%%%%%%%%%%
A polyhedron is called an {\em L$\sp{\natural}$-convex polyhedron}
if its (convex) indicator function is L$\sp{\natural}$-convex.
See Section~5.6 of Murota (2003)\citeH[Section 5.6]{Mdcasiam}
for details.
});   %%%%%%%%% footnote %%%%%%%%%%%%%%%%%%
see Theorem 3.3 of Murota and Shioura (2008)\citeH[Theorem 3.3]{MS08cont}.
For a closed proper convex function $g: \RR^{n} \to \RR \cup \{ +\infty \}$,
the condition {\rm (\ref{tildegsubmineqR})} for L$\sp{\natural}$-convexity  
is equivalent to translation-submodularity:
\begin{equation} \label{lnatftrsubmR}
  g(p) + g(q) \geq g((p - \alpha \vecone) \vee q) 
                 + g(p \wedge (q + \alpha \vecone))
\qquad  (\forall p, q \in \RR\sp{n},  \forall \alpha \in \RR_{+}).
\end{equation}
Often we are interested in polyhedral L$\sp{\natural}$-convex functions.

L$\sp{\natural}$-convex functions in real variables are investigated by 
Murota and Shioura (2000, 2004a, 2004b, 2008)%
\citeH{MS00poly}\citeH{MS04conjreal}\citeH{MS04fund}\citeH{MS08cont}.

%%CCCCCCCCCCCCCCCCCCCCCCCCCCCCCCCCCC
\subsection{Conjugacy}
\label{SCconjugacy}

\paragraph{Functions in continuous variables:}
\label{SCconjugacyR}

For a function 
$f: \RR\sp{n} \to \mathbb{R} \cup \{ +\infty  \}$
(not necessarily convex) with
$\dom f \not= \emptyset$,
the {\em convex conjugate}
$f\sp{\bullet}: \RR\sp{n} \to \mathbb{R} \cup \{ +\infty  \}$
is defined by
\begin{equation} \label{conjvexOdefR}
 f\sp{\bullet}(p) 
 = \sup\{  \langle p, x \rangle - f(x)     \mid x \in \RR\sp{n} \}
\qquad ( p \in \RR\sp{n}),
\end{equation}
where  
$ \langle p, x \rangle = \sum_{i=1}\sp{n} p_{i} x_{i}$ 
is the inner product of 
$p=(p_{i}) \in \RR\sp{n}$ and 
$x=(x_{i}) \in \RR\sp{n}$.
The function $f\sp{\bullet}$ is also referred to as
the (convex) {\em Legendre(--Fenchel) transform}
of $f$,
and the mapping $f \mapsto f\sp{\bullet}$ as the (convex) 
{\em Legendre(--Fenchel) transformation}.
A fundamental theorem in convex analysis states that 
the Legendre transformation 
gives a symmetric one-to-one correspondence in the class of
all closed proper convex functions.
That is, for a closed proper convex function $f$,
the conjugate function
$f\sp{\bullet}$ is a closed proper convex function
and the {\em biconjugacy} $(f\sp{\bullet})\sp{\bullet} = f$ holds.

To formulate the correspondence between 
concave functions 
$f: \RR\sp{n} \to \mathbb{R} \cup \{ -\infty  \}$
and convex functions
$g: \RR\sp{n} \to \mathbb{R} \cup \{ +\infty  \}$
with $\dom f \not= \emptyset$ and $\dom g \not= \emptyset$,
we introduce the following variants of the transformation (\ref{conjvexOdefR}):
\begin{align} 
 f\sp{\triangledown}(p) 
 &= \sup\{  f(x) - \langle p, x \rangle   \mid x \in \RR\sp{n} \}
\qquad ( p \in \RR\sp{n}),
\label{conjcave2vexR}
\\
 g\sp{\triangle}(x) 
 &= \inf\{  g(p) + \langle p, x \rangle   \mid p \in \RR\sp{n} \}
\qquad ( x \in \RR\sp{n}) ,
\label{conjvex2caveR}
\end{align}
where
$f\sp{\triangledown}: \RR\sp{n} \to \mathbb{R} \cup \{ +\infty  \}$
and 
$g\sp{\triangle}: \RR\sp{n} \to \mathbb{R} \cup \{ -\infty  \}$.
The biconjugacy is expressed as 
$(f\sp{\triangledown})\sp{\triangle} = f$, 
$(g\sp{\triangle})\sp{\triangledown} = g$
for closed proper concave functions $f$ and closed proper convex functions $g$.

\begin{theorem} \label{THOconjcavevexR}
\quad  %% do not delete this %%

\noindent {\rm (1)}
The transformations {\rm (\ref{conjcave2vexR})} and {\rm (\ref{conjvex2caveR})}
give a one-to-one correspondence between the classes of all
closed proper concave functions $f$ and closed proper convex functions $g$.

\noindent {\rm (2)}
For a closed proper concave function 
$f: \RR\sp{n} \to \mathbb{R} \cup \{ -\infty  \}$,
the conjugate function 
$f\sp{\triangledown}: \RR\sp{n} \to \mathbb{R} \cup \{ +\infty  \}$ 
is a closed proper convex function
and $(f\sp{\triangledown})\sp{\triangle} = f$.

\noindent {\rm (3)}
For a closed proper convex function 
$g: \RR\sp{n} \to \mathbb{R} \cup \{ +\infty  \}$,
the conjugate function 
$g\sp{\triangle}: \RR\sp{n} \to \mathbb{R} \cup \{ -\infty  \}$ 
is a closed proper concave function
and $(g\sp{\triangle})\sp{\triangledown} = g$.
\end{theorem}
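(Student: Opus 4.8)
The plan is to reduce everything to the classical Legendre--Fenchel biconjugacy theorem for convex functions, which is quoted just before the statement: for a closed proper convex function $h$ one has that $h^\bullet$ is closed proper convex and $(h^\bullet)^\bullet = h$. The bridge is a pair of sign-reflection identities connecting the concave/convex transforms $f^\triangledown$ and $g^\triangle$ to the standard convex conjugate $\bullet$. Concretely, for a concave $f$ I would set $h = -f$ (convex) and check directly from the definitions that
\[
 f^\triangledown(p) = (-f)^\bullet(-p) = h^\bullet(-p),
\]
while for a convex $g$ one has
\[
 g^\triangle(x) = -g^\bullet(-x).
\]
Each identity is a one-line manipulation: pull the minus sign out of the supremum/infimum and substitute $-p$ (resp.\ $-x$) for the dual variable.

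First I would establish the structural claims. If $f$ is closed proper concave then $h=-f$ is closed proper convex, so $h^\bullet$ is closed proper convex by the classical theorem; since $p \mapsto -p$ is linear, $f^\triangledown(p) = h^\bullet(-p)$ is again closed proper convex, giving the first half of (2). Symmetrically, if $g$ is closed proper convex then $g^\bullet$ is closed proper convex and $g^\triangle(x) = -g^\bullet(-x)$ is closed proper concave, giving the first half of (3).

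For the biconjugacy in (2), write $g := f^\triangledown$, so $g(p) = h^\bullet(-p)$. A short computation using the substitution $q = -p$ inside the supremum gives $g^\bullet(y) = (h^\bullet)^\bullet(-y) = h(-y)$, where the last equality is the classical biconjugacy $(h^\bullet)^\bullet = h$. Feeding this into the reflection identity for $\triangle$ yields
\[
 (f^\triangledown)^\triangle(x) = g^\triangle(x) = -g^\bullet(-x) = -h(x) = f(x),
\]
which is exactly $(f^\triangledown)^\triangle = f$. The proof of $(g^\triangle)^\triangledown = g$ in (3) is completely analogous, swapping the roles of the two reflection identities and again invoking $(g^\bullet)^\bullet = g$. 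Part (1) is then immediate: by (2) and (3) the maps $f \mapsto f^\triangledown$ and $g \mapsto g^\triangle$ send each class into the other and satisfy $\triangle \circ \triangledown = \mathrm{id}$, $\triangledown \circ \triangle = \mathrm{id}$, hence are mutually inverse bijections.

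The computations are elementary once the classical theorem is granted, so there is no deep obstacle; the only place demanding care is the bookkeeping of the two sign flips --- negating function values versus reflecting the (primal or dual) argument --- and verifying that closedness and properness survive these operations. In particular I would double-check that the empty-domain and $\pm\infty$ conventions in the definitions of $f^\triangledown$ and $g^\triangle$ are consistent with those for $\bullet$, so that the reflection identities hold as genuine equalities of extended-real-valued functions rather than only on the effective domains.
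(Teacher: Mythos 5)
Your proposal is correct, and it takes the route the paper itself intends: the paper states Theorem~\ref{THOconjcavevexR} without proof, as an immediate reformulation of the classical biconjugacy theorem for closed proper convex functions quoted just before it, via exactly the sign-reflection identities $f^{\triangledown}(p) = (-f)^{\bullet}(-p)$ and $g^{\triangle}(x) = -g^{\bullet}(-x)$ that you spell out. Your careful bookkeeping of the two sign flips and of closedness/properness under reflection simply makes explicit the routine calculation the paper leaves to the reader.
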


Addition of combinatorial ingredients
to the above theorem yields the conjugacy theorem between
M$\sp{\natural}$-concave and L$\sp{\natural}$-convex functions
(Murota and Shioura 2004a)\citeH{MS04conjreal}.

%%%%%%%%%%
\begin{theorem}  \label{THlmconjcavevexR}
%%\begin{theorem}[\protect{\citeH{MS04conjreal}}]  \label{THlmconjR}
\quad  %% do not delete this %%

\noindent {\rm (1)}
The transformations {\rm (\ref{conjcave2vexR})} and {\rm (\ref{conjvex2caveR})}
give a one-to-one correspondence between the classes of all
closed proper M$\sp{\natural}$-concave functions $f$ 
and closed proper L$\sp{\natural}$-convex functions $g$.

\noindent {\rm (2)}
For a closed proper M$\sp{\natural}$-concave function
$f: \RR\sp{n} \to \mathbb{R} \cup \{ -\infty  \}$,
the conjugate function 
$f\sp{\triangledown}: \RR\sp{n} \to \mathbb{R} \cup \{ +\infty  \}$ 
is a closed proper L$\sp{\natural}$-convex function
and $(f\sp{\triangledown})\sp{\triangle} = f$.

\noindent {\rm (3)}
For a closed proper L$\sp{\natural}$-convex function 
$g: \RR\sp{n} \to \mathbb{R} \cup \{ +\infty  \}$,
the conjugate function 
$g\sp{\triangle}: \RR\sp{n} \to \mathbb{R} \cup \{ -\infty  \}$ 
is a closed proper M$\sp{\natural}$-concave function
and $(g\sp{\triangle})\sp{\triangledown} = g$.
\end{theorem}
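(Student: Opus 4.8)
The plan is to build on Theorem~\ref{THOconjcavevexR}, which already shows that the transformations \eqref{conjcave2vexR} and \eqref{conjvex2caveR} form an involutive bijection between the classes of closed proper concave and closed proper convex functions, with biconjugacy $(f\sp{\triangledown})\sp{\triangle}=f$ and $(g\sp{\triangle})\sp{\triangledown}=g$. Since every closed proper M$\sp{\natural}$-concave (resp.\ L$\sp{\natural}$-convex) function is in particular closed proper concave (resp.\ convex), it suffices to prove the two one-sided inclusions: \textbf{(A)} if $f$ is M$\sp{\natural}$-concave then $f\sp{\triangledown}$ is L$\sp{\natural}$-convex, and \textbf{(B)} if $g$ is L$\sp{\natural}$-convex then $g\sp{\triangle}$ is M$\sp{\natural}$-concave. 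Granting (A) and (B), the conjugation map carries the M$\sp{\natural}$-concave class into the L$\sp{\natural}$-convex class and vice versa; as it is an involution by Theorem~\ref{THOconjcavevexR}, these restrictions are mutually inverse bijections, which gives (1), while (2) and (3) are then immediate from (A), (B) and biconjugacy.

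For (A) I would verify translation-submodularity \eqref{lnatftrsubmR} for $g=f\sp{\triangledown}$ directly. Fix $p,q\in\RR\sp{N}$ and $\alpha\in\RR_{+}$, and set $p'=(p-\alpha\vecone)\vee q$ and $q'=p\wedge(q+\alpha\vecone)$. A coordinatewise check, splitting each index $i$ according to whether $p_{i}\geq q_{i}+\alpha$ or $p_{i}<q_{i}+\alpha$, yields the key identity $p'+q'=p+q$. Choosing maximizers $x'\in D(p';f)$ and $y'\in D(q';f)$, so that $g(p')+g(q')=f(x')+f(y')-\langle p',x'\rangle-\langle q',y'\rangle$, and bounding the suprema defining $g(p),g(q)$ below by evaluating them at transported arguments, one is reduced to making the residual $\langle p'-p,x'\rangle+\langle q'-q,y'\rangle$ nonnegative. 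The two coordinate types contribute terms of the form $\alpha(y'_{i}-x'_{i})$ and $(p_{i}-q_{i})(y'_{i}-x'_{i})$, which need not be individually nonnegative; here is where M$\sp{\natural}$-concavity enters. Using {\rm (M$\sp{\natural}$-EXC[$\RR$])} I would transport $x'$ and $y'$ along exchange directions $x'\mapsto x'-\beta(\chi_{i}-\chi_{j})$, $y'\mapsto y'+\beta(\chi_{i}-\chi_{j})$, which preserve $x'+y'$ and, by \eqref{mnatconcavexc2R}, do not decrease $f(x')+f(y')$, so as to correct the offending coordinates and render the residual nonnegative, giving $g(p)+g(q)\geq g(p')+g(q')$.

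For (B) I would argue through the demand-set characterization. Given $g$ L$\sp{\natural}$-convex, put $f=g\sp{\triangle}$; conjugate duality gives $D(p;f)=\argmax_{x}\{f(x)-\langle p,x\rangle\}=-\partial g(p)$. The plan is to show that each such signed subdifferential of an L$\sp{\natural}$-convex function is an M$\sp{\natural}$-convex polyhedron, by translating the translation-submodularity \eqref{lnatftrsubmR} of $g$ into the defining exchange property of $-\partial g(p)$. Once every $D(p;f)$ is known to be an M$\sp{\natural}$-convex polyhedron, Theorem~\ref{THmconcavargmaxR} yields that $f$ is M$\sp{\natural}$-concave in the polyhedral case, the general closed proper case following by polyhedral approximation and a limiting argument.

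The hard part is the combinatorial bookkeeping in (A): organizing the exchanges so that a single sequence of moves on the pair $(x',y')$ simultaneously repairs all offending coordinates of both types while keeping $f(x')+f(y')$ nondecreasing. In continuous variables this must be carried out with carefully chosen step sizes $\beta$ and a finite iteration of \eqref{mnatconcavexc2R}, for which the size-constrained exchanges of Remark~\ref{RMmnatexcsizeR} are convenient. The dual step in (B) --- identifying the subdifferentials of L$\sp{\natural}$-convex functions with M$\sp{\natural}$-convex polyhedra --- is the mirror obstacle, and on both sides the reduction from polyhedral to general closed proper functions requires care regarding attainment of the extrema and preservation of closedness, handled as in Theorem~\ref{THOconjcavevexR}.
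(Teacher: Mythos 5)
First, a point of reference: the paper does not actually prove Theorem~\ref{THlmconjcavevexR}---it is quoted from Murota and Shioura (2004a)---so there is no in-paper argument to compare against. Your scaffolding is the correct and standard one: since conjugation is an involutive bijection on closed proper concave/convex functions (Theorem~\ref{THOconjcavevexR}), it suffices to show each structured class maps into the other, and your preliminary computations are sound---the identity $p'+q'=p+q$ for $p'=(p-\alpha\vecone)\vee q$ and $q'=p\wedge(q+\alpha\vecone)$ is correct, as is the identification $D(p;f)=-\partial g(p)$ for $f=g\sp{\triangle}$.

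The genuine gap is that the two steps you yourself label ``the hard part'' are exactly where all the mathematical content of the theorem sits, and in continuous variables they do not follow by routine bookkeeping. For (A): the axiom (M$\sp{\natural}$-EXC[$\RR$]) guarantees the inequality (\ref{mnatconcavexc2R}) only for $\alpha\in[0,\alpha_{0}]$ with an unquantified $\alpha_{0}>0$, and the index $j$ is supplied by the axiom, not chosen by you; hence a repair process that iterates exchanges to align $\sign(y'_{i}-x'_{i})$ with $\sign(p_{i}-q_{i})$ may require steps of vanishing size and need not terminate or converge---one needs a compactness/closedness argument to pass to a limiting pair, or a prior reduction to the polyhedral case, and neither is supplied. (A lesser, fixable slip: you pick maximizers $x'\in D(p';f)$, $y'\in D(q';f)$, but for non-polyhedral closed proper $f$ the suprema defining $g(p')$, $g(q')$ need not be attained; the argument should run over arbitrary $x',y'\in\dom f$ and take suprema at the end.) For (B): the same non-attainment phenomenon means $D(p;f)$ can be empty for many $p$, so the demand-correspondence criterion has no force for general closed proper $f$; Theorem~\ref{THmconcavargmaxR} is stated only for polyhedral functions, and the paper offers no general-variable analogue. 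Your proposed remedy---first the polyhedral case (which is Theorem~\ref{THlmconjcavevexP}, i.e., Murota--Shioura 2000), then a limiting argument, which is indeed the spirit of how the general case is actually handled---is the right idea, but it is itself a substantial argument: you must produce polyhedral approximants whose conjugates converge pointwise, verify that (M$\sp{\natural}$-EXC[$\RR$]) survives the limit (the admissible step length $\alpha_{0}$ may shrink along the sequence, which is precisely the delicate point), and confirm closedness and properness of the limit function. As it stands, the proposal is a correct strategic outline with the two pivotal lemmas asserted rather than proved.
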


The M$\sp{\natural}$/L$\sp{\natural}$-conjugacy 
is also valid for polyhedral concave/convex functions
(Murota and Shioura 2000, Theorem 8.4 of Murota 2003).
\citeH{MS00poly}\citeH[Theorem 8.4]{Mdcasiam}%

%%%%%%%%%%
\begin{theorem} \label{THlmconjcavevexP}
%%\begin{theorem}[\protect{\citeH{MS00poly}}]  \label{THlmconjP}
\quad  %% do not delete this %%

\noindent {\rm (1)}
The transformations {\rm (\ref{conjcave2vexR})} and {\rm (\ref{conjvex2caveR})}
give a one-to-one correspondence between the classes of all
polyhedral M$\sp{\natural}$-concave functions $f$ 
and polyhedral L$\sp{\natural}$-convex functions $g$.

\noindent {\rm (2)}
For a polyhedral M$\sp{\natural}$-concave function 
$f: \RR\sp{n} \to \mathbb{R} \cup \{ -\infty  \}$,
the conjugate function 
$f\sp{\triangledown}: \RR\sp{n} \to \mathbb{R} \cup \{ +\infty  \}$ 
is a polyhedral L$\sp{\natural}$-convex function
and $(f\sp{\triangledown})\sp{\triangle} = f$.

\noindent {\rm (3)}
For a polyhedral L$\sp{\natural}$-convex function 
$g: \RR\sp{n} \to \mathbb{R} \cup \{ +\infty  \}$,
the conjugate function 
$g\sp{\triangle}: \RR\sp{n} \to \mathbb{R} \cup \{ -\infty  \}$ 
is a polyhedral M$\sp{\natural}$-concave function
and $(g\sp{\triangle})\sp{\triangledown} = g$.
\end{theorem}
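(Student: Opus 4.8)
The plan is to deduce the polyhedral conjugacy theorem (Theorem~\ref{THlmconjcavevexP}) from the already-established conjugacy theorem for closed proper functions (Theorem~\ref{THlmconjcavevexR}), together with the general Legendre--Fenchel correspondence (Theorem~\ref{THOconjcavevexR}). The key observation is that every polyhedral concave (resp.\ convex) function is automatically closed and proper, so a polyhedral M$\sp{\natural}$-concave function is in particular a closed proper M$\sp{\natural}$-concave function, and Theorem~\ref{THlmconjcavevexR} already tells us that its conjugate $f\sp{\triangledown}$ is a closed proper L$\sp{\natural}$-convex function with $(f\sp{\triangledown})\sp{\triangle}=f$. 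Thus the entire content that remains to be proved is that the conjugacy transformation \emph{preserves polyhedrality} in both directions: if $f$ is polyhedral then so is $f\sp{\triangledown}$, and symmetrically for $g\sp{\triangle}$.

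First I would record the reduction precisely. For part~(2), start from a polyhedral M$\sp{\natural}$-concave $f$; since it is closed and proper, Theorem~\ref{THlmconjcavevexR}(2) gives that $f\sp{\triangledown}$ is a closed proper L$\sp{\natural}$-convex function and $(f\sp{\triangledown})\sp{\triangle}=f$. Then I would invoke the classical fact from convex analysis that the Legendre--Fenchel transform of a polyhedral convex function is again polyhedral (see Rockafellar 1970, Corollary~19.2.1, as cited in the paper's introduction). Applied to the concave setting via \eqref{conjcave2vexR}, this yields that $f\sp{\triangledown}$ is polyhedral. Part~(3) is the mirror image: a polyhedral L$\sp{\natural}$-convex $g$ is closed proper, so $g\sp{\triangle}$ is a closed proper M$\sp{\natural}$-concave function by Theorem~\ref{THlmconjcavevexR}(3), and polyhedrality of $g\sp{\triangle}$ follows from the same polyhedral-conjugacy fact. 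Part~(1) is then immediate: the two transformations are mutually inverse on the closed proper classes by Theorem~\ref{THlmconjcavevexR}(1), and the above shows each restricts to a map on the polyhedral subclasses, so they furnish a one-to-one correspondence between polyhedral M$\sp{\natural}$-concave and polyhedral L$\sp{\natural}$-convex functions.

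\textbf{The main obstacle} is almost entirely bookkeeping about the polyhedrality-preservation of the Legendre transform, rather than anything combinatorial. The combinatorial heart of the theorem---that M$\sp{\natural}$-concavity is sent to L$\sp{\natural}$-convexity---has already been done in Theorem~\ref{THlmconjcavevexR} and does not need to be revisited. The one point that genuinely requires care is the direction of polyhedrality: one must confirm that a polyhedral concave function has a conjugate whose epigraph is again polyhedral, which hinges on the facts that the (hypo/epi)graph of a polyhedral function is a polyhedron and that the support-function operation maps polyhedra to polyhedral functions. I would also double-check the ``proper'' hypotheses match up---polyhedral functions in this paper are taken with nonempty effective domain, so properness is built in, and a bounded or nonempty-domain assumption is not needed here because Theorem~\ref{THlmconjcavevexR} already covers arbitrary closed proper functions. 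Hence the proof is essentially a specialization argument: restrict the conjugacy bijection of Theorem~\ref{THlmconjcavevexR} to the polyhedral subclass and verify the restriction is well-defined using the classical polyhedral Legendre transform.

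Alternatively, if one preferred a self-contained route that does not cite Theorem~\ref{THlmconjcavevexR}, I would proceed through the integer conjugacy theory: a polyhedral M$\sp{\natural}$-concave function is the convex (piecewise-linear) extension of an M$\sp{\natural}$-concave function on $\ZZ\sp{n}$, and the Legendre transform interacts compatibly with taking such extensions, so that the continuous conjugacy reduces to the discrete Legendre transformation studied in Murota (2003, Theorem~8.4). This would require establishing that the piecewise-linear extension commutes with conjugation, which is more laborious and reintroduces the combinatorial argument one is trying to avoid. For that reason I expect the first route---specialization of Theorem~\ref{THlmconjcavevexR} plus the standard polyhedral-transform fact---to be the cleanest and the one actually intended.
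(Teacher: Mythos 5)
Your derivation is correct as a deduction within the paper, but it is a genuinely different route from the paper's own treatment: the survey gives no proof of Theorem~\ref{THlmconjcavevexP} at all, stating it with citations to Murota and Shioura (2000) and Theorem~8.4 of Murota (2003), where the polyhedral conjugacy is established \emph{directly} by polyhedral-combinatorial arguments (analysis of faces, edge directions, and the exchange/submodularity conditions), not by specialization of a more general theorem. Your route instead takes the closed proper conjugacy theorem (Theorem~\ref{THlmconjcavevexR}) as a black box and adds the classical Rockafellar fact that conjugation preserves polyhedrality; the sign bookkeeping indeed works, since with $\varphi=-f$ polyhedral convex one has $f\sp{\triangledown}(p)=\varphi\sp{\bullet}(-p)$, and polyhedral concave/convex functions with nonempty effective domain are automatically closed and proper, so the bijection of Theorem~\ref{THlmconjcavevexR} restricts to the polyhedral subclasses and part~(1) follows from the biconjugacy in parts~(2) and~(3). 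What your approach buys is brevity; what it costs is logical independence: in the literature the polyhedral theorem (Murota--Shioura 2000) \emph{precedes and underlies} the general closed proper theorem (Murota--Shioura 2004a), so your argument is a consistency check within the survey's ordering rather than a reconstruction of the source proof, and it would be circular if offered as a proof in the original development.

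One concrete caution about your fallback route: the claim that a polyhedral M$\sp{\natural}$-concave function is the piecewise-linear extension of an M$\sp{\natural}$-concave function on $\ZZ\sp{n}$ is false in general, since a polyhedral M$\sp{\natural}$-concave function may have non-integral vertices and breakpoints; only the \emph{locally polyhedral} conjugates arising from discrete functions, as in Theorem~\ref{THlmconjZR}, have that form. So reducing Theorem~\ref{THlmconjcavevexP} to the discrete conjugacy (Theorem~\ref{THlmconjcavevexZ}) would require a nontrivial scaling or approximation step that your sketch does not supply, and that alternative should be discarded rather than merely deprioritized.
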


As corollaries of the conjugacy theorems,
the following characterizations of 
M$\sp{\natural}$-concavity
and L$\sp{\natural}$-convexity
in terms of the conjugate functions are obtained.

\begin{theorem}  \label{THmconcavebyconjfnRR}
\quad  %% do not delete this %%

\noindent {\rm (1)}
A function
$f: \RR\sp{n} \to \mathbb{R} \cup \{ -\infty  \}$
is closed proper M$\sp{\natural}$-concave 
if and only if
the conjugate function 
$f\sp{\triangledown}: \RR\sp{n} \to \mathbb{R} \cup \{ +\infty  \}$
by {\rm (\ref{conjcave2vexR})} is closed proper L$\sp{\natural}$-convex.

\noindent {\rm (2)}
A function
$f: \RR\sp{n} \to \mathbb{R} \cup \{ -\infty  \}$
is polyhedral M$\sp{\natural}$-concave 
if and only if
the conjugate function 
$f\sp{\triangledown}: \RR\sp{n} \to \mathbb{R} \cup \{ +\infty  \}$
by {\rm (\ref{conjcave2vexR})} is polyhedral L$\sp{\natural}$-convex.
\end{theorem}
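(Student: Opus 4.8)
The plan is to obtain this theorem as an immediate corollary of the conjugacy theorems already established, namely Theorem~\ref{THlmconjcavevexR} for the closed proper case and Theorem~\ref{THlmconjcavevexP} for the polyhedral case, combined with the biconjugacy $(f^{\triangledown})^{\triangle} = f$ recorded in Theorem~\ref{THOconjcavevexR}. No new combinatorial argument is needed: the entire content is the observation that the Legendre transformation $f \mapsto f^{\triangledown}$ is a bijection on the relevant classes and that closed proper (resp.\ polyhedral) M$\sp{\natural}$-concavity and L$\sp{\natural}$-convexity are precisely the images of one another under it.

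For the \emph{only if} direction of part (1), I would simply invoke Theorem~\ref{THlmconjcavevexR}(2): if $f$ is closed proper M$\sp{\natural}$-concave, then $f^{\triangledown}$ is closed proper L$\sp{\natural}$-convex, which is exactly the asserted conclusion. The \emph{if} direction is where the biconjugacy enters. Suppose $g := f^{\triangledown}$ is closed proper L$\sp{\natural}$-convex. Applying Theorem~\ref{THlmconjcavevexR}(3) to $g$ shows that $g^{\triangle}$ is closed proper M$\sp{\natural}$-concave. It then remains only to identify this function with $f$: by Theorem~\ref{THOconjcavevexR}(2) we have $g^{\triangle} = (f^{\triangledown})^{\triangle} = f$, whence $f$ is closed proper M$\sp{\natural}$-concave. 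Part (2) would be handled by the identical two-step argument, reading the \emph{only if} direction off Theorem~\ref{THlmconjcavevexP}(2) and the \emph{if} direction off Theorem~\ref{THlmconjcavevexP}(3), with the polyhedral biconjugacy again furnished by Theorem~\ref{THOconjcavevexR} (polyhedral concave/convex functions being in particular closed proper).

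The step I expect to require the most care is the use of biconjugacy $(f^{\triangledown})^{\triangle} = f$ in the \emph{if} direction, since this equality holds only when $f$ is itself closed proper concave (resp.\ polyhedral concave); the argument therefore tacitly operates within that ambient class. The cleanest way to phrase it, avoiding any appeal to an a priori concavity hypothesis on $f$, is to route the \emph{if} direction through the one-to-one correspondence of Theorem~\ref{THlmconjcavevexR}(1) (resp.\ Theorem~\ref{THlmconjcavevexP}(1)) directly: the function $(f^{\triangledown})^{\triangle}$ is the concave closure (biconjugate) of $f$ and shares with $f$ the same conjugate $g$, so the correspondence pins down $f = g^{\triangle}$ as the unique closed proper M$\sp{\natural}$-concave preimage of $g$. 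Beyond this point, all remaining verifications are the routine bookkeeping of tracking which conjugate variant ($^{\triangledown}$ versus $^{\triangle}$) is applied at each stage, together with the elementary fact that the conjugate of a function coincides with the conjugate of its closure.
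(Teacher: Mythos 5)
Your proposal is correct and is essentially the paper's own derivation: the paper states Theorem~\ref{THmconcavebyconjfnRR} with no separate argument, precisely as a corollary of the conjugacy theorems (Theorem~\ref{THlmconjcavevexR} for the closed proper case, Theorem~\ref{THlmconjcavevexP} for the polyhedral case) combined with biconjugacy as in Theorem~\ref{THOconjcavevexR}. Your closing caveat—that the \emph{if} direction tacitly operates within the ambient class of closed proper (resp.\ polyhedral) concave functions, since $(f\sp{\triangledown})\sp{\triangle}=f$ can fail for non-concave $f$ sharing the same conjugate—is exactly the implicit reading under which the stated equivalence holds, so no gap remains.
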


\begin{theorem}  \label{THlconvbyconjfnRR}
\quad  %% do not delete this %%

\noindent {\rm (1)}
A function
$g: \RR\sp{n} \to \mathbb{R} \cup \{ +\infty  \}$
is closed proper L$\sp{\natural}$-convex 
if and only if
the conjugate function 
$g\sp{\triangle}: \RR\sp{n} \to \mathbb{R} \cup \{ -\infty  \}$
by {\rm (\ref{conjvex2caveR})} is closed proper M$\sp{\natural}$-concave.

\noindent {\rm (2)}
A function
$g: \RR\sp{n} \to \mathbb{R} \cup \{ +\infty  \}$
is polyhedral L$\sp{\natural}$-convex 
if and only if
the conjugate function 
$g\sp{\triangle}: \RR\sp{n} \to \mathbb{R} \cup \{ -\infty  \}$
by {\rm (\ref{conjvex2caveR})} is polyhedral M$\sp{\natural}$-concave.
\end{theorem}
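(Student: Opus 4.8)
The plan is to derive Theorem~\ref{THlconvbyconjfnRR} as an immediate corollary of the conjugacy theorems already established, in exact parallel with (and as the dual of) Theorem~\ref{THmconcavebyconjfnRR}. Assertions (1) and (2) have identical proofs, the only difference being that (1) invokes the closed–proper conjugacy theorem~\ref{THlmconjcavevexR} whereas (2) invokes its polyhedral counterpart~\ref{THlmconjcavevexP}; so I would spell out (1) and then remark that (2) goes through verbatim.

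For the forward implication of (1), suppose $g$ is closed proper L$\sp{\natural}$-convex. Then Theorem~\ref{THlmconjcavevexR}(3) applies directly: it asserts that the conjugate $g\sp{\triangle}$ is a closed proper M$\sp{\natural}$-concave function (and, incidentally, that $(g\sp{\triangle})\sp{\triangledown}=g$). This is the whole of the ``only if'' direction, requiring no further work. For the converse, suppose $g\sp{\triangle}$ is closed proper M$\sp{\natural}$-concave and set $f:=g\sp{\triangle}$. By Theorem~\ref{THlmconjcavevexR}(2) the conjugate $f\sp{\triangledown}=(g\sp{\triangle})\sp{\triangledown}$ is a closed proper L$\sp{\natural}$-convex function, and it remains only to identify this function with $g$ itself. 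This is where biconjugacy enters: since $g$ is closed proper convex (the standing hypothesis implicit in the very notion of L$\sp{\natural}$-convexity), Theorem~\ref{THOconjcavevexR}(3) gives $(g\sp{\triangle})\sp{\triangledown}=g$, whence $g$ is closed proper L$\sp{\natural}$-convex. Equivalently, one can phrase both directions together by noting that Theorem~\ref{THlmconjcavevexR}(1) already exhibits $g\mapsto g\sp{\triangle}$ and $f\mapsto f\sp{\triangledown}$ as mutually inverse bijections between the two classes, so that membership of $g\sp{\triangle}$ in the M$\sp{\natural}$-concave class is equivalent to membership of $g$ in the L$\sp{\natural}$-convex class.

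The only step that is not a one-line quotation, and hence the point I would treat with care, is the biconjugacy identity $(g\sp{\triangle})\sp{\triangledown}=g$ used in the converse. It is valid only for closed proper convex $g$; for a general $g$ the double transform recovers merely the closed convex hull of $g$, so the ``if'' direction would be false without the convexity restriction. Since L$\sp{\natural}$-convexity is by definition a refinement of ordinary convexity, this restriction is already built into the statement, and no genuine obstacle arises. For part (2) the same three steps go through unchanged, with Theorem~\ref{THlmconjcavevexP} replacing Theorem~\ref{THlmconjcavevexR} and with the biconjugacy for polyhedral convex functions—again a special case of Theorem~\ref{THOconjcavevexR}(3), as polyhedral convex functions are closed proper—supplying the identity $(g\sp{\triangle})\sp{\triangledown}=g$.
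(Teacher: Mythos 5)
Your derivation is precisely how the paper obtains this result: Theorem~\ref{THlconvbyconjfnRR} is stated there without a separate proof, as an immediate corollary of the conjugacy theorems (Theorem~\ref{THlmconjcavevexR} for part~(1), Theorem~\ref{THlmconjcavevexP} for part~(2)) combined with the biconjugacy $(g\sp{\triangle})\sp{\triangledown}=g$, exactly as you argue. Your caveat on the converse is also the right one to flag: the biconjugacy identity requires $g$ to be closed proper convex (resp.\ polyhedral convex), and this is the standing assumption under which the ``if'' direction must be read---for a non-convex $g$ (e.g.\ $\dom g$ a two-point set) the statement would fail, since $g\sp{\triangle}$ sees only the convex closure of $g$.
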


L$\sp{\natural}$-convexity,
being equivalent to translation-submodularity,
is a stronger property than mere submodularity.
When we replace 
L$\sp{\natural}$-convexity of $f\sp{\triangledown}$ 
in Theorem \ref{THmconcavebyconjfnRR} (2) with submodularity,
we obtain a larger class of polyhedral concave functions $f$ 
than M$\sp{\natural}$-concave functions.
The following theorem is ascribed to 
Danilov and Lang (2001)\citeH{DL01gs}
in Danilov et al.~(2003)\citeH{DKL03gr};
see also Appendix of Shioura and Tamura (2015)\citeH[Appendix]{ST15jorsj}
for technical supplements.

\begin{theorem} \label{THgssubmRpoly}
Let $f: \RR\sp{N} \to \RR \cup \{ -\infty \}$ be a polyhedral concave 
function with a bounded effective domain.
Then the following conditions are equivalent%
\footnote{%%%%%%%%%%%%%%%%%%%%%%%%%%%
Recall the definition of (GS[$\RR$]) from Section \ref{SCmaximizersR}.
Also recall from Theorem \ref{THmnatgsRpoly}
that polyhedral M$\sp{\natural}$-concave functions satisfy (GS[$\RR$]).
}:  %%% footnote %%%%%%%%%%%%%%%%%%%

\noindent \ \ {\rm (a)} 
$f$ satisfies {\rm (GS[$\RR$])}.

\noindent \ \ {\rm (b)} 
For every $p \in \RR\sp{N}$, 
each edge (one-dimensional face) of $D(p ; f)$ 
is parallel to a vector $d$ with $|\suppp(d)| \leq 1$ and $|\suppm(d)| \leq 1$.

\noindent \ \  {\rm (c)} 
$f\sp{\triangledown}: \RR\sp{N} \to \RR \cup \{ +\infty \}$
by {\rm (\ref{conjcave2vexR})} is a submodular function.
\end{theorem}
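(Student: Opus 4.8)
The plan is to exploit conjugate duality to turn the three conditions into statements about the single polyhedral convex function $g := f\sp{\triangledown}$ and its gradient map, and then to reduce everything to a two-coordinate (pairwise) analysis. Since $\dom f$ is bounded, $g$ is a finite-valued polyhedral convex function on all of $\RR\sp{N}$, and the conjugacy of Theorem \ref{THOconjcavevexR} gives $D(p;f) = \partial g(p)$, the subdifferential of $g$ at $p$. Thus the faces $D(p;f)$ are exactly the faces of $P := \overline{\dom f}$ dual to the cells of the polyhedral subdivision of $\RR\sp{N}$ into maximal domains of linearity of $g$; in particular an edge of some $D(p;f)$ with direction $d$ arises precisely as $d = x' - x''$, the difference of the two gradient values $x' = -\nabla g|_{C'}$ and $x'' = -\nabla g|_{C''}$ of $g$ on two full-dimensional linearity cells $C', C''$ sharing a facet. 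This dictionary converts (b) into a statement about the gradient jumps of $g$.

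First I would establish (c) $\Leftrightarrow$ (b). By Topkis's theorem, submodularity of $g$ on the lattice $(\RR\sp{N}, \vee, \wedge)$ is equivalent to its submodularity in every pair of coordinates with the remaining coordinates fixed, i.e. to
\[
 g(p) + g(p + s\unitvec{i} + t\unitvec{j}) \leq g(p + s\unitvec{i}) + g(p + t\unitvec{j})
\]
for all $i \neq j$ and all $s, t \geq 0$. For a convex function this pairwise inequality is equivalent to the monotone comparative-statics statement that, as $p_{i}$ increases with the other prices held fixed, each other coordinate $-\partial_{j} g = x_{j}$ is nondecreasing while the own coordinate $x_{i}$ is nonincreasing. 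Translating through the gradient-jump dictionary, this monotonicity holds across every facet of the subdivision if and only if every gradient jump $d = x' - x''$ has $|\suppp(d)| \leq 1$ and $|\suppm(d)| \leq 1$: a jump raising two coordinates or lowering two coordinates would violate the pairwise inequality in the offending pair, and conversely the unit form of all jumps forces the pairwise inequality. This gives (c) $\Leftrightarrow$ (b).

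Next I would prove (b) $\Leftrightarrow$ (a). For (b) $\Rightarrow$ (a), take $p \leq q$ and $x \in D(p;f)$ and connect $p$ to $q$ by a path along which prices rise one coordinate at a time; following a continuous selection of demand, the demand point moves along edges of the successive demand polytopes, each of the unit substitution form $\unitvec{j} - \unitvec{k}$ or $-\unitvec{k}$ with $k$ the coordinate whose price is currently rising. Such moves never decrease the demand for any good whose price is not changing, so the endpoint $y \in D(q;f)$ satisfies $x_{i} \leq y_{i}$ whenever $p_{i} = q_{i}$, which is (GS[$\RR$]). For the contrapositive (a) $\Rightarrow$ (b), if some demand edge had direction $d$ with two positive coordinates $i, i'$ (or symmetrically two negative coordinates), I would take $p$ in the relative interior of the facet producing this edge and a small perturbation selecting the endpoint with the larger $i$-th coordinate as the unique demand; raising $p_{i}$ further then forces the demand to the other endpoint, strictly decreasing $x_{i'}$ although $p_{i'}$ was unchanged, contradicting (GS[$\RR$]).

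The main obstacle is not the smooth intuition but the polyhedral bookkeeping: demand sets are genuine polytopes rather than points, so one must work with a consistent selection of $D(p;f)$ along price paths, handle prices lying on lower-dimensional cells where several linearity cells meet, and control the behaviour on the unbounded linearity cells of $g$ and on the relative boundary of $P$. These are exactly the technical points addressed in the appendix of Shioura and Tamura (2015); in the plan above they enter when one asserts that a price perturbation can isolate a single edge endpoint as the unique maximizer and that the edge directions across all facets capture submodularity without gaps. Carefully handling these degeneracies, rather than any single conceptual difficulty, is where the real work lies.
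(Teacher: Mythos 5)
The paper itself supplies no proof of Theorem~\ref{THgssubmRpoly}: it is stated with an attribution to Danilov and Lang (2001) via Danilov et al.~(2003), with the polyhedral technicalities deferred to the appendix of Shioura and Tamura (2015), and only the smooth-case heuristic (Remark~\ref{RMsmoothGSsubm}) is given in the text. Your plan is, in substance, the route those references take --- pass to $g=f\sp{\triangledown}$, use the duality between the subdivision induced by $f$ and the linearity complex of $g$ so that edge directions of demand sets become gradient jumps across facets, reduce submodularity to pairwise monotone comparative statics \`a la Topkis for (c)$\Leftrightarrow$(b), and use a coordinatewise price path with a monotone demand selection for (b)$\Rightarrow$(a) plus a facet-crossing perturbation for the contrapositive of (a)$\Rightarrow$(b) --- and at this level of detail it is sound, with the genuinely delicate degeneracy handling correctly identified rather than glossed over.

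Two local inaccuracies deserve correction, though neither breaks the argument. First, $D(p;f)=-\partial g(p)$, not $\partial g(p)$ (your later use of $x'=-\nabla g|_{C'}$ is consistent, so this is only the displayed identity), and the sets $D(p;f)$ are cells of the polyhedral subdivision of $\dom f$ induced by the linearity domains of $f$, not faces of $\overline{\dom f}$ itself --- the duality you invoke is with that induced subdivision. Second, in the (b)$\Rightarrow$(a) path argument you describe the moves as having the ``unit substitution form $\unitvec{j}-\unitvec{k}$ or $-\unitvec{k}$''; under (b) alone the edge directions are only $\alpha\unitvec{j}-\beta\unitvec{k}$ with $\alpha,\beta\geq 0$ possibly unequal, which is exactly why these polytopes are quasi-polymatroids (Remark~\ref{RMpolybasic}) and not M$\sp{\natural}$-convex polyhedra; the paper's Example~\ref{EXgsRnotMnat}, where $f(x_{1},x_{2})=\min(2,\,x_{1}+2x_{2})$ satisfies (GS[$\RR$]) with a demand edge of direction $(2,-1)$, shows the normalization $\alpha=\beta$ is unavailable. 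Since your monotonicity conclusion uses only the sign pattern of $d$ and not its magnitudes, the path argument survives verbatim once this is restated.
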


\begin{remark} \rm  \label{RMpolybasic}
In Danilov et al.~(2003)\citeH{DKL03gr}
 a bounded polyhedron $P$ is called a {\em quasi-polymatroid}
if each edge (one-dimensional face) 
is parallel to a vector $d$ with $|\suppp(d)| \leq 1$ and $|\suppm(d)| \leq 1$.
It follows from 
Theorem 3.1 of Fujishige et al.~(2004)\citeH[Theorem 3.1]{FMTK04}
that every face of a quasi-polymatroid
whose normal vector has the full support $N$ 
is obtained from an M-convex polyhedron (base polyhedron)
by a scaling along axes.
We mention in passing that a pointed convex polyhedron is called {\em polybasic} 
if each edge is parallel to a vector $d$ with 
$|\suppp(d)| + |\suppm(d)| \leq 2$ 
(Fujishige et al.~2004\citeH{FMTK04}).
\finbox
\end{remark}

\begin{remark} \rm  \label{RMsmoothGSsubm}
In the canonical situation, where
$f: \RR\sp{n} \to \mathbb{R}$ is a strictly concave smooth function,
the equivalence between (GS[$\RR$]) of $f$ and 
the submodularity of $g = f\sp{\triangledown}$ is easily derived by simple calculus.
Let $x(p)$ be the unique maximizer of 
$f(x) - \langle p, x \rangle$.
We have
$p_{i} = \partial f / \partial x_{i}$ for $i=1,\ldots, n$,
and 
$g(p) = f(x(p)) - \langle p, x(p) \rangle$.
This implies
$\partial g / \partial p_{i} = - x_{i}$ $(i=1,\ldots, n)$,
and hence
$\partial\sp{2} g / \partial p_{i}\partial p_{j} 
 = - \partial x_{i}/ \partial p_{j}$  $(i,j=1,\ldots, n)$.
On the other hand, the submodularity of $g$ is equivalent to 
$\partial\sp{2} g / \partial p_{i}\partial p_{j} \leq 0$  $(i \not= j)$,
and (GS[$\RR$]) of $f$ is represented as
$\partial x_{i}/ \partial p_{j} \geq 0$ $(i \not= j)$.
\finbox
\end{remark}

\paragraph{Functions in discrete variables:}
\label{SCconjugacyZ}

We turn to functions defined on integer vectors.
For functions 
$f: \ZZ\sp{n} \to \mathbb{R} \cup \{ -\infty  \}$ and 
$g: \ZZ\sp{n} \to \mathbb{R} \cup \{ +\infty  \}$
with $\dom f \not= \emptyset$ and $\dom g \not= \emptyset$,
the transformations 
(\ref{conjcave2vexR}) and (\ref{conjvex2caveR})
are modified to
\begin{align} 
 f\sp{\triangledown}(p) 
 &= \sup\{  f(x) - \langle p, x \rangle   \mid x \in \ZZ\sp{n} \}
\qquad ( p \in \RR\sp{n}),
\label{conjcave2vexZ}
\\
 g\sp{\triangle}(x) 
 &= \inf\{  g(p) + \langle p, x \rangle   \mid p \in \ZZ\sp{n} \}
\qquad ( x \in \RR\sp{n}) ,
\label{conjvex2caveZ}
\end{align}
where
$f\sp{\triangledown}: \RR\sp{n} \to \mathbb{R} \cup \{ +\infty  \}$
and 
$g\sp{\triangle}: \RR\sp{n} \to \mathbb{R} \cup \{ -\infty  \}$.

The conjugacy  between
M$\sp{\natural}$-concavity and L$\sp{\natural}$-convexity in this case
reads as follows%
\footnote{%%%%%%%%%%%%%%%%%%%%%%%%%%%
In Theorem \ref{THlmconjZR} (1),
${}\sp{\triangledown}$ is defined by {\rm (\ref{conjcave2vexZ})} and 
${}\sp{\triangle}$ by {\rm (\ref{conjvex2caveR})}.
In (2), ${}\sp{\triangle}$ is defined by {\rm (\ref{conjvex2caveZ})} and 
${}\sp{\triangledown}$ by {\rm (\ref{conjcave2vexR})}.
}.  %%% footnote %%%%%%%%%%%%%%%%%%%

\begin{theorem}  \label{THlmconjZR}
\quad  %% do not delete this %%

\noindent {\rm (1)}
For an M$\sp{\natural}$-concave function 
$f: \ZZ\sp{n} \to \mathbb{R} \cup \{ -\infty  \}$,
the conjugate function 
$f\sp{\triangledown}: \RR\sp{n} \to \mathbb{R} \cup \{ +\infty  \}$
is a {\rm (}locally polyhedral\/{\rm )}
L$\sp{\natural}$-convex function,
and $(f\sp{\triangledown})\sp{\triangle}(x) = f(x)$ for $x \in \ZZ\sp{n}$.

\noindent {\rm (2)}
For an L$\sp{\natural}$-convex function
$g: \ZZ\sp{n} \to \mathbb{R} \cup \{ +\infty  \}$,
the conjugate function 
$g\sp{\triangle}: \RR\sp{n} \to \mathbb{R} \cup \{ -\infty  \}$
is a {\rm (}locally polyhedral\/{\rm )}
M$\sp{\natural}$-concave function,
and $(g\sp{\triangle})\sp{\triangledown}(p) = g(p)$ for $p \in \ZZ\sp{n}$.
\end{theorem}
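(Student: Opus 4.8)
The plan is to reduce both parts to the continuous conjugacy theorem (Theorem~\ref{THlmconjcavevexR}) by passing to the concave, respectively convex, closure of the given function. I will describe part~(1) in detail; part~(2) is completely analogous, and I indicate only the changes.

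Let $f: \ZZ\sp{n} \to \RR \cup \{ -\infty \}$ be M$\sp{\natural}$-concave. By the concave-extensibility of M$\sp{\natural}$-concave functions (Section~\ref{SCmnatconcavZ}), the concave closure $\overline{f}: \RR\sp{n} \to \RR \cup \{ -\infty \}$ is a closed proper M$\sp{\natural}$-concave function in the continuous sense with $\overline{f}(x) = f(x)$ for all $x \in \ZZ\sp{n}$. The first key step is to show that the discrete transform (\ref{conjcave2vexZ}) of $f$ coincides with the continuous transform (\ref{conjcave2vexR}) of $\overline{f}$:
\[
 f\sp{\triangledown}(p) = \overline{f}\sp{\triangledown}(p) \qquad (p \in \RR\sp{n}).
\]
For ``$\geq$'' I would restrict the continuous supremum over $\RR\sp{n}$ to the integer points and use $\overline{f} = f$ there. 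For ``$\leq$'' I would observe that, by the definition of $f\sp{\triangledown}(p)$, the affine function $x \mapsto \langle p, x \rangle + f\sp{\triangledown}(p)$ majorizes $f$ on $\ZZ\sp{n}$; being affine it then majorizes the concave hull $\overline{f}$ on all of $\RR\sp{n}$, so that $\overline{f}(x) - \langle p, x \rangle \leq f\sp{\triangledown}(p)$ for every $x \in \RR\sp{n}$.

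Granting this identity, Theorem~\ref{THlmconjcavevexR}~(2) applied to $\overline{f}$ shows at once that $f\sp{\triangledown} = \overline{f}\sp{\triangledown}$ is a closed proper L$\sp{\natural}$-convex function; its local polyhedrality is read off from the representation of $f\sp{\triangledown}$ as the supremum of the affine functions $p \mapsto f(x) - \langle p, x \rangle$ indexed by $x \in \domZ f$, which is piecewise linear on each bounded set of prices. For the biconjugacy I would invoke the continuous theorem once more: $(f\sp{\triangledown})\sp{\triangle} = (\overline{f}\sp{\triangledown})\sp{\triangle} = \overline{f}$ by Theorem~\ref{THlmconjcavevexR}~(2), and restricting to $\ZZ\sp{n}$ gives $(f\sp{\triangledown})\sp{\triangle}(x) = \overline{f}(x) = f(x)$, as claimed. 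Part~(2) then follows by the same scheme applied to the convex closure $\overline{g}$ of the L$\sp{\natural}$-convex function $g$ (using convex-extensibility and Theorem~\ref{THlmconjcavevexR}~(3)), together with the analogous identity $g\sp{\triangle} = \overline{g}\sp{\triangle}$ relating the discrete transform (\ref{conjvex2caveZ}) to the continuous one (\ref{conjvex2caveR}), proved by the dual affine-minorant argument.

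The step I expect to be the main obstacle is the passage between the discrete and continuous settings: one must confirm that the concave closure of a discrete M$\sp{\natural}$-concave function is genuinely M$\sp{\natural}$-concave, closed, and proper as a function on $\RR\sp{n}$ (and dually for $\overline{g}$), so that Theorem~\ref{THlmconjcavevexR} is applicable. A secondary technical nuisance is the careful treatment of the value $-\infty$ and of unbounded effective domains in the identity $f\sp{\triangledown} = \overline{f}\sp{\triangledown}$ and in the local-polyhedrality claim; when $\dom f$ is bounded these collapse to the polyhedral conjugacy of Theorem~\ref{THlmconjcavevexP}, and the general case can be recovered by the truncation-and-limit argument already employed in Remark~\ref{RMconvolproof}.
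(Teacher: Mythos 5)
The paper states Theorem~\ref{THlmconjZR} without proof (it is imported from Murota and Shioura (2004a)), so there is no in-paper argument to compare against; your closure-based reduction is in fact the route the original proof takes, and much of your outline is sound. The identities $f\sp{\triangledown}=\overline{f}\sp{\triangledown}$ and $g\sp{\triangle}=\overline{g}\sp{\triangle}$ are correct, with exactly your restriction and affine majorant/minorant arguments, and for bounded effective domains the statement does collapse to the polyhedral conjugacy of Theorem~\ref{THlmconjcavevexP}, with the closure lemma supplied by Theorems~\ref{THmconcavargmaxZ} and~\ref{THmconcavargmaxR} together with the (standard, but unproved in this paper) fact that the convex hull of an M$\sp{\natural}$-convex set is an M$\sp{\natural}$-convex polyhedron. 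Note also that the biconjugacy clause never needs the M$\sp{\natural}$ closure lemma at all: concave-extensibility gives $\overline{f}=f$ on $\ZZ\sp{n}$ with $\overline{f}$ closed proper concave (and a supergradient at a relative interior point supplies an affine majorant, making $f\sp{\triangledown}$ proper), so the classical Theorem~\ref{THOconjcavevexR}~(2) already yields $(f\sp{\triangledown})\sp{\triangle}=(\overline{f}\sp{\triangledown})\sp{\triangle}=\overline{f}$; invoking Theorem~\ref{THlmconjcavevexR}~(2) here, as you do, quietly presupposes for unbounded domains the very lemma you deferred.

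Two steps, however, have genuine gaps. First, your justification of local polyhedrality --- that a supremum of affine functions indexed by $\domZ f$ ``is piecewise linear on each bounded set of prices'' --- is false as a general principle: for $n=1$ the function $f(x)=\sum_{j=1}\sp{x}1/j$ on $\ZZ_{+}$ is univariate concave, hence M$\sp{\natural}$-concave, yet $f\sp{\triangledown}$ has breakpoints at $p=1/k$ for every $k$, so infinitely many linear pieces occur in the bounded set $(0,1]$. Local polyhedrality is true, but it requires showing that near each point of the effective domain (equivalently, on each truncation of the epigraph) only finitely many affine pieces are needed, and this local finiteness does not follow from the sup-of-affines representation alone. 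Second, the appeal to ``the truncation-and-limit argument already employed in Remark~\ref{RMconvolproof}'' transfers only to the L-side: for part~(1) it works, since translation-submodularity (\ref{lnatftrsubmR}) is a uniform inequality preserved under the increasing pointwise limit $f^{(k)\triangledown}\uparrow f\sp{\triangledown}$, and closedness of $f\sp{\triangledown}$ is automatic. But for part~(2) the target axiom (M$\sp{\natural}$-EXC[$\RR$]) contains an existential $\alpha_{0}>0$, and along the truncations the admissible $\alpha_{0}\sp{(k)}$ may shrink to zero, so pigeonholing the index $j$ (the move that finishes Remark~\ref{RMconvolproof}, where one always exchanges by exactly one unit) does not close the argument. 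This is precisely why the published proof proceeds through the local polyhedral structure of $g\sp{\triangle}$ rather than a bare limit; as written, your proposal leaves the M$\sp{\natural}$-concavity of $g\sp{\triangle}$ for unbounded $\dom g$ unestablished.
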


For integer-valued functions $f$ and $g$, 
$f\sp{\triangledown}(p)$ and $g\sp{\triangle}(x)$ 
are  integers for integer vectors $p$ and $x$.
Hence 
(\ref{conjcave2vexZ}) with $p \in \ZZ\sp{n}$ 
and (\ref{conjvex2caveZ}) with $x \in \ZZ\sp{n}$, i.e.,
\begin{align} 
 f\sp{\triangledown}(p) 
 &= \sup\{  f(x) - \langle p, x \rangle   \mid x \in \ZZ\sp{n} \}
\qquad ( p \in \ZZ\sp{n}),
\label{conjcave2vexZZ}
\\
 g\sp{\triangle}(x) 
 &= \inf\{  g(p) + \langle p, x \rangle   \mid p \in \ZZ\sp{n} \}
\qquad ( x \in \ZZ\sp{n}) ,
\label{conjvex2caveZZ}
\end{align}
define  transformations of 
$f: \ZZ\sp{n} \to \mathbb{Z} \cup \{ -\infty  \}$ to
$f\sp{\triangledown}: \ZZ\sp{n} \to \mathbb{Z} \cup \{ +\infty  \}$
and 
$g: \ZZ\sp{n} \to \mathbb{Z} \cup \{ +\infty  \}$ to
$g\sp{\triangle}: \ZZ\sp{n} \to \mathbb{Z} \cup \{ -\infty  \}$,
respectively.

The conjugacy theorem for integer-valued discrete-variable  M$\sp{\natural}$-concave 
and L$\sp{\natural}$-convex functions reads as follows
(Murota 1998, Theorem 8.12 of Murota 2003)\citeH{Mdca}\citeH[Theorem 8.12]{Mdcasiam}.

\begin{theorem}  \label{THlmconjcavevexZ}
\quad  %% do not delete this %%

\noindent {\rm (1)}
The transformations 
{\rm (\ref{conjcave2vexZZ})} and {\rm (\ref{conjvex2caveZZ})}
give a one-to-one correspondence between the classes of all
integer-valued M$\sp{\natural}$-concave functions $f$ 
and integer-valued L$\sp{\natural}$-convex functions $g$.

\noindent {\rm (2)}
For an integer-valued M$\sp{\natural}$-concave function
$f: \ZZ\sp{n} \to \mathbb{Z} \cup \{ -\infty  \}$,
the conjugate function 
$f\sp{\triangledown}: \ZZ\sp{n} \to \mathbb{Z} \cup \{ +\infty  \}$
is an integer-valued L$\sp{\natural}$-convex function
and $(f\sp{\triangledown})\sp{\triangle} = f$.

\noindent {\rm (3)}
For an integer-valued L$\sp{\natural}$-convex function
$g: \ZZ\sp{n} \to \mathbb{Z} \cup \{ +\infty  \}$,
the conjugate function 
$g\sp{\triangle}: \ZZ\sp{n} \to \mathbb{Z} \cup \{ -\infty  \}$ 
is an integer-valued M$\sp{\natural}$-concave function
and $(g\sp{\triangle})\sp{\triangledown} = g$.
\end{theorem}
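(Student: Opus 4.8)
The plan is to deduce the integer-variable conjugacy from the mixed discrete/continuous conjugacy already recorded in Theorem~\ref{THlmconjZR}, so that the only genuinely new points to establish are (i) that the Legendre transforms preserve integer-valuedness and (ii) that the suprema and infima in {\rm (\ref{conjcave2vexZZ})} and {\rm (\ref{conjvex2caveZZ})}, taken over $\ZZ\sp{n}$, agree with the continuous transforms supplied by Theorem~\ref{THlmconjZR}.

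First I would take an integer-valued M$\sp{\natural}$-concave $f$ and regard it as a function into $\RR \cup \{ -\infty \}$. Theorem~\ref{THlmconjZR}(1) then says that $f\sp{\triangledown}$, defined by the continuous-range formula {\rm (\ref{conjcave2vexZ})}, is a (locally polyhedral) L$\sp{\natural}$-convex function with $(f\sp{\triangledown})\sp{\triangle} = f$ on $\ZZ\sp{n}$. To see integer-valuedness, note that for $p \in \ZZ\sp{n}$ each term $f(x) - \langle p, x \rangle$ with $x \in \ZZ\sp{n}$ is an integer (or $-\infty$), so $f\sp{\triangledown}(p)$, being the supremum of a set of integers bounded above, is an integer whenever finite; hence $g := f\sp{\triangledown}|_{\ZZ\sp{n}}$ maps into $\ZZ \cup \{ +\infty \}$. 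Restricting the translation-submodularity inequality (condition (b) of Theorem~\ref{THlnatcondZ}, equivalently {\rm (\ref{lnatftrsubmZ})}) of the continuous $f\sp{\triangledown}$ to integral $p, q$ and integral $\alpha$ shows that $g$ is L$\sp{\natural}$-convex on $\ZZ\sp{n}$, and by construction $g$ is precisely the image of $f$ under the integer transformation {\rm (\ref{conjcave2vexZZ})}.

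The crux is the biconjugacy with the integer-range infimum {\rm (\ref{conjvex2caveZZ})}. For $x \in \ZZ\sp{n}$ one has immediately $(f\sp{\triangledown})\sp{\triangle}(x) \leq \inf_{p \in \ZZ\sp{n}} \{ g(p) + \langle p, x \rangle \}$, since the integer infimum runs over a subset of $\RR\sp{n}$; the reverse inequality is the substantive step. Here I would use that $g = f\sp{\triangledown}$ is L$\sp{\natural}$-convex, so that for integral $x$ the set of minimizers of $p \mapsto g(p) + \langle p, x \rangle$ is the set of integer points of an L$\sp{\natural}$-convex polyhedron, which is integral and hence contains an integer point attaining the continuous infimum. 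Combined with the continuous biconjugacy $(f\sp{\triangledown})\sp{\triangle} = f$ of Theorem~\ref{THlmconjZR}(1), this gives $\inf_{p \in \ZZ\sp{n}} \{ g(p) + \langle p, x \rangle \} = f(x)$ for $x \in \ZZ\sp{n}$, i.e.\ the integer biconjugate recovers $f$; the case $x \notin \dom f$, where both sides equal $-\infty$, is handled by exhibiting an integral recession direction of $g$ along which the objective is unbounded below.

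The symmetric direction, starting from an integer-valued L$\sp{\natural}$-convex $g$ and invoking Theorem~\ref{THlmconjZR}(2) together with the same two integrality arguments, shows that $g\sp{\triangle}|_{\ZZ\sp{n}}$ is an integer-valued M$\sp{\natural}$-concave function and that $(g\sp{\triangle})\sp{\triangledown} = g$ on $\ZZ\sp{n}$. The two biconjugacies then certify that {\rm (\ref{conjcave2vexZZ})} and {\rm (\ref{conjvex2caveZZ})} are mutually inverse bijections between the two classes, which is assertion (1), while (2) and (3) are exactly the two biconjugacy statements just obtained. I expect the main obstacle to be the reverse inequality in the biconjugacy step: everything hinges on the fact that the Legendre transform of an integer-valued M$\sp{\natural}$- or L$\sp{\natural}$-function can be computed by optimizing over integer points alone, and this lossless discretization is precisely where the integral polyhedral structure of M$\sp{\natural}$-convex and L$\sp{\natural}$-convex sets (integral g-polymatroids and their conjugates) must be brought to bear, rather than mere concavity or convexity.
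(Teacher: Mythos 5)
First, a point of calibration: the paper itself does not prove Theorem~\ref{THlmconjcavevexZ}; it quotes it from Murota (1998) and Theorem~8.12 of Murota (2003), where the proof runs through the integral discrete separation theorem (equivalently, the existence of integral supergradients). So your proposal must stand on its own, and its overall architecture is reasonable: integer-valuedness of $f\sp{\triangledown}$ at integral $p$ is correctly argued (a supremum of integers, finite since $\dom f \neq \emptyset$), L$\sp{\natural}$-convexity of the restriction to $\ZZ\sp{n}$ does follow by restricting translation-submodularity {\rm (\ref{lnatftrsubmR})} to integral $p,q,\alpha$ and invoking Theorem~\ref{THlnatcondZ}(b), and you correctly isolate the crux as the lossless discretization of the infimum in the biconjugacy step, with (1) following formally from (2) and (3).

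The gap is precisely at that crux, and your justification for it is false as stated. You claim the minimizer set of $p \mapsto g(p) + \langle p, x \rangle$ is ``the set of integer points of an L$\sp{\natural}$-convex polyhedron, which is integral.'' The argmin is indeed an L$\sp{\natural}$-convex polyhedron, but L$\sp{\natural}$-convex polyhedra are not integral in general: any box $\{ p \mid a \leq p \leq b \}$ with half-integral $a, b$ is L$\sp{\natural}$-convex (translation-submodularity of its indicator is immediate), and non-integral translates preserve L$\sp{\natural}$-convexity, so L$\sp{\natural}$-convexity alone gives no relation to $\ZZ\sp{n}$. Note that $p$ minimizes $f\sp{\triangledown}(p) + \langle p, x \rangle$ exactly when $f(y) - f(x) \leq \langle p, y - x \rangle$ for all $y$, i.e., the argmin is the superdifferential of $f$ at $x$; what you need is that for \emph{integer-valued} M$\sp{\natural}$-concave $f$ this superdifferential contains an integral vector. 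That is the integral supergradient theorem (recorded in a footnote in Section~\ref{SCauction}), a result of essentially the same depth as the theorem you are proving: it is equivalent to the integrality assertion of the M$\sp{\natural}$-separation theorem (Theorem~\ref{THmfnsep}), and its known proofs use the discrete exchange structure --- e.g., the local optimality criterion of Theorem~\ref{THmnatsetfnlocmaxZ} together with a potential (shortest-path) construction on the exchange graph --- not polyhedral convexity alone. As written, your argument is therefore either circular (if ``integral'' silently imports that theorem) or broken (if it rests on integrality of L$\sp{\natural}$-convex polyhedra). It can be repaired by citing the integral supergradient/separation theorem explicitly; with that input, the remaining pieces, including your recession-direction treatment of $x \notin \dom f$ (sound after scaling a rational recession direction to an integral one and starting from an integral point of $\dom f\sp{\triangledown}$), do go through.
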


As corollaries of the conjugacy theorems,
the following characterizations of 
M$\sp{\natural}$-concavity
and L$\sp{\natural}$-convexity
in terms of the conjugate functions are obtained.

\begin{theorem}  \label{THmconcavebyconjfnZR}
\quad  %% do not delete this %%

\noindent {\rm (1)}
A function
$f: \ZZ\sp{n} \to \mathbb{R} \cup \{ -\infty  \}$
is M$\sp{\natural}$-concave 
if and only if
the conjugate function 
$f\sp{\triangledown}: \RR\sp{n} \to \mathbb{R} \cup \{ +\infty  \}$
by {\rm (\ref{conjcave2vexZ})} is (locally polyhedral) L$\sp{\natural}$-convex.

\noindent {\rm (2)}
A function
$f: \ZZ\sp{n} \to \mathbb{Z} \cup \{ -\infty  \}$
is M$\sp{\natural}$-concave 
if and only if
the conjugate function 
$f\sp{\triangledown}: \ZZ\sp{n} \to \mathbb{Z} \cup \{ +\infty  \}$
by {\rm (\ref{conjcave2vexZZ})} is L$\sp{\natural}$-convex.
\end{theorem}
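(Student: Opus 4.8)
The plan is to obtain both parts as direct corollaries of the conjugacy theorems already in hand: part (1) from the $\ZZ$--$\RR$ conjugacy of Theorem \ref{THlmconjZR} (supplemented by the continuous conjugacy of Theorem \ref{THlmconjcavevexR}), and part (2) from the integer-valued conjugacy of Theorem \ref{THlmconjcavevexZ}. The two parts share the same logical skeleton, so I would argue them in parallel, the only bookkeeping difference being whether the transform is taken by (\ref{conjcave2vexZ}) or by (\ref{conjcave2vexZZ}) and whether integrality of values is tracked.

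For the ``only if'' direction in each part, suppose $f$ is M$\sp{\natural}$-concave. Then Theorem \ref{THlmconjZR}(1) (respectively Theorem \ref{THlmconjcavevexZ}(2)) asserts exactly that $f\sp{\triangledown}$ is (locally polyhedral, respectively integer-valued) L$\sp{\natural}$-convex. So this half is nothing more than a citation.

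For the ``if'' direction I would pass to the conjugate and come back. Set $g = f\sp{\triangledown}$ and assume $g$ is L$\sp{\natural}$-convex. In part (2) the conjugate $g$ is again a function on $\ZZ\sp{n}$, and the reverse half of the integer-valued conjugacy, Theorem \ref{THlmconjcavevexZ}(3), shows that $g\sp{\triangle} = (f\sp{\triangledown})\sp{\triangle}$ is M$\sp{\natural}$-concave. In part (1) the conjugate $g$ lives on $\RR\sp{n}$, so here I would invoke the continuous conjugacy Theorem \ref{THlmconjcavevexR}(3) to see that $g\sp{\triangle}$ is M$\sp{\natural}$-concave on $\RR\sp{n}$ and then restrict to $\ZZ\sp{n}$. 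In both cases it remains only to identify the biconjugate with $f$ on $\ZZ\sp{n}$: once $(f\sp{\triangledown})\sp{\triangle} = f$ holds there, the conclusion that $f$ is M$\sp{\natural}$-concave is immediate.

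The main obstacle is precisely this biconjugacy in the ``if'' direction, and it is where care is needed. The Legendre--Fenchel transform sees $f$ only through its concave closure $\overline{f}$, and the double transform returns that closure, so in general $(f\sp{\triangledown})\sp{\triangle} = \overline{f}$ rather than $f$ verbatim; indeed a univariate $f$ that dips below its concave hull already has an L$\sp{\natural}$-convex (that is, convex) conjugate without being M$\sp{\natural}$-concave. The equivalence is therefore to be read within the class of concave-extensible functions, those with $f = \overline{f}$, and this costs nothing for the statement, since every M$\sp{\natural}$-concave function is concave-extensible, as recorded in Section \ref{SCmnatconcavZ}. Thus the biconjugacy $(f\sp{\triangledown})\sp{\triangle} = f$ furnished by Theorem \ref{THlmconjZR}(1) and Theorem \ref{THlmconjcavevexZ}(2) for the relevant class closes the argument; the one remaining routine verification is that the inner supremum in (\ref{conjcave2vexZ})/(\ref{conjcave2vexZZ}) is paired with the matching infimum, so that the two transforms compose correctly over $\ZZ\sp{n}$ and the restriction of the resulting M$\sp{\natural}$-concave function to the integer lattice is again M$\sp{\natural}$-concave.
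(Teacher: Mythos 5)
Your overall skeleton is exactly the paper's intended derivation: the paper prints no separate proof of Theorem~\ref{THmconcavebyconjfnZR}, presenting it as an immediate corollary of the conjugacy theorems, and your split --- ``only if'' by citing Theorem~\ref{THlmconjZR}(1), resp.\ Theorem~\ref{THlmconjcavevexZ}(2), ``if'' by applying the inverse transformation and biconjugacy --- is precisely that. Moreover, your central caveat is correct and sharper than the paper's text: the literal ``if'' direction does fail without concave-extensibility. Your univariate example is valid; concretely, $f(0)=0$, $f(1)=-1$, $f(2)=0$, $f=-\infty$ elsewhere violates (M$\sp{\natural}$-EXC[$\ZZ$]), yet $f\sp{\triangledown}(p)=\max(0,-2p)$ is polyhedral L$\sp{\natural}$-convex on $\RR$ and its restriction to $\ZZ$ is L$\sp{\natural}$-convex, so both parts of the theorem must indeed be read within the class of closed concave-extensible functions, i.e.\ with $f=(f\sp{\triangledown})\sp{\triangle}$ built in.

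Two of your closing claims, however, need repair. First, ``this costs nothing for the statement'' is wrong as phrased: concave-extensibility of M$\sp{\natural}$-concave functions only makes the restriction harmless on the \emph{only-if} side; on the \emph{if} side it is a genuine additional hypothesis --- your own counterexample shows the unrestricted converse is simply false, so something \emph{is} being given up, namely the theorem as literally quantified. Note also that within the restricted class you cannot obtain $(f\sp{\triangledown})\sp{\triangle}=f$ from Theorem~\ref{THlmconjZR}(1) or Theorem~\ref{THlmconjcavevexZ}(2), since those presuppose M$\sp{\natural}$-concavity of $f$, which is the conclusion; the honest route is ordinary biconjugacy (Theorem~\ref{THOconjcavevexR}) applied to the closure $\overline{f}$, using $f\sp{\triangledown}=\overline{f}\sp{\triangledown}$. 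Second, the step you dismiss as ``routine'' --- restricting the M$\sp{\natural}$-concave function $(f\sp{\triangledown})\sp{\triangle}$ on $\RR\sp{n}$ back to $\ZZ\sp{n}$ in part (1) --- is not routine in general: restrictions of M$\sp{\natural}$-concave functions on $\RR\sp{n}$ to the lattice can fail M$\sp{\natural}$-concavity (already the integer points of a non-integral M$\sp{\natural}$-convex polyhedron need not satisfy (B$\sp{\natural}$-EXC[$\ZZ$])); here one must use that $(f\sp{\triangledown})\sp{\triangle}=\overline{f}$ is generated by its values at integer points. Likewise in part (2) the integer infimum in (\ref{conjvex2caveZZ}) recovers $f$ only because L$\sp{\natural}$-convexity of $g=f\sp{\triangledown}$ guarantees integral supergradients; mere concave-extensibility of an integer-valued $f$ does not. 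With these two points made explicit your argument is complete and coincides with the paper's intended proof.
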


\begin{theorem}  \label{THlconvbyconjfnZR}
\quad  %% do not delete this %%

\noindent {\rm (1)}
A function
$g: \ZZ\sp{n} \to \mathbb{R} \cup \{ +\infty  \}$
is L$\sp{\natural}$-convex 
if and only if
the conjugate function 
$g\sp{\triangle}: \RR\sp{n} \to \mathbb{R} \cup \{ -\infty  \}$
by {\rm (\ref{conjvex2caveZ})} is (locally polyhedral) M$\sp{\natural}$-concave.

\noindent {\rm (2)}
A function
$g: \ZZ\sp{n} \to \mathbb{Z} \cup \{ +\infty  \}$
is L$\sp{\natural}$-convex 
if and only if
the conjugate function 
$g\sp{\triangle}: \ZZ\sp{n} \to \mathbb{Z} \cup \{ -\infty  \}$
by {\rm (\ref{conjvex2caveZZ})}
is M$\sp{\natural}$-concave.
\end{theorem}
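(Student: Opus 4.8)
The plan is to read both parts off the conjugacy theorems already established, using biconjugacy to reverse the implication. Part~(1) rests on Theorem~\ref{THlmconjZR} (supplemented by the continuous conjugacy of Theorem~\ref{THlmconjcavevexR}, since the conjugate $g\sp{\triangle}$ here lives on $\RR\sp{n}$), and part~(2) on Theorem~\ref{THlmconjcavevexZ}; the two arguments are formally identical, differing only in the transform employed.

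The ``only if'' direction is immediate. If $g$ is L$\sp{\natural}$-convex, then Theorem~\ref{THlmconjZR}(2) for part~(1), and Theorem~\ref{THlmconjcavevexZ}(3) for part~(2), assert verbatim that $g\sp{\triangle}$ is (locally polyhedral, respectively integer-valued) M$\sp{\natural}$-concave. Nothing further is required here.

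For the ``if'' direction I would set $f := g\sp{\triangle}$ and apply the M-to-L half of the relevant conjugacy theorem to $f$: this yields that $f\sp{\triangledown} = (g\sp{\triangle})\sp{\triangledown}$ is L$\sp{\natural}$-convex. It then suffices to identify $(g\sp{\triangle})\sp{\triangledown}$ with $g$ itself, i.e.\ to invoke the biconjugacy $(g\sp{\triangle})\sp{\triangledown} = g$; the L$\sp{\natural}$-convexity of the left-hand side is thereby transferred to $g$.

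The hard part will be exactly this biconjugacy step. For a general $g$ the double conjugate $(g\sp{\triangle})\sp{\triangledown}$ returns only the convex closure $\overline{g}$, so the equality $(g\sp{\triangle})\sp{\triangledown} = g$ holds precisely when $g$ is convex-extensible (closed proper convex in the continuous picture). I would therefore keep this underlying convexity hypothesis explicit, since without it the ``if'' direction breaks: any non-extensible $g$ sharing its convex closure with an L$\sp{\natural}$-convex function has an M$\sp{\natural}$-concave conjugate yet is not itself L$\sp{\natural}$-convex. For part~(2) the integer biconjugacy clause $(g\sp{\triangle})\sp{\triangledown} = g$ of Theorem~\ref{THlmconjcavevexZ} supplies the needed identification within the integer-valued class; for part~(1) the closed proper convexity of $g$ plays the same role through Theorem~\ref{THlmconjcavevexR}. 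The residual checks — that $g\sp{\triangle}$ is proper and, in part~(2), integer-valued on integer points — are routine and are already recorded in the cited theorems.
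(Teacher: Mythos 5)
Your proposal is correct and follows essentially the same route as the paper, which offers no separate proof of this theorem but presents it as an immediate corollary of the conjugacy theorems: the ``only if'' direction is read off verbatim from Theorem~\ref{THlmconjZR}(2) (part~(1)) and Theorem~\ref{THlmconjcavevexZ}(3) (part~(2)), and the ``if'' direction applies the M-to-L half to $f := g\sp{\triangle}$ and identifies $(g\sp{\triangle})\sp{\triangledown}$ with $g$ by biconjugacy, exactly as you do. Your caveat about the biconjugacy step is the one substantive point, and it is well taken: the biconjugacy clauses $(g\sp{\triangle})\sp{\triangledown}=g$ in Theorems~\ref{THlmconjZR}(2) and \ref{THlmconjcavevexZ}(3) are stated only under the hypothesis that $g$ is L$\sp{\natural}$-convex, so invoking them directly in the ``if'' direction would be circular; what one actually gets is that $(g\sp{\triangle})\sp{\triangledown}$ recovers (the restriction to $\ZZ\sp{n}$ of) the convex closure of $g$, so the argument closes only on the class where $g$ agrees with that closure at integer points. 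Read with complete literalness, the ``if'' direction indeed requires this: for $n=1$ take $g(-1)=g(1)=0$, $g(0)=1$, $g=+\infty$ elsewhere; then $g\sp{\triangle}(x)=\min(-x,\,1,\,x)=-|x|$, which is integer-valued, polyhedral, and M$\sp{\natural}$-concave, while $g(-1)+g(1)<2g(0)$ shows $g$ is not L$\sp{\natural}$-convex. So the theorem must be understood on the convex-extensible (closed) class---precisely the class appearing in the one-to-one correspondences of Theorems~\ref{THlmconjcavevexZ} and \ref{THlmconjcavevexR}---and your decision to keep that hypothesis explicit makes your write-up more careful than the paper's terse statement. The two residual checks you mention are indeed routine: in part~(1), $g\sp{\triangle}$ is a proper closed concave function (an infimum of affine functions, finite somewhere since it is assumed M$\sp{\natural}$-concave), and the restriction to $\ZZ\sp{n}$ of the L$\sp{\natural}$-convex function $(g\sp{\triangle})\sp{\triangledown}$ on $\RR\sp{n}$ is L$\sp{\natural}$-convex because the translation-submodularity inequalities simply restrict to integer arguments.
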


L$\sp{\natural}$-convexity,
being equivalent to translation-submodularity,
is a stronger property than mere submodularity.
Naturally, we may wonder if L$\sp{\natural}$-convexity of 
$f\sp{\triangledown}$ in Theorem \ref{THmconcavebyconjfnZR}
can be replaced by submodularity.
However, the following example denies this possibility%
\footnote{%%%%%%%%%%%%%%%%%%%%%%%%%%%%%%%%%%%%%%%%%%%%%
Example 7.4 of Shioura and Tamura (2015)\citeH[Example 7.4]{ST15jorsj}
also shows this.
See Theorem \ref{THgssubmRpoly} for the continuous case.
}. %%%%%% footnote %%%%%%%%%%%%%%%%%%%%%%%%%%%%%%%%%%%%%%%%%%%%%%%%%

\begin{example} \rm \label{EXmconjIfsubmNot}
Here is an example of a function $f$ 
such that the conjugate function $f\sp{\triangledown}$ is submodular, 
but $f$ is not M$\sp{\natural}$-concave.
Let  
$f: \mathbb{Z}^{2} \to \mathbb{R} \cup \{ -\infty  \}$
be defined by $f(x_{1}, x_{2})  = \min ( 2, x_{1} + 2 x_{2} )$ on 
$\dom f = \{ (x_{1}, x_{2}) \in \mathbb{Z}^{2} 
\mid 0 \leq x_{1} \leq 2,  0 \leq x_{2} \leq 1 \}$, whose numerical values are
\[
 f(0,0) = 0, \ f(1,0) = 1, \  f(2,0) = 2; \quad
 f(0,1) = f(1,1) =  f(2,1) = 2.
\]
This function is not M$\sp{\natural}$-concave because
(M$\sp{\natural}$-EXC[$\ZZ$]) fails for $x=(2,0)$, $y=(0,1)$ and $i=1$.
The conjugate function
$f\sp{\triangledown}: \mathbb{R}^{2} \to \mathbb{R} \cup \{ +\infty  \}$
of (\ref{conjcave2vexZ})
is given  by
\[
 f\sp{\triangledown}(p_{1}, p_{2}) 
 = \max ( 0, 2- 2 p_{1}, 2 - p_{2},2 - 2 p_{1} - p_{2} ) =
   \left\{  \begin{array}{ll}
   0  &  ( p_{1} \geq 1 , p_{2} \geq 2) ,  \\
   2  - 2 p_{1}  &  (2 p_{1} \leq \min(2 , p_{2}), p_{2} \geq 0 ) ,  \\
   2 - p_{2}  &  (p_{2} \leq \min (2, 2 p_{1}),  p_{1} \geq 0 ) ,  \\
   2 - 2 p_{1} - p_{2}  &  (p_{1} \leq 0, p_{2} \leq 0 ) .  \\
             \end{array}  \right.
\]
The function $f\sp{\triangledown}$ is submodular, as is easily verified, 
but it is not L$\sp{\natural}$-convex
since the translation-submodularity (\ref{lnatftrsubmR}) fails 
for $g = f\sp{\triangledown}$, $p=(1,2)$, $q=(0,0)$ and $\alpha = 1$
with $g(p) + g(q) = 0 + 2 =2$ and 
$ g((p - \alpha \vecone) \vee q) + g(p \wedge (q + \alpha \vecone))
= g(0,1) + g(1,1) = 2 + 1 = 3$.
It is also noted that 
$f\sp{\triangledown}(p_{1}/2, p_{2})$ is L$\sp{\natural}$-convex in $(p_{1}, p_{2})$.
%%%% keep %%  \citeH[Theorem~2]{MS09sbst}: the first sentence is incorrect 
\finbox
\end{example}

In spite of the above example,
M$\sp{\natural}$-concavity of a set function
$f: 2\sp{N} \to \RR \cup \{ -\infty \}$
can be characterized by submodularity of 
the conjugate function $f\sp{\triangledown}$, which is defined by
\begin{align} 
 f\sp{\triangledown}(p) 
 &= \max\{  f(X) - p(X)   \mid X \subseteq N \}
\qquad ( p \in \RR\sp{n})
\label{conjcave2vex01}
\end{align}
as an adaptation of (\ref{conjcave2vexZ}).

\begin{theorem}  \label{THmconcavebyconjfn01}
A set function
$f: 2\sp{N} \to \mathbb{R} \cup \{ -\infty  \}$
is M$\sp{\natural}$-concave 
if and only if
the conjugate function 
$f\sp{\triangledown}: \RR\sp{n} \to \mathbb{R} \cup \{ +\infty  \}$
in {\rm (\ref{conjcave2vex01})} is submodular.
\end{theorem}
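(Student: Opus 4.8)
The plan is to read off both implications from results already in hand, rather than to verify the exchange axiom directly. For the \emph{only if} part, regard $f$ as a function on $\ZZ\sp{n}$ with $\dom f \subseteq \{0,1\}\sp{n}$; then its conjugate in the sense of (\ref{conjcave2vexZ}) coincides with (\ref{conjcave2vex01}), and Theorem~\ref{THmconcavebyconjfnZR}(1) tells us that M$\sp{\natural}$-concavity of $f$ forces $f\sp{\triangledown}$ to be L$\sp{\natural}$-convex. Since translation-submodularity specializes to ordinary submodularity at $\alpha = 0$ (Theorem~\ref{THlnatcondZ}(b)), $f\sp{\triangledown}$ is in particular submodular. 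So this direction needs no new work.

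The substance is the \emph{if} part, which I would route through the concave closure $\overline{f}\colon \RR\sp{n} \to \RR \cup \{ -\infty \}$, a polyhedral concave function whose effective domain $\overline{\dom f}$ is bounded, lying in $[0,1]\sp{n}$. The first step is to observe that the set-function conjugate and the continuous conjugate agree, $f\sp{\triangledown} = (\overline{f})\sp{\triangledown}$, because a linear functional maximized over a polytope is attained at a vertex, and the vertices of $\overline{\dom f}$ are $0$-$1$ points of $\dom f$. The second step is to record that for every $q$ the demand polytope $D(q ; \overline{f})$ has only $0$-$1$ vertices, each belonging to $D(q ; f)$: if an extreme point $v \in D(q ; \overline{f})$ is written through the closure as $v = \sum_{k} \lambda_{k} \chi_{X_{k}}$ with $\lambda_{k} > 0$ and $\overline{f}(v) = \sum_{k} \lambda_{k} f(X_{k})$, then equality in $\overline{f}(v) - \langle q, v \rangle \leq f\sp{\triangledown}(q)$ forces every $X_{k} \in D(q ; f)$ and every $\chi_{X_{k}} \in D(q ; \overline{f})$, so an extreme $v$ must itself equal one of the $\chi_{X_{k}}$. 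With $f\sp{\triangledown} = (\overline{f})\sp{\triangledown}$ submodular, Theorem~\ref{THgssubmRpoly} (implication (c)$\Rightarrow$(a)) then yields that $\overline{f}$ satisfies (GS[$\RR$]).

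It remains to descend from (GS[$\RR$]) for $\overline{f}$ to (GS) for the set function $f$ (Section~\ref{SCmaximizers01}), after which Theorem~\ref{THmconcavgross} closes the argument. Given $p \leq q$ and $X \in D(p ; f)$, apply (GS[$\RR$]) to $\chi_{X} \in D(p ; \overline{f})$ to obtain $y \in D(q ; \overline{f})$ with $(\chi_{X})_{i} \leq y_{i}$ whenever $p_{i} = q_{i}$. Writing $y$ as a convex combination of $0$-$1$ vertices of $D(q ; \overline{f})$, each of which lies in $D(q ; f)$, and using $0 \leq y_{i} \leq 1$, I get that for every $i \in X$ with $p_{i} = q_{i}$ the coordinate $y_{i} = 1$, hence $i$ lies in each such vertex; picking any one vertex $Y$ gives $\{\, i \in X \mid p_{i} = q_{i} \,\} \subseteq Y$, which is exactly (GS), and Theorem~\ref{THmconcavgross} delivers M$\sp{\natural}$-concavity of $f$. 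The delicate point, and the reason the statement fails without the $0$-$1$ restriction as Example~\ref{EXmconjIfsubmNot} shows, is precisely this descent: it is the box constraint $\dom f \subseteq \{0,1\}\sp{n}$, forcing $y_{i} \in [0,1]$ and the vertices of the demand polytope to be integral, that upgrades the weaker substitutability (GS[$\RR$]) of the closure to the genuine (GS). I expect the only real care to be in the closure-conjugate identity and the integrality of $D(q ; \overline{f})$; everything else is quotation of earlier theorems.
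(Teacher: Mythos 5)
Your proof is correct, but it takes a genuinely different route from the paper's. The paper disposes of this theorem essentially by citation: it combines Theorem 10 of Ausubel and Milgrom (2002), which asserts directly at the set-function level that (GS) is equivalent to submodularity of the indirect utility function $f\sp{\triangledown}$, with Theorem~\ref{THmconcavgross} (the equivalence of (GS) and M$\sp{\natural}$-concavity), pointing to Shioura--Tamura for an alternative proof. You keep the final reduction to Theorem~\ref{THmconcavgross} but replace the Ausubel--Milgrom input by material internal to the paper: for the easy direction you pass through the conjugacy theorem (Theorem~\ref{THmconcavebyconjfnZR}), obtaining L$\sp{\natural}$-convexity of $f\sp{\triangledown}$---strictly more than submodularity, consistent with Example~\ref{EXmconjIfsubmNot}---and for the substantial direction you invoke the polyhedral Danilov--Lang equivalence (Theorem~\ref{THgssubmRpoly}, implication (c)$\Rightarrow$(a)) for the concave closure $\overline{f}$ and then descend from (GS[$\RR$]) to (GS). The descent is the genuinely new content, and your two supporting claims are sound: $f\sp{\triangledown}=(\overline{f})\sp{\triangledown}$ holds because the relevant linear functional on the hypograph of $\overline{f}$ is maximized at one of the generating points $(\chi_{X},f(X))$; and every extreme point of $D(q;\overline{f})$ is a $0$--$1$ point of $D(q;f)$ by your tightness argument (note also that $\overline{f}(\chi_{X})=f(X)$ holds automatically because each $\chi_{X}$ is a vertex of the cube $[0,1]\sp{n}$ and so cannot be a nontrivial convex combination of points of $\dom\overline{f}$; this is what licenses the step $X\in D(p;f)\Rightarrow\chi_{X}\in D(p;\overline{f})$ that your application of (GS[$\RR$]) needs). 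Your diagnosis of why the statement survives only in the $0$--$1$ setting---integrality of the vertices of the demand polytope, which is exactly what fails in Example~\ref{EXmconjIfsubmNot}---is correct. As for what each approach buys: the paper's proof is a two-line quotation operating entirely at the set-function level, while yours is self-contained modulo the survey's own theorems, makes the role of the box constraint transparent, and in effect re-derives the Ausubel--Milgrom equivalence from the continuous-variable theory.
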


This theorem can be derived from a combination of 
Theorem 10 of Ausubel and Milgrom (2002)\citeH[Theorem 10]{AM02}
with Theorem \ref{THmconcavgross} in Section~\ref{SCmaximizers01}; 
see also 
Section 7.2.2 of Shioura and Tamura (2015)\citeH[Section 7.2.2]{ST15jorsj}
for an alternative proof.

\subsection{Minimization of L$\sp{\natural}$-convex functions}
\label{SCalgLmin}

The set of the minimizers of an L$\sp{\natural}$-convex function on $\ZZ\sp{n}$
forms a well-behaved ``discrete convex'' subset of $\ZZ\sp{n}$.
Recall from Remark \ref{RMlnatconvexsetZ} that
a nonempty set $P \subseteq \ZZ\sp{n}$ is
called an {L$\sp{\natural}$-convex set}
if
\begin{equation} \label{lnatsettrsubmZ}
 p, q \in P    \ \Longrightarrow  \  
(p - \alpha \vecone) \vee q, \ 
 p \wedge (q + \alpha \vecone) \in P
\qquad (\forall \alpha \in \ZZ_{+}).
\end{equation}
This condition with $\alpha = 0$ gives
\begin{equation} \label{lnatsetsubmZ}
 p, q \in P    \ \Longrightarrow  \  
 p \vee q, \   p \wedge q \in P ,
\end{equation}
which shows that 
an L$\sp{\natural}$-convex set forms a sublattice of $\ZZ\sp{n}$.
A bounded L$\sp{\natural}$-convex set 
has the (uniquely determined) maximal element
and the (uniquely determined) minimal element.

\begin{theorem}  \label{THlfnargminZ}
Let $g: \ZZ\sp{N} \to \RR \cup \{ +\infty \}$ 
be an L$\sp{\natural}$-convex function and assume 
$\argmin g \not= \emptyset$.
Then the set of the minimizers $\argmin g$ is an L$\sp{\natural}$-convex set.
If $\argmin g$ is bounded, there exist the maximal and the minimal minimizer of $g$.
\end{theorem}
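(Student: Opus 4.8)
The plan is to derive both assertions from the translation-submodularity characterization of L$\sp{\natural}$-convexity furnished by Theorem~\ref{THlnatcondZ}(b), namely
\[
 g(p) + g(q) \geq g((p - \alpha \vecone) \vee q) + g(p \wedge (q + \alpha \vecone))
 \qquad (p, q \in \ZZ\sp{N}, \ \alpha \in \ZZ_{+}),
\]
together with the set-level condition (b) of Remark~\ref{RMlnatconvexsetZ}, which says exactly that a nonempty $P \subseteq \ZZ\sp{N}$ is L$\sp{\natural}$-convex iff $p, q \in P$ and $\alpha \in \ZZ_{+}$ imply $(p - \alpha \vecone) \vee q \in P$ and $p \wedge (q + \alpha \vecone) \in P$.

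First I would prove that $\argmin g$ is L$\sp{\natural}$-convex by a sandwich argument. Write $\mu = \min_{x} g(x)$, which is finite because $\argmin g \neq \emptyset$. Take any $p, q \in \argmin g$ and any $\alpha \in \ZZ_{+}$. The left-hand side of the displayed inequality equals $2\mu$, whereas each of the two terms on the right-hand side is at least $\mu$, since $\mu$ is the global minimum value of $g$; hence the right-hand side is itself at least $2\mu$. The inequality then forces equality throughout, so $g((p - \alpha \vecone) \vee q) = g(p \wedge (q + \alpha \vecone)) = \mu$. Thus $(p - \alpha \vecone) \vee q \in \argmin g$ and $p \wedge (q + \alpha \vecone) \in \argmin g$, which is precisely condition (b) of Remark~\ref{RMlnatconvexsetZ}, establishing the first claim.

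For the second assertion I would specialize to $\alpha = 0$, obtaining $p \vee q, p \wedge q \in \argmin g$ whenever $p, q \in \argmin g$; that is, $\argmin g$ is a sublattice of $\ZZ\sp{N}$ (equation~(\ref{lnatsetsubmZ})). As a bounded subset of the integer lattice, $\argmin g$ is finite and nonempty, so iterating closure under $\vee$ over its finitely many elements yields the join $\bigvee \{ x \mid x \in \argmin g \} \in \argmin g$, which dominates every minimizer and is therefore the (unique, greatest, hence maximal) maximal minimizer; symmetrically the meet $\bigwedge \{ x \mid x \in \argmin g \}$ is the minimal minimizer. I do not expect a genuine obstacle here: the whole proof is routine once translation-submodularity is in hand. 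The only points that merit explicit care are that boundedness of a subset of $\ZZ\sp{N}$ forces finiteness (so the join and meet over all elements are well-defined finite lattice operations staying inside the set), and that the sandwich step really pins each right-hand term to $\mu$ rather than merely bounding their sum.
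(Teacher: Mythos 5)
Your proposal is correct and follows exactly the route the paper takes: its proof of Theorem~\ref{THlfnargminZ} consists of the single remark that the result ``follows easily from the translation-submodularity in Theorem~\ref{THlnatcondZ}~(b),'' and your sandwich argument together with the finiteness-of-a-bounded-integer-set observation is precisely the routine expansion of that remark. No gaps; the two points you flag for care (forcing each right-hand term individually to equal $\mu$, and boundedness implying finiteness so that the join and meet stay in $\argmin g$) are handled correctly.
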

\begin{proof}
This follows easily from the translation-submodularity 
in Theorem \ref{THlnatcondZ} (b).
\end{proof}

For an L$\sp{\natural}$-convex function,
the minimality of a function value is characterized by a local condition
as follows 
(Theorem 7.14 of Murota 2003)\citeH[Theorem 7.14]{Mdcasiam}.
Recall the notation $\unitvec{Y}$ for the characteristic vector of a subset $Y$;
see (\ref{charvecdefnotat}).

\begin{theorem}  \label{THlfnlocminZ}
Let $g: \ZZ\sp{N} \to \RR \cup \{ +\infty \}$ 
be an L$\sp{\natural}$-convex function and $p \in \dom g$. 

\noindent {\rm (1)}
If $g(p) > g(q)$ for $q \in \dom g$, then 
$g(p) > g(p + \chi_{Y})$ 
for some $Y \subseteq \suppp(q-p)$
or
$g(p) > g(p - \chi_{Z})$ for some $Z \subseteq \suppm(q-p)$.

\noindent {\rm (2)}
$p$ is a minimizer of $g$ if and only if
\begin{equation} \label{lnatfnlocminZ}
g(p) \leq  g(p + \chi_{Y})
\quad  (\forall \, Y \subseteq N),
\qquad
g(p) \leq  g(p - \chi_{Z})  
\quad  (\forall \, Z \subseteq N).
\end{equation}
\end{theorem}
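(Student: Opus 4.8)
The plan is to establish part (1) first — the substantive ``descent'' statement — and then deduce part (2) as an immediate corollary. For part (1) I would argue by induction on the nonnegative integer $\| q - p \|_{1}$, using the characterization of L$\sp{\natural}$-convexity in Theorem~\ref{THlnatcondZ}(d) as the sole structural input: whenever $\suppp(p' - q') \neq \emptyset$ one has $g(p') + g(q') \geq g(p' - \chi_{A}) + g(q' + \chi_{A})$ with $A = \argmax_{i}\{ p'_{i} - q'_{i} \}$, and this $A$ is automatically contained in $\suppp(p'-q')$ because the maximal value of $p'_{i} - q'_{i}$ is strictly positive. Since $g(q) < g(p)$ forces $q \neq p$, the base of the induction is harmless and the recursion always strictly decreases $\| q - p \|_{1}$.

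The inductive step splits according to whether $q$ lies weakly below $p$. First I would treat the case $\suppp(q - p) \neq \emptyset$: set $Y = \argmax_{i}\{ q_{i} - p_{i} \} \subseteq \suppp(q-p)$ and apply (d) with the two points interchanged, giving $g(p) + g(q) \geq g(p + \chi_{Y}) + g(q - \chi_{Y})$. If $g(p + \chi_{Y}) < g(p)$, the desired ``increasing'' move has been found. Otherwise the inequality forces $g(q - \chi_{Y}) \leq g(q) < g(p)$, and finiteness of $g(p)$ and $g(q)$ guarantees $q - \chi_{Y} \in \dom g$; since subtracting $\chi_{Y}$ strictly reduces $\| q - p \|_{1}$, the induction hypothesis applied to $q - \chi_{Y}$ returns a subset of the required form. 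The complementary case $q \leq p$ (so $\suppm(q-p) \neq \emptyset$) is symmetric: I would take $Z = \argmax_{i}\{ p_{i} - q_{i} \} \subseteq \suppm(q-p)$, apply (d) directly, and either find a ``decreasing'' move $g(p - \chi_{Z}) < g(p)$ or push $q$ up to $q + \chi_{Z} \leq p$ and recurse, observing that the recursion stays in the regime $\suppp(q-p) = \emptyset$ and hence can only return a $Z'$-type move.

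Once (1) is in hand, part (2) is immediate. The forward implication is trivial, since a global minimizer satisfies every one-sided inequality in (\ref{lnatfnlocminZ}). For the converse I would argue contrapositively: if $p$ is not a minimizer, choose $q \in \dom g$ with $g(q) < g(p)$ and invoke (1) to produce either $Y$ with $g(p + \chi_{Y}) < g(p)$ or $Z$ with $g(p - \chi_{Z}) < g(p)$, each of which violates (\ref{lnatfnlocminZ}).

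The routine parts are the two inequality manipulations and the verification that $\dom g$-membership propagates down the recursion; the one step needing genuine care — and which I regard as the crux — is the support bookkeeping in the inductive step. One must check that the moves $q \mapsto q - \chi_{Y}$ and $q \mapsto q + \chi_{Z}$ are nested with respect to the supports: the first move can only shrink $\suppp(q-p)$ and leaves $\suppm(q-p)$ untouched, while the second keeps $\suppp(q-p)$ empty. This monotonicity is exactly what guarantees that a subset of a support of the reduced point is still a subset of the corresponding support of the originally given $q$; without it, the inductively obtained $Y'$ or $Z'$ would not be certified to lie in $\suppp(q-p)$ or $\suppm(q-p)$, and the precise subset requirement in the statement of~(1) would break down.
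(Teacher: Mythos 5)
Your proof is correct, and it runs on the same engine as the paper's argument---repeated application of the steepest-descent axiom, Theorem~\ref{THlnatcondZ}(d), with the $\argmax$ set, peeling $q$ toward $p$ first through $\suppp(q-p)$ and then through $\suppm(q-p)$---but it is packaged genuinely differently. The paper does not perform an induction with early exit; it first proves the standalone Theorem~\ref{THlfnlocmindecZ}, a quantitative discrete-subgradient inequality
$g(q) \geq g(p) + \sum_{k=1}^{m}[\,g(p+\chi_{Y_{k}})-g(p)\,] + \sum_{j=1}^{l}[\,g(p-\chi_{Z_{j}})-g(p)\,]$,
built on the canonical nested chain decomposition
$q-p=\sum_{k}\chi_{Y_{k}}-\sum_{j}\chi_{Z_{j}}$ with
$Y_{1}\subseteq\cdots\subseteq Y_{m}=\suppp(q-p)$ and
$Z_{1}\subseteq\cdots\subseteq Z_{l}=\suppm(q-p)$;
part (1) then drops out in two lines, since $g(q)<g(p)$ forces at least one bracketed term to be negative, and your ``crux'' support bookkeeping is subsumed in the lemma's nested chains (indeed your first move $Y=\argmax_{i}\{q_{i}-p_{i}\}$ is exactly the paper's $Y_{1}$, and the paper's two-phase peeling---down to $p\wedge q$, then up to $p$---is exactly your case split on whether $\suppp(q-p)$ is empty). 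Your inductive formulation proves only the qualitative statement actually needed, terminating as soon as a descent direction appears, and your local handling of $\dom g$ is sound: finiteness of both sides of axiom (d) (the codomain excludes $-\infty$) forces the new point into $\dom g$, and the moves $q\mapsto q-\chi_{Y}$ and $q\mapsto q+\chi_{Z}$ shrink or preserve the relevant supports as you claim. What the paper's route buys is the stronger, reusable telescoping inequality---a discrete analogue of the gradient inequality, useful beyond this theorem; what yours buys is a leaner, self-contained argument from the single axiom (d). Your treatment of part (2) coincides with the paper's, which likewise records it as immediate from (1).
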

\begin{proof}
(1)
This follows from Theorem \ref{THlfnlocmindecZ} below.
If $g(q) < g(p)$ in (\ref{lfnlocdecineq}),
 $g(p+\unitvec{Y_{k}}) - g(p) < 0 $ 
for some $k$
or $g(p-\unitvec{Z_{j}}) - g(p) < 0$ for some $j$.
(2) This is immediate from (1).
\end{proof}

\begin{theorem}  \label{THlfnlocmindecZ}
Let $g: \ZZ\sp{N} \to \RR \cup \{ +\infty \}$ 
be an L$\sp{\natural}$-convex function.
For $p, q \in \dom g$ we have
\begin{equation} \label{lfnlocdecineq}
 g(q) \geq g(p) 
+ \sum_{k=1}\sp{m}[ g(p+\unitvec{Y_{k}}) - g(p) ] 
+ \sum_{j=1}\sp{l}[ g(p-\unitvec{Z_{j}}) - g(p) ] ,
\end{equation}
where%
\footnote{%%%%%%%%%%%%%%%%%%%
The decomposition (\ref{lnatq-pdec}) is uniquely determined:
$m  = \max(0, q_{1} - p_{1}, \ldots,q_{n} - p_{n} )$, \ 
$ Y_{k} = \{ i \mid q_{i}- p_{i} \geq m+1-k \}$
$(k=1,\ldots,m)$;
$l  = \max(0, p_{1} - q_{1}, \ldots,p_{n} - q_{n} )$, \ 
$ Z_{j} = \{ i \mid p_{i}- q_{i} \geq l+1-j  \}$ 
$(j=1,\ldots,l)$.
} %%%%%%%% footnote %%%%%%%%%%%%%%%%
$\emptyset \not= Y_{1} \subseteq Y_{2}  
  \subseteq \cdots \subseteq Y_{m} = \suppp(q-p)$, \ 
$\emptyset \not= Z_{1} \subseteq Z_{2}  \subseteq \cdots \subseteq Z_{l}
= \suppm(q-p)$, and
\begin{equation} \label{lnatq-pdec}
q - p = \sum_{k=1}^{m} \unitvec{Y_{k}} -  \sum_{j=1}^{l} \unitvec{Z_{j}} .
\end{equation}
\end{theorem}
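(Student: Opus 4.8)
The plan is to prove the inequality by telescoping along the monotone lattice path from $p$ to $q$ determined by the chain decomposition \eqref{lnatq-pdec}, controlling each step by a discrete-convexity estimate. Concretely, set $p_{0}=p$, let $p_{k}=p_{k-1}+\chi_{Y_{k}}$ for $k=1,\dots,m$, and then $p_{m+j}=p_{m+j-1}-\chi_{Z_{j}}$ for $j=1,\dots,l$, so that $p_{m+l}=q$ by \eqref{lnatq-pdec}. Telescoping gives $g(q)-g(p)=\sum_{k=1}^{m}[g(p_{k})-g(p_{k-1})]+\sum_{j=1}^{l}[g(p_{m+j})-g(p_{m+j-1})]$, and the theorem follows once I show that each path increment dominates the corresponding first-order increment based at $p$, namely $g(p_{k})-g(p_{k-1})\ge g(p+\chi_{Y_{k}})-g(p)$ and $g(p_{m+j})-g(p_{m+j-1})\ge g(p-\chi_{Z_{j}})-g(p)$. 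This is a ``marginals are monotone'' statement: moving the base away from $p$ along the chain can only increase the marginal cost of the next layer.

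The engine for these monotonicity statements is discrete midpoint convexity, Theorem~\ref{THlnatcondZ}(c). Three second-difference inequalities suffice, each obtained by applying \eqref{disfnmidconv} to a pair whose componentwise midpoint rounds to the two ``off-diagonal'' points. First, for nested sets $T\subseteq S$ and any base $r$, applying midpoint convexity to $r+\chi_{T}+\chi_{S}$ and $r$ (whose midpoint rounds up to $r+\chi_{S}$ and down to $r+\chi_{T}$) yields $g(r)+g(r+\chi_{T}+\chi_{S})\ge g(r+\chi_{T})+g(r+\chi_{S})$; the mirror computation with $r-\chi_{T}-\chi_{S}$ and $r$ gives the downward analogue $g(r)+g(r-\chi_{T}-\chi_{S})\ge g(r-\chi_{T})+g(r-\chi_{S})$. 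Second, for disjoint sets $V\cap W=\emptyset$, applying midpoint convexity to $r+\chi_{V}-\chi_{W}$ and $r$ (midpoint rounding up to $r+\chi_{V}$ and down to $r-\chi_{W}$) gives $g(r)+g(r+\chi_{V}-\chi_{W})\ge g(r+\chi_{V})+g(r-\chi_{W})$. Read as statements about marginals, the first says that the $+\chi_{S}$-marginal does not decrease when a subset layer $\chi_{T}$ ($T\subseteq S$) is added to the base; the second that the $-\chi_{S}$-marginal does not decrease when a subset layer is subtracted; and the third that the $-\chi_{W}$-marginal does not decrease when a layer on disjoint coordinates is added.

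With these in hand the telescoping closes cleanly, using the structural facts from \eqref{lnatq-pdec} that the $Y_{k}$ form an increasing chain with $Y_{i}\subseteq Y_{k}$ for $i<k$, the $Z_{j}$ form an increasing chain, and $\suppp(q-p)$ is disjoint from $\suppm(q-p)$. For the positive layers, $p_{k-1}-p=\sum_{i<k}\chi_{Y_{i}}$ is built from subset layers of $Y_{k}$, so repeated use of the first inequality transports the $+\chi_{Y_{k}}$-marginal from base $p$ up to base $p_{k-1}$ without decrease, giving $g(p_{k})-g(p_{k-1})\ge g(p+\chi_{Y_{k}})-g(p)$. For the negative layers the base shift $p_{m+j-1}-p=\sum_{k}\chi_{Y_{k}}-\sum_{j'<j}\chi_{Z_{j'}}$ is assembled one single-layer move at a time: each added $\chi_{Y_{k}}$ lives on coordinates disjoint from $Z_{j}$ and each subtracted $\chi_{Z_{j'}}$ has $Z_{j'}\subseteq Z_{j}$, so the disjoint-support and downward inequalities show that none of these moves decreases the $-\chi_{Z_{j}}$-marginal, giving $g(p_{m+j})-g(p_{m+j-1})\ge g(p-\chi_{Z_{j}})-g(p)$. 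Summing the two families of inequalities is exactly \eqref{lfnlocdecineq}.

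The step requiring the most care is not the algebra but the finiteness bookkeeping: the telescoping is legitimate only if every path point, and in particular every $p+\chi_{Y_{k}}$ and $p-\chi_{Z_{j}}$ appearing on the right-hand side, lies in $\dom g$, since a single $+\infty$ increment would corrupt the identity. Here I would invoke that $\dom g$ is an L$\sp{\natural}$-convex set (Remark~\ref{RMlnatconvexsetZ}): starting from $p,q\in\dom g$ and iterating the set-version of condition~(d) (peeling the top layer $A=\argmax_{i}\{q_{i}-p_{i}\}=Y_{1}$, and symmetrically the bottom layer, at each stage) shows that the entire chain, hence all intermediate path points, remains inside $\dom g$. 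Once membership is secured every term is finite and the chain of estimates is valid; this is the only place where the global structure of $\dom g$, rather than purely local convexity, enters.
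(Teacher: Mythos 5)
Your proof is correct, but it takes a genuinely different route from the paper's. The paper runs the argument along the opposite path $q \to p \wedge q \to p$ using only condition (d) of Theorem~\ref{THlnatcondZ}: applying (\ref{lnatAPR}) to the pair $(q,p)$ peels the top layer off $q$ and simultaneously produces the summand $g(p+\chi_{Y_{1}})-g(p)$, and iterating yields $g(q) \geq g(p\wedge q)+\sum_{k=1}^{m}[\,g(p+\chi_{Y_{k}})-g(p)\,]$; a symmetric iteration starting from $(p,\,p\wedge q)$ produces the $Z_{j}$-terms, and adding the two inequalities finishes. A side benefit of that arrangement is that finiteness is automatic: each application of (\ref{lnatAPR}) has a finite left-hand side, so both points on the right stay in $\dom g$, and no separate domain lemma is needed. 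Your telescoping along $p \to p\vee q \to q$ instead isolates an explicit ``monotone marginals'' principle; your three second-difference inequalities are correctly extracted from discrete midpoint convexity (\ref{disfnmidconv}), and the chain bookkeeping ($Y_{i}\subseteq Y_{k}$ for $i<k$, $Z_{j'}\subseteq Z_{j}$ for $j'<j$, disjointness of $\suppp(q-p)$ and $\suppm(q-p)$) is exactly what makes the transport of marginals work. The price is the domain issue you identified, and there your sketch needs one repair: peeling with condition (d) against the fixed base $p$, or against the simultaneously updated pair $(q-\chi_{Y_{1}},\,p+\chi_{Y_{1}})$, produces the single-layer points $p+\chi_{Y_{k}}$ and $q$-side suffix points, not your prefix points $p_{t}=p+\sum_{k\leq t}\chi_{Y_{k}}$; the peeling must be done against the moving base, i.e., apply (d) to the pair $(q,p_{t})$, whose argmax set is exactly $Y_{t+1}$, and in the descending phase to $(p_{m+j},q)$, whose argmax set is $Z_{j+1}$. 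Alternatively, membership is immediate from condition (b) of Remark~\ref{RMlnatconvexsetZ}, since $p_{t}=\bigl(q-(m-t)\vecone\bigr)\vee p$ and $p_{m+j}=q\vee\bigl(p\wedge(q+(l-j)\vecone)\bigr)$. (The displaced points $p_{t}+\chi_{Y_{k}}$ along the transport chain need no extra care: the marginals $g(p_{t}+\chi_{Y_{k}})-g(p_{t})$ are monotone in $\RR\cup\{+\infty\}$ and the final one is finite, so all of them are.) In exchange, your route uses only condition (c) for the estimates themselves and makes the supermodularity structure behind the bound visible, while the paper's argument is shorter and carries its finiteness bookkeeping for free.
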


\begin{proof}
(1)
If $\suppp(q-p)$ is nonempty,
(\ref{lnatAPR}) for $(q,p)$ implies 
\[
 g(q) \geq  g(p+\unitvec{Y_{1}})  + g(q-\unitvec{Y_{1}}) - g(p)
= [ g(p+\unitvec{Y_{1}}) - g(p) ] + g(q_{2}),
\]
where  $q_{2}=q-\unitvec{Y_{1}}$.
If $\suppp(q_{2}-p)$ is nonempty,
(\ref{lnatAPR}) for $(q_{2},p)$ implies
\[
 g(q_{2}) \geq  g(p+\unitvec{Y_{2}}) + g(q_{2}-\unitvec{Y_{2}})  - g(p)  
=  [ g(p+\unitvec{Y_{2}}) - g(p) ] + g(q_{3}),
\]
where $q_{3}=q_{2}-\unitvec{Y_{2}}
= q-\unitvec{Y_{1}}-\unitvec{Y_{2}}$.
Repeating this, we obtain
$q'=q-\sum_{k=1}\sp{m} \unitvec{Y_{k}} = p \wedge q$
and
\begin{equation} \label{ineq1}
 g(q) \geq g(q') 
+ \sum_{k=1}\sp{m}[ g(p+\unitvec{Y_{k}}) - g(p) ] .
\end{equation}
By the similar procedure starting with $(p, q')$
we obtain 
$p=q'+\sum_{j=1}\sp{l} \unitvec{Z_{j}}$ and
\begin{equation} \label{ineq2}
 g(q') \geq g(p) + \sum_{j=1}\sp{l}[ g(p-\unitvec{Z_{j}}) - g(p) ] .
\end{equation}
Adding (\ref{ineq1}) and (\ref{ineq2})  we obtain (\ref{lfnlocdecineq}).
\end{proof}

\paragraph{Algorithms for L$\sp{\natural}$-convex minimization:}
Algorithms for L$\sp{\natural}$-convex function minimization
are considered by 
Murota (2000b)\citeH{Malg00},
Kolmogorov and Shioura (2009)\citeH{KS09lnatmin},
%%Murota and Shioura (2014, 2016)\citeH{MS14exbndLmin}\citeH{MS16Lmin2ph},
\RED{
Murota and Shioura (2014, 2017)\citeH{MS14exbndLmin}\citeH{MS16Lmin2ph},
}%
Murota et al.~(2016)\citeH{MSY16auction},
and
Shioura (2017)\citeH{Shi17L};
see also Section 10.3 of Murota (2003)\citeH[Section 10.3]{Mdcasiam}.
Among others we present here the following two algorithms%
\footnote{%%%%%%%%%%%%%%%%%%%%%%%%
Algorithm {\sc Greedy} is called ``steepest descent algorithm'' in 
Section 10.3.1 of Murota (2003)\citeH[Section 10.3.1]{Mdcasiam}.
}. %%%% footnote %%%%%%%%%%%%%%%%%%%%%%%%%%%%%%

%%%%%%%%%%%%%%%%%
\begin{tabbing}       
\= {\bf Algorithm} {\sc Greedy}  \\
\> \quad  Step 0: 
    Find a vector $p\sp{\circ} \in \dom g$ 
  \   and set $p := p\sp{\circ}$. \\
\> \quad  Step 1:
   Find $\varepsilon \in \{ +1, -1 \}$ and $X \subseteq N$
   that minimize $g(p+ \varepsilon \chi_{X})$.  \\
\> \quad  Step 2: 
    If $g(p) \leq g(p + \varepsilon \chi_{X})$,  then output $p$ and stop.  \\ 
\> \quad  Step 3: 
    Set  $p:= p + \varepsilon \chi_{X}$ \ and \ go to Step~1. 
\end{tabbing}

%%%%%%%%%%%%%%%%%
\begin{tabbing}      
\= {\bf Algorithm} {\sc GreedyUpMinimal} \\
\> \quad  Step 0: 
    Find a vector $p\sp{\circ} \in \dom g$ such that 
    $\{ q \mid  q \geq p\sp{\circ} \} \cap \argmin g  \not= \emptyset$ 
    \  \ and set $p := p\sp{\circ}$.  \\
\> \quad  Step 1:
   Find the minimal minimizer $X \subseteq N$ of $g(p+  \chi_{X})$.  \\
\> \quad  Step 2: 
    If $X = \emptyset$,  then output $p$ and stop.   \\ 
\> \quad  Step 3: 
    Set  $p:= p + \chi_{X}$ \ and \ go to Step~1. 
\end{tabbing}

The algorithm {\sc Greedy} can start with an arbitrary initial vector $p\sp{\circ}$
in the effective domain, and the vector $p$ may increase or
decrease depending on $\varepsilon =+1$ or $-1$.
The output of the algorithm {\sc Greedy} is not uniquely determined,
varying with the choice of $\varepsilon$ and $X$ in case of ties
in minimizing $g(p+ \varepsilon \chi_{X})$ in Step~1. 
Step~1 amounts to minimizing two set functions
$\rho_{+}(X)=g(p+\chi_{X})-g(p)$ and
$\rho_{-}(X)=g(p-\chi_{X})-g(p)$
over all subsets $X$ of $N$.
As a consequence of submodularity of $g$, 
both $\rho_{+}$ and $\rho_{-}$ are submodular set functions
and they can be minimized efficiently (i.e., in strongly polynomial time).
The second algorithm, {\sc GreedyUpMinimal}, keeps increasing the vector $p$,
until it reaches the smallest minimizer of $g$ 
that is greater than or equal to $p\sp{\circ}$.
Accordingly, the initial vector $p\sp{\circ}$ must be small enough to 
ensure $\{ q \mid  q \geq p\sp{\circ} \} \cap \argmin g  \not= \emptyset$.
If $g$ has the minimal minimizer $p\sp{*}_{\min}$ and
$p\sp{\circ} \leq p\sp{*}_{\min}$, 
then the algorithm {\sc GreedyUpMinimal} outputs $p\sp{*}_{\min}$.

The correctness of the algorithms, at their termination,
is guaranteed by Theorem \ref{THlfnlocminZ},
whereas the following exact bounds for the number of updates of $p$
are established recently 
by Murota and Shioura (2014)\citeH{MS14exbndLmin}.

\begin{theorem}\label{THlminalgiter}
\quad  %% do not delete this %%

\noindent {\rm (1)}
The number of updates of $p$ in the algorithm {\sc Greedy} is exactly equal to 
\begin{equation}  \label{distmudef}
 \mu(p\sp{\circ})
 =   \min\{\| p\sp{\circ} - p\sp{*} \|_\infty\sp{+}
+ \| p\sp{\circ} - p\sp{*} \|_\infty\sp{-} \mid p\sp{*} \in \arg\min g\} 
\end{equation}
under the assumption of $\argmin g \not= \emptyset$,
where
$\| q \|_\infty\sp{+} = \max(0, q_{1}, q_{2}, \ldots, q_{n} )$
and 
$\| q \|_\infty\sp{-} = \max(0, -q_{1}, -q_{2}, \ldots, -q_{n} )$.

\noindent {\rm (2)}
The number of updates of $p$ in the algorithm {\sc GreedyUpMinimal} 
is exactly equal to%
\footnote{%%%%%%%%%%%%%%%%%%%%%%%%%
We have $\hat\mu(p\sp{\circ})=+\infty$ if there is no
$p\sp{*} \in \arg\min g$ with $p\sp{*} \geq p\sp{\circ}$.
It can be shown that
$\hat\mu(p\sp{\circ}) \in \{ \mu(p\sp{\circ}), +\infty \}$
holds for all $p\sp{\circ} \in \ZZ\sp{n}$;  see 
Shioura (2017)\citeH{Shi17L}
for the proof.
} %% footnote %%%%%%%%%%%%%%%%%%%%%%%%%%
\begin{equation}  \label{disthatmudef}
 \hat\mu(p\sp{\circ})
= \min\{\| p\sp{\circ} - p\sp{*}  \|_\infty 
   \mid p\sp{*} \in \arg\min g,\ p\sp{\circ} \leq p\sp{*}  \} 
\end{equation}
under the assumption of
$\{ q \mid  q \geq p\sp{\circ} \} \cap \argmin g  \not= \emptyset$.
If the minimal minimizer $p\sp{*}_{\min}$ exists and
$p\sp{\circ} \leq p\sp{*}_{\min}$, then 
$ \hat\mu(p\sp{\circ}) =
\| p\sp{\circ} - p\sp{*}_{\min} \|_{\infty}$.
\end{theorem}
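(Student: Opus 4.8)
The plan is to view each algorithm as monotonically decreasing a potential equal to the relevant distance to $\argmin g$, and to show that every update lowers this potential by exactly $1$. Both algorithms stop only at a global minimizer: their termination tests, ``$g(p)\le g(p+\varepsilon\chi_X)$ for all $\varepsilon,X$'' and ``$X=\emptyset$'', are the local optimality condition of Theorem~\ref{THlfnlocminZ}(2), so at termination $p\in\argmin g$ and the potential is $0$. For \textsc{Greedy} I first record that the map $v\mapsto\|v\|_\infty^{+}+\|v\|_\infty^{-}$ is $1$-Lipschitz under any perturbation by $\pm\chi_X$: incrementing the coordinates in $X$ raises $\|\cdot\|_\infty^{+}$ by at most $1$ and cannot increase $\|\cdot\|_\infty^{-}$, and symmetrically for a decrement. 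Since $\mu(p)$ is the minimum over $p^{*}\in\argmin g$ of $\|p-p^{*}\|_\infty^{+}+\|p-p^{*}\|_\infty^{-}$, it is likewise $1$-Lipschitz, so $|\mu(p+\varepsilon\chi_X)-\mu(p)|\le 1$ for every step; in particular each update lowers $\mu$ by at most $1$, whence \textsc{Greedy} makes at least $\mu(p^{\circ})$ updates.

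The substantial direction is that every steepest-descent step lowers $\mu$ by exactly $1$; by the Lipschitz bound it suffices to show $\mu(p+\varepsilon\chi_X)\le\mu(p)-1$ for any $g$-minimizing step at a non-minimizer $p$. The plan is to prove an alignment lemma: there is a nearest minimizer $p^{*}$ realizing $\mu(p)$ toward which the chosen direction points, meaning $\|(p+\varepsilon\chi_X)-p^{*}\|_\infty^{+}+\|(p+\varepsilon\chi_X)-p^{*}\|_\infty^{-}=\mu(p)-1$. Intuitively, lowering the top overshooting layer $A=\argmax_i(p_i-p^{*}_i)$ or raising the bottom undershooting layer is the canonical distance-reducing move, and the decomposition inequality of Theorem~\ref{THlfnlocmindecZ} for the pair $(p^{*},p)$ certifies that descent toward $p^{*}$ is possible. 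The difficulty is that the steepest-descent rule minimizes $g(p+\varepsilon\chi_X)$ and may select a different optimizer, so I must show that \emph{any} $g$-minimizing direction is itself distance-reducing for a suitable $p^{*}$. I would establish this using translation-submodularity (Theorem~\ref{THlnatcondZ}(b)) together with the lattice structure of the minimizers of the submodular functions $\rho_{+}(X)=g(p+\chi_X)-g(p)$ and $\rho_{-}(X)=g(p-\chi_X)-g(p)$, which pins down the minimal and maximal $g$-optimal sets and lets me match them to the extreme deviation layers of an appropriately chosen nearest minimizer. I expect this alignment lemma to be the principal difficulty of the whole proof.

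For \textsc{GreedyUpMinimal} the iterate is nondecreasing, so I would fix as target $\bar p$ the least minimizer dominating $p^{\circ}$—well defined because $\argmin g$ is an L$\sp{\natural}$-convex set, hence a sublattice (Theorem~\ref{THlfnargminZ}), so $\{q\mid q\ge p^{\circ}\}\cap\argmin g$ is a nonempty sublattice bounded below and possesses a least element—and note $\|\bar p-p^{\circ}\|_\infty=\hat\mu(p^{\circ})$ since $\bar p\ge p^{\circ}$. Two facts drive the count. First, the iterate stays below $\bar p$: submodularity of $g$ gives $g(p+\chi_X)+g(\bar p)\ge g((p+\chi_X)\vee\bar p)+g((p+\chi_X)\wedge\bar p)$, and because $\bar p$ is a global minimizer this forces the minimal minimizer $X$ of $\rho_{+}$ to satisfy $X\subseteq\suppp(\bar p-p)$, so $p+\chi_X\le\bar p$; a short argument then shows the process terminates exactly at $\bar p$. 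Consequently each coordinate $i$ gains a total of $\bar p_i-p^{\circ}_i$ in unit increments, so the number of updates is at least $\max_i(\bar p_i-p^{\circ}_i)=\hat\mu(p^{\circ})$. Second—and this is the delicate point—I must show each update increments every coordinate currently at maximal deficiency $\nu(p)=\max_i(\bar p_i-p_i)$, that is $\{i\mid\bar p_i-p_i=\nu(p)\}\subseteq X$, so that $\nu$ drops by exactly $1$ per step and the number of updates is at most $\nu(p^{\circ})=\hat\mu(p^{\circ})$. Establishing this top-layer containment from translation-submodularity (Theorem~\ref{THlnatcondZ}(b)) and the decomposition inequality (Theorem~\ref{THlfnlocmindecZ}) toward $\bar p$ is the main obstacle for~(2); the degenerate value $\hat\mu(p^{\circ})=+\infty$ corresponds to the absence of any minimizer above $p^{\circ}$.
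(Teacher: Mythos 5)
The paper contains no proof of this theorem---it is quoted from Murota and Shioura (2014)---so your proposal must stand on its own, and judged that way its architecture is right but the two statements that carry the entire content of the theorem are left unproven. Your scaffolding is correct: the $1$-Lipschitz estimate giving the lower bound in (1) is fine; in (2) the least minimizer $\bar p$ dominating $p\sp{\circ}$ exists (your lattice argument works because $\{q \mid q\ge p\sp{\circ}\}\cap\argmin g$ is meet-closed and bounded below), your submodularity argument that the minimal minimizer $X$ of $\rho_{+}$ satisfies $X\subseteq\suppp(\bar p-p)$ is complete, and the resulting lower bound $\max_i(\bar p_i-p\sp{\circ}_i)=\hat\mu(p\sp{\circ})$ is correct. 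But the upper bounds---that every steepest-descent step decreases $\mu$ by exactly $1$ in (1), and the top-layer containment $A=\{i\mid \bar p_i-p_i=\nu(p)\}\subseteq X$ in (2)---are exactly what you defer (``the principal difficulty,'' ``the main obstacle''), and the tools you gesture at do not close them as stated. For (2), translation-submodularity (Theorem~\ref{THlnatcondZ}(b)) applied with $p'=\bar p$, $q'=p+\chi_{X}$, $\alpha=\nu(p)-1$ yields $g(\bar p)+g(p+\chi_{X})\ge g(p+\chi_{X\cup A})+g(\bar p-\chi_{A\setminus X})$, hence only that $X\cup A$ is \emph{another} minimizer of $\rho_{+}$---perfectly consistent with $A\not\subseteq X$, since the minimal minimizer is the intersection of all minimizers, not a superset of them. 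The missing idea is a strictness mechanism tied to least-ness: $\bar p-\chi_{A\setminus X}\ge p\sp{\circ}$ (as $\bar p_i-1\ge p_i\ge p\sp{\circ}_i$ on $A\setminus X$), so if $A\setminus X\neq\emptyset$ this point lies strictly below $\bar p$ in $\{q\mid q\ge p\sp{\circ}\}$ and therefore cannot be optimal, giving $g(\bar p-\chi_{A\setminus X})>g(\bar p)$ and thus $\rho_{+}(X\cup A)<\rho_{+}(X)$, a contradiction. Nothing in your sketch isolates this use of $\bar p$ being the \emph{least} minimizer above $p\sp{\circ}$, and without it the containment simply does not follow from the lattice structure of $\argmin\rho_{\pm}$.

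For (1) the gap is larger still: your ``alignment lemma'' must handle a step $p\mapsto p+\varepsilon\chi_{X}$ whose sign and support need not point toward any fixed minimizer attaining $\mu(p)$; one has to show (say for $\varepsilon=+1$) that some optimal $p\sp{*}$ can be chosen with $\argmax_i(p\sp{*}_i-p_i)\subseteq X\subseteq\suppm(p-p\sp{*})$, possibly after re-selecting $p\sp{*}$, since a single coordinate of $X$ overshooting $p\sp{*}$ already destroys the decrease of $\|p-p\sp{*}\|_\infty\sp{+}+\|p-p\sp{*}\|_\infty\sp{-}$. Saying that the minimal/maximal minimizers of $\rho_{\pm}$ ``match the extreme deviation layers'' names the desired conclusion rather than an argument; this is precisely the nontrivial lemma for which the present paper cites a separate research article. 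One further imprecision: the termination test of {\sc GreedyUpMinimal} ($X=\emptyset$) certifies only $g(p)\le g(p+\chi_{Y})$ for all $Y$, which is half of the criterion in Theorem~\ref{THlfnlocminZ}(2); it implies global minimality only because a minimizer exists above $p$, via Theorem~\ref{THlfnlocmindecZ} applied to $q=\bar p\ge p$ (where only the $\unitvec{Y_k}$ terms appear), so your opening sentence attributing termination-correctness of both algorithms directly to Theorem~\ref{THlfnlocminZ}(2) skips a step that your own later machinery would have to supply.
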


We can conceive variants of
 {\sc GreedyUpMinimal} 
by changing  ``{\sc Up}'' to  ``{\sc Down}''
and/or  
``{\sc Minimal}'' to  ``{\sc Maximal}'' according to
Table \ref{TBgreedyLalg} (a).
For example, the algorithm {\sc GreedyDownMinimal} 
is obtained from {\sc GreedyUpMinimal} 
by changing Steps 0 and 1 to:
%%%%%%%%%%%%%%%%%
\begin{tabbing}      
\= \quad  Step 0: 
    Find a vector $p\sp{\circ} \in \dom g$ such that 
    \underline{$\{ q \mid  q \leq p\sp{\circ} \}$} $\cap \argmin g  \not= \emptyset$ 
    \  \ and set $p := p\sp{\circ}$.  \\
\> \quad  Step 1:
   Find the \underline{maximal} minimizer $X \subseteq N$ of 
\underline{$g(p-  \chi_{X})$}.  
\end{tabbing}
Starting with an initial vector $p\sp{\circ}$  large enough to 
ensure $\{ q \mid  q \leq p\sp{\circ} \} \cap \argmin g  \not= \emptyset$,
this algorithm keeps decreasing the vector $p$.
If $g$ has the minimal minimizer $p\sp{*}_{\min}$,
the algorithm stops when it reaches $p\sp{*}_{\min}$.
The number of updates of $p$ in {\sc GreedyDownMinimal} 
is exactly equal to $\| p\sp{\circ} - p\sp{*}_{\min} \|_{\infty}$
(Proposition 3.7 of Murota et al.~2016).
\citeH[Proposition 3.7]{MSY16auction}%
Table \ref{TBgreedyLalg} (b) shows the output 
and the number of updates of $p$ for the four algorithms.

\begin{table}
\caption{Algorithms {\sc Greedy-\{Up, Down\}-\{Minimal, Maximal\}} }
\label{TBgreedyLalg}
\begin{center}
(a) Description of the algorithms
\\
\begin{tabular}{ll|l|l}
\hline 
\multicolumn{2}{l|}{{\sc Greedy}}
    & \multicolumn{1}{c|}{ {\sc Minimal} } & \multicolumn{1}{c}{ {\sc Maximal} }
\\ \hline 
{\sc Up} &  Step 0
     & \multicolumn{2}{c}{$p\sp{\circ}$ such that \ 
             $\{ q \mid  q \geq p\sp{\circ} \} \cap \argmin g  \not= \emptyset$
   \quad (i.e., $p\sp{*}_{\max} \geq p\sp{\circ}$)}
\\  &   Step 1
    &  minimal minimizer $X$ of $g(p+ \chi_X)$    
     &  maximal minimizer $X$ of $g(p+ \chi_X)$
\\ \hline 
{\sc Down} &  Step 0 
     & \multicolumn{2}{c}{$p\sp{\circ}$ such that \  
     $\{ q \mid  q \leq p\sp{\circ} \} \cap \argmin g  \not= \emptyset$
   \quad (i.e., $p\sp{*}_{\min} \leq p\sp{\circ}$)}
\\   & Step 1
     &  maximal minimizer $X$ of $g(p- \chi_X)$
     &  minimal minimizer $X$ of $g(p- \chi_X)$    
\\ \hline 
\\
     \multicolumn{4}{c}{(b) Output and the exact number of updates of $p$}
\\ \hline 
\multicolumn{2}{l|}{{\sc Greedy}}
     & \multicolumn{1}{c|}{ {\sc Minimal} } & \multicolumn{1}{c}{ {\sc Maximal} }
\\ \hline 
{\sc Up}  & Output
     & $p\sp{*}_{\min}$ if $p\sp{*}_{\min} \geq p\sp{\circ}$; otherwise 
     & $p\sp{*}_{\max}$
\\   &
     & \quad $\min( \{ q \mid  q \geq p\sp{\circ} \} \cap \argmin g)$
     &
\\  \multicolumn{2}{r|}{\# Updates}  
   &  $\| p\sp{\circ} - p\sp{*}_{\min} \|_{\infty}$
       if $p\sp{*}_{\min} \geq p\sp{\circ}$; 
                        & $\| p\sp{\circ} - p\sp{*}_{\max} \|_{\infty}$
\\
   &  &  \quad  otherwise   $\hat\mu(p\sp{\circ})$   & 
\\ \hline 
{\sc Down}   & Output
     & $p\sp{*}_{\min}$
     & $p\sp{*}_{\max}$ if $p\sp{*}_{\max} \leq p\sp{\circ}$; otherwise 
\\   &  &
     & \quad $\max( \{ q \mid  q \leq p\sp{\circ} \} \cap \argmin g)$
\\  \multicolumn{2}{r|}{\# Updates}  
   &  $\| p\sp{\circ} - p\sp{*}_{\min} \|_{\infty}$ 
   &  $\| p\sp{\circ} - p\sp{*}_{\max} \|_{\infty}$
       if $p\sp{*}_{\max} \leq p\sp{\circ}$;
\\ & &  &  \quad otherwise $\check\mu(p\sp{\circ})$
\\ \hline 
%%%%%%%%%%%%%%%%%%%%%%
   \multicolumn{4}{c}{$p\sp{\circ}$: initial vector, \ \  
      $p\sp{*}_{\min}$: minimal minimizer of $g$, \ \ 
      $p\sp{*}_{\max}$: maximal minimizer of $g$}
\\
\multicolumn{4}{c}{$
 \hat\mu(p\sp{\circ}) = \min\{\| p\sp{\circ} - p\sp{*}  \|_\infty 
   \mid p\sp{*} \in \arg\min g,\ p\sp{\circ} \leq p\sp{*}  \}$}
\\
\multicolumn{4}{c}{$
 \check\mu(p\sp{\circ}) = \min\{\| p\sp{\circ} - p\sp{*}  \|_\infty 
   \mid p\sp{*} \in \arg\min g,\ p\sp{\circ} \geq p\sp{*}  \}$}
\\ \hline 
\end{tabular}
\end{center}
\end{table}

In Section \ref{SCauction} we shall discuss connection of
L$\sp{\natural}$-convex function minimization to iterative auctions.
The algorithm {\sc GreedyUpMinimal}
corresponds to ascending (English) auctions,
and {\sc GreedyDownMaximal} to descending (Dutch) auctions.
In connection to two-phase (English--Dutch) auctions 
it is natural to consider two-phase algorithms
for L$\sp{\natural}$-convex function minimization.

The combination of 
{\sc GreedyUpMinimal} and {\sc GreedyDownMaximal} 
results in the following algorithm:

%%%%%%%%%%%%%%%%%
\begin{tabbing}      
\= {\bf Algorithm} {\sc TwoPhaseMinMax} \\
\> \quad  Step 0: 
    Find a vector $p\sp{\circ} \in \dom g$  \ and set $p := p\sp{\circ}$.
    Go to Up Phase.  \\
\> \quad Up Phase: 
\\
\> \qquad  Step U1:
   Find the minimal minimizer $X \subseteq N$ of $g(p+  \chi_{X})$.  \\
\> \qquad  Step U2: 
 If $X = \emptyset$, then go to Down Phase.  \\
\> \qquad  Step U3: 
    Set  $p:= p + \chi_{X}$ \ and \ go to Step~U1. \\
\> \quad Down Phase: 
\\
\> \qquad  Step D1:
   Find the minimal minimizer $X \subseteq N$ of $g(p -  \chi_{X})$.  \\
\> \qquad  Step D2: 
 If $X = \emptyset$, then output $p$ and stop.  \\
\> \qquad  Step D3: 
    Set  $p:= p - \chi_{X}$ \ and \ go to Step~D1. 
\end{tabbing}

It can be shown from Theorem \ref{THlnatcondZ} (d) that, 
at the end of the up phase, the vector $p$ satisfies the condition 
\ $\{ q \mid  q \leq p \} \cap \argmin g  \not= \emptyset$ \ 
required for an initial vector of {\sc GreedyDownMaximal}.
Therefore, the output of {\sc TwoPhaseMinMax} 
is guaranteed to be a minimizer of $g$.
An upper bound on the number of updates of $p$ is
given in 
Theorem 4.13 of Murota et al.~(2016),
\citeH[Theorem 4.13]{MSY16auction}%
which is improved to the following statement by 
%%Murota and Shioura (2016)\citeH{MS16Lmin2ph};
\RED{
Murota and Shioura (2017)\citeH{MS16Lmin2ph};
}%
see also Remark~\ref{RMtwophaseMinMaxbnd}.
Recall the definition of $\mu(p\sp{\circ})$ from (\ref{distmudef}).

\begin{theorem}\label{THlmintwophaseMinMax}
For any initial vector $p\sp{\circ}$,
the algorithm {\sc TwoPhaseMinMax} terminates
by outputting some minimizer of $g$.
The number of updates of the vector $p$ 
is bounded by $\mu(p\sp{\circ})$ in the up phase 
and by $\mu(p\sp{\circ})$ in the down phase; 
in total, bounded by $2\mu(p\sp{\circ})$.
\end{theorem}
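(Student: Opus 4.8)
The plan is to treat correctness and the iteration counts separately, and to analyze each phase by comparison with one fixed optimal vector $p^{*}$.

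\textbf{Termination and correctness.} The up phase is exactly {\sc GreedyUpMinimal}, so $p$ only increases and each update strictly decreases $g(p)$; hence it terminates at a vector $\bar p$ that is up-locally optimal, i.e. $g(\bar p)\le g(\bar p+\chi_{Y})$ for all $Y\subseteq N$. The first thing I would prove is that some minimizer lies weakly below $\bar p$. I would take a minimizer $p^{*}$ minimizing $\sum_{i}\max(0,p^{*}_{i}-\bar p_{i})$ and, assuming $\suppp(p^{*}-\bar p)\neq\emptyset$, apply condition (d) of Theorem~\ref{THlnatcondZ}, namely (\ref{lnatAPR}), to the pair $(p^{*},\bar p)$ with $A=\argmax_{i}\{p^{*}_{i}-\bar p_{i}\}\subseteq\suppp(p^{*}-\bar p)$. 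This yields $g(p^{*})+g(\bar p)\ge g(p^{*}-\chi_{A})+g(\bar p+\chi_{A})$; up-local optimality gives $g(\bar p+\chi_{A})\ge g(\bar p)$, so $g(p^{*}-\chi_{A})\le g(p^{*})$, forcing $p^{*}-\chi_{A}$ to be a minimizer with strictly smaller excess over $\bar p$, a contradiction. Thus $\arg\min g\cap\{q\mid q\le\bar p\}\neq\emptyset$, which is precisely the precondition that {\sc GreedyDownMaximal} needs. Since $\arg\min g$ is an L$\sp{\natural}$-convex set, hence a sublattice (Theorem~\ref{THlfnargminZ}), its intersection with the box $\{q\le\bar p\}$ is a nonempty sublattice with a maximal element $q^{\star}$; the down phase descends to $q^{\star}$, and that $q^{\star}$ is a global minimizer follows from Theorem~\ref{THlfnlocminZ}, because at termination $p$ is down-locally optimal and still dominates a minimizer.

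\textbf{The two counting bounds.} Now fix a minimizer $p^{*}$ attaining $\mu(p^{\circ})$ in (\ref{distmudef}). The core of the argument is a confinement (proximity) lemma: the whole trajectory stays inside the box $[\,p^{\circ}\wedge p^{*},\ p^{\circ}\vee p^{*}\,]$, with the up phase confined to $[\,p^{\circ},\ p^{\circ}\vee p^{*}\,]$ and the down phase to $[\,p^{\circ}\wedge p^{*},\ \bar p\,]$. Granting this, $\bar p-p^{\circ}\in[\,\bm{0},\ (p^{\circ}\vee p^{*})-p^{\circ}\,]$ gives $\|\bar p-p^{\circ}\|_{\infty}\le\|p^{\circ}-p^{*}\|_{\infty}^{-}\le\mu(p^{\circ})$, while $\bar p-q^{\star}\le(p^{\circ}\vee p^{*})-(p^{\circ}\wedge p^{*})$ gives $\|\bar p-q^{\star}\|_{\infty}\le\|p^{\circ}-p^{*}\|_{\infty}\le\mu(p^{\circ})$. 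It then remains to convert these $\ell_{\infty}$-displacements into update counts; for the minimal/maximal-minimizer greedy this conversion is exact (the number of updates equals the $\ell_{\infty}$-displacement toward the output), which is the content of Theorem~\ref{THlminalgiter} and of the {\sc GreedyDownMaximal} row of Table~\ref{TBgreedyLalg}. Hence the up phase performs at most $\mu(p^{\circ})$ updates and so does the down phase, giving the claimed total of $2\mu(p^{\circ})$.

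\textbf{Main obstacle.} The hard part is the confinement lemma, i.e. showing that the minimal-minimizer ascent never overshoots $p^{\circ}\vee p^{*}$ (and, dually, that the descent never undershoots $p^{\circ}\wedge p^{*}$). The key sublemma I would establish is that, at any $p$ in the box, the minimal minimizer $X$ of $g(p+\chi_{X})$ satisfies $X\subseteq\suppp(p^{*}-p)$, so that $p+\chi_{X}\le p^{\circ}\vee p^{*}$. This is an exchange argument: $\rho_{+}(X)=g(p+\chi_{X})-g(p)$ is submodular, and translation-submodularity (\ref{lnatftrsubmZ}) couples the values of $g$ near $p$ with those near $p^{*}$, allowing one to replace $X$ by $X\cap\suppp(p^{*}-p)$ without increasing $g(p+\chi_{X})$; minimality of $X$ then forces the inclusion, and the dual statement handles the down phase. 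A secondary technical point is that the up phase is launched from an arbitrary $p^{\circ}$, so Theorem~\ref{THlminalgiter}(2) cannot be cited verbatim (its hypothesis $\{q\mid q\ge p^{\circ}\}\cap\arg\min g\neq\emptyset$ may fail); the confinement lemma repairs this, since it exhibits $\bar p$ as the minimal up-locally-optimal vector above $p^{\circ}$ and thereby makes the exact-count bookkeeping applicable on the truncated trajectory.
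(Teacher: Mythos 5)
Your first part (termination and correctness) is sound, and it is in fact the only part the paper itself proves: the paper's argument is exactly your observation that, by Theorem~\ref{THlnatcondZ}\,(d), the up phase ends at a point $\bar p$ with $\{q \mid q \leq \bar p\} \cap \argmin g \neq \emptyset$, which is the precondition of {\sc GreedyDownMaximal}; your potential argument via (\ref{lnatAPR}) fleshes this out correctly. The iteration bound, by contrast, is not proved in the paper at all --- it is quoted from Murota and Shioura (2017) --- so the counting part is where your proposal carries the real burden, and there it breaks.

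The gap is that your ``key sublemma'' is false, and with it the confinement lemma. Take $g(p_{1},p_{2}) = \max(0,-p_{1}) + |p_{1}-p_{2}|$ on $\ZZ^{2}$, which is L$\sp{\natural}$-convex (a univariate convex function of $p_{1}$ plus a univariate convex function of $p_{1}-p_{2}$; sums of L$\sp{\natural}$-convex functions are L$\sp{\natural}$-convex). Here $\argmin g = \{(t,t) \mid t \geq 0\}$. For $p\sp{\circ} = (0,5)$ one computes $\mu(p\sp{\circ}) = 5$, attained in particular by $p\sp{*} = (0,0)$. For this $\mu$-attaining minimizer, $\suppp(p\sp{*}-p\sp{\circ}) = \emptyset$, yet the minimal minimizer of $g(p\sp{\circ}+\chi_{X})$ is $X = \{1\}$ (since $g(1,5) = 4 < 5 = g(0,5)$), and the up phase climbs $(0,5) \to (1,5) \to \cdots \to (5,5)$, leaving the box $[\,p\sp{\circ}, p\sp{\circ}\vee p\sp{*}\,]$ (a single point here) at the very first step. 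The up phase makes $5$ updates while your confinement argument would bound it by $\|p\sp{\circ}-p\sp{*}\|_{\infty}\sp{-} = 0$; note that your scheme would in fact prove the \emph{stronger} bound ``up phase $\leq \|p\sp{\circ}-p\sp{*}\|_{\infty}\sp{-}$,'' which this example refutes outright. The argument survives only for a trajectory-adapted choice such as $p\sp{*} = (5,5)$, and proving that a suitably adapted $\mu$-attaining minimizer always exists is precisely the nontrivial content of the Murota--Shioura analysis that your proposal leaves untouched; since you fix an \emph{arbitrary} $\mu$-attaining $p\sp{*}$, the exchange argument you sketch (replacing $X$ by $X \cap \suppp(p\sp{*}-p)$) cannot go through. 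A secondary, independent gap: even granting confinement, you may not invoke Theorem~\ref{THlminalgiter}\,(2) or Table~\ref{TBgreedyLalg} for the up phase, because its precondition can fail and the phase ends at a merely up-locally-optimal point rather than a minimizer; the clean repair is to observe that the up phase coincides with {\sc GreedyUpMinimal} applied to $g$ restricted to the box $\{q \mid q \geq p\sp{\circ}\}$ (again L$\sp{\natural}$-convex, with the precondition automatic), but then the exact count becomes the distance to the nearest minimizer of the \emph{restricted} function, and showing that this is at most $\mu(p\sp{\circ})$ is once more the hard step, which your proposal does not supply.
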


For the analysis of the Vickrey--English--Dutch auction algorithm
(Section \ref{SCunitdemandauction}),
it is convenient to consider the combination of 
{\sc GreedyUpMinimal} and {\sc GreedyDownMinimal}.
The resulting two-phase algorithm is called {\sc TwoPhaseMinMin},
which is the same as {\sc TwoPhaseMinMax} except that  
Step D1 is replaced by 
\begin{tabbing}      
\= \qquad  Step D1:
   Find the maximal minimizer $X \subseteq N$ of $g(p -  \chi_{X})$. 
\end{tabbing}
An upper bound on the number of updates of $p$ is
given in 
Theorem 4.12 of Murota et al.~(2016)\citeH[Theorem 4.12]{MSY16auction},
which is improved by 
%%Murota and Shioura (2016)\citeH{MS16Lmin2ph}
\RED{
Murota and Shioura (2017)\citeH{MS16Lmin2ph}
}%%
to the following statement;
see also Remark~\ref{RMtwophaseMinMinbnd}.
Recall the notation
$\| q \|_\infty\sp{+} = \max(0, q_{1}, q_{2}, \ldots, q_{n} )$
for $q \in \ZZ\sp{n}$.

\begin{theorem}\label{THlmintwophaseMinMin}
For any initial vector $p\sp{\circ}$,
the algorithm {\sc TwoPhaseMinMin} terminates
by outputting the minimal minimizer $p\sp{*}_{\min}$ of $g$,
if $p\sp{*}_{\min}$ exists.
The number of updates of the vector $p$ 
is bounded by $\mu(p\sp{\circ})$ in the up phase
and is exactly equal to
$\| p\sp{\circ} - p\sp{*}_{\min} \|_\infty\sp{+}$
in the down phase;
in total, bounded by 
$\mu(p\sp{\circ}) + \| p^{\circ} - p\sp{*}_{\min} \|_\infty\sp{+}$.
\end{theorem}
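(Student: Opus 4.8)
The plan is to read {\sc TwoPhaseMinMin} as the concatenation of {\sc GreedyUpMinimal} (the up phase) and {\sc GreedyDownMinimal} (the down phase, whose Step~D1 selects the \emph{maximal} minimizer of $g(p - \chi_{X})$), and to dispatch the up phase by quoting results already available while concentrating the real work on the exact down-phase count. Throughout, $p\sp{*}_{\min}$ denotes the minimal minimizer (assumed to exist, so $\argmin g \neq \emptyset$), and I set $K = \| p\sp{\circ} - p\sp{*}_{\min} \|_\infty\sp{+}$.

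First I would treat the up phase. Its Steps U1--U3 are \emph{verbatim} those of {\sc TwoPhaseMinMax}, so Theorem~\ref{THlmintwophaseMinMax} applies unchanged: the up phase terminates after at most $\mu(p\sp{\circ})$ updates at a vector $\bar p \geq p\sp{\circ}$ that is ascending-optimal, i.e.\ $g(\bar p) \leq g(\bar p + \chi_{X})$ for all $X \subseteq N$ (Theorem~\ref{THlfnlocminZ}), and that satisfies $\{ q \mid q \leq \bar p \} \cap \argmin g \neq \emptyset$. Since $p\sp{*}_{\min}$ is the componentwise minimum of all minimizers (Theorem~\ref{THlfnargminZ}), any minimizer $q\sp{*} \leq \bar p$ forces $p\sp{*}_{\min} \leq \bar p$. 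This yields the up-phase bound and guarantees the down-phase precondition. The down phase is then exactly {\sc GreedyDownMinimal} started from $\bar p$, so by its exact count (Table~\ref{TBgreedyLalg}; Proposition~3.7 of Murota et al.~(2016)) it terminates at $p\sp{*}_{\min}$ after exactly $\| \bar p - p\sp{*}_{\min} \|_{\infty}$ updates, proving termination and correctness of the output. It remains only to identify this number with $K$; the inequality ``$\geq$'' is immediate from $\bar p \geq p\sp{\circ} \vee p\sp{*}_{\min}$ together with the monotonicity of $\| \cdot \|_{\infty}$ on the nonnegative orthant.

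The hard part is the reverse bound $\| \bar p - p\sp{*}_{\min} \|_{\infty} \leq K$. One cannot hope for $\bar p = p\sp{\circ} \vee p\sp{*}_{\min}$, since the up phase may genuinely overshoot individual coordinates: it stops at an ascending-optimal point, which can lie strictly above $p\sp{\circ} \vee p\sp{*}_{\min}$. Instead I would prove the sharper invariant that, throughout the up phase, $p \leq r := p\sp{*}_{\min} + K \vecone$; then $\bar p \leq r$ gives $\| \bar p - p\sp{*}_{\min} \|_{\infty} \leq K$ and closes the gap. The invariant rests on two facts. (i) The ceiling $r$ is itself ascending-optimal: from the ascending-optimality of the minimizer $p\sp{*}_{\min}$ (Theorem~\ref{THlfnlocminZ}), translation-submodularity (Theorem~\ref{THlnatcondZ}(b)) applied to $p = p\sp{*}_{\min} + \vecone + \chi_{S}$, $q = p\sp{*}_{\min}$, $\alpha = 1$ gives $g(p\sp{*}_{\min} + \vecone + \chi_{S}) - g(p\sp{*}_{\min} + \vecone) \geq g(p\sp{*}_{\min} + \chi_{S}) - g(p\sp{*}_{\min}) \geq 0$, so ascending-optimality is preserved under the shift $+\vecone$, hence under $+K\vecone$ by induction. (ii) At a current $p$ with $p\sp{\circ} \leq p \leq r$, the minimal minimizer $X$ of the submodular set function $\rho_{+}(Y) = g(p + \chi_{Y}) - g(p)$ avoids the ceiling set $T = \{ i \mid p_{i} = r_{i} \}$: submodularity of $g$ gives $g(p + \chi_{X}) + g(r) \geq g(r + \chi_{X \cap T}) + g((p + \chi_{X}) \wedge r)$, and since $(p + \chi_{X}) \wedge r = p + \chi_{X \setminus T}$ while $g(r + \chi_{X \cap T}) \geq g(r)$ by (i), we obtain $\rho_{+}(X \setminus T) \leq \rho_{+}(X)$; minimality of $X$ among the minimizers of the submodular $\rho_{+}$ then forces $X \subseteq X \setminus T$, i.e.\ $X \cap T = \emptyset$, whence $p + \chi_{X} \leq r$.

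Putting these together, the invariant holds initially because $K \geq p\sp{\circ}_{i} - (p\sp{*}_{\min})_{i}$ for every $i$, and is propagated by step (ii); this gives $\bar p \leq r$ and therefore $\| \bar p - p\sp{*}_{\min} \|_{\infty} = K = \| p\sp{\circ} - p\sp{*}_{\min} \|_\infty\sp{+}$, completing the exact down-phase count, while the up-phase count $\le \mu(p\sp{\circ})$ and the total bound follow at once. The main obstacle is maintaining the invariant of step~(ii): it is exactly here that the \emph{minimal}-minimizer selection rule and the full strength of L$\sp{\natural}$-convexity (translation-submodularity, beyond plain submodularity) are needed, whereas every other ingredient is assembly of the cited theorems.
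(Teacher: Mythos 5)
Your decomposition is the correct one, and since the paper gives no in-text proof of this theorem (it defers to Murota and Shioura (2017)), the comparison is against the infrastructure it quotes, which you assemble properly: the up phase is verbatim that of {\sc TwoPhaseMinMax}, so the bound $\mu(p^{\circ})$ and the end-of-phase condition $\{ q \mid q \leq \bar p\} \cap \argmin g \neq \emptyset$ (hence $p^{*}_{\min} \leq \bar p$) may be quoted from Theorem~\ref{THlmintwophaseMinMax} and its surrounding discussion; the down phase is {\sc GreedyDownMinimal}, whose output $p^{*}_{\min}$ and exact count $\| \bar p - p^{*}_{\min} \|_{\infty}$ come from Table~\ref{TBgreedyLalg}(b); your ``$\geq$'' direction is correct; and the theorem does reduce exactly to your ceiling invariant $p \leq r := p^{*}_{\min} + K \vecone$ throughout the up phase, where $K = \| p^{\circ} - p^{*}_{\min} \|_{\infty}^{+}$.

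There is, however, a genuine gap in your proof of the invariant: both steps (i) and (ii) tacitly assume $r \in \dom g$, and $g$ is $\RR \cup \{+\infty\}$-valued, so $r$ can fail to be feasible. Concretely, take $n=2$, $\dom g = \{ p \in \ZZ^{2} \mid \veczero \leq p \leq (3,3), \ |p_{1}-p_{2}| \leq 1 \}$ (an L$^{\natural}$-convex set) and $g(p) = p_{1} + \max(1-p_{2},\,0)$ on it; then $p^{*}_{\min} = (0,1)$ is the unique minimizer, and with $p^{\circ} = (3,3)$ one gets $K = 3$ and $r = (3,4) \notin \dom g$. When $g(r) = +\infty$, your submodularity inequality $g(p+\chi_{X}) + g(r) \geq g(r + \chi_{X \cap T}) + g(p + \chi_{X \setminus T})$ degenerates to $+\infty \geq +\infty$ (indeed $g(r+\chi_{S}) = +\infty$ for every $S$ as well, since $(r+\chi_{S}) \wedge (p^{*}_{\min} + K\vecone) = r$ and property (b) of Remark~\ref{RMlnatconvexsetZ} would otherwise force $r \in \dom g$), so $g(r)$ cannot be cancelled; likewise the inductive shift in your lemma (i) subtracts the possibly infinite values $g(p^{*}_{\min} + k\vecone)$. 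The repair is to anchor at the finite minimum rather than at $r$: apply translation-submodularity (Theorem~\ref{THlnatcondZ}(b)) to the pair $p+\chi_{X}$, $p^{*}_{\min}$ with $\alpha = K$, which gives $g(p+\chi_{X}) + g(p^{*}_{\min}) \geq g\bigl( ((p+\chi_{X}) - K\vecone) \vee p^{*}_{\min} \bigr) + g\bigl( (p+\chi_{X}) \wedge r \bigr)$; your computation $(p+\chi_{X}) \wedge r = p + \chi_{X \setminus T}$ is valid under the invariant $p \leq r$, the first right-hand term is at least $g(p^{*}_{\min}) = \min g$, and $g(p+\chi_{X}) \leq g(p) < +\infty$ makes the cancellation legitimate, yielding $\rho_{+}(X \setminus T) \leq \rho_{+}(X)$. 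Your minimal-minimizer argument then gives $X \cap T = \emptyset$ unchanged, and with this anchoring your lemma (i) becomes superfluous; the rest of your proof goes through as written.
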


\begin{remark} \rm  \label{RMtwophaseMinMaxbnd}
For the algorithm {\sc TwoPhaseMinMax},
Theorem 4.13 of Murota et al.~(2016)\citeH[Theorem 4.13]{MSY16auction}
shows that the number of updates of $p$ is bounded by
$\eta(p\sp{\circ}, p\sp{*}) =  \| p\sp{\circ} - p\sp{*} \|_\infty\sp{+} 
  + \| p\sp{\circ} - p\sp{*} \|_\infty\sp{-} $
in the up phase,
by $2 \eta(p\sp{\circ}, p\sp{*})$ 
in the down phase,
and  in total
by $3 \eta(p\sp{\circ}, p\sp{*})$,
where $p\sp{*}$ denotes the output of the algorithm.
Theorem \ref{THlmintwophaseMinMax}
gives an improved bound
since $\eta(p\sp{\circ}, p\sp{*}) \geq \mu(p\sp{\circ})$.
Theorem 3.2 of Murota et al.~(2013a)\citeH[Theorem 3.2]{MSY13},
though stated as a bound for a two-phase auction algorithm,
implies that
the number of updates of $p$ in {\sc TwoPhaseMinMax} 
is bounded by
$\mu(p\sp{\circ})$ in the up phase,
by $2 \mu(p\sp{\circ})$
in the down phase,
and in total by $3 \mu(p\sp{\circ})$;
see Murota et al.~(2013b)\citeH{MSY13metr}
for the proof.
\finbox
\end{remark}

\begin{remark} \rm  \label{RMtwophaseMinMinbnd}
For the algorithm {\sc TwoPhaseMinMin},
Theorem 4.12 of Murota et al.~(2016)\citeH[Theorem 4.12]{MSY16auction}
shows that the number of updates of $p$ is bounded by
$\eta(p\sp{\circ}, p\sp{*}_{\min}) =  \| p\sp{\circ} - p\sp{*}_{\min} \|_\infty\sp{+} 
  + \| p\sp{\circ} - p\sp{*}_{\min} \|_\infty\sp{-} $
in the up phase,
by $2 \eta(p\sp{\circ}, p\sp{*}_{\min})$ in the down phase,
and  in total
by $3 \eta(p\sp{\circ}, p\sp{*}_{\min})$.
Theorem \ref{THlmintwophaseMinMin}
gives an improved bound since
$\eta(p\sp{\circ}, p\sp{*}_{\min}) \geq \mu(p\sp{\circ})$
and
$\eta(p\sp{\circ}, p\sp{*}_{\min}) \geq 
\| p\sp{\circ} - p\sp{*}_{\min} \|_\infty\sp{+}$.
\finbox
\end{remark}

\begin{remark} \rm  \label{RMlminTwoPhaseOther}
Besides {\sc TwoPhaseMinMax} and {\sc TwoPhaseMinMin},
we can obtain other variants of two-phase algorithms
by choosing appropriate combinations 
from among the algorithms {\sc Greedy-\{Up, Down\}-\{Minimal, Maximal\}}
listed in Table \ref{TBgreedyLalg}.
\finbox
\end{remark}

\subsection{Concluding remarks of section \ref{SCconjuLconv}}

In this paper we put more emphasis on 
M$\sp{\natural}$-concave functions
and give L$\sp{\natural}$-convex functions
only a secondary role as the conjugate of M$\sp{\natural}$-concave functions,
though, in fact,  they are equally important and play 
symmetric roles in discrete convex analysis.

The concept of L-convex functions is formulated by
Murota (1998)\citeH{Mdca},
compatibly with the accepted understanding of the relationship 
between submodularity and convexity expounded by
Lov\'{a}sz (1983)\citeH{Lov83}.
Then L$\sp{\natural}$-convex functions are introduced by
Fujishige and Murota (2000)\citeH{FM00}
as a variant of L-convex functions,
together with the observation that
they coincide with submodular integrally convex functions
considered earlier by 
Favati and Tardella  (1990)\citeH{FT90}.
The concept of quasi L-convex functions is also introduced by
Murota and Shioura (2003)\citeH{MS03quasi},
in accordance with  quasisupermodularity of
Milgrom and Shannon (1994)\citeH{MS94eco}.
L-convex functions in continuous variables 
are defined by
Murota and Shioura (2000, 2004a)\citeH{MS00poly}\citeH{MS04conjreal},
partly motivated by 
a phenomenon inherent in the network flow/tension problem
described in 
Section 2.2.1 of Murota (2003)\citeH[Section 2.2.1]{Mdcasiam}.

Recently, the concept of L-convex functions 
is extended to functions on graph structures,
which are more general than  $\ZZ\sp{n}$.
See
Kolmogorov (2011)\citeH{Kol11},
Huber and Kolmogorov (2012)\citeH{HK12},
Fujishige (2014)\citeH{Fuj14bisubm},
and 
%%Hirai (2015, 2016a, 2016b)\citeH{Hir15Lextprox}\citeH{Hir16min0ext}\citeH{Hir16Lgraph}
\RED{
Hirai (2015, 2016a, 2018)\citeH{Hir15Lextprox}\citeH{Hir16min0ext}\citeH{Hir16Lgraph}
}%
for the recent development.

%%% end of file %%%%%%%%

%% \newpage
%% 2016-10-28 Kazuo Murota 
%% file= DCAECOauction.tex

\section{Iterative Auctions}
\label{SCauction}

This section presents a unified method of analysis 
for iterative auctions (dynamic auctions)
by combining the Lyapunov function approach of 
Ausubel (2006)\citeH{Aus06}
with discrete convex analysis.
We are mainly concerned with the multi-item multi-unit model, 
where there are multiple indivisible goods for sale and 
each good may have several units.
The bidders' valuation functions are assumed to have gross substitutes property.
This section is mostly based on 
Murota et al.~(2013a, 2016)\citeH{MSY13}\citeH{MSY16auction}
with some new results from 
%%Murota and Shioura (2016)\citeH{MS16Lmin2ph}.
\RED{
Murota and Shioura (2017)\citeH{MS16Lmin2ph}.
}%%

\subsection{Auction models and Walrasian equilibrium}
\label{SCauctionmodel}

Fundamental concepts about auctions
are introduced here only briefly; see, e.g., 
Milgrom (2004)\citeH{Mil04book}, 
Cramton et al.~(2006)\citeH{CSS06},
and Blumrosen and Nisan (2007)\citeH{BN07}
for comprehensive accounts.

 In the auction market, there are
 $n$ types of items or goods, denoted by $N= \{1, 2, \ldots, n\}$, and
 $m$ bidders,
 denoted by $M=\{1, 2,\ldots, m\}$, where $m \geq 2$.
 We have $u_{i}$ units available for each item $i \in N$,
where $u_{i}$ is a positive integer.
 We denote the integer interval as
$[\bm{0}, u]_{\ZZ} = \{x \in \ZZ\sp{n} \mid \ \bm{0} \leq x \leq u\}$,  
where
$u=(u_{1}, u_{2}, \ldots, u_{n})$.
 Each vector $x \in [\bm{0}, u]_{\ZZ}$ is called a {\it bundle};
a bundle 
$x=(x_{1}, x_{2}, \ldots, x_{n})$ 
corresponds to a (multi-)set of items, where $x_{i}$
represents the multiplicity of item $i \in N$.
 Each bidder $j \in M$ has his valuation function
$f_{j}: [\bm{0}, u]_{\ZZ} \to \RR$;
the number 
$f_{j}(x)$ represents the
value of the bundle $x$ worth to bidder $j$.
 The case with $u_{i}=1$ for all $i \in N$
is referred to as {\em single-unit auction},
while the general case with $u \in \ZZ_{++}\sp{n}$ as {\em multi-unit auction}.
 Note that $[\bm{0}, \bm{1}]_{\ZZ} =\{0,1\}\sp{n}$,
where $\bm{1} = (1,1,\ldots, 1)$.
 A further special case where 
each bidder is interested in getting at most one item is called
{\em unit-demand auction}.

 In an auction, we want to find an
efficient allocation
and market clearing prices.
 An {\it allocation} of items is defined as a set of bundles
$x_1, x_2, \ldots, x_m \in [\bm{0}, u]_{\ZZ}$
satisfying $\sum_{j=1}^m x_{j} = u$.
 Given a price vector $p \in \RR\sp{n}_{+}$, each bidder $j \in M$ wants to
have a bundle $x$ which maximizes the value $f_{j}(x) - p^\top x$.
 For $j \in M$ and $p \in \RR\sp{n}_{+}$, define
\begin{eqnarray}
D_{j}(p) & = & D(p; f_{j}) 
= \arg\max\{f_{j}(x) - p^\top x \mid x \in [\bm{0}, u]_{\ZZ} \, \}.
\label{eqn:def-demand-set}
\end{eqnarray}
We call 
the set $D_{j}(p) \subseteq [\bm{0}, u]_{\ZZ}$
the {\em demand set}.
 The auctioneer wants to find 
a pair of a price vector $p^*$ and an allocation
$x_1^*, x_2^*, \ldots, x_m^*$
such that $x_{j}^* \in D_{j}(p^*)$
for all $j \in M$.
 Such a pair is called a {\em (Walrasian) equilibrium}
and $p^*$ is called a {\em (Walrasian) equilibrium price vector}.

 Although the Walrasian equilibrium
 possesses several desirable properties, it does not always exist.
 Some condition has to be imposed on bidders' valuation functions
 before the existence of a Walrasian equilibrium can be  guaranteed.
 Throughout this section we assume the following conditions for
 bidders' valuation functions $f_{j}$ $(j=1,2,\ldots, m)$:
\begin{quote}
{\bf (A0)} $f_{j}$ is monotone nondecreasing,
\\
{\bf (A1)}  $f_{j}$  is an M$\sp{\natural}$-concave function, 
\\
{\bf (A2)}   $f_{j}$ takes integer values.
\end{quote}
Recall from Sections \ref{SCmaximizers01} and \ref{SCmaximizersZ}
that a valuation function is M$\sp{\natural}$-concave
if and only if it 
has the gross substitutes (GS) property (in its stronger form);
see Theorems \ref{THmconcavgross} and \ref{THmnatGSLAD}, in particular.

\begin{remark} \rm  \label{RMauctIntroSingUnit}
Whereas we are mainly concerned with the multi-unit model here,
the single-unit model is treated more extensively in the literature,
e.g., 
Kelso and Crawford (1982)\citeH{KC82},
Gul and Stacchetti (1999, 2000)\citeH{GS99}\citeH{GS00},
Milgrom (2004)\citeH{Mil04book},
Blumrosen and Nisan (2007)\citeH{BN07},
Cramton et al.~(2006)\citeH{CSS06},
and Milgrom and Strulovici (2009)\citeH{MS09sbst}.
The method of analysis presented in this section 
remains meaningful and interesting also for the single-unit model.
\finbox
\end{remark}

\begin{remark} \rm  \label{RMauctIntroUnitDemand}
 Iterative auctions for unit-demand auction are
discussed extensively in the literature,
e.g.,
Vickrey (1961)\citeH{Vic61},
Demange et al.~(1986)\citeH{DGS86},
Mo et al.~(1988)\citeH{MTL88},
Sankaran (1994)\citeH{San94},
Mishra and Parkes (2009)\citeH{MP09vd},
Andersson et al.~(2013)\citeH{AAT13},
and Andersson and Erlanson (2013)\citeH{AE13}.
Specifically,
the Vickrey--English auction by 
Demange et al.~(1986)\citeH{DGS86},
the Vickrey--Dutch auction by 
Mishra and Parkes (2009)\citeH{MP09vd},
and the Vickrey--English--Dutch auction by 
Andersson and Erlanson (2013)\citeH{AE13} 
are such iterative auctions.
Although these three algorithms are proposed independently of the iterative
auction algorithms for the multi-unit model,
it is possible to give a unified treatment 
of these iterative auction algorithms by
revealing their relationship to
the Lyapunov function approach (Section \ref{SCunitdemandauction}).
\finbox
\end{remark}

\subsection{Lyapunov function approach to iterative auctions}
\label{SClyapapproach}

In this section we describe the Lyapunov function-based iterative auctions,
which is proposed by 
Ausubel (2006)\citeH{Aus06}.
Our objective is to clarify the underlying mathematical structure
with the aid of discrete convex analysis, 
and to derive sharp upper or exact bounds on the
number of iterations in the iterative auctions.

 For $j \in M$ and $p \in \RR\sp{n}_{+}$, we define
the {\em indirect utility function}
 $V_{j}: \RR\sp{n}_{+} \to \RR$ by 
\begin{eqnarray}
V_{j}(p) &=& V(p; f_{j}) = \max\{f_{j}(x) - p^\top x \mid x \in [\bm{0}, u]_{\ZZ} \, \},
\label{indirutildef}
\end{eqnarray}
and the {\em Lyapunov function} by
\begin{equation}  \label{Lyapfndef}
  L(p) = \sum_{j=1}^m V_{j}(p) + u^\top p \qquad (p \in \RR\sp{n}),
\end{equation}
where the vector $u \in \ZZ\sp{n}_{+}$ represents
the numbers of available units for items in $N$.

Under the assumptions (A0)--(A2) it can be shown%
\footnote{%%%%%%%%%%%%%%%%%%%%%%%
The integrality follows from the fact that
 an integer-valued M$\sp{\natural}$-concave function $f$ on $\ZZ\sp{n}$
has an integral subgradient (or supergradient) at every point $x$ in $\dom f$.
} %%%%%%%%%%%%%%%%%%%%%%%%%%
that there exists an equilibrium price vector $p^*$ 
whose components are nonnegative integers.
Henceforth we assume that the price vector $p$
in iterative auctions is always chosen to be a nonnegative integer vector,
i.e., $p \in \ZZ_{+}\sp{n}$.
Accordingly, we regard $V_{j}$ and $L$ as integer-valued functions
defined on nonnegative integers, i.e.,
$V_{j}: \ZZ_{+}\sp{n} \to \ZZ$ and
$L: \ZZ_{+}\sp{n} \to \ZZ$.

The ascending auction algorithm based on the 
Lyapunov function 
(Ausubel 2006\citeH{Aus06})
is as follows:

%%%%%%%%%%%%%%%%%
\begin{tabbing}     
\= {\bf Algorithm} {\sc AscendMinimal} \\
\> \quad  Step 0: 
 Set $p:=p\sp{\circ}$, where
$p\sp{\circ} \in \ZZ_{+}\sp{n}$ is an arbitrary vector satisfying 
$p\sp{\circ} \leq p\sp{*}_{\min}$ \ (e.g., $p\sp{\circ} = \bm{0}$).  \\
\> \quad  Step 1:
    Find the minimal minimizer $X \subseteq N$ of $L(p+ \chi_X)$.  \\
\> \quad  Step 2: 
    If $X = \emptyset$,  then output $p$ and stop.   \\ 
\> \quad  Step 3: 
    Set  $p:= p + \chi_{X}$ \ and \ go to Step~1. 
\end{tabbing}

The above algorithm can be interpreted in auction terms as follows%
\footnote{%%%%%%%%%%%%%%%%%
See Appendix B of Ausubel (2006)\citeH[Appendix B]{Aus06}
for details about the implementation of Steps 2 and 3.
}:  %%%%%footnote %%%%%%%%%%%%%%%%%%

%%%%%%%%%%%%%%%%%
\begin{tabbing}     
\= {\bf Algorithm} {\sc AscendMinimal} (in auction terms) \\
\> \quad  Step 0: 
The auctioneer sets
 $p:=p\sp{\circ}$, where
$p\sp{\circ} \in \ZZ\sp{n}$ should satisfy $p\sp{\circ} \leq p\sp{*}_{\min}$.
\\
\> \quad  Step 1:
The auctioneer asks the bidders to report their demand sets $D_{j}(p)$ $(j \in M)$,
\\
\> \quad  \phantom{Step 1:} 
 and finds the minimal minimizer $X \subseteq N$ of $L(p+ \chi_X)$.
\\
\> \quad  Step 2: 
The auctioneer checks if $X = \emptyset$; 
if $X = \emptyset$  holds, then the auctioneer
\\
\> \quad  \phantom{Step 1:}
 reports $p$ as the final price vector and stop.
\\
\> \quad  Step 3: 
   The auctioneer sets $p:=p + \chi_X$ and returns to Step~1. 
\end{tabbing}

The analysis of the algorithm {\sc AscendMinimal} 
can be made transparent 
by using concepts and results from discrete convex analysis.
Before presenting formal theorems, we enumerate 
the major mathematical ingredients.

\begin{itemize}
\item
As pointed out by  
Ausubel (2006)\citeH{Aus06},
the Walrasian equilibrium price vector can be characterized as a 
minimizer of the Lyapunov function $L$
and an iterative auction algorithm can be understood as 
a minimization process of the Lyapunov function $L(p)$.
See Theorem~\ref{THlyapminzerequil}.

\item
The conjugate function of an M$\sp{\natural}$-concave function
is an L$\sp{\natural}$-convex function, and vice versa
(the conjugacy theorem in Section \ref{SCconjugacy}).
Hence the indirect utility function $V_{j}$
is an L$\sp{\natural}$-convex function
and therefore, the Lyapunov function $L$
is an L$\sp{\natural}$-convex function. 
See Theorem~\ref{THlyaplnatfn}.

\item
The L$\sp{\natural}$-convexity of the Lyapunov function $L$
implies a nice combinatorial structure of the equilibrium prices.
The set of the equilibrium prices is an L$\sp{\natural}$-convex set
(Remark \ref{RMlnatconvexsetZ}),
which is more special than just being a sublattice.
See Theorem~\ref{THlyapequilLnatset}.

\item
The L$\sp{\natural}$-convexity of the Lyapunov function $L$
enables us to utilize general results on 
L$\sp{\natural}$-convex function minimization (Section \ref{SCalgLmin}) 
to analyze the behavior of iterative auction algorithms, 
such as convergence to an equilibrium price and the number of iterations
needed to reach the equilibrium price.
See 
Theorem~\ref{THlyapnumiterAscMin} as well as Theorem~\ref{THunitdemanditerbound}.
\end{itemize}

We now present the theorems substantiating the above-mentioned points.
The conditions (A0)--(A2) are assumed implicitly 
in the following four theorems.
The first theorem is due to
Ausubel (2006)\citeH{Aus06}.

\begin{theorem} \label{THlyapminzerequil}
A vector $p \in \ZZ_{+}\sp{n}$ is an equilibrium price vector
 if and only if it is a minimizer of the Lyapunov function $L$.
\end{theorem}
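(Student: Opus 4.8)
The plan is to read the Lyapunov function as the Fenchel dual of the social-welfare maximization problem, so that ``$p$ is an equilibrium price'' becomes exactly the attainment (complementary-slackness) condition in a weak/strong duality pair. The central object I would introduce is the maximum social welfare
\[
W = \max\Bigl\{ \sum_{j=1}^m f_j(x_j) \;\Big|\; \sum_{j=1}^m x_j = u, \ x_j \in [\bm{0}, u]_{\ZZ} \Bigr\},
\]
which is finite since each $f_j$ is real-valued on the bounded box and the allocation $x_1=u,\ x_2=\cdots=x_m=\bm{0}$ is feasible.

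First I would prove the weak-duality lemma $L(p)\ge W$ for every $p$, together with its equality case. This is immediate from the definitions: for any allocation $(x_1,\dots,x_m)$ with $\sum_j x_j=u$ one has $\sum_j f_j(x_j) = \sum_j\bigl(f_j(x_j)-p^\top x_j\bigr) + u^\top p \le \sum_j V_j(p) + u^\top p = L(p)$, and maximizing the left side over allocations yields $W\le L(p)$. Tracing the inequality backwards shows that $L(p)=W$ holds precisely when the welfare-maximizing allocation has each $x_j\in D_j(p)$, i.e.\ precisely when $p$ is an equilibrium price vector. With this lemma the ``only if'' direction is finished: if $p$ is an equilibrium price then $L(p)=W\le L(q)$ for all $q$, so $p$ minimizes $L$.

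For the ``if'' direction I would invoke the existence of an integral equilibrium price vector $p^\ast\in\ZZ_{+}^n$, guaranteed under (A0)--(A2) and already noted in the text. Then $\min_{q}L(q)\le L(p^\ast)=W$, which combined with $L\ge W$ forces $\min_q L(q)=W$. Hence any minimizer $p$ satisfies $L(p)=W$, and the equality case of the lemma makes $p$ an equilibrium price, completing the equivalence.

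The main obstacle is exactly this ``if'' direction: it rests on the zero-duality-gap fact that the common bound $W$ is actually \emph{attained} by $L$, which is tantamount to the existence of a Walrasian equilibrium. To make the argument self-contained rather than quoting existence, I would instead write $L(p)=F^{\triangledown}(p)+\langle p,u\rangle$, where $F=f_1\conv\cdots\conv f_m$ is the integer convolution; a direct computation gives $F^{\triangledown}(p)=\sum_j V_j(p)$. By Theorem~\ref{THmfnoperation}(9), $F$ is an integer-valued M$\sp{\natural}$-concave function with $u\in\dom F$, so the integral biconjugacy $(F^{\triangledown})^{\triangle}=F$ of Theorem~\ref{THlmconjcavevexZ} yields $\min_{q}L(q)=(F^{\triangledown})^{\triangle}(u)=F(u)=W$ with the minimum attained at an integer point. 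This closes the gap intrinsically and again identifies the minimizers of $L$ with the equilibrium price vectors.
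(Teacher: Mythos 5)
Your proof is correct, but it takes a genuinely different route from the paper. The paper's proof (following Ausubel) is a first-order argument: it identifies the set of excess supply vectors $\{u - \sum_{j=1}^m x_j \mid x_j \in D_j(p)\}$ with the subdifferential of $L$ at $p$, so that ``$p$ is an equilibrium'' becomes ``$\veczero \in \partial L(p)$,'' which by convexity of $L$ is equivalent to $p$ being a minimizer. You instead run a duality-gap argument: weak duality $L(p) \geq W$ against the maximum social welfare $W$, an equality-case analysis showing $L(p) = W$ exactly when $p$ supports an equilibrium allocation, and then attainment of the bound --- either by quoting the existence of an integral equilibrium price (which the paper indeed asserts just before the theorem, so this route is fully legitimate) or, self-containedly, via $L(p) = F\sp{\triangledown}(p) + \langle p, u\rangle$ with $F = f_1 \conv \cdots \conv f_m$ and the integral biconjugacy of Theorems~\ref{THmfnoperation}(9) and \ref{THlmconjcavevexZ}. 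Your conjugacy variant buys something the paper's proof does not state explicitly: $\min_q L(q) = W$, i.e., the optimal Lyapunov value equals the maximum welfare, and it re-derives equilibrium existence rather than presupposing it; the paper's subgradient identity, on the other hand, is what powers the algorithmic analysis in the rest of Section~\ref{SCauction}. One small point to patch in the conjugacy variant: biconjugacy gives the minimum of $L$ over all of $\ZZ\sp{n}$, while the theorem concerns $p \in \ZZ_{+}\sp{n}$. This is closed by observing that any $\ZZ\sp{n}$-minimizer $p$ satisfies $L(p)=W$ and hence supports an allocation with $x_j \in D_j(p)$ and $\sum_j x_j = u$; if $p_i < 0$ then monotonicity (A0) forces $(x_j)_i = u_i$ for every $j$, giving total demand $m u_i > u_i$ since $m \geq 2$, a contradiction --- so all minimizers automatically lie in $\ZZ_{+}\sp{n}$. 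Since the infimum of the integer-valued function $L$ is attained whenever finite, this makes the restriction to nonnegative prices harmless.
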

\begin{proof}
The key of the proof is the fact  
that the set of excess supply vectors
at a price vector~$p$, i.e., 
$\{u - \sum_{j=1}^m x_{j} \mid x_{j} \in D_{j}(p) \ (j=1,2,\ldots, m)\}$,
coincides with the set of subgradients of  the Lyapunov function $L$ at $p$;
see Ausubel (2006)\citeH{Aus06}.
\end{proof}

\begin{theorem} \label{THlyaplnatfn}
\quad   %% do not delete this %%

\noindent
{\rm (1)} 
For each $j \in M$, the indirect utility function $V_{j}$
is an L$\sp{\natural}$-convex function.

\noindent
{\rm (2)} 
The Lyapunov function $L$ is an L$\sp{\natural}$-convex function. 
\end{theorem}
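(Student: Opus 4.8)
The plan is to recognize the indirect utility function as a Legendre--Fenchel conjugate and then invoke the M$\sp{\natural}$/L$\sp{\natural}$ conjugacy theorem directly. First I would extend each $f_{j}$ to all of $\ZZ\sp{n}$ by setting $f_{j}(x) = -\infty$ for $x \notin [\bm{0}, u]_{\ZZ}$; with this convention the definition (\ref{indirutildef}) becomes $V_{j}(p) = \sup\{ f_{j}(x) - \langle p, x \rangle \mid x \in \ZZ\sp{n} \}$, which is exactly the concave-to-convex conjugate $f_{j}\sp{\triangledown}$ of (\ref{conjcave2vexZZ}). Under the standing assumptions (A1) and (A2), $f_{j}$ is integer-valued and M$\sp{\natural}$-concave with bounded effective domain $[\bm{0}, u]_{\ZZ}$, so Theorem~\ref{THlmconjcavevexZ}(2) immediately yields that $f_{j}\sp{\triangledown} = V_{j}$ is an integer-valued L$\sp{\natural}$-convex function. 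This disposes of part (1) with no further computation, since the entire substance is packaged in the conjugacy theorem.

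For part (2) I would deduce the L$\sp{\natural}$-convexity of $L$ from part (1) using two elementary closure properties, both read off from the translation-submodularity characterization in Theorem~\ref{THlnatcondZ}(b). The first is that a sum of L$\sp{\natural}$-convex functions is L$\sp{\natural}$-convex: writing inequality (\ref{lnatftrsubmZ}) for each $V_{j}$ and adding them over $j$ shows $\sum_{j=1}\sp{m} V_{j}$ satisfies (\ref{lnatftrsubmZ}) as well. The second is that the linear term $u\sp{\top} p$ is L$\sp{\natural}$-convex: for each coordinate $i$ one has $((p - \alpha \vecone) \vee q)_{i} + (p \wedge (q + \alpha \vecone))_{i} = \max(p_{i} - \alpha, q_{i}) + \min(p_{i}, q_{i} + \alpha) = p_{i} + q_{i}$, as seen by checking the two cases $p_{i} \geq q_{i} + \alpha$ and $p_{i} < q_{i} + \alpha$ separately, so the linear function satisfies (\ref{lnatftrsubmZ}) with equality. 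Combining these, $L(p) = \sum_{j=1}\sp{m} V_{j}(p) + u\sp{\top} p$ is a finite sum of L$\sp{\natural}$-convex functions and hence L$\sp{\natural}$-convex.

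The argument has essentially no hard step, precisely because the deep fact---that the conjugate of an M$\sp{\natural}$-concave function is L$\sp{\natural}$-convex---is already available as Theorem~\ref{THlmconjcavevexZ}. The only matters requiring care are bookkeeping: confirming that the maximization domain $[\bm{0}, u]_{\ZZ}$ in (\ref{indirutildef}) coincides with the full-lattice $\triangledown$-transform after the $-\infty$ extension, and noting that the price domain $\ZZ_{+}\sp{n}$ on which we ultimately regard $V_{j}$ and $L$ is itself an L$\sp{\natural}$-convex set, so that restricting to it preserves the property. Neither point presents a genuine difficulty, and I expect the bulk of the writeup to be the one-line verification of the two closure properties rather than any real obstacle.
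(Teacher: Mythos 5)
Your proposal is correct and follows essentially the same route as the paper: identify $V_{j}$ with the conjugate $f_{j}\sp{\triangledown}$ of (\ref{conjcave2vexZZ}), apply Theorem~\ref{THlmconjcavevexZ}(2), and conclude part (2) from closure of L$\sp{\natural}$-convexity under addition (Theorem~\ref{THlnatcondZ}) together with the L$\sp{\natural}$-convexity of the linear term $u\sp{\top}p$. The only difference is that you spell out the translation-submodularity verifications that the paper leaves as citations, which is harmless.
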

\begin{proof}
(1) 
When regarded as  $V_{j}: \ZZ_{+}\sp{n} \to \ZZ$,
the definition (\ref{indirutildef}) of $V_{j}$
shows that $V_{j}$ is the conjugate function of $f_{j}$ in the sense of
 (\ref{conjcave2vexZZ}).
%% (\ref{conjcave2vexZ})$_{\ZZ}$.
That is,
$V_{j} = f_{j}\sp{\triangledown}$
in the notation of Section \ref{SCconjugacy}.
Then Theorem~\ref{THlmconjcavevexZ} (2) shows the L$\sp{\natural}$-convexity
of $V_{j}$.

(2) In the definition (\ref{Lyapfndef}) of $L$,
each $V_{j}$ is L$\sp{\natural}$-convex by (1), 
and the linear term $u^\top p$ is obviously L$\sp{\natural}$-convex.
The sum of L$\sp{\natural}$-convex functions is again
L$\sp{\natural}$-convex by Theorem~\ref{THlnatcondZ}.
Hence the Lyapunov function $L$ is L$\sp{\natural}$-convex.
\end{proof}

\begin{theorem} \label{THlyapequilLnatset}
The equilibrium price vectors form 
a bounded L$\sp{\natural}$-convex set%
\footnote{%%%%%%%%%%%%%%%%%
See Remark \ref{RMlnatconvexsetZ} for L$\sp{\natural}$-convex sets.
If we consider real price vectors, 
the equilibrium price vectors form an L$\sp{\natural}$-convex polyhedron.
}.  %%%%%footnote %%%%%%%%%%%%%%%%%%
That is, for two equilibrium price vectors $p\sp{*}$, $q\sp{*}$ 
and any nonnegative integer $\alpha$,
both $(p\sp{*} - \alpha \vecone) \vee q\sp{*}$
and $p\sp{*} \wedge (q\sp{*} + \alpha \vecone)$
are equilibrium price vectors.
In particular, the minimal equilibrium price vector $p\sp{*}_{\min}$
and the maximal equilibrium price vector $p\sp{*}_{\max}$
are uniquely determined.
\end{theorem}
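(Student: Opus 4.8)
The plan is to identify the set of equilibrium price vectors with the set of minimizers of the Lyapunov function and then read off its structure from the L$\sp{\natural}$-convexity already established. By Theorem~\ref{THlyapminzerequil}, a vector $p \in \ZZ_{+}\sp{n}$ is an equilibrium price vector if and only if $p \in \argmin L$, so it suffices to analyze $\argmin L$. By Theorem~\ref{THlyaplnatfn}~(2), $L$ is an L$\sp{\natural}$-convex function on $\ZZ_{+}\sp{n}$. Hence, once I know that $\argmin L$ is nonempty and bounded, Theorem~\ref{THlfnargminZ} immediately yields that $\argmin L$ is an L$\sp{\natural}$-convex set and that it possesses a (unique) maximal element $p\sp{*}_{\max}$ and a (unique) minimal element $p\sp{*}_{\min}$.

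Second, I would verify nonemptiness and boundedness through a coercivity argument, which is where assumption (A0) enters. For any $p \geq \veczero$ the choice $x = \veczero$ in the maximization defining $V_{j}$ gives $V_{j}(p) \geq f_{j}(\veczero)$, so each $V_{j}$ is bounded below on $\ZZ_{+}\sp{n}$. Since $u \in \ZZ_{+}\sp{n}$ has all components at least $1$, the linear term satisfies $u\sp{\top} p \to +\infty$ as $\| p \|_{\infty} \to \infty$ over $p \in \ZZ_{+}\sp{n}$. Combining these, $L(p) = \sum_{j=1}\sp{m} V_{j}(p) + u\sp{\top} p \geq \sum_{j=1}\sp{m} f_{j}(\veczero) + u\sp{\top} p \to +\infty$, so $L$ is coercive. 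Coercivity of an integer-valued function that is bounded below forces the minimum to be attained, whence $\argmin L \not= \emptyset$, and it forces every sublevel set, in particular $\argmin L$, to be bounded.

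Finally, I would translate the abstract conclusion into the concrete statement of the theorem. By the characterization of L$\sp{\natural}$-convex sets in \eqref{lnatsettrsubmZ} (equivalently Remark~\ref{RMlnatconvexsetZ}~(b)), membership of $\argmin L$ in the class of L$\sp{\natural}$-convex sets means precisely that $p\sp{*}, q\sp{*} \in \argmin L$ imply $(p\sp{*} - \alpha \vecone) \vee q\sp{*} \in \argmin L$ and $p\sp{*} \wedge (q\sp{*} + \alpha \vecone) \in \argmin L$ for every $\alpha \in \ZZ_{+}$; rephrasing via Theorem~\ref{THlyapminzerequil}, these vectors are equilibrium price vectors, which is the asserted closure property. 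The existence and uniqueness of $p\sp{*}_{\min}$ and $p\sp{*}_{\max}$ then follow from the boundedness established above together with the last sentence of Theorem~\ref{THlfnargminZ}. The only nonroutine step is the coercivity/boundedness verification; the remainder is a direct assembly of Theorems~\ref{THlyapminzerequil}, \ref{THlyaplnatfn}, and \ref{THlfnargminZ}, so I expect no further obstacle.
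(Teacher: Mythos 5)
Your proof is correct and takes essentially the same route as the paper's: identify the equilibrium price vectors with $\argmin L$ via Theorem~\ref{THlyapminzerequil}, invoke the L$\sp{\natural}$-convexity of $L$ from Theorem~\ref{THlyaplnatfn}, and conclude via Theorem~\ref{THlfnargminZ} and Remark~\ref{RMlnatconvexsetZ}, with your coercivity computation simply filling in the boundedness step the paper dismisses as ``easily shown.'' One minor slip: your coercivity argument uses only that $\veczero \in \dom f_{j} = [\veczero, u]_{\ZZ}$ and $u_{i} \geq 1$, not the monotonicity assumption (A0) as you claim.
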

\begin{proof}
This follows from the L$\sp{\natural}$-convexity of the Lyapunov function
(Theorem~\ref{THlyaplnatfn})
and the L$\sp{\natural}$-convexity of the set of the minimizers
(Remark \ref{RMlnatconvexsetZ});
the boundedness is easily shown.
\end{proof}

\begin{theorem} \label{THlyapnumiterAscMin}
For an initial vector $p^\circ$ with $p\sp{\circ} \leq p\sp{*}_{\min}$,
the algorithm {\sc AscendMinimal} outputs 
the minimal equilibrium price vector $p\sp{*}_{\min}$
and the number of updates of the price vector 
is  exactly equal to $\|p\sp{*}_{\min} - p\sp{\circ} \|_\infty$.
\end{theorem}
\begin{proof}
 The Lyapunov function $L$ is an L$\sp{\natural}$-convex function 
by Theorem~\ref{THlyaplnatfn}, 
and the algorithm {\sc AscendMinimal} is nothing but
the algorithm {\sc GreedyUpMinimal} in Section \ref{SCalgLmin}
applied to $L$.
Since the minimal minimizer of the Lyapunov function $L$ is 
the minimal equilibrium price vector $p\sp{*}_{\min}$ 
by Theorem~\ref{THlyapminzerequil},
the auction algorithm {\sc AscendMinimal} 
yields the minimal equilibrium price vector $p\sp{*}_{\min}$. 
The number of updates of the price vector 
is equal to $\|p\sp{*}_{\min} - p\sp{\circ} \|_\infty$
by Theorem~\ref{THlminalgiter} (2).
\end{proof}

Theorem~\ref{THlyapnumiterAscMin} is due to 
Murota et al.~(2016)\citeH{MSY16auction},
while the finite termination is noted in 
Ausubel (2006)\citeH{Aus06}.
The bound for the number of iterations
in {\sc AscendMinimal} is given as the
$\ell_\infty$-distance from the initial price vector $p\sp{\circ}$ to the
minimal equilibrium price vector $p\sp{*}_{\min}$.
This implies, in particular, that the trajectory of the price vector
generated by the ascending auction
is the ``shortest'' path between the initial vector
and the minimal equilibrium price vector.

\paragraph{Variants of auction algorithms:}
A variant of the ascending auction algorithm,
called {\sc AscendMaximal},
is obtained through the application of 
the algorithm {\sc GreedyUpMaximal} 
in Section \ref{SCalgLmin} to the Lyapunov function $L$.
Two other variants of the descending auction algorithm,
called {\sc DescendMaximal} and {\sc DescendMinimal},
are obtained through
the application of 
the algorithms {\sc GreedyDownMaximal} and {\sc GreedyDownMinimal}
in Section \ref{SCalgLmin} to the Lyapunov function $L$,
where {\sc DescendMaximal} 
coincides with the descending auction algorithm in
Ausubel (2006)\citeH{Aus06}.
The general results for L$\sp{\natural}$-convex function minimization 
summarized in Table \ref{TBgreedyLalg} (b) in Section \ref{SCalgLmin} imply
the following exact bounds 
(Murota et al.~2016)\citeH{MSY16auction}.

\begin{theorem} \label{THauctionvariants}
\quad   %% do not delete this %%

\noindent
{\rm (1)} 
For an initial vector $p^\circ$ with $p^\circ \leq p\sp{*}_{\max}$,
the algorithm {\sc AscendMaximal}
outputs $p\sp{*}_{\max}$ and
the number of updates of the price vector 
is exactly equal to $\|  p\sp{*}_{\max} - p^\circ \|_\infty$.

\noindent
{\rm (2)} 
For an initial vector $p^\circ$ with $p^\circ \geq p\sp{*}_{\max}$,
the algorithm {\sc DescendMaximal} 
outputs $p\sp{*}_{\max}$ and
the number of updates of the price vector 
is exactly equal to $\|  p\sp{*}_{\max} - p^\circ \|_\infty$.

\noindent
{\rm (3)} 
For any initial vector $p^\circ$ with $p^\circ \geq p\sp{*}_{\min}$, 
the algorithm {\sc DescendMinimal} 
outputs $p\sp{*}_{\min}$ and
the number of updates of the price vector 
is exactly equal to $\| p\sp{*}_{\min} - p^\circ \|_\infty$.
\end{theorem}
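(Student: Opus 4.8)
The plan is to argue exactly as in the proof of Theorem~\ref{THlyapnumiterAscMin}, exploiting the identification of each auction variant with a corresponding greedy algorithm for L$\sp{\natural}$-convex function minimization applied to the Lyapunov function $L$. Two facts underpin the whole argument: first, $L$ is L$\sp{\natural}$-convex (Theorem~\ref{THlyaplnatfn}~(2)), so that the machinery of Section~\ref{SCalgLmin} and Table~\ref{TBgreedyLalg} is available; second, the set of minimizers of $L$ coincides with the set of equilibrium price vectors (Theorem~\ref{THlyapminzerequil}). Combining the latter with Theorem~\ref{THlyapequilLnatset}, I would record that the minimal minimizer of $L$ is $p\sp{*}_{\min}$ and the maximal minimizer of $L$ is $p\sp{*}_{\max}$.

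Next I would pin down the precise correspondence between algorithms: by construction {\sc AscendMaximal} is {\sc GreedyUpMaximal} run on $g = L$, {\sc DescendMaximal} is {\sc GreedyDownMaximal} run on $L$, and {\sc DescendMinimal} is {\sc GreedyDownMinimal} run on $L$. It then remains, for each of the three cases, to check that the hypothesis on the initial vector $p\sp{\circ}$ stated in the theorem matches (or is stronger than) the initialization requirement of the corresponding greedy algorithm, and to read off the output and the exact number of updates from Table~\ref{TBgreedyLalg}~(b).

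Carrying this out: for part (1), the hypothesis $p\sp{\circ} \leq p\sp{*}_{\max}$ is precisely the condition $\{ q \mid q \geq p\sp{\circ} \} \cap \argmin L \not= \emptyset$ required by {\sc GreedyUpMaximal}, and the table returns output $p\sp{*}_{\max}$ with update count $\| p\sp{\circ} - p\sp{*}_{\max} \|_{\infty}$. For part (2), since $p\sp{*}_{\min} \leq p\sp{*}_{\max} \leq p\sp{\circ}$, the initialization condition $\{ q \mid q \leq p\sp{\circ} \} \cap \argmin L \not= \emptyset$ of {\sc GreedyDownMaximal} holds; as $p\sp{*}_{\max} \leq p\sp{\circ}$, the table yields output $p\sp{*}_{\max}$ and count $\| p\sp{\circ} - p\sp{*}_{\max} \|_{\infty}$. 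For part (3), $p\sp{\circ} \geq p\sp{*}_{\min}$ ensures the initialization condition of {\sc GreedyDownMinimal}, and the table gives output $p\sp{*}_{\min}$ with count $\| p\sp{\circ} - p\sp{*}_{\min} \|_{\infty}$.

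The only genuine content beyond this bookkeeping is the \emph{exactness} of the iteration counts, which I would not re-derive but invoke from the analysis summarized in Table~\ref{TBgreedyLalg}~(b). Accordingly, the main obstacle lies not in the present deduction but in the well-definedness of the maximal/minimal-minimizer selection rule in each greedy step and the resulting monotone, shortest-path trajectory — a point that ultimately rests on the translation-submodularity characterization of L$\sp{\natural}$-convexity (Theorem~\ref{THlnatcondZ}~(b),(d)) behind those exact bounds, and which for the present theorem I am entitled to treat as already established.
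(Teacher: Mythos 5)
Your proposal is correct and follows essentially the same route as the paper: the paper likewise establishes Theorem~\ref{THauctionvariants} by identifying {\sc AscendMaximal}, {\sc DescendMaximal}, and {\sc DescendMinimal} with {\sc GreedyUpMaximal}, {\sc GreedyDownMaximal}, and {\sc GreedyDownMinimal} applied to the Lyapunov function $L$ (using Theorems~\ref{THlyaplnatfn} and~\ref{THlyapminzerequil} so that $\argmin L$ is the set of equilibrium prices with extreme elements $p\sp{*}_{\min}$, $p\sp{*}_{\max}$), and then reading the outputs and exact iteration counts from Table~\ref{TBgreedyLalg}~(b). Your verification that each hypothesis on $p\sp{\circ}$ matches the initialization requirement of the corresponding greedy algorithm is exactly the bookkeeping the paper's citation-based argument relies on.
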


A two-phase auction algorithm, consisting of
an ascending auction phase followed by a descending phase,
can be obtained by applying the algorithm
{\sc TwoPhaseMinMax} in Section \ref{SCalgLmin} to the Lyapunov function $L$.
Another two-phase auction algorithm can be obtained from {\sc TwoPhaseMinMin}.
Then Theorems \ref{THlmintwophaseMinMax} and \ref{THlmintwophaseMinMin}
imply the following
%%(Murota and Shioura 2016)\citeH{MS16Lmin2ph}.
\RED{
(Murota and Shioura 2017)\citeH{MS16Lmin2ph}.
}%

\begin{theorem} \label{THauctiontwophase}
\quad   %% do not delete this %%

\noindent
{\rm (1)} 
For any initial vector $p^\circ$,
the two-phase algorithm {\sc TwoPhaseMinMax} 
outputs some equilibrium price $p\sp{*}$.
The number of updates of the vector $p$ 
is bounded by $\mu(p\sp{\circ})$ in the ascending phase 
and by $\mu(p\sp{\circ})$ in the descending phase; 
in total, bounded by $2\mu(p\sp{\circ})$.

\noindent
{\rm (2)} 
For any initial vector $p^\circ$,
the two-phase algorithm {\sc TwoPhaseMinMin} 
outputs the minimal equilibrium price $p\sp{*}_{\min}$.
The number of updates of the vector $p$ 
is bounded by $\mu(p\sp{\circ})$ in the ascending phase
and is exactly equal to
$\| p\sp{\circ} - p\sp{*}_{\min} \|_\infty\sp{+}$
in the descending phase;
in total, bounded by 
$\mu(p\sp{\circ}) + \| p^{\circ} - p\sp{*}_{\min} \|_\infty\sp{+}$.
\end{theorem}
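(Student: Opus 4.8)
The plan is to derive both parts as direct specializations of the general L$\sp{\natural}$-convex minimization bounds (Theorems \ref{THlmintwophaseMinMax} and \ref{THlmintwophaseMinMin}) to the Lyapunov function $L$. First I would recall the three structural facts already established in this section: by Theorem~\ref{THlyaplnatfn}(2) the Lyapunov function $L: \ZZ_{+}\sp{n} \to \ZZ$ is L$\sp{\natural}$-convex; by Theorem~\ref{THlyapminzerequil} the equilibrium price vectors are exactly the minimizers of $L$; and by Theorem~\ref{THlyapequilLnatset} the set $\argmin L$ is a bounded L$\sp{\natural}$-convex set, so that in particular the minimal minimizer $p\sp{*}_{\min}$ exists and coincides with the minimal equilibrium price vector. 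With these identifications in hand, the two-phase auction algorithms are nothing but the algorithms {\sc TwoPhaseMinMax} and {\sc TwoPhaseMinMin} of Section~\ref{SCalgLmin} applied to $g = L$, just as {\sc AscendMinimal} was identified with {\sc GreedyUpMinimal} in the proof of Theorem~\ref{THlyapnumiterAscMin}.

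For part (1), I would apply Theorem~\ref{THlmintwophaseMinMax} with $g = L$. That theorem guarantees, for any initial vector $p\sp{\circ}$, termination at some minimizer of $L$, with the number of updates of $p$ bounded by $\mu(p\sp{\circ})$ in the up phase and by $\mu(p\sp{\circ})$ in the down phase, hence by $2\mu(p\sp{\circ})$ in total. Translating ``minimizer of $L$'' back into ``equilibrium price'' via Theorem~\ref{THlyapminzerequil}, and renaming the up/down phases as the ascending/descending phases, yields the claim verbatim.

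For part (2), I would apply Theorem~\ref{THlmintwophaseMinMin} with $g = L$. Since $p\sp{*}_{\min}$ exists (from the boundedness in Theorem~\ref{THlyapequilLnatset}), that theorem asserts termination at the minimal minimizer $p\sp{*}_{\min}$ with at most $\mu(p\sp{\circ})$ updates in the up phase and exactly $\| p\sp{\circ} - p\sp{*}_{\min} \|_\infty\sp{+}$ updates in the down phase, for a total bounded by $\mu(p\sp{\circ}) + \| p\sp{\circ} - p\sp{*}_{\min} \|_\infty\sp{+}$. Identifying $p\sp{*}_{\min}$ as the minimal equilibrium price vector completes the argument.

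Honestly, there is no substantive obstacle here: the entire content of the theorem has been front-loaded into Theorems~\ref{THlmintwophaseMinMax} and \ref{THlmintwophaseMinMin}, and the proof is a dictionary translation between the optimization language and the auction language. The only point requiring a moment's care is the existence of $p\sp{*}_{\min}$ needed to invoke Theorem~\ref{THlmintwophaseMinMin}, but this is precisely supplied by Theorem~\ref{THlyapequilLnatset}. I would therefore keep the write-up to a few sentences, citing these four theorems in sequence rather than reproducing any of their internal arguments.
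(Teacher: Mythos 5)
Your proposal is correct and is essentially identical to the paper's own derivation: the paper likewise obtains Theorem~\ref{THauctiontwophase} by applying Theorems~\ref{THlmintwophaseMinMax} and \ref{THlmintwophaseMinMin} to $g = L$, using Theorem~\ref{THlyaplnatfn} for L$\sp{\natural}$-convexity of $L$ and Theorem~\ref{THlyapminzerequil} to identify minimizers with equilibrium prices. Your explicit remark that the existence of $p\sp{*}_{\min}$ (via Theorem~\ref{THlyapequilLnatset}) is the one hypothesis needing verification is a correct and slightly more careful rendering of a point the paper leaves implicit.
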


Two-phase algorithms with more flexibility
are given in 
Murota et al.~(2013a)\citeH{MSY13},
and
%%Murota and Shioura (2016)\citeH{MS16Lmin2ph}.
\RED{
Murota and Shioura (2017)\citeH{MS16Lmin2ph}.
}%

\begin{remark} \rm  \label{RMauctionTwoPhase}
The algorithm {\sc TwoPhaseMinMax}, when applied to
valuation functions on $\{0,1\}\sp{N}$
(single-unit valuations), 
coincides with a special case of  
``Global Dynamic Double-Track (GDDT) procedure'' proposed in 
Sun and Yang (2009)\citeH{SY09}.
The ``global Walrasian t{\^a}tonnement algorithm'' proposed by
Ausubel (2006)\citeH{Aus06}
repeats ascending and descending phases
until some equilibrium is found.
Theorem~\ref{THlmintwophaseMinMax} 
shows that the global Walrasian t\^atonnement algorithm terminates
after only one ascending phase and only one descending phase.
Put differently, the behavior of the global Walrasian t\^atonnement 
algorithm coincides with that of {\sc TwoPhaseMinMax}.
\finbox
\end{remark}

\begin{remark} \rm  \label{RMauctionTwoPhaseOther}
Besides {\sc TwoPhaseMinMax},
we can obtain many variants of two-phase algorithms
by choosing appropriate combinations 
from among the algorithms {\sc Greedy-\{Up, Down\}-\{Minimal, Maximal\}}
listed in Table \ref{TBgreedyLalg}.
In Section \ref{SCunitdemandauction}, for example,
we consider the combination of 
{\sc GreedyUpMinimal} and {\sc GreedyDownMinimal}.
\finbox
\end{remark}

\subsection{Unit-demand auctions}
\label{SCunitdemandauction}

Fundamental multi-item unit-demand auction algorithms
such as the
Vickrey--English,
Vickrey--Dutch,
Vickrey--English--Dutch auctions
can be reformulated 
in the  framework of the Lyapunov function approach.
In so doing we can derive bounds for the number of iterations
in these auction algorithms from the corresponding results about 
L$\sp{\natural}$-convex function minimization 
presented in Section \ref{SCalgLmin}.

The unit-demand auction model
is a special case of the single-unit auction model, where each bidder is
a {\em unit-demand} bidder,  being 
interested in getting at most one item.
 We continue to use notations
 $N=\{1,2,\ldots, n\}$ for the set of items 
and $M=\{1,2,\ldots, m\}$ for the set of bidders. 
 For each item $i$ and each bidder $j$, 
we denote by $v_{ji}$ the valuation of item $i$ by bidder $j$,
which is assumed to be a nonnegative integer, i.e., $v_{ji} \in \ZZ_{+}$.
The valuation function $f_{j}: 2\sp{N} \to \ZZ_{+}$ 
of bidder $j$ is given by
\begin{equation}
 \label{unitdemandfndef}
 f_{j}(X) = \left\{
\begin{array}{ll}
 \max\{v_{ji} \mid i \in X \} & (\mbox{if }X \neq \emptyset),\\
0 & (\mbox{if }X = \emptyset).
\end{array}
\right.
\end{equation}
 A valuation function of this form, often called a 
{\em unit-demand valuation}\/%
\footnote{%%%%%%%%%%%%%%%%%
See, e.g.,  
Section~9.2.2 of Cramton et al.~(2006)\citeH[Section~9.2.2]{CSS06}
and  Definition 11.17 of Blumrosen and Nisan (2007)\citeH[Definition 11.17]{BN07}.
}, %%%%%footnote %%%%%%%%%%%%%%%%%%
is a gross substitutes valuation, as pointed out by 
Gul and Stacchetti (1999)\citeH{GS99}.
In other words, a unit-demand valuation is M$\sp{\natural}$-concave; see (\ref{unitdemutil01}).
We are interested in finding the minimal Walrasian equilibrium price vector
$p\sp{*}_{\min} \in \ZZ_{+}\sp{N}$ by iterative auctions.

Fundamental iterative auction algorithms such as the
Vickrey--English auction of 
Demange et al.~(1986)\citeH{DGS86} 
(the variant by 
Mo et al.~(1988)\citeH{MTL88}
and Sankaran (1994)\citeH{San94}, 
to be more specific),
the Vickrey--Dutch auction of 
Mishra and Parkes (2009)\citeH{MP09vd},
and the Vickrey--English--Dutch auction of 
Andersson and Erlanson (2013)\citeH{AE13}
can be recast into the Lyapunov function-based framework.
The following theorem is due to
Murota et al.~(2016)\citeH[Theorem 5.5]{MSY16auction};
the specific forms of the auction algorithms are described in Remark~\ref{RMunitdemaucnalg}.

\begin{theorem}   \label{THunitdemconnectLyap}
Let $L: \ZZ_{+}\sp{N} \to \ZZ$ be the Lyapunov function
associated with the unit-demand valuations {\rm (\ref{unitdemandfndef})}.  
\\
{\rm (1)}
For any initial price vector $p\sp{\circ}$ with $p\sp{\circ} \leq p\sp{*}_{\min}$, 
the sequence of price vectors $p$ generated by 
the algorithm {\sc Vickrey\_English} 
is the same as that of 
{\sc GreedyUpMinimal} applied to $L$.
\\
{\rm (2)}
For any initial price vector $p\sp{\circ}$ with $p\sp{\circ} \geq p\sp{*}_{\min}$,
the sequence of price vectors $p$ generated by
the algorithm {\sc Vickrey\_Dutch}
is the same as that of
{\sc GreedyDownMinimal} applied to $L$.
\\
{\rm (3)}
For any initial price vector $p\sp{\circ}$,
the sequence of price vectors $p$ generated by
the algorithm {\sc Vickrey\_English\_Dutch}  
is the same as that of 
{\sc TwoPhaseMinMin} applied to $L$.
\end{theorem}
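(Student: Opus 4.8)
The plan is to prove all three claims by reducing them to a single per-step statement: at every current price vector $p$, the subset $X \subseteq N$ selected by the classical auction rule coincides with the subset $X$ selected by the corresponding greedy rule applied to the Lyapunov function $L$. Since $L$ is L$\sp{\natural}$-convex (Theorem~\ref{THlyaplnatfn}), the algorithms {\sc GreedyUpMinimal}, {\sc GreedyDownMinimal}, and {\sc TwoPhaseMinMin} are well defined on $L$, their step rules are exactly those tabulated in Table~\ref{TBgreedyLalg}(a), and all of them use the common update $p \mapsto p \pm \chi_{X}$ and the common stopping test $X = \emptyset$. Hence, once the per-step coincidence is established, the full sequences of price vectors agree by induction on the iteration count (and, for the two-phase algorithm, by concatenating the two phases).

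First I would compute the one-step differences of $L$ in terms of the demand sets. Writing $V_{j} = f_{j}\sp{\triangledown}$ (Theorem~\ref{THlmconjcavevexZ}(2)) and using the subgradient description $\partial L(p) = \{\, u - \sum_{j} x_{j} \mid x_{j} \in D_{j}(p)\,\}$ recalled in the proof of Theorem~\ref{THlyapminzerequil}, I would establish
\[
 L(p + \chi_{X}) - L(p) = u(X) - \sum_{j=1}\sp{m} \min\{\, x(X) \mid x \in D_{j}(p)\,\},
\]
\[
 L(p - \chi_{X}) - L(p) = \sum_{j=1}\sp{m} \max\{\, x(X) \mid x \in D_{j}(p)\,\} - u(X).
\]
The crucial point is that these identities are \emph{exact} (not merely subgradient inequalities) in the unit-demand case: every demand vector in $D_{j}(p)$ is a $0$--$1$ unit vector or $\veczero$, so $V_{j}$ is built from affine pieces whose slopes lie in $\{0,-1\}$ along each coordinate, and a unit step $\chi_{X}$ therefore changes $V_{j}$ by exactly $-\min\{\, x(X) \mid x \in D_{j}(p)\,\} \in \{0,-1\}$. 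In particular both set functions $X \mapsto L(p \pm \chi_{X})$ are submodular (as in Section~\ref{SCalgLmin}), so their minimal and maximal minimizers are well defined.

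Next I would match these set functions to the selection rules of the three auctions as specified in Remark~\ref{RMunitdemaucnalg}. Each auction rule is phrased through a demand-based set function measuring excess demand (ascending) or excess supply (descending); the displayed identities show that this function coincides, up to the additive constant $L(p)$, with $X \mapsto L(p + \chi_{X})$ respectively $X \mapsto L(p - \chi_{X})$. Consequently the minimal minimizer chosen by {\sc Vickrey\_English} equals the minimal minimizer of $L(p + \chi_{X})$ selected by {\sc GreedyUpMinimal}, giving (1), and the maximal minimizer chosen by {\sc Vickrey\_Dutch} equals the maximal minimizer of $L(p - \chi_{X})$ selected by {\sc GreedyDownMinimal}, giving (2). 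For (3), {\sc Vickrey\_English\_Dutch} runs an ascending phase governed by the rule of (1) followed by a descending phase governed by the rule of (2); these are precisely the up phase and down phase of {\sc TwoPhaseMinMin}. The legitimacy of the phase transition---that the price reached at the end of the ascending phase satisfies the precondition required to start the descending phase---follows from the local optimality characterization of Theorem~\ref{THlfnlocminZ}(d) together with the two-phase analysis behind Theorem~\ref{THlmintwophaseMinMin}, so the combined sequences coincide as well.

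The main obstacle I expect is the second step: pinning down the exact one-step difference formulas and, above all, verifying that the combinatorial selection rules of the classical auctions are literally (minimal, respectively maximal) minimization of the very same submodular set function $X \mapsto L(p \pm \chi_{X})$. The difference formulas are delicate because they require the exact value of a unit step of the L$\sp{\natural}$-convex indirect utility $V_{j}$, which is only available thanks to the $0$--$1$ structure peculiar to unit-demand valuations; matching the minimal or maximal minimizer (rather than merely some minimizer) to the auction's uniquely specified update set is what makes the price trajectories agree on the nose rather than only up to the resolution of ties.
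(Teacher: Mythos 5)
Your overall architecture is exactly the paper's: the paper establishes Theorem~\ref{THunitdemconnectLyap} by a per-step coincidence argument resting on Proposition~\ref{PRexcessLyapunov} (Lemma 5.7 of Murota et al.\ 2016), which says that the maximal set in excess demand is the minimal minimizer of $L(p+\chi_X)-L(p)$, and that $Z$ is the maximal set in positive excess demand if and only if $X=\suppp(p)\setminus Z$ is the maximal minimizer of $L(p-\chi_X)-L(p)$. Your exact one-step formulas
$L(p+\chi_X)-L(p)=u(X)-\sum_{j}\min\{x(X)\mid x\in D_j(p)\}$ and
$L(p-\chi_X)-L(p)=\sum_{j}\max\{x(X)\mid x\in D_j(p)\}-u(X)$
are correct here, and in the unit-demand case they specialize (with $u=\vecone$) to $L(p+\chi_X)-L(p)=|X|-|O(X,p)|$; the exactness holds because integrality of the data forces all breakpoints of $t\mapsto V_j(p\pm t\chi_X)$ to lie at integers. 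Your justification of this is loose, though: when some prices are zero, $D_j(p)$ as a set of bundles contains non-singleton bundles (free items can be appended), so ``every demand vector is a unit vector or $\veczero$'' is false as stated; the formulas survive because the $x(X)$-minimal (resp.\ $x(X)$-maximal) demanded bundles behave as you describe.

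The genuine gap is the bridging step that you yourself flag as the main obstacle but then assert rather than prove. The auctions of Remark~\ref{RMunitdemaucnalg} do not minimize any set function: \textsc{Vickrey\_English} selects the maximal $X$ satisfying the Hall-type condition $|U(Y,p)\cap O(X,p)|>|Y|$ for every nonempty $Y\subseteq X$, and \textsc{Vickrey\_Dutch} does the analogue with $O\sp{+}$ followed by complementation within $\suppp(p)$. So your claim that ``the displayed identities show that this function coincides, up to the additive constant $L(p)$, with $X\mapsto L(p\pm\chi_X)$'' misreads the selection rules: the identities reduce the theorem to the purely combinatorial equivalence between ``maximal set in excess demand'' and ``minimal minimizer of the submodular function $|X|-|O(X,p)|$'' (and its descending counterpart), but that equivalence is precisely the content of Proposition~\ref{PRexcessLyapunov} and still requires an argument (e.g., submodularity of $|X|-|O(X,p)|$, since $O(X\cap Y,p)=O(X,p)\cap O(Y,p)$ and $O(X\cup Y,p)\supseteq O(X,p)\cup O(Y,p)$, plus an exchange argument identifying excess demand with strict descent on all nonempty subsets); invoking it without proof is circular with respect to the statement being proved. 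Two smaller repairs: the phase-transition condition for part (3) should cite Theorem~\ref{THlnatcondZ}(d) --- Theorem~\ref{THlfnlocminZ} has no part (d) --- and you should make explicit that feasibility $p-\chi_X\geq\veczero$ confines descending updates to $X\subseteq\suppp(p)$, which is exactly where the complementation $X=\suppp(p)\setminus Z$ in Proposition~\ref{PRexcessLyapunov}(2) enters.
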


Theorem~\ref{THunitdemconnectLyap} above is established on the basis of
the following technical observations  
(Lemma 5.7 of Murota et al.~2016),
  \citeH[Lemma 5.7]{MSY16auction}%
which relate the descending directions of the Lyapunov function 
with ``sets in excess demand'' 
(see Remark~\ref{RMunitdemaucnalg})
used in the Vickrey--English, Vickrey--Dutch, Vickrey--English--Dutch 
auction algorithms.

\begin{proposition} \label{PRexcessLyapunov}
 Let $p \in \ZZ_{+}\sp{N}$ be a price vector.

\noindent
{\rm (1)}
 A set $X \subseteq N$  is the maximal set in excess demand at price $p$
if and only if $X$ is the minimal minimizer of $L(p + \chi_X) - L(p)$.

\noindent
{\rm (2)}
 A set $Z \subseteq \suppp(p)$  
is the maximal set in positive excess demand at price $p$
if and only if $X=\suppp(p) \setminus Z$ 
is the maximal minimizer of $L(p - \chi_X) - L(p)$.
\end{proposition}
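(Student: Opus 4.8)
The plan is to reduce the whole statement to two explicitly computable submodular set functions and then match them against the combinatorial excess-demand conditions. First I would introduce
\[
\rho_{+}(X) = L(p + \chi_X) - L(p), \qquad \rho_{-}(X) = L(p - \chi_X) - L(p)
\qquad (X \subseteq N),
\]
and evaluate them from $L(p) = \sum_{j\in M} V_j(p) + u^\top p$. Since each $f_j$ is a unit-demand valuation, taking more than one item never helps, so the indirect utility has the closed form $V_j(p) = \max\bigl(0,\ \max_{i\in N}(v_{ji}-p_i)\bigr)$. Writing $\hat D_j(p)\subseteq N$ for the set of (single) items demanded by bidder $j$ at $p$, a short case analysis gives $V_j(p+\chi_X)-V_j(p) = -1$ exactly when $V_j(p)>0$ and $\hat D_j(p)\subseteq X$ (bidder $j$ demands only items in $X$ and is not content with the outside option), and $=0$ otherwise; dually $V_j(p-\chi_X)-V_j(p)=1$ exactly when $\hat D_j(p)\cap X\neq\emptyset$. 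With $u=\vecone$ this yields the bidder-counting formulas
\[
\rho_{+}(X) = |X| - \bigl|\{\, j\in M : V_j(p)>0,\ \hat D_j(p)\subseteq X \,\}\bigr|,
\qquad
\rho_{-}(X) = \bigl|\{\, j\in M : \hat D_j(p)\cap X\neq\emptyset \,\}\bigr| - |X|,
\]
which link the Lyapunov increments to overdemand and to excess demand.

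Next I would record that $\rho_{+}$ and $\rho_{-}$ are submodular. This is immediate from the L$\sp{\natural}$-convexity of $L$ (Theorem~\ref{THlyaplnatfn}), exactly as noted for the descent step of {\sc Greedy} in Section~\ref{SCalgLmin}; it can also be read off the formulas above, since $X\mapsto[\hat D_j(p)\subseteq X]$ is supermodular while $X\mapsto[\hat D_j(p)\cap X\neq\emptyset]$ is submodular. Submodularity guarantees that the minimizers of $\rho_{+}$ over $2\sp{N}$, and the minimizers of $\rho_{-}$ over the subsets of $\suppp(p)$ (the only directions admissible for a nonnegative price decrease), each form a lattice. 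Hence the minimal minimizer of $\rho_{+}$ and the maximal minimizer of $\rho_{-}$ are uniquely determined, which matches the objects appearing in the statement.

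Then I would identify these extremal minimizers with the auction-theoretic sets. For part~{\rm (1)}, the defining inequality for a set in excess demand is precisely $|\{\, j : V_j(p)>0,\ \hat D_j(p)\subseteq X \,\}| > |X|$, i.e. $\rho_{+}(X)<0$; I would show that the maximal set in excess demand coincides with the minimal minimizer of $\rho_{+}$, using the lattice structure to argue that the minimal minimizer concentrates all the overdemand and cannot be enlarged without appending items that are not themselves overdemanded, while conversely any set in excess demand is absorbed into it under $\cup$. For part~{\rm (2)} I would pass to the complement $Z=\suppp(p)\setminus X$: the maximal minimizer $X$ of $\rho_{-}$ collects the underdemanded items whose prices should be lowered, and its complement within the support is exactly the maximal set in positive excess demand, the relevant polarity being the order-reversing map $X\mapsto\suppp(p)\setminus X$ that sends a maximal minimizer to a maximal excess-demand set.

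The main obstacle I anticipate is precisely this last matching step: proving that the word ``maximal'' in the two excess-demand definitions corresponds to the \emph{minimal} minimizer in {\rm (1)} and to the complement of the \emph{maximal} minimizer in {\rm (2)}. This is a polarity argument that has to be run through the submodular lattice of minimizers together with the Hall/deficiency combinatorics of the demand bipartite graph, and care is needed to handle ties in the demand sets (items co-optimal with the outside option), the strict condition $V_j(p)>0$ in the formula for $\rho_{+}$, and the restriction $X\subseteq\suppp(p)$ forced by the nonnegativity of prices.
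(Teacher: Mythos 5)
Your overall route is the right one, and it is essentially the argument behind the source that the paper cites for this Proposition (Lemma~5.7 of Murota et al.~2016 --- note the paper itself gives no proof in the text). Your closed forms are correct: with $u=\vecone$ one gets $\rho_{+}(X)=|X|-|O(X,p)|$ and $\rho_{-}(X)=|U(X,p)|-|X|$ in the notation of Remark~\ref{RMunitdemaucnalg}, since your set $\{\, j : V_j(p)>0,\ \hat D_j(p)\subseteq X \,\}$ is exactly $O(X,p)$ (for $X\subseteq N$, $D_j(p)\subseteq X$ already forces the null item out), and the submodularity of $\rho_{\pm}$ and the lattice of minimizers follow from the L$^{\natural}$-convexity of $L$ as you say. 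Your handling of the restriction $X\subseteq\suppp(p)$ in part (2) is also correct.

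The genuine gap is at the matching step, and it begins one sentence earlier than where you locate it: your claim that ``the defining inequality for a set in excess demand is precisely $|O(X,p)|>|X|$, i.e.\ $\rho_{+}(X)<0$'' is false. That is only the instance $Y=X$ of the definition; being in excess demand requires $|U(Y,p)\cap O(X,p)|>|Y|$ for \emph{every} nonempty $Y\subseteq X$, which is strictly stronger. (Two items, three bidders all demanding only item $1$: $X=\{1,2\}$ has $\rho_{+}(X)=-1<0$ but fails the condition at $Y=\{2\}$, and is not a minimizer.) Starting from the false reformulation, your absorption argument ``any set in excess demand is absorbed under $\cup$'' cannot be run. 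The missing idea is the bridge identity: since $O(X,p)\setminus O(X\setminus Y,p)=U(Y,p)\cap O(X,p)$, one has, for $\emptyset\neq Y\subseteq X$,
\[
\rho_{+}(X)-\rho_{+}(X\setminus Y)\;=\;|Y|-|U(Y,p)\cap O(X,p)| ,
\]
so $X$ is in excess demand if and only if $\rho_{+}(X)<\rho_{+}(X')$ for all $X'\subsetneq X$, i.e., $X$ is the unique minimizer of $\rho_{+}$ on $2^{X}$. Both directions of (1) then follow by exactly the lattice argument you gesture at: the minimal minimizer $X^{*}$ satisfies this strict inequality below itself (an $X'\subsetneq X^{*}$ with $\rho_{+}(X')=\rho_{+}(X^{*})$ would be a smaller minimizer), hence is in excess demand; and if $X$ is in excess demand with $X\not\subseteq X^{*}$, then $X\cap X^{*}\subsetneq X$ gives $\rho_{+}(X\cap X^{*})>\rho_{+}(X)$, whence submodularity yields $\rho_{+}(X\cup X^{*})<\rho_{+}(X^{*})$, a contradiction; so every excess-demand set lies in $X^{*}$ and $X^{*}$ is the maximal one. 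For (2) the analogous identity is $\rho_{-}(X\cup Y)-\rho_{-}(X)=|U(Y,p)\cap O^{+}(Z,p)|-|Y|$ for $Z=\suppp(p)\setminus X$ and $\emptyset\neq Y\subseteq Z$, using $O^{+}(Z,p)=\{\, j : D_j(p)\cap X=\emptyset \,\}$; thus $Z$ is in positive excess demand iff $X$ is the unique minimizer of $\rho_{-}$ on the interval $\{X' \mid X\subseteq X'\subseteq\suppp(p)\}$, and the mirror-image submodularity argument identifies the maximal minimizer with the complement of the maximal $Z$. Without these identities your proposal establishes the counting formulas and the lattice structure, but not the proposition itself.
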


Theorem~\ref{THunitdemconnectLyap} enables us to 
resort to the general results for 
L$\sp{\natural}$-convex function minimization 
in Section \ref{SCalgLmin}
to establish the following (exact or upper) bounds on the number of iterations
in the unit-demand auction algorithms,
where (1) and (2) are given in 
Corollary 2 of Andersson and Erlanson (2013)\citeH[Corollary 2]{AE13},
and (3) is in
%%Murota and Shioura (2016)\citeH{MS16Lmin2ph}.
\RED{
Murota and Shioura (2017)\citeH{MS16Lmin2ph}.
}%

\begin{theorem}\label{THunitdemanditerbound}
\quad   %% do not delete this %%

\noindent
{\rm (1)}
For any initial price vector $p\sp{\circ}$ with $p\sp{\circ} \leq p\sp{*}_{\min}$, 
the number of updates of the price vector in 
the algorithm {\sc Vickrey\_English} is exactly equal to 
$\| p\sp{\circ} - p\sp{*}_{\min} \|_\infty$.
\\
{\rm (2)}
For any initial price vector $p\sp{\circ}$ with $p\sp{\circ} \geq p\sp{*}_{\min}$,
the number of updates of the price vector in 
the algorithm {\sc Vickrey\_Dutch} is exactly equal to 
$\| p\sp{\circ} - p\sp{*}_{\min} \|_\infty$.
\\
{\rm (3)}
For any initial price vector $p\sp{\circ}$,
the number of updates of the price vector in 
the algorithm {\sc Vickrey\_English\_Dutch} 
is bounded by $\mu(p\sp{\circ})$ in the ascending phase
and is exactly equal to
$\| p\sp{\circ} - p\sp{*}_{\min} \|_\infty\sp{+}$
in the descending phase;
in total, bounded by 
$\mu(p\sp{\circ}) + \| p^{\circ} - p\sp{*}_{\min} \|_\infty\sp{+}$.
\end{theorem}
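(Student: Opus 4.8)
The plan is to reduce each of the three statements to the corresponding iteration-count result for L$\sp{\natural}$-convex function minimization established in Section \ref{SCalgLmin}, using the algorithmic correspondence of Theorem \ref{THunitdemconnectLyap} as the bridge. The preliminary observation to record first is that the Lyapunov function $L$ is L$\sp{\natural}$-convex (Theorem \ref{THlyaplnatfn}) and that its set of minimizers coincides with the set of equilibrium price vectors (Theorem \ref{THlyapminzerequil}); consequently the minimal equilibrium price vector $p\sp{*}_{\min}$, whose existence is guaranteed by Theorem \ref{THlyapequilLnatset}, is exactly the minimal minimizer of $L$. With this identification in hand, each auction algorithm can be analyzed through its L$\sp{\natural}$-convex minimization counterpart.

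For (1), I would invoke Theorem \ref{THunitdemconnectLyap} (1): for $p\sp{\circ} \leq p\sp{*}_{\min}$ the price sequence produced by {\sc Vickrey\_English} is identical to that of {\sc GreedyUpMinimal} applied to $L$. Since $p\sp{*}_{\min}$ is the minimal minimizer of $L$ and $p\sp{\circ} \leq p\sp{*}_{\min}$, the final clause of Theorem \ref{THlminalgiter} (2) gives that the number of updates equals $\| p\sp{\circ} - p\sp{*}_{\min} \|_{\infty}$. Statement (2) is handled symmetrically: by Theorem \ref{THunitdemconnectLyap} (2), {\sc Vickrey\_Dutch} coincides with {\sc GreedyDownMinimal} applied to $L$, and the relevant entry of Table \ref{TBgreedyLalg} (b) shows that this algorithm outputs $p\sp{*}_{\min}$ in exactly $\| p\sp{\circ} - p\sp{*}_{\min} \|_{\infty}$ updates whenever $p\sp{\circ} \geq p\sp{*}_{\min}$.

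For (3), I would use Theorem \ref{THunitdemconnectLyap} (3) to identify the price trajectory of {\sc Vickrey\_English\_Dutch} with that of {\sc TwoPhaseMinMin} applied to $L$, and then quote Theorem \ref{THlmintwophaseMinMin} essentially verbatim: the ascending phase requires at most $\mu(p\sp{\circ})$ updates, the descending phase requires exactly $\| p\sp{\circ} - p\sp{*}_{\min} \|_\infty\sp{+}$ updates, and the total is bounded by $\mu(p\sp{\circ}) + \| p\sp{\circ} - p\sp{*}_{\min} \|_\infty\sp{+}$; the hypothesis that $p\sp{*}_{\min}$ exists is supplied by Theorem \ref{THlyapequilLnatset}.

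Since essentially all of the analytic content resides in the earlier results, the proof is an assembly rather than a computation. The one point that genuinely must be verified---and where I expect the only real friction---is the alignment of the \emph{minimal} equilibrium price with the \emph{minimal} minimizer of $L$, keeping the two norm conventions ($\| \cdot \|_{\infty}$ versus $\| \cdot \|_\infty\sp{+}$) and the two directions of monotonicity in the hypotheses straight. Once the correspondence of Theorem \ref{THunitdemconnectLyap} is granted, the iteration counts transfer with no further work.
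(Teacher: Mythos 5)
Your proposal is correct and matches the paper's own proof essentially verbatim: the paper likewise derives (1) from Theorem~\ref{THunitdemconnectLyap}~(1) together with Theorem~\ref{THlminalgiter}~(2), derives (2) from Theorem~\ref{THunitdemconnectLyap}~(2) together with Table~\ref{TBgreedyLalg}~(b), and derives (3) from Theorem~\ref{THunitdemconnectLyap}~(3) together with Theorem~\ref{THlmintwophaseMinMin}. Your preliminary identification of $p\sp{*}_{\min}$ with the minimal minimizer of the L$\sp{\natural}$-convex Lyapunov function $L$ (via Theorems~\ref{THlyapminzerequil}, \ref{THlyaplnatfn}, and \ref{THlyapequilLnatset}) is exactly the bridge the paper relies on, so nothing is missing.
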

\begin{proof}
We prove the claims to illustrate the use of 
the general results in Section \ref{SCalgLmin}.
(1) follows from Theorem~\ref{THunitdemconnectLyap} (1)
and Theorem~\ref{THlminalgiter} (2).
(2) follows from Theorem~\ref{THunitdemconnectLyap} (2)
and Table \ref{TBgreedyLalg} (b). 
(3) follows from Theorem~\ref{THunitdemconnectLyap} (3)
and Theorem~\ref{THlmintwophaseMinMin}.
\end{proof}

\begin{remark} \rm  \label{RMunitdemaucnalg}
The Vickrey--English, Vickrey--Dutch,
Vickrey--English--Dutch auction algorithms are described here,
following
Andersson and Erlanson (2013)\citeH{AE13}
and Andersson et al.~(2013)\citeH{AAT13}.
Denote by $0$ an artificial item (null-item)
which has no value (i.e., $v_{j0}=0$ for all $j \in M$) 
and is available in an infinite number of units.
 For each bidder $j \in M$ and a price vector $p \in \ZZ_{+}\sp{N}$, 
define $D_{j}(p) \subseteq N \cup \{ 0 \}$ by
\begin{align*}
  D_{j}(p)
 = \arg\max\{v_{ji} - p_{i}  \mid i \in N \cup \{ 0 \}  \}
 = \{i \in N \cup \{ 0 \} \mid  v_{ji} - p_{i} \geq v_{ji'} - p_{i'} 
   \ (\forall i' \in N \cup \{ 0 \} ) \,  \},
\end{align*}
where $p_{0} = 0$. 
 For an item  set $Y \subseteq N$ and a price vector $p \in \ZZ_{+}\sp{N}$,  define
\begin{align*}
  O(Y, p) & = \{j \in M \mid D_{j}(p) \subseteq Y\}, 
\\
 U(Y, p) & = \{j \in M \mid D_{j}(p) \cap Y \neq \emptyset\}.
\end{align*}
 The set $O(Y,p)$ 
consists of bidders who only demand items in $Y$
at price $p$, while $U(Y, p)$
is the set of bidders who demand some item in $Y$
at price $p$.
 Obviously, $O(Y, p) \subseteq U(Y,p)$.
 A set $X \subseteq N$ is said to be {\em in excess demand}
at price $p$ if it satisfies
\[
 |U(Y, p) \cap O(X, p)| > |Y| \qquad (\emptyset \neq \forall Y \subseteq X).
\]
For each price vector $p$ there uniquely exists a maximal set in excess demand%
\footnote{%%%%%%%%%%%%%%%%%%%
See 
Proposition 1 of Mo et al.~(1988)\citeH[Proposition 1]{MTL88}
and also 
Proposition 1 of Andersson and Erlanson (2013)\citeH[Proposition 1]{AE13}, 
and Theorem 1 of Andersson et al.~(2013)\citeH[Theorem 1]{AAT13}.
}. %%%%%%%%%%%%%%%%%%%%
 The Vickrey-English auction algorithm  due to 
Mo et al.~(1988)\citeH{MTL88}
 and Sankaran (1994)\citeH{San94},
a variant of the one in 
Demange et al.~(1986)\citeH{DGS86},
is as follows:

%%%%%%%%%%%%%%%%%
\begin{tabbing}     
\= {\bf Algorithm} {\sc Vickrey\_English} \\
\> \quad  Step 0: 
 Set $p:=p\sp{\circ}$, where
$p\sp{\circ} \in \ZZ_{+}\sp{N}$ is an arbitrary vector satisfying 
$p\sp{\circ} \leq p\sp{*}_{\min}$ \ (e.g., $p\sp{\circ} = \bm{0}$).  \\
\> \quad  Step 1:
    Find the maximal set $X \subseteq N$ in excess demand at price $p$. \\
\> \quad  Step 2: 
    If $X = \emptyset$,  then output $p$ and stop.   \\ 
\> \quad  Step 3: 
    Set  $p:= p + \chi_{X}$ \ and \ go to Step~1. 
\end{tabbing}

The Vickrey-Dutch auction algorithm
refers to the variants of the sets $D_{j}(p)$ and $O(Y, p)$
defined as 
\begin{align*}
 D\sp{+}_{j}(p) & = D_{j}(p) \cap \suppp(p),
\\
 O\sp{+}(Y, p) & = \{j \in M \mid D\sp{+}_{j}(p) \subseteq Y\}.
\end{align*}
\noindent
 A set $X \subseteq N$ is said to be \textit{in positive excess demand}
at price $p$ if 
$X \subseteq \suppp(p)$ and 
\[
 |U(Y, p) \cap O\sp{+}(X, p)| > |Y| \qquad (\emptyset \neq \forall Y \subseteq X).
\]
For each price vector $p$ there uniquely exists 
a maximal set in positive excess demand%
\footnote{%%%%%%%%%%%%%%%%%%%
See Theorem 2 of Andersson and Erlanson (2013)\citeH[Theorem 2]{AE13}.
}. %%%%%%%%%%%%%%%%%%%%
The Vickrey--Dutch auction by 
Mishra and Parkes (2009)\citeH{MP09vd}
is as follows:

%%%%%%%%%%%%%%%%%
\begin{tabbing}     
\= {\bf Algorithm} {\sc Vickrey\_Dutch} \\
\> \quad  Step 0: 
     Set $p:=p\sp{\circ}$, where
    $p\sp{\circ} \in \ZZ_{+}\sp{N}$ is an arbitrary vector satisfying 
     $p\sp{\circ} \geq p\sp{*}_{\min}$. \\
\> \quad  Step 1:
    Find the maximal set $Z \subseteq N$ in positive excess demand at price $p$,
    and  \\
\> \quad  \phantom{Step 1:}
     set $X := \suppp(p) \setminus Z$.  \\
\> \quad  Step 2: 
    If $X = \emptyset$,  then output $p$ and stop.   \\ 
\> \quad  Step 3: 
    Set  $p:= p - \chi_{X}$ \ and \ go to Step~1. 
\end{tabbing}

The Vickrey--English--Dutch auction by 
Andersson and Erlanson (2013)\citeH{AE13}
is a combination of the Vickrey--English and Vickrey--Dutch auctions,
as follows:

%%%%%%%%%%%%%%%%%
\begin{tabbing}      
\= {\bf Algorithm} {\sc Vickrey\_English\_Dutch} \\
\> \quad  Step 0: 
   Set $p:=p\sp{\circ}$, where  $p\sp{\circ} \in \ZZ^N_{+}$ is an arbitrary vector. 
    Go to Ascending Phase.\\
\> \quad  Ascending Phase: \\
\> \qquad  Step A1:
  Find the maximal set $X \subseteq N$ in excess demand at price $p$. \\
\> \qquad  Step A2: 
 If $X = \emptyset$, then go to Descending Phase.  \\
\> \qquad  Step A3: 
    Set  $p:= p + \chi_{X}$ \ and \ go to Step~A1. \\
\> \quad Descending Phase: \\
\> \qquad  Step D1:
    Find the maximal set $Z \subseteq N$ in positive excess demand at price $p$,
    and  \\
\> \qquad  \phantom{Step D1:}
    set $X := \suppp(p) \setminus Z$.  \\
\> \qquad  Step D2: 
 If $X = \emptyset$, then output $p$ and stop.  \\
\> \qquad  Step D3: 
    Set  $p:= p - \chi_{X}$ \ and \ go to Step~D1. 
\end{tabbing}
\vspace{-\baselineskip}
%%%%%%%%%%%%%
\finbox
\end{remark}

\subsection{Concluding remarks of section \ref{SCauction}}

Use of discrete convex analysis
in the Lyapunov function approach
is also conceived by 
Drexl and Kleiner (2015)\citeH{DK15drexl}.
Besides the basic form of ascending auction, 
the paper proposes and analyzes the ``singleton-based t{\^ a}tonnement''
which reflects a certain practice in auction design. 
It also discusses the double-track adjustment process of 
Sun and Yang (2009)\citeH{SY09}
as an application of the framework of Section \ref{SClyapapproach};
the underlying key fact here is that gross substitutes and complements
are represented 
by twisted M$\sp{\natural}$-concave functions
(Section \ref{SCtwistMnat01}).
Lehmann et al.~(2006)\citeH{LLM06}
shows a connection 
between discrete convex analysis and combinatorial auctions.
Sun and Yang (2014)\citeH{SY14}
considers super-additive utility functions.

%%% end of file %%%%%%%%%%%%%%%%%

%% \newpage
%% 2016-10-28 Kazuo Murota 
%% file= DCAECOintersepar.tex

\section{Intersection and Separation Theorems}
\label{SCintersepar}

\subsection{Separation theorem}
\label{SCseparthm}

The {\em duality}
principle in convex analysis can be
expressed in a number of different forms.
One of the most appealing statements
is in the form of the separation theorem,
which asserts the existence of a separating affine function
$y = \alpha\sp{*} + \langle p\sp{*}, x \rangle$  
for a pair of convex and concave functions.
In application to economic problems,
 the separating vector $p\sp{*}$ gives the equilibrium price.

In the continuous case we have the following.

%%%%%%%%%%
\begin{theorem}  \label{THOsep}
\index{separation theorem}
Let $f: \RR\sp{n} \to \RR \cup \{ +\infty \}$ 
and 
$h: \RR\sp{n} \to \RR \cup \{ -\infty \}$ 
be convex and concave functions, respectively
{\rm (}satisfying certain regularity conditions{\rm )}.
If
\[
 f(x) \geq h(x)  
 \qquad  (\forall x \in \RR\sp{n}) ,
\]
there exist 
$\alpha\sp{*} \in \RR$ and $p\sp{*} \in \RR\sp{n}$
such that
\[
f(x) \geq \alpha\sp{*} + \langle p\sp{*}, x \rangle  \geq h(x)  
 \qquad  (\forall x \in \RR\sp{n}).
\]
\end{theorem}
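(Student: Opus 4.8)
The plan is to prove this as the classical convex separation theorem, by separating the epigraph of $f$ from the hypograph of $h$ in $\RR^{n+1}$ and reading off the affine function from the separating hyperplane. First I would introduce the convex sets
\[
 E = \{ (x,\mu) \in \RR^{n+1} \mid \mu \geq f(x) \},
 \qquad
 H = \{ (x,\mu) \in \RR^{n+1} \mid \mu \leq h(x) \},
\]
which are convex by the convexity of $f$ and the concavity of $h$, respectively. The hypothesis $f(x) \geq h(x)$ for all $x$ forces the relative interiors of $E$ and $H$ to be disjoint: a common point $(x,\mu)$ would satisfy $\mu > f(x)$ and $\mu < h(x)$, whence $f(x) < h(x)$, a contradiction.

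Next I would invoke the proper separating hyperplane theorem for two convex sets whose relative interiors are disjoint, obtaining a nonzero pair $(q,b) \in \RR^{n} \times \RR$ and a scalar $c \in \RR$ with
\[
 \langle q, x\rangle + b\mu \geq c \quad ((x,\mu) \in E),
 \qquad
 \langle q, x\rangle + b\mu \leq c \quad ((x,\mu) \in H).
\]
Letting $\mu \to +\infty$ within $E$ and $\mu \to -\infty$ within $H$ (at points of $\dom f$ and $\dom h$, which are nonempty) shows that $b \geq 0$. Once $b > 0$ is known, I would set $p^{*} = -q/b$ and $\alpha^{*} = c/b$, substitute $\mu = f(x)$ in the first inequality and $\mu = h(x)$ in the second, and divide by $b$ to obtain exactly $f(x) \geq \alpha^{*} + \langle p^{*}, x\rangle$ and $\alpha^{*} + \langle p^{*}, x\rangle \geq h(x)$ for all $x$, which is the assertion.

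The main obstacle is precisely the non-verticality $b \neq 0$: if $b = 0$ the separator degenerates to a vertical hyperplane $\langle q, x\rangle = c$ carrying no affine graph, and the construction collapses. This is where the unstated regularity conditions enter — for instance $\mathrm{ri}(\dom f) \cap \mathrm{ri}(\dom h) \neq \emptyset$, or polyhedrality of both $f$ and $h$. Under such a hypothesis I would argue that a point $\bar x$ in this common relative interior lies strictly between the two graphs in the $\mu$-direction (there is room with $h(\bar x) < \mu < f(\bar x)$ unless $f(\bar x)=h(\bar x)$, in which case the polyhedral/interior structure still blocks a vertical separator), so that no vertical equality can separate $E$ from $H$, forcing $b > 0$ after normalization.

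An alternative route I would keep in reserve is Fenchel duality phrased through the conjugates of the excerpt. For a fixed slope $p$, an affine function $\alpha + \langle p, x\rangle$ lies between $h$ and $f$ exactly when $h^{\triangledown}(p) \leq \alpha \leq -f^{\bullet}(p)$, so it suffices to produce $p^{*}$ with $f^{\bullet}(p^{*}) + h^{\triangledown}(p^{*}) \leq 0$. The strong Fenchel duality relation $\inf_{x}(f(x) - h(x)) = \max_{p}(-f^{\bullet}(p) - h^{\triangledown}(p))$, valid under the same regularity, delivers such a $p^{*}$ because the left-hand side is nonnegative by hypothesis; any $\alpha^{*}$ in the resulting nonempty interval then completes the proof.
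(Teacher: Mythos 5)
Your proposal is correct, but there is nothing in the paper to compare it against: Theorem~\ref{THOsep} is quoted as classical background (with the regularity conditions deliberately left unspecified, pointing to convex analysis in the sense of Rockafellar), and the paper gives no proof of it. What you have written is the standard textbook argument: separate $E = \epi f$ from the hypograph of $h$ in $\RR\sp{n+1}$, use $f \geq h$ to show $\mathrm{ri}(E) \cap \mathrm{ri}(H) = \emptyset$ (via the fact that, for proper $f$, $\mathrm{ri}(\epi f) = \{(x,\mu) \mid x \in \mathrm{ri}(\dom f),\ \mu > f(x)\}$, and dually for $h$ --- worth stating explicitly, since a point of $\mathrm{ri}(E)$ satisfying $\mu > f(x)$ is exactly this lemma), invoke proper separation, rule out $b < 0$ by letting $\mu \to \pm\infty$, and normalize. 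Your reserve route through Fenchel duality is also sound and is in fact the cleaner one in the context of this paper, since the equivalence ``$h\sp{\triangledown}(p) \leq \alpha \leq -f\sp{\bullet}(p)$ iff the affine function with slope $p$ fits between $h$ and $f$'' is precisely the mechanism by which the paper later relates separation and Fenchel-type duality for the discrete theorems (Remark~\ref{RMseparfenc}); under the regularity hypotheses the supremum in Theorem~\ref{THOfenc} is attained, which hands you $p\sp{*}$ directly.

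The one spot where your first argument is looser than it should be is the exclusion of a vertical separator, i.e., $b = 0$. Your parenthetical ``the polyhedral/interior structure still blocks a vertical separator'' is hand-waving; the standard repair is this. Proper separation guarantees that $E$ and $H$ do not both lie in the separating hyperplane. If $b = 0$, the separation reads $\langle q, x \rangle \geq c$ on $\dom f$ and $\langle q, x \rangle \leq c$ on $\dom h$; a point $\bar x \in \mathrm{ri}(\dom f) \cap \mathrm{ri}(\dom h)$ then satisfies $\langle q, \bar x \rangle = c$, and a linear function attaining its minimum (resp.\ maximum) over a convex set at a relative-interior point is constant on that set, so $\langle q, \cdot \rangle \equiv c$ on both domains, forcing $E$ and $H$ entirely into the vertical hyperplane and contradicting properness. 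With that patch, $b > 0$ follows and your normalization completes the proof; no case split on whether $f(\bar x) = h(\bar x)$ is needed.
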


In the discrete case we are concerned with functions defined on integer points:
$f: \ZZ\sp{n} \to \RR \cup \{ +\infty \}$ and 
$h: \ZZ\sp{n} \to \RR \cup \{ -\infty \}$.
A {\em discrete separation theorem}
 means a statement like:
\begin{quote}
For any 
$f: \ZZ\sp{n} \to \RR \cup \{ +\infty \}$ and 
$h: \ZZ\sp{n} \to \RR \cup \{ -\infty \}$ belonging to
certain classes of functions,
if $f(x)  \geq h(x)$  for all $x \in \ZZ\sp{n}$,
then there exist
$\alpha\sp{*} \in \RR$ and $p\sp{*} \in \RR\sp{n}$
such that
\[ 
f(x) \geq \alpha\sp{*} + \langle p\sp{*}, x \rangle  \geq h(x)  
 \qquad  (\forall x \in \ZZ\sp{n}).
\] 
Moreover, if $f$ and $h$ are integer-valued, there exist integer-valued
$\alpha\sp{*} \in \ZZ$ and $p\sp{*} \in \ZZ\sp{n}$.
\end{quote}
In application to economic problems,
 the separating vector $p\sp{*}$ 
in a discrete separation theorem
often gives the equilibrium price in markets with indivisible goods.

Discrete separation theorems capture
deep combinatorial properties in spite of the apparent similarity to 
the separation theorem in the continuous case.
In this connection we note the following facts
that indicate the difficulty 
inherent in discrete separation theorems%
\footnote{%%%%%%%%%%%%%%%%%%%
See Examples 1.5 and 1.6 of Murota (2003)\citeH[Examples 1.5 and 1.6]{Mdcasiam}
for concrete examples.
}.  %%%%%%% footnote %%%%%%%%%%
Let 
$f: \ZZ\sp{n} \to \RR \cup \{ +\infty \}$ 
be a convex-extensible function,
with the convex closure $\overline{f}$.
Also let $h: \ZZ\sp{n} \to \RR \cup \{ -\infty \}$ 
be a concave-extensible function,
with the concave closure $\overline{h}$.
In the following statements,  
\   $\Longrightarrow$ \hspace{-6mm}$\not$\hspace{5mm} \  
stands for ``does not imply.''

\begin{enumerate}
\item
$f(x) \geq h(x)$ $(\forall x \in \ZZ\sp{n})$
\   $\Longrightarrow$ \hspace{-6mm}$\not$\hspace{5mm} \  
$\overline{f}(x) \geq \overline{h}(x)$ $(\forall x \in \RR\sp{n})$.

\item
$f(x) \geq h(x)$ $(\forall x \in \ZZ\sp{n})$
\   $\Longrightarrow$ \hspace{-6mm}$\not$\hspace{5mm} \  
existence of $\alpha\sp{*} \in \RR$ and $p\sp{*} \in \RR\sp{n}$.

\item
existence of $\alpha\sp{*} \in \RR$ and $p\sp{*} \in \RR\sp{n}$
\   $\Longrightarrow$ \hspace{-6mm}$\not$\hspace{5mm} \  
existence of $\alpha\sp{*} \in \ZZ$ and $p\sp{*} \in \ZZ\sp{n}$. 
\end{enumerate}

It is known that discrete separation theorems hold 
for M$\sp{\natural}$-convex/M$\sp{\natural}$-concave functions
and for L$\sp{\natural}$-convex/L$\sp{\natural}$-concave functions.
The M$\sp{\natural}$-separation theorem (Theorem \ref{THmfnsep}) is shown by
Murota (1996c, 1998, 1999)\citeH{Mstein}\citeH{Mdca}\citeH{Msbmfl}
in terms of M-convex/concave functions,
and the L$\sp{\natural}$-separation theorem (Theorem \ref{THlfnsep}) by
Murota (1998)\citeH{Mdca}
in terms of L-convex/concave functions.
The assumptions of the theorems refer to
the convex and concave conjugate functions of $f$ and $h$
 defined, respectively, by%
\footnote{%%%%%%%%%%%%%%%%
We have $f\sp{\bullet}(p) = - f\sp{\triangle}(-p)$ and
$ h\sp{\circ}(p) = - h\sp{\triangledown}(p)$
in the notation of (\ref{conjcave2vexZ}) and (\ref{conjvex2caveZ}).
}%%%% footnote %%%%%%%%%%%
\begin{eqnarray}
 f\sp{\bullet}(p) 
 &=& \sup\{  \langle p, x \rangle - f(x) 
        \mid x \in \ZZ\sp{n} \}
\qquad ( p \in \RR\sp{n}), 
\label{disconjvex6def} \\
 h\sp{\circ}(p) 
 &=& \inf\{  \langle p, x \rangle - h(x) 
        \mid x \in \ZZ\sp{n} \}
\qquad ( p \in \RR\sp{n}).
 \label{disconjcav6def}
\end{eqnarray}

%%%%%%%%%%
\begin{theorem}[M$\sp{\natural}$-separation theorem]     \label{THmfnsep}
Let $f: \ZZ\sp{n} \to \RR \cup \{ +\infty \}$
be an M$\sp{\natural}$-convex function
and 
$h: \ZZ\sp{n} \to \RR \cup \{ -\infty \}$
be an M$\sp{\natural}$-concave function such that 
$\domZ f \cap \domZ h \not= \emptyset$
or
$\domR f\sp{\bullet} \cap \domR h\sp{\circ} \not= \emptyset$.
If
$f(x) \geq h(x)$ $(\forall x \in \ZZ\sp{n})$, 
there exist 
$\alpha\sp{*} \in \RR$ and $p\sp{*} \in \RR\sp{n}$ such that
\[ 
 f(x) \geq \alpha\sp{*} + \langle p\sp{*}, x \rangle  \geq h(x)  
 \qquad  (\forall x \in \ZZ\sp{n}).
\] 
Moreover, if $f$ and $h$ are integer-valued,
there exist integer-valued 
$\alpha\sp{*} \in \ZZ$ and $p\sp{*} \in \ZZ\sp{n}$.
\end{theorem}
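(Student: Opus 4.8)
The plan is to recast the separation statement in dual (conjugate) variables and then read off the separator from a Fenchel-type min--max relation. First I would observe that an affine function $\alpha\sp{*} + \langle p\sp{*}, x \rangle$ separates $f$ from $h$ precisely when, for the fixed vector $p\sp{*}$, one can slip a constant between the two residual functions $f(x) - \langle p\sp{*}, x \rangle$ and $h(x) - \langle p\sp{*}, x \rangle$. Taking infima and suprema over $x \in \ZZ\sp{n}$, the requirement $f(x) \geq \alpha\sp{*} + \langle p\sp{*}, x \rangle \geq h(x)$ for all $x$ is equivalent to
\[
 -h\sp{\circ}(p\sp{*}) \leq \alpha\sp{*} \leq -f\sp{\bullet}(p\sp{*}) ,
\]
with $f\sp{\bullet}$ and $h\sp{\circ}$ as in (\ref{disconjvex6def})--(\ref{disconjcav6def}). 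Hence such an $\alpha\sp{*}$ exists if and only if the single scalar inequality $h\sp{\circ}(p\sp{*}) \geq f\sp{\bullet}(p\sp{*})$ holds, and the entire problem reduces to producing one vector $p\sp{*}$ at which $h\sp{\circ}(p\sp{*}) - f\sp{\bullet}(p\sp{*}) \geq 0$.

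Next I would supply that vector through a min--max duality. The engine is the identity
\[
 \inf_{x \in \ZZ\sp{n}} \{ f(x) - h(x) \} = \sup_{p} \{ h\sp{\circ}(p) - f\sp{\bullet}(p) \} ,
\]
the Fenchel-type relation for an M$\sp{\natural}$-convex $f$ and an M$\sp{\natural}$-concave $h$. Since $f(x) \geq h(x)$ for every $x$, the left-hand side is nonnegative, so the supremum on the right is nonnegative as well; under the stated hypothesis---namely $\domZ f \cap \domZ h \neq \emptyset$ (primal feasibility) or $\domR f\sp{\bullet} \cap \domR h\sp{\circ} \neq \emptyset$ (dual feasibility)---there is no duality gap and the supremum is attained, at some $p\sp{*}$ with $h\sp{\circ}(p\sp{*}) - f\sp{\bullet}(p\sp{*}) \geq 0$. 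Combined with the first paragraph, choosing any $\alpha\sp{*}$ in the nonempty interval $[-h\sp{\circ}(p\sp{*}), -f\sp{\bullet}(p\sp{*})]$ yields the desired separator. The structural reason this works---and the place where M$\sp{\natural}$-convexity is indispensable---is that, by the conjugacy theorem (Theorem~\ref{THlmconjcavevexZ}), $f\sp{\bullet}$ and $h\sp{\triangledown} = -h\sp{\circ}$ are both L$\sp{\natural}$-convex, so the dual objective $h\sp{\circ} - f\sp{\bullet} = -(f\sp{\bullet} + h\sp{\triangledown})$ is governed by the minimization of a sum of two L$\sp{\natural}$-convex functions, whose well-behaved, lattice-structured minimizers drive the attainment.

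For the integral refinement, I would use that when $f$ and $h$ are integer-valued, so are $f\sp{\bullet}$ and $h\sp{\circ}$ at integer arguments, and the same duality delivers an integral optimizer $p\sp{*} \in \ZZ\sp{n}$; then $-h\sp{\circ}(p\sp{*})$ and $-f\sp{\bullet}(p\sp{*})$ are integers bounding a nonempty interval, so an integer $\alpha\sp{*} \in \ZZ$ can be taken between them. The main obstacle is exactly the assertion hidden in the middle paragraph: that there is no duality gap and that the dual optimum is attained (integrally). This is no formality---the three displayed non-implications preceding the theorem show that for general convex/concave lattice functions neither a continuous separator, nor any separator at all, nor integrality of a separator need exist. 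Securing all three simultaneously is where the exchange structure of M$\sp{\natural}$-convex functions, transported to translation-submodularity on the dual side, must be used in full, and I expect the bulk of the work to lie in establishing (or invoking) that no-gap-with-attainment statement; the reduction and the integer choice of $\alpha\sp{*}$ are routine once it is in hand.
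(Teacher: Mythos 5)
Your proof is correct, and it takes a genuinely different route from the one the paper points to. The paper gives no in-text proof of Theorem~\ref{THmfnsep}: it attributes the result to Murota (1996c, 1998, 1999), where the separation theorem is established directly, by polyhedral-combinatorial methods using the continuous separation theorem, or via the intersection theorem proved through the submodular flow reduction (cf.\ Remarks~\ref{RMminterproof} and \ref{RMseparfenc}). You instead take the Fenchel-type duality theorem (Theorem~\ref{THmlfencdual}) as the engine. Your first reduction is exact: separability by an affine function with slope $p\sp{*}$ is equivalent to $-h\sp{\circ}(p\sp{*}) \leq \alpha\sp{*} \leq -f\sp{\bullet}(p\sp{*})$, hence to the scalar inequality $h\sp{\circ}(p\sp{*}) \geq f\sp{\bullet}(p\sp{*})$; the min--max identity with attainment then produces $p\sp{*}$, and the integer-valued version of the duality theorem yields the integral refinement. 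This is precisely the ``relatively easy formal calculation'' that Remark~\ref{RMseparfenc} says converts any one of the three duality theorems into the other two: your route buys brevity and makes the role of M$\sp{\natural}$/L$\sp{\natural}$ conjugacy transparent, while the original proof buys logical independence.

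Two caveats deserve explicit mention. First, circularity: in the source development the Fenchel-type theorem is itself obtained downstream of the separation theorems, so your argument is a derivation of one established theorem from another, not an independent foundation; it is valid here only because the chain can be anchored at an independently proved node (e.g., the intersection theorem via the M-convex submodular flow problem, Remark~\ref{RMminterproof}), and you should state which result you take as primitive. Second, two technical adjustments: Theorem~\ref{THmlfencdual} as stated covers integer-valued functions, so the real-valued case of Theorem~\ref{THmfnsep} requires the real-valued variant mentioned immediately after it (Theorem 8.21 of Murota 2003), with $p$ ranging over $\RR\sp{n}$; and in the case where $\domZ f \cap \domZ h = \emptyset$ but $\domR f\sp{\bullet} \cap \domR h\sp{\circ} \not= \emptyset$, the common value in the min--max identity is $+\infty$, so attainment of the supremum is not asserted by the theorem and is not needed---an unbounded supremum trivially supplies some $p\sp{*}$ with $h\sp{\circ}(p\sp{*}) - f\sp{\bullet}(p\sp{*}) \geq 0$. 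With these adjustments your argument is complete.
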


%%%%%%%%%%
\begin{theorem}[L$\sp{\natural}$-separation theorem] \label{THlfnsep}
Let $g: \ZZ\sp{n} \to \RR \cup \{ +\infty \}$
be an L$\sp{\natural}$-convex function
and 
$k: \ZZ\sp{n} \to \RR \cup \{ -\infty \}$
be an L$\sp{\natural}$-concave function
such that 
$\domZ g \cap \domZ k \not= \emptyset$
or
$\domR g\sp{\bullet} \cap \domR k\sp{\circ} \not= \emptyset$.
If
$g(p) \geq k(p)$ $(\forall p \in \ZZ\sp{n})$, 
there exist 
$\beta\sp{*} \in \RR$ and $x\sp{*} \in \RR\sp{n}$ such that
\[ 
 g(p) \geq \beta\sp{*} + \langle p, x\sp{*} \rangle  
           \geq k(p)  
 \qquad  (\forall p \in \ZZ\sp{n}).
\] 
Moreover, if $g$ and $k$ are integer-valued,
there exist integer-valued 
$\beta\sp{*} \in \ZZ$ and $x\sp{*} \in \ZZ\sp{n}$.
\end{theorem}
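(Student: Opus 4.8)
The plan is to deduce the L$\sp{\natural}$-separation theorem from the already-established M$\sp{\natural}$-separation theorem (Theorem~\ref{THmfnsep}) by passing to conjugate functions. I would work with the convex conjugate $g\sp{\bullet}$ of $g$ and the concave conjugate $k\sp{\circ}$ of $k$, defined in \eqref{disconjvex6def} and \eqref{disconjcav6def}. Since $g$ is L$\sp{\natural}$-convex and $k$ is L$\sp{\natural}$-concave, the conjugacy theorems of Section~\ref{SCconjuLconv} (Theorems~\ref{THlmconjcavevexZ} and \ref{THlconvbyconjfnZR}), together with the elementary facts that negation and the reflection $x \mapsto -x$ interchange the convex/concave and M$\sp{\natural}$-convex/M$\sp{\natural}$-concave roles (as in Theorem~\ref{THmfnoperation}), show that $g\sp{\bullet}: \RR\sp{n} \to \RR \cup \{ +\infty \}$ is M$\sp{\natural}$-convex and $k\sp{\circ}: \RR\sp{n} \to \RR \cup \{ -\infty \}$ is M$\sp{\natural}$-concave; moreover biconjugacy gives $(g\sp{\bullet})\sp{\bullet} = g$ and $(k\sp{\circ})\sp{\circ} = k$. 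The key reformulation is that an affine function $\beta\sp{*} + \langle p, x\sp{*} \rangle$ separates $g$ and $k$ if and only if $g\sp{\bullet}(x\sp{*}) \leq k\sp{\circ}(x\sp{*})$, in which case any $\beta\sp{*}$ with $-k\sp{\circ}(x\sp{*}) \leq \beta\sp{*} \leq -g\sp{\bullet}(x\sp{*})$ works; this is immediate from $-g\sp{\bullet}(x\sp{*}) = \inf_{p}\{ g(p) - \langle p, x\sp{*} \rangle \}$ and $-k\sp{\circ}(x\sp{*}) = \sup_{p}\{ k(p) - \langle p, x\sp{*} \rangle \}$. Thus it suffices to produce a point $x\sp{*}$ at which the M$\sp{\natural}$-convex function $g\sp{\bullet}$ lies below the M$\sp{\natural}$-concave function $k\sp{\circ}$.

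To produce such a point I would argue by contradiction, assuming $g\sp{\bullet}(x) > k\sp{\circ}(x)$ for all $x$. Using the domain hypotheses and the polyhedral structure of M$\sp{\natural}$-functions to guarantee that the gap $\delta = \inf_{x}\{ g\sp{\bullet}(x) - k\sp{\circ}(x) \}$ is attained and hence strictly positive, I would shift and apply Theorem~\ref{THmfnsep} to the pair $g\sp{\bullet} - \delta/2$ (M$\sp{\natural}$-convex) and $k\sp{\circ} + \delta/2$ (M$\sp{\natural}$-concave), which now satisfy the domination hypothesis pointwise. The resulting separating affine function $\gamma\sp{*} + \langle q\sp{*}, x \rangle$ then lies strictly between $g\sp{\bullet}$ and $k\sp{\circ}$, and biconjugating yields $g(q\sp{*}) \leq -\gamma\sp{*} - \delta/2 < -\gamma\sp{*} + \delta/2 \leq k(q\sp{*})$, i.e.\ $g(q\sp{*}) < k(q\sp{*})$, contradicting $g \geq k$. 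Hence the desired $x\sp{*}$ exists. Equivalently, one may phrase this step as the Fenchel-type min-max identity $\inf_{x}\{ g\sp{\bullet}(x) - k\sp{\circ}(x) \} = -\inf_{p}\{ g(p) - k(p) \} \leq 0$, which is the intersection-theorem twin of Theorem~\ref{THmfnsep} and from which the crossing point is read off directly.

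For the integral refinement, when $g$ and $k$ are integer-valued the point $x\sp{*}$ and slope $q\sp{*}$ delivered by the integral version of Theorem~\ref{THmfnsep} are integral, and then $g\sp{\bullet}(x\sp{*})$ and $k\sp{\circ}(x\sp{*})$ are integers, so the interval $[-k\sp{\circ}(x\sp{*}), -g\sp{\bullet}(x\sp{*})]$ contains an integer $\beta\sp{*}$, giving the integral separator $(\beta\sp{*}, x\sp{*})$. The main obstacle is precisely the attainment argument underlying the positivity of $\delta$: I must show that under the stated hypotheses ($\domZ g \cap \domZ k \neq \emptyset$ or $\domR g\sp{\bullet} \cap \domR k\sp{\circ} \neq \emptyset$) the functions $g\sp{\bullet}$ and $k\sp{\circ}$ genuinely touch or cross rather than approaching one another only asymptotically, and that the domain condition required to invoke Theorem~\ref{THmfnsep} on the conjugate pair (namely $\domZ g\sp{\bullet} \cap \domZ k\sp{\circ} \neq \emptyset$ or $\domR g \cap \domR k \neq \emptyset$, using $(g\sp{\bullet})\sp{\bullet} = g$ and $(k\sp{\circ})\sp{\circ} = k$) is implied by the given one. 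This properness and closedness bookkeeping, rather than the underlying algebra, is where the real care is needed.
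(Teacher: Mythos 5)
Your route---conjugating to an M$\sp{\natural}$-convex/M$\sp{\natural}$-concave pair and invoking Theorem~\ref{THmfnsep}---is exactly the derivation the paper points to: it gives no self-contained proof of Theorem~\ref{THlfnsep}, citing Murota (1998), and Remark~\ref{RMseparfenc} says the M$\sp{\natural}$-separation, L$\sp{\natural}$-separation and Fenchel-type duality theorems are interderivable by such formal calculations. For \emph{integer-valued} $g$ and $k$ your argument does close: $g\sp{\bullet}$ and $k\sp{\circ}$ are integer-valued M$\sp{\natural}$-convex/concave on $\ZZ\sp{n}$ with $(g\sp{\bullet})\sp{\bullet}=g$ and $(k\sp{\circ})\sp{\circ}=k$ (Theorems~\ref{THlmconjcavevexZ} and~\ref{THlconvbyconjfnZR}); the gap satisfies $\delta\geq 1$ automatically, so no attainment argument is needed; Theorem~\ref{THmfnsep} returns an \emph{integral} slope $q\sp{*}\in\ZZ\sp{n}$, so your biconjugation contradiction $g(q\sp{*})<k(q\sp{*})$ is legitimately tested against the hypothesis $g\geq k$ on $\ZZ\sp{n}$; and the domain transfer works, the second disjunct using that the two M$\sp{\natural}$-convex effective domains of the conjugates span integral g-polymatroids whose nonempty intersection contains an integer point (an Edmonds-type intersection integrality fact).

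In the real-valued case, however, your contradiction step has a genuine hole, not mere ``properness and closedness bookkeeping.'' First, Theorem~\ref{THmfnsep} is a statement about functions on $\ZZ\sp{n}$, so you must restrict $g\sp{\bullet}$ and $k\sp{\circ}$ (which Theorem~\ref{THlmconjZR} produces as locally polyhedral functions on $\RR\sp{n}$) to the lattice and show the restrictions are still M$\sp{\natural}$-convex/concave; for real-valued $g,k$ the conjugates can have non-integral breakpoints and this is not automatic. Second, and decisively, Theorem~\ref{THmfnsep} then delivers only a \emph{real} separating slope $q\sp{*}$. Your final inequalities compare $(g\sp{\bullet})\sp{\bullet}$ and $(k\sp{\circ})\sp{\circ}$ at $q\sp{*}$; at a non-integral $q\sp{*}$ these are the convex closure of $g$ and the concave closure of $k$, and the hypothesis $g\geq k$ on $\ZZ\sp{n}$ does \emph{not} imply that the closures dominate off the lattice---this is precisely failure~1 in the paper's own list of discrete-separation pathologies in Section~\ref{SCseparthm}, and assuming it here would be circular, since closure domination for an L$\sp{\natural}$-pair is essentially what the separation theorem asserts. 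To close the real case you need either a rational-scaling-and-limit argument forcing an integral (or compatible) slope, or the real-valued Fenchel-type duality (the paper only cites it, as Theorem 8.21 of Murota 2003, since Theorem~\ref{THmlfencdual} is stated for integer-valued functions); likewise the attainment of $\delta$ that you assume from ``polyhedral structure'' is part of what must actually be proved there.
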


As an immediate corollary of the M$\sp{\natural}$-separation theorem
we can obtain an optimality criterion
for the problem of maximizing the sum of two M$\sp{\natural}$-concave functions,
which we call the 
{\em M$\sp{\natural}$-concave intersection problem}.
Note that the sum of M$\sp{\natural}$-concave functions 
is no longer M$\sp{\natural}$-concave
and Theorem \ref{THmnatsetfnlocmaxZ} does not apply.
Recall the notation $f[-p](x) = f(x) - \langle p, x \rangle$.

%%%%%%%%%%%%%%%%%%%%%%
\begin{theorem}[M$\sp{\natural}$-concave intersection theorem] \label{THmfcaveninteropt}
For M$\sp{\natural}$-concave functions
$f_{1}, f_{2}: \ZZ\sp{n} \to \RR \cup \{ -\infty \}$
and a point $x\sp{*} \in \domZ f_{1} \cap \domZ f_{2}$
we have
\[ 
f_{1}(x\sp{*})+f_{2}(x\sp{*})  \geq f_{1}(x)+f_{2}(x)
 \qquad (\forall x \in \ZZ\sp{n})
\] 
if and only if there exists $p\sp{*} \in \RR\sp{n}$ such that
\begin{align*}
f_{1}[-p\sp{*}](x\sp{*}) & \geq f_{1}[-p\sp{*}](x)
 \qquad (\forall x \in \ZZ\sp{n}),
\\
 f_{2}[+p\sp{*}](x\sp{*}) & \geq f_{2}[+p\sp{*}](x)
 \qquad (\forall x \in \ZZ\sp{n}) .
\end{align*}
These conditions 
are equivalent, respectively, to
\begin{align*}
 f_{1}[-p\sp{*}](x\sp{*}) & \geq f_{1}[-p\sp{*}](x\sp{*}+\chi_{i}-\chi_{j})
 \qquad (\forall \, i, j  \in  \{ 0,1, \ldots,n \}) ,
 \\
 f_{2}[+p\sp{*}](x\sp{*}) & \geq f_{2}[+p\sp{*}](x\sp{*}+\chi_{i}-\chi_{j})
 \qquad (\forall \, i, j  \in  \{ 0,1, \ldots,n \}) ,
\end{align*}
and for such $p\sp{*}$ we have
\[ 
  \argmaxZ (f_{1}+f_{2}) = \argmaxZ f_{1}[-p\sp{*}] \cap \argmaxZ f_{2}[+p\sp{*}].
\] 
Moreover, if
$f_{1}$ and $f_{2}$ are integer-valued,
we can choose integer-valued $p\sp{*} \in \ZZ\sp{n}$.
\end{theorem}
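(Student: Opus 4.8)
The plan is to derive the theorem as a corollary of the M$\sp{\natural}$-separation theorem (Theorem~\ref{THmfnsep}), after which the remaining assertions follow by elementary manipulation together with the local maximality criterion (Theorem~\ref{THmnatsetfnlocmaxZ}). First I would dispose of the easy implication. Suppose $p\sp{*}$ exists with $x\sp{*}$ maximizing both $f_{1}[-p\sp{*}]$ and $f_{2}[+p\sp{*}]$. Since $f_{1}[-p\sp{*}](x) + f_{2}[+p\sp{*}](x) = f_{1}(x) + f_{2}(x)$ for every $x$ (the terms $\pm \langle p\sp{*}, x \rangle$ cancel), adding the two maximality inequalities evaluated at $x\sp{*}$ yields $f_{1}(x) + f_{2}(x) \leq f_{1}(x\sp{*}) + f_{2}(x\sp{*})$ for all $x$, so $x\sp{*}$ maximizes $f_{1}+f_{2}$. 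The same cancellation identity, read termwise, also gives the displayed formula for $\argmaxZ(f_{1}+f_{2})$: a vector $x$ attains the maximum of the sum exactly when it attains both individual maxima, because each summand is bounded above by its own maximal value and those two maximal values add up to $f_{1}(x\sp{*})+f_{2}(x\sp{*})$.

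The core of the argument is the converse, for which I would introduce the right convex/concave pair. Assuming $x\sp{*}$ maximizes $f_{1}+f_{2}$, set $C = f_{1}(x\sp{*}) + f_{2}(x\sp{*})$ and define $F(x) = C - f_{1}(x)$ and $H(x) = f_{2}(x)$. Because $f_{1}$ is M$\sp{\natural}$-concave, $-f_{1}$, and hence $F$, is M$\sp{\natural}$-convex, while $H$ is M$\sp{\natural}$-concave; moreover $x\sp{*} \in \domZ F \cap \domZ H$, so the domain hypothesis of Theorem~\ref{THmfnsep} is met. The maximality of $x\sp{*}$ is precisely the statement $F(x) \geq H(x)$ for all $x \in \ZZ\sp{n}$. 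Applying the M$\sp{\natural}$-separation theorem produces $\alpha\sp{*} \in \RR$ and $q\sp{*} \in \RR\sp{n}$ with $F(x) \geq \alpha\sp{*} + \langle q\sp{*}, x \rangle \geq H(x)$; putting $p\sp{*} = -q\sp{*}$ and rewriting, this reads $f_{1}[-p\sp{*}](x) \leq C - \alpha\sp{*}$ and $f_{2}[+p\sp{*}](x) \leq \alpha\sp{*}$ for all $x$. Since $F(x\sp{*}) = f_{2}(x\sp{*}) = H(x\sp{*})$, both separation inequalities are tight at $x\sp{*}$, forcing $f_{1}[-p\sp{*}](x\sp{*}) = C - \alpha\sp{*}$ and $f_{2}[+p\sp{*}](x\sp{*}) = \alpha\sp{*}$; hence $x\sp{*}$ maximizes each of $f_{1}[-p\sp{*}]$ and $f_{2}[+p\sp{*}]$, as required. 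If $f_{1}$ and $f_{2}$ are integer-valued, so are $F$ and $H$, and the integral version of Theorem~\ref{THmfnsep} delivers integral $\alpha\sp{*}$ and $q\sp{*}$, giving $p\sp{*} \in \ZZ\sp{n}$.

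Finally I would reduce each global maximality condition to its local form. Both $f_{1}[-p\sp{*}]$ and $f_{2}[+p\sp{*}]$ are M$\sp{\natural}$-concave by Theorem~\ref{THmfnoperation}(3), so Theorem~\ref{THmnatsetfnlocmaxZ} asserts that each is maximized at $x\sp{*}$ if and only if its value cannot be increased by any single exchange $x\sp{*} + \chi_{i} - \chi_{j}$ with $i,j \in \{0,1,\ldots,n\}$ (recalling $\chi_{0} = \veczero$), which is exactly the stated local criterion. I do not anticipate a genuine obstacle, as the result is billed as an immediate corollary; the only points demanding care are the bookkeeping of signs (the passage $q\sp{*} \mapsto p\sp{*} = -q\sp{*}$ and the placement of $[-p\sp{*}]$ versus $[+p\sp{*}]$) and the verification that the separating hyperplane is tight at $x\sp{*}$, since it is precisely this tightness that upgrades a mere separator into simultaneous maximizers of the two reduced functions.
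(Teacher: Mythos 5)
Your proposal is correct and takes essentially the same approach as the paper, which presents this result precisely as an immediate corollary of the M$\sp{\natural}$-separation theorem (Theorem~\ref{THmfnsep}): your convex/concave pair $F(x) = C - f_{1}(x)$, $H(x) = f_{2}(x)$, the tightness of the separator at $x\sp{*}$, and the passage from global to local maximality via Theorem~\ref{THmfnoperation}(3) and Theorem~\ref{THmnatsetfnlocmaxZ} supply exactly the details the paper leaves implicit. The sign bookkeeping ($p\sp{*} = -q\sp{*}$) and the argmax identity via cancellation of the linear terms are handled correctly, and the integer-valued case follows as you say from the integral version of the separation theorem.
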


An extension of the M$\sp{\natural}$-concave intersection theorem
is given in Theorem \ref{THgameaggregate},
which constitutes the technical pivot 
in the Fujishige--Tamura model that unifies
the stable marriage and the assignment game
(see Remark \ref{RMgameFTdualcases}).

\begin{remark} \rm  \label{RMminterproof}
Three different proofs are available 
for the M$\sp{\natural}$-concave intersection theorem.
The original proof (Murota 1996c)\citeH{Mstein}
is based on the reduction of 
the M$\sp{\natural}$-concave intersection problem 
to the M-convex submodular flow problem;
see Remark \ref{RMintersubmfl} in Section \ref{SCsubmflproblem}.
Then Theorem \ref{THmfcaveninteropt} is derived from 
the negative-cycle optimality criterion (Theorem~\ref{THsbmfcyccritZ})
for the M-convex submodular flow problem.
The second proof is based on the reduction to 
the discrete separation theorem,
which is proved by the polyhedral-combinatorial method
using the (standard) separation theorem in convex analysis;
see the proof of Theorem 8.15 of Murota (2003)\citeH{Mdcasiam}.
The third proof (Murota 2004a)\citeH{MproofMinter04}
is a direct constructive proof based on the successive shortest path algorithm.
\finbox
\end{remark}

\subsection{Fenchel duality}
\label{SCfencheldual}

Another expression of the duality principle is in the form of
the Fenchel duality.
This is a min-max relation 
between a pair of convex and concave functions
and their conjugate functions.
Such a min-max theorem is computationally useful 
in that it affords a certificate of optimality.

We start with the continuous case.
For a function 
$f: \RR\sp{n} \to \RR \cup \{ +\infty \}$
with $\dom f \not= \emptyset$,
the convex conjugate
$f\sp{\bullet}: \RR\sp{n} \to \RR \cup \{ +\infty \}$
is defined by%
\footnote{%%%%%%%%%%%%%%%%
We have $f\sp{\bullet}(p) = - f\sp{\triangle}(-p)$ and
$ h\sp{\circ}(p) = - h\sp{\triangledown}(p)$
in the notation of (\ref{conjcave2vexR}) and (\ref{conjvex2caveR}).
}%%%% footnote %%%%%%%%%%%
\begin{equation} \label{conjvexOdef}
 f\sp{\bullet}(p) 
 = \sup\{  \langle p, x \rangle - f(x) 
        \mid x \in \RR\sp{n} \}
\qquad ( p \in \RR\sp{n}).
\end{equation}
For $h: \RR\sp{n} \to \RR \cup \{ -\infty \}$,
the {concave conjugate}
$h\sp{\circ}: \RR\sp{n} \to \RR \cup \{ -\infty \}$
is defined by
\begin{equation} \label{conjcavOdef}
 h\sp{\circ}(p) 
 = \inf\{  \langle p, x \rangle - h(x) 
        \mid x \in \RR\sp{n} \}
\qquad ( p \in \RR\sp{n}).
\end{equation}

%%%%%%%%%%
\begin{theorem}  \label{THOfenc}
Let $f: \RR\sp{n} \to \RR \cup \{ +\infty \}$ 
and 
$h: \RR\sp{n} \to \RR \cup \{ -\infty \}$ 
be convex and concave functions, respectively
{\rm (}satisfying certain regularity conditions{\rm )}. Then
\[
  \inf\{ f(x) - h(x) \mid x \in \RR\sp{n}  \}
 = 
  \sup\{ h\sp{\circ}(p) - f\sp{\bullet}(p)   
  \mid   p \in \RR\sp{n} \}.  
\]
\end{theorem}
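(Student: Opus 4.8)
The plan is to prove the two inequalities separately. The ``easy'' direction (weak duality), namely $\inf\{f(x)-h(x)\} \geq \sup\{h\sp{\circ}(p)-f\sp{\bullet}(p)\}$, follows purely from the definitions of the conjugates, while the ``hard'' direction (strong duality) will be reduced to the separation theorem already available as Theorem~\ref{THOsep}.

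First I would establish weak duality. Fix any $x \in \RR\sp{n}$ and any $p \in \RR\sp{n}$. From the definition (\ref{conjvexOdef}) of $f\sp{\bullet}$ we have $f\sp{\bullet}(p) \geq \langle p, x\rangle - f(x)$, that is, $f(x) \geq \langle p, x\rangle - f\sp{\bullet}(p)$; from the definition (\ref{conjcavOdef}) of $h\sp{\circ}$ we have $h\sp{\circ}(p) \leq \langle p, x\rangle - h(x)$, that is, $h(x) \leq \langle p, x\rangle - h\sp{\circ}(p)$. Subtracting the second from the first cancels the term $\langle p, x\rangle$ and yields $f(x) - h(x) \geq h\sp{\circ}(p) - f\sp{\bullet}(p)$. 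Taking the infimum over $x$ on the left and the supremum over $p$ on the right gives $\inf_{x}\{f(x)-h(x)\} \geq \sup_{p}\{h\sp{\circ}(p)-f\sp{\bullet}(p)\}$.

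For strong duality I would set $\gamma = \inf\{f(x)-h(x) \mid x\in\RR\sp{n}\}$ and treat the main case where $\gamma$ is finite. (If $\gamma = -\infty$, weak duality already forces the supremum to equal $-\infty$ as well; the case $\gamma = +\infty$ is excluded by the regularity conditions, which keep the effective domains in suitable relative position.) By the definition of $\gamma$ we have $f(x) \geq h(x) + \gamma$ for all $x$, and $\tilde h := h + \gamma$ is again a concave function. Applying Theorem~\ref{THOsep} to the convex function $f$ and the concave function $\tilde h$ produces $\alpha\sp{*} \in \RR$ and $p\sp{*} \in \RR\sp{n}$ with $f(x) \geq \alpha\sp{*} + \langle p\sp{*}, x\rangle \geq h(x) + \gamma$ for all $x$. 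The left inequality says $\langle p\sp{*}, x\rangle - f(x) \leq -\alpha\sp{*}$ for all $x$, hence $f\sp{\bullet}(p\sp{*}) \leq -\alpha\sp{*}$; the right inequality says $\langle p\sp{*}, x\rangle - h(x) \geq \gamma - \alpha\sp{*}$ for all $x$, hence $h\sp{\circ}(p\sp{*}) \geq \gamma - \alpha\sp{*}$. Therefore $h\sp{\circ}(p\sp{*}) - f\sp{\bullet}(p\sp{*}) \geq (\gamma - \alpha\sp{*}) - (-\alpha\sp{*}) = \gamma$, so $\sup_{p}\{h\sp{\circ}(p)-f\sp{\bullet}(p)\} \geq \gamma$. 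Combined with weak duality, this forces equality, and as a byproduct shows the supremum is attained at $p\sp{*}$.

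The main obstacle is not the algebra but securing a \emph{non-vertical} separating hyperplane: the geometric content is that the epigraph of $f$ lies above the hypograph of $h+\gamma$, and one must convert this into an \emph{affine} separator $\alpha\sp{*} + \langle p\sp{*}, x\rangle$ rather than a possibly vertical hyperplane in $\RR\sp{n+1}$. This is exactly where the unstated ``regularity conditions'' are needed, and they are precisely the hypotheses already absorbed into Theorem~\ref{THOsep}; invoking that theorem lets me bypass reproving the supporting-hyperplane argument here and keep the reduction clean.
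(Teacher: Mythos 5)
Your proof is correct, and it cannot be compared against a proof in the paper for the simple reason that the paper gives none: Theorem~\ref{THOfenc} is stated as classical background from convex analysis (with the hypotheses deliberately compressed into the phrase ``certain regularity conditions''), so your argument fills a gap the paper intentionally leaves to the references. Your two steps are sound: the weak-duality inequality $f(x)-h(x) \geq h\sp{\circ}(p)-f\sp{\bullet}(p)$ follows from the definitions exactly as you compute (and the extended-real edge cases $f(x)=+\infty$, $h(x)=-\infty$, $f\sp{\bullet}(p)=+\infty$, $h\sp{\circ}(p)=-\infty$ all land on the harmless side of the inequality, with properness ruling out the ill-defined combinations), and the strong-duality step via Theorem~\ref{THOsep} applied to $f$ and $h+\gamma$ is the standard reduction, correctly yielding $f\sp{\bullet}(p\sp{*}) \leq -\alpha\sp{*}$ and $h\sp{\circ}(p\sp{*}) \geq \gamma-\alpha\sp{*}$, hence attainment of the supremum when $\gamma$ is finite. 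Your handling of $\gamma=-\infty$ by weak duality is right, and deferring the exclusion of $\gamma=+\infty$ (and the non-verticality of the separator) to the same unspecified regularity hypotheses is exactly the level of precision the paper itself adopts; under the usual condition $\mathrm{ri}(\dom f)\cap\mathrm{ri}(\dom h)\neq\emptyset$ both points are secured, and adding the constant $\gamma$ to $h$ does not disturb any such hypothesis. It is worth noting that your route is precisely the one the paper endorses in spirit: Remark~\ref{RMseparfenc} observes (for the discrete analogues) that the separation theorems and the Fenchel-type duality theorem are almost equivalent, each derivable from the other by ``relatively easy formal calculations,'' and your argument is that calculation carried out in the continuous setting.
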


We now turn to the discrete case.
For any functions
$f: \ZZ\sp{n} \to \ZZ \cup \{ +\infty \}$
and  
$h: \ZZ\sp{n} \to \ZZ \cup \{ -\infty \}$,
we define the discrete versions of 
(\ref{conjvexOdef}) and (\ref{conjcavOdef}) as
\begin{eqnarray}
 f\sp{\bullet}(p) 
 &=& \sup\{  \langle p, x \rangle - f(x) 
        \mid x \in \ZZ\sp{n} \}
\qquad ( p \in \ZZ\sp{n}), 
\label{conjvexZZ} \\
 h\sp{\circ}(p) 
 &=& \inf\{  \langle p, x \rangle - h(x) 
        \mid x \in \ZZ\sp{n} \}
\qquad ( p \in \ZZ\sp{n}).
 \label{conjcavZZ}
\end{eqnarray}
Then we have a chain of inequalities:
\begin{equation}  \label{fencweak}
\begin{array}{ccc}
 \inf\{ f(x) - h(x) \mid x \in \ZZ\sp{n}  \}
& &  \sup\{ h\sp{\circ}(p) - f\sp{\bullet}(p) \mid  p \in \ZZ\sp{n} \}  
\\
 \raisebox{0.25cm}{\rotatebox{270}{$\geq$}}  &   & 
 \raisebox{0cm}{\rotatebox{90}{$\geq$}}  
\\
 \inf\{ \overline{f}(x) - \overline{h}(x) \mid x \in \RR\sp{n}  \}
 &\geq &   \sup\{ \overline{h}\sp{\circ}(p)
         - \overline{f}\sp{\bullet}(p)
        \mid  p \in \RR\sp{n} \} ,
\end{array}
\end{equation}
where $\overline{f}$ and $\overline{h}$ are the convex and concave
closures of $f$ and $h$, respectively,
and $\overline{f}\sp{\bullet}$ and $\overline{h}\sp{\circ}$
are defined by (\ref{conjvexOdef}) for $\overline{f}$
 and (\ref{conjcavOdef}) for $\overline{h}$.
We observe that
\begin{enumerate}
\item
The second inequality ($\geq$) in the middle of (\ref{fencweak})
 is in fact an equality ($=$)
(under mild regularity conditions)
by the Fenchel duality theorem  in convex analysis  (Theorem~\ref{THOfenc});

\item
The first inequality 
(\,\raisebox{0.25cm}{\rotatebox{270}{$\geq$}}\,)
in the left of (\ref{fencweak}) 
can be strict (i.e., $\not=$) even when
$f$ is  convex-extensible and $h$ is concave-extensible,
and similarly for the third inequality 
(\,\raisebox{0cm}{\rotatebox{90}{$\geq$}}\,)
in the right.
See Examples \ref{EXnorealfenc} and \ref{EXnointfenc} below%
\footnote{%%%%%%%%%%%%%
These examples are taken from 
Murota (2009)\citeH{Mbonn09}.
}. %%%%%footnote %%%%%%%%%%%
\end{enumerate}

%%%%%%%%%%%%%%
\begin{example}  \rm \label{EXnorealfenc}
For $f, h: \ZZ\sp{2} \to \ZZ$ defined as
\[
f(x_{1},x_{2}) = |x_{1}+x_{2}-1|,
\qquad
h(x_{1},x_{2}) = 1- |x_{1}-x_{2}|
\]
we have
$ \inf\{ f - h \} = 0$, 
$\inf\{ \overline{f} - \overline{h} \}=-1$.
The conjugate functions
(\ref{conjvexZZ}) and (\ref{conjcavZZ}) 
are given by
\[
f\sp{\bullet}(p_{1}, p_{2})  =
   \left\{  \begin{array}{ll}
    p_{1}            &   ((p_{1}, p_{2}) \in S)      \\
   + \infty      &   (\mbox{otherwise}),  \\
                      \end{array}  \right.
\quad
 h\sp{\circ}(p_{1}, p_{2})  =
   \left\{  \begin{array}{ll}
    -1            &   ((p_{1}, p_{2}) \in T)      \\
   - \infty      &   (\mbox{otherwise})  \\
                      \end{array}  \right.
\]
with
$S=\{ (-1,-1), (0,0), (1,1) \}$ and
$T=\{ (-1,1), (0,0), (1,-1) \}$.
Hence
$\sup\{ h\sp{\circ} - f\sp{\bullet} \}
 = h\sp{\circ}(0,0) - f\sp{\bullet}(0,0) = -1 - 0 = -1$.
Then (\ref{fencweak}) reads as
\[
  \begin{array}{ccccccc}
 \inf\{ f - h \}
 & > & \inf\{ \overline{f} - \overline{h} \}
 & = &   \sup\{ \overline{h}\sp{\circ}
         - \overline{f}\sp{\bullet}  \}
 & = &  \sup\{ h\sp{\circ} - f\sp{\bullet} \}  .
\\
 (0) &  & (-1)  & & (-1)  && (-1) 
  \end{array} 
\]
\vspace{-1.5\baselineskip}
\\
\finbox
\end{example}
%%%%%%%%%%%%%%

%%%%%%%%%%%%%%
\begin{example}  \rm \label{EXnointfenc}
For $f, h: \ZZ\sp{2} \to \ZZ$ defined as
\[
f(x_{1},x_{2}) = \max(0,x_{1}+x_{2}),
\qquad
h(x_{1},x_{2}) = \min(x_{1},x_{2})
\]
we have
$ \inf\{ f - h \} 
= \inf\{ \overline{f} - \overline{h} \}=0$.
The conjugate functions (\ref{conjvexZZ}) and (\ref{conjcavZZ}) 
are given as
$f\sp{\bullet} = \delta_{S}$ and
$h\sp{\circ} = - \delta_{T}$
in terms of the (convex) indicator functions%
\footnote{%%%%%%%%%%%%
$\delta_{S}(p)=0$ for $p \in S$ and $=+\infty$ for $p \not\in S$.
} %%%%%% footnote %%%%%%%%%%%%%%%
of $S=\{ (0,0), (1,1) \}$ and $T=\{ (1,0), (0,1) \}$.
Since $S \cap T = \emptyset$, the function
$h\sp{\circ} - f\sp{\bullet}$ is identically equal to $-\infty$,
whereas 
$\sup\{ \overline{h}\sp{\circ} - \overline{f}\sp{\bullet} \} = 0$
since 
$\overline{f}\sp{\bullet} = \delta_{\overline{S}}$,
$\overline{h}\sp{\circ} = -\delta_{\overline{T}}$
and
$\overline{S} \cap \overline{T} = \{ (1/2, 1/2) \}$.
Then (\ref{fencweak}) reads as
\[
  \begin{array}{ccccccc}
 \inf\{ f - h \}
 & = & \inf\{ \overline{f} - \overline{h} \}
 & = &   \sup\{ \overline{h}\sp{\circ}
         - \overline{f}\sp{\bullet}  \}
 & > &  \sup\{ h\sp{\circ} - f\sp{\bullet} \} . 
\\
 (0) &  &  (0) & & (0) && (-\infty)
  \end{array} 
\]
\vspace{-1.5\baselineskip}
\\
\finbox
\end{example}
%%%%%%%%%%%%%%

The Fenchel-type duality  holds for
M$\sp{\natural}$-convex/M$\sp{\natural}$-concave functions and
L$\sp{\natural}$-convex/L$\sp{\natural}$-concave functions.
The Fenchel-type duality theorem originates in 
Murota (1996c)\citeH{Mstein}
(see also Murota 1998)\citeH{Mdca}
and formulated into the following form in
Murota (2003)\citeH{Mdcasiam}.
The essence of the theorem is the assertion that
the first and third inequalities in (\ref{fencweak})
are in fact equalities for
M$\sp{\natural}$-convex/M$\sp{\natural}$-concave functions and
L$\sp{\natural}$-convex/L$\sp{\natural}$-concave functions.

\begin{theorem}[Fenchel-type duality theorem]     \label{THmlfencdual}
\quad  %%do not delete this %%

\noindent {\rm (1)}
Let $f: \ZZ\sp{n} \to \ZZ \cup \{ +\infty \}$
be an integer-valued M$\sp{\natural}$-convex function and 
$h: \ZZ\sp{n} \to \ZZ \cup \{ -\infty \}$
be an integer-valued M$\sp{\natural}$-concave function
such that 
$\domZ f \cap \domZ h \not= \emptyset$
or
$\domZ f\sp{\bullet} \cap \domZ h\sp{\circ} \not= \emptyset$,
where
$f\sp{\bullet}$ and $h\sp{\circ}$ 
are defined by {\rm (\ref{conjvexZZ})} and {\rm (\ref{conjcavZZ})}.
Then we have
\begin{equation} \label{mlfencminmaxM}
  \inf\{ f(x) - h(x) \mid x \in \ZZ\sp{n}  \}
 =   \sup\{ h\sp{\circ}(p) - f\sp{\bullet}(p) \mid p \in \ZZ\sp{n} \} .
\end{equation}
If this common value is finite, 
the infimum and the supremum are attained.

\noindent {\rm (2)}
Let $g: \ZZ\sp{n} \to \ZZ \cup \{ +\infty \}$
be an integer-valued L$\sp{\natural}$-convex function and 
$k: \ZZ\sp{n} \to \ZZ \cup \{ -\infty \}$
be an integer-valued L$\sp{\natural}$-concave function
 such that 
$\domZ g \cap \domZ k \not= \emptyset$
or
$\domZ g\sp{\bullet} \cap \domZ k\sp{\circ} \not= \emptyset$,
where
$g\sp{\bullet}$ and $k\sp{\circ}$ 
are defined by {\rm (\ref{conjvexZZ})} and {\rm (\ref{conjcavZZ})}.
Then we have
\begin{equation} \label{mlfencminmaxL}
  \inf\{ g(p) - k(p) \mid p \in \ZZ\sp{n}  \}
 =   \sup\{ k\sp{\circ}(x) - g\sp{\bullet}(x) \mid x \in \ZZ\sp{n} \} .
\end{equation}
If this common value is finite, 
the infimum and the supremum are attained.
\end{theorem}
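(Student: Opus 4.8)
The plan is to deduce both parts from the discrete separation theorems already established in the excerpt: the M$\sp{\natural}$-separation theorem (Theorem~\ref{THmfnsep}) for part~(1) and the L$\sp{\natural}$-separation theorem (Theorem~\ref{THlfnsep}) for part~(2). Since the two arguments are verbatim parallel (convex/concave roles and the pairing $\langle p,x\rangle$ unchanged, only the function class and the relevant separation theorem differ), I would write out part~(1) in full and then note that part~(2) is obtained by substituting Theorem~\ref{THlfnsep} for Theorem~\ref{THmfnsep} throughout.

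First I would record weak duality, which is elementary. From the definitions (\ref{conjvexZZ}) and (\ref{conjcavZZ}), for every $x,p \in \ZZ\sp{n}$ one has $f(x) \geq \langle p,x\rangle - f\sp{\bullet}(p)$ and $h(x) \leq \langle p,x\rangle - h\sp{\circ}(p)$, whence $f(x) - h(x) \geq h\sp{\circ}(p) - f\sp{\bullet}(p)$. Taking the infimum over $x$ and the supremum over $p$ yields $\inf\{f-h\} \geq \sup\{h\sp{\circ} - f\sp{\bullet}\}$, which is the easy inequality in (\ref{mlfencminmaxM}). This already disposes of the degenerate case where the left-hand side equals $-\infty$, since then the right-hand side is forced to $-\infty$ as well. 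The remaining $+\infty$ situation is controlled by the regularity hypotheses: when $\domZ f \cap \domZ h \neq \emptyset$ a common domain point makes the infimum finite, and the alternative hypothesis $\domZ f\sp{\bullet} \cap \domZ h\sp{\circ} \neq \emptyset$ reduces this case to the finite one by passing to conjugates.

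The substance is the reverse inequality in the finite case. I would set $\gamma = \inf\{f(x) - h(x)\}$, assumed finite, so that $f(x) \geq (h+\gamma)(x)$ for all $x$, where $h+\gamma$ is again integer-valued and M$\sp{\natural}$-concave. Applying Theorem~\ref{THmfnsep}---whose domain hypothesis is exactly the one assumed here---produces $\alpha\sp{*} \in \ZZ$ and $p\sp{*} \in \ZZ\sp{n}$ with $f(x) \geq \alpha\sp{*} + \langle p\sp{*}, x\rangle \geq h(x) + \gamma$ for all $x \in \ZZ\sp{n}$; the integrality of $\alpha\sp{*}, p\sp{*}$ is essential, since the outer supremum in (\ref{mlfencminmaxM}) ranges over $p \in \ZZ\sp{n}$. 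Rewriting the left inequality as $\langle p\sp{*},x\rangle - f(x) \leq -\alpha\sp{*}$ gives $f\sp{\bullet}(p\sp{*}) \leq -\alpha\sp{*}$, and the right inequality as $\langle p\sp{*},x\rangle - h(x) \geq \gamma - \alpha\sp{*}$ gives $h\sp{\circ}(p\sp{*}) \geq \gamma - \alpha\sp{*}$. Subtracting, $h\sp{\circ}(p\sp{*}) - f\sp{\bullet}(p\sp{*}) \geq \gamma$, which together with weak duality forces equality in (\ref{mlfencminmaxM}) and shows that the supremum is attained at $p\sp{*}$.

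The hard part will be attainment of the infimum. The equality just obtained pins down $f\sp{\bullet}(p\sp{*}) = -\alpha\sp{*}$ and $h\sp{\circ}(p\sp{*}) = \gamma - \alpha\sp{*}$ simultaneously, i.e. $\inf f[-p\sp{*}] = \alpha\sp{*}$ and $\sup h[-p\sp{*}] = \alpha\sp{*} - \gamma$; for integer-valued M$\sp{\natural}$-convex and M$\sp{\natural}$-concave functions these extrema are attained wherever finite, so $\argminZ f[-p\sp{*}]$ and $\argmaxZ h[-p\sp{*}]$ are nonempty M$\sp{\natural}$-convex sets. A minimizer of $f-h$ is precisely a point lying in both, so the delicate step is to exhibit a common element rather than two separate optimizers. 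I would obtain this from the M$\sp{\natural}$-convex intersection structure (the minimization analogue of Theorem~\ref{THmfcaveninteropt}), using the exchange property to merge a minimizer of $f[-p\sp{*}]$ and a maximizer of $h[-p\sp{*}]$ into a single common optimizer, at which the infimum is attained. Part~(2) is then immediate by the parallel argument with the L$\sp{\natural}$-separation theorem.
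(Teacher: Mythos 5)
Your main line is sound and is exactly the derivation the paper intends. The paper gives no self-contained proof of Theorem~\ref{THmlfencdual}; Remark~\ref{RMseparfenc} records that the M$\sp{\natural}$-separation, L$\sp{\natural}$-separation and Fenchel-type duality theorems are interderivable by relatively easy formal calculations using M-/L-conjugacy, and your argument — weak duality from the definitions, then Theorem~\ref{THmfnsep} applied to $f$ and $h+\gamma$ with $\gamma=\inf\{f-h\}$ finite, reading off $f\sp{\bullet}(p\sp{*})\leq -\alpha\sp{*}$ and $h\sp{\circ}(p\sp{*})\geq\gamma-\alpha\sp{*}$ from the \emph{integral} separating pair, with Theorem~\ref{THlfnsep} playing the same role in part (2) — is precisely that route. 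One small point of bookkeeping: the reduction for the alternative hypothesis is cross-part, not internal. If $\domZ f\cap\domZ h=\emptyset$ but $\domZ f\sp{\bullet}\cap\domZ h\sp{\circ}\neq\emptyset$, then $f\sp{\bullet}$ is L$\sp{\natural}$-convex and $h\sp{\circ}$ is L$\sp{\natural}$-concave, so "passing to conjugates" means invoking part (2) for the pair $(g,k)=(f\sp{\bullet},h\sp{\circ})$ and using biconjugacy (Theorem~\ref{THlmconjcavevexZ}) to identify $g\sp{\bullet}=f$, $k\sp{\circ}=h$; this also delivers the required conclusion $\sup\{h\sp{\circ}-f\sp{\bullet}\}=+\infty$ in that degenerate case.

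The genuine gap is your final paragraph on attainment of the infimum. The merging step you propose would fail: $\argminZ f[-p\sp{*}]$ and $\argmaxZ h[-p\sp{*}]$ are two M$\sp{\natural}$-convex sets, and two nonempty M$\sp{\natural}$-convex sets need not intersect — the exchange property operates within a single such set, not between two of them, and Theorem~\ref{THmfcaveninteropt} only \emph{characterizes} optimality of a point already in hand; it does not produce a common point. Indeed, nonemptiness of that intersection is \emph{equivalent} to the attainment you are trying to prove (a point $x\sp{*}$ attains $\gamma$ iff the chain $f(x\sp{*})\geq\alpha\sp{*}+\langle p\sp{*},x\sp{*}\rangle\geq h(x\sp{*})+\gamma$ collapses to equalities), so as written the step is circular. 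The repair is a one-liner, and it is where integrality enters a second time: since $f-h$ is integer-valued where finite and $\gamma$ is finite, the set $\{\,f(x)-h(x)\mid x\in\domZ f\cap\domZ h\,\}$ is a nonempty set of integers bounded below, hence has a minimum, and the infimum is attained outright (the same observation gives attainment of the supremum even without exhibiting $p\sp{*}$). With that substitution your proof is complete and coincides with the paper's intended derivation.
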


The Fenchel-type duality theorem can be formulated 
for real-valued functions 
$f, g: \ZZ\sp{n} \to \RR \cup \{ +\infty \}$
and
$h, k: \ZZ\sp{n} \to \RR \cup \{ -\infty \}$
as well; see 
Theorem 8.21 of Murota (2003)\citeH[Theorem 8.21]{Mdcasiam}.

\begin{remark} \rm  \label{RMfencmixNG}
For the Fenchel-type duality,
the two	 functions must be consistent with respect to 
the types (M$\sp{\natural}$ or L$\sp{\natural}$).
In Example \ref{EXnorealfenc},
$f$ is M$\sp{\natural}$-convex and $h$ is L$\sp{\natural}$-concave.
This is also the case in Example \ref{EXnointfenc}.
\finbox
\end{remark}

\begin{remark} \rm  \label{RMseparfenc}
Whereas the L$\sp{\natural}$-separation and M$\sp{\natural}$-separation theorems 
are parallel or conjugate to each other
in their statements, the Fenchel-type duality theorem is self-conjugate,
in that the substitution of $f=g\sp{\bullet}$ and $h=k\sp{\circ}$
into (\ref{mlfencminmaxM}) results in (\ref{mlfencminmaxL})
by virtue of the biconjugacy $g=(g\sp{\bullet})\sp{\bullet}$ 
and $k=(k\sp{\circ})\sp{\circ}$
(Theorem \ref{THlmconjcavevexZ}).
With the knowledge of M-/L-conjugacy (Section~\ref{SCconjugacy}), 
these three duality theorems are almost equivalent to one another;
once one of them is established, the other two theorems can be derived
by relatively easy formal calculations.
\finbox
\end{remark}

\subsection{Concluding remarks of section \ref{SCintersepar}}
\label{SCinterseparcondrem}

The significance of the duality theorems of this section 
in combinatorial optimization is mentioned here.
Frank's discrete separation theorem 
(Frank 1982)\citeH{Fra82}
for submodular/supermodular set functions
is a special case of the L$\sp{\natural}$-separation theorem.
Frank's weight splitting theorem 
(Frank 1981)\citeH{Fra81}  
for the weighted
matroid intersection problem is a special case of 
the M$\sp{\natural}$-concave intersection problem.
Edmonds's intersection theorem 
(Edmonds 1970)\citeH{Edm70}
for (poly) matroids in the integral case is a special case of 
the Fenchel-type duality (Theorem \ref{THmlfencdual} (1)).
Fujishige's Fenchel-type duality theorem 
(Fujishige 1984\citeH{Fuj84};  
also Section 6.1 of Fujishige 2005\citeH[Section 6.1]{Fuj05})
for submodular set functions
is a special case of Theorem \ref{THmlfencdual} (2).
Section 8.2.3 of Murota (2003)
  \citeH[Section 8.2.3]{Mdcasiam}% 
gives more details.

%%%%% end of file %%%

%% \newpage
%% 2016-10-28 Kazuo Murota 
%% file= DCAECOstabmatch.tex

\section{Stable Marriage and Assignment Game}
\label{SCmarrigeassigngame}

Two-sided matching 
(Roth and Sotomayor 1990, Abdulkadiro{\u g}lu and S{\"o}nmez 2013)
  \citeH{RS90}\citeH{AS13matmark}%
affords a fairly general framework in game theory,
including the stable matching of 
Gale and Shapley (1962) 
  \citeH{GS62}%
and the assignment model of 
Shapley and Shubik (1972) 
  \citeH{SS72}%
as special cases.
An even more general framework has been proposed by
Fujishige and Tamura (2007),
  \citeH{FT07market}%
in which the existence of an equilibrium is established 
on the basis of a novel duality-related property
of M$\sp{\natural}$-concave functions.
The results of
Fujishige and Tamura (2007)
  \citeH{FT07market}%
are described in this section% 
\footnote{%%%%%%%%%%%%%
This section is based on
Section 11.10 of Murota (2009). 
  \citeH[Section 11.10]{Mbonn09}%
}.  %%%%%%%%%%%%%%%%%%%%%%

\subsection{Fujishige--Tamura model}

Let $P$ and $Q$ be finite sets and put
\[
 E =  P \times  Q  = \{ (i,j) \mid i \in P, j \in Q \} ,
\]
where we think of $P$ as a set of workers
and $Q$ as a set of firms, respectively.
We suppose that worker $i$ works at firm $j$ for 
$x_{ij}$ units of time, gaining a salary $s_{ij}$ per unit time.
Then the {\em labor allocation} is represented by an integer vector
\[
x = (x_{ij} \mid (i,j) \in E) \in \ZZ\sp{E}
\]
and the salary by a real vector
$s = ( s_{ij} \mid (i,j) \in E) \in \RR\sp{E}$.
We are interested in the stability of a pair $(x,s)$
in the sense to be made precise later.

For $i\in P$ and $j\in Q$ we put
\[
 E_{(i)} = \{i\} \times  Q  = \{ (i,j) \mid  j \in Q \},
\qquad
 E_{(j)} =  P \times \{j\}  = \{ (i,j) \mid  i \in P \} ,
\]
and for a vector $y$ on $E$ we denote by 
$y_{(i)}$ and $y_{(j)}$ 
the restrictions of $y$ to $E_{(i)}$
and $E_{(j)}$, respectively.
For example, for the labor allocation $x$ we obtain
\[
x_{(i)} = (x_{ij} \mid j \in Q) \in \ZZ\sp{E_{(i)}},
\qquad
x_{(j)} = (x_{ij} \mid i \in P) \in \ZZ\sp{E_{(j)}} 
\]
and this convention also applies to the salary vector $s$ to yield  
$s_{(i)}$ and $s_{(j)}$.

It is supposed that for each $(i,j) \in E$ 
lower and upper bounds on the salary $s_{ij}$ are given,
denoted by 
$\underline{\pi}_{ij} \in \RR \cup \{ -\infty \}$ 
and $\overline{\pi}_{ij} \in \RR \cup \{ +\infty \}$,
where $\underline{\pi}_{ij} \leq \overline{\pi}_{ij}$.
A salary $s$ is called {\em feasible} if 
$\underline{\pi}_{ij} \leq s_{ij} \leq \overline{\pi}_{ij}$
for all $(i,j) \in E$.
We put
\[
\underline{\pi} = (\underline{\pi}_{ij} \mid (i,j) \in E)
    \in (\RR \cup \{ -\infty \})\sp{E},
\qquad
\overline{\pi} = (\overline{\pi}_{ij} \mid (i,j) \in E) 
    \in (\RR \cup \{ +\infty \})\sp{E} .
\]

Each agent (worker or firm)
$k \in  P \cup  Q$ evaluates his/her state $x_{(k)}$ 
of labor allocation in monetary terms
through a function
$f_{k} : \ZZ\sp{E_{(k)}} \to \RR \cup \{ -\infty \}$.
Here the effective domain
$ 
 \dom f_{k} = \{ z  \in \ZZ\sp{E_{(k)}} \mid f_{k}(z) > -\infty \} 
$
is assumed to satisfy the following natural condition:
\begin{equation} \label{gameassumpA}
 \mbox{$\dom f_{k}$ is bounded and hereditary, with unique minimal element $\veczero$,}
\end{equation}
where $\dom f_{k}$ being hereditary means that
$\veczero \leq z \leq y \in \dom f_{k}$ implies $z \in \dom f_{k}$.
In what follows we always assume that $x$ is feasible in the sense that
\[ 
x_{(i)} \in \dom f_{i}
\quad
(i \in  P),
\qquad
x_{(j)} \in \dom f_{j}
\quad
(j \in  Q).
\] 
A pair $(x,s)$ of 
feasible allocation $x$ and feasible salary $s$
is called an {\em outcome}.

%%%%%%%%%%%%%%
\begin{example} \rm  \label{EXgamestablemarriage}
The {\em stable marriage problem}\index{stable marriage problem}
can be formulated as a special case
of the present setting. Put
$\underline{\pi}=\overline{\pi}=\veczero$ and
define
$f_{i} : \ZZ\sp{E_{(i)}} \to \RR \cup \{ -\infty \}$
for  $i \in  P$
and
$f_{j} : \ZZ\sp{E_{(j)}} \to \RR \cup \{ -\infty \}$
for  $j \in Q$ as
\begin{equation} \label{gameufnPiQj} 
  f_{i}(y) = \left\{
   \begin{array}{ll}
    a_{ij}         & (y = \chi_{j}, j \in Q), \\
    0  &  (y = \veczero),   \\
    -\infty & (\mbox{otherwise}),
   \end{array} \right.                           
\quad
%%\label{gameufnPi} \\
  f_{j}(z) = \left\{
   \begin{array}{ll}
    b_{ij}         & (z = \chi_{i}, i \in P), \\
    0  &  (z = \veczero),   \\
    -\infty & (\mbox{otherwise}) ,
   \end{array} \right.                           
%%\label{gameufnQj}
\end{equation}
where the vector 
$(a_{ij} \mid j \in Q) \in {\RR}\sp{Q}$
represents (or, is an encoding of) 
the preference of ``man'' $i \in P$ over ``women'' $Q$,
and
$(b_{ij} \mid i \in P) \in {\RR}\sp{P}$
the preference of ``woman'' $j \in Q$ over ``men'' $P$.
Then a matching $X$ is
stable if and only if $(x,s)=(\chi_{X},\veczero)$ is stable 
in the present model.
\finbox
\end{example}

%%%%%%%%%%%%%%
\begin{example} \rm  \label{EXgameassignment}
The {\em assignment model}\index{assignment model}
 is a special case where
$\underline{\pi}=(-\infty, \ldots, -\infty)$,
$\overline{\pi}=(+\infty, \ldots, +\infty)$
and the functions 
$f_{i}$ and $f_{j}$ are of the form of  (\ref{gameufnPiQj})
with some
$a_{ij},b_{ij} \in \RR$ for all $i \in P, j \in Q$.
\finbox
\end{example}

\subsection{Market equilibrium}

Given an outcome $(x,s)$ 
the payoff of worker $i \in  P$ is defined to be
the sum of his/her evaluation of $x_{(i)}$ and the total income from firms:
\begin{equation} \label{gamePigain}
 f_{i}(x_{(i)}) + \sum_{j \in  Q} s_{ij}x_{ij}
\qquad (=: (f_{i}+s_{(i)})(x_{(i)}) ) .
\end{equation}
Similarly, the payoff of firm $j \in  Q$ is defined as
\begin{equation} \label{gameQjgain}
f_{j}(x_{(j)}) - \sum_{i \in  P} s_{ij}x_{ij}
\qquad (=: (f_{j}-s_{(j)})(x_{(j)}) ) .
\end{equation}
Each agent ($i \in P$ or $j \in Q$) naturally wishes to maximize his/her
payoff function%
\footnote{%%%%%%%%%%%%%%%%%%%%
We have
$ (f_{i}+s_{(i)})(x_{(i)})=f_{i}[+s_{(i)}](x_{(i)}) $ and
$(f_{j}-s_{(j)})(x_{(j)}) = f_{j}[-s_{(j)}](x_{(j)})$
in the notation of (\ref{f-pdefZ}).
}. %%%% footnote %%%%%%%%%%%%%%%%%%%

A {\em market equilibrium} is defined as an outcome $(x,s)$ 
that is stable under reasonable actions 
(i) by each worker $i$, (ii) by each firm $j$, and 
(iii) by each worker-firm pair $(i,j)$.
To be specific, we say that
$(x,s)$ is stable with respect to $i \in P$ if
\begin{equation} \label{incentivePi}
    (f_{i}+s_{(i)})(x_{(i)}) = 
            \max\{ (f_{i}+s_{(i)})(y) \mid y \leq x_{(i)}\} .
\end{equation}
Similarly, 
$(x,s)$ is said to be stable with respect to $j \in Q$ if
\begin{equation} \label{incentiveQj}
    (f_{j}-s_{(j)})(x_{(j)}) =
            \max\{ (f_{j}-s_{(j)})(z) \mid z \leq x_{(j)}\} .
\end{equation}
In technical terms $(x,s)$ is said to satisfy the 
{\em incentive constraint}\index{incentive constraint}
if it satisfies (\ref{incentivePi}) and (\ref{incentiveQj}).

The stability of $(x,s)$  with respect to 
$(i,j)$ is defined as follows.
Suppose that worker $i$ and firm $j$ think of a change of their contract
to a new salary $\alpha \in [\underline{\pi}_{ij},\overline{\pi}_{ij}]_{\RR}$
and a new working time of $\beta \in \ZZ_{+}$ units.
Worker $i$ will be happy with this contract if
there exists $y \in \ZZ\sp{E_{(i)}}$ such that
\begin{eqnarray}
 && y_{j} = \beta, \qquad
 y_{k} \leq x_{ik} \quad (k \in  Q\setminus\{j\}) ,
\label{newallocPi} \\
 &&(f_{i}+s_{(i)})(x_{(i)}) < (f_{i}+(s_{(i)}\sp{-j},\alpha))(y) ,
\label{newutilityPi}
\end{eqnarray}
where $(s_{(i)}\sp{-j},\alpha)$ denotes
the vector $s_{(i)}$ with its $j$-th component
replaced by $\alpha$.
Note that $y$ means the new labor allocation of worker $i$
with an increased payoff given on the right-hand side of (\ref{newutilityPi}).
Similarly, firm $j$ is motivated to make the new contract if
there exists $z \in \ZZ\sp{E_{(j)}}$ such that
\begin{eqnarray}
 && z_{i} = \beta, \qquad
 z_{k} \leq x_{kj} \quad (k \in  P\setminus\{i\}) ,
\label{newallocQj} \\
    && (f_{j}-s_{(j)})(x_{(j)}) < (f_{j}-(s_{(j)}\sp{-i},\alpha))(z) ,
\label{newutilityQj}
\end{eqnarray}
where $(s_{(j)}\sp{-i},\alpha)$ is
the vector $s_{(j)}$ with its $i$-th component
replaced by $\alpha$.
Then we say that $(x,s)$ is stable with respect to $(i,j)$ if
there exists no $(\alpha, \beta, y, z)$
that simultaneously satisfies
(\ref{newallocPi}), (\ref{newutilityPi}), (\ref{newallocQj})
and (\ref{newutilityQj}).

We now define an outcome $(x,s)$ to be {\em stable} 
if, for every $i \in  P$, $j \in  Q$,
$(x,s)$ is (i) stable with respect to $i$,
(ii) stable with respect to $j$, and
(iii) stable with respect to $(i,j)$.
This is our concept of market equilibrium.

A remarkable fact, found by 
Fujishige and Tamura (2007)\citeH{FT07market},
is that a market equilibrium
exists if the functions $f_{k}$ are M$\sp{\natural}$-concave.

\begin{theorem} \label{THgamestable}
Assume that $\underline{\pi} \leq \overline{\pi}$
and, for each $k \in  P \cup  Q$, $f_{k}$ is an
M$\sp{\natural}$-concave function satisfying {\rm (\ref{gameassumpA})}.
Then a stable outcome $(x,s) \in \ZZ\sp{E} \times \RR\sp{E}$ exists.
Furthermore,  we can take an integral $s \in \ZZ\sp{E}$
if $\underline{\pi} \in (\ZZ \cup \{ -\infty \})\sp{E}$,
$\overline{\pi} \in (\ZZ \cup \{ +\infty \})\sp{E}$,
and $f_{k}$ is integer-valued for every $k \in  P \cup  Q$.
\end{theorem}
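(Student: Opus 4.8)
The plan is to reduce the game-theoretic notion of stability to a convex-analytic equilibrium for a pair of aggregated M$\sp{\natural}$-concave functions, and then to produce that equilibrium by a discrete separation argument. First I would aggregate the individual valuations. Since the variable blocks $E_{(i)}$ for distinct workers $i\in P$ are mutually disjoint, the direct sum $f_{P}(x)=\sum_{i\in P}f_{i}(x_{(i)})$ is M$\sp{\natural}$-concave on $\ZZ\sp{E}$: the local exchange conditions of Theorem~\ref{THmconcavlocexcZB} decouple across the blocks, so they hold for $f_{P}$ as soon as they hold for each $f_{i}$ (this may also be viewed as an instance of the operations in Theorem~\ref{THmfnoperation}). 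Likewise $f_{Q}(x)=\sum_{j\in Q}f_{j}(x_{(j)})$ is M$\sp{\natural}$-concave, using the disjointness of the blocks $E_{(j)}$. By assumption~(\ref{gameassumpA}) both $\dom f_{P}$ and $\dom f_{Q}$ are bounded and hereditary with minimal element $\veczero$; in particular $\veczero\in\dom f_{P}\cap\dom f_{Q}$ and every feasible $x$ satisfies $x\ge\veczero$, facts I will use repeatedly.

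Next I would translate stability into an equilibrium condition. Writing the worker payoff as $f_{P}[+s](x)=f_{P}(x)+\langle s,x\rangle$ and the firm payoff as $f_{Q}[-s](x)=f_{Q}(x)-\langle s,x\rangle$, the incentive constraints (\ref{incentivePi}) and (\ref{incentiveQj}) say exactly that $x$ is the choice-function value of $f_{P}[+s]$ and of $f_{Q}[-s]$ on $\{\,y\le x\,\}$, i.e.\ no agent profits by \emph{reducing} hours. The remaining requirement (iii) rules out the opposite deviation: no worker--firm pair profits by \emph{increasing} some $x_{ij}$ at a salary still lying in the band $[\underline{\pi}_{ij},\overline{\pi}_{ij}]$. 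Using heredity I would verify that these two families of deviations exhaust all profitable joint moves, so that $(x,s)$ is stable precisely when $s\in[\underline{\pi},\overline{\pi}]$ and $x$ is a common maximizer of $f_{P}[+s]$ and $f_{Q}[-s]$ in the sense of the M$\sp{\natural}$-concave intersection theorem (Theorem~\ref{THmfcaveninteropt}), with $s$ playing the role of the separating vector $p\sp{*}$ (taking $f_{1}=f_{Q}$, $f_{2}=f_{P}$).

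Finally I would establish existence of such an $(x,s)$. In the unbounded case $\underline{\pi}\equiv-\infty$, $\overline{\pi}\equiv+\infty$ this is immediate: since both effective domains are bounded the total value $f_{P}(x)+f_{Q}(x)$ attains a maximizer $x\sp{*}$, and Theorem~\ref{THmfcaveninteropt} furnishes a separating $s=p\sp{*}$ for which $f_{Q}[-s]$ and $f_{P}[+s]$ are maximized at $x\sp{*}$; these global optimality conditions are stronger than, hence imply, the equilibrium conditions of the previous step. The genuine difficulty is the salary band: the separating vector given by the bare intersection theorem need not respect $\underline{\pi}\le s\le\overline{\pi}$, the two sides pin $s$ from opposite directions, and entries may be $\pm\infty$. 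The way around this is to force the separator into the box by adjoining to $f_{P}$ and $f_{Q}$ suitable separable piecewise-linear concave terms built from $\underline{\pi}$ and $\overline{\pi}$ (legitimate by Theorem~\ref{THmfnoperation}~(4), which preserves M$\sp{\natural}$-concavity), arranged so that any separating vector of the modified pair necessarily lies in $[\underline{\pi},\overline{\pi}]$ while the added terms are inactive at the optimal allocation; this box-constrained separation is exactly the extension of the intersection theorem that serves as the technical pivot (Theorem~\ref{THgameaggregate}), and I expect it to be the main obstacle, both because of the two-sided pinning and because of the $\pm\infty$ bounds. Once it is in hand, translating its maximizer and separator back through the aggregation of the first step yields a stable outcome $(x,s)\in\ZZ\sp{E}\times\RR\sp{E}$. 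For the integral refinement, when $\underline{\pi},\overline{\pi}$ are integer-valued (or $\pm\infty$) and every $f_{k}$ is integer-valued, the modified functions remain integer-valued, so the integrality clause of Theorem~\ref{THmfcaveninteropt} (equivalently the M$\sp{\natural}$-separation theorem, Theorem~\ref{THmfnsep}) delivers an integral salary $s\in\ZZ\sp{E}$.
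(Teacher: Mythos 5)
Your step 1 (aggregating to $f_{P}$ and $f_{Q}$ as in (\ref{gamefPfQ})) and your treatment of the unbounded case $\underline{\pi}\equiv-\infty$, $\overline{\pi}\equiv+\infty$ match the paper, and in that special case the reduction to Theorem~\ref{THmfcaveninteropt} is exactly right (first bullet of Remark~\ref{RMgameFTdualcases}). The gap is in step 2: your claimed equivalence ``$(x,s)$ is stable $\iff$ $s\in[\underline{\pi},\overline{\pi}]$ and $x$ is a common \emph{unconstrained} maximizer of $f_{P}[+s]$ and $f_{Q}[-s]$'' is false as soon as the salary band binds. Test it on the stable marriage specialization (Example~\ref{EXgamestablemarriage}), where $\underline{\pi}=\overline{\pi}=\veczero$ forces $s=\veczero$: take $P=\{i_{1},i_{2}\}$, $Q=\{j_{1},j_{2}\}$ with both men preferring $j_{1}$ (say $a_{i_{1}j_{1}}=a_{i_{2}j_{1}}=2$, $a_{i_{1}j_{2}}=a_{i_{2}j_{2}}=1$) and $j_{1}$ preferring $i_{1}$, $j_{2}$ preferring $i_{2}$. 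A stable matching exists ($i_{1}$--$j_{1}$, $i_{2}$--$j_{2}$), but at it $f_{P}$ is not globally maximized: since $f_{P}$ decouples over the blocks $E_{(i)}$ and does not see the firms' capacities, the point with both men assigned to $j_{1}$ lies in $\dom f_{P}$ and gives a strictly larger value. So no stable outcome satisfies your condition, and consequently no penalty modification whose added terms are ``inactive at the optimal allocation'' can work either --- inactivity of the penalties makes your target literally equivalent to the unconstrained common-maximizer condition that this example refutes.

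What the paper actually proves is structurally different. The correct translation of stability (Theorem~\ref{THgamechar}) introduces auxiliary bound vectors $u,v\in(\ZZ\cup\{+\infty\})\sp{E}$ and requires only the \emph{constrained} optimality
$x_{(i)}\in\argmax\{(f_{i}+p_{(i)})(y)\mid y\leq u_{(i)}\}$ and
$x_{(j)}\in\argmax\{(f_{j}-p_{(j)})(z)\mid z\leq v_{(j)}\}$,
coupled with the complementary-slackness conditions (\ref{gamechar4})--(\ref{gamechar5}): $u_{ij}<+\infty$ forces $p_{ij}=\underline{\pi}_{ij}$ and $v_{ij}=+\infty$, and symmetrically. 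It is precisely this $u,v$ mechanism that lets $x$ fail to maximize one side's payoff globally (the man rationed by $u_{ij}=0$ in the example above) while still excluding every blocking pair, because on each arc at least one side is unconstrained and genuinely unwilling. The technical pivot, Theorem~\ref{THgameaggregate}, is an intersection-type theorem of this constrained form --- conditions (\ref{gamegeneral1})--(\ref{gamegeneral5}) --- not a box-constrained separation obtainable from Theorem~\ref{THmfcaveninteropt} by adding separable concave terms via Theorem~\ref{THmfnoperation}~(4); it degenerates to the plain intersection theorem only in the unbounded case you handled. Your proposal would need to be repaired by reformulating both the stability characterization and the pivot with the $(u,v)$ complementarity before any existence argument can go through; the integrality clause, once that is done, is indeed inherited as you say.
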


\subsection{Technical ingredients}

The technical ingredients of the above theorem can be 
divided into the following two theorems, due to 
Fujishige and Tamura (2007).
  \citeH{FT07market}%
Note also that sufficiency part of Theorem \ref{THgamechar}
(which we need here) 
is independent of M$\sp{\natural}$-concavity.

\begin{theorem} \label{THgamechar}
%%\begin{theorem}[\protect{\citeH{FT07market}}] \label{THgamechar}
Under the same assumption as in Theorem {\rm \ref{THgamestable}}
let $x$ be a feasible allocation.
Then $(x,s)$ is a stable outcome for some $s$
if and only if there exist 
$p \in \RR\sp{E}$, 
$u =  (u_{(i)} \mid i \in P) \in (\ZZ \cup \{ +\infty \})\sp{E}$ and 
$v =  (v_{(j)} \mid j \in Q) \in (\ZZ \cup \{ +\infty \})\sp{E}$
such that
\begin{eqnarray}
&&    x_{(i)} \in \argmax\{ (f_{i}+p_{(i)})(y) \mid y \leq u_{(i)}\},
\label{gamechar1} \\
&&    x_{(j)} \in \argmax\{ (f_{j}-p_{(j)})(z) \mid z \leq v_{(j)}\},
\label{gamechar2} \\
&&  \underline{\pi} \leq p \leq \overline{\pi},
\label{gamechar3}  \\
&& (i,j) \in E, u_{ij} < +\infty 
  \ \Longrightarrow \ p_{ij} = \underline{\pi}_{ij}, v_{ij}= +\infty,
\label{gamechar4} \\
&& (i,j) \in E, v_{ij} < +\infty 
  \ \Longrightarrow \ p_{ij} = \overline{\pi}_{ij}, u_{ij}= +\infty.
\label{gamechar5} 
\end{eqnarray}
Moreover, $(x,p)$ is a stable outcome for any $(x, p, u, v)$
satisfying the above conditions.
\end{theorem}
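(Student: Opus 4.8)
The plan is to prove the two directions separately, establishing the \emph{sufficiency} (the ``moreover'' assertion, equivalently the ``if'' direction) first, since it is elementary and---as the text notes---does not invoke M$\sp{\natural}$-concavity, and then the \emph{necessity} (``only if''), which is the substantive part. Throughout I set $p=s$ when passing from a stable outcome to the data $(p,u,v)$, so that (\ref{gamechar3}) is immediate from feasibility of $s$.

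For sufficiency, suppose $(x,p,u,v)$ satisfies (\ref{gamechar1})--(\ref{gamechar5}); I show $(x,p)$ is stable. Conditions (\ref{incentivePi}) and (\ref{incentiveQj}) follow at once: from (\ref{gamechar1}) we have $x_{(i)}\le u_{(i)}$, so $\{y\mid y\le x_{(i)}\}\subseteq\{y\mid y\le u_{(i)}\}$ and the maximizer $x_{(i)}$ over the larger box is a fortiori a maximizer over the smaller one; symmetrically for (\ref{incentiveQj}) via (\ref{gamechar2}). For stability with respect to a pair $(i,j)$ I argue by contradiction from a hypothetical block $(\alpha,\beta,y,z)$ with $\alpha\in[\underline{\pi}_{ij},\overline{\pi}_{ij}]$. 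Collecting the salary on the $(i,j)$-link rewrites (\ref{newutilityPi}) and (\ref{newutilityQj}) as
\begin{align}
(f_{i}+p_{(i)})(y)+(\alpha-p_{ij})\beta &> (f_{i}+p_{(i)})(x_{(i)}), \label{planW}\\
(f_{j}-p_{(j)})(z)-(\alpha-p_{ij})\beta &> (f_{j}-p_{(j)})(x_{(j)}). \label{planF}
\end{align}
I then split into the three cases permitted by (\ref{gamechar4})--(\ref{gamechar5}): (a) $u_{ij}=v_{ij}=+\infty$, where $y\le u_{(i)}$ and $z\le v_{(j)}$ so (\ref{gamechar1})--(\ref{gamechar2}) bound both left-hand sides, and adding (\ref{planW})+(\ref{planF}) cancels the $\alpha$-terms to produce $\le$, contradicting the strict inequalities; (b) $u_{ij}<+\infty$, forcing $p_{ij}=\underline{\pi}_{ij}\le\alpha$ and $v_{ij}=+\infty$, so $z\le v_{(j)}$, (\ref{gamechar2}) applies, and $(\alpha-p_{ij})\beta\ge0$ makes (\ref{planF}) impossible; (c) $v_{ij}<+\infty$, the mirror image, where $(\alpha-p_{ij})\beta\le0$ defeats (\ref{planW}). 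This exhausts the cases and completes sufficiency.

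For necessity, let $(x,s)$ be stable and put $p=s$. The task is to manufacture upper bounds $u,v$ so that the downward-box optimality supplied by (\ref{incentivePi})--(\ref{incentiveQj}) is upgraded to the full-box optimality (\ref{gamechar1})--(\ref{gamechar2}), while respecting the bookkeeping (\ref{gamechar4})--(\ref{gamechar5}). The construction I propose is link-by-link: set $u_{ij}=v_{ij}=+\infty$ whenever worker $i$ has no incentive to increase labour on $j$ and firm $j$ has no incentive to increase labour from $i$ at salary $s_{ij}$; and when one party does wish to trade more on the link, cap that party's bound at $x_{ij}$, keeping the other's bound at $+\infty$. Here ``no incentive to increase'' is made precise through the local-maximizer characterization of M$\sp{\natural}$-concave functions (Theorem~\ref{THmnatsetfnlocmaxZ}, applied to the M$\sp{\natural}$-concave $f_{i}+s_{(i)}$ and $f_{j}-s_{(j)}$), which reduces box-optimality to single-coordinate and exchange moves and hence lets the caps be chosen coordinatewise; (\ref{gamechar1}) is then exactly the statement that raising $u_{(i)}$ to $+\infty$ in the uncapped coordinates creates no profitable upward move, which is what the capping rule builds in, and (\ref{gamechar2}) is symmetric.

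The main obstacle---and the only place M$\sp{\natural}$-concavity is essential---is verifying the price-location constraints (\ref{gamechar4})--(\ref{gamechar5}): that whenever a worker (resp.\ firm) is capped on link $(i,j)$ the salary must sit at the floor $\underline{\pi}_{ij}$ (resp.\ the ceiling $\overline{\pi}_{ij}$) with the opposite bound infinite. The plan is a contraposition against pairwise stability: if worker $i$ strictly prefers more labour on $j$ yet $s_{ij}>\underline{\pi}_{ij}$, I build a blocking quadruple by lowering the salary to some $\alpha\in[\underline{\pi}_{ij},s_{ij})$. Lowering $\alpha$ only helps the firm (the term $-(\alpha-p_{ij})\beta$ in (\ref{planF}) turns positive), while the worker keeps a strict gain in (\ref{planW}) for $\alpha$ near $s_{ij}$; the delicate point is to pick one common integer $\beta$ satisfying both (\ref{newutilityPi}) and (\ref{newutilityQj}), and it is precisely the exchange (single-improvement) property of the M$\sp{\natural}$-concave $f_{i}$ and $f_{j}$ that reduces each side to a one-unit move on coordinate $(i,j)$, so that $\beta=x_{ij}+1$ serves both. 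Establishing this simultaneous marginal block, and checking that the capping rule leaves at most one finite bound per link (two simultaneously improving parties would already block at $\alpha=s_{ij}$), is the crux; with it in hand (\ref{gamechar4})--(\ref{gamechar5}) follow and the characterization is complete. The alternative route, reducing necessity to the aggregated duality of Theorem~\ref{THgameaggregate}, would bypass this hand construction but rests on the same M$\sp{\natural}$-concave machinery.
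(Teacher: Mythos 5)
Your sufficiency half is correct: the box-shrinking argument for (\ref{incentivePi})--(\ref{incentiveQj}), the rewriting of (\ref{newutilityPi})/(\ref{newutilityQj}) by collecting the salary term on the link, and the three-case analysis keyed to (\ref{gamechar4})--(\ref{gamechar5}) together give exactly the elementary ``moreover'' direction, consistent with the paper's remark that this part is independent of M$\sp{\natural}$-concavity. (Note the paper itself states Theorem~\ref{THgamechar} without proof, citing Fujishige and Tamura (2007), so there is no in-paper argument to compare against; your sufficiency proof is a valid reconstruction.) Your observation that a link on which \emph{both} parties are rationed yields a block with the common $\beta=x_{ij}+1$, via the local-maximizer Theorem~\ref{THmnatsetfnlocmaxZ} applied to the box-restricted functions, is also sound.

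The necessity half, however, has a genuine gap, and it is located precisely at your opening decision ``throughout I set $p=s$.'' Stability of $(x,s)$ does \emph{not} force $s_{ij}=\underline{\pi}_{ij}$ on a link where the worker is rationed: your contraposition needs the firm to gain strictly at some $\alpha\in[\underline{\pi}_{ij},s_{ij})$, but the firm's marginal value $f_{j}(x_{(j)}+\chi_{i})-f_{j}(x_{(j)})$ can be arbitrarily small or $-\infty$, in which case no blocking quadruple exists at any permissible $\alpha$ --- the step ``the exchange property reduces each side to a one-unit move'' presupposes the firm has an improving move at all, which it need not. Concretely: let $P=\{i\}$, $Q=\{j\}$, $\underline{\pi}_{ij}=0$, $\overline{\pi}_{ij}=10$, $f_{i}(0)=0$, $f_{i}(1)=-3$, $f_{i}(2)=-6$ (and $-\infty$ otherwise), $f_{j}(0)=0$, $f_{j}(1)=4$ (and $-\infty$ otherwise); these satisfy the hypotheses of Theorem~\ref{THgamestable}. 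Then $(x,s)=(1,4)$ is stable (a block with $\beta=1$ needs $\alpha>4$ for the worker and $\alpha<4$ for the firm; $\beta=2$ gives the firm $-\infty$), yet no $(u,v)$ works with $p=s=4$: taking $u_{ij}=+\infty$ violates (\ref{gamechar1}), since $\argmax\{f_{i}(y)+4y\}=\{2\}\not\ni 1$, while taking $u_{ij}<+\infty$ forces $p_{ij}=\underline{\pi}_{ij}=0\neq 4$ by (\ref{gamechar4}). A valid witness is $p_{ij}=3$ with $u_{ij}=v_{ij}=+\infty$, so the price in the theorem must in general move off the given salary $s$. Consequently the necessity proof cannot be a link-by-link capping at $p=s$; it must construct a new price vector (roughly, lowering $p_{ij}$ below $s_{ij}$ until the worker's rationing disappears or the floor is reached, symmetrically raising toward $\overline{\pi}_{ij}$ on firm-rationed links) and re-verify (\ref{gamechar1})--(\ref{gamechar2}) at the new $p$ simultaneously across all links --- and this global consistency is exactly where the M$\sp{\natural}$-concave duality machinery of Theorems~\ref{THgameaggregate} and \ref{THgameexist}, which you set aside as an ``alternative route,'' does irreplaceable work.
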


\begin{theorem} \label{THgameexist}
%%\begin{theorem}[\protect{\citeH{FT07market}}] \label{THgameexist}
Under the same assumption as in Theorem {\rm \ref{THgamestable}}
there exists  $(x, p, u, v)$ that satisfies 
{\rm (\ref{gamechar1})--(\ref{gamechar5})}.
Furthermore,  we can take an integral $p \in \ZZ\sp{E}$
if $\underline{\pi} \in (\ZZ \cup \{ -\infty \})\sp{E}$,
$\overline{\pi} \in (\ZZ \cup \{ +\infty \})\sp{E}$,
and $f_{k}$ is integer-valued for every $k \in  P \cup  Q$.
\end{theorem}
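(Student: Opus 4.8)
The plan is to translate the system (\ref{gamechar1})--(\ref{gamechar5}) into the optimality condition of a single M$\sp{\natural}$-concave intersection problem in the labor allocation $x$, with the salary $p$ serving as the separating (dual) price and the bounds $\underline{\pi}\le p\le\overline{\pi}$ absorbed into the two aggregated valuations. The existence of $(x,p,u,v)$ will then follow from the M$\sp{\natural}$-concave intersection theorem (Theorem~\ref{THmfcaveninteropt}) together with a box-constrained extension of it (the content of Theorem~\ref{THgameaggregate}), which the plan is to establish.

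First I would aggregate the two sides. Define $F_{P},F_{Q}:\ZZ\sp{E}\to\RR\cup\{-\infty\}$ by $F_{P}(x)=\sum_{i\in P}f_{i}(x_{(i)})$ and $F_{Q}(x)=\sum_{j\in Q}f_{j}(x_{(j)})$. The blocks $\{E_{(i)}\mid i\in P\}$ partition $E$, as do the blocks $\{E_{(j)}\mid j\in Q\}$, so each of $F_{P}$ and $F_{Q}$ is a direct sum of M$\sp{\natural}$-concave functions over a coordinate partition, and is therefore itself M$\sp{\natural}$-concave (the exchange axiom (M$\sp{\natural}$-EXC[$\ZZ$]) is verified block by block). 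By (\ref{gameassumpA}) both effective domains are bounded and contain $\veczero$, so $\domZ F_{P}\cap\domZ F_{Q}\ni\veczero$ and the maximum of $F_{P}+F_{Q}$ over the finite set $\domZ F_{P}\cap\domZ F_{Q}$ is attained.

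In the unconstrained case, where $\underline{\pi}=-\infty$ and $\overline{\pi}=+\infty$, the claim is already immediate: apply Theorem~\ref{THmfcaveninteropt} with $f_{1}=F_{Q}$ and $f_{2}=F_{P}$ to a maximizer $x\sp{*}$ of $F_{P}+F_{Q}$, obtaining $p\sp{*}\in\RR\sp{E}$ for which $x\sp{*}$ maximizes $F_{Q}[-p\sp{*}]$ and $F_{P}[+p\sp{*}]$. Since these aggregates separate over the blocks, $x\sp{*}_{(i)}$ maximizes $f_{i}+p\sp{*}_{(i)}$ and $x\sp{*}_{(j)}$ maximizes $f_{j}-p\sp{*}_{(j)}$; taking $p=p\sp{*}$ and $u=v=(+\infty,\dots,+\infty)$ yields (\ref{gamechar1})--(\ref{gamechar2}), while (\ref{gamechar4})--(\ref{gamechar5}) hold vacuously. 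The integral clause of Theorem~\ref{THmfcaveninteropt} furnishes an integral $p\sp{*}$ when every $f_{k}$ is integer-valued, settling integrality here.

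The real work is to force the separating price into the box $[\underline{\pi},\overline{\pi}]$, and this is where $u$ and $v$ enter. I would enforce the bounds by truncating the allocations on the edges where the price is pinned to a bound: on an edge where the salary floor $\underline{\pi}_{ij}$ binds, the worker's supply is capped (a finite $u_{ij}$) while the firm stays unconstrained ($v_{ij}=+\infty$), and dually at the ceiling $\overline{\pi}_{ij}$; this is exactly the complementary-slackness reading of (\ref{gamechar4})--(\ref{gamechar5}). Each such truncation is a restriction to an integer interval, which preserves M$\sp{\natural}$-concavity by Theorem~\ref{THmfnoperation}(5) and keeps the domains nonempty and hereditary by (\ref{gameassumpA}), so the truncated problem is again an M$\sp{\natural}$-concave intersection problem. \emph{The main obstacle} is showing that the truncations can be chosen simultaneously and consistently so that the separating price of the truncated problem lands in $[\underline{\pi},\overline{\pi}]$ with exactly one of $u_{ij},v_{ij}$ finite on every tight edge; this is precisely the extended intersection theorem (Theorem~\ref{THgameaggregate}). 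I expect to prove it by starting from the unconstrained optimum and repairing one violated coordinate at a time -- lowering a price above $\overline{\pi}_{ij}$ by capping $v_{ij}$, raising a price below $\underline{\pi}_{ij}$ by capping $u_{ij}$ -- invoking the M$\sp{\natural}$-exchange property to certify that each repair preserves joint optimality and that the process terminates, and carrying the integral version throughout so that the final $p$, hence the stable salary, is integral under the stated hypotheses.
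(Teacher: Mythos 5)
Your proposal follows essentially the same route as the paper: you aggregate the per-agent valuations into the blockwise sums $f_{P}(x)=\sum_{i\in P}f_{i}(x_{(i)})$ and $f_{Q}(x)=\sum_{j\in Q}f_{j}(x_{(j)})$, each M$\sp{\natural}$-concave with bounded hereditary effective domain containing $\veczero$, and obtain $(x,p,u,v)$ satisfying (\ref{gamechar1})--(\ref{gamechar5}), with integral $p$ in the integer-valued case, by applying the box-constrained intersection theorem (Theorem~\ref{THgameaggregate}) --- exactly the paper's derivation, and your unconstrained special case via Theorem~\ref{THmfcaveninteropt} is precisely the first case noted in Remark~\ref{RMgameFTdualcases}. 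Since the paper itself invokes Theorem~\ref{THgameaggregate} as a cited result of Fujishige and Tamura (2007) without proving it, your admittedly incomplete coordinate-repair sketch for that theorem is supplementary material not required for the proof of Theorem~\ref{THgameexist}.
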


It is worth while noting that the essence of 
Theorem \ref{THgameexist} is an intersection-type theorem
for a pair of M$\sp{\natural}$-concave functions,
Theorem \ref{THgameaggregate} below, due to 
Fujishige and Tamura (2007).
  \citeH{FT07market}%
Indeed Theorem \ref{THgameexist} can be derived easily from 
Theorem \ref{THgameaggregate}  
applied to
\begin{equation} \label{gamefPfQ}
 f_{P}(x) = \sum_{i \in P} f_{i}(x_{(i)}),
\qquad
 f_{Q}(x) = \sum_{j \in Q} f_{j}(x_{(j)}) .
\end{equation}

\begin{theorem} \label{THgameaggregate}
%%\begin{theorem}[\protect{\citeH{FT07market}}] \label{THgameaggregate}
Assume $\underline{\pi} \leq \overline{\pi}$
for $\underline{\pi} \in (\RR \cup \{ -\infty \})\sp{E}$
and
$\overline{\pi} \in (\RR \cup \{ +\infty \})\sp{E}$,
and let $f, g: \ZZ\sp{E} \to \RR \cup \{ -\infty \}$
be M$\sp{\natural}$-concave functions
such that the effective domains are
bounded and hereditary, with unique minimal element $\veczero$.
Then there exist
$x \in \dom f \cap \dom g$, $p \in \RR\sp{E}$, 
$u  \in (\ZZ \cup \{ +\infty \})\sp{E}$ and $v  \in (\ZZ \cup \{ +\infty \})\sp{E}$ 
such that
\begin{eqnarray}
&&    x \in \argmax\{ (f+p)(y) \mid y \leq u \},
\label{gamegeneral1}
 \\
&&    x \in \argmax\{ (g-p)(z) \mid z \leq v \},
\label{gamegeneral2} 
\\
&&  \underline{\pi} \leq p \leq \overline{\pi},
\label{gamegeneral3}
  \\
&& e \in E, u_{e} < +\infty 
  \ \Longrightarrow \ p_{e} = \underline{\pi}_{e}, v_{e}= +\infty,
\label{gamegeneral4}
 \\
&& e \in E, v_{e} < +\infty 
  \ \Longrightarrow \ p_{e} = \overline{\pi}_{e}, u_{e}= +\infty.
\label{gamegeneral5} 
\end{eqnarray}
Furthermore,  we can take an integral $p \in \ZZ\sp{E}$
if $\underline{\pi} \in (\ZZ \cup \{ -\infty \})\sp{E}$,
$\overline{\pi} \in (\ZZ \cup \{ +\infty \})\sp{E}$,
and $f$ and $g$ are integer-valued.
\end{theorem}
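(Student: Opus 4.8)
The plan is to reduce the statement to the M$\sp{\natural}$-concave intersection theorem (Theorem~\ref{THmfcaveninteropt}) by absorbing the salary bounds $\underline{\pi},\overline{\pi}$ into the two valuations as separable-concave penalties. Define modified functions $\tilde{f},\tilde{g}:\ZZ\sp{E}\to\RR\cup\{-\infty\}$ by
\[
 \tilde{f}(x)=\max\{ f(y)+\underline{\pi}\sp{\top}(y-x)\mid y\geq x,\ y\in\dom f\},\quad
 \tilde{g}(x)=\max\{ g(z)-\overline{\pi}\sp{\top}(z-x)\mid z\geq x,\ z\in\dom g\},
\]
restricted to a box $x\in[\veczero,\bar{c}]_{\ZZ}$, where $\bar{c}$ is a common upper bound of $\dom f$ and $\dom g$ (on coordinates $e$ with $\underline{\pi}_e=-\infty$, resp.\ $\overline{\pi}_e=+\infty$, the extra acquisition is simply forbidden, i.e.\ $y_e=x_e$, resp.\ $z_e=x_e$). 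Each of $\tilde{f},\tilde{g}$ is an integer convolution of an M$\sp{\natural}$-concave function with a separable concave function supported on a ray, followed by a box restriction, so by Theorem~\ref{THmfnoperation}(9),(5) both are M$\sp{\natural}$-concave; boundedness of $\dom f,\dom g$ keeps the suprema finite, and both effective domains contain $\veczero$.

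Next I apply the intersection theorem. Since $\tilde{f}+\tilde{g}$ has a bounded nonempty effective domain, it attains its maximum at some $\bar{x}$, and Theorem~\ref{THmfcaveninteropt} furnishes a price $p\sp{*}\in\RR\sp{E}$ — integral when $f,g,\underline{\pi},\overline{\pi}$ are integral, since then $\tilde{f},\tilde{g}$ are integer-valued — with $\bar{x}\in\argmaxZ\tilde{f}[+p\sp{*}]$ and $\bar{x}\in\argmaxZ\tilde{g}[-p\sp{*}]$. Let $y\sp{*}\geq\bar{x}$ and $z\sp{*}\geq\bar{x}$ attain the maxima defining $\tilde{f}(\bar{x})$ and $\tilde{g}(\bar{x})$. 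I then set $x:=\bar{x}$ (which lies in $\dom f\cap\dom g$ because $\veczero\leq\bar{x}\leq y\sp{*}\in\dom f$ and $\bar{x}\leq z\sp{*}\in\dom g$, the domains being hereditary), $p:=p\sp{*}$, and define the caps coordinatewise by $u_e:=\bar{x}_e$ if $y\sp{*}_e>\bar{x}_e$ and $u_e:=+\infty$ otherwise, $v_e:=\bar{x}_e$ if $z\sp{*}_e>\bar{x}_e$ and $v_e:=+\infty$ otherwise. Conditions (\ref{gamegeneral1}) and (\ref{gamegeneral2}) are then obtained by unfolding the convolutions: on coordinates carrying no excess ($y\sp{*}_e=\bar{x}_e$) the modified and original functions agree locally, so $\bar{x}\in\argmaxZ\tilde{f}[+p\sp{*}]$ restricts to $\bar{x}\in\argmax\{f[+p\sp{*}](y)\mid y\leq u\}$, and symmetrically for $g$.

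The heart of the argument is the verification of the complementary-slackness conditions (\ref{gamegeneral3})--(\ref{gamegeneral5}). Writing $\tilde{f}(x)+\underline{\pi}\sp{\top}x=\max\{f(y)+\underline{\pi}\sp{\top}y\mid y\geq x\}$, which is nonincreasing in $x$, the optimality $\bar{x}\in\argmaxZ\tilde{f}[+p\sp{*}]$ yields, on any coordinate $e$ with $y\sp{*}_e>\bar{x}_e$, the two-sided estimate that forces $p\sp{*}_e=\underline{\pi}_e$; the dual computation on $\tilde{g}$ forces $p\sp{*}_e=\overline{\pi}_e$ whenever $z\sp{*}_e>\bar{x}_e$. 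One must also rule out that $y\sp{*}_e>\bar{x}_e$ and $z\sp{*}_e>\bar{x}_e$ hold for the same $e$: if they did, moving $\bar{x}$ up by $\chi_e$ would not decrease $\tilde{f}+\tilde{g}$ and would strictly increase it when $\overline{\pi}_e>\underline{\pi}_e$, contradicting the maximality of $\bar{x}$, while the degenerate case $\overline{\pi}_e=\underline{\pi}_e$ is handled by increasing $\bar{x}_e$ to remove the double excess. Together with $\underline{\pi}\leq\overline{\pi}$ this gives the disjointness behind (\ref{gamegeneral4}), (\ref{gamegeneral5}) and the bound (\ref{gamegeneral3}).

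I expect this clamping analysis — showing that the penalty slopes pin $p\sp{*}$ to the active bound exactly on the capped coordinates, including at the boundary $\bar{x}_e=0$ where only one-sided perturbations are available — to be the main obstacle, since it is where the separable structure of the penalties must be exploited in full and matched between the $\tilde{f}$-side and $\tilde{g}$-side inequalities. Once it is in place, the integrality assertion comes for free from the integral version of Theorem~\ref{THmfcaveninteropt}, because $u,v$ inherit their finite values from $\bar{x}\in\ZZ\sp{E}$ and $p\sp{*}$ is then integral.
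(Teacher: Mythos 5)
Your reduction setup is fine as far as it goes: $\tilde f$ and $\tilde g$ are indeed M$\sp{\natural}$-concave by Theorem~\ref{THmfnoperation} (9),(5), the maximizer $\bar x$ lies in $\dom f\cap\dom g$ by heredity, and your degenerate two-sided-excess fix is legitimate (by the last identity in Theorem~\ref{THmfcaveninteropt}, any maximizer of $\tilde f+\tilde g$ is certified by the same $p\sp{*}$). But the step from $\bar x\in\argmax \tilde f[+p\sp{*}]$ to (\ref{gamegeneral1}) is a genuine gap, not a routine ``unfolding of the convolution.'' The available chain $f(y)+\langle p\sp{*},y\rangle\le\tilde f(y)+\langle p\sp{*},y\rangle\le\tilde f(\bar x)+\langle p\sp{*},\bar x\rangle$ bounds capped-feasible payoffs by $f(y\sp{*})+\underline{\pi}\sp{\top}(y\sp{*}-\bar x)+\langle p\sp{*},\bar x\rangle$, which strictly exceeds $f(\bar x)+\langle p\sp{*},\bar x\rangle$ whenever there is genuine excess, so nothing certifies $\bar x$ as optimal in $\{y\le u\}$. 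Concretely: take $E=\{1,2\}$, $f(y)=10\max(y_{1},y_{2})$ on $\{0,1\}\sp{2}$ (unit-demand, M$\sp{\natural}$-concave), $g\equiv 0$ on $\{0,1\}\sp{2}$, $\underline{\pi}=(0,0)$, $\overline{\pi}$ large. Then $\tilde f\equiv 10$ and $\tilde g\equiv 0$, so $\bar x=(0,0)$ is a legitimate maximizer, the dual is forced to $p\sp{*}=(0,0)$, and taking $y\sp{*}=(1,0)$ your recipe gives $u=(0,+\infty)$; but $y=(0,1)\le u$ has $f(y)+\langle p\sp{*},y\rangle=10>0=f(\bar x)+\langle p\sp{*},\bar x\rangle$, so (\ref{gamegeneral1}) fails. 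The theorem's $x$ here must be $(0,1)$ or $(1,1)$: the reduction needs a careful selection of $\bar x$ and of the witnesses $y\sp{*},z\sp{*}$ (absorbing removable excess into $\bar x$ in general, not only in the degenerate case you treat), and your proposal contains no mechanism producing such a selection.

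Second, the price conditions are not secured. As you yourself flag, the clamping is one-sided at the boundary: excess $y\sp{*}_{e}>\bar x_{e}$ only yields $p\sp{*}_{e}\le\underline{\pi}_{e}$ (upward perturbation), and the matching bound $p\sp{*}_{e}\ge\underline{\pi}_{e}$ needs $\bar x_{e}\ge 1$; with $E=\{e\}$, $f(0)=0,\ f(1)=10$, $g(0)=0,\ g(1)=-100$, $\underline{\pi}=\overline{\pi}=0$, every $p\sp{*}\in[-100,0]$ is a valid dual from Theorem~\ref{THmfcaveninteropt}, while the theorem needs $p=0$. Moreover (\ref{gamegeneral3}) on the \emph{uncapped} coordinates is nowhere addressed---the intersection dual is unconstrained there---and the obvious repair of truncating $p\sp{*}$ into $[\underline{\pi},\overline{\pi}]$ is not innocent: moving $p_{e}$ re-opens downward deviations on whichever side the price moved against (e.g., raising $p_{e}$ to $\underline{\pi}_{e}$ can make $z_{e}<\bar x_{e}$ profitable for the $g$-side), so the argmax certificates would have to be re-proved, which is exactly the content the proof is missing. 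This is why the result cannot be obtained by a soft projection onto the intersection theorem: as Remark~\ref{RMgameFTdualcases} shows, Theorem~\ref{THgameaggregate} strictly contains Theorem~\ref{THmfcaveninteropt}, so the caps, the band constraint and the complementary slackness carry extra content; the Fujishige--Tamura argument (which the paper cites rather than reproduces) has to build them in from the start---for instance by keeping the excesses as explicit extra variables on an enlarged ground set so that the dual prices pin them automatically---rather than recover them ex post as you attempt.
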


\begin{remark} \rm  \label{RMgameFTdualcases}
Two special cases of Theorem \ref{THgameaggregate} are worth mentioning.

\begin{itemize}
\item
The first case is where 
$\underline{\pi}=(-\infty, \ldots, -\infty)$ and
$\overline{\pi}=(+\infty, \ldots, +\infty)$.
In this case, (\ref{gamegeneral3}) is void, and 
we must have 
$u_{e} = v_{e} = +\infty$ for all $e \in E$
by (\ref{gamegeneral4}) and (\ref{gamegeneral5}).
Therefore, the assertion of Theorem \ref{THgameaggregate} reduces
to:
There exist
$x \in \dom f \cap \dom g$ and $p \in \RR\sp{E}$
such that
$x \in \argmax (f+p)$ and 
$x \in \argmax (g-p)$,
which coincides with the M$\sp{\natural}$-concave intersection theorem
(Theorem~\ref{THmfcaveninteropt}).

\item
The second case is where $\underline{\pi}=\overline{\pi}=\veczero$,
which corresponds to the discrete concave stable marriage model of 
Eguchi et al.~(2003).
  \citeH{EFT03}%
Let $w$ be a vector such that $y \leq w$ for all $y \in \dom f \cap \dom g$.
By (\ref{gamegeneral3}) we must have 
$p_{e} = 0$ for all $e \in E$.
For each $e \in E$, we must have
$u_{e} = +\infty$ or $v_{e} = +\infty$ (or both)
by (\ref{gamegeneral4}) and (\ref{gamegeneral5}).
Therefore, the assertion of Theorem \ref{THgameaggregate} reduces
to:
There exist
$x \in \dom f \cap \dom g$,
$u  \in \ZZ\sp{E}$, and $v  \in \ZZ\sp{E}$ 
such that
$w =  u \vee v$,
$x \in \argmax\{ f(y) \mid y \leq u \}$, and 
$x \in \argmax\{ g(z) \mid z \leq v \}$.
This is the main technical result of 
Eguchi et al.~(2003)
  \citeH{EFT03}%
 that implies
the existence of a stable allocation in their model.
\finbox
\end{itemize}
\end{remark}

\subsection{Concluding remarks of section \ref{SCmarrigeassigngame}}
\label{SCmarriagecondrem}

The Fujishige--Tamura model contains 
several recently proposed matching models
such as
Eriksson and Karlander (2000), 
  \citeH{EK00}%
Fleiner (2001),
  \citeH{Flei01}%
Sotomayor (2002)
  \citeH{Sot02}%
as well as 
Eguchi and Fujishige (2002),
  \citeH{EF02}%
Eguchi et al.~(2003),
  \citeH{EFT03}%
Fujishige and Tamura (2006)
  \citeH{FT06market}%
as special cases.
In particular, the hybrid model of 
Eriksson and Karlander (2000), 
  \citeH{EK00}%
with flexible and rigid agents,
is a special case
where $P$ and $Q$ are partitioned as
$P=P_{\infty} \cup P_{0}$ and $Q=Q_{\infty} \cup Q_{0}$,
and 
$\underline{\pi}_{ij}=-\infty$, $\overline{\pi}_{ij}=+\infty$
for $(i,j) \in P_{\infty} \times Q_{\infty}$ and
$\underline{\pi}_{ij}=\overline{\pi}_{ij}=0$
for other $(i,j)$.
Realistic constraints on matchings such as lower quotas can 
be expressed in terms of matroids
(Fleiner 2001,
    \citeH{Flei01}%
Fleiner and Kamiyama 2016,
    \citeH{FK16}%
%%Kojima et al.~2014,  \citeH{KTY14}%
\RED{ 
Kojima et al.~2018,   \citeH{KTY14}%
}%
%%Goto et al.~2016,  \citeH{GKKTY16}%
\RED{
Goto et al.~2017,  \citeH{GKKTY16}%
}%
%%2022-12-07
%%Yokoi 2016).
\RED{
Yokoi 2017).
  \citeH{Yok16mor}%
}%%

%%%%%%%% end of file %%%%%%%

%% \newpage
%% 2016-10-28 Kazuo Murota 
%% file= DCAECOviap.tex

%%%%%%%%%%%%%%%%%%%%%%%%
\section{Valuated Assignment Problem}

\label{SCviap}

As we have seen in Sections \ref{SCmnatexample01} and \ref{SCnetinduce},
M$\sp{\natural}$-concave set functions 
are amenable to (bipartite) graph structures.
As a further step in this direction 
we describe the valuated (independent) assignment problem,
introduced by 
Murota (1996a, 1996b)\citeH{MvmiI96}\citeH{MvmiII96}.
In contrast to the original formulation of the problem
in terms of valuated matroids (or M-convex set functions),
 we present here a reformulation
in terms of  M$\sp{\natural}$-concave set functions
for the convenience of applications to economics and game theory.

\subsection{Problem description}
\label{SCviapprob}

The problem we consider is the following%
\footnote{%%%%%%%%%%%%%%%%%%%%%%%%
This problem is a variant of the valuated independent assignment problem.
}: %%%%%% footnote %%%%%%%%%%%%%%

\smallskip
{\bf [M$\sp{\natural}$-concave matching problem]}
Given a bipartite graph
$G=(V\sp{+}, V\sp{-}; A)$,
a pair of M$\sp{\natural}$-concave set functions
$f\sp{+}: 2\sp{V\sp{+}} \to \RR \cup \{ -\infty \}$
and
\RED{
$f\sp{-}: 2\sp{V\sp{-}} \to \RR \cup \{ -\infty \}$,
}%
and a  weight function  $w: A \to \RR$
(see Fig.~\ref{FGviapgraph}),
find a matching $M \ (\subseteq A)$ that maximizes
\begin{equation} \label{Omegadef}
 w(M)+ f\sp{+}(\partial\sp{+} M)  + f\sp{-}(\partial\sp{-} M),
\end{equation}
where
$w(M) = \sum \{w(a) \mid a \in M \}$, 
and  $\partial\sp{+} M$ (resp., $\partial\sp{-} M$)
denotes the set of the vertices in $V\sp{+}$ (resp., $V\sp{-}$)
incident to $M$.
For (\ref{Omegadef}) to be finite, we have implicit constraints that
\begin{equation} \label{indepmatching}
\partial\sp{+} M \in \dom f\sp{+},
\qquad
\partial\sp{-} M \in \dom f\sp{-}.
\end{equation}

%%%  FIGURE %%%%%%%%%%%%%%%%%%
%% \input{DCAfgVIAPgraph}
\begin{figure}\begin{center}
 \includegraphics[width=0.25\textwidth,clip]{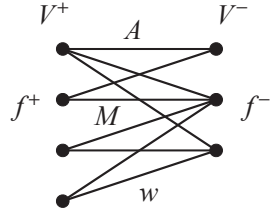}
 \caption{Valuated assignment problem}
 \label{FGviapgraph}
\end{center}\end{figure}
%%%  FIGURE %%%%%%%%%%%%%%%%%%

In applications the empty set often belongs to 
$\dom f\sp{+}$ (resp.,  $\dom f\sp{-}$),
in which case 
$\dom f\sp{+}$ (resp.,  $\dom f\sp{-}$) forms
the family of independent sets of a matroid.
If
$f\sp{+} \equiv 0$
and $f\sp{-} \equiv 0$
(with $\dom f\sp{+} = 2\sp{V\sp{+}}$ and $\dom f\sp{-} = 2\sp{V\sp{-}}$),
this problem coincides with  the conventional weighted matching problem.

An important special case of 
the M$\sp{\natural}$-concave matching problem
arises from a very special underlying graph 
$G_{\equiv}=(V\sp{+}, V\sp{-}; A_{\equiv})$
that represents a one-to-one correspondence between
$V\sp{+}$ and $V\sp{-}$.
In other words,
given a pair of M$\sp{\natural}$-concave set functions
$f_{1}, f_{2}: 2\sp{V} \to \RR \cup \{ -\infty \}$
and a  weight function  $w: V \to \RR$,
let
$V\sp{+}$ and $V\sp{-}$ be disjoint copies of $V$ and
$A_{\equiv} = \{ (v\sp{+},v\sp{-}) \mid v \in V \}$,
where $v\sp{+} \in V\sp{+}$ and $v\sp{-} \in V\sp{-}$ denote the
copies of $v \in V$.
The given functions  $f_{1}$ and $f_{2}$ are regarded as set functions 
on $V\sp{+}$ and $V\sp{-}$, respectively.
Then we obtain the following problem:

\smallskip
{\bf [M$\sp{\natural}$-concave intersection problem]} 
Given a pair of M$\sp{\natural}$-concave set functions
$f_{1}, f_{2}: 2\sp{V} \to \RR \cup \{ -\infty \}$
and a  weight function  $w: V \to \RR$, 
find a subset $X$ that maximizes 
\begin{equation} \label{Mconvinterobj01}
w(X) + f_{1}(X)  + f_{2}(X),
\end{equation}
where $w(X) = \sum_{v \in X} w(v)$.

%%%%%%%%%%%%%%%%%%%%%%%%
\subsection{Optimality criterion by potentials}
\label{SCvmpotential}

We show the optimality criterion for 
the M$\sp{\natural}$-concave matching problem
in terms of potentials, where
a {\em potential}
means a function $p: V\sp{+} \cup V\sp{-} \to \RR$ 
(or a vector $p \in \RR\sp{V\sp{+} \cup V\sp{-}}$)
on the vertex set $V\sp{+} \cup V\sp{-}$. 
In the following theorem due to 
Murota (1996a)\citeH{MvmiI96}
(see also Theorem 5.2.39 of Murota 2000a\citeH[Theorem 5.2.39]{Mspr2000}),
the statement (1) refers to the existence of 
an appropriate potential,
whereas its reformulation in (2) reveals the duality nature%
\footnote{%%%%%%%%%%%%%%%%%%%%%%%%
Compare the identity in (2) with the Fenchel-type duality in Theorem~\ref{THmlfencdual}.
}.  %%%%%% footnote %%%%%%%%%%%%%%
For each arc $a =(u,v) \in A$, $\partial\sp{+} a$ denotes the 
initial (tail) vertex of $a$, and
$\partial\sp{-}a$ the terminal (head) vertex of $a$,
i.e., $\partial\sp{+} a = u \in V\sp{+}$ and 
$\partial\sp{-} a = v \in V\sp{-}$,
where all the arcs are assumed to be directed from $V\sp{+}$ to $V\sp{-}$.

\begin{theorem}[Potential criterion]   \label{THvmpotential}
Let $M$ be a matching in $G=(V\sp{+}, V\sp{-}; A)$
satisfying {\rm (\ref{indepmatching})}
for the M$\sp{\natural}$-concave matching problem
to maximize {\rm (\ref{Omegadef})}.

\noindent
{\rm (1)}
$M$ is an optimal matching
if and only if
there exists a potential
$p: V\sp{+} \cup V\sp{-} \to \RR$ such that

{\rm (i)} \ 
${\displaystyle
   w(a) - p(\partial\sp{+} a)+p(\partial\sp{-} a)   \ 
   \left\{  \begin{array}{ll}
  \leq 0  &   (a \in A)  ,    \\
     = 0  &   (a \in M)  ,     
                      \end{array}  \right.
}$

{\rm (ii)} \ 
$\partial\sp{+} M$ is a maximizer of $f\sp{+}[+ p\sp{+}]$,

{\rm (iii)}  \ 
$\partial\sp{-} M$ is a maximizer of $f\sp{-}[-p \sp{-}]$,

\noindent
where $p\sp{+}$ and $p\sp{-}$ are the restrictions of $p$ to $V\sp{+}$ and $V\sp{-}$, 
respectively, and 
$f\sp{+}[+ p\sp{+}]$ and $f\sp{-}[-p\sp{-}]$
are defined by
\begin{align*}
 f\sp{+}[+p\sp{+}](X) &=
  f\sp{+}(X)  +  \sum \{ p(u) \mid u \in X \} 
\qquad (X \subseteq V\sp{+}),
\nonumber \\
 f\sp{-}[-p\sp{-}](Y) &=
  f\sp{-}(Y)  -  \sum \{ p(v) \mid v \in Y \} 
\qquad (Y \subseteq V\sp{-}).
\end{align*}

\noindent
{\rm (2)} \ 
${\displaystyle
 \max_{M} \{  w(M)+ f\sp{+}(\partial\sp{+} M)  + f\sp{-}(\partial\sp{-} M) \}}$
\par \ 
${\displaystyle 
=\min_{p} \{  \max (f\sp{+}[+ p\sp{+}]) 
           + \max (f\sp{-}[-p \sp{-}])  \mid 
w(a) - p(\partial\sp{+} a)+p(\partial\sp{-} a) \leq 0
          \ (a \in A) \}  .
}$

\noindent
{\rm (3)}
If $f\sp{+}$, $f\sp{-}$ and $w$ are all integer-valued,
the potential $p$ in {\rm (1)} and {\rm (2)}
can be chosen to be integer-valued.

\noindent
{\rm (4)}
Let $p$ be a potential that satisfies {\rm (i)}--{\rm (iii)} 
in {\rm (1)} for some (optimal) matching $M=M_{0}$.  
A matching $M'$ is 
optimal if and only if it satisfies {\rm (i)}--{\rm (iii)}
(with $M$ replaced by $M'$).
\end{theorem}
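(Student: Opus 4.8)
The plan is to establish weak duality first, since this simultaneously yields the easy ``if'' direction of (1) and both inequalities in the min--max identity (2), and then to construct an optimal potential for the hard ``only if'' direction, from which (3) and (4) follow with little extra work.

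First I would prove the ``if'' direction of (1). Given a potential $p$ satisfying (i)--(iii), I estimate the objective value of an arbitrary feasible matching $M'$. Since $M'$ is a matching, the tail vertices $\partial^+ a$ ($a \in M'$) are distinct and exhaust $\partial^+ M'$, so $\sum_{a \in M'} p(\partial^+ a) = \sum_{u \in \partial^+ M'} p(u) = p^+(\partial^+ M')$, and similarly on the head side. Applying the inequality in (i) arc by arc gives $w(M') \le p^+(\partial^+ M') - p^-(\partial^- M')$, whence
\[
w(M') + f^+(\partial^+ M') + f^-(\partial^- M') \le f^+[+p^+](\partial^+ M') + f^-[-p^-](\partial^- M') \le \max(f^+[+p^+]) + \max(f^-[-p^-]).
\]
For the given $M$, the equality in (i) on $M$ together with (ii) and (iii) turns every inequality into an equality, so $M$ attains this common upper bound and is optimal. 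The same computation shows that the left-hand side of (2) is at most its right-hand side, while the value attained by the pair $(M,p)$ witnesses the reverse; thus (2) reduces to exhibiting a matching and a potential with a common value, that is, to the ``only if'' direction of (1).

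The main obstacle is the ``only if'' direction: producing a potential $p$ from an optimal $M$. Here I would invoke the valuated independent assignment theorem of Murota (1996a), of which the present problem is the promised reformulation; alternatively I would construct $p$ directly as shortest-path distances in an auxiliary exchange graph on $V^+ \cup V^-$ (augmented by an auxiliary vertex handling the single-element $\chi_0$ moves), whose arc lengths encode $w$ together with the local exchange increments of $f^+$ and $f^-$ furnished by (M$\sp{\natural}$-EXC). The crux is that M$\sp{\natural}$-concavity guarantees that optimality of $M$ is equivalent to the absence of negative cycles in this exchange graph; this is precisely where the exchange property does the heavy lifting, exactly as in the reduction to the M-convex submodular flow problem (cf.~Section~\ref{SCsubmflow}). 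The absence of negative cycles makes the shortest-path potential well defined, and its complementary-slackness relations are exactly (i)--(iii).

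Finally, (3) and (4) follow cheaply. For (3), if $w$, $f^+$, and $f^-$ are integer-valued then all arc lengths in the exchange graph are integers, so the shortest-path potential is integral (equivalently, an integer-valued M$\sp{\natural}$-concave function admits an integral supergradient). For (4), fix a potential $p$ satisfying (i)--(iii) for $M_0$; by the computation above the common optimal value equals $\max(f^+[+p^+]) + \max(f^-[-p^-])$. Running the weak-duality estimate for any other optimal $M'$ with this same $p$ forces both inequalities to become equalities, which is exactly conditions (ii) and (iii) for $\partial^+ M'$, $\partial^- M'$ together with the equality part of (i) on $M'$; the inequality part of (i) is a property of $p$ alone and so persists. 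The converse is the ``if'' direction of (1). Hence the optimal matchings are precisely those satisfying (i)--(iii) with the fixed $p$.
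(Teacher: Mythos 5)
Your weak-duality computation is correct and complete: it uses no concavity at all, it yields the ``if'' part of (1), the inequality $\max\leq\min$ in (2), the integrality claim (3) via integral shortest-path distances, and part (4) exactly as you describe once a potential is in hand for a single optimal matching. Note that the survey itself does not reprove this theorem but cites Murota (1996a) and Section 5.2 of Murota (2000a); the proof there follows precisely your second route, with the ``upper-bound lemma'' and ``unique-max lemma'' doing the work, so your skeleton is the right one.

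The gap is at the crux you name but do not fill. Within this paper the negative-cycle criterion (Theorem~\ref{THvmcycle}) is \emph{derived from} Theorem~\ref{THvmpotential} via (\ref{POTigenderiv01}), (\ref{POTiicaseMderiv01}) and (\ref{potfeasible01}), so invoking ``optimality of $M$ is equivalent to absence of negative cycles'' here is circular; and your first alternative---citing the valuated independent assignment theorem of Murota (1996a)---is a citation of the very statement being proved, not an argument. Two concrete pieces are missing. First, the implication ``$M$ optimal $\Rightarrow$ no negative cycle in the exchange graph'' requires, in contrapositive form, that a negative cycle with the \emph{fewest} arcs can be cancelled to produce a feasible matching (with $\partial\sp{+}M' \in \dom f\sp{+}$ and $\partial\sp{-}M' \in \dom f\sp{-}$) of strictly larger value; this is the content of the unique-max lemma, and the footnote to the analogous Theorem~\ref{THncaug} warns that the corresponding inequality ``is by no means obvious.'' Second, the complementary-slackness relations of your shortest-path potential deliver only the \emph{local} conditions in (\ref{POTigenderiv01}) and (\ref{POTiicaseMderiv01}); to obtain the global maximality asserted in (ii) and (iii) you must invoke the local optimality criterion for M$\sp{\natural}$-concave functions (Theorem~\ref{THmnatsetfnlocmax}, applied to $f\sp{+}[+p\sp{+}]$ and $f\sp{-}[-p\sp{-}]$, which remain M$\sp{\natural}$-concave), or equivalently the upper-bound lemma (Theorem~\ref{THmnatfnupperbnd}). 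Both ingredients are available in the paper, so your outline is repairable, but as written the heavy lifting you attribute to the exchange property is asserted rather than performed.
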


In connection to (ii) and (iii) in (1) in Theorem \ref{THvmpotential},
Theorem \ref{THmnatsetfnlocmax} shows:
\begin{eqnarray}
 && X \in \argmax (f\sp{+}[+ p\sp{+}]) 
 \nonumber \\
 & & \iff
   \left\{  \begin{array}{ll}
   f\sp{+}(X) - f\sp{+}(X-u+v) +p(u) - p(v) \geq 0 
    & (\forall \,  u \in X, \ \forall \, v \in V\sp{+} \setminus X),  \\
   f\sp{+}(X) - f\sp{+}(X-u) +p(u)  \geq 0 
    & (\forall \,  u \in X),  \\
   f\sp{+}(X) - f\sp{+}(X+v) - p(v) \geq 0 
    & (\forall \, v \in V\sp{+} \setminus X),  \\
             \end{array}  \right.
\label{POTigenderiv01} 
\\
 && Y \in \argmax (f\sp{-}[- p\sp{-}]) 
 \nonumber \\
 & & \iff  
   \left\{  \begin{array}{ll}
   f\sp{-}(Y) - f\sp{-}(Y-u+v) -p(u) + p(v) \geq 0 
    & (\forall \,  u \in Y, \ \forall \, v \in V\sp{-} \setminus Y),  \\
   f\sp{-}(Y) - f\sp{-}(Y-u) -p(u)  \geq 0 
    & (\forall \,  u \in Y),  \\
   f\sp{-}(Y) - f\sp{-}(Y+v) + p(v) \geq 0 
    & (\forall \, v \in V\sp{-} \setminus Y).  \\
             \end{array}  \right.
\label{POTiicaseMderiv01} 
\end{eqnarray}
These expressions are crucial in deriving the second
optimality criterion (Theorem \ref{THvmcycle}) 
in Section \ref{SCvmnegcycle} 
and in designing efficient algorithms 
for the M$\sp{\natural}$-concave matching problem.

The optimality condition for 
the M$\sp{\natural}$-concave intersection problem 
(\ref{Mconvinterobj01})
deserves a separate statement
in the form of weight splitting,
though it is an immediate corollary of the above theorem.

%%%%%%%%%%%%%%%%%%%%%%
\begin{theorem}[Weight splitting for M$\sp{\natural}$-concave intersection]  
            \label{THwsplit01}
\quad

\noindent
{\rm (1)}
A subset $X \subseteq V$
maximizes  $w(X)+ f_{1}(X)  + f_{2}(X)$
if and only if
there exist $w_{1}, w_{2}: V \to \RR$ such that

{\rm (i)} 
{\rm [}``weight splitting''{\rm ]}\index{weight splitting}  \ 
$w(v) = w_{1}(v) + w_{2}(v)$ \  $(v \in V)$,

{\rm (ii)} \ 
$X$ is a maximizer of $f_{1}[+w _{1}]$,

{\rm (iii)}  \ 
$X$ is a maximizer of $f_{2}[+w _{2}]$.

\noindent
{\rm (2)}
${\displaystyle 
 \max_{X} \{ w(X)+ f_{1}(X)  + f_{2}(X) \} 
}$
\par \ 
${\displaystyle  =  
\min_{w_{1},w_{2}} \{ 
      \max (f_{1}[+w_{1}]) + \max (f_{2}[+w_{2}]) 
    \mid 
        w(v) = w_{1}(v) + w_{2}(v) \  (v \in V) \}  .
}$

\noindent
{\rm (3)}
If $f_{1}$, $f_{2}$ and $w$ are all integer-valued,
we may assume that $w_{1}, w_{2}: V \to \ZZ$.
\end{theorem}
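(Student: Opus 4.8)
The plan is to derive Theorem \ref{THwsplit01} as the announced corollary of the Potential criterion (Theorem \ref{THvmpotential}), applied to the special bipartite graph $G_{\equiv}=(V\sp{+},V\sp{-};A_{\equiv})$ encoding a one-to-one correspondence between two copies of $V$. Under this specialization a matching $M\subseteq A_{\equiv}$ is nothing but the arc set $\{(v\sp{+},v\sp{-})\mid v\in X\}$ determined by a subset $X\subseteq V$, and then $\partial\sp{+}M$ and $\partial\sp{-}M$ are the copies of $X$ in $V\sp{+}$ and $V\sp{-}$, while $w(M)=w(X)$. Hence maximizing $w(M)+f\sp{+}(\partial\sp{+}M)+f\sp{-}(\partial\sp{-}M)$ with $f\sp{+}=f_{1}$ and $f\sp{-}=f_{2}$ is exactly the problem of maximizing $w(X)+f_{1}(X)+f_{2}(X)$, so Theorem \ref{THvmpotential} becomes directly available.

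For the substantive (``only if'') direction I would take an optimal $X$, invoke Theorem \ref{THvmpotential}(1) to obtain a potential $p$, and split it by setting $w_{1}=p\sp{+}$ and $w_{2}=-p\sp{-}$ (reading both as functions on $V$). Conditions (ii) and (iii) of the potential criterion then read precisely as ``$X$ maximizes $f_{1}[+w_{1}]$'' and ``$X$ maximizes $f_{2}[+w_{2}]$'', since $f\sp{-}[-p\sp{-}]=f_{2}[+w_{2}]$. The converse (``if'') direction is the easy estimate: from any splitting one has, for every $Y\subseteq V$,
\[
 w(Y)+f_{1}(Y)+f_{2}(Y)=f_{1}[+w_{1}](Y)+f_{2}[+w_{2}](Y)\le \max(f_{1}[+w_{1}])+\max(f_{2}[+w_{2}]),
\]
with equality at $Y=X$, so $X$ is a maximizer and the weak-duality half of part (2) drops out simultaneously.

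The one point that is not literally immediate --- and the main obstacle --- is that Theorem \ref{THvmpotential}(i) only guarantees $w(v)-p\sp{+}(v)+p\sp{-}(v)\le 0$ in general, with equality merely for $v$ on the matching (i.e.\ $v\in X$), whereas weight splitting demands the exact identity $w=w_{1}+w_{2}$ on all of $V$. I would close this gap by a monotone adjustment of the potential: for each $v\notin X$ with $p\sp{+}(v)-p\sp{-}(v)>w(v)$, decrease $p\sp{+}(v)$ (equivalently $w_{1}(v)$) down to $w(v)+p\sp{-}(v)$. Since $f_{1}[+p\sp{+}](Y)=f_{1}(Y)+p\sp{+}(Y)$ is nondecreasing in each coordinate $p\sp{+}(v)$ and $v\notin X$, the value at $X$ is unchanged while every competing value is nonincreased, so $X$ stays a maximizer of $f_{1}[+w_{1}]$; condition (iii) is untouched because $p\sp{-}$ is fixed. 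After this adjustment $w_{1}+w_{2}=w$ holds identically, establishing (i)--(iii).

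Finally, the min--max identity (2) follows by matching its two sides to part (2) of the potential criterion, again using the above monotonicity to see that minimizing $\max(f_{1}[+w_{1}])+\max(f_{2}[+w_{2}])$ over the equality set $\{w_{1}+w_{2}=w\}$ yields the same value as minimizing over the inequality set $\{w_{1}+w_{2}\ge w\}$ that appears in Theorem \ref{THvmpotential}(2). The integrality claim (3) is then inherited verbatim: integrality of $f_{1},f_{2},w$ makes $f\sp{+},f\sp{-},w$ integral, so Theorem \ref{THvmpotential}(3) supplies an integral potential, and the adjustment step $p\sp{+}(v)\mapsto w(v)+p\sp{-}(v)$ preserves integrality.
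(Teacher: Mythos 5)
Your proof follows the paper's own route: the paper obtains Theorem \ref{THwsplit01} exactly as you do, by specializing the potential criterion (Theorem \ref{THvmpotential}) to the graph $G_{\equiv}$ encoding the identity correspondence between two copies of $V$, with $w_{1}=p\sp{+}$ and $w_{2}=-p\sp{-}$. Your monotone adjustment of $p\sp{+}(v)$ for $v\notin X$ --- needed because condition (i) of the potential criterion only yields $w\le w_{1}+w_{2}$ with equality on $X$ --- is a correct and welcome filling-in of the one detail the paper suppresses when it calls the result ``an immediate corollary.''
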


\subsection{Optimality criterion by negative-cycles}
\label{SCvmnegcycle}

As the second criterion for optimality 
we describe the negative-cycle criterion.
First we need to introduce the auxiliary graph
$G_{M} = (\tilde V,  A_{M})$
associated with a matching $M$
satisfying 
$\partial\sp{+} M \in \dom f\sp{+}$ and $\partial\sp{-} M \in \dom f\sp{-}$
in (\ref{indepmatching}).
Define $X=\partial\sp{+} M$ and $Y=\partial\sp{-} M$.

The vertex set $\tilde V$ of the auxiliary graph $G_{M}$ is given by
$\tilde V = V\sp{+} \cup V\sp{-} \cup \{ s\sp{+}, s\sp{-} \}$,
where $s\sp{+}$ and $s\sp{-}$ are new vertices often referred to as
``source vertex'' and ``sink vertex'' respectively.
The arc set $A_{M}$ 
consists of nine disjoint parts:
\[
 A_{M} =  (A\sp{\circ} \cup M\sp{\circ})
      \cup (A\sp{+} \cup F\sp{+}  \cup S\sp{+})
      \cup (A\sp{-}  \cup F\sp{-} \cup S\sp{-}) \cup R,
\]
where%
\footnote{%%%%%%%%%%%%%%%%
The {\em reorientation} of an arc $a=(u,v)$ means the arc $(v,u)$, 
to be denoted as $\overline{a}$.
}  %%% footnote %%%%%%%%%%%%%%%%%%%
\begin{eqnarray}
 A\sp{\circ} &=& \{ a \mid a \in A \}
  \qquad \mbox{(copy of $A$)},
\nonumber \\
 M\sp{\circ} &=& \{ \overline{a} \mid a \in M \}
  \qquad \mbox{($\overline{a}$: reorientation of $a$)},
\nonumber \\
 A\sp{+} &=& \{ (u,v) \mid 
        u \in X, \  v \in V\sp{+} \setminus X, \  
        X - u + v \in \dom f\sp{+}  \}, 
\nonumber \\
 F\sp{+} &=& \{ (u,s\sp{+}) \mid  
\RED{
u \in X, \
X - u \in \dom f\sp{+} \},
}%
\label{viapkarcdef} \\
 S\sp{+} &=& \{ (s\sp{+},v) \mid
\RED{
  v \in V\sp{+} \setminus X, \  
  X + v \in \dom f\sp{+} \},
}%
\nonumber \\
 A\sp{-} &=& \{ (v,u) \mid 
       u \in Y, \  v \in V\sp{-} \setminus Y,  \   
        Y - u + v \in \dom f\sp{-}  \} ,
\nonumber \\
 F\sp{-} &=& \{ (s\sp{-},u) \mid  
\RED{
 u \in Y, \ 
 Y - u  \in \dom f\sp{-}  \},
}%
\nonumber \\
 S\sp{-} &=& \{ (v,s\sp{-}) \mid 
\RED{
v \in V\sp{-} \setminus Y,  \  
Y + v \in \dom f\sp{-}  \},
}%%
\nonumber \\
%%% 2022-12-06
%% R \  &=& \{ (s\sp{-},s\sp{+}) \}.
 R \  &=& 
\RED{
\{ (s\sp{-},s\sp{+}),  (s\sp{+},s\sp{-})   \}.
}%
\nonumber 
%%%%%%%%%%%%%
\end{eqnarray}
%%\marginpar{2022-12-06}
The arc length $\ell_{M}(a)$ for $a \in A_{M}$ is defined by
\begin{equation} \label{gamdef2k}
 \ell_{M}(a) =
   \left\{  \begin{array}{ll}
  -w(a)             &   (a \in A\sp{\circ}),      \\
   w(\overline{a})  &   (a=(u,v) \in M\sp{\circ}, \  \overline{a}=(v,u) \in M), \\
 f\sp{+}(X)-f\sp{+}(X-u+v)  &   (a=(u,v)\in A\sp{+}) ,  \\
 f\sp{+}(X)-f\sp{+}(X-u)  &   (a=(u,s\sp{+})\in F\sp{+}) ,  \\
 f\sp{+}(X)-f\sp{+}(X+v)  &   (a=(s\sp{+},v)\in S\sp{+}) ,  \\
 f\sp{-}(Y)-f\sp{-}(Y-u+v)  &   (a=(v,u)\in A\sp{-}) , \\
 f\sp{-}(Y)-f\sp{-}(Y-u)  &   (a=(s\sp{-},u)\in F\sp{-}) , \\
 f\sp{-}(Y)-f\sp{-}(Y+v)  &   (a=(v,s\sp{-})\in S\sp{-}) , \\
%%% 2022-12-06
%% 0   &   (a=(s\sp{-}, s\sp{+})\in R) . \\
 0   & 
\RED{
  (a \in R) . 
}%
%%%%%%%%%%%%%
                      \end{array}  \right.
\end{equation}

A directed cycle in $G_{M}$ of a negative length 
with respect to the arc length $\ell_{M}$
is called a {\em negative cycle}.
As is well known in network flow theory,
there exists no negative cycle  in $(G_{M},\ell_{M})$ 
if and only if there exists a potential 
$p: \tilde{V} \to \RR$ such that
\begin{equation} \label{potfeasible01}
 \ell_{M}(a) +  p(\partial\sp{+} a)  -  p(\partial\sp{-} a)
 \geq 0
\qquad (a \in A_{M}) ,
\end{equation}
where $\partial\sp{+} a$ denotes the 
initial (tail) vertex of $a$, and
$\partial\sp{-}a$ the terminal (head) vertex of $a$.
With the use of 
(\ref{POTigenderiv01}), (\ref{POTiicaseMderiv01}) and (\ref{potfeasible01}),
Theorem \ref{THvmpotential} is translated into the following theorem;
see Remark \ref{RMpotonsources}.
This theorem 
(Murota 1996a,
  \citeH{MvmiI96}%
Theorem 5.2.42 of Murota 2003)
  \citeH[Theorem 5.2.42]{Mspr2000}%
gives an optimality criterion in terms of negative cycles.

\begin{theorem}[Negative-cycle criterion] \label{THvmcycle}
In the M$\sp{\natural}$-concave matching problem
to maximize {\rm (\ref{Omegadef})},
a matching $M$ 
satisfying {\rm (\ref{indepmatching})}
is optimal 
if and only if
there exists in the auxiliary graph $G_{M}$ no negative cycle 
with respect to the arc length $\ell_{M}$.
\end{theorem}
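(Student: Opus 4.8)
The plan is to deduce the negative-cycle criterion from the potential criterion (Theorem~\ref{THvmpotential}~(1)) by invoking the classical fact from network flow theory already used in the paragraph preceding the theorem: the graph $(G_M, \ell_M)$ contains no negative cycle if and only if it admits a feasible potential, that is, a function $p: \tilde V \to \RR$ satisfying the inequalities (\ref{potfeasible01}). Thus it suffices to prove that a feasible potential on $(G_M, \ell_M)$ exists if and only if there exists a potential on $V\sp{+} \cup V\sp{-}$ satisfying conditions (i)--(iii) of Theorem~\ref{THvmpotential}~(1).

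First I would translate the feasibility inequality (\ref{potfeasible01}) arc-class by arc-class, using the arc lengths defined in (\ref{gamdef2k}). For $a \in A\sp{\circ}$ the inequality reads $w(a) - p(\partial\sp{+} a) + p(\partial\sp{-} a) \leq 0$, while for the reoriented arc $\overline{a} \in M\sp{\circ}$ it reads $w(a) - p(\partial\sp{+} a) + p(\partial\sp{-} a) \geq 0$; together these are exactly condition (i), with equality forced along $M$. For the arcs in $A\sp{+}$, $F\sp{+}$, $S\sp{+}$ the substituted inequalities reproduce the three families on the right-hand side of the equivalence (\ref{POTigenderiv01}), except that those from $F\sp{+}$ and $S\sp{+}$ carry an extra term involving $p(s\sp{+})$ (namely $-p(s\sp{+})$ and $+p(s\sp{+})$, respectively). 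Symmetrically, the arcs $A\sp{-}$, $F\sp{-}$, $S\sp{-}$ reproduce the inequalities of (\ref{POTiicaseMderiv01}) up to terms involving $p(s\sp{-})$.

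The two arcs in $R$ serve to pin down the source and sink potentials: the inequalities for $(s\sp{-},s\sp{+})$ and $(s\sp{+},s\sp{-})$ force $p(s\sp{+}) = p(s\sp{-})$. Since a feasible potential may be shifted by an arbitrary additive constant without altering any cycle-length computation, I would normalize $p(s\sp{+}) = p(s\sp{-}) = 0$ (this is the content of Remark~\ref{RMpotonsources}). With this normalization the extra terms in the $F\sp{\pm}$ and $S\sp{\pm}$ inequalities vanish, so the plus-side inequalities become precisely the right-hand side of (\ref{POTigenderiv01}), hence are equivalent to $X = \partial\sp{+} M$ being a maximizer of $f\sp{+}[+p\sp{+}]$, i.e.\ condition (ii); likewise the minus-side arcs yield condition (iii). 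Conversely, given a potential $p$ on $V\sp{+} \cup V\sp{-}$ satisfying (i)--(iii), extending it by $p(s\sp{+}) = p(s\sp{-}) = 0$ produces a feasible potential on $G_M$. This gives the desired equivalence, and Theorem~\ref{THvmpotential}~(1) then identifies it with optimality of $M$.

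The main obstacle I anticipate is the careful sign- and direction-bookkeeping across all nine arc classes, together with the legitimacy of the normalization at the source and sink. One must verify that every inequality in (\ref{POTigenderiv01}) and (\ref{POTiicaseMderiv01}) is accounted for by exactly one arc class, so that no maximality condition is lost and none is spuriously strengthened, and that the additive shift used to set $p(s\sp{\pm}) = 0$ does not disturb the $A\sp{\circ}$/$M\sp{\circ}$ inequalities, which involve only vertices of $V\sp{+} \cup V\sp{-}$. Once this matching of inequalities is checked in full, the statement follows at once.
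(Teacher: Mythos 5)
Your proposal is correct and takes essentially the same route as the paper: the paper also obtains Theorem~\ref{THvmcycle} by translating the classical no-negative-cycle/feasible-potential equivalence (\ref{potfeasible01}) arc class by arc class through the lengths (\ref{gamdef2k}) into the conditions (i)--(iii) of Theorem~\ref{THvmpotential}, using (\ref{POTigenderiv01}) and (\ref{POTiicaseMderiv01}), with the normalization $p(s\sp{+})=p(s\sp{-})=0$ justified exactly as in Remark~\ref{RMpotonsources}. Your sign bookkeeping, including the role of the $R$ arcs in forcing $p(s\sp{+})=p(s\sp{-})$ and the harmlessness of the constant shift, is accurate.
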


\begin{remark} \rm  \label{RMpotonsources}
The condition (\ref{potfeasible01}) 
for $a \in (F\sp{+}  \cup S\sp{+}) \cup (F\sp{-} \cup S\sp{-})$
refers to 
$p(s\sp{+})$ and $p(s\sp{-})$,
while the potential $p$ in Theorem~\ref{THvmpotential}
is defined only on $V\sp{+} \cup V\sp{-}$.
To derive (\ref{potfeasible01}) from Theorem~\ref{THvmpotential} 
we may define $p(s\sp{+})=p(s\sp{-})=0$.
Indeed, the conditions imposed on $p(s\sp{+})$ by (\ref{potfeasible01}) are
\begin{align*}
& f\sp{+}(X)-f\sp{+}(X-u) +  p(u)  -  p(s\sp{+}) \geq 0
\qquad( u \in X) ,
\\
& f\sp{+}(X)-f\sp{+}(X+v) +  p(s\sp{+})  -  p(v) \geq 0
\qquad( v \in V\sp{+} \setminus X),
\end{align*}
which are satisfied 
by (\ref{POTigenderiv01}) if $p(s\sp{+})=0$.
Similarly for $p(s\sp{-})$.
\finbox
\end{remark}

\subsection{Concluding remarks of section \ref{SCviap}}
\label{SCviapcondrem}

Theorems \ref{THvmpotential} and \ref{THvmcycle} 
contain several standard results in matroid optimization,
such as 
Frank's weight splitting theorem 
(Frank 1981)\citeH{Fra81}
for the weighted matroid intersection problem.
The proofs of Theorems \ref{THvmpotential}
and \ref{THvmcycle} can be found in 
Murota (1996a)\citeH{MvmiI96}
and 
Section 5.2 of Murota (2000a)\citeH[Section 5.2]{Mspr2000}.
There are two key lemmas, called ``upper-bound lemma''
and ``unique-max lemma,'' which capture the 
essential properties inherent in M-concavity.
On the basis of these optimality criteria
efficient algorithms can be designed
for the M$\sp{\natural}$-concave matching problem.
For algorithmic issues, see 
Murota (1996b)\citeH{MvmiII96} 
and Section 6.2 of Murota (2000a)\citeH[Section 6.2]{Mspr2000}.

The valuated matching problem treated in this section
is generalized to the submodular flow problem in Section \ref{SCsubmflow}.

%%%%%%%%%% end of file %%%%%%%%%

%% \newpage
%% 2016-10-28 Kazuo Murota 
%% file= DCAECOsubmflow.tex

%%%%%%%%%%%%%%%%%%%%%%%%
\section{Submodular Flow Problem}
\label{SCsubmflow}

\subsection{Submodular flow problem}
\label{SCsubmflproblem}

Let $G=(V,A)$ be a directed graph with vertex set $V$ and arc set $A$.
Suppose that each arc $a \in A$ is associated with
upper-capacity $\overline{c}(a)$, lower-capacity $\underline{c}(a)$,
and  cost $\gamma(a)$ per unit flow.
Furthermore, for each vertex $v \in V$, the amount of flow supply at $v$
is specified by $x(v)$.

The minimum cost flow problem
is to find a flow $\xi=(\xi(a) \mid a \in A)$ 
that minimizes the total cost
$\langle \gamma, \xi \rangle_{A}
 = \sum_{a \in A} \gamma(a) \xi(a)$
subject to the capacity constraint 
$\underline{c}(a) \leq \xi(a) \leq \overline{c}(a)$
$(a \in A)$
and the supply specification. 
Here the supply specification means a constraint 
that the boundary $\partial\xi$ of $\xi$ defined by
\begin{equation} \label{bounddef5}
  \partial\xi(v) = 
    \sum \{ \xi(a) \mid a \in \delta\sp{+}v \}  
    - \sum \{ \xi(a) \mid a \in \delta\sp{-}v \}
\qquad (v \in V)
\end{equation}
should be equal to a given value $x(v)$,
where
$\delta\sp{+}v$ and $\delta\sp{-}v$
denote the sets of arcs
leaving (going out of) $v$ and entering (coming into) $v$, respectively.
We can interpret 
$x(v)=\partial\xi(v)$ as the net amount of flow
entering the network at $v$ from outside.

We consider the integer flow problem,
which is described by 
an integer-valued upper-capacity
$\overline{c}: A \to \ZZ \cup \{ +\infty \}$,
an integer-valued lower-capacity
$\underline{c}: A \to \ZZ \cup \{ -\infty \}$,
a real-valued cost function $\gamma: A \to \RR$,
and an integer supply vector $x: V \to \ZZ$,
where it is assumed that
$\overline{c}(a) \geq \underline{c}(a)$ for each $a \in A$.
The variable to be optimized is an integral flow $\xi: A \to \ZZ$.

\smallskip

{\bf [Minimum cost flow problem  MCFP (linear arc cost)]}%
\footnote{%%%%%%%%%%%%%%%%%%%%%%%%%%%
MCFP stands for Minimum Cost Flow Problem.
}%%%%%% footnote %%%%%%%%%%%%%%%%%%%%%%%%%%%%%
\begin{eqnarray}
\mbox{Minimize \ \ } & & 
 \Gamma_{0}(\xi) = 
 \sum_{a \in A} \gamma(a) \xi(a)
 \label{mincostflowlin} \\
 \mbox{subject to \ }  &  &  
  \underline{c}(a) \leq \xi(a) \leq \overline{c}(a)
  \qquad (a \in A),
 \label{capconstlin} \\
  &  &  \partial\xi =x,
 \label{bndxi=xlin} \\
  &  &  \xi(a) \in \ZZ       \qquad (a \in A) .
 \label{flowreallin} 
\end{eqnarray}

A generalization of the minimum cost flow problem MCFP 
is obtained by relaxing the supply specification 
$\partial\xi = x$ to the constraint that the flow boundary 
$\partial\xi$ should belong to a given subset $B$ of $\ZZ\sp{V}$ 
representing  ``feasible'' or ``admissible'' supplies%
\footnote{%%%%%%%%%%%%%%%%%%%%% 
By the flow conservation law,
the sum of the components of $\partial \xi$ is equal to zero,
i.e., $\partial \xi(V)=0$, for any flow $\xi$.
Accordingly we assume that $B$ is contained in the hyperplane
$\{ x \in \RR\sp{V} \mid x(V) = 0 \}$.
}: %%%%%%%%%%%%%%%%%%%%%%%%%%%%%%%
\begin{equation} \label{flowbndinB} 
 \partial\xi \in B.
\end{equation}
Such problem is
called the {\em submodular flow problem},
if $B$ is an M-convex set (integral base polyhedron; see Remark~\ref{RMmconcaveZ})%
\footnote{%%%%%%%%%%%%%%%%%%%%% 
In the conventional formulation 
(Chapter III of Fujishige 2005),
  \citeH[Chapter III]{Fuj05}%
the M-convex set $B$ is given by an integer-valued submodular set function
that describes $B$; see also 
Section 4.4 of Murota (2003).
  \citeH[Section 4.4]{Mdcasiam}%
}. %%%%%%%%%%%%%%%%%%%%%%%%%%%%%%%
This problem is introduced by Edmonds and Giles (1977)\citeH{EG77}.

\smallskip

{\bf [Submodular flow problem  MSFP$_{1}$ (linear arc cost)]}%
\footnote{%%%%%%%%%%%%%%%%%%%%%%%%%%%
MSFP stands for M-convex Submodular Flow Problem.
We use denotation MSFP$_{i}$ with $i=1,2,3$ to indicate
the hierarchy of generality in the problems.
}%%%%%%%%%%%%%%%%%%%%%%%%%%%%%%%%%%%
\begin{eqnarray}
\mbox{Minimize \ \ } & & 
 \Gamma_{1}(\xi) = 
 \sum_{a \in A} \gamma(a) \xi(a)
 \label{sbmflowlin} \\
 \mbox{subject to \ } & &  \underline{c}(a) \leq \xi(a) \leq \overline{c}(a)
  \qquad (a \in A),
 \label{sbmfcapconstlin} \\
  &  &  \partial \xi  \in B, 
 \label{sbmfbndxilin} \\
  &  &  \xi(a) \in \ZZ   
     \qquad (a \in A) .
 \label{sbmflowreallin} 
\end{eqnarray}

A further generalization of the problem is obtained by 
introducing a cost function
for the flow boundary $\partial\xi$ rather than merely imposing 
the constraint $\partial\xi \in B$.
Namely, with a function 
$f: \ZZ\sp{V} \to \RR\cup\{+\infty\}$
we add a new term
$f(\partial\xi)$ to the objective function,
thereby imposing constraint
$\partial\xi \in B=\dom f$ implicitly.
If the function $f$ is M-convex,
the generalized problem is called the 
{\em M-convex submodular flow problem}, introduced by 
Murota (1999).
  \citeH{Msbmfl}%

\smallskip

{\bf [M-convex submodular flow problem  MSFP$_{2}$ (linear arc cost)]}
\begin{eqnarray}
\mbox{Minimize \ \ } & & 
 \Gamma_{2}(\xi) = 
 \sum_{a \in A} \gamma(a) \xi(a)
  + f(\partial\xi)
 \label{mincostflowmlin} \\
 \mbox{subject to \ } & &  \underline{c}(a) \leq \xi(a) \leq \overline{c}(a)
  \qquad (a \in A),
 \label{capconstmlin} \\
  &  &  \partial \xi  \in \dom f,
 \label{bndximlin} \\
  &  &  \xi(a) \in \ZZ   
     \qquad (a \in A) .
 \label{flowrealmlin} 
\end{eqnarray}
The special case of the M-convex submodular flow problem MSFP$_{2}$
with a $\{ 0, +\infty\}$-valued $f$ reduces to
the submodular flow problem MSFP$_{1}$.

A still further generalization is possible by
replacing the linear arc cost in $\Gamma_{2}$
with a separable convex function\index{cost function!flow}.
Namely, using univariate convex functions%
\footnote{%%%%%%%%%%%%%
$f_{a}(t-1) + f_{a}(t+1) \geq 2 f_{a}(t)$ for all integers $t$.
}  %%% footnote %%%%%%%%%%%%%%%%%%
$f_{a}: \ZZ \to \RR\cup\{+\infty\}$ $(a \in A)$,
we consider
${\displaystyle \sum_{a \in A} f_{a}(\xi(a))}$
instead of 
${\displaystyle \sum_{a \in A} \gamma(a) \xi(a)}$
to obtain MSFP$_{3}$ below.

\smallskip

{\bf [M-convex submodular flow problem  MSFP$_{3}$ (nonlinear arc cost)]}
\begin{eqnarray}
\mbox{Minimize \ \ } & & 
 \Gamma_{3}(\xi) = 
  \sum_{a \in A} f_{a}(\xi(a))  + f(\partial\xi)
\label{mincostflowmconv} \\
 \mbox{subject to \ } & &  \xi(a) \in \dom f_{a}   \qquad (a \in A),
 \label{capconstmconv} \\
  &  &  \partial \xi  \in \dom f,
 \label{bndxi=xmconv} \\
  &  &  \xi(a) \in \ZZ   
     \qquad (a \in A) .
 \label{flowrealmconv} 
\end{eqnarray}
Obviously,  MSFP$_{2}$ is a special case of MSFP$_{3}$ with
\begin{equation} \label{falin5m}
 f_{a}(t) =
   \left\{  \begin{array}{ll}
   \gamma(a) t & (t \in [\underline{c}(a), \overline{c}(a)]_{\ZZ}) , \\
   +\infty      & (\mbox{otherwise}) . 
            \end{array}  \right.
\end{equation}
Conversely,  MSFP$_{3}$ can be put into the form MSFP$_{2}$;
see Remark~\ref{RMarccosttobound}.

\begin{remark} \rm  \label{RMarccosttobound}
Problem MSFP$_{3}$ on $G=(V, A)$
can be written in the form of MSFP$_{2}$
on a larger graph $\tilde G=(\tilde V, \tilde A)$.
We replace each arc $a =(u,v) \in A$ with a pair of arcs,
$a\sp{+} = (u,v_{a}\sp{-})$ and 
$a\sp{-} = (v_{a}\sp{+}, v)$,
where $v_{a}\sp{+}$ and $v_{a}\sp{-}$ are newly introduced vertices.
Accordingly, we have
$\tilde A =  \{ a\sp{+}, a\sp{-} \mid a \in A \}$
and 
$\tilde V = V \cup \{ v_{a}\sp{+},v_{a}\sp{-} \mid a \in A \}$.
For each $a \in A$ we consider a function
$\tilde f_{a}: \ZZ\sp{2} \to \RR \cup \{ +\infty \}$
given by
\[
\tilde f_{a}(t,s)=
   \left\{  \begin{array}{ll}
   f_{a}(t)   &   (t+s =0) ,     \\
   +\infty        &     (\mbox{otherwise}) ,
                      \end{array}  \right.
\]
and define $\tilde f: \ZZ\sp{\tilde V} \to \RR\cup\{+\infty\}$
by
\[
 \tilde f(\tilde x)
=  \sum_{a \in A} \tilde f_{a}  (\tilde x(v_{a}\sp{+}), \tilde x(v_{a}\sp{-}))  
 + f(\tilde x |_{V}) 
\qquad (\tilde x \in \ZZ\sp{\tilde V}),
\]
where
$\tilde x |_{V}$ denotes the restriction of $\tilde x$ to $V$.
For a flow $\tilde \xi: \tilde A \to \ZZ$, we have
$\tilde \xi(a\sp{+}) = \tilde \xi(a\sp{-})$
if 
$(\partial \tilde \xi(v_{a}\sp{+}),\partial \tilde \xi(v_{a}\sp{-}))
\in \dom \tilde f_{a}$.
Problem MSFP$_{3}$ is thus reduced to MSFP$_{2}$ with objective function
$\tilde \Gamma_{2}(\tilde \xi) = \tilde f(\partial \tilde \xi)$,
where the function $\tilde f$ is M-convex. 
\finbox
\end{remark}

%%%%%%%%%%%%%%%%%%%%%%%%%%%%%%%%%%%%%
\begin{remark} \rm  \label{RMintersubmfl}
The M$\sp{\natural}$-concave intersection problem (Section \ref{SCseparthm}) 
can be formulated as an M-convex submodular flow problem.
Suppose we want to maximize the sum 
$f_{1}(x) + f_{2}(x)$
of two M$\sp{\natural}$-concave functions
$f_{1}, f_{2}: \ZZ\sp{n} \to \RR \cup \{ -\infty \}$.
Let
$\tilde{f}_{1}, \tilde{f}_{2}: \ZZ\sp{n+1} \to \RR \cup \{ -\infty \}$
be the associated M-concave functions;
see (\ref{mfnmnatfnrelationcave}).
We consider an M-convex submodular flow problem
on the bipartite graph $G=(V_{1} \cup V_{2},A)$
in Fig.~\ref{FGmcaveinter},
where 
$V_{i}=\{ v_{i0},  v_{i1}, \ldots, v_{in}  \}$
for $i=1,2$
and
$A = \{ (v_{1j}, v_{2j}) \mid j=0,1,\ldots,n \}$.
The boundary cost function 
$f: \ZZ\sp{V_{1}}\times \ZZ\sp{V_{2}} \to \RR \cup \{ +\infty \}$
is defined by
$f(x_{1},x_{2}) = -\tilde{f}_{1}(x_{1}) - \tilde{f}_{2}(-x_{2})$
for  
$x_{1} \in \ZZ\sp{V_{1}}$ and $x_{2} \in \ZZ\sp{V_{2}}$,
which is an M-convex function.
The arc costs are identically zero 
and no capacity constraints are imposed
($\gamma(a)=0$, 
$\overline{c}(a)=+\infty$, $\underline{c}(a)=-\infty$ for all $a \in A$).
Since $x_{1}=-x_{2}$ if $(x_{1},x_{2})=\partial \xi$ for a flow $\xi$
in this network, this M-convex submodular flow problem
is equivalent to the problem of maximizing $f_{1}(x) + f_{2}(x)$.
Theorem \ref{THmfcaveninteropt} for the M-convex intersection problem
can be regarded as a special case of Theorem \ref{THsbmfpotcritZ}
for the M-convex submodular flow problem.
\finbox
\end{remark}

%%%  FIGURE %%%%%%%%%%%%%%%%%%
%% \input{DCAfgIntersect}
 \begin{figure}\begin{center}
 \includegraphics[width=0.45\textwidth,clip]{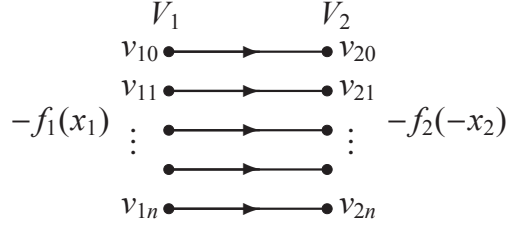}
\caption{M-convex submodular flow problem for M$\sp{\natural}$-concave intersection problem}
\label{FGmcaveinter}
 \end{center}\end{figure}
%%%  FIGURE %%%%%%%%%%%%%%%%%%

In subsequent sections we show optimality criteria
for the M-convex submodular flow problem
in terms of potentials and negative cycles.

%%%%%%%%%%%%%%%%%%%%%%%
\subsection{Optimality criterion by potentials}
\label{SCsbmfpotentialcrit}

We show the optimality criterion for 
the M-convex submodular flow problem MSFP$_{3}$
in terms of potentials.
A {\em potential}
means a function $p: V \to \RR$ (or a vector $p \in \RR\sp{V}$)
on the vertex set $V$. 
The {\em coboundary}
of a potential $p$ is a function $\delta p: A \to \RR$ 
on the arc set $A$ defined by
\begin{equation} \label{cobounddef5}
  \delta  p(a) =  
  p(\partial\sp{+} a)  -  p(\partial\sp{-} a)
\qquad (a \in A) ,
\end{equation}
where, for each arc $a \in A$, $\partial\sp{+} a$ denotes the 
initial (tail) vertex of $a$ and, $\partial\sp{-}a$ the terminal (head) vertex of $a$.
The following theorem is due to
Murota (1999);
  \citeH{Msbmfl}% 
see also 
Section 9.4 of Murota (2003).
  \citeH[Section 9.4]{Mdcasiam}%

%%%%%%%%%%%%%%%%%%
\begin{theorem}[Potential criterion]   \label{THsbmfpotcritZ}
Consider the M-convex submodular flow problem {\rm MSFP$_{3}$}.

\noindent {\rm (1)} 
For a feasible flow $\xi: A \to \ZZ$, 
two conditions {\rm (OPT)} and {\rm (POT)} below are equivalent.

{\rm (OPT)} $\xi$ is an optimal flow.

{\rm (POT)} There exists a potential $p: V \to \RR$ such that%
\footnote{%%%%%%%%%%%%%%%%%%%%%%%%%%%
By notation (\ref{f-pdefZ}),
$f_{a}[+\delta p(a)]$ means the function defined as
$f_{a}[+\delta p(a)](t) = f_{a}(t) + 
( p(\partial\sp{+} a)  -  p(\partial\sp{-} a) )t $ for all $t \in \ZZ$.
} %%%%%% footnote %%%%%%%%%%%%%%%%%%%%%%%%%%%%%
\par
\quad {\rm (i)} \ 
 $\xi(a) \in \argmin f_{a}[+\delta p(a)]$ \  for every $a \in A$, \  and
\par
\quad {\rm (ii)} \ $\partial\xi \in \argmin f[-p]$.

\noindent {\rm (2)}
Suppose that a potential $p: V \to \RR$ satisfies
{\rm (i)} and {\rm (ii)} above 
for an optimal flow $\xi$.
A feasible flow $\xi'$ is optimal if and only if
\par
\quad {\rm (i)} \ 
 $\xi'(a) \in \argmin f_{a}[+\delta p(a)]$ \  for every $a \in A$, \  and
\par
\quad {\rm (ii)} \ 
$\partial\xi' \in \argmin f[-p]$.

\noindent {\rm (3)}
If the cost functions
$f_{a}$ $(a \in A)$ and $f$
are integer-valued, 
there exists an integer-valued potential $p: V \to \ZZ$ 
in {\rm (POT)}.
Moreover, the set of integer-valued optimal potentials,
\[
\Pi\sp{*} = \{ p \mid p: \mbox{integer-valued optimal potential\,} \},
\]
is an L-convex set\,%
\footnote{%%%%%%%%%%%%%%%%%%%%%%%%%%%
A nonempty set $P \subseteq \mathbb{Z}\sp{n}$
is called an {\em L-convex set}
if it is an L$\sp{\natural}$-convex set 
(Remark \ref{RMlnatconvexsetZ}) such that
$p \in P$ implies $p + \vecone, p -\vecone \in P$.
See 
Chapter 5 of Murota (2003)
  \citeH[Chapter 5]{Mdcasiam}%
for details.
}. %%%%%% footnote %%%%%%%%%%%%%%%%%%%%%%%%%%%%%
\end{theorem}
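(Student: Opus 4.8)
The direction (POT)$\Rightarrow$(OPT) and the whole of part (2) are weak duality plus bookkeeping, and use only convexity of the $f_a$. The one identity to record is that, for any flow $\xi$ and any potential $p$,
\[
\sum_{a \in A} \delta p(a)\,\xi(a) = \langle p, \partial \xi \rangle ,
\]
obtained by collecting, for each arc $a=(u,v)$, the contributions $p(u)\xi(a)$ and $-p(v)\xi(a)$. Given $p$ as in (POT) and any feasible $\eta$, define the slacks $s_a(\eta)=f_a[+\delta p(a)](\eta(a))-\min f_a[+\delta p(a)]\ge 0$ (nonnegative by (i)) and $s_0(\eta)=f[-p](\partial\eta)-\min f[-p]\ge 0$ (nonnegative by (ii)). Substituting the identity, all $p$-terms cancel and one gets the exact identity $\Gamma_{3}(\eta)-\Gamma_{3}(\xi)=\sum_{a\in A}s_a(\eta)+s_0(\eta)$. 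This proves (POT)$\Rightarrow$(OPT) at once, and, reading it for an optimal $\eta$, shows every slack must vanish, which is precisely (i) and (ii) for $\eta$ with the same $p$; conversely vanishing slacks force optimality. That is exactly statement (2), both directions.

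\textbf{The hard direction (OPT)$\Rightarrow$(POT).} Here M-convexity enters, and the plan is to collapse the arc variables and land on an M-convex intersection problem on $\ZZ\sp{V}$. Set $\Psi(y)=\inf\{\sum_{a\in A}f_a(\xi(a))\mid \partial\xi=y\}$ for $y\in\ZZ\sp{V}$. By the network-transformation principle (the convex, minimization counterpart of Section~\ref{SCnetinduce}, with every vertex an exit and no entrance) $\Psi$ is an M-convex function whose effective domain lies in the hyperplane $\{y\mid y(V)=0\}$, and it is proper because an optimal flow exists. Since $\min_{\xi}\Gamma_{3}(\xi)=\min_{y}[\Psi(y)+f(y)]$ and the optimal $\xi$ attains the infimum defining $\Psi(y\sp{*})$ at $y\sp{*}=\partial\xi$, minimizing $\Gamma_{3}$ is the problem of minimizing $\Psi+f$, i.e.\ an M-convex intersection problem. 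Applying Theorem~\ref{THmfcaveninteropt} (in its minimization form) at $y\sp{*}$ yields $p\sp{*}$ with $y\sp{*}\in\argmin\Psi[-p\sp{*}]$ and $y\sp{*}\in\argmin f[-p\sp{*}]$, the latter being (ii). For (i), the identity above and separability give
\[
\min_{y}\Psi[-p\sp{*}](y)=\min_{\xi}\sum_{a\in A}f_a[+\delta p\sp{*}(a)](\xi(a))=\sum_{a\in A}\min_{t\in\ZZ}f_a[+\delta p\sp{*}(a)](t),
\]
and our optimal $\xi$ attains this common value; equality in a sum of per-arc minima forces $\xi(a)\in\argmin f_a[+\delta p\sp{*}(a)]$ for each $a$, which is (i). When the data are integer-valued, $\Psi$ and $f$ are integer-valued and Theorem~\ref{THmfcaveninteropt} delivers an integral $p\sp{*}$, which is the integrality clause of (3).

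\textbf{Part (3): $\Pi\sp{*}$ is an L-convex set.} By (2), a single optimal flow $\xi$ satisfies (i) and (ii) simultaneously for every $p\in\Pi\sp{*}$, so fix one such $\xi$. It suffices to show $\Pi\sp{*}$ is closed under $\vee$, $\wedge$ and under $p\mapsto p\pm\vecone$, since these three closures together yield the translation-submodular closure defining an L-convex set. Invariance under $p\mapsto p\pm\vecone$ is immediate: $\delta(p\pm\vecone)=\delta p$ preserves (i), and $\dom f\subseteq\{y\mid y(V)=0\}$ gives $f[-(p\pm\vecone)]=f[-p]$ there, preserving (ii). For $\vee$ and $\wedge$, take $p,q\in\Pi\sp{*}$. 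Condition (i): for each arc $a=(u,v)$ a short case check shows $\delta(p\vee q)(a)=\max(p(u),q(u))-\max(p(v),q(v))$ lies between $\delta p(a)$ and $\delta q(a)$ (and likewise for $p\wedge q$); since $\xi(a)$ minimizes the univariate convex $f_a[+c]$ at the slopes $c=\delta p(a),\delta q(a)$, convexity makes it a minimizer for every intermediate slope, so (i) survives. Condition (ii): $\{p\mid y\sp{*}\in\argmin f[-p]\}=\argmin f\sp{\bullet}[-y\sp{*}]$, where $f\sp{\bullet}$ is L-convex as the conjugate of the M-convex $f$ (Theorem~\ref{THlmconjcavevexZ}); by Theorem~\ref{THlfnargminZ} this minimizer set is an L$\sp{\natural}$-convex set, in particular a sublattice, hence contains $p\vee q$ and $p\wedge q$, so (ii) survives as well.

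\textbf{Where the difficulty sits.} The substance is concentrated in (OPT)$\Rightarrow$(POT): establishing that $\Psi$ is a \emph{proper} M-convex function via the network transformation, and the sign/translation accounting that turns the vertex-potential splitting of Theorem~\ref{THmfcaveninteropt} into the arc-wise condition (i). There is no circularity in invoking Theorem~\ref{THmfcaveninteropt}, which admits a proof independent of submodular flows (Remark~\ref{RMminterproof}).
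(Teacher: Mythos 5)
Your proof is correct in substance, but it runs in the opposite logical direction from the paper's. The survey gives no in-line proof of Theorem~\ref{THsbmfpotcritZ}; it cites Murota (1999) and Section~9.4 of Murota (2003), where the order of deduction is: first the negative-cycle criterion (Theorem~\ref{THsbmfcyccritZ}) is established directly from the exchange axioms (the ``upper-bound'' and ``unique-max'' lemmas), then the potential criterion follows by the standard ``no negative cycle $\iff$ feasible potential'' argument, and the M-concave intersection theorem (Theorem~\ref{THmfcaveninteropt}) is obtained afterwards as a \emph{special case} (this is exactly what Remarks~\ref{RMminterproof} and~\ref{RMintersubmfl} record). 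You reverse this: you take Theorem~\ref{THmfcaveninteropt} as given (correctly noting it has proofs independent of submodular flows), collapse the arc variables into the boundary function $\Psi(y)=\inf\{\sum_{a}f_{a}(\xi(a))\mid\partial\xi=y\}$, and read MSFP$_{3}$ as the M-convex intersection problem $\min_y[\Psi(y)+f(y)]$. This buys a shorter derivation in which weak duality, part (2), and the arc-wise condition (i) all fall out of one exact identity, and your conjugacy argument for part (3) (the set of $p$ with $\partial\xi\in\argmin f[-p]$ equals $\argmin f\sp{\bullet}[-\partial\xi]$, an L-convex set, intersected with arc-wise slope intervals closed under $\vee$, $\wedge$, $\pm\vecone$) is clean and matches the spirit of the original. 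What it costs is that the combinatorial content is hidden inside the M-convexity of $\Psi$ and the imported intersection theorem, and you lose the algorithmic by-product (cycle cancellation) that the paper's route delivers for free.

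Two small repairs. First, a sign slip: the minimization form of Theorem~\ref{THmfcaveninteropt} applied to $\Psi+f$ gives $y\sp{*}\in\argmin\Psi[+p\sp{*}]$ and $y\sp{*}\in\argmin f[-p\sp{*}]$ (the tilts must be opposite); your subsequent display with $f_{a}[+\delta p\sp{*}(a)]$ is consistent with this, so only the sentence needs fixing. Second, ``$\Psi$ is proper because an optimal flow exists'' deserves one more line: if $\Psi(y)=-\infty$ for some $y$, a recession argument produces a circulation direction $\eta$ (so $\partial\eta=\veczero$) along which the separable cost decreases without bound, and then $\xi+t\eta$ would contradict the optimality of $\xi$ since its boundary stays at $\partial\xi\in\dom f$; hence $\Psi>-\infty$ everywhere. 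Relatedly, for the integrality clause you should note that an integer-valued infimum that is finite is attained, so $\Psi$ is integer-valued. With these patches the argument is complete.
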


In connection to (i) and (ii) in {\rm (POT)} in Theorem \ref{THsbmfpotcritZ},
note the equivalences:
\begin{eqnarray}
 && \xi(a) \in \argmin f_{a}[+\delta p(a)]
 \nonumber \\
 & & \iff  
 f_{a}(\xi(a)+d) - f_{a}(\xi(a)) + d[p(\partial\sp{+} a) - p(\partial\sp{-} a)]
  \geq 0
\qquad (d=\pm1),
\label{POTigenderivZ} 
\\
 && \partial\xi \in \argmin f[-p]
 \nonumber \\
 & & \iff  
 \Delta f(\partial \xi;v,u) +p(u)-p(v) \geq 0
 \qquad (\forall u,v \in V),
\label{POTiicaseMderivZ} 
\end{eqnarray}
where  
\begin{equation} \label{fvuderivdef}
 \Delta f(z;v,u)  = 
f(z+\unitvec{v}-\unitvec{u}) - f(z) 
   \qquad (z \in \dom f; u,v \in V) .
\end{equation}
These expressions are crucial in deriving the second
optimality criterion (Theorem \ref{THsbmfcyccritZ}) in Section \ref{SCsbmfnegcyclecrit} 
and in designing efficient algorithms 
for the M-convex submodular flow problem.

%%%%%%%%%%%%%%%%%%%%%%%
\subsection{Optimality criterion by negative cycles}
\label{SCsbmfnegcyclecrit}

The optimality of an M-convex submodular flow can also be characterized
by the nonexistence of negative cycles in an auxiliary network.
This fact leads to the cycle-cancelling algorithm.
We consider the M-convex submodular flow problem
MSFP$_{2}$ that has a linear arc cost.
This is not restrictive, since MSFP$_{3}$
can be put in the form of MSFP$_{2}$ (Remark~\ref{RMarccosttobound}).

For a feasible flow $\xi: A \to \ZZ$
we define an auxiliary network as follows.
Let $G_{\xi}=(V,A_{\xi})$
be a directed graph with vertex set $V$ and 
arc set $ A_{\xi} = A_{\xi}\sp{\circ} \cup B_{\xi}\sp{\circ} \cup C_{\xi}$
consisting of three disjoint parts:
\begin{eqnarray}
 A_{\xi}\sp{\circ} & = &
    \{ a \mid a \in A, \  \xi(a) < \overline{c}(a) \},
\nonumber \\
 B_{\xi}\sp{\circ} &  = &
    \{ \overline{a} \mid a \in A,  \  \underline{c}(a) < \xi(a)  \}
  \qquad \mbox{($\overline{a}$: reorientation of $a$)},
\nonumber \\
  C_{\xi} & = & \{ (u,v) \mid 
    u, v \in V,  \  u\not=v, \  
    \partial\xi-  (\unitvec{u}-\unitvec{v}) \in \dom f \}.
\label{auggrCxidefZ} 
\end{eqnarray}
We define an arc length function $\ell_{\xi}: A_{\xi} \to \RR$ by
\begin{equation}\label{sbmfauxgraphlengthdef5Z}
 \ell_{\xi}(a) =
   \left\{  \begin{array}{ll}
   \gamma(a )             &   (a \in A_{\xi}\sp{\circ}) ,     \\
   -\gamma(\overline{a})  &   (a \in B_{\xi}\sp{\circ}, \  \overline{a} \in A) ,\\
  \Delta f(\partial \xi;v,u)  &   (a=(u,v)\in C_{\xi}).   \\
                      \end{array}  \right.
\end{equation}
We refer to $(G_{\xi},\ell_{\xi})$ as the {auxiliary network}.

A directed cycle in $G_{\xi}$ of a negative length with respect to 
the arc length $\ell_{\xi}$
is called a {\em negative cycle}.
As is well known in network flow theory,
there exists no negative cycle 
    in $(G_{\xi},\ell_{\xi})$ 
if and only if there exists a potential 
$p: V \to \RR$ such that
\begin{equation} \label{potfeasible}
 \ell_{\xi}(a) +  p(\partial\sp{+} a)  -  p(\partial\sp{-} a)
 \geq 0
\qquad (a \in A_{\xi}) .
\end{equation}
With the use of 
(\ref{POTigenderivZ}), (\ref{POTiicaseMderivZ}) and (\ref{potfeasible}),
 Theorem \ref{THsbmfpotcritZ}
is translated into the following theorem
(Murota 1999;
  \citeH{Msbmfl}%
 see also 
Section 9.5 of Murota 2003),
  \citeH[Section 9.5]{Mdcasiam}%
which gives an optimality criterion in terms of negative cycles.

%%%%%%%%%%%%%%%%%%%%%
\begin{theorem}[Negative-cycle criterion] \label{THsbmfcyccritZ}
For a feasible flow $\xi: A \to \ZZ$
to the M-convex submodular flow problem {\rm MSFP$_{2}$},
the conditions {\rm (OPT)} and {\rm (NNC)} below are equivalent.

{\rm (OPT)} $\xi$ is an optimal flow.

{\rm (NNC)} 
There exists no negative cycle 
    in the auxiliary network $(G_{\xi},\ell_{\xi})$ 
    with $\ell_{\xi}$ of {\rm (\ref{sbmfauxgraphlengthdef5Z})}. 
\end{theorem}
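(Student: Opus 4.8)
The plan is to deduce the equivalence from the potential criterion already established in Theorem~\ref{THsbmfpotcritZ}, together with the standard network-flow fact quoted in (\ref{potfeasible}): the auxiliary network $(G_{\xi},\ell_{\xi})$ has no negative cycle if and only if there exists a potential $p: V \to \RR$ with
\[
 \ell_{\xi}(a) + p(\partial\sp{+} a) - p(\partial\sp{-} a) \geq 0 \qquad (a \in A_{\xi}).
\]
Since Theorem~\ref{THsbmfpotcritZ} already gives (OPT) $\iff$ (POT), it suffices to show that this single inequality system is equivalent to the pair of local conditions (i) and (ii) appearing in (POT). The two equivalences then chain together to yield (OPT) $\iff$ (NNC).

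The core of the argument is a term-by-term matching of the potential inequality over the three disjoint arc classes $A_{\xi}\sp{\circ}$, $B_{\xi}\sp{\circ}$, $C_{\xi}$ against the componentwise characterizations (\ref{POTigenderivZ}) and (\ref{POTiicaseMderivZ}). First I would treat the residual arcs coming from $A$. For the linear arc cost in MSFP$_{2}$, the function $f_{a}$ of (\ref{falin5m}) satisfies $f_{a}(\xi(a)+1) - f_{a}(\xi(a)) = \gamma(a)$ exactly when $\xi(a) < \overline{c}(a)$ (hence $a \in A_{\xi}\sp{\circ}$, with $\ell_{\xi}(a) = \gamma(a)$), and $=+\infty$ otherwise; likewise $f_{a}(\xi(a)-1) - f_{a}(\xi(a)) = -\gamma(a)$ exactly when $\underline{c}(a) < \xi(a)$ (hence $\overline{a} \in B_{\xi}\sp{\circ}$, with $\ell_{\xi}(\overline{a}) = -\gamma(a)$). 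Substituting these two cases, $d = +1$ and $d = -1$, into (\ref{POTigenderivZ}) reproduces precisely the potential inequality on $A_{\xi}\sp{\circ}$ and on $B_{\xi}\sp{\circ}$ (the latter after interchanging $\partial\sp{+}$ and $\partial\sp{-}$ under reorientation); the cases in which the difference is $+\infty$ make the corresponding residual arc absent, so those inequalities are vacuous. Thus condition (i) of (POT) is equivalent to (\ref{potfeasible}) restricted to $A_{\xi}\sp{\circ} \cup B_{\xi}\sp{\circ}$.

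Next I would handle the boundary arcs $C_{\xi}$. By definition, $a = (u,v) \in C_{\xi}$ exactly when $\partial\xi - (\unitvec{u} - \unitvec{v}) \in \dom f$, i.e., when $\Delta f(\partial\xi; v, u)$ is finite, and then $\ell_{\xi}(a) = \Delta f(\partial\xi; v, u)$. Hence (\ref{potfeasible}) for such $a$ reads $\Delta f(\partial\xi; v, u) + p(u) - p(v) \geq 0$, which is exactly the inequality in (\ref{POTiicaseMderivZ}); for pairs $(u,v)$ with $\Delta f = +\infty$ the corresponding arc is absent and the inequality is again vacuous. Therefore condition (ii) of (POT) is equivalent to (\ref{potfeasible}) restricted to $C_{\xi}$. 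Combining the two matchings, (POT) holds for some $p$ if and only if (\ref{potfeasible}) holds for some $p$, i.e., if and only if (NNC) holds, completing the chain (OPT) $\iff$ (POT) $\iff$ (NNC).

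I expect the only delicate point to be the bookkeeping of sign conventions and residual-capacity thresholds: one must verify that the reorientation $\overline{a}$ correctly swaps head and tail so that the $d=-1$ instance of (\ref{POTigenderivZ}) matches the length $-\gamma(a)$ carried by $B_{\xi}\sp{\circ}$, and that the finiteness conditions defining the three arc sets coincide exactly with the cases in which the local difference quotients are finite, so that no genuine constraint is silently dropped. Once this correspondence is pinned down, the equivalence is purely formal and requires no further combinatorial input, since all the structural content (M-convexity of $f$, existence of the potential, and so on) has already been absorbed into Theorem~\ref{THsbmfpotcritZ}.
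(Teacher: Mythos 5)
Your proposal is correct and follows exactly the route the paper itself indicates: Theorem~\ref{THsbmfcyccritZ} is obtained by translating the potential criterion (OPT) $\iff$ (POT) of Theorem~\ref{THsbmfpotcritZ} through the local-optimality equivalences (\ref{POTigenderivZ}) and (\ref{POTiicaseMderivZ}) and the standard negative-cycle/feasible-potential fact (\ref{potfeasible}). Your term-by-term matching of the inequality system over $A_{\xi}\sp{\circ}$, $B_{\xi}\sp{\circ}$ and $C_{\xi}$, including the sign bookkeeping under reorientation and the observation that the $+\infty$ cases correspond exactly to absent arcs, is precisely the intended argument.
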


%%%%%%%%%%%%%%%%%%%%%%%
\paragraph{Cycle cancellation:}
\label{SCsbmfcyclecancel}

The negative-cycle optimality criterion 
states that the existence of a negative cycle implies the non-optimality
of a feasible flow.
This suggests the possibility of 
improving a non-optimal feasible flow by the cancellation of 
a suitably chosen negative cycle.

Suppose that negative cycles exist in the 
auxiliary network $(G_{\xi},\ell_{\xi})$
for a feasible flow $\xi$, where the arc length $\ell_{\xi}$
is defined by (\ref{sbmfauxgraphlengthdef5Z}).
Choose a negative cycle having the smallest number of arcs,
and let $Q$ $(\subseteq A_{\xi})$ be the set of its arcs.
Modifying the flow $\xi$ along $Q$ by a unit amount we obtain a new flow
$\overline{\xi}$ defined by
\begin{equation}   \label{flowud5}
 \overline{\xi}(a) =
   \left\{  \begin{array}{ll}
   \xi(a)+1  &   (a \in  Q \cap A_{\xi}\sp{\circ})    ,  \\
   \xi(a)-1   &   (\overline{a} \in Q \cap B_{\xi}\sp{\circ}) , \\
   \xi(a)   &   \mbox{(otherwise)}.
                      \end{array}  \right.
\end{equation}
The following theorem%
\footnote{%%%%%%%%%%%%%%%%
The inequality (\ref{GammdecreseNegCyc}) is by no means obvious. See 
Murota (1999)
  \citeH{Msbmfl}%
and Section 10.4 of Murota (2003)
  \citeH[Section 10.4]{Mdcasiam}% 
for the proof.
}  %%%%%%%%%%%%%%%%%%%%%%%%%
 shows that the updated flow $\overline{\xi}$ is a feasible 
flow with an improvement in the objective function in (\ref{mincostflowmlin}): 
\[
 \Gamma_{2}(\xi) = 
 \sum_{a \in A} \gamma(a) \xi(a) + f(\partial\xi) .
\]

\begin{theorem}   \label{THncaug}
For a feasible flow $\xi: A \to \ZZ$
to the M-convex submodular flow problem {\rm MSFP$_{2}$},
let $Q$ be a negative cycle having the smallest number of arcs
in $(G_{\xi},\ell_{\xi})$.
Then $\overline{\xi}$ in {\rm (\ref{flowud5})} is a feasible flow and 
\begin{equation}  \label{GammdecreseNegCyc}
\Gamma_{2}(\overline{\xi}) \leq 
 \Gamma_{2}(\xi) + \ell_{\xi}(Q)  <  \Gamma_{2}(\xi) .
\end{equation}
\end{theorem}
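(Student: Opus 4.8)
Theorem \ref{THncaug} claims that cancelling a minimal-length negative cycle $Q$ in the auxiliary network $(G_{\xi},\ell_{\xi})$ produces a feasible flow $\overline{\xi}$ with the objective improvement \eqref{GammdecreseNegCyc}. Let me sketch how I would attack this.

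=== MY PROOF PROPOSAL ===

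The plan is to split the statement into two assertions: feasibility of $\overline{\xi}$, and the objective inequality $\Gamma_{2}(\overline{\xi}) \leq \Gamma_{2}(\xi) + \ell_{\xi}(Q)$. The strict inequality $\Gamma_{2}(\xi) + \ell_{\xi}(Q) < \Gamma_{2}(\xi)$ is immediate, since $Q$ is a \emph{negative} cycle, i.e.\ $\ell_{\xi}(Q) < 0$. So the real content lies in the first inequality together with feasibility.

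First I would verify feasibility of $\overline{\xi}$. The capacity constraints \eqref{capconstmlin} are easy: each arc $a \in Q \cap A_{\xi}\sp{\circ}$ satisfies $\xi(a) < \overline{c}(a)$ by the definition of $A_{\xi}\sp{\circ}$, so increasing $\xi(a)$ by one keeps $\overline{\xi}(a) \leq \overline{c}(a)$; symmetrically for $\overline{a} \in Q \cap B_{\xi}\sp{\circ}$ using $\underline{c}(a) < \xi(a)$. The subtle part is verifying $\partial \overline{\xi} \in \dom f$, i.e.\ constraint \eqref{bndximlin}. Here I would track how the boundary changes: cancelling the cycle $Q$ changes $\partial \xi$ only through the arcs of $Q$ lying in $C_{\xi}$. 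An arc $(u,v) \in C_{\xi}$ contributes, via \eqref{auggrCxidefZ}, precisely the requirement $\partial\xi - (\unitvec{u} - \unitvec{v}) \in \dom f$. The effect of the whole cycle on the boundary is a telescoping sum of such unit shifts $\unitvec{v} - \unitvec{u}$, and I would need to argue — using the exchange property (B$\sp{\natural}$-EXC[$\ZZ$]), or rather the M-convexity of $\dom f$ in the equicardinal (M-convex set) formulation of Remark~\ref{RMmconcaveZ} — that the net displacement keeps us inside $\dom f$.

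The main obstacle, and the crux of the whole theorem (as the footnote to the statement warns), is the objective inequality. The arc-cost part $\sum_{a} \gamma(a)\overline{\xi}(a)$ changes by exactly the sum of $\ell_{\xi}$-values on $Q \cap (A_{\xi}\sp{\circ} \cup B_{\xi}\sp{\circ})$, directly from \eqref{sbmfauxgraphlengthdef5Z}. The genuinely hard term is the change in $f(\partial\xi)$, which must be bounded above by the sum of the $C_{\xi}$-arc lengths $\Delta f(\partial\xi; v,u)$ along $Q$. This is a \emph{superadditivity}-type estimate: the actual boundary change of $f$ under a sequence of unit moves must not exceed the sum of the individual one-step increments $\Delta f$. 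A naive telescoping fails because $\Delta f(\partial\xi; v,u)$ is evaluated at the fixed base point $\partial\xi$ rather than at intermediate boundaries. The plan here is to invoke the M-convexity of $f$ together with the minimality of $Q$ (smallest number of arcs): I would use a standard argument (the ``upper-bound lemma'' style estimate, as in Theorem~\ref{THmnatfnupperbnd}, adapted to the M-convex setting) showing that for an M-convex $f$ the shortest negative cycle's $C_{\xi}$-arcs can be realized as a system of disjoint exchanges whose increments \emph{sum} to an upper bound on $f(\partial\overline{\xi}) - f(\partial\xi)$. Concretely, one decomposes the restriction of $Q$ to $C_{\xi}$ into a matching of ``$+\unitvec{v}$'' and ``$-\unitvec{u}$'' coordinate moves and appeals to the exchange inequality; minimality of $Q$ guarantees no chord short-circuits the cycle, preventing the cancellation that would otherwise allow a smaller negative cycle and break the bound. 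This is precisely the step for which the footnote directs the reader to the detailed proofs in Murota (1999) and Section 10.4 of Murota (2003), and I would expect to lean on those M-convexity exchange lemmas rather than reprove them from scratch.
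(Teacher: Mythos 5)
Your outline matches the paper's treatment: the paper itself gives no proof of Theorem~\ref{THncaug}, only a footnote deferring precisely the step you isolate as the crux --- the superadditivity bound on $f(\partial\overline{\xi})-f(\partial\xi)$ by the sum of the base-point increments $\Delta f(\partial\xi;v,u)$ over the $C_{\xi}$-arcs of $Q$, which fails under naive telescoping and is rescued by M-convexity together with the minimality of $Q$ --- to Murota (1999) and Section 10.4 of Murota (2003), the very sources you propose to lean on. One minor simplification you could note: the boundary feasibility $\partial\overline{\xi}\in\dom f$ need not be argued separately via exchange axioms, since once the superadditivity estimate is in hand its right-hand side is finite, forcing $f(\partial\overline{\xi})<+\infty$, so feasibility is subsumed by the inequality (\ref{GammdecreseNegCyc}) itself.
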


\subsection{Concluding remarks of section \ref{SCsubmflow}}
\label{SCsugmflowcondrem}

On the basis of the optimality criteria in Theorems \ref{THsbmfpotcritZ}
and \ref{THsbmfcyccritZ}
we can design efficient algorithms 
for the M-convex submodular flow problem,
where the expressions (\ref{POTigenderivZ}) and   
(\ref{POTiicaseMderivZ})
are crucial.
For algorithmic issues, see 
Murota (1999),
  \citeH{Msbmfl}%
Section 10.4 of Murota (2003),
  \citeH[Section 10.4]{Mdcasiam}%
Iwata and Shigeno (2003),
  \citeH{IS03}%
Murota and Tamura (2003b),
  \citeH{MTcompeq03}%
and 
Iwata et al.~(2005).
  \citeH{IMM03submflow}% 

%%%%%%%%%%% end of file %%%%%%%%%%%%%

%% \newpage
%% 2016-10-28 Kazuo Murota 
%% file= DCAECOfixpt.tex

\section{Discrete Fixed Point Theorem}
\label{SCfixedpoint}

Discrete fixed point theorems in discrete convex analysis
originate in the theorem of 
Iimura et al.~(2005)\citeH{IMT05}
based on Iimura (2003)\citeH{Iim03}, 
which is described in this section.
Subsequent development and other types of discrete fixed point theorems 
are mentioned in Section \ref{SCfixptcondrem}.

%%%%%%%%%%%%%%%%%%%%%%%%%%%%%%%%%%%%%%%%%%%%
\subsection{Discrete fixed point theorem}
\label{SCfixptthm}

To motivate the discrete fixed point theorem of 
Iimura et al.~(2005)\citeH{IMT05},
we first take a glimpse at Kakutani's fixed point theorem.

Let $S$ be a subset of $\RR\sp{n}$
and $F$ be a set-valued mapping (correspondence)
 from $S$ to itself, which is denoted as
$F: S \to\to S$ (or $F: S \to 2\sp{S}$).
A point $x \in S$ satisfying $x \in F(x)$ is said to be a 
{\em fixed point}\index{fixed point} of $F$.
Kakutani's fixed point theorem reads as follows.

%%%%%%%%%%%%%%%%%%%%%%
\begin{theorem}    \label{THkakutanifixpt}
A set-valued mapping $F: S \to\to S$,
where  $S \subseteq \RR\sp{n}$, has a fixed point if

{\rm (a)}
 $S$ is a bounded closed convex subset of $\RR\sp{n}$,

{\rm (b)}
 For each $x \in S$, $F(x)$ is a nonempty closed convex set, and

{\rm (c)}
 $F$ is upper-hemicontinuous.
\end{theorem}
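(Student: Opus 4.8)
The plan is to reduce this set-valued statement to the single-valued case, that is, to Brouwer's fixed point theorem (a continuous map of a nonempty compact convex set into itself has a fixed point), which I take as the foundational analytic input. The bridge from Brouwer to Kakutani is the standard device of approximating the correspondence $F$ by piecewise-affine selections on finer and finer triangulations, applying Brouwer to each, and passing to a limit that survives thanks to compactness, convexity of the values, and upper-hemicontinuity.

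First I would arrange that the domain is a simplex. Since $S$ is compact and convex, choose an $n$-simplex $\Delta \supseteq S$ and let $r: \RR\sp{n} \to S$ be the nearest-point projection onto $S$, which is well defined and continuous because $S$ is closed and convex. Setting $G(y) = F(r(y))$ for $y \in \Delta$ produces a correspondence $G: \Delta \to 2\sp{\Delta}$ whose values are nonempty, compact (being closed subsets of the bounded set $S$), and convex, and which is upper-hemicontinuous, these properties being inherited from $F$ through the continuous map $r$. Any fixed point $y\sp{*} \in G(y\sp{*})$ lies in $S$, whence $r(y\sp{*}) = y\sp{*}$ and $y\sp{*} \in F(y\sp{*})$; so it suffices to produce a fixed point of $G$, and I may assume henceforth that the domain is the simplex $\Delta$.

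Next I would build the approximations. Take simplicial subdivisions of $\Delta$ with mesh $\delta_{k} \to 0$. For each $k$, at every vertex $v$ of the subdivision pick a single point $g_{k}(v) \in G(v)$ (nonempty by hypothesis (b)), and extend $g_{k}$ affinely over each small simplex; the result is a continuous map $g_{k}: \Delta \to \Delta$, with image in $\Delta$ because each $g_{k}(v) \in \Delta$ and $\Delta$ is convex. By Brouwer's theorem $g_{k}$ has a fixed point $x_{k} = g_{k}(x_{k})$. Let $\sigma_{k}$ be a subdivision simplex containing $x_{k}$, with vertices $v_{0}\sp{k}, \ldots, v_{n}\sp{k}$ and barycentric representation $x_{k} = \sum_{i=0}\sp{n} \lambda_{i}\sp{k} v_{i}\sp{k}$, so that by affineness $x_{k} = g_{k}(x_{k}) = \sum_{i=0}\sp{n} \lambda_{i}\sp{k} y_{i}\sp{k}$ with $y_{i}\sp{k} = g_{k}(v_{i}\sp{k}) \in G(v_{i}\sp{k})$. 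Using compactness of $\Delta$ and of the coordinate simplex, pass to a subsequence along which $x_{k} \to x\sp{*}$, $\lambda_{i}\sp{k} \to \lambda_{i}\sp{*}$, and $y_{i}\sp{k} \to y_{i}\sp{*}$; since $\delta_{k} \to 0$, the vertices satisfy $v_{i}\sp{k} \to x\sp{*}$ as well. Applying upper-hemicontinuity of $G$ (equivalently its closed-graph property, valid here because the values are compact and lie in the compact set $\Delta$) to $v_{i}\sp{k} \to x\sp{*}$, $y_{i}\sp{k} \in G(v_{i}\sp{k})$, $y_{i}\sp{k} \to y_{i}\sp{*}$, gives $y_{i}\sp{*} \in G(x\sp{*})$. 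Taking limits in the barycentric identity yields $x\sp{*} = \sum_{i=0}\sp{n} \lambda_{i}\sp{*} y_{i}\sp{*}$, a convex combination of points of $G(x\sp{*})$; convexity of $G(x\sp{*})$ then forces $x\sp{*} \in G(x\sp{*})$, the desired fixed point.

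I expect the main obstacle to be the careful execution of this limiting step: justifying that all vertices of the shrinking simplices converge to the same $x\sp{*}$, extracting the barycentric data along a common subsequence, and invoking upper-hemicontinuity in the correct closed-graph form so that the selected images $y_{i}\sp{k}$ retain membership in the single value $G(x\sp{*})$ in the limit. Hypotheses (a) and (c) supply the compactness and the closed graph, while convexity in (b) is used exactly once, to absorb the limiting convex combination back into $G(x\sp{*})$.
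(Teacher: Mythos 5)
Your argument is correct, but note that the paper itself offers no proof of this statement: Theorem~\ref{THkakutanifixpt} is Kakutani's classical fixed point theorem, stated only as background to motivate the discrete analogue (Theorem~\ref{THimtfixpt}), so there is no in-paper proof to compare against. What you have written is the standard reduction of Kakutani to Brouwer, essentially Kakutani's original argument: piecewise-affine selections on subdivisions of a simplex, Brouwer at each stage, and a limit argument in which compactness and the closed-graph form of upper-hemicontinuity keep the limiting points $y_{i}\sp{*}$ inside $G(x\sp{*})$, with convexity of the values invoked exactly once at the end. All the delicate points are handled properly: the projection trick $G = F \circ r$ to pass from $S$ to a simplex (composition with a continuous map preserves upper-hemicontinuity, and any fixed point of $G$ lies in $S$ and is therefore fixed by $r$); the equivalence of upper-hemicontinuity and closed graph for compact-valued correspondences into the compact set $\Delta$; and the convergence of all vertices $v_{i}\sp{k}$ of the shrinking carrier simplices to the common limit $x\sp{*}$ because the mesh tends to zero. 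The only implicit assumption worth flagging is that $S$ is nonempty (needed for the projection and indeed for the theorem itself), which the paper's statement also leaves tacit.

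It is worth observing that your proof runs parallel, in structure, to the paper's proof outline of the discrete fixed point theorem in Section~\ref{SCfixptproof}: there too one takes a simplicial decomposition (of an integrally convex set), extends a selection-like map (the projection $\pi$) piecewise linearly, applies Brouwer's theorem to the extension, and then converts the resulting continuous fixed point into the desired one by a local membership argument --- with the direction-preserving hypothesis playing, in the discrete setting, the role that upper-hemicontinuity and convex values play in your limiting step. So while your route was not taken from the paper, it is exactly the template the paper discretizes.
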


In the discrete fixed point theorem (Theorem \ref{THimtfixpt} below) we are
concerned with 
$F: S \to\to S$, where $S$ is a subset of $\ZZ\sp{n}$.
The three conditions (a) to (c) in Theorem \ref{THkakutanifixpt} above
are ``discretized'' as follows.

\begin{itemize}
\item
Condition (a) assumes that the domain of definition $S$
is nicely-shaped or well-behaved.
In the discrete case we assume $S$ to be ``integrally convex.''
\item
Condition (b) assumes that each value $F(x)$
is nicely-shaped or well-behaved.
In the discrete case we assume that
$F(x) = \overline{F(x)} \cap \ZZ\sp{n}$,
where $\overline{F(x)}$ denotes the convex hull of $F(x)$.
\item
Condition (c) assumes that mapping $F$ is continuous in some appropriate sense.
In the discrete case we assume $F$ to be ``direction-preserving.'' 
\end{itemize}

The key concepts, 
``integrally convex set'' and ``direction-preserving mapping,'' 
are explained in 
Section \ref{SCfixpticsetdirpres}.
The discrete fixed point theorem of 
Iimura et al.~(2005)\citeH{IMT05}
is the following.

%%%%%%%%%%%%%%%%%%%%%%
\begin{theorem}  \label{THimtfixpt}
A set-valued mapping $F: S \to\to S$,
where  $S \subseteq \ZZ\sp{n}$,
 has a fixed point if

{\rm (a)}
 $S$ is a nonempty finite integrally convex subset of $\ZZ\sp{n}$,

{\rm (b)}
 For each $x \in S$, $F(x)$ is nonempty and 
$F(x) = \overline{F(x)} \cap \ZZ\sp{n}$, and

{\rm (c)}
 $F$ is direction-preserving.
\end{theorem}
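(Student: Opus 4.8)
The plan is to reduce this discrete statement to Kakutani's fixed point theorem (Theorem~\ref{THkakutanifixpt}) by constructing a suitable continuous extension of $F$ on the convex hull $\overline{S}$, and then showing that any continuous fixed point forces a discrete fixed point via the direction-preserving property. First I would define, for each point $y \in \overline{S}$, a piecewise-linear (simplicial) interpolation $\hat F(y)$ built from the values of $F$ at the integer points of $S$. The role of integral convexity of $S$ (condition (a)) is precisely to guarantee that $\overline{S} \cap \ZZ\sp{n}$ triangulates nicely: every point of $\overline{S}$ lies in the convex hull of integer points of $S$ that are mutually close (within the same unit cube), so the interpolation stays inside $\overline{S}$ and $\hat F$ is a well-defined continuous self-map of the compact convex set $\overline{S}$.

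Next I would apply Kakutani (or the simpler Brouwer version, since the interpolation can be made single-valued) to $\hat F: \overline{S} \to \overline{S}$ to obtain a continuous fixed point $y\sp{*}$ with $y\sp{*} \in \hat F(y\sp{*})$. The key step is then to argue that $y\sp{*}$ must actually be (or lie arbitrarily near) an integer point that is a genuine fixed point of $F$. This is where the direction-preserving hypothesis (condition (c)) does the real work: at each integer point $x$ that is not a fixed point, the displacement vector $F(x) - x$ points in a consistent ``direction'' relative to neighbouring cells, which prevents the interpolated map from having a fixed point in the interior of a cell unless one of the defining vertices is itself fixed. Condition (b), namely $F(x) = \overline{F(x)} \cap \ZZ\sp{n}$, ensures that once we locate an integer vertex $x$ in the carrier simplex of $y\sp{*}$ with the appropriate displacement properties, the point $x$ itself satisfies $x \in F(x)$, so that the fixed point is truly discrete.

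The main obstacle I anticipate is the combinatorial heart of the argument: showing that a continuous fixed point of $\hat F$ located inside a cell (simplex) of the integral-convexity triangulation compels one of the cell's integer vertices to be a fixed point of $F$. This requires a careful case analysis of the direction assignment on the vertices of the cell, essentially a discrete index or parity argument showing that the directions cannot ``rotate consistently'' around the cell without producing a zero displacement at some vertex. The technical subtlety is coordinating the definition of ``direction-preserving'' (which I would need from Section~\ref{SCfixpticsetdirpres}) with the local simplicial structure so that the absence of a discrete fixed point would contradict the existence of the continuous one.

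An alternative, and perhaps cleaner, route would be a direct combinatorial proof avoiding Kakutani entirely: triangulate $\overline{S}$ compatibly with the integer lattice, assign to each integer vertex a label encoding the sign pattern (direction) of $F(x) - x$, and invoke a Sperner-type lemma to extract a fully-labelled simplex; the direction-preserving condition then forces a vertex of that simplex to carry the ``zero'' label, i.e.\ to be a fixed point. I would likely develop both approaches in parallel and present whichever yields the shorter verification of the crucial local lemma, since in either case that local lemma—tying the topological/combinatorial fixed point to an honest integer fixed point of $F$—is the step that demands the most care.
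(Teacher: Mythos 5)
Your first route is essentially the paper's own proof: it takes the simplicial decomposition $\calT$ of $\overline{S}$ guaranteed by integral convexity (every carrier simplex $T(y)$ has all its vertices in the integral neighborhood $N(y)$), extends the single-valued nearest-point projection $\pi(x)$ of $x$ onto $\overline{F(x)}$ piecewise linearly over $\calT$, and applies Brouwer's theorem to the resulting continuous self-map of $\overline{S}$. The one refinement worth noting is that the ``combinatorial heart'' you anticipate is much simpler than an index or parity argument: at a Brouwer fixed point $y\sp{*}$ one has $\sum_{x \in V(y\sp{*})} \lambda_{x}\,(\pi(x)-x) = \veczero$, and since the carrier vertices lie in $N(y\sp{*})$ and hence are pairwise within $\ell_{\infty}$-distance $1$, the direction-preserving property forces the summands to have a common sign in each coordinate, so $\pi(x)=x$ for every vertex with $\lambda_{x}>0$, and condition (b) then converts $x = \pi(x) \in \overline{F(x)} \cap \ZZ\sp{n}$ into $x \in F(x)$.
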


\subsection{Integrally convex set and direction-preserving mapping}
\label{SCfixpticsetdirpres}

\paragraph{Integrally convex set:}

%%%  FIGURE %%%%%%%%%%%%%%%%%%
%%\input{DCAfg4neighbor}
 \begin{figure}\begin{center}
 \includegraphics[width=0.5\textwidth,clip]{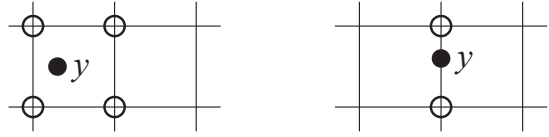}
\caption{Integral neighbor $N(y)$ of $y$ \ \ 
{\rm (}$\circ$: point of $N(y)${\rm )}}
\label{FGneighbor}
 \end{center}\end{figure}
%%%  FIGURE %%%%%%%%%%%%%%%%%%

%%%  FIGURE %%%%%%%%%%%%%%%%%%
%\input{DCAfg4intconvset}
 \begin{figure}\begin{center}
 \includegraphics[width=0.78\textwidth,clip]{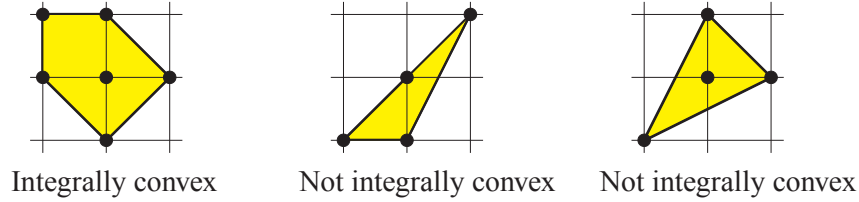}
\caption{Concept of integrally convex sets}
\label{FGintconvset}
 \end{center}\end{figure}
%%%  FIGURE %%%%%%%%%%%%%%%%%%

The {\em integral neighborhood} of a point $y \in \RR\sp{n}$
is defined as
\begin{equation}
 N(y) = \{ z \in \ZZ\sp{n} \mid \|z-y\|_{\infty} < 1 \} .
\end{equation}
See Fig.~\ref{FGneighbor}.
A set $S  \subseteq \ZZ\sp{n}$ is said to be 
{\em integrally convex}\index{integrally convex set}
if
\begin{equation}\label{setintconvnscond}
 y \in \overline{S} \ \Longrightarrow  y \in  \overline{S \cap N(y)}
\end{equation}
for any $y \in \RR\sp{n}$ 
(Favati and Tardella 1990)\citeH{FT90}.
Figure \ref{FGintconvset} illustrates this concept.
We have $S = \overline{S} \cap \ZZ\sp{n}$ for an integrally convex set $S$. 
It is known that L$\sp{\natural}$-convex sets and
M$\sp{\natural}$-convex sets are integrally convex.
See Section 3.4 of Murota (2003)\citeH[Section 3.4]{Mdcasiam}
 and Moriguchi et al.~(2016)\citeH{MMTT16}
for more about integral convexity.

\paragraph{Direction-preserving mapping:}

%%%  FIGURE %%%%%%%%%%%%%%%%%%
%%\input{DCAfg4directsign}
\begin{figure}\begin{center}
 \includegraphics[width=0.45\textwidth,clip]{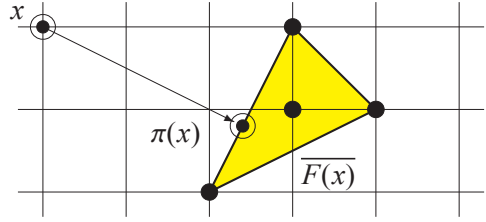}
\caption{Projection $\pi(x)$ with $\sigma(x) = \sign(\pi(x)-x) = (+1, -1)$}
\label{FGdirectsign}
\end{center}\end{figure}
%%%  FIGURE %%%%%%%%%%%%%%%%%%

Let $S$ be a subset of $\ZZ\sp{n}$
and $F: S \to\to S$ be a set-valued mapping (correspondence) from $S$ to $S$.
For $x =(x_{1},\ldots, x_{n})  \in \ZZ\sp{n}$ we denote by 
$\pi(x) = (\pi_{1}(x), \ldots, \pi_{n}(x) ) \in \RR\sp{n}$
the projection of $x$ to $\overline{F(x)}$;
see Fig.~\ref{FGdirectsign}.
This means that $\pi(x)$ is the point of $\overline{F(x)}$
that is nearest to $x$ with respect to the Euclidean norm.
We define the direction sign vector
$\sigma(x) \in \{+1,0,-1 \}\sp{n}$ as
\[ 
 \sigma(x) =
 ( \sigma_{1}(x), \ldots, \sigma_{n}(x) )
 = ( \sign(\pi_{1}(x) - x_{1}), \ldots, \sign(\pi_{n}(x) -  x_{n}) ) ,
\] 
where
\[
 \sign(y) =  
   \left\{  \begin{array}{rl}
   +1  & (y > 0),  \\
   0  & (y = 0),  \\
   -1  & (y < 0).  \\
             \end{array}  \right.
\]
Then we say that $F$ is 
{\em direction-preserving}
if for all $x, z \in S$ with $\|x-z\|_{\infty} \leq 1$ 
it holds that
\begin{equation} \label{dirpresdef1}
   \sigma_{i}(x) > 0 
 \ \Longrightarrow \ 
 \sigma_{i}(z) \geq 0 
   \qquad (i=1,\ldots,n).
\end{equation}
Note that this is equivalent to saying that
$\sigma_{i}(x) \sigma_{i}(z) \not= -1$ for each $i=1,\ldots,n$
if $x, z \in S$ and $\|x-z\|_{\infty} \leq 1$.
Being direction-preserving is interpreted as being ``continuous'' 
in the discrete setting.

\subsection{Illustrative examples}
\label{SCfixptexample}

\begin{example} \rm \label{EXfixonedim}
The significance of being direction-preserving is most transparent
in the case of $n=1$.
Let $S = [ a, b ]_{\ZZ}$ be an integer interval
with $a, b \in \ZZ$ and $a \leq b$.
Consider $F: S \to\to S$
represented as
$F(x) = [ \alpha(x), \beta(x) ]_{\ZZ}$,
where $\alpha(x), \beta(x) \in \ZZ$ and $a \leq  \alpha(x) \leq \beta(x) \leq  b$.
The projection $\pi(x)$ and the direction sign vector $\sigma(x)$ are given by
\[
\pi(x)  =
   \left\{  \begin{array}{ll}
    x           &   (\alpha(x) \leq x \leq \beta(x)) ,     \\
   \alpha(x)    &   (x \leq \alpha(x)-1),  \\
   \beta(x)     &   (x \geq \beta(x)+1),  \\
                      \end{array}  \right.
\quad
\sigma(x)  =
   \left\{  \begin{array}{ll}
    0           &   (\alpha(x) \leq x \leq \beta(x))  ,    \\
    +1     &   (x \leq \alpha(x)-1) , \\
    -1     &   (x \geq \beta(x)+1) .  \\
                      \end{array}  \right.
\]
Suppose that $F$ is direction-preserving,
which means 
$\sigma(x) \sigma(x+1) \not= -1$ 
for all $x$ with $a \leq x < b$.
There are three possibilities:

(i) $\sigma(x)=+1$ for all $x \in S$, 

(ii) $\sigma(x)=-1$ for all $x \in S$,

(iii) $\sigma(x)=0$ for some $x \in S$.

\noindent
In the first case (i) 
we must have
$x+1 \leq \alpha(x) \leq b$
for all $x \in S$,
but this is impossible for $x = b$.
Similarly, the second case (ii) is not possible, either.
Therefore, we must have the third case (iii),
and then the $x$ satisfying $\sigma(x)=0$ is a fixed point of $F$.
\finbox
\end{example}

%%%  FIGURE %%%%%%%%%%%%%%%%%%
\begin{figure}
\begin{center}
%% \vspace{0.5cm}
 \includegraphics[width=0.5\textwidth,clip]{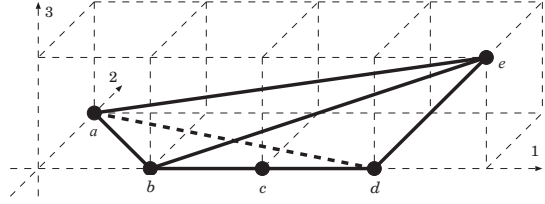}
 \caption{Necessity of the assumption  of integral convexity}
 \label{FGfixptcounter}
\end{center}
\end{figure}
%%%  FIGURE %%%%%%%%%%%%%%%%%%

\begin{example} \rm \label{EXfixptcounter}
The assumption (a) of integral convexity in Theorem \ref{THimtfixpt}
cannot be weakened to the ``hole-free'' property:
$S = \overline{S} \cap \ZZ\sp{n}$.
Let $n=3$ and consider a subset $S$ of $\ZZ\sp{3}$ (Fig.~\ref{FGfixptcounter}) given by
\[
  S = \{ a=(0,1,0), \ b=(1,0,0), 
      \ c=(2,0,0), \ d=(3,0,0), \ e=(4,0,1) \} ,
\]
which is not integrally convex, but satisfies $S = \overline{S} \cap \ZZ\sp{n}$.
Define $F : S \to\to S$ by
\[
 F(a)=F(b)= \{ e \}, \quad
 F(c)=\{ a,e \}, \quad
 F(d)=F(e)= \{ a \}.
\]
For each $x \in S$, $F(x)$ is a nonempty subset of $S$ 
satisfying 
$F(x) = \overline{F(x)} \cap \ZZ\sp{n}$.
Furthermore, $F$ is direction-preserving.
Indeed we have
\[
   \begin{array}{lcl@{}rrr@{}l}
    \pi(a)-a &=& (&  4,&  -1,&   1 &), \\
    \pi(b)-b &=& (&  3,&   0,&   1 &), \\
    \pi(c)-c &=& (&  0,& 1/2,& 1/2 &), \\
    \pi(d)-d &=& (& -3,&   1,&   0 &), \\
    \pi(e)-e &=& (& -4,&   1,&  -1 &),
   \end{array}
\quad
   \begin{array}{lcl@{}rrr@{}l}
    \sigma(a) &=& (& +1,&  -1,&  +1 &), \\
    \sigma(b) &=& (& +1,&   0,&  +1 &), \\
    \sigma(c) &=& (&  0,&  +1,&  +1 &), \\
    \sigma(d) &=& (& -1,&  +1,&   0 &), \\
    \sigma(e) &=& (& -1,&  +1,&  -1 &)
   \end{array}
\]
and the condition (\ref{dirpresdef1}) holds 
for every pair $(x,z)$ with $\|x-z\|_{\infty} \leq 1$,
i.e., for
$(x,z) =(a,b), (b,c), (c,d), (d,e)$.
Thus, $F$ meets the conditions
(b) and (c) in Theorem \ref{THimtfixpt}, but $F$ has no fixed point.
\finbox
\end{example}

%%%%%%%%%%%%%%%%%%%%%%%%%%%%%%%%%%%%%%%%%%%%%%
\subsection{Proof outline}
\label{SCfixptproof}

The proof of Theorem~\ref{THimtfixpt} consists of the following three major steps;
the reader is referred to 
Iimura et al.~(2005)\citeH{IMT05} 
for the detail.

\begin{enumerate}
\item
An integrally convex set $S$  
has a simplicial decomposition $\calT$ with a nice property.
For each $y \in \RR\sp{n}$ contained in the convex hull of $S$,
let $T(y)$ denote the smallest simplex in $\calT$ that contains $y$.
Then the simplicial decomposition $\calT$ has the property
that all the vertices of $T(y)$ belong to the integral neighborhood $N(y)$ of $y$.
That is, the set of the vertices of $T(y)$, to be denoted by $V(y)$,
is given as $V(y) = T(y) \cap N(y)$.

\item

With reference to the simplicial decomposition $\calT$,
we define a piecewise linear extension, say, $f$
of the projection $\pi$ by
\[ 
 f(y) = \sum_{x \in V(y)} \lambda_{x} \pi(x)
\qquad
 ( y = \sum_{x \in V(y)} \lambda_{x} x,
  \ \ 
   \sum_{x \in V(y)} \lambda_{x} = 1, \quad
   \lambda_{x} \geq 0 ) .
\] 
By Brouwer's fixed point theorem applied to 
$f: \overline{S} \to \overline{S}$,
we obtain a fixed point $y\sp{*} \in \overline{S}$ of $f$,
i.e., $y\sp{*} = f(y\sp{*})$.

\item
   From the equations
\[ 
 \sum_{x \in V(y\sp{*})}\lambda_{x} (\pi(x)-x)
 =  \sum_{x \in V(y\sp{*})}\lambda_{x} \pi(x)
   -  \sum_{x \in V(y\sp{*})}\lambda_{x} x
 = f(y\sp{*}) - y\sp{*}  = \veczero
\] 
and the assumption of $F$ being direction-preserving,
we see that $\pi(x)-x = \veczero$ for some $x \in V(y\sp{*})$.
Let $x\sp{*}$ be such a point in $V(y\sp{*})$.
Then $x\sp{*}$ is a fixed point of $F$, since 
$x\sp{*} = \pi(x\sp{*}) \in \overline{F(x\sp{*})}$,
from which follows
$x\sp{*} \in \overline{F(x\sp{*})} \cap \ZZ\sp{n} = F(x\sp{*})$
by condition (b).
\end{enumerate}

\subsection{Concluding remarks of section \ref{SCfixedpoint}}
\label{SCfixptcondrem}

The discrete fixed point theorem initiated by 
Iimura (2003)\citeH{Iim03}
and Iimura et al.~(2005)\citeH{IMT05}
aims at a discrete version of Brouwer's fixed point theorem.
Related work in this direction includes
van der Laan et al.~(2006)\citeH{LTY06zpt},
Danilov and Koshevoi (2007)\citeH{DK07fixpt},
Chen and Deng (2006, 2008, 2009)\citeH{CD06lat}\citeH{CD08brw}\citeH{CD09simp},
Yang (2008, 2009)\citeH{Yan08comp}\citeH{Yan09fixpt},
Talman and  Yang (2009)\citeH{TY09},
Iimura and Yang (2009)\citeH{IY09},
Iimura (2010)\citeH{Iim10},
Deng et al.~(2011)\citeH{DQSZ11},
van der Laan et al.~(2011)\citeH{LTY11},
and Iimura et al.~(2012)\citeH{IMT12}.
Discrete fixed point theorems are used successfully 
in showing the existence of
a competitive equilibrium under indivisibility,
a pure Nash equilibrium with discrete strategy sets,
etc.

Efforts are made to weaken the condition (c) 
of ``direction preserving'' in Theorem \ref{THimtfixpt}.
Weaker conditions called
``locally gross direction preserving''
and
``simplicially locally gross direction preserving''
are considered by
Yang (2008, 2009)\citeH{Yan08comp}\citeH{Yan09fixpt},
Iimura and Yang (2009)\citeH{IY09},
Iimura (2010)\citeH{Iim10}.
Further variants are found in 
Talman and  Yang (2009)\citeH{TY09},
van der Laan et al.~(2011)\citeH{LTY11},
and Iimura et al.~(2012)\citeH{IMT12}.
These studies, however, share the framework
of  mappings and correspondences defined 
on integrally convex sets or their simplicial divisions.

The proof of Theorem \ref{THimtfixpt} by
Iimura et al.~(2005)\citeH{IMT05}
is not constructive,
relying on Brouwer's fixed point theorem.
Constructive proofs are given by
van der Laan et al.~(2006)\citeH{LTY06zpt}
and van der Laan et al.~(2011)\citeH{LTY11}.
Computational complexity of finding a fixed point
for direction-preserving mappings 
is discussed by
Chen and Deng (2006, 2008, 2009)\citeH{CD06lat}\citeH{CD08brw}\citeH{CD09simp}
and Deng et al.~(2011)\citeH{DQSZ11}.

Another type of (discrete) fixed point theorem,
the lattice-theoretical fixed point theorem of 
Tarski (1955)\citeH{Tar55fixpt},
is a powerful tool used extensively in economics and game theory; see 
Milgrom and Roberts (1990)\citeH{MR90eco},
Vives (1990)\citeH{Viv90},
and Topkis (1998)\citeH{Top98}.
For stable matchings, use and power of Tarski's fixed point theorem
are demonstrated by
Adachi (2000)\citeH{Ada00},
Fleiner (2003)\citeH{Flei03},
and Farooq et al.~(2012)\citeH{FFT12}.
It may be said, however, that 
Tarski's fixed point theorem is rather independent of discrete convex analysis.

Yet another type of discrete fixed point theorems are considered in the
literature, including
Robert (1986)\citeH{Rob86},
Shih and Dong (2005)\citeH{SD05},
Richard (2008)\citeH{Ric08},
Sato and Kawasaki (2009)\citeH{SK09fixpt}
and
Kawasaki et al.~(2013)\citeH{KKK13}.

%%%% end of file %%%%%%%%%%%%%%%

%% \newpage
%% 2016-10-28 Kazuo Murota 
%% file= DCAECOtopic.tex

%%%%%%%%%%%%%%%%%%%%%%%%
\section{Other Topics}
\label{SCothertopic}

%%%%%%%%%%%%%%%%%%%%%%%%%%%%%%%%%%%%
\subsection{Matching market and economy with indivisible goods}

Since the seminal paper by
Kelso and Crawford (1982)\citeH{KC82},
the concept of gross substitutes with its variants
has turned out to be pivotal
in discussing matching market and economy with indivisible goods.
The literature includes, e.g.,
Roth and Sotomayor (1990)\citeH{RS90},
Bikhchandani and Mamer (1997)\citeH{BM97eco},
Gul and Stacchetti (1999)\citeH{GS99},
Ausubel and Milgrom (2002)\citeH{AM02},
Milgrom (2004)\citeH{Mil04book},
Hatfield and Milgrom (2005)\citeH{HM05},
Ausubel (2006)\citeH{Aus06},
Sun and Yang (2006)\citeH{SY06},
Milgrom and Strulovici (2009)\citeH{MS09sbst},
and Hatfield et al. (2016)\citeH{HKNOW16fullsubst}.

Application of discrete convex analysis to 
economics was started by
Danilov et al.~(1998, 2001)\citeH{DKM98}\citeH{DKM01} 
for the  Walrasian equilibrium of indivisible markets
(see also Chapter 11 of Murota 2003)\citeH[Chapter 11]{Mdcasiam}.
The interaction between economics and discrete convex analysis
was reinforced decisively by the observation of 
Fujishige and Yang (2003)\citeH{FY03gr}
that M$\sp{\natural}$-concavity (of set functions) is equivalent
to the gross substitutes property 
(Theorem \ref{THmconcavgross} in Section \ref{SCmaximizers01}).
This equivalence is extended to functions in integer variables
(Section \ref{SCmaximizersZ}).
While the reader is referred to 
Tamura (2004)\citeH{Tam04}
and Chapter 11 of Murota (2003)\citeH[Chapter 11]{Mdcasiam}
for this earlier development, 
we mention more recent papers below.

As described in Section \ref{SCmarrigeassigngame},
the Fujishige-Tamura model of two-sided matching markets,
proposed by Fujishige and Tamura (2006, 2007),
is a common generalization of the stable marriage model
 (Gale and Shapley 1962)\citeH{GS62}
and the assignment game
 (Shapley and Shubik 1972)\citeH{SS72}.

Inoue (2008)\citeH{Ino08} 
uses the property of M$\sp{\natural}$-convex sets 
that they are closed under (Minkowski) summation, 
to show that the weak core in a finite exchange economy
is nonempty if every agent's upper contour set is M$\sp{\natural}$-convex.
%%Kojima et al.~(2014)\citeH{KTY14} 
\RED{
Kojima et al.~(2018)\citeH{KTY14} 
}%
present a unified treatment of 
two-sided matching markets with a variety of distributional
constraints that can be represented 
by M$\sp{\natural}$-concave functions.
It is shown that the generalized
deferred acceptance algorithm is strategy-proof and yields a stable matching.
Yokote (2016)\citeH{Yokote16} 
considers a market in which 
%%buyers and sellers trade indivisible commodities for money,
each buyer demands at most one unit of commodity
and each seller produces multiple units of several types of commodities. 
The core and the competitive equilibria are shown to exist and coincide
under the assumption that 
the cost function of each seller is M$\sp{\natural}$-convex.

Algorithmic aspects of Walrasian equilibria
 are fully investigated by
\RED{
Paes Leme and Wong (2020)\citeH{PLW16}
}%
%%Paes Leme and Wong (2016)\citeH{PLW16}
in a general setting,
in which the algorithms from discrete convex analysis
are singled out as efficient methods 
for the gross substitutes case.
See also 
\RED{
Paes Leme (2017)\citeH{PLem14gs}
}%
%%Paes Leme (2014)\citeH{PLem14gs}
as well as 
Murota and Tamura (2003b)\citeH{MTcompeq03} and
Section 11.5 of Murota (2003)\citeH[Chapter 11]{Mdcasiam}.

%%%%%%%%%%%%%%%%%%%%%%%%%%%%%%%
\subsection{Trading networks}
\label{SCtradingnet}

M$\sp{\natural}$-concavity plays a substantial role 
in the modeling and analysis of 
vertical 
trading networks (supply chain networks)
introduced  by
Ostrovsky (2008)\citeH{Ost08}
and investigated 
in a more general setting
by Hatfield et al.~(2013)\citeH{HKNOW13tradnet},
Fleiner (2014)\citeH{Flei14},
Fleiner et al.~(2015)\citeH{FJTT15hj},
Ikebe et al.~(2015)\citeH{ISST15},
Ikebe and Tamura (2015)\citeH{IT15scnet},
and Candogan et al.~(2016)\citeH{CEV16}.

In a trading network,
an agent is identified with a vertex (node) of the network.
In-coming arcs to a vertex represent the trades
in which the agent acts as a buyer and 
out-going arcs represent the trades
in which the agent acts as a seller.
Each vertex $v$ of the network is associated
with a choice function $C_{v}$ and/or a valuation function $f_{v}$ of the agent,
defined on the set $U_{v} \cup W_{v}$ of the arcs incident to $v$,
where $U_{v}$ is the set of in-coming arcs to the vertex $v$
and $W_{v}$ is the set of out-going arcs from $v$.
In particular, the function $f_{v}$ 
is a set function on $U_{v} \cup W_{v}$ in the single-unit case,
whereas it is a function on $\ZZ\sp{U_{v} \cup W_{v}}$
in the multi-unit case.

In the single-unit case,
Ostrovsky (2008)\citeH{Ost08}
identifies the key property of a choice function, called 
the same-side substitutability (SSS) and the cross-side complementarity (CSC),
which are discussed in Section~\ref{SCtwistMnat01}.
These properties are 
satisfied by the choice function induced from a unique-selecting 
twisted M$\sp{\natural}$-concave valuation function $f_{v}$,
with twisting by $W_{v}$;
see Theorem \ref{THchotwistMnatuniq01}.
The multi-unit case is treated by Ikebe and Tamura (2015)\citeH{IT15scnet}.
The conditions (SSS) and (CSC) are generalized to
(SSS-CSC$\sp{1}$\hbox{[$\ZZ$]}) and (SSS-CSC$\sp{2}$\hbox{[$\ZZ$]}),
and these conditions are shown to be satisfied 
by the choice function induced from 
a unique-selecting twisted M$\sp{\natural}$-concave valuation $f_{v}$;
see Theorem \ref{THchotwistMnatZ} in Section \ref{SCtwistMnatZ}.

Discrete convex analysis is especially relevant and useful 
when valuation functions and 
the price vector $p$ are explicitly involved in the model
as in Hatfield et al.~(2013), Ikebe et al.~(2015), 
and Candogan et al.~(2016).
\citeH{HKNOW13tradnet}\citeH{ISST15}\citeH{CEV16}%
Specifically, we can use the results from discrete convex analysis
as follows:

\begin{itemize}

\item
The existence of a competitive equilibrium 
(Hatfield et al.~2013, Definition 3)\citeH{HKNOW13tradnet} 
can be proved with the aid of 
the M$\sp{\natural}$-concave intersection theorem
(Theorem \ref{THmfcaveninteropt}).

\item
The lattice structure of the equilibrium price vectors can be shown 
through the conjugacy relationship between 
M$\sp{\natural}$-concavity and L$\sp{\natural}$-convexity
(Section \ref{SCconjugacy}).

\item
The equivalence of chain stability and stability
can be established with the aid of the negative-cycle criterion 
for the M-convex submodular flow problem 
(Theorem~\ref{THsbmfcyccritZ}).
Recall from Remark \ref{RMintersubmfl} that
the M$\sp{\natural}$-concave intersection problem 
can be formulated as an M-convex submodular flow problem.

\item
Fundamental computational problems for a trading network,
such as checking stability,  computing a competitive equilibrium,
and maximizing the welfare, 
can often 
be solved with the aid of algorithms known in discrete convex analysis, 
such as those for maximizing M$\sp{\natural}$-concave functions 
and  for solving the M-convex submodular flow problem.
See Candogan et al.~(2016)\citeH{CEV16}
as well as
Murota and Tamura (2003b)\citeH{MTcompeq03},
Section 11.5 of Murota (2003)\citeH[Chapter 11]{Mdcasiam},
and
Ikebe et al.~(2015)\citeH{ISST15}.
\end{itemize}

\subsection{Congestion games}

Congestion games (Rosenthal 1973)\citeH{Ros73},
which are equivalent to
(exact) ``finite'' potential games 
(Monderer and Shapley 1996)\citeH{MS96potgame},
are a class of games possessing a Nash equilibrium in pure strategies.
There are various generalizations of potential games, such as:
ordinal and generalized ordinal 
(Monderer and Shapley 1996)\citeH{MS96potgame}
and best-response 
(Voorneveld 2000)\citeH{Voo00}
potential games.
For algorithmic aspects of congestion games, we refer to
Roughgarden (2007)\citeH{Rou07}
and 
Tardos and Wexler (2007)\citeH{TW07netgame}.

Recently, a connection is made by
Fujishige et al.~(2015)\citeH{FGHPZ15conges}
between congestion games on networks
and discrete convex analysis.
It has been known (Fotakis 2010)\citeH{Fot10} 
that for every congestion game 
on an extension-parallel network, 
considered by 
Holzman and Law-yone (2003)\citeH{HL03},
any best-response sequence reaches a pure Nash equilibrium of the game
in $n$ steps, where $n$ is the number of players.
It is pointed out by
Fujishige et al.~(2015)\citeH{FGHPZ15conges}
that the fast convergence of best-response sequences 
is a consequence of M$\sp{\natural}$-convexity of the associated potential function,
which is a laminar convex function and hence is
M$\sp{\natural}$-convex; 
see (\ref{laminarconvZ}) in Section \ref{SCmnatexampleZ}.

In economics, potential games on some subset of
a Euclidean space are more widely studied.
A maximizer of (some sort of)  potential function
is a Nash equilibrium.
We also have the converse if the potential function is ``concave,''
since local optimality implies the global optimality there.
Ui (2006, 2008) studies the condition
for a local maximizer of a function on the integer lattice
to become a global maximizer of the function as well,
with application to best-response potential games
on the integer lattice.
In Ui (2008), it is shown that
a condition analogous to midpoint concavity,
called ``larger midpoint property,''  is sufficient
for the equivalence of local optimality and global optimality,
and shows the equivalence of
a Nash equilibrium and a maximizer of the best-response potential function.
A more general condition for
the equivalence of local and global optimality
is studied in Ui (2006),
along with its relation to M-, L-, L$^\natural$-, and M$^\natural$-convex functions.

\subsection{Integrally concave games}

Another study on the games on the integer lattice $\ZZ\sp{n}$
is found in Iimura and Watanabe (2014)\citeH{IW14},
which deals with
$n$-person symmetric games with integrally concave payoff functions
defined on the $n$-product of a finite integer interval.
Here, the integral concavity is in the sense of 
Favati and Tardella (1990)\citeH{FT90};
see also Section 3.4 of Murota (2003)\citeH[Section 3.4]{Mdcasiam}.
It is shown that every game in this class of games
has a (not necessarily symmetric) Nash equilibrium,
which is located within a unit distance from
the diagonal of strategy space.
Although assuming concavity on the entire strategy space is somewhat stringent,
this result generalizes the result of 
Cheng et al.~(2004)\citeH{CRVW04}
that every $n$-person symmetric ``two-strategy'' game has
a (not necessarily symmetric) Nash equilibrium,
because any real-valued function on the $n$-product of a doubleton
is integrally concave.
A further generalization has been made by Iimura and Watanabe (2016)\citeH{IW}, 
which implies the existence of an equilibrium in discrete Cournot game 
with concave industry revenue, convex cost, and nonincreasing inverse demand.

%%%%%%%%%%%%%%%%%%%%%%%%%%%%%%%%%%%%
\subsection{Unimodularity and tropical geometry}
\label{SCunimodular}

Unimodular coordinate transformations are a natural operation
for discrete convexity.
In Section~\ref{SCconremmnatZ} we have mentioned 
that a function $f$ is twisted M$\sp{\natural}$-concave 
if and only if it is represented as $f(x) = g(U x)$
with $U = {\rm diag}(1,\ldots,1, -1,\ldots,-1)$
for some M$\sp{\natural}$-concave function $g$.
Another such example is multimodular functions 
of Hajek (1985)\citeH{Haj85}
used in discrete-event control 
(Altman et al.~2000)\citeH{AGH00}.
A function $f: \ZZ\sp{n} \to \RR \cup \{ +\infty \}$
is said to be {\em multimodular}
if the function  $\tilde f: \ZZ\sp{n+1} \to \RR \cup \{ +\infty \}$
defined by 
$ \tilde f(x_{0}, x) = f(x_{1}-x_{0},  x_{2}-x_{1}, \ldots, x_{n}-x_{n-1})$  
for $x_{0} \in \ZZ$ and $x \in \ZZ\sp{n}$
is submodular in $n+1$ variables. 
This means that $f$ is multimodular if and only if
the function $g(x)=f(Dx)$ is L$\sp{\natural}$-convex,
where
$D=(d_{ij} \mid 1 \leq i,j \leq n)$ 
is a bidiagonal matrix defined by
$d_{ii}=1 \ (i=1,\ldots,n)$ and
$d_{i+1,i}=-1 \ (i=1,\ldots,n-1)$.
This matrix $D$ is unimodular, and its inverse
$D\sp{-1}$ is an integral matrix
with $(D\sp{-1})_{ij}=1$ for $i \geq j$ and 
$(D\sp{-1})_{ij}=0$ for $i < j$.
Therefore,
a function $f$ is multimodular
if and only if it is represented as $f(x) = g(U x)$
with $U = D\sp{-1}$
for some L$\sp{\natural}$-convex function $g$.

The fundamental role of unimodularity for discrete convexity,
beyond unimodular coordinate transformations,
is investigated in
Danilov and Koshevoy (2004)\citeH{DK04uni}
under the name of ``unimodular systems.''
An application of unimodular systems 
to competitive equilibrium
is found in  Danilov et al.~(2001)\citeH{DKM01}.

Another recent topic, of a similar flavor, is tropical geometry.
%%Baldwin and Klemperer (2016)\citeH{BK16}
\RED{
Baldwin and Klemperer (2019)\citeH{BK16}
}%
investigate indivisibility issues 
in terms of tropical geometry.
The Ricardian theory of international trade
is treated by Shiozawa (2015)\citeH{Shiz15},
mechanism design by 
Crowell and Tran (2016)\citeH{CT16},
and dominant strategy implementation by 
Weymark (2016)\citeH{Wey16}.
The interaction of tropical geometry with economics has just begun%
\footnote{%%%%%%%%%%%%%%%%%%%%%%
A summer school entitled
``Economics and Tropical Geometry''
was organized by Ngoc Tran and Josephine Yu
at Hausdorff Center for Mathematics, Bonn, May 2016.
}. %% footnote %%%%%%%%%%%%%%%%%%%

%%%%%%%%%% end of file %%%%%

%%\newpage
\subsection*{Acknowledgements}

The author would like to thank Zaifu Yang for 
offering the opportunity of this survey paper. 
Special thanks go to Akiyoshi Shioura and Akihisa Tamura
for carefully reading all the manuscript and making constructive comments.
The author is also indebted to 
Satoru Fujishige,
Takuya Iimura,
Satoko Moriguchi,
and Yu Yokoi
for helpful suggestions.
This work was supported by The Mitsubishi Foundation, CREST, JST, 
and JSPS KAKENHI Grant Number 26280004.

%% \newpage
%%Kazuo Murota  2016-10-28
%% file = DCAECOref.tex

%%\addtocontents{toc}{\vspace*{3mm}}
%%\addtocontents{toc}{\bf References}
\addtocontents{toc}{\protect\contentsline {section}{\numberline {}References}{\thepage}}

%%%% end of file %%%%%%%%%%%%

\newpage
\tableofcontents

\newpage
\listoffigures

\end{document}